\documentclass[a4paper,12pt]{article}

\usepackage[usenames]{xcolor} 
\usepackage{amssymb} 
\usepackage{mathtools} 
\usepackage{amsfonts}
\usepackage{amsthm}
\usepackage[utf8]{inputenc} 
\usepackage[british]{babel}
\usepackage{array} 
\usepackage{multirow} 
\usepackage{float} 
\usepackage[margin=1.3in]{geometry}
\usepackage{multirow}
\usepackage{enumerate}
\usepackage{parskip}
\usepackage[all,cmtip]{xy}
\usepackage{tikz}
\usetikzlibrary{matrix,arrows.meta,positioning}
\usepackage{arydshln}
\usepackage{comment}
\usepackage{setspace}

\DeclareMathOperator{\Sel}{Sel}
\DeclareMathOperator{\Gal}{Gal}

\DeclareMathOperator{\res}{res}
\DeclareMathOperator{\ind}{ind}

\renewcommand{\Im}{\textrm{Im}}
\newcommand{\Frob}{\textrm{Frob}}
\newcommand{\Fitt}{\textrm{Fitt}}

\DeclareMathOperator{\Hom}{Hom}
\DeclareMathOperator{\Ext}{Ext}

\DeclareMathOperator{\Aut}{Aut}
\DeclareMathOperator{\KS}{KS}

\newcommand{\loc}{\textrm{loc}}
\newcommand{\m}{\mathfrak{m}}
\newcommand{\p}{\mathfrak{p}}

\newcommand{\coker}{\textrm{coker}}

\newcommand{\rank}{\textrm{rank}}

\newcommand{\Reg}{\textrm{Reg}}

\newcommand{\Q}{\mathbb Q}
\newcommand{\R}{\mathbb R}

\newcommand{\Z}{\mathbb Z}
\newcommand{\F}{\mathbb F}
\newcommand{\N}{\mathbb N}
\newcommand{\FF}{\mathcal F}

\newcommand{\FBK}{{\mathcal{F}_{\textrm{BK}}}}
\newcommand{\FGR}{{\mathcal{F}_{\textrm{Gr}}}}
\newcommand{\PP}{\mathcal P}
\newcommand{\NN}{\mathcal N}

\newcommand{\GG}{\mathcal G}

\newcommand{\II}{\mathcal I}
\newcommand{\UU}{\mathcal U}

\newcommand{\BB}{\mathcal B}
\newcommand{\HH}{\mathcal H}

\newcommand{\MM}{\mathcal M}

\renewcommand{\AA}{\mathcal A}

\newcommand{\tr}{\textrm{tr}}
\newcommand{\ur}{\textrm{ur}}
\newcommand{\f}{\textrm{f}}
\newcommand{\s}{\textrm{s}}

\newcommand{\fs}{\textrm{fs}}
\newcommand{\cl}{\textrm{cl}}
\renewcommand{\f}{\textrm{f}}

\newcommand{\GL}{\textrm{GL}}

\newcommand{\Kato}{\textrm{Kato}}
\newcommand{\LLZ}{\textrm{LLZ}}
\newcommand{\tors}{\textrm{tors}}

\newcommand{\bH}{\textbf{H}}
\newcommand{\bSS}{\textbf{SS}}
\newcommand{\bKS}{\textbf{KS}}

\newcommand{\bT}{\textbf{T}}

\newcommand{\bz}{\textbf{z}}

\newcommand{\bTheta}{\mathbf{\Theta}}

\newcommand{\ku}{\mathfrak u}
\newcommand{\kn}{\mathfrak n}

\newcommand{\ka}{\mathfrak a}
\newcommand{\kb}{\mathfrak b}
\newcommand{\kc}{\mathfrak c}
\newcommand{\kq}{\mathfrak q}
\newcommand{\kr}{\mathfrak r}
\newcommand{\ks}{\mathfrak s}

\renewcommand{\Im}{\textrm{Im}}
\newcommand{\sign}{\textrm{sign}}

    \DeclareFontFamily{U}{wncy}{}
    \DeclareFontShape{U}{wncy}{m}{n}{<->wncyr10}{}
    \DeclareSymbolFont{mcy}{U}{wncy}{m}{n}
\DeclareMathSymbol{\Sha}{\mathord}{mcy}{"58}

\theoremstyle{definition}
\newtheorem{theorem}{Theorem}[section]
\newtheorem{theorem*}{Theorem}[section]
\newtheorem{definition}[theorem]{Definition}

\newtheorem{remark}[theorem]{Remark}
\newtheorem{remark*}[theorem*]{Remark}
\newtheorem{proposition}[theorem]{Proposition}
\newtheorem{corollary}[theorem]{Corollary}
\newtheorem{corollary*}[theorem*]{Corollary}
\newtheorem{lemma}[theorem]{Lemma}

\newtheorem{notation}[theorem]{Notation}

\newtheorem{innercustomass}{Assumption}

\newenvironment{namedass}[1]
  {\renewcommand\theinnercustomass{#1}\innercustomass}
  {\endinnercustomass}

\makeatletter
\def\namedlabel#1#2{\begingroup
    #2%
    \def\@currentlabel{#2}%
    \phantomsection\label{#1}\endgroup
}
\makeatother

\usepackage[backend=biber, style=alphabetic]{biblatex}

\usepackage{csquotes}

\usepackage[pdfencoding=auto,hidelinks]{hyperref} 

\begin{document}

\title{Ultra Kolyvagin systems and higher Fitting ideals of Iwasawa Selmer groups}
\author{Alberto Angurel}
\maketitle

\begin{abstract}
We develop the theory of equivariant, ultra Kolyvagin systems to bypass structural limitations of the Euler system machinery over infinite rings. By utilizing collections of classes living in the exterior powers of patched Selmer groups---constructed from ultraproducts of classical Selmer groups---we compute the structure of an Iwasawa Selmer group up to pseudo-isomorphism of Iwasawa modules and prove the absence of finite submodules. We apply this theoretical framework to the fine Selmer group of an elliptic curve and the Bloch-Kato Selmer group of the Rankin-Selberg convolution of modular forms.
\end{abstract}

\tableofcontents

\section{Introduction}

The method of Euler systems is a cornerstone of Iwasawa theory that has led to considerable progress towards the proof of important conjectures in arithmetic geometry. Arguably, its most important consequence is the proof of the Birch and Swinnerton-Dyer conjecture for elliptic curves whose analytic rank is at most one. Furthermore, this method has also been applied in greater generality to prove the Bloch-Kato conjecture and one divisibility of the Iwasawa main conjecture in other settings.

Historically, this argument was introduced in \cite{Thaine}, \cite{Kolyvagin_first}, \cite{Kolyvagin}, \cite{PerrinRiou98}, \cite{Kato99} and \cite{Rubin}, where Selmer groups were studied using collections of cohomology classes, known as Euler systems, satisfying certain norm-compatibility relations with a similar flavour to the Euler factors used to define the $L$-function. After that, Mazur and Rubin axiomatized in \cite{MazurRubin} the ideas at the core of this argument by introducing the notion of Kolyvagin systems.

In this article, we generalise the theory of Kolyvagin systems to prove refinements of the Iwasawa main conjectures. In addition to the equality between the $p$-adic $L$-function and the characteristic ideal of the dual Selmer group predicted by this conjecture, we are able to describe its full structure up to pseudo-isomorphism of Iwasawa modules in terms of higher Stickelberger ideals, similar to those appearing in \cite{Kur2014}. Although our methods can be applied in wider generality, we give explicit computations for the Selmer groups associated with elliptic curves and Rankin-Selberg convolutions of modular forms.

Our work builds on further generalisations of the theory of Kolyvagin systems in \cite{MazurRubin16}, \cite{BurnsSano}, \cite{Sakamoto18}, \cite{BurnsSakamotoSano2} and \cite{BurnsSakamotoSano3}. They extended the results in \cite{MazurRubin} in two different directions: Selmer groups with higher core rank and those with non-principal coefficient rings, allowing an equivariant generalisation of the theory.

Here we develop a further generalisation of the theory of Kolyvagin systems that gives a full description of the Selmer group at the top of a $\Z_p$-extension. In order to do this, we develop an equivariant theory of Kolyvagin systems of higher rank that combines previous methods in \cite{BurnsSakamotoSano2} and \cite{Sweeting}.

We view Selmer groups as subgroups of the patched cohomology, originally constructed in \cite{Sweeting} as an ultraproduct of classical Galois cohomology groups. In this setting, the notion of prime number is relaxed, leading to the concept of ultraprimes. The existence of ultraprimes satisfying properties not satisfied by any prime is what allows this generalisation of the theory of Kolyvagin systems. This new formalism was used by Loeffler and Zerbes in \cite{ultra_kolyvagin_LZ} to generalise the Euler system argument for bounding Selmer groups associated with non-ordinary Galois representations.

This setting allows us to develop the Kolyvagin system argument directly on the profinite Selmer groups that naturally appear when studying the Galois representation along a $\Z_p$-extension. It is a significant advantage over the previous theory, which requires one to go through the cohomology of the finite quotients. In our new formalism, the full power of the structure theorem of finitely generated Iwasawa modules can be used to describe the structure of the dual Selmer group.

In particular, we develop a higher rank theory of ultra Kolyvagin systems, which are seen as collections of classes in the exterior powers of patched Selmer groups. In this formalism, the Stickelberger ideals associated with a primitive Kolyvagin system are more than bounds of the Selmer group, but can describe its pseudo-isomorphism structure. In addition, the existence of ultra Kolyvagin systems shows, under mild big image assumptions, the absence of finite submodules in Iwasawa Selmer groups.

Using the methods presented in this article, we are able to compute the structure of the fine Selmer group of an elliptic curve and the Bloch-Kato Selmer group of the Rankin-Selberg convolution of modular forms along the cyclotomic $\Z_p$-extension. However, the method can be applied to a wider class of Galois representations for which Euler systems are known.

\subsection{Main Results}

We propose a generalisation of the theory of Kolyvagin systems which allows the computation of the higher Fitting ideals of a Selmer group associated with a Galois representation over the Iwasawa algebra
\[\Lambda:=\Z_p[[X]],\]
where $p\geq 5$ is a prime number. We assume that $\bT$ is a free, finitely generated $\Lambda$-module endowed with a continuous action of the absolute Galois group of a number field $K$ that is ramified only at finitely many primes.

We will consider the Selmer group $H^1_{\FF}(K,\bT)$ defined by a cartesian Selmer structure of positive core rank . In order to develop the theory, we need to impose certain mild big image assumptions and the $p$-primality of the Tamagawa numbers. Precise definitions and assumptions are detailed in \textsection\ref{sec:selmer}-\ref{sec:iwasawa}.

We establish the notion of ultra Kolyvagin systems of higher rank and we show, under the assumptions previously mentioned, that the Iwasawa module of ultra Kolyvagin systems $\bKS(\bT,\FF)$ is free of rank one. Hence we can fix a generator $\kappa$ of $\bKS(\bT,\FF)$, which is referred to as a primitive Kolyvagin system.

In addition, when $\kappa\in \bKS(\bT,\FF)$ is a Kolyvagin system, we can define a sequence of ideals $\bTheta_i(\kappa)$, where $i\geq 0$, from the divisibility of the different classes in $\kappa$ (see Definition \ref{def:kol_theta}). 

This sequence of ideals can be used to describe the structure of the dual Selmer group. By local duality, it is possible to define a dual Selmer structure $\FF^*$, that will determine a Selmer group on the cohomology of the Cartier dual $\bT^*$. It is the structure of its Pontryagin dual $H^1_{\FF^*}(K,\bT^*)^\vee$ that can be determined using the ideals $\bTheta_i(\kappa)$.

\begin{theorem*}(Theorem \ref{th:iwasawa:kolyvagin_theta_fitting})
Let $\bT$ and $\FF$ be as above and let $\kappa$ be a primitive Kolyvagin system. Then the ideals $\bTheta_i(\kappa)$ coincide, up to finite index, with the Fitting ideals $\Fitt^i_\Lambda\Bigl(H^1_{\FF^*}(K,\bT^*)^\vee\Bigr)$ of the dual Selmer group. 
\label{th:intro:pseudo}
\end{theorem*}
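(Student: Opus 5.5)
The plan is to prove the pseudo-equality $\bTheta_i(\kappa)\sim \Fitt^i_\Lambda\bigl(H^1_{\FF^*}(K,\bT^*)^\vee\bigr)$ locally at each height-one prime $\p$ of $\Lambda$. By the structure theorem, the dual Selmer group $M:=H^1_{\FF^*}(K,\bT^*)^\vee$ is pseudo-isomorphic to $\Lambda^{r}\oplus\bigoplus_j\Lambda/\p_j^{e_j}$. Two ideals of the two-dimensional regular ring $\Lambda$ agree up to finite index exactly when their reflexive hulls coincide and both have finite colength in that hull, so the comparison can be made after localizing.

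For each height-one prime $\p$ (including $\p=(p)$), the localization $\Lambda_\p$ is a DVR. I will use the patched (ultraproduct) cohomology to reduce to the principal-ring theory of higher rank Kolyvagin systems from Mazur--Rubin and Burns--Sakamoto--Sano.

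\begin{enumerate}
\item Localize the ultra Kolyvagin system $\kappa$ at $\p$, or pass to the quotients $\bT/\p^n\bT$ (twisting if necessary so that the relevant big image hypotheses hold). This produces a Kolyvagin system over the DVR-quotients $\Lambda_\p/\p^n$.
\item Primitivity of $\kappa$, meaning it generates the rank-one module $\bKS(\bT,\FF)$, should imply that the image is primitive over each $\Lambda_\p/\p^n$. The reason is that the specialization map on Kolyvagin system modules is surjective up to pseudo-null error, which I expect to be proved earlier in the paper.
\item The classical theorem over a DVR then gives $\bTheta_i(\kappa)\Lambda_\p=\Fitt^i_{\Lambda_\p}(M_\p)$. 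Ultraprimes provide "Kolyvagin primes" with exactly prescribed localizations, which is what lets the elementary divisors be read off one at a time.
\end{enumerate}

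Since Fitting ideals commute with localization, this yields equality of reflexive hulls.

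The remaining task is to show that neither side has a nontrivial pseudo-null defect beyond finite index. For the Fitting side, $\Fitt^i$ of a module pseudo-isomorphic to $\Lambda^r\oplus\bigoplus\Lambda/\p_j^{e_j}$ differs from a principal ideal by finite index, provided $M$ has no pseudo-null part of positive rank over $\Z_p$. In dimension two, pseudo-null means finite, so this is automatic. For the $\bTheta$ side, I will argue that $\bTheta_i(\kappa)$ is contained in its reflexive hull with finite cokernel. This uses that $\bTheta_i$ is generated by the images of the classes $\kappa_\kn$ under maps from exterior powers of patched Selmer groups, which are reflexive modules (free, up to finite error, over the semilocal patched ring). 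Finite generation of $\bTheta_i$ then combines with the height-one equalities to give the claim.

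The main obstacle is the second step: compatibility of primitivity under specialization to $\p$. It is possible that $\kappa\bmod\p$ is divisible, i.e.\ not primitive, at finitely many or exceptional primes $\p$. There I expect to use the control theorem for $\bKS$, that is, surjectivity of $\bKS(\bT)\to\bKS(\bT/\p\bT)$ for all but finitely many $\p$. I would then handle the exceptional $\p$, notably $(p)$, by choosing the auxiliary twist or the linear element $\p'\neq\p$ generically, so that the localization at $\p$ is unaffected.
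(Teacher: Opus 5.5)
\textbf{There is a genuine gap: the reduction in your Steps 1--3 never lands in a setting where a classical theorem applies.}

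Localizing at a height-one prime $\p$ gives the DVR $\Lambda_\p$. But $\Lambda_\p$ is not complete and its residue field $\mathrm{Frac}(\Lambda/\p)$ is infinite. So neither the paper's framework (Assumption \ref{ass:R_lim}, the Chebotarev input of Proposition \ref{prop:patched:cheb}) nor the Mazur--Rubin or Burns--Sakamoto--Sano theorems are available over it.

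Passing instead to $\bT/\p^n\bT$ keeps a finite residue field. However, for $n\geq 2$ the ring $\Lambda/\p^n$ is one-dimensional and non-reduced, hence neither a DVR, nor regular, nor self-injective. If you specialize to the DVR $\mathcal{O}_\p$ (the normalization of $\Lambda/\p$), you only see the image of $\Fitt^i_\Lambda(M)$ modulo $\p$, not $\Fitt^i_\Lambda(M)\Lambda_\p$. Concretely, if $M_\p\cong\Lambda_\p^{r}\oplus\bigoplus_j\Lambda_\p/\p^{e_j}$, then $M_\p/\p M_\p$ records only the number of summands. At the primes that matter (those with $M_\p\neq 0$) the specialized dual Selmer group has positive rank over $\Lambda/\p$, so the exponents $e_j$ are invisible. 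Recovering them would need an extra approximation device, such as specializing at auxiliary height-one primes converging to $\p$ and comparing lengths asymptotically, which you do not set up.

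Any specialization argument also needs the propagated structure on $\bT/\p\bT$ to be cartesian in the classical sense, with unchanged core rank and with Assumption \ref{ass:patched:basic} still valid. None of this is available under the hypotheses of this theorem; the paper needs the extra Assumption \ref{ass:classic} even for the quotients $\bT_k$. The control map $\bKS(\bT,\FF)\twoheadrightarrow\bKS(\bT/\p\bT,\FF)$ you expect to find earlier is not in the paper either. Only surjectivity onto $\bKS(\bT_1,\FF)$ is proved, and only under Assumption \ref{ass:classic}. Your final step, by contrast, is automatic: every nonzero ideal of $\Lambda$ has finite index in the principal ideal generated by its gcd. No reflexivity argument is needed, and there is no ``semilocal patched ring''.

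\textbf{The missing idea is that no specialization is needed: ultraprimes let you run the whole argument over $\Lambda$.}

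By Corollary \ref{cor:patched:dual_res_vanish_417}, pick a weak core vertex $\kc$ with $\bH^1_{(\FF^*)(\ku)}(K,\bT^*)=\bH^1_{(\FF^*)_\ku}(K,\bT^*)$ for all $\ku\mid\kc$. Global duality gives a presentation $\bH^1_{\FF^\kc}(K,\bT)\to\bigoplus_{\ku\mid\kc}\bH^1_\s(K_\ku,\bT)\to M\to 0$ by free $\Lambda$-modules (Propositions \ref{prop:weak_core_pseudo_free} and \ref{prop:patched_transverse}). Killing the $\kq$-component presents $\bH^1_{(\FF^*)_\kq}(K,\bT^*)^\vee$. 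So the definition of Fitting ideals, together with the choice of $\kc$, yields $\Fitt^i_\Lambda(M)=\sum_{\kq\mid\kc}\Fitt^{i-1}_\Lambda\bigl(\bH^1_{(\FF^*)(\kq)}(K,\bT^*)^\vee\bigr)$. Induction, applied to the structures $\FF(\kq)$, then gives $\Fitt^i_\Lambda(M)=\sum_{\kn\in\NN_i}\Fitt^0_\Lambda\bigl(\bH^1_{(\FF^*)(\kn)}(K,\bT^*)^\vee\bigr)$.

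The case $i=0$ for each $\FF(\kn)$ is Theorem \ref{th:kol_fitting0}. Write $\kappa=\Reg(\varepsilon)$ with $\varepsilon$ a primitive Stark system (Corollary \ref{cor:iwasawa:reg_iso}). Then $\kappa_\kn$ is a rank reduction of the generator $\varepsilon_\kc$ along an exact sequence whose cokernel is $\bH^1_{(\FF^*)(\kn)}(K,\bT^*)^\vee$. Propositions \ref{prop:bidual_map_fitting} and \ref{prop:bidual_map_iwasawa_fitting_inclusion} identify $\ind(\kappa_\kn)$ as the principal ideal containing that $\Fitt^0$ with finite index.

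Summing gives $\Fitt^i_\Lambda(M)\subset_{f.i.}\bTheta_i(\kappa)$, an actual inclusion and not only an equality after localization. Height-one localization enters only in the purely algebraic Proposition \ref{prop:bidual_map_fitting}, where it is harmless.
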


Theorem \ref{th:intro:pseudo} gives enough information about the Fitting ideals to compute the Iwasawa Selmer module up to pseudo-isomorphism. However, it can be refined in order to prove the absence of finite submodules in the Selmer group.

\begin{theorem*}(Theorem \ref{th:iwasawa:ultrakol:theta_fitting_equal})
Under the assumptions of Theorem \ref{th:intro:pseudo}, suppose that $\FF$ also satisfies Assumption \ref{ass:classic} from section \textsection\ref{sec:cl:ass}. Then 
\[\Theta_i(\kappa)=\Fitt^i_\Lambda\Bigl(H^1_{\FF^*}(K,\bT^*)^\vee\Bigr)\ \ \forall i\in \Z_{\geq 0}.\]
\label{th:intro:equal}
\end{theorem*}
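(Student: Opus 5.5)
We reduce the equality $\Theta_i(\kappa)=\Fitt^i_\Lambda\bigl(H^1_{\FF^*}(K,\bT^*)^\vee\bigr)$ to a statement over the ultra-patched Selmer groups, and then compare Fitting ideals ring by ring. By Theorem \ref{th:intro:pseudo}, the two ideals already agree up to finite index. So it suffices to prove two things: that both are ideals of $\Lambda$ with no nontrivial finite-index ``defect'', and that one contains the other.

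\textbf{Step 1: one inclusion.} First I show $\Theta_i(\kappa)\subseteq \Fitt^i_\Lambda(H^1_{\FF^*}(K,\bT^*)^\vee)$ for every $i$, and this holds for any Kolyvagin system, not only a primitive one. Fix an ultraprime product $\kn$ with $\nu(\kn)$ ultraprimes. The standard exact sequence relates the relaxed/strict Selmer groups at $\kn$ to the localisations. It shows that the image of $\kappa_\kn$ in $\bigwedge^{r+\nu(\kn)}$ of the patched Selmer group is, after pairing with duals, contained in the $\nu(\kn)$-th Fitting ideal of the dual Selmer group modified at $\kn$. Since the modified Selmer group surjects onto $H^1_{\FF^*}(K,\bT^*)^\vee$ with kernel generated by $\nu(\kn)$ elements, Fitting ideals shift appropriately and the inclusion follows. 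I would carry this out over each finite level $\Lambda/(p^k,X^m)$ or $\Lambda/\m^k$ and pass to the limit. Assumption \ref{ass:classic} guarantees that the relevant local conditions are free and cartesian at every level.

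\textbf{Step 2: freeness and the absence of finite submodules.} Under Assumption \ref{ass:classic}, I expect the dual Selmer group $M:=H^1_{\FF^*}(K,\bT^*)^\vee$ to have no nonzero finite submodule. The reason is that $M$ admits a presentation $\Lambda^{a}\to\Lambda^{b}\to M\to 0$ coming from the patched cohomology, which is free over $\Lambda$. I would obtain this by choosing an ultraprime set $\kn$ that kills $H^1_{\FF^*(\kn)}$, via Chebotarev-type arguments in the ultraproduct. Then $M$ is the cokernel of a localisation map between free $\Lambda$-modules of ranks $\nu(\kn)+r$ and $\nu(\kn)$. The Fitting ideals of $M$ are therefore determinantal ideals of an explicit matrix, and the ideals $\Theta_i(\kappa)$ are, by construction, generated by the corresponding minors of the same matrix. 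This uses the rank-one freeness of $\bKS(\bT,\FF)$ and the fact that $\kappa$ generates it.

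\textbf{Step 3: reverse inclusion via reflexivity.} If Step 2 yields identical determinantal descriptions, we are done directly. Otherwise I would fall back on showing that $\Fitt^i$ has no embedded component at $\m$. Since the two ideals agree up to finite index and $\Theta_i(\kappa)\subseteq\Fitt^i$, it then suffices to show that $\Theta_i(\kappa)$ is saturated with respect to $\m$: that is, if $\m^N x\subseteq\Theta_i(\kappa)$ then $x\in\Theta_i(\kappa)$. I would try to prove this by specialising to height-one primes $(X-\alpha)$ lying over $\Z_p$-points. At those primes, the control theorem for Kolyvagin systems (primitivity being preserved under Assumption \ref{ass:classic}) together with the classical result over discrete valuation rings gives exact equality after specialisation, and equality at enough specialisations lifts to $\Lambda$.

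\textbf{Main obstacle.} The hardest step is Step 2 or 3: ruling out a finite-index discrepancy. This amounts to ensuring that the determinantal presentation obtained from ultraprimes is exact over $\Lambda$ itself and not merely over its finite quotients. This is exactly where Assumption \ref{ass:classic} must enter, to ensure surjectivity of the relevant local maps and the vanishing of $H^0$ terms that would otherwise create finite submodules.
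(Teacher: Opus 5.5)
Your Step 1 is sound, and it is exactly how the paper proves $\bTheta_i(\kappa)\subseteq\Fitt^i_\Lambda\bigl(\bH^1_{\FF^*}(K,\bT^*)^\vee\bigr)$. One projects $\kappa$ to $\kappa^{(k)}\in\KS(\bT_k,\FF)$ through Theorem~\ref{th:ultrakol_limit} and applies \cite[Theorem 5.2]{BurnsSakamotoSano2} with Corollary~\ref{cor:theta_comparison}. This is where Assumption~\ref{ass:classic} enters, via Propositions~\ref{prop:iwasawa:finite_cartesian} and~\ref{prop:core_inv}. One then uses $\bTheta_i(\kappa)+(p^k,X^k)\subseteq\bTheta_i(\kappa^{(k)})$ (Corollary~\ref{cor:dvr:theta_proj_finite}) and $\Fitt^i_{\Lambda_k}=\Fitt^i_\Lambda+(p^k,X^k)$ (Proposition~\ref{prop:patched:selmer_torsion}), and concludes because ideals of $\Lambda$ are closed.

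The gap is the reverse inclusion. You read Theorem~\ref{th:intro:pseudo} as a bare pseudo-equality and try to remove the finite-index defect in Steps 2--3, and neither step works.

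Step 3 sets out to prove that $\Theta_i(\kappa)$ is $\m$-saturated; if the theorem holds, this is the same as saturation of $\Fitt^i$. That is false in general: $M=\Lambda/(p)\oplus\Lambda/(X)$ has no nonzero finite submodule, yet $\Fitt^1_\Lambda(M)=\m\approx\Lambda$. Distinguishing $\m$ from $\Lambda$ is precisely what the statement adds to Theorem~\ref{th:intro:pseudo}, so no saturation argument can supply it. Your specialisation sketch also never explains how to get $\Fitt^i\subseteq\Theta_i(\kappa)+(X-\alpha)$. The specialised system's theta ideal uses more Kolyvagin primes and different Selmer groups, so comparing it with the image of $\Theta_i(\kappa)$ is the same exact control you are trying to establish.

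Step 2 asserts ``by construction'' that $\Theta_i(\kappa)$ is the ideal of minors of one localisation matrix. But $\Theta_i(\kappa)$ sums indices over all $\kn\in\NN_i$, computed in $\bigwedge^{\chi(\FF)}\bH^1_{\FF(\kn)}(K,\bT)$, so this identification is essentially the theorem itself. Nor does your presentation give absence of finite submodules. A weak core vertex $\kc$ yields $0\to\bH^1_\FF(K,\bT)\to\bH^1_{\FF^\kc}(K,\bT)\to\Lambda^{\nu(\kc)}\to M\to 0$, which allows projective dimension $2$. In the paper this property is a \emph{consequence} of the theorem (Theorem~\ref{th:iwasawa:nofinsub}), and by the example above it would not determine $\Fitt^i$ for $i\geq 1$ anyway.

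What you are missing is that the other inclusion is already an honest containment over $\Lambda$. Theorem~\ref{th:iwasawa:kolyvagin_theta_fitting} gives $\Fitt^i_\Lambda\bigl(\bH^1_{\FF^*}(K,\bT^*)^\vee\bigr)\subseteq\bTheta_i(\kappa)$, since $\subset_{f.i.}$ means inclusion plus pseudo-equality, not mere pseudo-equality. The reason is Proposition~\ref{prop:bidual_map_iwasawa_fitting_inclusion}:
\begin{itemize}
\item $\kappa_\kn$ is a rank reduction of a generator $\varepsilon_\kc$ at a weak core vertex, so $\ind(\kappa_\kn)$ is principal.
\item A principal ideal of $\Lambda$ contains every ideal pseudo-equal to it, so $\Fitt^0_\Lambda\bigl(\bH^1_{(\FF^*)(\kn)}(K,\bT^*)^\vee\bigr)\subseteq\ind(\kappa_\kn)$.
\item The recursion $\Fitt^i=\sum_{\kn\in\NN_i}\Fitt^0_\Lambda\bigl(\bH^1_{(\FF^*)(\kn)}(K,\bT^*)^\vee\bigr)$ then gives $\Fitt^i\subseteq\bTheta_i(\kappa)$.
\end{itemize}
Combined with your Step 1, this is the paper's entire proof, and Steps 2--3 should be dropped. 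The finite-index defect is handled by an asymmetry, not by any saturation or no-finite-submodule input: principality absorbs the defect on one side over $\Lambda$, and the self-injective quotients $\Lambda_k$ handle the other side.
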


This finer computation of Fitting ideals can be used to show that the Pontryagin duals of Iwasawa Selmer groups contain no finite submodules or, equivalently, that they have projective dimension one. The next result is a generalisation of previous theorems in \cite[Propositions 4.8 and 4.9]{Greenberg}, concerning the Selmer group of an elliptic curve, and \cite[Proposition 2.9]{Kur2014}, which assumed stronger assumptions such as the torsionness of the Selmer group and the vanishing of the $\mu$-invariant associated with elliptic curves.

\begin{corollary*}(Theorem \ref{th:iwasawa:nofinsub})
In the setting of Theorem \ref{th:intro:equal}, the Selmer group $H^1_{\FF^*}(K,\bT^*)^\vee$ contains no finite Iwasawa submodules.
\label{cor:intro:finsub}
\end{corollary*}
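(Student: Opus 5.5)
Write $M:=H^1_{\FF^*}(K,\bT^*)^\vee$. The plan is to deduce the absence of finite submodules from the exact equality of Fitting ideals in Theorem \ref{th:intro:equal}. We combine this with a structural property of the ideals $\Theta_i(\kappa)$ that comes from their construction out of a Kolyvagin system. The key algebraic fact is that a finitely generated $\Lambda$-module $M$ has no nonzero finite submodule if and only if $\mathrm{pd}_\Lambda M\le 1$. Since $\Lambda$ is a two-dimensional regular local ring, this is also equivalent to $\mathrm{depth}\, M\geq 1$ (or $M=0$).

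The route through Fitting ideals goes as follows. Pick a pseudo-isomorphism $M\to E:=\Lambda^r\oplus\bigoplus_j\Lambda/(f_j)$ with finite kernel and cokernel. Then compare Fitting ideals using the following observation. If $M$ has a nonzero finite submodule $N$, the Fitting ideals of $M$ are strictly smaller than the "expected" ones by a factor of finite index. Concretely, some $\Fitt^i_\Lambda(M)$ fails to be the product of a principal ideal with an ideal $\mathfrak a$ of the shape forced by $E$. The cleanest form of this is the characterisation of $\mathrm{pd}\le1$ via the first nonzero Fitting ideal being reflexive after dividing out the torsion. I would instead prove the statement directly, as follows.

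Step 1: establish from the definition of $\Theta_i(\kappa)$ (Definition \ref{def:kol_theta}) that each $\Theta_i(\kappa)$ is generated by determinants or exterior-power evaluations of classes $\kappa_\kn$. Here $\kn$ runs over products of ultraprimes, and these classes are chosen so that $M$ becomes presented by a square matrix after adding the ultraprimes. Using the freeness of $\bKS(\bT,\FF)$ and Assumption \ref{ass:classic}, I expect to extract a finite presentation
\[\Lambda^{n}\xrightarrow{A}\Lambda^{n+r}\to M\to 0\]
in which $A$ is injective. Here $r$ is the rank of $M$ and the columns come from the patched Selmer groups of a suitable Kolyvagin ultraprime product $\kn$. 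The injectivity of $A$ immediately gives $\mathrm{pd}_\Lambda M\le1$, hence no finite submodules.

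Step 2: to justify the injectivity, compare the ideals of minors of $A$ with $\Theta_i(\kappa)$. The matrix $A$ arises from localisation maps of the patched Selmer module $H^1_{\FF^\kn}$, which is free because the ultraprimes kill the dual Selmer group. Its minors of size $n-i$ generate exactly $\Theta_i(\kappa)$, and Theorem \ref{th:intro:equal} identifies these with $\Fitt^i_\Lambda(M)$. In particular, the maximal minors generate $\Fitt^{r}$-type ideals that are nonzero, since $\Fitt^{\,r}_\Lambda(M)\neq 0$ because $M$ has rank $r$. A map between free modules whose maximal minors are not all zero is injective. Equivalently, the dual Selmer group of $\FF^\kn$ vanishing means $H^1_{\FF^*}$ is the cokernel of a map from the free patched module $\bigoplus_{\ell\mid\kn}H^1_{\f}(K_\ell,\bT)$ into a free module.

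The main obstacle is Step 1. We must show that the relevant patched objects over $\Lambda$ are genuinely free. Assumption \ref{ass:classic} is presumably needed here, to control local conditions at bad primes and the Tamagawa factors. We must also show that the exact sequence from global duality involving $\FF_\kn$ and $\FF^\kn$ produces a presentation of $M$ itself, not merely of a module pseudo-isomorphic to it. I would first try to deduce the freeness from the fact that the exact equality of Theorem \ref{th:intro:equal} holds for all $i$, including $i=0$. If that fails, I would use the criterion that $M$ has no finite submodule iff $M/XM$ and $M[X]$ behave well under specialisation. The control theorems over $\Lambda/(X-a)$ would then handle this step.
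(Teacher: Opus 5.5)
\textbf{There is a genuine gap.} Your Step~1 is not a preliminary reduction; it is the whole statement. A presentation $\Lambda^{n}\xrightarrow{A}\Lambda^{n+r}\to M\to 0$ exists if and only if $\mathrm{pd}_\Lambda M\le 1$, and you only say you ``expect to extract'' it.

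Step~2 then proves the one thing that is automatic. Once $A$ has this shape and $M$ has rank $r$, the image of $A$ has rank $n$, so $\ker A$ is a torsion submodule of $\Lambda^n$ and vanishes. Invoking Theorem~\ref{th:intro:equal} there adds nothing, since the minors of any presentation matrix of $M$ generate its Fitting ideals by definition.

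The presentation that global duality actually gives at a weak core vertex $\kc$ is
\[\bH^1_{\FF^\kc}(K,\bT)\longrightarrow\bigoplus_{\ku\mid\kc}\bH^1_\s(K_\ku,\bT)\longrightarrow M\longrightarrow 0.\]
Its first map has kernel $\bH^1_{\FF}(K,\bT)$, which is free of rank $\chi(\FF)+r\geq 1$. To get your shape you would need $\bH^1_{\FF^\kc}(K,\bT)/\bH^1_{\FF}(K,\bT)$ to be free. Freeness of both Selmer modules and torsion-freeness of the quotient do not imply this over the two-dimensional ring $\Lambda$. The exact sequence $0\to\Lambda\to\Lambda^2\to\Lambda\to\Lambda/\m\to 0$, with maps $1\mapsto(X,-p)$ and $(a,b)\mapsto ap+bX$, has exactly this shape, with all three terms free and a finite cokernel. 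Nothing in your proposal rules this out.

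\textbf{The missing idea} is how the case $i=0$ of the exact equality is used: through principality, not through a matrix.

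If $M$ is torsion, then $\bH^1_{\FF}(K,\bT)$ is free of rank exactly $\chi(\FF)$. Hence $\Fitt^0_\Lambda(M)=\bTheta_0(\kappa)=\ind(\kappa_1)$ is the index of a single element of $\bigwedge^{\chi(\FF)}\bH^1_{\FF}(K,\bT)\cong\Lambda$, and so is a principal ideal. A torsion $\Lambda$-module with principal $\Fitt^0$ has no finite submodules (Proposition~\ref{prop:fitting_ppal_finsub}). Your aside about the first nonzero Fitting ideal being reflexive was pointing in this direction before you set it aside.

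For $M$ of rank $r>0$, the paper reduces to the torsion case. Iterating Proposition~\ref{prop:patched:cheb_rank_red} gives $\kn\in\NN$ with $\nu(\kn)=r$ and $\bH^1_{(\FF^*)_\kn}(K,\bT^*)^\vee$ torsion. The map $\Lambda^r\cong\bigoplus_{\ku\mid\kn}\bH^1_\s(K_\ku,\bT)\to M$ has image of rank $r$, so it is injective. Thus $M$ is an extension of $\bH^1_{(\FF^*)_\kn}(K,\bT^*)^\vee$ by a free module. The quotient has no finite submodules by the torsion case applied to $\FF^\kn$, so $M$ has none either.

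Your fallback via specialisation at $X-a$ is not developed far enough to replace either step.
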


These results, particularly Theorem \ref{th:intro:equal}, can be applied to compute the higher Fitting ideal of the Iwasawa Selmer groups of some specific representations for which an Euler system is known. Particularly, we apply our theory to the following two situations:

\begin{theorem*}(Theorem \ref{th:kato})
Let $E/\Q$ be an elliptic curve defined over $\Q$ and let $T_pE$ be its $p$-adic Tate module. Let $\kappa_\Kato$ be the Kolyvagin systems obtained from Kato's Euler system \cite{Kato}. Under Assumptions \ref{Eord}-\ref{EIMC} in \textsection\ref{sec:app:ec}, 
\[\bTheta_i(\kappa_{\Kato})=\Fitt^i_\Lambda\Bigl(\Sel_0(\Q_\infty,E[p^\infty])^\vee\Bigr)\ \ \forall i\in \Z_{\geq 0},\]
where $\Sel_0(\Q_\infty,E[p^\infty])$ denotes the fine Selmer group, with restricted local condition at $p$.
\label{th:intro:kato}
\end{theorem*}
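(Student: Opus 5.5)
The plan is to deduce the statement from Theorem \ref{th:intro:equal}, applied to $\bT=T_pE\otimes\Lambda$ (the cyclotomic deformation of $T_pE$ over $K=\Q$) with the Selmer structure $\FF$ that is relaxed at $p$. Its dual structure $\FF^*$ is strict at $p$, so $H^1_{\FF^*}(\Q,\bT^*)$ should be identified with the fine Selmer group $\Sel_0(\Q_\infty,E[p^\infty])$. The argument has three parts. First, check that the hypotheses of the general theory hold under Assumptions \ref{Eord}--\ref{EIMC}. Second, produce a primitive ultra Kolyvagin system from Kato's Euler system. Third, read off the Fitting ideals.

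\textbf{Step 1: hypotheses.} For $V=T_pE$ over $\Q$ the relaxed-at-$p$ structure has core rank $1$, since $d^-=1$. The big image hypotheses follow from the assumption that $\bar\rho_{E,p}$ is surjective (or has large image), via the standard argument of Mazur--Rubin and Kato. Cartesianity of $\FF$ follows because the local conditions away from $p$ are unramified/propagated ones. Under the Tamagawa assumption, the $p$-primality of Tamagawa numbers gives that the local condition at bad primes is the full unramified one. For Assumption \ref{ass:classic} I expect the relevant input to be something like $H^0(\Q_\ell,\bar T)=0$ or the vanishing of $H^2$ at primes of bad reduction; this would be checked directly from the listed assumptions. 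Shapiro's lemma then identifies $H^1_{\FF^*}(\Q,\bT^*)$ with $\Sel_0(\Q_\infty,E[p^\infty])$, using that the relaxed-at-$p$ dual is strict at $p$.

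\textbf{Step 2: primitivity.} Kato's zeta elements form an Euler system in $H^1(\Q(\mu_m)_\infty,T_pE)$. The Euler-to-Kolyvagin system map (in its ultra version, constructed earlier in the paper) produces $\kappa_\Kato\in\bKS(\bT,\FF)$. Since $\bKS(\bT,\FF)$ is free of rank one, $\kappa_\Kato=\lambda\kappa$ for a primitive $\kappa$ and some $\lambda\in\Lambda$. This gives $\bTheta_i(\kappa_\Kato)=\lambda\bTheta_i(\kappa)$, so the task is to show that $\lambda$ is a unit. By Theorem \ref{th:intro:equal}, $\bTheta_0(\kappa)=\Fitt^0=\mathrm{char}(\Sel_0^\vee)$. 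On the other side, $\bTheta_0(\kappa_\Kato)$ is the ideal generated by the image of $\kappa_{\Kato,1}$, that is $\mathrm{char}(H^1_\FF(\Q,\bT)/\Lambda z_\Kato)$, via the standard rank-one relation between $\bTheta_0$ and the index of the initial class.

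Assumption \ref{EIMC} should be Kato's main conjecture in its equality form, namely $\mathrm{char}(\Sel_0^\vee)=\mathrm{char}(H^1_\FF/\Lambda z_\Kato)$, equivalently Skinner--Urban together with Kato's reciprocity law. Combining the two gives $\lambda\bTheta_0(\kappa)=\bTheta_0(\kappa)$. The nonvanishing of $z_\Kato$ (Rohrlich) makes $\bTheta_0(\kappa)$ nonzero, so $\lambda\in\Lambda^\times$.

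\textbf{Step 3: conclusion.} With $\lambda$ a unit, $\bTheta_i(\kappa_\Kato)=\bTheta_i(\kappa)=\Fitt^i_\Lambda(\Sel_0^\vee)$ for all $i$, by Theorem \ref{th:intro:equal}.

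The main obstacle is Step 2. One has to carefully match $\bTheta_0$ of a rank-one Kolyvagin system with the characteristic ideal of $H^1_\FF/\Lambda\kappa_1$, and the main conjecture is needed in equality form rather than as a divisibility. This matching presumably uses Theorem \ref{th:intro:pseudo} together with the earlier computation of $\bTheta_0$ in terms of the first class.
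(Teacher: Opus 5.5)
Your overall route is the paper's: apply Theorem~\ref{th:iwasawa:ultrakol:theta_fitting_equal} to the relaxed-at-$p$ structure $\FF^p$ on $\bT=T_pE\otimes\Lambda$, with primitivity of $\kappa_\Kato$ supplied by \ref{EIMC}. Your Step~2 is sound. It spells out what the paper compresses into the sentence ``primitivity is equivalent to the main conjecture''. The second equality in $\bTheta_0(\kappa)=\Fitt^0_\Lambda(\Sel_0^\vee)=\mathrm{char}(\Sel_0^\vee)$ holds because $\bTheta_0(\kappa)=\ind(\kappa_1)$ is principal, as $\bH^1_{\FF^p}(\Q,\bT)$ is free of rank one. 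One correction: the paper has no ultra Euler-to-Kolyvagin map. It takes $\overline{\kappa}_\Kato\in\overline{\KS}(\bT,\FF^p)$ from \cite[Theorem 5.3.3]{MazurRubin} and transports it via Theorem~\ref{th:ultrakol_limit}, which already requires Assumption~\ref{ass:classic}. So your Step~1 carries everything else, and it is essentially the whole of the paper's proof.

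That is where your proposal has a genuine gap: you do not identify what must be verified, and your guesses would fail. Assumption~\ref{ass:classic} asks for four things:
\begin{itemize}
\item Assumption~\ref{ass:patched:basic};
\item Assumption~\ref{ass:iwasawa:local_no_fin_sub};
\item cartesianness of $\FF^p$;
\item cartesianness of its propagation to $\bT/X\bT$.
\end{itemize}
Neither $H^0(\Q_\ell,\bT/\m\bT)=0$ nor $\bH^2(\Q_\ell,\bT)=0$ follows from \ref{Eord}--\ref{EIMC}. Take a prime $\ell\equiv 1\pmod p$ of split multiplicative reduction, where $c_\ell$ may well be prime to $p$. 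There the local cyclotomic $\Z_p$-extension is $\Q_\ell(\mu_{p^\infty})$ and $\mu_{p^\infty}\subset E[p^\infty]$, so both groups are nonzero.

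What is actually needed is only that $\bH^0(\Q_\ku,\bT^*)^\vee$ has no nonzero finite $\Lambda$-submodule, at $p$ and at the bad primes. This follows from $p$-divisibility of $\bH^0(\Q_\ku,\bT^*)$. At the bad primes that divisibility is supplied by \ref{Etam} via Remark~\ref{rem:iwasawa:local_nofinsub_divisible}; at $p$ it is supplied by \ref{Eanom}, a hypothesis you never use. The same condition gives \ref{ass:H2} via Lemma~\ref{lem:H2}, and hence core vertices and freeness of $\bKS(\bT,\FF^p)$.

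You also give the Tamagawa hypothesis the wrong role. For the Iwasawa module, $\bH^1_\f(\Q_\ell,\bT)=\bH^1(\Q_\ell,\bT)$ holds for every $\ell\neq p$ unconditionally \cite[Lemma 5.3.1]{MazurRubin}. The condition at $p$ is the full $\bH^1$, so $\bH^1_{/\FF^p}(\Q_\ku,\bT)=0$ for all $\ku\in\Sigma_{\FF^p}$. This, not ``the conditions are unramified'', is why $\FF^p$ is cartesian. Since the zero module is free, Proposition~\ref{prop:iwasawa:free_cartesian_propagated} then gives cartesianness of the propagation to $\bT/X\bT$.
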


\begin{theorem*}(Theorem \ref{th:llz})
Let $T$ be the Galois representation associated with the Rankin-Selberg convolution of two ordinary modular forms which satisfies Assumptions \ref{RSord}-\ref{RSquot} in \textsection\ref{sec:app:rs} and the Iwasawa main conjecture. Let $\kappa_\LLZ$ be the Kolyvagin system obtained from the Euler system of Lei-Loeffler-Zerbes \cite{LeiLoefflerZerbes}. Then
\[\bTheta_i(\kappa_{\LLZ})=\Fitt^i_\Lambda\Bigl(H^1_{\FBK}(\Q_\infty,T^*)^\vee\Bigr)\ \ \forall i\in \Z_{\geq 0}.\]
\end{theorem*}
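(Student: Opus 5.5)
The plan is to deduce the statement from Theorem \ref{th:intro:equal}, applied to $\bT=T\otimes\Lambda$ where $T$ is the representation attached to the Rankin--Selberg convolution, with $\FF=\FBK$ (equivalently, the Greenberg structure, since both forms are ordinary) on the cyclotomic tower. The argument has three steps.

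\emph{Step 1: the hypotheses of Theorem \ref{th:intro:pseudo} hold.} First, $p\geq 5$, and $\bT$ is free over $\Lambda$ and ramified at finitely many primes. Assumptions \ref{RSord}-\ref{RSquot} should supply the big image conditions: an element of $G_{\Q(\mu_{p^\infty})}$ acting with a free rank-one fixed quotient, irreducibility of the residual representation, and $H^0$ vanishing. They should also supply the $p$-primality of the Tamagawa numbers. The core rank of $\FBK$ on $\bT$ I would compute with the Greenberg--Wiles formula. The Rankin--Selberg representation has rank four with a two-dimensional $+1$ eigenspace of complex conjugation, and the ordinary filtration makes the Bloch--Kato condition at $p$ of rank two over $\Lambda$. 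I expect this to give core rank one, up to the usual shift given by the choice of $\FF$ versus the relaxed condition. I would use the form of $\FF$ that makes the Lei--Loeffler--Zerbes classes lie in the Selmer group. I also need to check that $\FF$ is cartesian and satisfies Assumption \ref{ass:classic}. For a Greenberg-type ordinary condition cut out by a $\Lambda$-direct summand of the local cohomology, I would verify this by showing that the relevant $H^0$ and $H^2$ terms vanish residually at $p$ under \ref{RSquot}.

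\emph{Step 2: $\kappa_\LLZ$ is primitive.} The Beilinson--Flach Euler system of \cite{LeiLoefflerZerbes} gives, via the Euler-to-Kolyvagin map (extended to ultra Kolyvagin systems earlier in the paper), an element $\kappa_\LLZ\in\bKS(\bT,\FF)$. This module is free of rank one, so $\kappa_\LLZ=\lambda\kappa$ for a primitive $\kappa$ and some $\lambda\in\Lambda$. By definition of the ideals $\bTheta_i$ we get $\bTheta_i(\kappa_\LLZ)=\lambda\,\bTheta_i(\kappa)$. It therefore suffices to show $\lambda\in\Lambda^\times$. For this I use $\bTheta_0$: Theorem \ref{th:intro:equal} gives $\bTheta_0(\kappa)=\Fitt^0_\Lambda(H^1_{\FBK}(\Q_\infty,T^*)^\vee)$. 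On the other hand, the explicit reciprocity law relates $\kappa_{\LLZ,1}$, and hence $\bTheta_0(\kappa_\LLZ)$, to the $p$-adic Rankin--Selberg $L$-function. This holds possibly after passing to the module $H^1(\Q_p,\cdot)/\FF$ and using the Coleman map, whose cokernel must be checked to be trivial under \ref{RSquot}. The assumed main conjecture says that this $L$-function generates $\Fitt^0=\mathrm{char}$; here the dual Selmer group is torsion and has no finite submodule by Corollary \ref{cor:intro:finsub}, so the Fitting ideal equals the characteristic ideal. Hence $\lambda\,\Fitt^0=\Fitt^0$, and since $\Fitt^0\neq 0$ in the domain $\Lambda$, $\lambda$ is a unit.

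\emph{Step 3: conclude.} With $\lambda$ a unit, $\bTheta_i(\kappa_\LLZ)=\bTheta_i(\kappa)=\Fitt^i_\Lambda(H^1_{\FBK}(\Q_\infty,T^*)^\vee)$ for all $i$.

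The main obstacle is Step 2. One must match $\bTheta_0(\kappa_\LLZ)$ exactly with the main-conjecture ideal, with no spurious factors. Possible sources of such factors are Euler factors at bad primes, the image of the Coleman map, and the normalisation of the Euler-to-Kolyvagin map. A second difficulty is confirming Assumption \ref{ass:classic} for the ordinary local condition. I would handle both by invoking the precise form of the reciprocity law and the hypothesis \ref{RSquot}, which are presumably designed to kill these error terms.
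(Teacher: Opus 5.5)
Your proposal follows the paper's proof: check Assumption \ref{ass:classic} for $\FBK$ through the freeness of the local quotient at $p$, write $\kappa_{\LLZ}=\delta\kappa$ with $\kappa$ primitive, apply Theorem \ref{th:iwasawa:ultrakol:theta_fitting_equal}, and let the main conjecture force $\delta\in\Lambda^\times$ (your $\bTheta_0$ argument simply unpacks the paper's one-line assertion that the main conjecture is equivalent to primitivity). One correction to Step 1: the Bloch--Kato condition at $p$ is $\Im\bigl(\bH^1(\Q_p,\bT^+)\bigr)$, of rank three with quotient $\bH^1(\Q_p,\bT^-)$ of rank one, which is what gives $\chi(\FBK)=1$ (a rank-two condition would give core rank $0$); also, the freeness of this quotient rests on \ref{RSanom} (torsion-freeness and injectivity of reduction mod $X$), while \ref{RSquot} is used to kill $\bH^2(\Q_p,\bT^+)$.
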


However, our methods can be applied in greater generality to Galois representations for which the existence of Euler systems is known.

\subsection{Relation with the existing literature}

The existing literature on the theory of Kolyvagin systems was limited to the computation of the characteristic ideal of the dual Selmer group using the leading term of the system. Here we generalise this theory to compute its full structure.

This article is a natural generalisation of \cite{BurnsSakamotoSano2}. In loc.~cit., a sequence of ideals $\Theta_i(\kappa)$ was used to establish bounds on the Fitting ideals of the Selmer group and it is conjectured that these bounds are sharp. 

Here we can give an affirmative answer to this conjecture for representations with coefficients over the Iwasawa algebra by developing the theory in the patched cohomology groups constructed in \cite{Sweeting}. It allows a more powerful use of the structure theorem of Iwasawa modules that leads to the full computation of these Selmer groups.

Patched cohomology has been previously used in \cite{ultra_kolyvagin_LZ} for developing the Euler system argument in order to bound Selmer groups that were out of the scope of the original Euler system argument. In loc.~cit., a patched version of the Kolyvagin derivative is developed so one can obtain, from an Euler system, ultra Kolyvagin systems satisfying appropriate local conditions in non-ordinary settings. This generalisation allowed the application of the Euler system machinery to new Galois representations, establishing explicit bounds to their associated Selmer groups.

The generalisation made in this article also allows a simplification of the method in \cite{BurnsSakamotoSano2}, since Kolyvagin systems can be constructed as collections of classes in the exterior powers of Iwasawa Selmer groups. The reason is that, under our assumptions, the profinite Selmer groups are free Iwasawa modules, so there is a canonical identification between their exterior powers and their exterior biduals, the objects used in \cite{BurnsSakamotoSano2} to construct higher rank Kolyvagin systems.

Our main results here have been applied to determine the structure of the Selmer groups associated with an elliptic curve and with the Rankin-Selberg convolution of two ordinary modular forms. The method used to obtain ultra Kolyvagin systems was to compare them with the limit of classical Kolyvagin systems, which can be obtained from an Euler system using the standard argument in \cite{MazurRubin}. However, this comparison with classical Kolyvagin systems is not needed, since ultra Kolyvagin systems can be directly obtained from Euler systems using the patched version of the Kolyvagin derivative developed in \cite{ultra_kolyvagin_LZ}.

Previous computations of higher Fitting ideals of Iwasawa Selmer groups in the literature were limited to particular cases or required much stronger assumptions. The first results in this direction appear in \cite{Kurihara_gausssum}, \cite{Ohshita_circular} and \cite{Ohshita_elliptic}, concerning the limit of class groups along a $\Z_p$-extension. Those works were generalised in \cite{Kur2014} and \cite{Ohshita}, computing the higher Fitting ideals of Selmer groups defined for a Galois representation that is not residually self-dual. Our results can be seen as a generalisation of these results, since they do not require those assumptions about self-duality. 

Specially interesting is the recent preprint \cite{RoncheLongoVigni}, which studies the higher Fitting ideals for the self-dual representation associated with the Tate module of an elliptic curve in the anticyclotomic setting. This Selmer group is out of the scope of the methods of this article, since it is defined from a Selmer structure of core rank $0$. It remains an interesting question to extend the theory of ultra Kolyvagin systems of Iwasawa modules to obtain information about these Selmer groups, generalising the results in \cite{Angurel} in this equivariant setting.

\subsection{Structure}

The structure of this article is the following. \textsection \ref{sec:ultrafilters}-\textsection\ref{sec:selmer} are dedicated to outlining the construction of the patched cohomology from \cite{Sweeting}, as well as including the technical results needed for the development of this theory. 

In \textsection \ref{sec:iwasawa}, we specialize the (patched) Selmer groups obtained from Galois representations in which the coefficient ring is the Iwasawa algebra. The algebraic preliminaries, including the construction of the Fitting ideals and the rank reduction maps of exterior powers are included in \textsection\ref{sec:algebra}.

\textsection\ref{sec:iwasawa:stark} and \textsection\ref{sec:iwasawa:kol} are dedicated to the construction of ultra Stark and ultra Kolyvagin systems, respectively, whose classes live in the exterior powers of patched Selmer groups. \textsection\ref{sec:iwasawa:kol} includes the first main result in this article, Theorem \ref{th:iwasawa:kolyvagin_theta_fitting}, that describes the Fitting ideals of the Selmer group up to finite index.

\textsection\ref{sec:classical} compares the newly constructed Kolyvagin system with the classical theory of Kolyvagin systems in \cite{BurnsSakamotoSano2}. This comparison allows a finer description of the Fitting ideal (see Theorem \ref{th:iwasawa:ultrakol:theta_fitting_equal}), which are now completely determined by the ideals $\Theta_i(\kappa)$.

Finally, an application of this theorem to the ultra Kolyvagin systems obtained from the Euler systems constructed by Kato \cite{Kato} and Lei-Loeffler-Zerbes \cite{LeiLoefflerZerbes} appears in \textsection \ref{sec:app}.

\subsection*{Acknowledgements}

The author would like to thank Christian Wuthrich for numerous discussions during the preparation of this manuscript. The author is also grateful to Dominik Bullach, Kâzim Büyükboduk, David Loeffler and Chris Williams for enlightening conversations and comments about the topics in this manuscript.

\section{Ultrafilters}
\label{sec:ultrafilters}

\subsection{Ultraproducts}

\begin{definition}
A \emph{filter} in the natural numbers is a collection of sets $\UU$ in the power set $\mathcal P(\N)$ such that 
\begin{itemize}
\item \namedlabel{filter_empty}{(F0)} The empty set does not belong to $\UU$.
\item \namedlabel{filter_supset}{(F1)} If $S_1\subset S_2$ and $S_1\in \UU$, then $S_2\in \UU$.
\item \namedlabel{filter_intersection}{(F2)} If $S_1,S_2\in \UU$, then $S_1\cap S_2\in \UU$.
\end{itemize}
We say that a filter is an \emph{ultrafilter} if it also satisfies the following condition
\begin{itemize}
\item \namedlabel{ultrafilter_axiom}{(UF)} For every set $S\in \PP(\N)$, either $S\in \UU$ or $\N\setminus S\in \UU$.
\end{itemize}
\end{definition}

The key property of ultrafilters is that \ref{ultrafilter_axiom} generalises to finite partitions, i.e., ultrafilters contain exactly one set in every finite partition of $\N$.
\begin{proposition} (\cite[Proposition 2.1.2]{Sweeting})
Let $\UU$ be an ultrafilter, let $S\in \UU$ and let $C$ be a finite set. For every map $f:\ S\to C$, there exists a unique $c\in C$ such that $f^{-1}(c)\in \UU$. 
\label{prop:ultrafilter_finite}
\end{proposition}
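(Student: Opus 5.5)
The plan is to show existence and uniqueness separately, both directly from the axioms (F0)--(F2) and (UF).

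\emph{Existence.} The fibres $f^{-1}(c)$ for $c\in C$ partition $S$, so $S=\bigcup_{c\in C} f^{-1}(c)$. Suppose, for a contradiction, that no fibre lies in $\UU$. By (UF), the complement $\N\setminus f^{-1}(c)$ then lies in $\UU$ for every $c\in C$. Since $C$ is finite, applying (F2) a finite number of times, inductively on $|C|$, shows that
\[\bigcap_{c\in C}\bigl(\N\setminus f^{-1}(c)\bigr)=\N\setminus S\in \UU.\]
One more application of (F2), to this set and to $S\in\UU$, puts $S\cap(\N\setminus S)=\emptyset$ in $\UU$, which contradicts (F0). If $C$ is empty, then $S$ itself is empty, which is already excluded by (F0). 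Hence at least one fibre belongs to $\UU$.

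\emph{Uniqueness.} Take $c\neq c'$ in $C$ with both $f^{-1}(c)$ and $f^{-1}(c')$ in $\UU$. These fibres are disjoint, so (F2) gives $\emptyset\in\UU$, again contradicting (F0).

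There is no real obstacle in this argument. The one point to check is that finiteness of $C$ is used only in the inductive application of (F2), since filters are closed under finite intersections but not under arbitrary ones.
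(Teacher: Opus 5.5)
Your proof is correct and is the standard argument. Existence comes from intersecting the finitely many complements $\N\setminus f^{-1}(c)$ to get $\N\setminus S\in\UU$, uniqueness from the disjointness of the fibres, and finiteness of $C$ enters only through iterating (F2), with $C=\emptyset$ excluded by (F0). The paper gives no proof of its own and simply cites \cite[Proposition 2.1.2]{Sweeting}, so there is no competing route to compare.
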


\begin{notation}
When we say that some property is satisfied for $\UU$-many $i\in \N$, we mean that there is a set $S\in \UU$ such that the property holds for every $i\in S$.
\end{notation}

For every $a\in \N$, the set
\[\UU_a=\{S\subset \N:\ a\in S\}\]
is an ultrafilter. We say that an ultrafilter is \emph{principal} if there is an element $a\in \N$ such that $\UU=\UU_a$.

For this theory, interesting ultrafilters are non-principal ones. They cannot be constructed explicitly, but their existence is guaranteed by the axiom of choice. Note that every non-principal ultrafilter contains the Fréchet filter, formed by the subset with finite complement.

Ultrafilters will be used to define the ultraproduct, which is a functor that clusters together infinitely many groups.

\begin{definition}
Let $\UU$ be an ultrafilter and let $(M_n)_{n\in \mathbb N}$ be a sequence of sets. The \emph{ultraproduct} $\UU_n(M_n)$ is the set defined as
\[\UU_n(M_n)=\prod_{n\in \N} M_n\biggm /\sim,\]
where the product is the direct product of sets and $\sim$ is the equivalence relation defined as $(m_n)\sim (m'_n)$ if $m_n=m'_n$ for $\UU$-many $n$.
\label{def:ultraproduct}
\end{definition}

\begin{notation}
We will also denote the ultraproduct by $\UU(M_n)_{n\in \N}$ when there is no risk of confusion. In addition, for every set $M$, we will denote by $\UU(M)$ the ultraproduct of the sequence $(M_n)_{n\in \N}$ in which $M_n=M$ for all $n$. The latter will be also called the \emph{ultrapower} of $M$.
\end{notation}

\begin{remark}
If the sets $M_n$ are groups (resp.\ rings, modules, etc.), the ultraproduct $\UU(M_n)$ has a natural group structure (resp.\ ring structure, module structure, etc.).
\end{remark}

\begin{proposition} (\cite[Propositions 2.1.4 and 2.1.5]{Sweeting})
The ultraproduct $\UU$ is functorial. In addition, it is exact when seen in the category of pointed sets.
\label{prop:ultrafilter_exact}
\end{proposition}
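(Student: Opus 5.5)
To prove Proposition \ref{prop:ultrafilter_exact}, I first make functoriality precise. Given a sequence of maps $f_n: M_n\to N_n$, I define $\UU(f_n):\UU(M_n)\to\UU(N_n)$ by $[(m_n)]\mapsto[(f_n(m_n))]$. This is well defined: if $m_n=m'_n$ on a set $S\in\UU$, then $f_n(m_n)=f_n(m'_n)$ on the same $S$. Identities are clearly preserved. Composition is preserved on the nose at the level of representatives. If the $f_n$ are homomorphisms of groups, rings or modules, then so is $\UU(f_n)$, because all operations are computed coordinatewise on representatives. In the pointed-set setting, the base point of $\UU(M_n)$ is the class of the sequence of base points, and it is preserved.

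For exactness, I take a sequence of pointed sets $M_n\xrightarrow{f_n}N_n\xrightarrow{g_n}P_n$ that is exact at $N_n$ for every $n$, or at least for $\UU$-many $n$. I then show that $\UU(M_n)\to\UU(N_n)\to\UU(P_n)$ is exact, i.e.\ that the image of the first map equals the preimage of the base point under the second.

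The inclusion ``image $\subseteq$ kernel'' is immediate. The composite $g_n\circ f_n$ sends everything to the base point for $\UU$-many $n$, and a set in $\UU$ suffices to identify $[(g_n f_n(m_n))]$ with the base point.

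For the reverse inclusion, let $[(x_n)]$ map to the base point. Then the set $S=\{n: g_n(x_n)=*\}$ lies in $\UU$. I intersect $S$ with the set where exactness holds, using \ref{filter_intersection}, and call the result $S'\in\UU$. For $n\in S'$, exactness gives some $m_n\in M_n$ with $f_n(m_n)=x_n$. For $n\notin S'$, I let $m_n$ be the base point; here I use that pointed sets are nonempty. Then $f_n(m_n)=x_n$ for all $n\in S'$, so $\UU(f_n)([(m_n)])=[(x_n)]$.

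Injectivity and surjectivity at the ends, i.e.\ exactness of short exact sequences, follow from the same argument. For injectivity, if $[(f_n(m_n))]=[(f_n(m'_n))]$, the agreement set lies in $\UU$, and injectivity of each $f_n$ gives $m_n=m'_n$ on that set. Surjectivity is handled by the same lifting argument with $P_n$ trivial.

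The only delicate point is the choice of lifts, which needs the axiom of choice and a definition outside a $\UU$-set. Neither causes trouble, since the equivalence relation only depends on the values on some $S\in\UU$, and filters are closed under finite intersections. The ultrafilter axiom \ref{ultrafilter_axiom} is not even needed here; Proposition \ref{prop:ultrafilter_finite} becomes relevant only for finer properties, such as preservation of finiteness.
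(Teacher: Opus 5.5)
Your proposal is correct and matches the standard argument: the paper gives no proof of its own, only a citation to Sweeting, where exactness is checked coordinatewise in the same way, by choosing lifts on a set in $\UU$ via (F1) and (F2) and extending by base points off it. You are also right that (UF) is never used, since the same verification works for reduced products over any filter.
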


In general, it is difficult to compute the ultraproduct, but there is a special case in which it is explicit: the ultraproduct of a sequence of finite sets of bounded order.

\begin{lemma}(\cite[Proposition 2.1.4. (ii)]{Sweeting})
If $M$ is a finite set, the diagonal map $\Delta:\ M\to \UU(M)$ is an isomorphism.
\label{lem:ultrafilter_finite_diagonal}
\end{lemma}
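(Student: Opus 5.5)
To prove Lemma~\ref{lem:ultrafilter_finite_diagonal}, I will check directly that $\Delta$ is injective and surjective. Recall that $\Delta$ sends $m\in M$ to the class of the constant sequence $(m)_{n\in\N}$ in $\UU(M)$.

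\textbf{Injectivity.} Suppose $\Delta(m)=\Delta(m')$. By Definition~\ref{def:ultraproduct}, the set $S=\{n : m=m'\}$ lies in $\UU$. This set is either $\N$ (if $m=m'$) or $\emptyset$ (if $m\neq m'$). Axiom \ref{filter_empty} rules out $S=\emptyset$, so $m=m'$.

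\textbf{Surjectivity.} This is the only step with content, and it is where finiteness of $M$ enters. Take a class in $\UU(M)$ represented by a sequence $(m_n)_{n\in\N}$. View it as a map $f\colon \N\to M$, $n\mapsto m_n$. We have $\N\in\UU$: the filter $\UU$ is nonempty by \ref{ultrafilter_axiom} applied to $\emptyset$ together with \ref{filter_empty}, and then \ref{filter_supset} gives $\N\in\UU$. Since $M$ is finite, Proposition~\ref{prop:ultrafilter_finite} applied to $S=\N$ and $C=M$ gives some $c\in M$ with $f^{-1}(c)=\{n : m_n=c\}\in\UU$. Thus $(m_n)\sim(c)_{n}$, so the class equals $\Delta(c)$.

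If $M$ carries extra structure (group, ring, module), $\Delta$ is clearly a homomorphism, since the operations on $\UU(M)$ are defined componentwise. Hence $\Delta$ is an isomorphism in that category too. I expect no real obstacle: the argument reduces entirely to Proposition~\ref{prop:ultrafilter_finite}, and the statement fails for infinite $M$ exactly because that proposition needs a finite target.
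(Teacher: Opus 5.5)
Your proof is correct, and it is the standard argument: the paper gives no proof of its own (it cites Sweeting), and the intended reasoning is exactly injectivity from \ref{filter_empty} and surjectivity from the finite-partition property, Proposition~\ref{prop:ultrafilter_finite}, applied to $S=\N$. As a small simplification, \ref{ultrafilter_axiom} applied to $\emptyset$ together with \ref{filter_empty} already gives $\N\in\UU$ directly, so \ref{filter_supset} is not needed there.
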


\begin{corollary}
Let $R$ be a finite ring and let $(M_n)_{n\in \N}$ be a sequence of $R$-modules of bounded order such that there exists $S\in \UU$ satisfying that 
\[\mathcal{A}_S:=\{\#M_n:\ n\in S\}\]
is bounded. Then $\UU(M_n)\cong M_n$ for $\UU$-many $n$.
\label{cor:ultrafilter_finite_modules}
\end{corollary}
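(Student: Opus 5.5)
The plan is to reduce to Lemma \ref{lem:ultrafilter_finite_diagonal} by pigeonholing over isomorphism classes, using Proposition \ref{prop:ultrafilter_finite}. Since $R$ is finite and the orders $\#M_n$ are bounded by some $B$ for $n\in S$, each such $M_n$ is an $R$-module with at most $B$ elements. Up to isomorphism there are only finitely many $R$-modules of cardinality at most $B$: such a module is determined by an underlying set identified with a subset of $\{1,\dots,B\}$ together with an addition table and a multiplication table $R\times M\to M$, and there are finitely many choices. Let $C$ be this finite set of isomorphism classes, and consider the map $f:S\to C$ sending $n$ to the class of $M_n$. By Proposition \ref{prop:ultrafilter_finite} there is a unique class, represented by a fixed finite $R$-module $M$, such that $S':=f^{-1}([M])\in\UU$.

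For each $n\in S'$ choose an isomorphism $\phi_n:M\xrightarrow{\sim}M_n$, and for $n\notin S'$ choose any map (say zero). I will then check that the induced map $\UU(M)\to\UU(M_n)$, $(m_n)\mapsto(\phi_n(m_n))$, is a well-defined isomorphism of $R$-modules. Well-definedness and the homomorphism property hold because they hold on the $\UU$-large set $S'$. Injectivity and surjectivity follow the same way: if the images agree for $\UU$-many $n$, intersect with $S'$ using \ref{filter_intersection} and apply injectivity of $\phi_n$ there. For surjectivity, given $(x_n)$, set $m_n=\phi_n^{-1}(x_n)$ on $S'$ and arbitrary elsewhere. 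Alternatively, this follows from functoriality and exactness in Proposition \ref{prop:ultrafilter_exact}, since the two modules differ only off a set outside $\UU$.

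Combining with Lemma \ref{lem:ultrafilter_finite_diagonal}, which gives $M\cong\UU(M)$ via the diagonal (an $R$-module isomorphism, as the diagonal is additive and $R$-linear), we obtain $\UU(M_n)\cong M\cong M_n$ for all $n\in S'$, i.e.\ for $\UU$-many $n$.

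The only point requiring care is the finiteness of the set of isomorphism classes. This is a standard counting argument, and I expect no real obstacle there. One should also note that the hypothesis is the boundedness of $\mathcal A_S$ on a set in $\UU$, not boundedness everywhere. That is handled by restricting $f$ to $S$ as above.
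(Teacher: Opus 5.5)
Your proposal is correct and follows the paper's proof: finitely many isomorphism classes of $R$-modules of bounded order, Proposition \ref{prop:ultrafilter_finite} to pick the class $N$ that occurs for $\UU$-many $n$, identification of $\UU(M_n)$ with the ultrapower $\UU(N)$, and then Lemma \ref{lem:ultrafilter_finite_diagonal}. The only difference is that you write out the identification $\UU(M_n)\cong\UU(N)$ via isomorphisms chosen on the $\UU$-large set, a step the paper simply asserts.
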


\begin{proof}
Let $C$ be a bound of $\AA_S$. Since $R$ is finite, there are only finitely many isomorphism classes of $R$-modules with order smaller than $C$. Then Proposition \ref{prop:ultrafilter_finite} implies that exactly one of these isomorphism classes, say $N$ contains $\UU$-many $M_n$. Hence the ultraproduct $\UU(M_n)$ coindide with the ultrapower $\UU(N)$. By Lemma \ref{lem:ultrafilter_finite_diagonal}, we have that $\UU(M_n)\cong N$.
\end{proof}

Now we state some properties of the ultraproduct.
\begin{remark}
If $(A_n)_{n\in \N}$ and $(B_n)_{n\in \N}$ are sequences of sets, modules or rings, there is a canonical identification
\[\UU(A_n\times B_n)=\UU(A_n)\times \UU(B_n),\]
which is well defined because the ultrafilter satisfies \ref{filter_intersection}.
\label{rem:ultraproduct_product}
\end{remark}

\begin{proposition}
Let $(A_n)_{n\in \N}$, $(B_n)_{n\in \N}$ be two sequences in $R$-modules. Then there is an injection
\[\Psi:\ \UU(\Hom_R(A_n,B_n))\hookrightarrow \Hom_R\Bigl(\UU(A_n),\UU(B_n)\Bigr).\]
In addition, $\Psi$ is an isomorphism when $(B_n)$ is a constant sequence of finite sets.
\label{prop:ultraproduct_hom}
\end{proposition}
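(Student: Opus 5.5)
The map $\Psi$ is the obvious one. Given a class $[(f_n)_n]\in \UU(\Hom_R(A_n,B_n))$, let $\Psi([(f_n)])$ be the map $\UU(A_n)\to \UU(B_n)$ sending $[(a_n)]$ to $[(f_n(a_n))]$.

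\textbf{Well-definedness.} If $(a_n)\sim(a_n')$, the two sequences agree on a set $S\in\UU$, so $f_n(a_n)=f_n(a_n')$ on $S$. If $(f_n)\sim(f_n')$, they agree on some $S'\in\UU$, so $f_n(a_n)=f_n'(a_n)$ on $S'$. By \ref{filter_intersection} the two conditions hold simultaneously on $S\cap S'\in\UU$. $R$-linearity is checked componentwise. The construction is functorial, as in Proposition \ref{prop:ultrafilter_exact}.

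\textbf{Injectivity.} Suppose $\Psi([(f_n)])=0$ but $[(f_n)]\neq 0$. Then $T=\{n:\ f_n\neq 0\}\in\UU$ by \ref{ultrafilter_axiom}. For $n\in T$ choose $a_n\in A_n$ with $f_n(a_n)\neq0$, and set $a_n=0$ otherwise. Then $\Psi([(f_n)])([(a_n)])=[(f_n(a_n))]$ is nonzero, because its coordinates are nonzero on $T\in\UU$. This is a contradiction, so $\Psi$ is injective. Only the axioms of an ultrafilter are used here.

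\textbf{Surjectivity when $B_n=B$ is constant and finite.} Lemma \ref{lem:ultrafilter_finite_diagonal} identifies $\UU(B)$ with $B$, so we must show that every $R$-linear $\varphi:\UU(A_n)\to B$ has the form $[(a_n)]\mapsto f_n(a_n)$ for $\UU$-many $n$.

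My plan is to argue by contradiction using a diagonal choice, in the same way as the injectivity step. Suppose that for $\UU$-many $n$ no $f\in\Hom_R(A_n,B)$ is compatible with $\varphi$. Since $B$ is finite, every candidate $f$ disagrees with $\varphi$ in one of finitely many ways, labelled by the value in $B$. Proposition \ref{prop:ultrafilter_finite} applied to these labels lets one choose, for each such $n$, a single witness element $a_n\in A_n$. One then evaluates $\varphi$ on the class $[(a_n)]$, and the value $\varphi([(a_n)])\in B$ is constant on a set in $\UU$. The goal is to show this value contradicts the failure at each coordinate.

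An alternative route is to take $K=\ker\varphi$. It has finite index in $\UU(A_n)$, bounded by $\#B$. Choosing finitely many coset representatives $x^1,\dots,x^k$, one would try to show that $K$ is itself an ultraproduct of submodules $K_n\subseteq A_n$ of index at most $\#B$. Given that, $f_n$ is the composite $A_n\to A_n/K_n\cong \UU(A_n)/K\to B$, where the middle isomorphism holds for $\UU$-many $n$ by Corollary \ref{cor:ultrafilter_finite_modules}, since the quotients $A_n/K_n$ have bounded order.

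\textbf{Main obstacle.} The hard step is exactly showing that $\ker\varphi$ is \emph{internal}, that is, of the form $\UU(K_n)$. I do not expect this to hold for arbitrary $A_n$; when $A_n=A$ is a fixed infinite-dimensional $\F_p$-vector space there may be non-internal functionals on $\UU(A)$. I would therefore first try to make the argument work when the $A_n$ are finitely generated with a uniform bound on the number of generators, which is the situation of the patched Selmer groups to which the statement is applied later in the paper. In that case the generators $x^1,\dots,x^d$ lift to sequences, and $\varphi$ is determined by the finitely many values $\varphi(x^j)\in B$. Proposition \ref{prop:ultrafilter_finite} then fixes these values on a common set in $\UU$, and the compatible $f_n$ are obtained by sending generators to those values.
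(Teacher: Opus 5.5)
Your definition of $\Psi$, the well-definedness check and the injectivity argument are correct. They are the same as the paper's: choose a witness $a_n$ with $f_n(a_n)\neq 0$ on a set in $\UU$. You give no proof of surjectivity, but your suspicion is right. The surjectivity claim is false as stated, so no missing idea could close it.

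\textbf{Counterexample.} Take $R=B=\F_p$ and $A_n=\F_p^n$. Since $\Hom(A_n,\F_p)\cong\F_p^n$, the group $\UU(\Hom(A_n,B))$ is a quotient of $\prod_n \F_p^n$, so it has cardinality at most $2^{\aleph_0}$. On the other hand, $x\mapsto[(x_1,\dots,x_n)_{n\in\N}]$ is an injective $\F_p$-linear map $\F_p^{\N}\to\UU(A_n)$. Indeed, if $x_m\neq 0$ then the truncations are non-zero for all $n\geq m$, and cofinite sets lie in $\UU$. Hence $\UU(A_n)$ has dimension at least $2^{\aleph_0}$, and $\Hom(\UU(A_n),\F_p)$ has cardinality at least $2^{2^{\aleph_0}}$, so $\Psi$ is not surjective. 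The same count works for a fixed countably-dimensional $A$, confirming your guess. Here every $A_n$ is finite, so finiteness alone is not enough; what surjectivity needs is a \emph{uniform} bound, as in Corollary \ref{cor:ultrafilter_finite_modules}.

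\textbf{The paper's argument.} The paper's own surjectivity argument is also wrong. It uses $\Hom(\prod_n A_n,B)=\prod_n\Hom(A_n,B)$, which is the universal property of $\bigoplus_n A_n$, not of $\prod_n A_n$. Moreover, since $\UU$ is non-principal, $\bigoplus_n A_n$ lies in the kernel of $\prod_n A_n\to\UU(A_n)$. So every lift of $\varphi$ restricts to zero on each factor $A_n$ and recovers nothing.

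\textbf{Your first route.} It is not viable as formulated. The condition ``no $f\in\Hom_R(A_n,B)$ is compatible with $\varphi$'' has no meaning for a single $n$, because compatibility is a property of the whole class $[(f_n)]$.

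\textbf{The restricted version.} Your restriction to uniformly finitely generated $A_n$ is the right repair. It suffices for every use of the proposition in the paper: Lemma \ref{lem:patching_structures_dual} and Theorem \ref{th:comp_ultra_kol} involve only modules of bounded order, and these are generated by a uniformly bounded number of elements. As sketched, however, it still has two gaps when $R$ is infinite.
\begin{enumerate}
\item The classes $x^1,\dots,x^d$ generate $\UU(A_n)$ only over $\UU(R)$, not over $R$; this already happens for $R=A_n=\Z_p$. So $\varphi$ is not obviously determined by the values $\varphi(x^j)$.
\item You have not checked that sending the generators of $A_n$ to these values respects the relations in $A_n$.
\end{enumerate}
Both gaps are handled by reducing first:
\begin{itemize}
\item Let $J=\mathrm{Ann}_R(B)$. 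It has finite index, since $R/J\hookrightarrow\mathrm{End}(B)$.
\item If $J$ is finitely generated (e.g.\ $R$ noetherian), then $J\,\UU(A_n)=\UU(JA_n)$. By Proposition \ref{prop:ultrafilter_exact}, $\varphi$ therefore factors through $\UU(A_n/JA_n)$.
\item The modules $A_n/JA_n$ are $d$-generated over the finite ring $R/J$, hence of bounded order. Proposition \ref{prop:ultrafilter_finite} then gives a finite module $N$, a set $S\in\UU$, and isomorphisms $\iota_n\colon N\cong A_n/JA_n$ for $n\in S$.
\item These induce $\UU(A_n/JA_n)\cong\UU(N)=N$ by Lemma \ref{lem:ultrafilter_finite_diagonal}. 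Under this identification $\varphi$ becomes some $f\colon N\to B$.
\item Set $f_n:=f\circ\iota_n^{-1}\circ(A_n\twoheadrightarrow A_n/JA_n)$ for $n\in S$. Then $\Psi([(f_n)])=\varphi$.
\end{itemize}
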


\begin{proof}
Let $(\varphi_n)$ be the sequence representing an element in the ultraproduct $\UU(\Hom_R(A_n,B_n))$ and let $(a_n)$ be a sequence representing an element in $\UU(A_n)$. We assign the sequence $\varphi_n(a_n)\in \UU(B_n)$. Clearly, this assignment behaves well under the ultrafilter equivalences in both $\UU(\Hom(A_n,B_n))$ and $\UU(A_n)$, so it defines a map
\[\Psi: \UU(\Hom(A_n,B_n))\to \Hom\Bigl(\UU(A_n),\UU(B_n)\Bigr).\]

In order to check injectivity, consider another sequence $(\psi_n)$ induces the same element in $\Hom\Bigl(\UU(A_n),\UU(B_n)\Bigr)$. For the sake of contradiction, assume that $(\varphi_n)$ and $(\psi_n)$ are not equivalent, i.e., the set
\[S=\{n\in \N:\ \varphi_n\neq \psi_n\}\in \UU.\]
Choose a sequence $(a_n)\in (A_n)$ where $\varphi_n(a_n)\neq \psi_n(a_n)$ for every $n\in S$. Hence $(\varphi_n(a_n))$ and $(\psi_n(a_n))$ represent different elements in $\UU(B_n)$, so $\Psi(\varphi_n)\neq \Psi(\psi_n)$.

Assume that $B_n=B$ for all $n\in \N$ and $B$ is a finite group. Then every element in $\Hom\Bigl(\UU(A_n),B\Bigr)$ lifts to a homomorphism in 
\[\Hom\left(\prod_{n\in \N} A_n,B\right)=\prod_{n\in \N}\left(\Hom(A_n,B)\right),\]
so it is also in the image of $\Psi$.
\end{proof}

\subsection{Ultraprimes}

For the rest of this paper, fix a non-principal ultrafilter $\UU$ and a number field $K$.

\begin{notation}
Denote by $\MM_K$ the set of places of $K$.
\end{notation}

\begin{definition}
An \emph{ultraprime} $\ku$ is an element of $\UU(\MM_K)$. More specifically, it is represented by a sequence of prime ideals $(\ell_n)_{n\in \N}$, and two sequences represent the same ultraprime if they coincide in $\UU$-many primes.
\end{definition}

\begin{remark}
The primes in $\MM_K$ are contained in the ultraprimes $\UU(\MM_K)$ via the diagonal map, i.e., a prime $\ell$ is identified with the equivalence class of the constant sequence $(\ell)$. The elements in the image of the map $\MM_K\hookrightarrow \UU(\MM_K)$ are sometimes referred to as \emph{constant ultraprimes}.
\end{remark}

We can use Proposition \ref{prop:ultrafilter_finite} to define the Frobenius element associated with an ultraprime $\ku$ in the absolute Galois group $G_K$.

\begin{definition}
Let $\ku=(\ell_n)_{n\in \N}$ be an ultraprime and let $L/K$ be a finite Galois extension of number fields. Then there exists a unique conjugacy class $c$ in $\Gal(L/K)$ such that $\Frob_{\ell_n}|_{L/K}\in c$ for $\UU$-many $n$. This class is called the \emph{Frobenius conjugacy class} of $\ku$ at $L/K$ and any element in it is called a \emph{Frobenius automorphism} $\Frob_\ku$.
\end{definition}

\begin{definition}
Let $\ku=(\ell_n)_{n\in \N}$ be an ultraprime. The (absolute) Frobenius conjugacy class $\Frob_\ku$ is defined as the conjugacy class
\[\Frob_{\ku}=\left(\Frob_{\ku}|_{L/K}\right)_{L/K}\subset \varprojlim_{L/K} \Gal(L/K)=G_K.\]
Any automorphism in this class is called a Frobenius element $\Frob_\ku$.
\label{def:frobenius}
\end{definition}

\begin{proposition}(\cite[Lemma 5.4]{ultra_kolyvagin_LZ})
Let $K$ be a number field and let $c\subset G_K$ be a conjugacy class. Then there exists an ultraprime $\ku$ such that $\Frob_\ku=c$.
\label{prop:ultraprimes_chebotarev}
\end{proposition}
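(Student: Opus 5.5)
The plan is to build the ultraprime by a diagonal construction: choose a tower of finite Galois extensions exhausting $\overline{K}$, use Chebotarev at each level to pick a prime, and check that the resulting sequence defines an ultraprime with Frobenius class $c$ at every finite level.

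Since $K$ is a number field, $\overline{K}/K$ has only countably many finite subextensions. We can therefore choose an increasing tower $L_1\subset L_2\subset\cdots$ of finite Galois extensions of $K$ with $\bigcup_n L_n=\overline{K}$; for instance, $L_n$ can be the Galois closure of the compositum of the first $n$ finite extensions in some enumeration. Let $c_n\subset\Gal(L_n/K)$ be the image of $c$. Because $c$ is a single conjugacy class in $G_K$, each $c_n$ is a conjugacy class in $\Gal(L_n/K)$, and the $c_n$ are compatible under restriction.

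For each $n$, the Chebotarev density theorem applied to $L_n/K$ gives a prime $\ell_n$ of $K$, unramified in $L_n$, with $\Frob_{\ell_n}|_{L_n/K}\in c_n$. Let $\ku$ be the ultraprime represented by $(\ell_n)_{n\in\N}$.

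To compute $\Frob_\ku$, let $L/K$ be any finite Galois extension. It is contained in some $L_N$, and then in every $L_n$ with $n\ge N$. For $n\ge N$, restricting from $L_n$ to $L$ gives $\Frob_{\ell_n}|_{L/K}\in c|_L$. The set $\{n\ge N\}$ is cofinite. Since $\UU$ is non-principal, it contains the Fréchet filter, so this set lies in $\UU$. By the uniqueness part of the definition, the Frobenius conjugacy class of $\ku$ at $L/K$ is therefore $c|_L$. As this holds for every finite Galois $L$, the inverse limit in Definition \ref{def:frobenius} equals $c$. This uses the fact that a conjugacy class in a profinite group is closed, hence determined by its images in all finite quotients.

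The one point needing care is whether the compatible family $(c|_L)_L$ determines exactly $c$ and not a larger set. Taken levelwise, the inverse limit of the images $c|_L$ is the closure of $c$. Because $G_K$ is compact, $c$ is the image of $G_K$ under the continuous conjugation map, so it is compact and hence closed. The limit is therefore $c$ itself.
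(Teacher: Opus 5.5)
Your argument is correct: choosing $\ell_n$ by Chebotarev along a cofinal tower $L_1\subset L_2\subset\cdots$ of finite Galois extensions, using that cofinite sets lie in the non-principal ultrafilter $\UU$, and using that $c$ is closed in $G_K$ together give $\Frob_\ku=c$ exactly. The paper gives no proof of its own and only cites \cite[Lemma 5.4]{ultra_kolyvagin_LZ}, and your diagonal construction is the standard argument for that lemma; note also that the ultraprime you build is automatically non-constant, since any fixed prime ramifies in $L_n$ for $n$ large and so differs from $\ell_n$ for all but finitely many $n$.
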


The construction of the Frobenius was used in \cite{Sweeting} to artificially define the local Galois group at the ultraprime. It is a generalization of the tame quotient of the classical local Galois groups.

\begin{definition}
Let $\ku$ be an ultraprime represented by a sequence $(\ell_i)_{i\in \N}$. The local Galois group $G_\ku$ is defined as the profinite completion of $\UU(G_{\ell_i})$. Similarly, the inertia subgroup $I_\ku$ is obtained as the profinite completion of $\UU(\II_{\ell_i})$.
\label{def:local_galois_group}
\end{definition}

\begin{remark}
When $\ku$ is a non-constant ultraprime, the \emph{local Galois group} $G_\ku$ coincides with the semidirect product 
\[G_\ku:=\widehat \Z(1)\rtimes \langle \Frob_\ku\rangle,\]
 where $\langle \Frob_\ku\rangle $ is the free profinite group generated by one element which acts by $\Frob_\ku\in G_K$ on $\widehat \Z(1)$. Note that this action is well-defined since $K(\mu_{p^\infty})/K$ is an abelian exension.
 
 The \emph{inertia subgroup} $I_\ku\subset G_\ku$ is identified with the normal subgroup $\widehat \Z(1)\subset G_\ku$.
\label{rem:local_galois_group}
\end{remark}

\begin{remark}
When $\ku$ is a constant ultraprime, the semidirect product in Remark \ref{rem:local_galois_group} represents the tame inertia quotient of the Galois group $G_\ku$.
\end{remark}

\begin{remark}
Assume $T$ is a finite $G_{\ell}$-module for every prime $\ell$. By Proposition \ref{prop:ultrafilter_finite}, there is a well-defined action of $G_\ku$ on $T$ given by the following description: an element $(g_i)_{i\in \N}\in \UU(G_{\ell_i})$ acts by $\rho\in \Aut(T)$ if, for $\UU$-many $i$, the element $g_i\in G_K$ acts on $T$ by $\rho$. This construction extends to the situations in which $T$ is either profinite or ind-finite by taking limits.
\end{remark}

\begin{remark}
If $T$ is unramified for $\UU$-many primes $(\ell_i)$ in any representing sequence of $\ku$, then the action of $G_\ku$ is also unramified. That is always the case when $\ku$ is non-constant and $T$ ramifies only at finitely many primes.
\end{remark}

\subsection{Product of ultraprimes}

There is a well-defined notion of products of ultraprimes, obtaining classes in $\UU(\II_K)$, where $\II_K$ denotes the set of modulus of $K$. Indeed, for every $s\in \N$, there is a well defined map
\[\UU(\MM_K)^s\to \UU(\II_K):\ (\ku_1,\ldots,\ku_s)\mapsto \ku_1\cdots\ku_s.\]
This product satisfies a weaker analogue of the unique prime factorisation theorem on $\II_K$, since the factorizations are unique but not all of the elements in $\UU(\II_K)$ admit an ultraprime factorization.

Since the product of ultraprimes is commutative, we can express it in the following way.

\begin{definition}
The product of ultraprimes is a map 
\[\UU(\MM_K)^s/S_s\to \UU(\II_K),\]
where $S_s$ denotes the symmetric group, acting on $\UU(\MM_K)$ in the natural way. The map is defined as follows: for every finite set of ultraprimes 
\[\ku_1=\left(\ell^{(1)}_k\right)_{k\in \N},\ldots, \ku_s=\left(\ell^{(s)}_k\right)_{k\in \N},\] 
we define
\[\ku_1\cdots\ku_s =(\ell^{1}_k\cdots \ell^{(s)}_k)_{k\in \N}\in \UU(\II_K).\]
\label{def:square-free_product}
\end{definition}

\begin{remark}
Definition \ref{def:square-free_product} produces a well-defined class in $\UU(\II_K)$, which is represented by a sequence $(n_i)_{i\in \N}$ in which $\nu(n_i)=s$ for all $i\in \N$, where $\nu(I)$ denotes the number of prime ideals dividing $I$, counting multiplicities. Hence, the product of $s$ ultraprimes cannot be equal to the product of $s'$ ultraprimes unless $s=s'$.
\label{rem:square-free_welldef}
\end{remark}

We can show that the map in Definition \ref{def:square-free_product} is injective.
\begin{proposition}
The product in Definition \ref{def:square-free_product} induces an injective map
\[\bigsqcup_{s>0}:\ \UU(\MM_K)^s/S_s\to \UU(\II_K).\]
\label{prop:square-free_product_injective}
\end{proposition}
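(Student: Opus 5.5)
Since products of different lengths are already distinguished by Remark \ref{rem:square-free_welldef}, I will fix $s$ and prove that $\UU(\MM_K)^s/S_s\to\UU(\II_K)$ is injective. The idea is to pass to representatives, apply unique prime factorisation in $\II_K$ at each index $k$, and then use Proposition \ref{prop:ultrafilter_finite} to make the resulting permutations uniform along a set in $\UU$.

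Fix $s$ and suppose $\ku_1\cdots\ku_s=\kb_1\cdots\kb_s$. Choose representatives $\ku_j=(\ell^{(j)}_k)_k$ and $\kb_j=(q^{(j)}_k)_k$. By definition there is a set $S\in\UU$ such that
\[\ell^{(1)}_k\cdots\ell^{(s)}_k=q^{(1)}_k\cdots q^{(s)}_k\qquad\text{for all } k\in S.\]
For each such $k$, unique factorisation of ideals in $K$ says the two multisets $\{\ell^{(j)}_k\}_j$ and $\{q^{(j)}_k\}_j$ coincide. So there is a permutation $\sigma_k\in S_s$ with $q^{(j)}_k=\ell^{(\sigma_k(j))}_k$ for all $j$. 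When primes repeat, $\sigma_k$ is not unique, and I just choose one.

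This gives a map $f\colon S\to S_s$, $k\mapsto\sigma_k$, into a finite set. Proposition \ref{prop:ultrafilter_finite} then provides a $\sigma\in S_s$ with $S':=f^{-1}(\sigma)\in\UU$. For every $k\in S'$ and every $j$ we have $q^{(j)}_k=\ell^{(\sigma(j))}_k$. Hence $\kb_j=\ku_{\sigma(j)}$ in $\UU(\MM_K)$ for each $j$, so the two tuples agree modulo $S_s$, which is injectivity.

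No single step is hard. The one point needing care is that the permutation $\sigma_k$ depends on $k$; the finite-partition property of $\UU$ is exactly what removes this dependence. One should also check that the product map is well defined on $S_s$-orbits and independent of the chosen representatives. This follows from \ref{filter_intersection}, since only finitely many coordinates are involved.
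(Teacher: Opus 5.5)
Your proof is correct and follows the same route as the paper. Both arguments pick representatives, apply unique factorisation in $\II_K$ at each index of a set in $\UU$, and remove the dependence on the index using Proposition \ref{prop:ultrafilter_finite}. The one difference is that you apply that proposition to the whole permutation $\sigma_k\in S_s$, while the paper matches each factor on one side to some factor on the other separately; your version is cleaner, because it accounts for repeated ultraprimes automatically, whereas factor-by-factor matching yields a reordering only once multiplicities are tracked.
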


\begin{proof}
Assume that $\kn=\ku_1\cdots\ku_s=\kq_1\cdots\kq_{s'}\in \UU(\II_K)$. By Remark \ref{rem:square-free_welldef}, we have that $s=s'$. Choose representatives 
\[\begin{array}{cc}
    \ku_i=\Bigl(\ell_k^{(i)}\Bigr)_{k\in \N},\ & \kq_j=\Bigl(q_k^{(j)}\Bigr)_{k\in \N}.
\end{array}\]
    By the definition of $\UU(\II_K)$,
    \[\ell_k^{(1)}\cdots\ell_k^{(s)}=q_k^{(1)}\cdots q_k^{(s)}\textrm{ for $\UU$-many $k$.}\]
    By Proposition \ref{prop:ultrafilter_finite} and the unique factorization in $\II_K$, for every $i\leq s$, there is some $j\leq s$ such that $\ell_k^{(i)}=q_k^{(j)}$ for $\UU$-many $k$, leading to $\ku_i=\kq_j$. This shows that $(\kq_1,\ldots,\kq_s)$ is a reordering of $(\ku_1,\ldots,\ku_s)$.
\end{proof}

Remark \ref{rem:square-free_welldef} and Proposition \ref{prop:square-free_product_injective} prove the uniqueness of the ultraprime factorization. Now it makes sense to mention square-free products of ultraprimes.

\begin{definition}
A product of ultraprimes is called \emph{square-free} if no two $\ku_i$ are equal.
\end{definition}

As mentioned before, the ultraprime factorisation does not exist for every $\kn\in \UU(\II_K)$, but only when it can be represented by a sequence $(n_k)_{k\in \N}$ in which the number of prime divisors of $n_k$, counting multiplicities, remains bounded.
\begin{proposition}
Assume an element in $\kn\in\UU(\II_K)$ can be represented by a sequence $(n_k)_{k\in \N}$ such that $\nu(n_k)$ is bounded. Then $\kn$ can be obtained as a product of ultraprimes. 

Assume, in addition, that $n_k$ is a square-free product for $\UU$-many $k$, then $\kn$ is also a square-free product.
\label{prop:patched:product_ultraprimes_ultraproduct_naturals}
\end{proposition}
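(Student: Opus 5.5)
The plan is to use Proposition \ref{prop:ultrafilter_finite} twice: once to fix the number of prime factors, and once to fix a pattern of coincidences among them. Let $C$ be a bound for $\nu(n_k)$. The map $k\mapsto \nu(n_k)$ takes values in the finite set $\{0,\ldots,C\}$. By Proposition \ref{prop:ultrafilter_finite} there is a unique $s$ with $S_0=\{k:\ \nu(n_k)=s\}\in\UU$. (If $s=0$, then $\kn$ is the class of the trivial modulus, the empty product; otherwise $s>0$.)

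For each $k\in S_0$, I write the prime factorisation of $n_k$ in $\II_K$ as $n_k=\ell^{(1)}_k\cdots\ell^{(s)}_k$, listing primes with multiplicity in an arbitrary order. For $k\notin S_0$, I set $\ell^{(i)}_k$ to be an arbitrary fixed prime. Each sequence then defines an ultraprime $\ku_i=(\ell^{(i)}_k)_{k\in\N}\in\UU(\MM_K)$. By Definition \ref{def:square-free_product}, $\ku_1\cdots\ku_s$ is represented by $(\ell^{(1)}_k\cdots\ell^{(s)}_k)_k$, which agrees with $(n_k)$ on $S_0\in\UU$. Hence $\kn=\ku_1\cdots\ku_s$ in $\UU(\II_K)$. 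This proves the first claim; the only care needed is that changing a sequence off a set in $\UU$ does not change its class.

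For the square-free statement, assume $n_k$ is square-free for $k$ in some $S_1\in\UU$. On $S_0\cap S_1\in\UU$ (by \ref{filter_intersection}) the primes $\ell^{(1)}_k,\ldots,\ell^{(s)}_k$ are pairwise distinct. Suppose, for contradiction, that $\ku_i=\ku_j$ for some $i\neq j$. Then $\ell^{(i)}_k=\ell^{(j)}_k$ for $k$ in a set $S_2\in\UU$. By \ref{filter_intersection} and \ref{filter_empty}, the intersection $S_0\cap S_1\cap S_2$ is nonempty. Any $k$ in it gives two equal primes in the factorisation of a square-free $n_k$, which is a contradiction. Hence the $\ku_i$ are pairwise distinct and the product is square-free.

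I do not expect a real obstacle. The only subtle point is that the choice of ordering of the factors of each $n_k$ is arbitrary and can vary with $k$. This does not matter for the argument, since the product is taken modulo $S_s$. By Proposition \ref{prop:square-free_product_injective}, different choices yield the same factorisation up to reordering.
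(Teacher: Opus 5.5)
Your proof is correct and follows the paper's argument. You use Proposition~\ref{prop:ultrafilter_finite} to fix $s$ with $\nu(n_k)=s$ on a set in $\UU$, split each such $n_k$ into $s$ prime factors to define $\ku_1,\ldots,\ku_s$, and then note that $\ku_i=\ku_j$ for $i\neq j$ would contradict the square-freeness of $n_k$ on a set in $\UU$. (The second application of Proposition~\ref{prop:ultrafilter_finite} announced in your plan, to fix a pattern of coincidences, is never used and is not needed.)
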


\begin{proof}
By Proposition \ref{prop:ultrafilter_finite}, there exists $s\in \N$ and $S\in \UU$ such that $\nu(n_k)=s$ for all $k\in S$. It is possible to choose $s$ sequences of prime numbers $\Bigl(\ell_k^{(j)}\Bigr)_{k\in \N}$ such that, whenever $k\in S$, then
\[n_k=\ell_k^{(1)}\cdots \ell_k^{(s)}.\]
By construction, $\kn=\ku_1\cdots\ku_s$, where $\ku_j$ is the ultraprime represented by the sequence $\Bigl(\ell_k^{(j)}\Bigr)_{k\in \N}$.

Note that, if $\ku_i=\ku_j$ for some $i$ and $j$, every representative $(n_i)_{i\in \N}$ of $\ku_1\cdots\ku_s$ will not contain $\UU$-many square-free $n_i$, so the last statement holds.
\end{proof}

\section{Patched cohomology}
\label{sec:patched}

In this section, we use the ultraproduct defined in the previous one to outline the construction of patched cohomology, which was already constructed in \cite{Sweeting}. Instead of focusing on global patched cohomology, we make this construction in a more general way, so we can also recover local patched cohomology as a particular case.

In order to have control over the patched cohomology groups, one should define it first for finite coefficient rings as an ultraproduct of cohomology groups, and then we extend the definition to either profinite or ind-finite coefficient rings by taking limits.

\begin{definition}
Let $T$ be a finite, abelian group endowed with an action from every group in a sequence\footnote{Technically, it is only needed that the action is well defined for $\UU$-many $n$} of groups $G=(G_n)_{n\in \mathbb N}\in \UU(\{\textrm{groups}\})$. The $\UU$-patched cohomology group is defined as the ultraproduct
\[\bH^i(G,T):=\UU(H^i(G_n,T)).\]
If $T$ is a profinite abelian group, we define the patched cohomology as the inverse limit
\[\bH^i(G,T):=\varprojlim_{T\twoheadrightarrow T'}\bH^i(G,T/T'),\]
where the limit is taken over all the finite quotients of $T$.

Similarly, when $T$ is an ind-finite abelian group, the patched cohomology is defined as
\[\bH^i(G,T):=\varinjlim_{T'\hookrightarrow T} \bH^i(G,T'),\]
where the limit is taken over all the finite subgroups of $T$.
\label{def:patched_cohomology}
\end{definition}

\begin{proposition}
The assignment 
\[T\mapsto \bH^i(G,T)\]
is a functor from the category of either finite abelian groups, profinite abelian groups and ind-finite abelian groups to the category of abelian groups.
\end{proposition}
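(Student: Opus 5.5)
The plan is to define the action of $\bH^i(G,-)$ on morphisms in each of the three categories separately, and then check identities and composition. In each case I reduce to the finite case, using the functoriality of ultraproducts (Proposition \ref{prop:ultrafilter_exact}) together with the universal properties of limits and colimits.

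\textbf{Finite case.} Let $f\colon T\to T'$ be a homomorphism of finite abelian groups compatible with the $G_n$-actions for $\UU$-many $n$. Then $f$ induces maps $f_{n,*}\colon H^i(G_n,T)\to H^i(G_n,T')$ for those $n$. For the remaining $n$, which form a set outside $\UU$, any choice of map will do, since the ultraproduct ignores it. Functoriality of $\UU$ then gives
\[\bH^i(G,f):=\UU(f_{n,*})\colon \UU(H^i(G_n,T))\to \UU(H^i(G_n,T')).\]
This is well defined on equivalence classes, because two sequences that agree on a set in $\UU$ have images that agree on the intersection with the set where $f$ is equivariant, and that intersection lies in $\UU$ by \ref{filter_intersection}. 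Identities and composition follow from functoriality of each $H^i(G_n,-)$, holding on a set in $\UU$.

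\textbf{Profinite case.} Let $f\colon T\to T'$ be continuous. For each finite quotient $T'\twoheadrightarrow T'/U'$, the kernel of the composite $T\to T'/U'$ is open, so this composite factors through some finite quotient $T/U$; in fact there is a cofinal choice $U=f^{-1}(U')$. We therefore get maps $\bH^i(G,T/U)\to\bH^i(G,T'/U')$ from the finite case. These maps are compatible with the transition maps, by finite-case functoriality applied to the commuting squares of quotients, so they induce a map on inverse limits. Independence from the choice of $U$ follows because any two choices are dominated by their intersection. Identities and composition are then inherited from the finite case.

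\textbf{Ind-finite case.} This is dual to the profinite case. Given $f\colon T\to T'$, each finite subgroup $T_0\subset T$ maps onto the finite subgroup $f(T_0)\subset T'$. This gives maps $\bH^i(G,T_0)\to\bH^i(G,f(T_0))\to\varinjlim\bH^i(G,T'_0)$, which are compatible with inclusions and hence induce a map on the colimit.

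The only point needing genuine care is the profinite case: I must check that the definition does not depend on the choice of cofinal subsystem and that composites $g\circ f$ give compatible systems of quotients. I expect this to be routine once the choice $U=f^{-1}(U')$ is fixed.
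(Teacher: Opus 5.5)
Your proposal is correct and follows the paper's own argument, whose proof is the single line ``it follows from the functorial properties of cohomology, ultraproducts, and inverse and direct limits.'' Your three cases spell out exactly those details: the map $\UU(f_{n,*})$ for finite coefficients, the choice $U=f^{-1}(U')$ for profinite coefficients, and images of finite subgroups for ind-finite coefficients.
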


\begin{proof}
It follows from the functorial properties of cohomology, ultraproducts, and inverse and direct limits.
\end{proof}

When the coefficient group $T$ is finite, there is a long exact sequence analogously to classical group cohomology.
\begin{proposition}
Let 
\[\xymatrix{0\ar[r] & A\ar[r]^{\mu} & B\ar[r]^{\varepsilon} & C\ar[r] & 0}\]
be an exact sequence of finite abelian groups. Assume $A$, $B$ and $C$ are endowed with an action of each group in a sequence $G=(G_n)_{n\in \N}$. Then there is an exact sequence

\begin{center}
\begin{tikzpicture}[descr/.style={fill=white,inner sep=1.5pt}]
        \matrix (m) [
            matrix of math nodes,
            row sep=3.5em,
            column sep=2.5em,
            text height=1.5ex, text depth=0.25ex
        ]
        {  \cdots& \bH^i(G,A) & \bH^i(G,B) & \bH^i(G,C) \\
            & \bH^{i+1}(G,A) & \bH^{i+1}(G,B)& \bH^{i+1}(G,C) &\\
            &\bH^{i+2}(G,A)&\cdots&&\\
        };

        \path[overlay,->, font=\scriptsize,>=latex]
        (m-1-1) edge (m-1-2)
        (m-1-2) edge (m-1-3)
        (m-1-3) edge (m-1-4)
        (m-1-4) edge [out=355,in=175] (m-2-2) 
        (m-2-2) edge (m-2-3)
        (m-2-3) edge (m-2-4)
        (m-3-2) edge (m-3-3)
        (m-2-4) edge [out=355,in=175] (m-3-2);    
\end{tikzpicture}
\end{center}
\label{prop:patched:long_finite}
\end{proposition}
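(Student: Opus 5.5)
The plan is to build the sequence levelwise and then pass to the ultraproduct, using exactness of $\UU$ (Proposition \ref{prop:ultrafilter_exact}). For each $n$ for which $G_n$ acts on $A$, $B$, $C$ compatibly with $\mu$ and $\varepsilon$ (by hypothesis, or at least for $\UU$-many $n$, which is all we need), the short exact sequence $0\to A\to B\to C\to 0$ is an exact sequence of $G_n$-modules. Classical group cohomology then gives the long exact sequence
\[\cdots\to H^i(G_n,A)\to H^i(G_n,B)\to H^i(G_n,C)\xrightarrow{\delta_n} H^{i+1}(G_n,A)\to\cdots\]
with connecting homomorphisms $\delta_n$.

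Next, define the maps on patched cohomology. The maps $\bH^i(G,A)\to\bH^i(G,B)\to\bH^i(G,C)$ are $\UU(\mu_*)$ and $\UU(\varepsilon_*)$; these agree with the functoriality of the previous proposition. The connecting map is $\UU(\delta_n)$, sending the class of $(c_n)$ to the class of $(\delta_n(c_n))$. It is well defined because if $c_n=c'_n$ for $\UU$-many $n$, then $\delta_n(c_n)=\delta_n(c'_n)$ on the same set. It is a group homomorphism since each $\delta_n$ is one.

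Then check exactness at each spot. Take, for example, $\bH^i(G,B)$ with a class $(b_n)$ mapping to zero. Then the set $S=\{n:\varepsilon_*(b_n)=0\}$ lies in $\UU$. For $n\in S$ choose $a_n$ with $\mu_*(a_n)=b_n$, and put $a_n=0$ otherwise. Then $(a_n)$ maps to the class of $(b_n)$. Conversely, compositions vanish levelwise and hence in the ultraproduct. The same argument works at every node, including around $\delta$. This is precisely the statement that the ultraproduct is exact as a functor on (pointed sets, here) abelian groups, so I would simply invoke Proposition \ref{prop:ultrafilter_exact} for each three-term segment.

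The only point needing care, and the expected obstacle, is ensuring that the actions are compatible for $\UU$-many $n$ simultaneously across $A$, $B$, $C$. If the actions are only defined on sets $S_A,S_B,S_C\in\UU$, intersect them using \ref{filter_intersection}. Outside that set the sequences are irrelevant to the ultraproduct. Finiteness of the groups is not needed for exactness itself; it only matters in the paper's framework because of how $\bH$ is defined for finite $T$.
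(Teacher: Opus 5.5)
Your proposal is correct and follows the same route as the paper: for finite coefficients $\bH^i(G,-)=\UU_n\bigl(H^i(G_n,-)\bigr)$, so the classical long exact sequences for each $G_n$, with connecting maps patched as $\UU(\delta_n)$, pass to the ultraproduct by the exactness in Proposition \ref{prop:ultrafilter_exact}. Your explicit check of exactness at $\bH^i(G,B)$, and your remark about intersecting the sets in $\UU$ on which the actions are defined, spell out details the paper leaves implicit.
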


\begin{proof}
Since $A$, $B$ and $C$ are finite, we have that 
\[\bH^i(G,-)=\UU_n\Bigl(H^i(G_n,-)\Bigr).\]
Then the result follows from Proposition \ref{prop:ultrafilter_exact} and the classical long exact sequence for Galois cohomology.
\end{proof}

Under the following assumption, we can construct a long cohomological exact sequence for the patched cohomology with profinite coefficients.

\begin{definition}
We say a finite, abelian group $T$ endowed with an action of each group in a sequence $G=(G_n)_{n\in \N}$ satisfies the \emph{bounded cohomology property} for some index $i\in \Z_{\geq 0}$ if there is a constant $K$ such that 
\[\#H^i(G_n,T)\leq K\ \textrm{for $\UU$-many $n$.}\]
\label{def:BCP}
\end{definition}

\begin{remark}
The bounded cohomology property guarantees, using Corollary \ref{cor:ultrafilter_finite_modules}, that the patched cohomology group $\bH^1(G,T)$ is finite.
\label{rem:BCP_finite_patched}
\end{remark}

Assuming the bounded cohomology property, we can generalise Proposition \ref{prop:patched:long_finite} to profinite groups. The following result is proven\footnote{The proof in loc.~cit. is done in a slightly different setting, but the argument works even only assuming the bounded cohomology property.} in \cite[Lemma 2.4.12]{Sweeting}.
\begin{proposition}
Let 
\[\xymatrix{0\ar[r] & A\ar[r]^{\mu} & B\ar[r]^{\varepsilon} & C\ar[r] & 0}\]
be an exact sequence of continuous maps of profinite abelian groups. Assume $A$, $B$ and $C$ are endowed with an action of the groups in $G=(G_n)_{n\in \N}$ and satisfy the bounded cohomology property. Then there is an exact sequence
\begin{center}
\begin{tikzpicture}[descr/.style={fill=white,inner sep=1.5pt}]
        \matrix (m) [
            matrix of math nodes,
            row sep=3.5em,
            column sep=2.5em,
            text height=1.5ex, text depth=0.25ex
        ]
        {  \cdots& \bH^i(G,A) & \bH^i(G,B) & \bH^i(G,C) \\
            & \bH^{i+1}(G,A) & \bH^{i+1}(G,B)& \bH^{i+1}(G,C) &\\
            &\bH^{i+2}(G,A)&\cdots&&\\
        };

        \path[overlay,->, font=\scriptsize,>=latex]
        (m-1-1) edge (m-1-2)
        (m-1-2) edge (m-1-3)
        (m-1-3) edge (m-1-4)
        (m-1-4) edge [out=355,in=175] (m-2-2) 
        (m-2-2) edge (m-2-3)
        (m-2-3) edge (m-2-4)
        (m-3-2) edge (m-3-3)
        (m-2-4) edge [out=355,in=175] (m-3-2);    
\end{tikzpicture}
\end{center}

\label{prop:patched:long_profinite}
\end{proposition}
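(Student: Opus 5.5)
The plan is to write $A=\varprojlim_\alpha A/A_\alpha$, $B=\varprojlim_\alpha B/B_\alpha$ and $C=\varprojlim_\alpha C/C_\alpha$ as compatible inverse systems of finite short exact sequences, and pass to the limit in the finite long exact sequences. Concretely, let $U$ range over the open subgroups of $B$. Set $B_U=B/U$, $A_U=A/\mu^{-1}(U)$ and $C_U=C/\varepsilon(U)$. Then
\[0\to A_U\to B_U\to C_U\to 0\]
is exact, every term is finite, and each carries the $G_n$-actions (we may restrict to $G$-stable $U$, which are cofinal because the action is continuous).

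Since the maps are continuous maps of profinite groups, the subgroups $\mu^{-1}(U)$ run through a cofinal family of open subgroups of $A$, and likewise the $\varepsilon(U)$ for $C$. Hence the limits over $U$ compute $\bH^i(G,A)$, $\bH^i(G,B)$ and $\bH^i(G,C)$ as in Definition \ref{def:patched_cohomology}.

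For each $U$, Proposition \ref{prop:patched:long_finite} gives a long exact sequence of patched cohomology groups. These sequences are functorial in $U$ because the connecting maps are natural, and naturality survives the ultraproduct by Proposition \ref{prop:ultrafilter_exact}. So we obtain an inverse system of long exact sequences indexed by a directed set, and the task is to show that $\varprojlim$ preserves exactness.

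Exactness of inverse limits fails in general; it holds for inverse systems of finite (or compact Hausdorff) groups. This is where the bounded cohomology property enters. By Remark \ref{rem:BCP_finite_patched}, applied to each finite quotient (I interpret the hypothesis as saying the finite quotients satisfy bounded cohomology in the relevant degrees), every group $\bH^i(G,A_U)$, $\bH^i(G,B_U)$ and $\bH^i(G,C_U)$ is finite.

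To use this, split each long exact sequence into short exact sequences of kernels and images:
\[0\to K_U\to X_U\to I_U\to 0.\]
All these terms are finite and functorial in $U$, so $\varprojlim^1$ of each such system vanishes (Mittag-Leffler, or compactness for directed systems of finite groups). Therefore $\varprojlim$ is exact on each short sequence, and splicing them back together gives exactness of the limit sequence at every spot.

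The main obstacle is the bounded cohomology hypothesis, and there are two points to settle. First, the finiteness of the patched groups must hold for every finite quotient $A_U,B_U,C_U$ and in every degree that appears. Second, the ultraproduct description $\bH^i=\UU(H^i(G_n,-))$ must be compatible with the transition maps. I would first verify that the hypothesis is meant for all finite quotients, and that it then gives finiteness via Corollary \ref{cor:ultrafilter_finite_modules}. Without finiteness, the ultraproducts can be huge non-compact groups and $\varprojlim^1$ may fail to vanish. Once finiteness is in hand, the rest is the standard argument for limits of finite groups.
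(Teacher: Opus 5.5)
Your proposal is correct and follows essentially the same route as the paper, which simply cites \cite[Lemma 2.4.12]{Sweeting} and notes that the argument there needs only the bounded cohomology property: exactness of the ultraproduct at the finite level, finiteness of the finite-level patched groups, and exactness of inverse limits of finite groups. Your reading of the hypothesis, namely bounded cohomology for every finite quotient and in every degree that appears, is the intended one.
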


\subsection{Local patched cohomology}

Local patched cohomology groups can be constructed for every ultraprime. It is the patched cohomology of the local Galois groups of the primes in a representing sequence of $\ku$. We will show it coincides with the group cohomology of the Galois group $G_\ku$ constructed in Definition \ref{def:local_galois_group}.

\begin{definition}
Let $\ku$ be an ultraprime represented by the sequence $(\ell_n)_{n\in}$. The local patched cohomology group is defined as
\[\bH^i(K_\ku,T):=\bH^i\Bigl((G_{K_{\ell_n}}),T\Bigr).\]
\end{definition}

\begin{remark}
Let $T$ be a finite group endowed with an action of the Galois group $G_K$ that is ramified only at finitely many primes. After fixing inclusions $\overline{\Q}\subset \overline{\Q_\ell}$ for every prime $\ell$, there is a natural action of $G_\ku$ on $T$ for every ultraprime $\ku$. Then $T$ satisfies the bounded cohomology property (see Definition \ref{def:BCP}). 

Indeed, if $\ku$ is represented by the sequence $(\ell_i)_{i\in \N}$, then, for $\UU$-many $i$, $\ell_i$ does not divide the cardinality of $T$, denoted by $\#T$, and the action of $G_{K_{\ell_i}}$ on $T$ is unramified. For those $i$, local duality implies that 
\[\# H^1(K_{\ell_i},T)\leq 2\# T,\]
so the bounded cohomology property holds. Then Proposition \ref{prop:patched:long_profinite} proves the existence of a long exact sequence in the profinite case.
\label{rem:patched:BCP_local}
\end{remark}

We can mimic the construction to obtain the cohomology of the inertia subgroup. That will lead to finite and singular cohomology groups.

\begin{definition}
Let $\ku$ be an ultraprime represented by the sequence $(\ell_n)_{n\in \N}$. The cohomology of the inertia subgroup $I_\ku$ is defined as the patched cohomology group
\[\bH^i(I_\ku,T):=\bH^i\Bigl(\II_{\ell_n},T\Bigr).\]
\end{definition}

\begin{definition}
Let $\ku$ be either a non-constant ultraprime or a constant one that does not ramify on $T$. The \emph{finite patched cohomology group} is defined as
\[\bH^1_{\f}(K_\ku,T):=\ker\biggl(\bH^1(K_\ku,T)\to \bH^1(I_\ku,T)\biggr).\]
\label{def:patched:finite}
\end{definition}

\begin{remark}
Definition \ref{def:patched:finite} can be extended to ramifying primes as in \cite[Definition I.3.4]{Rubin}.
\end{remark}

\begin{definition}
We define the \emph{singular patched cohomology group} as
\[\bH^1_\s(K_\ku,T):=\frac{\bH^1(K_\ku,T)}{\bH_\f^1(K_\ku,T)}.\]
\label{def:patched:singular}
\end{definition}

The local patched cohomology coincides with the group cohomology of the local Galois group defined in Definition \ref{def:local_galois_group}.

\begin{proposition}(\cite[Proposition 2.3.2]{Sweeting})
Let $T$ be either a finite, profinite or ind-finite abelian group, and let $\ku$ be an ultraprime represented by the sequence $(\ell_n)_{n\in \N}$. Then
\[\begin{array}{cc}
    \bH^i\Bigl(K_{\ku},T\Bigr)=H^i(G_\ku,T),\ & \bH^i\Bigl(\II_{\ku},T\Bigr)=H^i(I_\ku,T),
\end{array}\]
where $G_\ku$ and $I_\ku$ are the patched local Galois group defined in Definition \ref{def:local_galois_group}. Recall that $\bH$ denotes patched cohomology while $H$ denotes standard group cohomology.
\label{prop:local_patched_galois_group}
\end{proposition}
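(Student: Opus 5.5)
We argue first for finite $T$ and then pass to limits. In the finite case $\bH^i(K_\ku,T)=\UU_n\bigl(H^i(G_{K_{\ell_n}},T)\bigr)$ by definition, and the task is to identify this ultraproduct with $H^i(G_\ku,T)$. The natural comparison map is inflation–restriction through the tame quotient. For $\UU$-many $n$, the prime $\ell_n$ does not divide $\#T$ (or we are in the constant unramified case), so wild inertia is pro-$\ell_n$ and acts trivially on the $p$-group $T$. Hence inflation gives isomorphisms $H^i(G_{\ell_n}^{\mathrm{tame}},T)\cong H^i(G_{K_{\ell_n}},T)$. The same argument with inertia replaced by tame inertia handles the second identity. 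In the constant, possibly ramified, case we accept the remark that the semidirect product describes only the tame quotient, and the same inflation argument applies wherever tame cohomology is what is meant.

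Next we compute both sides explicitly. The tame group $G^{\mathrm{tame}}_{\ell_n}=\widehat\Z'(1)\rtimes\langle \Frob_{\ell_n}\rangle$ has cohomological dimension $2$ on finite $p$-primary modules. Its cohomology is computed by the Hochschild–Serre spectral sequence for the normal subgroup $I$, which here is procyclic. This gives short exact sequences
\[0\to H^1(\langle\Frob\rangle,T^{I})\to H^1(G,T)\to H^0(\langle\Frob\rangle,H^1(I,T))\to 0,\]
and similarly for $H^0$ and $H^2$. These are expressed in terms of explicit finite complexes built from the endomorphisms $\Frob-1$ and $\sigma-1$ of $T$, where $\sigma$ is a topological generator of $I$. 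The key point is that these complexes depend only on the automorphisms $\rho_n(\Frob_{\ell_n}),\rho_n(\sigma_n)\in\Aut(T)$ and on the character by which $\Frob$ acts on the $p$-part of $\widehat\Z(1)$ modulo $\#T$. All of these data live in a finite set. By Proposition \ref{prop:ultrafilter_finite}, for $\UU$-many $n$ they agree with the corresponding data for $G_\ku$: the Frobenius of $\ku$ acts on $T$ and on $\mu_{\#T}$ by the common value, using Definition \ref{def:frobenius}. So $H^i(G_{K_{\ell_n}},T)\cong H^i(G_\ku,T)$ for $\UU$-many $n$, and these isomorphisms are compatible because they come from the same explicit complex. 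Lemma \ref{lem:ultrafilter_finite_diagonal} (equivalently Corollary \ref{cor:ultrafilter_finite_modules}) then identifies the ultraproduct with the constant value $H^i(G_\ku,T)$. We must also check that this identification is the natural map. For this we write down the map $\UU(G_{\ell_n})\to G_\ku$ (continuous after profinite completion) and note that continuous cochains of $G_\ku$ with finite coefficients factor through a finite quotient, on which the map is determined $\UU$-almost everywhere.

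For profinite or ind-finite $T$, both sides are defined or computed as limits. Patched cohomology is a limit by Definition \ref{def:patched_cohomology}. Continuous cohomology of $G_\ku$ with ind-finite coefficients commutes with direct limits. For profinite coefficients, we use that each $H^{i}(G_\ku,T/T')$ is finite: the explicit complex shows this, and it is the bounded cohomology property of Remark \ref{rem:patched:BCP_local}. Therefore $\varprojlim^1$ vanishes and $H^i(G_\ku,\varprojlim T/T')=\varprojlim H^i(G_\ku,T/T')$. Naturality of the finite-level isomorphisms lets us pass to the limit.

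The main obstacle is the middle step: making the finite-level identification natural, rather than merely an abstract isomorphism of finite groups. The danger is that one gets isomorphisms that need not be compatible with the transition maps needed for the limits. I would handle this by fixing the explicit two-generator presentation (Frobenius and tame generator) uniformly in $n$ and defining everything through cochains on that presentation.
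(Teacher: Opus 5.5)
The paper gives no proof of its own here (it quotes \cite[Proposition 2.3.2]{Sweeting}), so I assess your sketch directly. There is a genuine gap in the constant case. The statement covers it, and the paper relies on it: local conditions at the primes above $p$ and at the bad primes in $\Sigma_\FF$ are submodules of $\bH^1(K_\ku,-)$ for constant $\ku$.

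Your first step, inflation from the tame quotient, is false there whenever $\ell\mid\#T$, ramified or not. Wild inertia is then pro-$\ell$ for a \emph{fixed} $\ell$, and it has large cohomology with coefficients in $T$. For instance, take $K=\Q$, $\ku=(p)$ with $p$ odd, and $T=\Z/p\Z$, which is unramified. Then $H^1(\Q_p,\Z/p\Z)$ is $2$-dimensional over $\F_p$, whereas the tame quotient only gives the unramified line. So the parenthetical ``or we are in the constant unramified case'' is wrong.

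``Accepting the remark'' does not prove the equality either. For constant $\ku$ the group $G_\ku$ is the whole local Galois group, and the semidirect product is only its tame quotient. The right argument here is different. For $\ku=(\ell)$ and finite $T$, one has $\bH^i(K_\ku,T)=\UU\bigl(H^i(K_\ell,T)\bigr)\cong H^i(K_\ell,T)$ by Lemma~\ref{lem:ultrafilter_finite_diagonal}. What then has to be proved is that the ultralimit map $\UU(G_{K_\ell})\to G_{K_\ell}$ identifies the profinite completion of the ultrapower with $G_{K_\ell}$. Concretely, every homomorphism from the abstract group $\UU(G_{K_\ell})$ to a finite group must factor through the ultralimit and be continuous on $G_{K_\ell}$. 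This holds because $G_{K_\ell}$ is topologically finitely generated, using the Nikolov--Segal bounds: every element of the open subgroup generated by $m$-th powers is a product of boundedly many $m$-th powers. Nothing in your sketch supplies this.

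For non-constant $\ku$ your argument is fine, provided you take the description $G_\ku\cong\widehat\Z(1)\rtimes\langle\Frob_\ku\rangle$ of Remark~\ref{rem:local_galois_group} as input. For $\UU$-many $n$ one has $\ell_n\nmid\#T$ and $T$ unramified at $\ell_n$, so inflation from the tame quotient is an isomorphism. The Hochschild--Serre description depends only on $\rho(\Frob)$ and on $\chi_{\mathrm{cyc}}(\Frob)$ modulo $\#T$, and both stabilise along $\UU$.

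Naturality in $T$ then does not require the canonical map $\UU(G_{\ell_n})\to G_\ku$. Fix $(\sigma_n,\phi_n)$ once and for all. Take $\phi_\ku=\lim_\UU\phi_n$ in $G_K$, which is a Frobenius for $\ku$ in the sense of Definition~\ref{def:frobenius}, and the corresponding generator $\sigma_\ku$ of $\widehat\Z(1)$. Then the complexes agree for $\UU$-many $n$, functorially in $T$ and compatibly with restriction to inertia. That is all your limit step uses.

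What you should not do is assert that cochains on a finite quotient of $G_\ku$ are ``determined $\UU$-almost everywhere''. That is the statement $\Hom\bigl(\UU(G_{\ell_n}),F\bigr)=\UU\bigl(\Hom_{\mathrm{cts}}(G_{\ell_n},F)\bigr)$ for finite groups $F$, which is the non-trivial content hidden in the Remark. Proving it requires two things:
\begin{enumerate}
\item Wild inertia dies. This holds because every element of a pro-$\ell_n$ group is an $m$-th power once $\ell_n\nmid m$.
\item Finite quotients of the tame ultraproduct come from quotients of uniformly bounded order.
\end{enumerate}
If you want the canonical comparison map, you have to prove this.
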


The study of local patched cohomology groups can take advantage of the following fact.
\begin{corollary}(\cite[Proposition 2.3.2]{Sweeting})
Let $\ku=(\ell_n)_{n\in \N}$ be an ultraprime and let $T$ be a finite group endowed with an action of $G_K$. For $\UU$-many $i$, there are isomorphisms
\[\begin{array}{cc}
    \varphi_i^\ku:\ \bH^1(K_\ku,T)\cong H^1(K_{\ell_i},T),\ & 
    \varphi_i^{I_\ku}:\ \bH^1(I_\ku,T)\cong H^1(\II_{\ell_i},T),\\
    \varphi_i^{\ku,\f}:\ \bH^1_\f(K_\ku,T)\cong H^1_\f(K_{\ell_i},T),\ &
    \varphi_i^{\ku,\s}:\ \bH^1_\s(K_\ku,T)\cong H^1_\s(K_{\ell_i},T).    
\end{array}\]
\label{cor:patched:local_constant}
\end{corollary}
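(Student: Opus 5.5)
The plan is to reduce everything to Corollary \ref{cor:ultrafilter_finite_modules} together with the finiteness of the possible local representations. Fix a representing sequence $(\ell_n)_{n\in\N}$ of $\ku$, and let $L/K$ be the finite extension cut out by the action of $G_K$ on $T$ (enlarged by $\mu_{\#T}$). By Remark \ref{rem:patched:BCP_local}, for $\UU$-many $n$ the prime $\ell_n$ is unramified in $T$ and does not divide $\#T$; otherwise $\ku$ is constant, and then the statement is trivial via the diagonal isomorphism of Lemma \ref{lem:ultrafilter_finite_diagonal}. For such $n$, the $G_{K_{\ell_n}}$-module $T$ is determined up to isomorphism by two pieces of data: the Frobenius $\Frob_{\ell_n}|_{L/K}$ up to conjugacy, and the image of $N\ell_n$ in $(\Z/\#T)^\times$, which governs the tame inertia character. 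Both take values in finite sets. By Proposition \ref{prop:ultrafilter_finite}, this pair of data is constant on some $S\in\UU$.

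For $n\in S$, the groups $H^1(K_{\ell_n},T)$, $H^1(\II_{\ell_n},T)$, $H^1_\f$ and $H^1_\s$ are computed from the tame quotient $\widehat\Z(1)\rtimes\langle\Frob\rangle$. Concretely, $H^1_\f=T/(\Frob-1)T$ and $H^1_\s\cong T(-1)^{\Frob=1}$, and $H^1(\II_{\ell_n},T)=\Hom(\Z_{\#T}(1),T)$ as a group. Each of these depends only on the data fixed on $S$. So the sequences of groups and maps (restriction, inclusion, quotient) are \emph{isomorphic as diagrams} for all $n\in S$, and in particular the orders are bounded.

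To get the actual maps $\varphi_i$, fix $i\in S$ and, for each $n\in S$, an isomorphism $\alpha_n$ from the $n$-th diagram to the $i$-th one. Such an isomorphism is induced by an identification of the tame Galois groups compatible with the action on $T$. The family $(\alpha_n)$ then gives an isomorphism
\[\UU(H^1(K_{\ell_n},T))\cong\UU(H^1(K_{\ell_i},T))\cong H^1(K_{\ell_i},T),\]
where the last step is Lemma \ref{lem:ultrafilter_finite_diagonal}. The same construction applies to the inertia, finite and singular groups. Since the ultraproduct is exact (Proposition \ref{prop:ultrafilter_exact}), kernels and quotients are preserved. Hence $\bH^1_\f(K_\ku,T)=\UU(H^1_\f(K_{\ell_n},T))$ and similarly for $\bH^1_\s$, and the four isomorphisms are compatible. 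This works for every $i\in S$, which gives the claim for $\UU$-many $i$.

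The main obstacle is making the identifications $\alpha_n$ canonical enough that the four isomorphisms are mutually compatible. Mere abstract isomorphism of the groups, which is all Corollary \ref{cor:ultrafilter_finite_modules} provides, is not enough for this. I would handle it by choosing, for each $n$, a topological generator of tame inertia mapping to a fixed generator of $\mu_{\#T}$ under the chosen embeddings, together with a Frobenius lift in the fixed conjugacy class. The cocycle descriptions above are then literally identical for all $n\in S$.
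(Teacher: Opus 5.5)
Your argument is correct, but it is organised differently from the paper. The paper gives no proof: it presents the statement (after Sweeting) as a corollary of Proposition \ref{prop:local_patched_galois_group}. In that framework one identifies $\bH^1(K_\ku,T)$ and $\bH^1(I_\ku,T)$ with the cohomology of the single patched group $G_\ku\cong\widehat\Z(1)\rtimes\langle\Frob_\ku\rangle$. One then observes that for $\UU$-many $i$ the action of $\Frob_{\ell_i}$ on $T$ and on $\mu_{\#T}$ is that of $\Frob_\ku$, so that $H^1(K_{\ell_i},T)\cong H^1(G_\ku,T)$ compatibly with restriction to inertia. You never use $G_\ku$: you compare each $\ell_n$ directly with a fixed $\ell_i$ and conclude with Lemma \ref{lem:ultrafilter_finite_diagonal}. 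Your route is more elementary, whereas the paper's yields an explicit, $i$-independent target, which is the form used later (e.g.\ in Proposition \ref{prop:patched:singular}).

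Your ``main obstacle'' is not really one. It is enough that the diagrams $H^1(K_{\ell_n},T)\to H^1(\II_{\ell_n},T)$ are isomorphic for $n\in S$. This already follows from Proposition \ref{prop:ultrafilter_finite}, since they have bounded order and so fall into finitely many isomorphism classes. Then \emph{any} choice of diagram isomorphisms $\beta_n$ onto a fixed diagram gives compatible $\varphi_i$. These are automatically coherent, in that $(\varphi_n(x))_n$ represents $x$, which is what Lemma \ref{lem:structure_patching} implicitly needs. No rigidification via generators of tame inertia is required.

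Three points of precision:
\begin{itemize}
\item The full tame quotients at different primes are not isomorphic, since the factor $\Z_{\ell_n}(1)$ is missing at $\ell_n$. What you actually identify is the quotient $\mu_{\#T}\rtimes\widehat\Z$, through which $H^1(-,T)$ factors because $\#T$ is prime to $\ell_n$.
\item Frobenius must act on $T$ by the same element, not merely the same conjugacy class. The fixed embeddings together with Proposition \ref{prop:ultrafilter_finite} provide this.
\item In your constant case with $\ell\mid\#T$, the group $H^1(\II_\ell,T)$ can be infinite. There the diagonal argument gives the isomorphism for $\bH^1(K_\ku,T)$ but not the inertia one.
\end{itemize}
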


Patched cohomology also satisfies an analogue of local duality.

\begin{notation}
If $T$ is a Galois representation, its Cartier dual will be denoted by $T^*$. Similarly, for any group $M$, its Pontryagin dual is denoted by $M^\vee$.
\label{not:dualities}
\end{notation}

\begin{proposition}(\cite[Proposition 2.6.4]{Sweeting})
Let $T$ be either a finite, profinite or ind-finite Galois module and let $\ku=(\ell_n)_{n\in \N}$ be an ultraprime, which is not a constant archimedean prime. For $i=0,1,2$, there is a non-degenerate pairing 
\[\bH^i(K_\ku,T)\times \bH^{2-i}(K_\ku,T^*) \to \Q/\Z.\]
Under this pairing, $\bH^1_\f(K_\ku,T)$ and $\bH^1_\f(K_\ku,T^*)$ are annihilators of each other.
\label{prop:patched:local_duality}
\end{proposition}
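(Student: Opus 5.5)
The plan is to prove the statement first for finite $T$ and then pass to limits. The finite case comes from classical local Tate duality at $\UU$-many primes, transported through the ultraproduct.

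\emph{Step 1: finite $T$.} Let $\ku=(\ell_n)_{n\in\N}$. Since $\ku$ is not a constant archimedean prime, we may assume for $\UU$-many $n$ that $\ell_n$ is a finite place. If $\ku$ is non-constant, we may further assume that $\ell_n\nmid \#T$ and that $T$ is unramified at $\ell_n$.

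For each such $n$, classical local Tate duality gives a perfect pairing
\[\langle\,,\rangle_n:\ H^i(K_{\ell_n},T)\times H^{2-i}(K_{\ell_n},T^*)\to \Q/\Z.\]
Every element of the finite group $T^*$ and of $H^2$ is killed by $\#T$, so these pairings all land in the finite group $\frac{1}{\#T}\Z/\Z$. We then define the patched pairing componentwise. Since $\UU$ applied to a constant finite set gives that set back (Lemma \ref{lem:ultrafilter_finite_diagonal}), the pairing lands in $\frac{1}{\#T}\Z/\Z$. It is well defined by \ref{filter_intersection}.

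For non-degeneracy, I use the bounded cohomology property (Remark \ref{rem:patched:BCP_local}), which also holds for $T^*$. By Corollary \ref{cor:ultrafilter_finite_modules}, or directly Corollary \ref{cor:patched:local_constant}, there are isomorphisms $\bH^i(K_\ku,T)\cong H^i(K_{\ell_n},T)$ for $\UU$-many $n$, and similarly for $T^*$. These should be chosen compatibly with the ultraproduct structure.

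I would argue without appeal to explicit isomorphisms, as follows. Take a nonzero class $x=(x_n)$. Then $x_n\neq0$ for $\UU$-many $n$, so for those $n$ we can choose $y_n$ with $\langle x_n,y_n\rangle_n\neq0$. Since all values lie in the finite set $\frac{1}{\#T}\Z/\Z\setminus\{0\}$, Proposition \ref{prop:ultrafilter_finite} shows that $\langle x,(y_n)\rangle$ is a nonzero element. This gives injectivity of $\bH^i(K_\ku,T)\to\bH^{2-i}(K_\ku,T^*)^\vee$, and symmetrically in the other variable. Both groups are finite by boundedness, so the pairing is perfect.

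The orthogonality of the finite parts is handled the same way. Classically, $H^1_\f(K_{\ell_n},T)$ and $H^1_\f(K_{\ell_n},T^*)$ are exact annihilators at unramified primes prime to $\#T$. Annihilation passes to the ultraproduct componentwise. For exactness, the order of $\bH^1_\f$ equals $\#H^1_\f$ for $\UU$-many $n$, using Corollary \ref{cor:patched:local_constant} for the finite parts. The orders then match, so each finite part is the full annihilator of the other.

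For constant $\ku=\ell$, everything is literally classical.

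\emph{Step 2: limits.} For profinite $T=\varprojlim T/T'$, the dual $T^*=\varinjlim (T/T')^*$ is ind-finite. By definition,
\[\bH^i(K_\ku,T)=\varprojlim \bH^i(K_\ku,T/T'), \qquad \bH^{2-i}(K_\ku,T^*)=\varinjlim \bH^{2-i}(K_\ku,(T/T')^*).\]
The finite pairings are compatible with transition maps, by naturality of classical duality at each $n$. So they assemble to a pairing. Pontryagin duality exchanges inverse limits of finite groups with direct limits, which gives perfectness. The finite parts are defined as kernels, and kernels commute with inverse limits. They are also compatible with direct limits, since direct limits are exact. So the annihilator statement passes to the limit.

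The main obstacle I expect is the compatibility of all the maps with the transition maps and with the kernels defining $\bH^1_\f$ under inverse limits. Here I would lean on the identification $\bH^i(K_\ku,T)=H^i(G_\ku,T)$ of Proposition \ref{prop:local_patched_galois_group}. For non-constant $\ku$, this makes $\bH^\bullet$ the cohomology of $\widehat\Z(1)\rtimes\widehat\Z$, and there duality and the unramified annihilator statement can be checked directly.
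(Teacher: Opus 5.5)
Your proof is correct. The paper gives no argument of its own for this statement; it defers to \cite{Sweeting}. Your scheme is exactly the mechanism the paper uses for the patched Poitou--Tate sequence (Proposition \ref{prop:patched:poitou-tate}) and for Lemma \ref{lem:patching_structures_dual}: classical local Tate duality at $\UU$-many $\ell_n$, transported through the ultraproduct via Proposition \ref{prop:ultrafilter_finite} and the bounded cohomology property, followed by passage to limits.

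Two points are worth tightening.

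First, in the limit step the inclusion $\bH^1_\f(K_\ku,T)^\perp\subseteq\bH^1_\f(K_\ku,T^*)$ does not follow merely from kernels commuting with limits. You need exactness of $\varprojlim$ on finite groups, so that $\bigl(\varprojlim_j A_j\bigr)^\perp=\varinjlim_j A_j^\perp$ for $A_j=\bH^1_\f(K_\ku,T/T'_j)$. This is the same Pontryagin-duality fact you already invoke for perfectness, now applied to the quotients $\bH^1(K_\ku,T/T'_j)/A_j$. The opposite inclusion does follow directly from the finite case together with kernels commuting with inverse limits.

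Second, your remark that the constant case is ``literally classical'' is right for the finite-part clause only when $\ell\nmid\#T$. Take $\ell\mid p$ with $\mu_p\subset K_\ell$ and $T=\Z/p$ with trivial action. Then both $T$ and $T^*$ are unramified. The unramified lines in $H^1(K_\ell,T)$ and $H^1(K_\ell,T^*)$, each of order $p$, are orthogonal but not exact annihilators, since $H^1(K_\ell,T)$ has order $p^{[K_\ell:\Q_p]+2}$. So that clause of the statement must be read for $\ku$ non-constant, or constant with $\ell\nmid\#T$.
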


\subsection{Global patched cohomology}

The goal of this section is to outline the construction of global patched cohomology groups. We do this by generalising the notion of Galois cohomology of the maximal extension $K^\Sigma/K$ unramified outside the certain finite set of prime numbers $\Sigma$. Hence, we aim to construct the patched cohomology group unramified outside a finite set of ultraprimes.

\begin{definition}(\cite[Definition 2.4.2]{Sweeting})
Let $T$ be either a finite, profinite or ind-finite abelian group endowed with an action of $G_K$ that is ramified outside finitely many primes. Let $\kn\in \NN$ be a square-free product of ultraprimes represented by the sequence $(n_k)_{k\in \N}$. We define the maximal patched cohomology group unramified at $\kn$ by
\[\bH^i(K^\kn/K,T):=\bH^i\Bigl(\Gal(K^{n_k}/K),T^{G_{K^{n_k}}}\Bigr),\]
where $K^{n_k}$ represents the maximal extension of $K$ unramified outside the prime divisors of $n_k$. Note that this definition is independent of the sequence representing $\kn$.
\end{definition}

\begin{notation}
If $\Sigma$ is a finite set of distinct ultraprimes and $\kn$ is the product of all the ultraprimes in $\Sigma$, we will also denote $\bH^i(K^\kn/K,T)$ by $\bH^i(K^\Sigma/K,T)$.
\end{notation}

The most important property of this construction is that the global patched cohomology groups satisfy the bonded cohomology property from Definition \ref{def:BCP}. In order to show it, we take advantage of the fact that every sequence $(n_i)_{i\in \N}$ representing the product of ultraprimes $\kn$ satisfies that $\nu(n_i)=k$ is constant for $\UU$-many $i$ and some $k\in \N$.

\begin{proposition}(\cite[Lemma 2.4.5]{Sweeting}, see also \cite[Theorem 4.10]{Milne})
Let $T$ be a finite abelian group endowed with an action of $G_K$ that is ramified outside finitely many primes and let $\kn\in \UU(\II_K)$ be a finite product of ultraprimes. Denote by $n'$ the square-free product of constant prime divisors of $\kn$. Then $T':=T^{\Gal(K^{n'}/K)}$ and the natural action of
\[G(K^{\kn}/K):=(\Gal(K^{n_i}/K))_{i\in \N}\]
satisfy the bounded cohomology property stated in Definition \ref{def:BCP}. Note that the action is only defined when $n_i$ is not divisible by any archimedean or $p$-adic primes or any primes at which $T$ is ramified. However, the conditions on $T$ and the fact that no constant ultraprimes divides $\kn$ implies that it is defined for $\UU$-many $i$.
\label{prop:BCP_global}
\end{proposition}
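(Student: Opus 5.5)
We need a uniform bound $C$ with $\#H^i\bigl(\Gal(K^{n_k}/K),T'\bigr)\le C$ for $\UU$-many $k$, where $T'=T^{\Gal(K^{n'}/K)}$. By Proposition \ref{prop:ultrafilter_finite} we may pick a representing sequence $(n_k)$ and a set $S\in\UU$ with the following properties for every $k\in S$. First, $n_k=n'\,m_k$, where $m_k$ is a product of exactly $s$ primes (the non-constant ultraprime divisors). Second, each prime dividing $m_k$ is non-archimedean, prime to $p\cdot\#T$, unramified in $T$, and, writing $\ku_j=(\ell^{(j)}_k)_k$, distinct from every prime in the finite bad set $\Sigma_0$. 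Here $\Sigma_0$ consists of the archimedean places, the primes above $\#T$, and the primes ramified in $T$.

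For such $k$ set $\Sigma_k=\Sigma_0\cup\{\text{primes dividing } n_k\}$. The module $T'$ is fixed by the kernel of the inflation to $G_{K,\Sigma_k}$, so the Hochschild--Serre/inflation spectral sequence for $G_{K,\Sigma_k}\twoheadrightarrow \Gal(K^{n_k}/K)$ bounds $H^i(\Gal(K^{n_k}/K),T')$ in terms of $H^i(G_{K,\Sigma_k},T')$ and cohomology of the kernel in lower degrees. It is cleaner, though, to argue directly with $G_{K,\Sigma_k}$ in the way Sweeting does, which amounts to enlarging $n_k$ by the fixed finite set $\Sigma_0$. I would first record that this enlargement changes cohomology only by a bounded amount. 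The comparison between unramified-outside-$n_k$ and unramified-outside-$\Sigma_k$ is controlled by the local terms $H^i(I_v,T')$ for $v\in\Sigma_0$, and these are finite and independent of $k$.

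The core step is then the global Euler-characteristic and Poitou--Tate bound from \cite[Theorem 4.10]{Milne}. For $\Sigma\supseteq\Sigma_0$, the Poitou--Tate exact sequence
\[0\to H^0(G_{K,\Sigma},T')\to \textstyle\bigoplus_{v\in\Sigma}H^0(K_v,T')\to H^2(G_{K,\Sigma},T'^*)^\vee\to H^1(G_{K,\Sigma},T')\to\textstyle\bigoplus_{v\in\Sigma}H^1(K_v,T')\to\cdots\]
bounds $\#H^1(G_{K,\Sigma},T')$ by $\#\Sha^1_\Sigma$ times $\prod_{v\in\Sigma}\#H^1(K_v,T')$. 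By Remark \ref{rem:patched:BCP_local}, each local factor at $v\mid m_k$ is at most $2\#T$. There are exactly $s$ such $v$, so the product over them is at most $(2\#T)^s$, uniformly in $k$, and the factors at $v\in\Sigma_0$ are fixed. Moreover, $\Sha^1_\Sigma(T')$ is dual to $\Sha^2_\Sigma(T'^*)$, which injects into $\Sha^2_{\Sigma_0}(T'^*)$ because $\Sha^2$ shrinks as $\Sigma$ grows. This gives a $k$-independent finite bound. Degrees $0$ and $2$ are handled the same way, degree $2$ via the same sequence together with local $H^2$, which is bounded by local duality. Degrees $i\ge3$ reduce to the real places, which contribute a fixed amount.

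The main obstacle is the passage from $G_{K,\Sigma_k}$ to the quotient $\Gal(K^{n_k}/K)$ acting on the invariants $T'$, since inflation need not be an isomorphism in degree $\ge2$. To handle it, I would use the five-term inflation–restriction sequence in degree $1$; for the higher degrees I would bound the kernel contribution $H^1(\ker,T')^{\Gal}$. This is $\Hom$ from the abelianised ramification at $\Sigma_0$, which is generated by finitely many inertia groups at fixed primes, into $T'$. That gives a bound independent of $k$, and the bounded cohomology property follows.
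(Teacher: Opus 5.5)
Your core step is the argument that the paper's one-line proof delegates to \cite[Lemma 2.4.5]{Sweeting} and \cite[Theorem 4.10]{Milne}, after noting that $\nu(n_i)=s$ for $\UU$-many $i$. It has four ingredients: Poitou--Tate for $G_{K,\Sigma_k}$; a uniform bound on the local $H^1$ at the $s$ moving primes; a $k$-independent bound on $\Sha^1$; and degrees $\geq 3$ read off from the real places. For the $\Sha^1$ bound, the simplest route is $\Sha^1_{\Sigma_k}(T')\subseteq \Sha^1_{\Sigma_0}(T')$, because a class locally trivial on $\Sigma_k$ is unramified on $\Sigma_k\setminus\Sigma_0$.

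Suppose $n'$ is divisible by all archimedean, $p$-adic and ramified primes. This is the situation the paper actually works in: see Definition~\ref{def:patched:selmer}, and note that the cohomological-dimension input in Corollary~\ref{cor:euler_char} needs $S\supseteq S_p\cup S_\infty$. Then $K^{n_k}=K_{\Sigma_k}$, your comparison step is vacuous, and the argument is complete.

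One correction in that part. For unramified $v\nmid p\#T$, the local Euler characteristic gives $\#H^1(K_v,T')=\#H^0(K_v,T')\cdot\#H^0(K_v,T'^*)\le(\#T)^2$, not $2\#T$; take $T=\Z/p$ with $\mu_p\subset K_v$. Only uniformity matters, so this is harmless.

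The genuine gap is the descent from $G:=G_{K,\Sigma_k}$ to $G/H=\Gal(K^{n_k}/K)$, where $H=\Gal(K_{\Sigma_k}/K^{n_k})$, when $n'$ misses primes of $\Sigma_0$.

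\begin{itemize}
\item Degrees $0$ and $1$ are fine, since inflation is injective in degree $1$.
\item Degree $2$ is fine via the five-term sequence, provided you bound $H^1(H,T')^{G/H}=\Hom_G(H^{\mathrm{ab}},T')$ correctly. Since $H$ is normally generated by inertia groups, this injects into $\prod_v \Hom(I_{K_v},T')^{G_{K_v}}$, which has order at most $\prod_v \#H^1(K_v,T')$. The invariants are essential: $H^1(I_{K_v},T')$ itself is in general infinite for $v\mid p$ (already $\Hom(I_{K_v},\Z/p)$ is). So your claim that the local terms $H^i(I_v,T')$ are finite is false as stated.
\item For $i\geq 3$ the argument fails. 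The Hochschild--Serre spectral sequence also involves $H^a(G/H,H^b(H,T'))$ with $a+b=i-1$ and $b\geq 1$. Already for $i=3$ these are $H^1(G/H,H^1(H,T'))$ and $H^2(H,T')^{G/H}$. Bounding $H^1(H,T')^{G/H}$ says nothing about them, and $H$ is a huge normal closure of inertia groups whose cohomology is not governed by local data.
\end{itemize}

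This is not a bookkeeping issue. If some prime above $p$ does not divide $n'$, then $\Gal(K^{n_k}/K)$ lies outside the range of Poitou--Tate duality. Its $p$-cohomological dimension need not even be finite: for $K=\Q(\sqrt{-23})$, $p=3$, $n_k=1$, the group is $\Z/3$. So no general uniform bound in degrees $\geq 3$ is available.

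You should either restrict to degrees $\le 2$ in that generality, or state explicitly that $n'$ contains the archimedean and $p$-adic primes. The latter also lets you drop $\Sigma_0$ altogether, since $T'$ is unramified outside $n'$ by construction.
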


\begin{proof}
Say that $\kn$ is the product of $S$ ultraprimes. Then $\nu(n_i)=s$ for $\UU$-many $i$. Then the proposition follows from \cite[Lemma 2.4.5]{Sweeting} (see also \cite[Theorem 4.10]{Milne}).
\end{proof}

\begin{corollary}
Let $T$ be a finite group and let $\Sigma\subset \UU(\MM_K)$ be a finite set of ultraprimes. The patched cohomology groups $\bH^i(K^\Sigma/K,T)$ are finite for all $i\geq 0$.
\label{cor:global_finite}
\end{corollary}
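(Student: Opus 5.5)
The corollary is essentially Proposition \ref{prop:BCP_global} read through Remark \ref{rem:BCP_finite_patched}. Let $\kn$ be the product of the ultraprimes in $\Sigma$, represented by a sequence $(n_k)_{k\in\N}$. By Remark \ref{rem:square-free_welldef}, $\nu(n_k)=s$ for $\UU$-many $k$, where $s=\#\Sigma$.

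\textbf{Reducing to finite coefficients.} The definition gives
\[\bH^i(K^\Sigma/K,T)=\UU_k\Bigl(H^i\bigl(\Gal(K^{n_k}/K),T^{G_{K^{n_k}}}\bigr)\Bigr).\]
Split $\kn=n'\cdot\kn''$, where $n'$ is the product of the constant ultraprimes in $\Sigma$ and $\kn''$ involves only non-constant ones. For $\UU$-many $k$, the fixed module $T^{G_{K^{n_k}}}$ is controlled by $T':=T^{\Gal(K^{n'}/K)}$. We may harmlessly enlarge $n'$ by the finitely many archimedean places, $p$-adic places and places where $T$ ramifies. The point is that the non-constant components of $n_k$ eventually avoid any fixed finite set of primes, by non-principality of $\UU$ (it contains the Fréchet filter). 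This puts us exactly in the setting of Proposition \ref{prop:BCP_global}.

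\textbf{Bounding the groups.} Proposition \ref{prop:BCP_global} gives a constant $C_i$ with
\[\#H^i\bigl(\Gal(K^{n_k}/K),T'\bigr)\le C_i \quad\text{for $\UU$-many $k$.}\]
\begin{itemize}
\item For $i\ge 3$ the groups are controlled by real places (Tate's theorem), so they are again uniformly bounded; since $p$ is odd they even vanish for the $p$-primary part.
\item The proposition's proof is stated uniformly in $i$, via \cite[Theorem 4.10]{Milne}. So it suffices to note that its Euler characteristic and Poitou--Tate bounds depend only on $\#T$, on $s$, and on the fixed finite set of bad places.
\end{itemize}

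\textbf{Concluding finiteness.} Take $R=\Z/\#T\Z$, a finite ring over which all these cohomology groups are modules. Corollary \ref{cor:ultrafilter_finite_modules} then shows that the ultraproduct is isomorphic to $H^i(\Gal(K^{n_k}/K),T')$ for $\UU$-many $k$. Its order is therefore at most $C_i$, so it is finite.

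\textbf{Main obstacle.} The substantive content is the uniform bound in Proposition \ref{prop:BCP_global}. What needs care here is the bookkeeping: the bound must be independent of $k$ once $\nu(n_k)=s$. The intended route is the Poitou--Tate / global Euler characteristic argument. This bounds $H^1$ via local groups at the $s$ primes, each of order at most $2\#T$ by Remark \ref{rem:patched:BCP_local}, together with a Selmer-type kernel contained in a fixed finite group; $H^2$ is then bounded by the Euler characteristic and $H^0$ trivially.
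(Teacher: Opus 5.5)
Your proposal is correct and follows the paper's route: the corollary is Proposition~\ref{prop:BCP_global} combined with Remark~\ref{rem:BCP_finite_patched}, i.e.\ the bounded cohomology property fed into Corollary~\ref{cor:ultrafilter_finite_modules}. One caution is that enlarging $n'$ by the archimedean, $p$-adic and ramified places is not harmless in degree $i\geq 2$, since inflation is only guaranteed to be injective on $H^1$; you should instead apply Proposition~\ref{prop:BCP_global} to $\kn$ itself rather than to an enlarged modulus.
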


Global patched cohomology also satisfies a Tate's formula for the  Euler characteristic.
\begin{corollary}
Let $\kn$ be a square-free product of ultraprimes. Then the sequence $\Gal(K^\kn/K)$ has (patched) $p$-cohomological dimension $2$ and for any finite group $T$ endowed with an action of $\Gal(K^\kn/K)$, we have that 
\begin{equation}
\frac{\#\bH^2(K^\kn/K,T) \#\bH^0(K^\kn/K,T)}{\#\bH^1(K^\kn/K,T)}=\prod_{v\in \Sigma_\infty}\frac{\#\bH^0(K_v,T)}{\#T^{[K_v:\R]}},
\label{eq:euler_char}
\end{equation}
where $\Sigma_\infty$ denotes the set of archimedean places of $K$.
\label{cor:euler_char}
\end{corollary}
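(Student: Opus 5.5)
The plan is to reduce both assertions to the classical statements for the groups $\Gal(K^{n_i}/K)$ indexed by a representing sequence $(n_i)_{i\in\N}$ of $\kn$, and then pass to the ultraproduct. Since $T$ is finite, Definition \ref{def:patched_cohomology} gives $\bH^j(K^\kn/K,T)=\UU\bigl(H^j(\Gal(K^{n_i}/K),T)\bigr)$ directly, with no limit to take.

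For the cohomological dimension, I would work with the finite $p$-primary module $T$ and the index $j\geq 3$. Classically, $\Gal(K^{n_i}/K)$ has $p$-cohomological dimension $2$ up to the archimedean contribution, and $p\geq 5$ is odd, so $H^j(\Gal(K^{n_i}/K),T)=0$ for $j\geq 3$. This holds as soon as $n_i$ is divisible by the primes above $p$, the archimedean primes and the primes where $T$ ramifies, which is the setting of Proposition \ref{prop:BCP_global}; see \cite[Theorem 4.10]{Milne}. Vanishing for $\UU$-many $i$ makes the ultraproduct zero. To obtain the statement for every finite $p$-primary $\Gal(K^\kn/K)$-module, I would use Proposition \ref{prop:patched:long_finite} together with dévissage along a composition series. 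A cleaner alternative is to note that the vanishing is already termwise for each module.

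For the Euler characteristic, I would apply Tate's global Euler characteristic formula to each $n_i$:
\[\frac{\#H^2(\Gal(K^{n_i}/K),T)\,\#H^0(\Gal(K^{n_i}/K),T)}{\#H^1(\Gal(K^{n_i}/K),T)}=\prod_{v\in\Sigma_\infty}\frac{\#H^0(K_v,T)}{\#T^{[K_v:\R]}}.\]
The right-hand side does not depend on $i$, because the archimedean places and $T$ are fixed. The remaining task is to identify the orders of the ultraproducts with the orders of the terms for $\UU$-many $i$. By Proposition \ref{prop:BCP_global}, the groups $H^j(\Gal(K^{n_i}/K),T)$ have bounded order for $\UU$-many $i$, and for $j=0,2$ the same holds by the standard bounds (for instance $\#H^0\leq\#T$). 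They are modules over the finite ring $\Z/\#T$, so Corollary \ref{cor:ultrafilter_finite_modules} applies. Intersecting the three sets in $\UU$ using \ref{filter_intersection}, I obtain an $i$ with $\#\bH^j(K^\kn/K,T)=\#H^j(\Gal(K^{n_i}/K),T)$ simultaneously for $j=0,1,2$. Equation \eqref{eq:euler_char} then follows.

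The main obstacle is bookkeeping about which primes must divide $n_i$ for the classical results to apply. Tate's formula and the cohomological dimension statement need $K^{n_i}$ to be unramified outside a set containing $p$, $\infty$ and the ramification of $T$, whereas $\kn$ involves only ultraprimes. I would handle this as in Proposition \ref{prop:BCP_global}, implicitly adjoining the fixed finite set $\Sigma_0$ of such bad primes to each $n_i$. Adjoining this constant set does not affect the ultraproduct formalism.

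I would also check that the bounded cohomology property, which is stated for $\bH^1$, extends to $\bH^2$. Boundedness of $\#H^2$ follows from Tate's formula itself, once $\#H^1$ and $\#H^0$ are bounded and the right-hand side is constant. So the argument is not circular.
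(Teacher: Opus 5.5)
Your proposal is correct and follows essentially the same route as the paper. Fix a representative $(n_k)$ of $\kn$, use $\mathrm{cd}_p\Gal(K^{n_k}/K)\leq 2$ and Tate's Euler characteristic formula termwise, and combine Proposition \ref{prop:BCP_global} with Corollary \ref{cor:ultrafilter_finite_modules} to identify $\bH^j(K^\kn/K,T)$ with $H^j(K^{n_k}/K,T)$ for $j=0,1,2$, simultaneously for $\UU$-many $k$.

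You correctly note that these classical results need $n_k$ to be divisible by the primes above $p$, the archimedean places and the ramified primes of $T$; the paper leaves this implicit. Rather than ``adjoining'' these primes, which changes the groups being computed, state it as a hypothesis on $\kn$; it holds where the corollary is applied in Proposition \ref{prop:core_rank}.
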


\begin{proof}
Let $(n_k)_{k\in \N}$ be a sequence representing $\kn$. Since $\Gal(K^{n_k}/K)$ has $p$-cohomological dimension $2$ by \cite[Proposition 8.3.18]{NSW}, then $\Gal(K^{\kn}/K)$ also has cohomological dimension $2$. In addition, Corollary \ref{cor:ultrafilter_finite_modules} and Proposition \ref{prop:BCP_global} imply the existence of $\UU$-many $k$ such that 
\[\bH^i(K^{\kn}/K,T)\cong H^i(K^{n_k}/K,T)\ \forall k=0,1,2.\]
Then \eqref{eq:euler_char} holds from its classical analogue in \cite[Theorem 8.7.4]{NSW}.
\end{proof}

The following corollary holds by combining Propositions \ref{prop:patched:long_profinite} and \ref{prop:BCP_global}.

\begin{corollary}
Let 
\[\xymatrix{0\ar[r] & A\ar[r]^{\mu} & B\ar[r]^{\varepsilon} & C\ar[r] & 0}\]
be an exact sequence of continuous maps of profinite $G_K$-modules that are unramified outside finitely many primes. If $\Sigma$ denotes a finite set of ultraprimes that does not contain any ramifying prime of $T$, there is an exact sequence
\begin{center}
\begin{tikzpicture}[descr/.style={fill=white,inner sep=1.5pt}]
        \matrix (m) [
            matrix of math nodes,
            row sep=3.5em,
            column sep=2em,
            text height=1.5ex, text depth=0.25ex
        ]
        {  \cdots& \bH^i(K^\Sigma/K,A) & \bH^i(K^\Sigma/K,B) & \bH^i(K^\Sigma/K,C) \\
            & \bH^{i+1}(K^\Sigma/K,A) & \bH^{i+1}(K^\Sigma/K,B)& \bH^{i+1}(K^\Sigma/K,C) &\\
            &\bH^{i+2}(K^\Sigma/K,A)&\cdots&&\\
        };

        \path[overlay,->, font=\scriptsize,>=latex]
        (m-1-1) edge (m-1-2)
        (m-1-2) edge (m-1-3)
        (m-1-3) edge (m-1-4)
        (m-1-4) edge [out=355,in=175] (m-2-2) 
        (m-2-2) edge (m-2-3)
        (m-2-3) edge (m-2-4)
        (m-3-2) edge (m-3-3)
        (m-2-4) edge [out=355,in=175] (m-3-2);    
\end{tikzpicture}
\end{center}
    \label{cor:patched:global_long_exact}
\end{corollary}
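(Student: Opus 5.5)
This follows by combining Proposition \ref{prop:patched:long_profinite} with Proposition \ref{prop:BCP_global}, applied to the sequence of groups $G(K^\Sigma/K)=(\Gal(K^{n_k}/K))_{k\in\N}$, where $(n_k)_{k\in\N}$ represents the product $\kn$ of the ultraprimes in $\Sigma$.

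First, I will make the hypotheses of Proposition \ref{prop:patched:long_profinite} precise. There the bounded cohomology property (Definition \ref{def:BCP}) is a property of finite coefficient groups. For profinite $A$, $B$, $C$ I interpret it as the statement that every finite quotient $A/A'$, $B/B'$, $C/C'$ satisfies it, for every degree $i$. The patched cohomology of a profinite module is the inverse limit over finite quotients, so this is the form the limit argument needs.

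Next, I check this property via Proposition \ref{prop:BCP_global}. Let $M$ be one of $A$, $B$, $C$ and let $M/M'$ be a finite quotient. Then $M/M'$ is a finite $G_K$-module ramified only at finitely many primes, and since $\Sigma$ contains no ramifying prime, the action of $\Gal(K^{n_k}/K)$ on the relevant invariants is defined for $\UU$-many $k$. Proposition \ref{prop:BCP_global} then gives a constant $C_i$ with $\#H^i(K^{n_k}/K,M/M')\le C_i$ for $\UU$-many $k$, for each $i$; this is the input to Corollary \ref{cor:global_finite}.

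A subtlety is that Proposition \ref{prop:BCP_global} concerns $T^{\Gal(K^{n'}/K)}$ when $\Sigma$ contains constant ultraprimes, whereas the definition of $\bH^i(K^\kn/K,T)$ uses $T^{G_{K^{n_k}}}$. I will handle this as follows. If $\Sigma$ contains no constant ultraprime at which the modules ramify, then for $\UU$-many $k$ the groups $\Gal(K^{n_k}/K)$ act on the full modules and nothing more is needed. Otherwise I will argue that taking invariants under the fixed kernel does not affect exactness in the range needed, since the ramification is the same for $\UU$-many $k$. The hypothesis that $\Sigma$ avoids ramifying primes is exactly what makes the action on $A$, $B$, $C$ themselves defined. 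So the exact sequence $0\to A\to B\to C\to 0$ is a sequence of modules over the same sequence of groups, and the hypothesis of Proposition \ref{prop:patched:long_profinite} holds.

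The main obstacle is inside Proposition \ref{prop:patched:long_profinite}, namely passing to inverse limits. For each level $m$ in a cofinal system of open subgroups I get an exact sequence of finite quotients, and Proposition \ref{prop:patched:long_finite} gives a long exact sequence of patched groups. By Remark \ref{rem:BCP_finite_patched} these patched groups are finite, so the Mittag-Leffler condition holds and $\varprojlim$ is exact. The remaining point is to choose compatible exact sequences $0\to A/A_m\to B/B_m\to C/C_m\to 0$, with $A_m=A\cap B_m$ and $C_m$ the image of $B_m$, which are cofinal among finite quotients of each term. This is standard for profinite groups, and taking the limit yields the stated long exact sequence.
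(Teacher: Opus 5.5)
Your overall plan is the paper's own: its proof is just ``combine Proposition~\ref{prop:patched:long_profinite} with Proposition~\ref{prop:BCP_global}''. Your limit argument (finite patched groups, compatible cofinal quotients $A/(A\cap B_m)$, $B/B_m$, $C/\varepsilon(B_m)$, and exactness of $\varprojlim$ on finite groups) is fine.

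The gap is in the step where you check that $0\to A\to B\to C\to 0$ is an exact sequence of $\Gal(K^{n_k}/K)$-modules: your reasoning about ramification runs the wrong way. The group $\Gal(K^{n_k}/K)$ acts on a module $M$ itself exactly when $G_{K^{n_k}}$ acts trivially on $M$, i.e.\ when $M$ is unramified outside the primes dividing $n_k$. If $\Sigma$ contains no prime at which the modules ramify, then for $\UU$-many $k$ the modulus $n_k$ is prime to every ramified prime. So the definition of $\bH^i(K^\Sigma/K,M)$ only sees the submodule $M^{G_{K^{n_k}}}$, which is in general a proper one. Your claims that in this case the groups ``act on the full modules'', and that avoiding ramified primes ``is exactly what makes the action defined'', are therefore the reverse of the truth. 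Your fallback fails too. That the ramification is the same for $\UU$-many $k$ only shows that $M^{G_{K^{n_k}}}$ is independent of $k$. Invariants under a fixed nontrivial subgroup are only left exact, so exactness is lost.

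This is a real obstruction, not a technicality. Set up the example as follows:
\begin{itemize}
\item $K=\Q$ and $G=\Gal(\Q(\mu_{p^2})/\Q)$;
\item $B=\F_p[G]$, and $C=\F_p$ with trivial action, reached from $B$ by the augmentation map;
\item $A$ is the kernel of the augmentation.
\end{itemize}
These are finite $G_\Q$-modules ramified only at $p$. Take any $\Sigma$ not containing $p$.

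Every nontrivial subfield of $\Q(\mu_{p^2})$ is ramified at $p$, so $G_{K^{n_k}}$ surjects onto $G$ for $\UU$-many $k$. The modules entering the definition are then $A^G=B^G=\F_p\cdot\sum_{g\in G}g$ and $C^G=\F_p$; the norm element lies in $A$ because $p\mid\#G$. Two consequences follow:
\begin{itemize}
\item $\bH^i(K^\Sigma/K,A)\to\bH^i(K^\Sigma/K,B)$ is an isomorphism for all $i$;
\item $\bH^0(K^\Sigma/K,B)\to\bH^0(K^\Sigma/K,C)=\F_p$ is zero.
\end{itemize}
Exactness at $\bH^0(K^\Sigma/K,C)$ and at $\bH^1(K^\Sigma/K,A)$ would then force the connecting map on $\F_p$ to be both injective and zero.

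The hypothesis must therefore be read the other way round: $\Sigma$ contains every prime at which $A$, $B$, $C$ ramify. This is how the corollary is actually used later, e.g.\ with $\Sigma=\Sigma_\FF$ in Proposition~\ref{prop:patched:selmer_torsion_free}. Under that reading:
\begin{itemize}
\item $G_{K^{n_k}}$ acts trivially on all three modules for $\UU$-many $k$;
\item each of your finite-level sequences is an exact sequence of $\Gal(K^{n_k}/K)$-modules;
\item the module $T'$ of Proposition~\ref{prop:BCP_global} is the whole module.
\end{itemize}
With that correction, the rest of your argument goes through.
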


\begin{remark}
If $\Sigma_1\subset \Sigma_2\subset \UU(\MM_K)$ are two finite sets of ultraprimes. Then there is a natural map
\[\bH^i(K^{\Sigma_1}/K,T)\hookrightarrow \bH^i(K^{\Sigma_2}/K,T).\]
When $T$ is finite, it is induced by a sequence of inflation maps. The general case, when $T$ is profinite and ind-finite, follows by taking limits.
\label{rem:patched_global_maps}
\end{remark}

We can use the maps in Remark \ref{rem:patched_global_maps} to construct the absolute global patched cohomology group.
\begin{definition}(\cite[\textsection 2.4.6]{Sweeting})
The absolute global patched cohomology group is defined as 
\[\bH^1(K,T):=\varinjlim_{\Sigma\subset \UU(\MM_K)} \bH(K^{\Sigma}/K,T),\]
where the limit is taken over all the finite sets and the transition maps are the ones defined in Remark \ref{rem:patched_global_maps}.
\label{def:absolute_galois}
\end{definition}

\begin{remark}
Let $\Sigma$ be a finite set of ultraprimes and let $\ku$ be an ultraprime. There exists a restriction map
\[\res:\ \bH^1(K^\Sigma/K,T)\to \bH^1(K_\ku,T),\]
induced, when $T$ is finite, by the restriction map in every factor of the ultraproduct. When $T$ is profinite (resp. ind-finite), the restriction map is obtained as the limit of the restriction map in the cohomology of the finite quotients (resp. submodules).
\end{remark}

The patched cohomology unramified outside a set of primes $\Sigma$ can be recovered as the unramified part of the absolute global patched cohomology group.

\begin{proposition}(\cite[Proposition 2.4.11]{Sweeting})
Let $\Sigma\subset \UU(\MM_K)$ be a finite set of ultraprimes. Then 
\[\bH^1(K^\Sigma/K,T)=\ker\left(\bH^1(K,T)\to \prod_{\ku\in \UU(\MM_K)\setminus \Sigma} \frac{\bH^1(K_\ku,T)}{\bH^1(I_\ku,T)}\right).\]
\label{prop:patched:unramified}
\end{proposition}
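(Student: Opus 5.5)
We first treat the case where $T$ is finite, and then pass to profinite and ind-finite coefficients by taking limits. Throughout, the right-hand kernel is read inside $\bH^1(K^{\Sigma'}/K,T)$ for finite $\Sigma'\supset\Sigma$, since $\bH^1(K,T)$ is the direct limit of these groups (Definition \ref{def:absolute_galois}). The quotient $\bH^1(K_\ku,T)/\bH^1(I_\ku,T)$ should be read as the image of $\bH^1(K_\ku,T)$ in $\bH^1(I_\ku,T)$, i.e.\ as $\bH^1_\s(K_\ku,T)$. The restriction to $\ku$ only needs to be tested for $\ku\in\Sigma'\setminus\Sigma$, because classes in $\bH^1(K^{\Sigma'}/K,T)$ are automatically unramified at ultraprimes outside $\Sigma'$. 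So it suffices to prove, for every finite $\Sigma\subset\Sigma'$, the exactness of
\[0\to \bH^1(K^\Sigma/K,T)\to \bH^1(K^{\Sigma'}/K,T)\to \prod_{\ku\in \Sigma'\setminus\Sigma}\bH^1_\s(K_\ku,T).\]
Compatibility of these sequences with the transition maps of Remark \ref{rem:patched_global_maps} is clear, and direct limits are exact, so the statement then follows.

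For finite $T$, choose representatives $(n_k)$ of $\prod\Sigma$ and $(n'_k)$ of $\prod\Sigma'$ with $n_k\mid n'_k$. For $\UU$-many $k$, the primes dividing $n'_k/n_k$ are the components $\ell_k^{(j)}$ of the ultraprimes $\ku_j\in\Sigma'\setminus\Sigma$, are distinct from each other and from the primes dividing $n_k$, and are unramified in $T$ (for non-constant ultraprimes, and by hypothesis for constant ones). For each such $k$ the classical inflation–restriction sequence gives exactness of
\[0\to H^1(K^{n_k}/K,T)\to H^1(K^{n'_k}/K,T)\to \prod_{j}H^1(\II_{\ell_k^{(j)}},T).\]
This uses that $K^{n_k}$ is the fixed field in $K^{n'_k}$ of the normal subgroup generated by the inertia groups at primes dividing $n'_k/n_k$, together with $T^{G_{K^{n_k}}}=T$ once $T$ is unramified outside $n_k$ (after enlarging $\Sigma$ by the finitely many constant ramified primes, as in Proposition \ref{prop:BCP_global}).

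Next apply the ultraproduct. Since $\Sigma'\setminus\Sigma$ has a fixed finite cardinality, Remark \ref{rem:ultraproduct_product} lets us commute $\UU$ with the finite product. By Proposition \ref{prop:ultrafilter_exact} the ultraproduct is exact, which yields
\[0\to\bH^1(K^\Sigma/K,T)\to\bH^1(K^{\Sigma'}/K,T)\to\prod_{\ku\in\Sigma'\setminus\Sigma}\bH^1(I_\ku,T).\]
The last map factors through $\bH^1(K_\ku,T)$, so its kernel is the kernel into $\bH^1_\s(K_\ku,T)$, which is the required sequence.

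The main obstacle is passage to profinite $T$. Inverse limits are only left exact, but left exactness is exactly what we need: kernels commute with $\varprojlim$. The only subtlety is that the absolute group $\bH^1(K,T)$ is defined as a direct limit over $\Sigma'$ of inverse limits, so we must check that an element of the inverse limit lies in a single $\bH^1(K^{\Sigma'}/K,T)$. By definition it does, and the kernel statement is applied there. The bounded cohomology property (Proposition \ref{prop:BCP_global}, Remark \ref{rem:patched:BCP_local}) ensures that all terms are finite, so Mittag-Leffler holds and no $\varprojlim^1$ issues arise. The ind-finite case follows directly from exactness of $\varinjlim$.
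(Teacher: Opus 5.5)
The paper gives no proof of its own here; it only quotes Sweeting's Proposition 2.4.11. Your argument is the standard one and is correct. It runs: classical inflation--restriction for $K^{n_k}\subset K^{n'_k}$ at $\UU$-many indices; exactness of the ultraproduct; left exactness of $\varprojlim$ for profinite coefficients; exactness of $\varinjlim$ for ind-finite ones; and cofinality of the sets $\Sigma'\supseteq\Sigma$ in the direct limit defining $\bH^1(K,T)$. This is the same template the paper uses for the patched Poitou--Tate sequence and for Lemma~\ref{lem:structure_patching}. Since only kernels are involved, your appeal to Mittag-Leffler and the bounded cohomology property is unnecessary, though harmless.

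The one thing to fix is how you handle ramified primes. You cannot ``enlarge $\Sigma$ by the constant ramified primes'' as a reduction, because the left-hand side depends on $\Sigma$. Instead, the statement must carry the hypothesis that $\Sigma$ contains every constant place at which $T$ ramifies. Every $\Sigma$ the paper uses satisfies this (cf.\ Definition~\ref{def:patched:selmer}), and without it the identity is false. For example, take $K=\Q$, $\Sigma=\emptyset$, and $T=\Z/3\Z$ twisted by the quadratic character of $\Q(\sqrt{-23})$. The left-hand side vanishes, since $\Q^{1}=\Q$ by Minkowski and $T^{G_\Q}=0$. But the class cut out by the Hilbert class field of $\Q(\sqrt{-23})$ is a non-zero element of $H^1(\Q,T)$ that is unramified at every place, so it lies in the right-hand side. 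Once you state the hypothesis explicitly, your proof goes through as written. That hypothesis is exactly what gives $T^{G_{K^{n_k}}}=T$ and makes the primes of $n'_k/n_k$ unramified in $T$ for $\UU$-many $k$.
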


Local and glocal patched cohomology also satisfy an analogue of Poitou-Tate $9$-term exact sequence.

\begin{proposition}
Let $T$ be either a finite, profinite or ind-finite group endowed with an action of $G_K$ and let $\Sigma$ be a finite set of ultraprimes containing all the archimedean ones, p-adic ones and all ramifying primes of $T$. Then there is a  $9$-term exact sequence
\begin{center}
\begin{tikzpicture}[descr/.style={fill=white,inner sep=1.5pt}]
        \matrix (m) [
            matrix of math nodes,
            row sep=3.5em,
            column sep=0.8em,
            text height=1.5ex, text depth=0.25ex
        ]
        {  0& \bH^0(K^{\Sigma}/K,T) & \displaystyle{\prod_{\ku\in \Sigma}} \bH^0(K_\ku,T) & \bH^2(K^{\Sigma}/K,T^*)^\vee \\
            & \bH^1(K^{\Sigma}/K,T) & \displaystyle{\prod_{\ku\in \Sigma}} \bH^1(K_\ku,T) & \bH^1(K^{\Sigma}/K,T^*)^\vee &\\
            &\bH^2(K^{\Sigma}/K,T) & \displaystyle{\prod_{\ku\in \Sigma}} \bH^0(K_\ku,T^*)^\vee & \bH^0(K^{\Sigma}/K,T^*)^\vee & 0.\\
        };

        \path[overlay,->, font=\scriptsize,>=latex]
        (m-1-1) edge (m-1-2)
        (m-1-2) edge (m-1-3)
        (m-1-3) edge (m-1-4)
        (m-1-4) edge [out=355,in=175] (m-2-2) 
        (m-2-2) edge (m-2-3)
        (m-2-3) edge (m-2-4)
        (m-3-2) edge (m-3-3)
        (m-2-4) edge [out=355,in=175] (m-3-2)
        (m-3-3) edge (m-3-4)
        (m-3-4) edge (m-3-5)
        ;    
\end{tikzpicture}
\end{center}
\label{prop:patched:poitou-tate}
\end{proposition}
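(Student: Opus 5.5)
We argue first for finite $T$ and then pass to profinite and ind-finite coefficients by limits. Let $(\ku^{(1)},\ldots,\ku^{(s)})$ enumerate $\Sigma$, with representing sequences $\ku^{(j)}=(\ell^{(j)}_n)_{n\in\N}$, and put $\Sigma_n=\{\ell^{(1)}_n,\ldots,\ell^{(s)}_n\}$. The constant ultraprimes of $\Sigma$ (archimedean, $p$-adic and ramified primes of $T$) give the same prime in every $\Sigma_n$. Hence $\Sigma_n$ contains all archimedean primes, all $p$-adic primes and all ramified primes of $T$ for every $n$. Because $\UU$ is closed under finite intersections, the $\ell^{(j)}_n$ are pairwise distinct for $\UU$-many $n$. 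For each such $n$, classical Poitou--Tate for $K^{\Sigma_n}/K$ gives a $9$-term exact sequence of finite groups.

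We then apply the ultraproduct $\UU_n$ to these sequences. By Proposition \ref{prop:ultrafilter_exact} the ultraproduct is exact, so the result is still exact. The next task is to identify each term.
\begin{itemize}
\item $\UU_n H^i(K^{\Sigma_n}/K,T)=\bH^i(K^\Sigma/K,T)$ holds by definition.
\item Since $\Sigma$ is finite, Remark \ref{rem:ultraproduct_product} gives $\UU_n\prod_j H^i(K_{\ell^{(j)}_n},T)=\prod_{\ku\in\Sigma}\bH^i(K_\ku,T)$.
\item For the dual terms we need $\UU_n\bigl(H^{2-i}(K^{\Sigma_n}/K,T^*)^\vee\bigr)\cong \bigl(\UU_n H^{2-i}(K^{\Sigma_n}/K,T^*)\bigr)^\vee$. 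By Proposition \ref{prop:BCP_global} and Corollary \ref{cor:ultrafilter_finite_modules}, the groups $H^{2-i}(K^{\Sigma_n}/K,T^*)$ are isomorphic to a fixed finite group $N$ for $\UU$-many $n$. Both sides are therefore isomorphic to $N^\vee$, via Lemma \ref{lem:ultrafilter_finite_diagonal} and Proposition \ref{prop:ultraproduct_hom} (with constant finite target $\Q/\Z[\#T]$).
\item The same argument applies to the local terms $H^0(K_\ku,T^*)^\vee$, using Remark \ref{rem:patched:BCP_local}.
\end{itemize}
Finally, the connecting maps are the ultraproducts of the classical ones, so we must check that the identifications above commute with them. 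This follows from functoriality of $\Psi$ in Proposition \ref{prop:ultraproduct_hom}.

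We then pass to general $T$. For profinite $T$ we take inverse limits over the finite quotients $T/T'$; the Cartier dual of a profinite module is ind-finite, and the Pontryagin dual of a direct limit is an inverse limit. Exactness of the inverse limit follows because every term is finite: by Corollary \ref{cor:global_finite} and the BCP, the system consists of finite groups and hence satisfies Mittag-Leffler. For ind-finite $T$ we take direct limits, which are exact.

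The main obstacle is the step identifying ultraproducts of duals with duals of ultraproducts compatibly with the maps. In particular, the isomorphism must be canonical and not merely a $\UU$-many-$n$ abstract isomorphism. We plan to make it canonical by using the natural map $\UU_n\Hom(A_n,\Q/\Z[m])\to\Hom(\UU_nA_n,\Q/\Z[m])$ of Proposition \ref{prop:ultraproduct_hom}, which is injective. Bounded cardinality then forces the two sides to have equal finite orders, so this map is bijective.
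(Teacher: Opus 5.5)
Your proposal is correct and follows the same route as the paper: you take the ultraproduct of the classical Poitou--Tate sequences for the representatives $\Sigma_n$, use exactness of $\UU$, and then pass to inverse or direct limits for profinite or ind-finite coefficients. You also supply details the paper leaves implicit. These are that the $\ell^{(j)}_n$ are pairwise distinct for $\UU$-many $n$, and that the natural map $\UU_n(H_n^\vee)\to(\UU_n H_n)^\vee$ is bijective by injectivity of $\Psi$ plus the bounded cohomology property (boundedness is genuinely needed for that step). The third is that the inverse limit is exact because all terms are finite.
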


\begin{proof}
When $T$ is finite, it follows from Poitou-Tate duality in classical Galois cohomology (\cite[Theorem 8.4.4]{NSW}) and the exactness of the ultraproduct in Proposition \ref{prop:ultrafilter_exact}. When $T$ is either profinite or ind-finite, it follows by taking limits.
\end{proof}

\section{Patched Selmer groups}
\label{sec:selmer}

\subsection{Patched Selmer structures}

Following \cite{Sweeting}, we can now define the Selmer structures in this setting. The main innovation is that they also include local conditions at non-constant ultraprimes. However, classical Selmer structures can be recovered when the local condition is the finite cohomology group for every non-constant ultraprime.

\begin{definition}
A \emph{patched Selmer structure} $\FF$ consists of the following data:
\begin{itemize}
\item A finite set $\Sigma_\FF$ of $\UU(\MM_K)$ containing all constant ultraprimes lying over $p$, $\infty$ and ramified (constant) ultraprimes of $T$.
\item For each $\ku\in \Sigma_{\FF}$, a closed $R$-submodule
\[\bH^1_\FF(K_\ku,T) \subset \bH^1(K_\ku,T).\]
\end{itemize}
\label{def:patched:selmer}
\end{definition}

\begin{definition}
The Selmer module of a Selmer structure $\FF$ is defined as
\[\bH^1_{\FF}(K,T):=\ker\Biggl(\bH^1(K^{\Sigma_\FF}/K,T) \to \prod_{\ku\in \Sigma_\FF} \frac{\bH^1(K_\ku,T)}{\bH^1_{\FF}(K_\ku,T)}\Biggr).\]
\end{definition}

\begin{remark}
By Proposition \ref{prop:patched:unramified}, a Selmer structure depends only on the local conditions. If we set $\bH^1_{\FF}(K_\ku,T):=\bH^1_\f(K_\ku,T)$ whenever $\ku\notin \Sigma_\FF$, then 
\[\bH^1_{\FF}(K,T)=\ker\Biggl(\bH^1(K,T) \to \prod_{\ku\in \UU(\MM_K)} \frac{\bH^1(K_\ku,T)}{\bH^1_{\FF}(K_\ku,T)}\Biggr).\]
\label{rem:patched:sigma_independent}
\end{remark}

\begin{remark}
The global patched cohomology groups $\bH^1(K^\kn/K)$ can be recovered as the Selmer group of the maximal, everywhere unramified outside $\kn$ Selmer structure $\FF^{\ur,\kn}$, defined by the local conditions
\[\bH^1_{\FF^{\ur,\kn}}(K_\ku,T)=\left\{\begin{aligned}
&\bH^1_\f(K_\ku,T)\ \textrm{if $\ku\nmid \kn$},\\
&\bH^1(K_\ku,T)\ \textrm{if $\ku\mid \kn$}.
\end{aligned}\right.\]
\end{remark}

Local conditions propagate to quotients and submodules in the following way.

\begin{definition}
Let $\ T\hookrightarrow T''$ be a quotient map. For every ultraprime $\ku$, it induces a map
\[\varepsilon:\ \bH^1(K,T)\to\bH^1(K,T'')\]

A local condition at $T$ propagates to $T'$ as
\[\bH^1_\FF(K_\ku,T''):=\varepsilon\Bigl(\bH^1_\FF(K_\ku,T)\Bigr).\]
\label{def:propagation_quotients}
\end{definition}

\begin{definition}
Let $\ T'\twoheadrightarrow T$ be a submodule. For every ultraprime $\ku$, this inclusion induces a map
\[\mu:\ \bH^1(K_\ku,T')\to \bH^1(K_\ku,T).\]

A local condition at $T$ propagates to $T'$ as
\[\bH^1_\FF(K_\ku,T'):=\mu^{-1}\Bigl(\bH^1_\FF(K_\ku,T)\Bigr).\]
\label{def:propagation_submodules}
\end{definition}

Then Selmer groups can be recovered as the limit of the propagated Selmer groups with finite coefficients.

\begin{proposition}(\cite[Proposition 2.5.6]{Sweeting})
Let $T$ be a countably profinite abelian group and let $\FF$ be a Selmer structure defined on $T$. Then
\[\bH^1_{\FF}(K,T)=\varprojlim_{T\twoheadrightarrow T'} \bH^1_{\FF}(K,T'),\]
where the limit is taken over the finite quotients $T'$ of $T$ and the Selmer group $\bH^1_{\FF}(K,T')$ is obtained from the propagated Selmer structure, as defined in Definition \ref{def:propagation_quotients}.
\label{prop:selmer_profinite}
\end{proposition}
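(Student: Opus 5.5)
We prove that $\bH^1_\FF(K,T)=\varprojlim_{T\twoheadrightarrow T'}\bH^1_\FF(K,T')$ by reducing to the definition of patched cohomology with profinite coefficients and commuting kernels with inverse limits. Fix the finite set $\Sigma=\Sigma_\FF$. By Definition \ref{def:patched_cohomology}, $\bH^1(K^\Sigma/K,T)=\varprojlim_{T'}\bH^1(K^\Sigma/K,T')$ and $\bH^1(K_\ku,T)=\varprojlim_{T'}\bH^1(K_\ku,T')$ for each $\ku\in\Sigma$, the limits running over finite quotients $T'$. Since $T$ is countably profinite, we may replace the index system by a cofinal sequence $T=\varprojlim_n T_n$ with surjective transition maps. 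The Selmer group is the kernel of $\bH^1(K^\Sigma/K,T)\to\prod_{\ku\in\Sigma}\bH^1(K_\ku,T)/\bH^1_\FF(K_\ku,T)$, and inverse limits are left exact. It therefore suffices to identify the local quotients compatibly:
\[\bH^1(K_\ku,T)/\bH^1_\FF(K_\ku,T)\;\hookrightarrow\;\varprojlim_n \bH^1(K_\ku,T_n)/\bH^1_\FF(K_\ku,T_n),\]
where $\bH^1_\FF(K_\ku,T_n)$ is the image of $\bH^1_\FF(K_\ku,T)$ as in Definition \ref{def:propagation_quotients}.

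The key step is to show that a class $c\in\bH^1(K_\ku,T)$ whose image lies in $\bH^1_\FF(K_\ku,T_n)$ for every $n$ already lies in $\bH^1_\FF(K_\ku,T)$. This gives injectivity of the displayed map, and left exactness then identifies the kernel with $\varprojlim_n\bH^1_\FF(K,T_n)$.

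For this step, let $\pi_n:\bH^1(K_\ku,T)\to\bH^1(K_\ku,T_n)$ be the projection. For each $n$ the set $A_n=\pi_n^{-1}(\pi_n(c))\cap\bH^1_\FF(K_\ku,T)$ is nonempty. Each $\bH^1(K_\ku,T_n)$ is finite: use Remark \ref{rem:patched:BCP_local} and Corollary \ref{cor:patched:local_constant} for nonconstant or unramified $\ku$, and classical local finiteness for constant ones. Hence $\bH^1(K_\ku,T)$, being the inverse limit, is profinite and the $\pi_n$ are continuous. Since $\bH^1_\FF(K_\ku,T)$ is closed by Definition \ref{def:patched:selmer}, each $A_n$ is a nonempty closed, hence compact, subset. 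The $A_n$ are decreasing, so by compactness $\bigcap_n A_n\neq\emptyset$. Any element of the intersection agrees with $c$ modulo every $T_n$, so it equals $c$, because $\bH^1(K_\ku,T)=\varprojlim\bH^1(K_\ku,T_n)$. Thus $c\in\bH^1_\FF(K_\ku,T)$.

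Finally, we check that the global maps are compatible. The restriction maps commute with the projections $T\to T_n$ by functoriality, and the propagated Selmer groups form an inverse system because images of $\bH^1_\FF(K_\ku,T)$ map onto each other. Combining this with the local identification and left exactness yields the claimed equality. I expect the main obstacle to be the compactness argument: one must justify that $\bH^1(K_\ku,T_n)$ is finite and that $\bH^1(K_\ku,T)$ carries the inverse-limit topology in which the local conditions are closed. Countability is used only to pass to a cofinal sequence, which simplifies the argument but is not essential.
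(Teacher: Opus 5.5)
Your proof is correct. The paper does not prove this statement; it only cites \cite[Proposition 2.5.6]{Sweeting}. Your argument supplies the standard reasoning behind that citation: apply left exactness of $\varprojlim$ to the defining sequence over $\bH^1(K^{\Sigma_\FF}/K,-)$, and use the fact that a closed local condition $F=\bH^1_\FF(K_\ku,T)$ satisfies $F=\bigcap_n \pi_n^{-1}(\pi_n(F))$. You also correctly identify closedness in the inverse-limit topology as the hypothesis that makes the statement work.

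One minor simplification: you do not need finiteness of $\bH^1(K_\ku,T_n)$ or the compactness argument. In any inverse limit of discrete sets, the closure of a subset $A$ is $\bigcap_n\pi_n^{-1}(\pi_n(A))$, so your key step follows from closedness alone.
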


\begin{proposition}(\cite[Proposition 2.5.6]{Sweeting})
Let $T$ be an ind-finite group and let $\FF$ be a Selmer structure defined on $T$. Then
\[\bH^1_{\FF}(K,T)=\varinjlim_{T'\hookrightarrow  T} \bH^1_{\FF}(K,T'),\]
where the limit is taken over the finite submodules $T'$ of $T$ and $\bH^1_{\FF}(K,T')$ is the Selmer group of the propagated Selmer structure, as defined in Definition \ref{def:propagation_submodules}.
\label{prop:selmer_indfinite}
\end{proposition}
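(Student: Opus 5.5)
The plan is to reduce to finite submodules and then commute the direct limit with the kernel defining the Selmer group. First, by Definition \ref{def:patched_cohomology} and Definition \ref{def:absolute_galois}, the global and local patched cohomology groups with ind-finite coefficients are direct limits over finite submodules $T'\subset T$:
\[\bH^1(K^{\Sigma_\FF}/K,T)=\varinjlim_{T'}\bH^1(K^{\Sigma_\FF}/K,T'),\qquad \bH^1(K_\ku,T)=\varinjlim_{T'}\bH^1(K_\ku,T').\]
Fix the finite set $\Sigma_\FF$ once and for all. It contains all ramified constant ultraprimes of $T$, hence of every $T'$, so the propagated structure on each $T'$ (Definition \ref{def:propagation_submodules}) may be taken with the same $\Sigma_\FF$. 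Then each $\bH^1_\FF(K,T')$ is the kernel of
\[\bH^1(K^{\Sigma_\FF}/K,T')\longrightarrow\prod_{\ku\in\Sigma_\FF}\bH^1(K_\ku,T')/\bH^1_\FF(K_\ku,T').\]

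Next, I would show that the propagated local conditions form a direct system whose limit is $\bH^1_\FF(K_\ku,T)$. Compatibility is immediate: if $T'\subset T''$, then $\mu_{T'}^{-1}$ of the condition at $T$ maps into $\mu_{T''}^{-1}$ of it. Every class in $\bH^1_\FF(K_\ku,T)$ comes from some finite $T'$, because the limit is over finite submodules. Such a class then lies in the propagated condition for that $T'$ by the definition of $\mu^{-1}$. So
\[\varinjlim_{T'}\bH^1_\FF(K_\ku,T')\to\bH^1_\FF(K_\ku,T)\]
is surjective, and injectivity is inherited from the ambient limit. The quotients $\bH^1(K_\ku,T')/\bH^1_\FF(K_\ku,T')$ therefore have direct limit $\bH^1(K_\ku,T)/\bH^1_\FF(K_\ku,T)$, since filtered colimits are exact in abelian groups. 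The index set $\Sigma_\FF$ is finite, so the product is a finite direct sum and also commutes with the colimit.

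Finally, exactness of filtered direct limits applied to the kernel sequences gives
\[\varinjlim_{T'}\bH^1_\FF(K,T')=\ker\Bigl(\bH^1(K^{\Sigma_\FF}/K,T)\to\prod_{\ku\in\Sigma_\FF}\bH^1(K_\ku,T)/\bH^1_\FF(K_\ku,T)\Bigr)=\bH^1_\FF(K,T).\]

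The main obstacle is the first step: checking that the patched cohomology of the ind-finite module really is the filtered colimit in a way compatible with restriction maps. In particular, the transition maps $\bH^1(K^{\Sigma}/K,T')\to\bH^1(K^{\Sigma}/K,T'')$ need not be injective, and one must handle this. I would argue that this is built into Definition \ref{def:patched_cohomology}, with restriction maps defined compatibly at each finite level, so only exactness of colimits is needed and no injectivity. The bounded cohomology property (Proposition \ref{prop:BCP_global}) would only be invoked to justify the identification at finite levels.
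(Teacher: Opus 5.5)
Your argument is correct. The paper gives no proof of its own and simply cites Sweeting's Proposition 2.5.6; your route (show that the propagated local conditions $\mu^{-1}\bigl(\bH^1_\FF(K_\ku,T)\bigr)$ have direct limit $\bH^1_\FF(K_\ku,T)$, then apply exactness of filtered colimits to the finite-product kernel sequence) is the standard formal proof, and it matches how the paper itself treats ind-finite coefficients, e.g.\ in the proof of global duality.

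One point deserves a line of justification. Keeping the same $\Sigma_\FF$ for every $T'$ is harmless because, for $\ku\notin\Sigma_\FF$, inertia acts trivially on $T$. Hence $\bH^1(I_\ku,T')\hookrightarrow\bH^1(I_\ku,T)$, and therefore $\mu^{-1}\bigl(\bH^1_\f(K_\ku,T)\bigr)=\bH^1_\f(K_\ku,T')$, so the propagated condition outside $\Sigma_\FF$ is the finite one.
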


\subsection{Dual patched Selmer structures}

In this section, we extend the theory in \cite{Sweeting} to construct a global duality exact sequence of patched Selmer groups.
\begin{definition}[Dual Selmer structure]
    Let $\FF$ be a Selmer structure on $T$. We can define a \emph{dual Selmer structure} $\FF^*$ on the Cartier dual $T^*$ by defining the local condition $\bH^1_{\FF^*}(K_\ku,T^*)$ as the annihilator of $\bH^1_{\FF}(K_\ku,T)$ under the local duality pairing in Proposition \ref{prop:patched:local_duality}
    \label{def:patched:dual_structure}
\end{definition}

This construction can be used to obtain a global duality exact sequence in the patched setting.

\begin{proposition}(Global duality)
Let $\FF\leq\GG$ be two Selmer structures. Then there is a global duality exact sequence (recall Notation \ref{not:dualities}):

\begin{center}
\begin{tikzpicture}[descr/.style={fill=white,inner sep=1.5pt}]
        \matrix (m) [
            matrix of math nodes,
            row sep=4em,
            column sep=1em,
            text height=1.5ex, text depth=0.25ex
        ]
        {  0& \bH^1_{\FF}(K,T) & \bH^1_{\GG}(K,T) & \displaystyle{\bigoplus_{\ku\in \Sigma_\FF\cup\Sigma_\GG} \frac{\bH^1_{\GG}(K_\ku,T)}{\bH^1_{\FF}(K_\ku,T)}} \\
            & \bH^1_{\FF^*}(K,T^*)^\vee & \bH^1_{\GG^*}(K,T^*)^\vee& 0. &\\
        };

        \path[overlay,->, font=\scriptsize,>=latex]
        (m-1-1) edge (m-1-2)
        (m-1-2) edge (m-1-3)
        (m-1-3) edge (m-1-4)
        (m-1-4) edge [out=355,in=175] (m-2-2) 
        (m-2-2) edge (m-2-3)
        (m-2-3) edge (m-2-4)
        ;    
\end{tikzpicture}
\end{center}
\label{prop:patched:global_duality}
\end{proposition}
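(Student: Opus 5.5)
The plan is to follow the classical argument for the global duality sequence (e.g.\ \cite[Theorem 2.3.4]{MazurRubin}): reduce to finite coefficients, apply the patched Poitou--Tate sequence of Proposition \ref{prop:patched:poitou-tate}, and then pass to the limit. First assume $T$ is finite. Choose a finite set $\Sigma$ of ultraprimes containing $\Sigma_\FF\cup\Sigma_\GG$, the archimedean and $p$-adic places, and the ramified primes of $T$. By Remark \ref{rem:patched:sigma_independent}, both Selmer groups live inside $\bH^1(K^\Sigma/K,T)$, and the dual Selmer groups live inside $\bH^1(K^\Sigma/K,T^*)$. The exactness of
\[0\to \bH^1_\FF(K,T)\to \bH^1_\GG(K,T)\to \bigoplus_{\ku}\bH^1_\GG(K_\ku,T)/\bH^1_\FF(K_\ku,T)\]
is immediate from the definitions, because $\FF\le \GG$ and the two structures share the same unramified conditions outside $\Sigma_\FF\cup\Sigma_\GG$.

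For the remaining part I will use the middle row of Proposition \ref{prop:patched:poitou-tate}. There the composite $\bH^1(K^\Sigma/K,T)\to\prod_{\ku\in\Sigma}\bH^1(K_\ku,T)\to \bH^1(K^\Sigma/K,T^*)^\vee$ is exact, and the second map is dual to restriction, via the pairing of Proposition \ref{prop:patched:local_duality}. Define the connecting map $\delta$ by lifting a tuple of classes in $\bH^1_\GG(K_\ku,T)/\bH^1_\FF(K_\ku,T)$ to $\prod_\ku \bH^1_\GG(K_\ku,T)$ and pairing it with global classes in $\bH^1_{\FF^*}(K,T^*)$. This is well defined because $\bH^1_\FF$ and $\bH^1_{\FF^*}$ annihilate each other locally, and the global reciprocity law (the sum of the local invariants of a global cup product is zero) follows from the fact that the Poitou--Tate composite is zero.

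The three remaining exactness checks then go as follows.
\begin{enumerate}[(i)]
\item \emph{Exactness at the local term.} A tuple pairing trivially with $\bH^1_{\FF^*}(K,T^*)$ comes from a global class, using exactness of Poitou--Tate at $\prod\bH^1(K_\ku,T)$ with $\FF^*$ in place of the unrestricted condition. Equivalently, I would run the argument symmetrically: apply Poitou--Tate to the Selmer structure $\FF^*$, whose dual is $\FF$, since double annihilators coincide for finite modules by Proposition \ref{prop:patched:local_duality}.
\item \emph{Exactness at $\bH^1_{\FF^*}(K,T^*)^\vee$.} The kernel of the restriction map $\bH^1_{\FF^*}(K,T^*)^\vee\to\bH^1_{\GG^*}(K,T^*)^\vee$ is the dual of $\bH^1_{\FF^*}(K,T^*)/\bH^1_{\GG^*}(K,T^*)$. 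This quotient injects into $\bigoplus \bH^1_{\FF^*}(K_\ku,T^*)/\bH^1_{\GG^*}(K_\ku,T^*)$, which is dual to $\bigoplus\bH^1_\GG(K_\ku,T)/\bH^1_\FF(K_\ku,T)$. Dualising this injection gives the required surjection onto the kernel.
\item \emph{Surjectivity at the end.} This is simply the dual of the injectivity $\bH^1_{\GG^*}(K,T^*)\hookrightarrow \bH^1_{\FF^*}(K,T^*)$.
\end{enumerate}

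Alternatively, for finite $T$ the whole sequence can be obtained at once. By Corollary \ref{cor:patched:local_constant} and Corollary \ref{cor:ultrafilter_finite_modules}, all the groups involved are, for $\UU$-many $n$, isomorphic to the classical groups at level $n_k$. So the sequence is an ultraproduct of classical global duality sequences, and it is exact by Proposition \ref{prop:ultrafilter_exact}. I would use this as the primary route, keeping the argument above as a consistency check on the maps.

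Finally, for profinite $T$ I pass to the inverse limit over finite quotients, using Proposition \ref{prop:selmer_profinite}. Dually, $T^*$ is ind-finite and its Selmer groups are direct limits by Proposition \ref{prop:selmer_indfinite}, so their Pontryagin duals are inverse limits. All terms at finite level are finite (Corollary \ref{cor:global_finite}, Remark \ref{rem:BCP_finite_patched}), so the Mittag-Leffler condition holds and the inverse limit of exact sequences stays exact. The ind-finite case is handled in the same way using direct limits, which are exact.

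The main obstacle is the compatibility of the propagated local conditions with duality. The annihilator of the propagated condition on a quotient $T/\m^kT$ must be the propagated condition on the corresponding submodule of $T^*$. Without this, the finite-level sequences do not assemble into the limit. I would verify it directly from the functoriality of the local pairing under $T\twoheadrightarrow T''$ and $T''^*\hookrightarrow T^*$, as in \cite[Lemma I.3.8]{Rubin}.
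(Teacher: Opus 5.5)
Your primary route is exactly the paper's proof: for finite $T$, write the sequence as an ultraproduct of classical global duality sequences, exact by Proposition \ref{prop:ultrafilter_exact}, and then pass to inverse/direct limits. The one step the paper spells out that you compress is that termwise abstract isomorphisms do not by themselves make the sequence an ultraproduct. One must define classical structures $\FF_i\leq\GG_i$ by pulling the patched local conditions back along the isomorphisms $\varphi_i^{\ku}$ (Lemma \ref{lem:structure_patching}), and check via Proposition \ref{prop:ultraproduct_hom} that $\FF^*$ is the ultraproduct of the $\FF_i^*$ (Lemma \ref{lem:patching_structures_dual}); the compatibility of duality with propagation that you verify for the limit step is likewise used implicitly in the paper.
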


\begin{proof}
Assume first that $T$ is finite. Let $\mathfrak n=(n_k)_{k\in \N}$ be the square-free product of all ultraprimes in $\Sigma_\FF\cup \Sigma_{\GG}$. By Lemma \ref{lem:structure_patching} below, there are sequences of classical Selmer structures $(\FF_i)$ and $(\GG_i)$, where $\FF_i\leq \GG_i$, and 
\[\begin{array}{cc}
\bH^1_{\FF}(K_\ku,T)=\UU_i\Bigl(H^1_{\FF_i}(K_{\ell_i},T)\Bigr),\ \bH^1_{\GG}(K_\ku,T)=\UU_i\Bigl(H^1_{\GG_i}(K_{\ell_i},T)\Bigr).
\end{array}\]

In addition, Lemma \ref{lem:structure_patching} below also gives that
\[\begin{array}{cc}
\bH^1_{\FF}(K,T)=\UU_i\Bigl(H^1_{\FF_i}(K,T)\Bigr),\ \bH^1_{\GG}(K,T)=\UU_i\Bigl(H^1_{\GG_i}(K,T)\Bigr).
\end{array}\]

By Proposition \ref{prop:ultrafilter_exact} and Lemma \ref{lem:patching_structures_dual} below, the dual Selmer groups can be also obtained as an ultraproduct of classical Selmer groups:
\[\begin{aligned}
&\bH^1_{\FF^*}(K,T^*)=\UU_i\Bigl(H^1_{(\FF_i)^*}(K,T^*)\Bigr),\\ 
&\bH^1_{\GG^*}(K,T^*)=\UU_i\Bigl(H^1_{(\GG_i)^*}(K,T^*)\Bigr).
\end{aligned}\]

Then the global duality exact sequence is

\begin{center}
\begin{tikzpicture}[descr/.style={fill=white,inner sep=1.5pt}]
        \matrix (m) [
            matrix of math nodes,
            row sep=6em,
            column sep=1.5em,
            text height=1.5ex, text depth=0.25ex
        ]
        {  0& \UU\Bigl(H^1_{\FF_i}(K,T)\Bigr) & \UU\Bigl(H^1_{\GG_i}(K,T)\Bigr) & \UU\left(\displaystyle{\bigoplus_{\ell_i\mid n_i} \frac{H^1_{\GG_i}(K_{\ell_i},T)}{H^1_{\FF_i}(K_{\ell_i},T)}}\right) \\
            & \UU\Bigl(H^1_{\FF^*}(K,T^*)^\vee\Bigr) & \UU\Bigl(H^1_{\GG^*}(K,T^*)^\vee\Bigr)& 0, &\\
        };

        \path[overlay,->, font=\scriptsize,>=latex]
        (m-1-1) edge (m-1-2)
        (m-1-2) edge (m-1-3)
        (m-1-3) edge (m-1-4)
        (m-1-4) edge [out=355,in=175] (m-2-2) 
        (m-2-2) edge (m-2-3)
        (m-2-3) edge (m-2-4)
        ;    
\end{tikzpicture}
\end{center}
which is exact by Proposition \ref{prop:ultrafilter_exact}.

When $T$ is profinite (resp. ind-finite), then Proposition \ref{prop:selmer_profinite} (resp. Proposition \ref{prop:selmer_indfinite}) implies that the above exact sequence can be obtained as the inverse (resp. direct) limit of the associated exact sequences for the finite quotients (resp. submodules) of $T$. The proof is thus concluded since the inverse limit of finite groups (resp. direct limit) is an exact functor.
\end{proof}

\begin{lemma}
Let $\FF\leq \GG$ be two Selmer structures defined on a finite Galois module $T$. Then there are sequences of classical Selmer structures $(\FF_i)$ and $(\GG_i)$ such that $\FF_i\leq \GG_i$ for all $i\in \N$ and for any representative $(\ell_i)_{i\in \N}$ of an ultraprime $\ku$,
\[\begin{array}{cc}
\bH^1_{\FF}(K_\ku,T)=\UU\Bigl(H^1_{\FF_i}(K_{\ell_i},T)\Bigr),\ \bH^1_{\GG}(K_\ku,T)=\UU\Bigl(H^1_{\GG_i}(K_{\ell_i},T)\Bigr).
\end{array}\]
Moreover, we can construct the patched Selmer group as the ultraproduct of classical Selmer groups
\[\begin{array}{cc}
\bH^1_{\FF}(K,T)=\UU\Bigl(H^1_{\FF_i}(K,T)\Bigr),\ \bH^1_{\GG}(K,T)=\UU\Bigl(H^1_{\GG_i}(K,T)\Bigr).
\end{array}\]
\label{lem:structure_patching}
\end{lemma}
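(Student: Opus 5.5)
The plan is to build the classical Selmer structures factor by factor from the ultraproduct description of the local patched cohomology, and then to transfer the global statement through exactness of the ultraproduct.

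\emph{Local step.} Let $\kn=(n_i)_{i\in\N}$ be the square-free product of the ultraprimes in $\Sigma_\FF\cup\Sigma_\GG$. Take $\ku\in\Sigma_\FF\cup\Sigma_\GG$ with representative $(\ell_i)$. By Corollary \ref{cor:patched:local_constant} (or directly from the definition, since $T$ is finite and has the bounded cohomology property, Remark \ref{rem:patched:BCP_local}), $\bH^1(K_\ku,T)=\UU_i(H^1(K_{\ell_i},T))$, and the groups $H^1(K_{\ell_i},T)$ have bounded order.

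\emph{Choosing the submodules.} Because $T$ is finite, $\bH^1(K_\ku,T)$ is finite. Fix a set $S\in\UU$ on which the maps $\varphi_i^\ku$ of Corollary \ref{cor:patched:local_constant} are isomorphisms. For $i\in S$, set $H^1_{\FF_i}(K_{\ell_i},T):=\varphi_i^\ku\bigl(\bH^1_\FF(K_\ku,T)\bigr)$, and define $\GG_i$ the same way.
\begin{itemize}
\item For $i\notin S$, take arbitrary conditions with $\FF_i\le\GG_i$, for instance the finite ones.
\item The inclusion $\FF_i\le\GG_i$ on $S$ follows from $\FF\le\GG$.
\item At primes $\ell_i$ not dividing $n_i$, use the finite (unramified) condition.
\end{itemize}
The key verification is that the ultraproduct of these subgroups recovers the given patched condition. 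One must check that the isomorphism $\bH^1(K_\ku,T)\cong\UU(H^1(K_{\ell_i},T))$ is compatible with the $\varphi_i^\ku$. That is, for a patched class $x=(x_i)$, we need $\varphi_i^\ku(x)=x_i$ for $\UU$-many $i$. I expect this holds by construction in Corollary \ref{cor:ultrafilter_finite_modules}, where $\varphi_i$ is induced by identifying $\UU$-many factors with one fixed finite module. Granting it, $\UU_i\bigl(H^1_{\FF_i}(K_{\ell_i},T)\bigr)$ and $\bH^1_\FF(K_\ku,T)$ agree, because a subgroup of a finite module is determined by finitely many elements.

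\emph{Independence of the representative.} Two representatives of $\ku$ agree on a set in $\UU$, so the choices can be made compatibly. Since $\Sigma_\FF\cup\Sigma_\GG$ is finite, we intersect finitely many sets in $\UU$ using \ref{filter_intersection}.

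\emph{Global step.} By definition and Proposition \ref{prop:BCP_global},
\[\bH^1(K^{\kn}/K,T)=\UU_i\bigl(H^1(K^{n_i}/K,T)\bigr).\]
The patched restriction maps are the ultraproducts of the classical ones. The product over $\ku\mid\kn$ of $\bH^1(K_\ku,T)/\bH^1_\FF(K_\ku,T)$ is the ultraproduct of $\bigoplus_{\ell\mid n_i}H^1(K_\ell,T)/H^1_{\FF_i}(K_\ell,T)$; this uses Remark \ref{rem:ultraproduct_product} together with the fact that $\nu(n_i)$ is constant for $\UU$-many $i$. Exactness of $\UU$ (Proposition \ref{prop:ultrafilter_exact}) then commutes the ultraproduct with the kernel. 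This gives $\bH^1_\FF(K,T)=\UU(H^1_{\FF_i}(K,T))$, and likewise for $\GG$.

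\emph{Main obstacle.} The hard part is the bookkeeping that makes the local isomorphisms $\varphi_i$ compatible with the patched restriction maps from global cohomology. I would handle it by working with the ultraproduct description throughout and using the isomorphisms only to transport subgroups.
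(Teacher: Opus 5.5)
Your proposal is correct and follows essentially the same route as the paper: you transport $\bH^1_\FF(K_\ku,T)$ and $\bH^1_\GG(K_\ku,T)$ to the factors via the isomorphisms $\varphi_i^\ku$ of Corollary \ref{cor:patched:local_constant} on a set in $\UU$, and then use Proposition \ref{prop:ultrafilter_exact} to get the global identification. Off that set the paper takes $0$ for $\FF_i$ and the full cohomology for $\GG_i$, while you take the finite conditions for both, which works equally well; the compatibility you flag, $\varphi_i^\ku(x)=x_i$ for $\UU$-many $i$, is left implicit in the paper, holds for the isomorphisms built as in Corollary \ref{cor:ultrafilter_finite_modules}, and can always be arranged by precomposing every $\varphi_i^\ku$ with one fixed automorphism of $\bH^1(K_\ku,T)$, so it is not a gap.
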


\begin{proof}
For an ultraprime $\ku \in \Sigma_\FF\cup \Sigma_\GG$, we have fixed a representing sequence $(\ell_i)$. We can assume, without loss of generality, that no prime appears in two different sequences on the same entry. Let $W_\ku$ be the set of indices such that there is an isomorphism 
\[\varphi_i^\ku:\ H^1(K_{\ell_i},T)\cong \bH^1(K_\ku,T).\]
 Note that Corollary \ref{cor:patched:local_constant} implies that $W_\ku\in \UU$. For every $i\in \N$, define the classical Selmer structure by the set of primes 
\[\Sigma_{\FF_i}=\{\ell_i:\ \ku=(\ell_i)_{i\in \N}\in \Sigma_\FF\cup \Sigma_{\GG}\}\]
 and local conditions 
\[\begin{aligned}
&H^1_{\FF_i}(K_{\ell_i},T)=(\varphi_i^\ku)^{-1} \bH^1_{\FF}(K_\ku,T)&\textrm{ if $i\in W_\ku$,}\\
&H^1_{\FF_i}(K_{\ell_i},T)=0&\textrm{ if $i\notin W_\ku$.}
\end{aligned}\]

This definition implies that $\bH^1_\FF(K_{\ku},T)=\UU_i\Bigl(H^1_{\FF}(K_{\ell_i},T)\Bigr)$. Therefore, Proposition \ref{prop:ultrafilter_exact} implies that
\[\begin{aligned}
    &\ker\Biggl[\bH^1(K^\kn/K,T)\to \prod_{\ku\mid \kn} \bH^1_{/\FF}(K_\ku,T)\Biggr]=\\
    &\ker\left[\UU_i\Bigl(H^1(K^{n_i}/K,T)\Bigr)\to \UU_i\left(\prod_{\ell_i\mid n_i} H^1_{/\FF}(K_{\ell_i},T)\right)\right].
\end{aligned}\]
Again, the exactness of the ultraproduct given in Proposition \ref{prop:ultrafilter_exact} implies that 
\[\bH^1_{\FF}(K,T)=\UU_i\Bigl(H^1_{\FF_i}(K,T)\Bigr).\]

Similarly, define Selmer structures $\GG_i$ by the set of primes 
\[\Sigma_{\GG_i}=\{\ell_i:\ \ku=(\ell_i)_{i\in \N}\in \Sigma_\FF\cup \Sigma_\GG\}\]
 and local conditions
\[\begin{aligned} 
&H^1_{\GG_i}(K_{\ell_i},T)=(\varphi_i^\ku)^{-1} \bH^1_{\GG}(K_\ku,T)&\textrm{ if $i\in W_\ku$,}\\
&H^1_{\GG_i}(K_{\ell_i},T)=H^1(K_{\ku_i},T)&\textrm{ if $i\notin W_\ku$.}
\end{aligned}\]
By construction $\FF_i\leq \GG_i$ and, analogously, 
\[\bH^1_{\GG}(K,T)=\UU_i\Bigl(H^1_{\GG_i}(K,T)\Bigr).\qedhere\]
\end{proof}

\begin{lemma}
Let $\FF$ a patched Selmer structure defined on a finite group $T$ and let $\FF_i$ be classical Selmer structures such that 
\[\bH^1_{\FF}(K_\ku,T)=\UU_i\Bigl(H^1_{\FF_i}(K_{\ell_i},T)\Bigr)\]
for every ultraprime $\ku=(\ell_i)_{i\in \N}$. Then, we have that
\[\bH^1_{\FF^*}(K_\ku,T^*)=\UU_i\Bigl(H^1_{\FF_i^*}(K_{\ell_i},T^*)\Bigr).\]
\label{lem:patching_structures_dual}
\end{lemma}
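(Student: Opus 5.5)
The plan is to reduce the statement to classical local Tate duality at the primes $\ell_i$, via the identification of the patched pairing with an ultraproduct of classical pairings. Since $T$ is finite, for $\UU$-many $i$ Corollary \ref{cor:patched:local_constant} gives isomorphisms $\varphi_i^\ku:\bH^1(K_\ku,T)\cong H^1(K_{\ell_i},T)$. The same corollary applied to $T^*$, which is also finite with a $G_K$-action, gives $\varphi_i^{\ku,*}:\bH^1(K_\ku,T^*)\cong H^1(K_{\ell_i},T^*)$. Concretely, $\bH^1(K_\ku,T)=\UU_i\bigl(H^1(K_{\ell_i},T)\bigr)$ by definition, and similarly for $T^*$.

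The first step is to check that the pairing of Proposition \ref{prop:patched:local_duality} is, for finite $T$, the ultraproduct of the classical local Tate pairings
\[\langle\ ,\ \rangle_{\ell_i}:\ H^1(K_{\ell_i},T)\times H^1(K_{\ell_i},T^*)\to \Q/\Z.\]
All of these pairings take values in the finite group $\tfrac{1}{\#T}\Z/\Z$, whose ultrapower is itself by Lemma \ref{lem:ultrafilter_finite_diagonal}. Hence the family of pairings defines a pairing $\UU_i(H^1(K_{\ell_i},T))\times \UU_i(H^1(K_{\ell_i},T^*))\to \Q/\Z$: for $x=(x_i)$ and $y=(y_i)$, the value is the unique $c$ with $\langle x_i,y_i\rangle_{\ell_i}=c$ for $\UU$-many $i$, whose existence follows from Proposition \ref{prop:ultrafilter_finite}. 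I would take this as the construction in \cite[Proposition 2.6.4]{Sweeting}. If one prefers not to rely on that, one can simply note that this ultraproduct pairing is non-degenerate, because via $\varphi_i^\ku$ it agrees with the perfect pairing at a single $\ell_i$ for $\UU$-many $i$. Annihilators are then computed with respect to it.

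Next, set $A_i=H^1_{\FF_i}(K_{\ell_i},T)$ and $A=\UU_i(A_i)=\bH^1_\FF(K_\ku,T)$. By classical local duality, $H^1_{\FF_i^*}(K_{\ell_i},T^*)=A_i^\perp$, so the claim reduces to the identity $\UU_i(A_i)^\perp=\UU_i(A_i^\perp)$.

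For the inclusion $\supseteq$: if $y_i\in A_i^\perp$ for $\UU$-many $i$, then for every $x=(x_i)\in A$ we have $\langle x_i,y_i\rangle=0$ on a set in $\UU$ (use \ref{filter_intersection}), so $y\in A^\perp$.

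For $\subseteq$, this is the main point. Suppose $y=(y_i)$ and $y_i\notin A_i^\perp$ on a set $S\in\UU$. For each $i\in S$, choose $x_i\in A_i$ with $\langle x_i,y_i\rangle\neq 0$, and choose $x_i\in A_i$ arbitrarily otherwise. Then $x=(x_i)\in A$ and $\langle x,y\rangle\neq0$, by Proposition \ref{prop:ultrafilter_finite} applied to the finitely many possible nonzero values. This is exactly the choice argument used in the injectivity part of Proposition \ref{prop:ultraproduct_hom}. Note that $\bH^1_\FF(K_\ku,T)$ is closed, which is automatic here since everything is finite, so no topological issues arise.

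I expect the only real obstacle to be the compatibility in the first step: making sure that the patched duality pairing of Proposition \ref{prop:patched:local_duality} genuinely coincides with the ultraproduct of the classical Tate pairings, including for constant ultraprimes, where the pairing is literally the classical one. Once that is in place, the rest is the elementwise ultrafilter argument above.
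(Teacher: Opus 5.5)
Your proposal is correct and is essentially the paper's argument: the paper writes the duality map $\bH^1(K_\ku,T^*)\to\Hom\bigl(\bH^1_\FF(K_\ku,T),(\#T)^{-1}\Z/\Z\bigr)$ as the ultraproduct of the classical duality maps composed with the identification of Proposition \ref{prop:ultraproduct_hom}, and then computes the kernel via exactness of the ultraproduct and classical local Tate duality. Your elementwise proof of $\UU_i(A_i)^\perp=\UU_i(A_i^\perp)$ simply unpacks that computation, with your $\subseteq$ direction being exactly the injectivity of $\Psi$.
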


\begin{proof}
By Definition \ref{def:patched:dual_structure}, for every ultraprime $\ku=(\ell_i)_{i\in \N}$, we have that
\[\bH^1_{\FF^*}(K_\ku,T^*)=\ker\left(H^1(K_\ku,T^*)\to \Hom\left(H^1_{\FF}(K_\ku,T), \frac{(\# T)^{-1} \Z}{\Z}\right)\right), \]
where the map is induced by local duality, as stated in Proposition \ref{prop:patched:local_duality}. This map can be also written as the composition
\[\begin{aligned} 
    \UU_i\Bigl(H^1(K_{\ell_i},T^*)\Bigr)\to \UU_i\left(\Hom\left(  H^1_{\FF_i}(K_{\ell_i},T),\frac{(\# T)^{-1} \Z}{\Z}\right)\right)= \\ \Hom\left(\UU_i\Bigl(H^1_{\FF_i}(K_{\ell_i},T)\Bigr),\UU\left(\frac{(\# T)^{-1} \Z}{\Z}\right)\right),
\end{aligned}\]
where the equality follows from Proposition \ref{prop:ultraproduct_hom}. By local Tate duality for Galois cohomology and \ref{prop:ultrafilter_exact}, we can compute the kernel of this map as
\[\bH^1_{\FF^*}(K_\ku,T^*)=\UU\Bigl(H^1_{\FF_i^*}(K_{\ell_i},T^*)\Bigr).\qedhere\]
\end{proof}

\subsection{Selmer modules}
\label{sec:selmer:modules}

For the rest of this article, we will assume that $T$ admits an $R$-module structure where $R$ is a ring satisfying the following assumption.

\begin{namedass}{(Reg)}
Let $R$ be a commutative, complete, noetherian, regular local domain with maximal ideal $\m$ and finite residue field $k$ of characteristic $p$. 
\label{ass:R_lim}
\end{namedass}

\begin{remark}
By Auslander-Buchsbaum theorem, we now that $R$ is a unique factorisation domain. Fix, once and forall, a regular system of parameters $\{\pi_1,\ldots,\pi_n\}$. For each $k\in \N_{\geq 0}$, define the ideal and the quotient
\[\begin{array}{cc}
    I_k:=(\pi_1^k,\ldots,\pi_n^k),\ &R_k=R/I_k 
\end{array}\]
Since $R$ is complete, it can be recovered as the inverse limit.
\[R=\varprojlim_{k\in \N} R_k,\]
Since $R$ is a unique factorisation domain, $R_k[\m]$ is a one-dimensional $k$-vector space, generated by $(\pi_1^{k-1}\cdots \pi_n^{k-1})$.
\label{rem:R_lim}
\end{remark}

\begin{remark}
For every $m<n$, the ideal $R_n[I_m]$ is a principal ideal of $R_n$. Indeed, since $R_n$ is self-injective, it can be seen as
\[R_n[I_m]:=\ker\left(\Hom(R_n,R_n)\to \Hom(I_m,R_n)\right)=\Hom(R_m,R_n)\]
Since $R_m$ is self-injective, then $R_m[\m]$ is a one-dimensional $k$-vector space. Hence $R_n[I_m]/\m$ is also one-dimensional as a $k$-vector space, so Nayayama's lemma implies that $R_n[I_m]$ is principal.
\label{rem:ann_prin}
\end{remark}

\begin{notation}
In addition, let $T$ be a $R[[G_K]]$-module that is finitely generated as an $R$-module, in which the Galois action ramifies only at finitely many primes. Similarly, we denote $T_k=T\otimes_R R_k$. It is important to note that, since $R$ is complete and $T$ is finitely generated, then $T_k$ is a cofinal sequence, so 
\[T=\varprojlim_{n\in \N} T_k.\]
\end{notation}

\begin{proposition}
Let $T$ be as in Assumption \ref{ass:R_lim} and let $\kn$ be a square-free product of ultraprimes. Then $\bH^1(K^\kn/K,T)$ is a finitely generated $R$-module.
\label{prop:fingen}
\end{proposition}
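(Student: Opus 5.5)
The idea is to write $\bH^1(K^\kn/K,T)$ as an inverse limit of finite modules and control the limit by a topological Nakayama argument. By Definition \ref{def:patched_cohomology}, and since the $T_k$ form a cofinal system of finite quotients of $T$, we have
\[\bH^1(K^\kn/K,T)=\varprojlim_k \bH^1(K^\kn/K,T_k).\]
Each $T_k$ is finite, so Proposition \ref{prop:BCP_global} and Corollary \ref{cor:global_finite} show that every $\bH^1(K^\kn/K,T_k)$ is a finite $R_k$-module. Hence $M:=\bH^1(K^\kn/K,T)$ is a profinite, and in particular compact, $R$-module. Because $R$ is a complete noetherian local ring, the topological Nakayama lemma applies: if $M/\m M$ is finite, then $M$ is finitely generated. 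It therefore suffices to bound $M/\m M$.

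To do this, I would compare $M/\m M$ with the cohomology of the residual representation $\bar T:=T/\m T$, which is finite. Choose generators $x_1,\dots,x_r$ of $\m$, for instance the regular system of parameters $\pi_1,\dots,\pi_n$. The map $T\to \bar T$ factors through successive quotients $T/(x_1,\dots,x_j)T$, and for each $j$ there is a short exact sequence
\[0\to T^{(j-1)}[x_j]\to T^{(j-1)}\xrightarrow{x_j} T^{(j-1)}\to T^{(j-1)}/x_jT^{(j-1)}\to 0,\]
where $T^{(j)}:=T/(x_1,\dots,x_j)T$. I would split this into two short exact sequences of profinite modules. All terms are finitely generated $R$-modules, so their finite quotients satisfy the bounded cohomology property by Proposition \ref{prop:BCP_global}. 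This lets me apply the long exact sequence of Corollary \ref{cor:patched:global_long_exact} to each piece. Two consequences follow:
\[\bH^1(K^\kn/K,T^{(j-1)})/x_j \hookrightarrow \bH^1\bigl(K^\kn/K,\,x_jT^{(j-1)}\bigr)\text{-type terms},\]
and, more usefully, the cokernel of $x_j$ on $\bH^1(K^\kn/K,T^{(j-1)})$ injects into $\bH^1(K^\kn/K,T^{(j)})$ modulo the image of an $\bH^1$ of the finite-length kernel part.

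A cleaner route is to induct on $j$, showing that $\bH^1(K^\kn/K,T^{(j)})/(x_{j+1},\dots,x_r)$ is finite whenever the corresponding statement holds at step $j+1$. The base case $\bH^1(K^\kn/K,\bar T)$ is finite by Corollary \ref{cor:global_finite}. At each step, $\bH^1(\ldots,T^{(j-1)})/x_j$ is controlled by $\bH^1(\ldots,T^{(j)})$ together with an $\bH^2$ of the kernel $T^{(j-1)}[x_j]$. That $\bH^2$ term can be dealt with by another dévissage on a module of strictly smaller dimension. This suggests organising everything as an induction on $\dim_R$ of the coefficient module rather than on $j$.

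I expect the main obstacle to be this dévissage. Quotienting by a non-zero-divisor is clean: multiplication by $x$ is injective on $T$, the long exact sequence gives $\bH^1(T)/x\hookrightarrow \bH^1(T/xT)$, and then $M/\m M$ is a quotient of $\bH^1(T/xT)/\m$. Torsion kernels, however, bring in $\bH^0$ and $\bH^2$ terms. To avoid them entirely, my first attempt will be to replace $T$ by a filtration with graded pieces of the form $R/\p$ and to induct on the dimension of the support. For such a piece, $x\in\m\setminus\p$ acts injectively on $R/\p$, so one gets
\[\bH^1(K^\kn/K,R/\p)/x \hookrightarrow \bH^1\bigl(K^\kn/K,R/(\p,x)\bigr),\]
where the right-hand side has smaller-dimensional support and is finitely generated by induction. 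The dimension-zero case is finite length, which is Corollary \ref{cor:global_finite}. Finally, finite generation passes through the extensions in the filtration by the long exact sequence and the noetherianity of $R$.
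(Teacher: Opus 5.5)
\textbf{There is a genuine gap in the d\'evissage you finally settle on.} A filtration of $T$ with graded pieces $R/\p$ is a filtration by $R$-submodules. Nothing makes it $G_K$-stable, so $\bH^1(K^\kn/K,R/\p)$ is not defined and Corollary \ref{cor:patched:global_long_exact} cannot be applied to its steps. In the present setting this is fatal, not a technicality. Take $R=\Z_p$ with $T/pT$ irreducible as in \ref{pTirred} and of dimension at least $2$. Then every nonzero $G_K$-stable submodule $M\subset T$ equals some $p^kT$: choose $k$ maximal with $M\subset p^kT$; then $p^{-k}M$ surjects onto $T/pT$, so it equals $T$ by Nakayama. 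Hence the first step of any $G_K$-stable filtration is free of rank $\ge 2$, and no $G_K$-stable filtration with subquotients $R/\p$ exists.

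Your reduction itself is sound. $M=\bH^1(K^\kn/K,T)=\varprojlim_k \bH^1(K^\kn/K,T_k)$ is profinite, so topological Nakayama over the complete local ring $R$ reduces finite generation to finiteness of $M/\m M$. This compactness is exactly what the paper's bare appeal to Nakayama's lemma needs.

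\textbf{What you are missing is that no filtration is needed.} In the paper's setting $T$ is free over the domain $R$. The paper uses this throughout, e.g.\ in Proposition \ref{prop:patched:selmer_torsion_free}, and its own proof needs the parameter to act injectively. So take $\pi\in\m\setminus\m^2$. Then multiplication by $\pi$ is injective on $T$, $R/(\pi)$ is again a regular local ring satisfying \ref{ass:R_lim} of dimension $\dim R-1$, and $T/\pi T$ is free over it. If you iterate along a regular system of parameters, the kernels $T^{(j-1)}[x_j]$ you were worried about are all zero.

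This is precisely the paper's proof:
\begin{enumerate}
\item Induct on $\dim R$; the base case $\dim R=0$ is Corollary \ref{cor:global_finite}.
\item The sequence $0\to T\xrightarrow{\pi}T\to T/\pi T\to 0$ gives $\bH^1(K^\kn/K,T)/\pi\hookrightarrow \bH^1(K^\kn/K,T/\pi T)$.
\item The target is finitely generated over $R/(\pi)$ by induction, so the source is too by noetherianity.
\item Hence $M/\m M$ is finite, and Nakayama concludes.
\end{enumerate}

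Relatedly, $M/\m M$ is not a quotient of $\bH^1(K^\kn/K,T/xT)/\m$. It is finite because $M/xM$ is a submodule of the finitely generated module $\bH^1(K^\kn/K,T/xT)$, as you correctly use at the end.

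If you do want to allow $R$-torsion in $T$, your approach can be repaired. Filter by canonical, hence $G_K$-stable, submodules such as $T[\p]$ and its $R/\p$-torsion part. Their subquotients are torsion-free over $R/\p$, and that is all your inductive step uses, since every $x\in\m\setminus\p$ acts injectively on them.
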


\begin{proof}
    We proceed by induction on the Krull dimension of $R$. When the dimension is zero, then $R$ is a finite field, so the proposition follows from Corollary \ref{cor:global_finite}.
    
    For the general case when $\dim(R)>0$, we can pick an element $\pi\in \m\setminus \m^2$. Then $R/(\pi)$ is a regular ring satisfying Assumption \ref{ass:R_lim} and with Krull dimension $\dim(R/(\pi))$ equal to $\dim(R)-1$.

    From Propositions \ref{prop:patched:long_profinite} and \ref{prop:BCP_global} applied to the short exact sequence
    \[\xymatrix{0\ar[r] & T\ar[r]^{\pi} & T\ar[r] & T/\pi T\ar[r] &0,}\]
    we obtain an injection
    \[\bH^1(K^\kn/K,T)\otimes_R R/(\pi)\hookrightarrow \bH^1(K^\kn/K,T/\pi T).\]
    From the induction hypothesis, we deduce that $\bH^1(K^\kn/K,T)\otimes_R R/(\pi)$ is a finitely-generated $R/(\pi)$-module. Therefore, Nakayama's lemma implies that $\bH^1(K^\kn/K,T)$ is finitely generated as $R$-module.
\end{proof}

The advantage of patched cohomology is that we can develop a theory of Kolyvagin systems for infinite rings, such as those satisfying Assumption \ref{ass:R_lim}. That was not possible in the classical setting, since the finiteness of the ring $R$ was a requirement for constructing Kolyvagin primes using the Chebotarev density theorem. In order to develop the theory of Kolyvagin systems, we need to make the standard big image assumption.

\begin{namedass}{(BI)}
We assume the following:
\begin{itemize}
\item \namedlabel{pTirred}{(T1)} $T/\m T$ is an irreducible $k[[G_K]]$-module.
\item \namedlabel{pTtau}{(T2)} There exists $\tau\in G_{K(\mu_{p^\infty})}$ such that $T/(\tau-1)T\cong R$ as $R$-modules.
\item \namedlabel{pTcoh}{(T3)} $H^1(K(T)_{p^\infty}/K,T)=H^1(K(T)_{p^\infty}/K,T^*(1))=0$, where $K(T)_{p^\infty}$ is the minimal extension $F/K(\mu_{p^\infty})$ such that $G_F$ acts trivially on $T$. 
\end{itemize}
\label{ass:patched:basic}
\end{namedass}

\begin{remark}
By \ref{pTirred}, we know that $(T/\m T)^{G_K}$ is either $0$ or $T/\m T$. However, the latter is not possible since it does not satisfy $\ref{pTcoh}$.

That implies that $(T/IT)^{G_K}=0$ for every ideal $I$ of $R$ and, dualizing, $(T^*[I])^{G_K}$. It can be also seen in terms of patched cohomology with the equalities
\[\bH^0(K,T/I)=\bH^0(K,T^*[I])=0.\]
\label{rem:patched:H0vanish}
\end{remark}

We can use this to show that, under these assumptions, profinite Selmer groups are torsion free.

\begin{proposition}
Let $T$ be a Galois representation satisfying Assumption \ref{ass:patched:basic}. For every Selmer structure $\FF$, the Selmer group
\[\bH^1_{\FF}(K,T)\]
is a torsion-free $R$-module.
\label{prop:patched:selmer_torsion_free}
\end{proposition}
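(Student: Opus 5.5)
Since $R$ is a domain, it suffices to show that for every nonzero $r\in R$, multiplication by $r$ is injective on $\bH^1_{\FF}(K,T)$. Because $T$ is finitely generated over $R$, a domain, and (implicitly in this setting) free over $R$, the sequence
\[0\to T\xrightarrow{r} T\to T/rT\to 0\]
is exact for every $r\neq 0$. I would reduce to this single sequence and use the vanishing of $\bH^0$ from Remark \ref{rem:patched:H0vanish}.

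The first step is to pass to cohomology unramified outside a finite set. Take $\Sigma=\Sigma_\FF$. By definition $\bH^1_\FF(K,T)$ is a submodule of $\bH^1(K^{\Sigma}/K,T)$, and a submodule of a torsion-free module is torsion-free. It is therefore enough to show that $\bH^1(K^\Sigma/K,T)[r]=0$. Applying Corollary \ref{cor:patched:global_long_exact} to the sequence above gives an exact sequence
\[\bH^0(K^\Sigma/K,T/rT)\to \bH^1(K^\Sigma/K,T)\xrightarrow{r}\bH^1(K^\Sigma/K,T).\]
Here Proposition \ref{prop:BCP_global} supplies the bounded cohomology property needed for the profinite long exact sequence, Proposition \ref{prop:patched:long_profinite}. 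Hence the $r$-torsion of $\bH^1(K^\Sigma/K,T)$ is a quotient of $\bH^0(K^\Sigma/K,T/rT)$.

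The second step is to show that $\bH^0(K^\Sigma/K,T/rT)=0$. I would argue through $T/\m T$. Remark \ref{rem:patched:H0vanish} gives $(T/IT)^{G_K}=0$ for every ideal $I$, in particular for $I=(r)$. It remains to identify the patched $\bH^0$ with the ordinary invariants. For each finite quotient $T'$ of $T/rT$, the ultraproduct $\UU((T')^{\Gal(K^{n_k}/K)})$ is the ultraproduct of a constant finite group, because $T'$ is unramified outside $\Sigma$ for $\UU$-many $k$. By Lemma \ref{lem:ultrafilter_finite_diagonal} it equals $(T')^{G_K}$. Taking the inverse limit over the finite quotients, which commutes with invariants, gives $\bH^0(K^\Sigma/K,T/rT)=(T/rT)^{G_K}=0$.

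The main obstacle is a subtle point in this identification. The $\bH^0$ of the finite quotients $T'$ of $T/rT$ need not vanish individually, even though their inverse limit does. So I would work with the inverse limit directly: $\varprojlim (T')^{G_K}=(\varprojlim T')^{G_K}$, which holds because invariants are a limit. The other thing to check is that Corollary \ref{cor:patched:global_long_exact} genuinely applies to $T/rT$, which is profinite but not finitely generated over a finite ring. The corollary only requires profinite $G_K$-modules that are unramified outside finitely many primes, and $T/rT$ satisfies this.
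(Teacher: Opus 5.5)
Your proof is correct and is essentially the paper's argument: the paper also applies Corollary \ref{cor:patched:global_long_exact} to $0\to T\xrightarrow{a}T\to T/aT\to 0$, uses Remark \ref{rem:patched:H0vanish} to kill $\bH^0(K^{\Sigma_{\FF}}/K,T/aT)$, and concludes because $\bH^1_{\FF}(K,T)\subset \bH^1(K^{\Sigma_{\FF}}/K,T)$. Your explicit identification of the patched $\bH^0$ with $(T/rT)^{G_K}$ just unpacks that remark. Your caution about individual finite quotients is unnecessary, since the cofinal quotients $T/(rR+I_k)T$ already have trivial invariants by the same remark.
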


\begin{proof}
Let $a$ in $R$ and consider the exact sequence
\[\xymatrix{0\ar[r] & T\ar[r] & T\ar[r] & T/aT\ar[r] & 0.}\]
By Corollary \ref{cor:patched:global_long_exact} and Remark \ref{rem:patched:H0vanish}, we have that 
\[\bH^1(K^{\Sigma_\FF}/K,T)[a]=0.\]
Since that holds for every $a\in R$ and Definition \ref{def:patched:selmer} defines the Selmer group as a subgroup of the global patched cohomology group, we can deduce that 
\[\bH^1_{\FF}(K,T)_{\tors}=0.\qedhere\]
\end{proof}

\subsection{Cartesian Selmer structures and core rank}

In order to develop this theory, we need to impose the cartesian condition on Selmer structures, as defined below.

\begin{definition}[Cartesian local condition]
A local condition $\bH^1_{\FF}(K_\ku,T)$ is called \emph{cartesian} if the quotient 
\[\bH^1_{/\FF}(K_\ku,T)=\frac{\bH^1(K_\ku,T)}{\bH^1_{\FF}(K_\ku,T)}\]
is a torsion-free $R$-module. A Selmer structure is said to be cartesian if all of its local conditions are cartesian.
\label{def:patched:cartesian}
\end{definition}

\begin{remark}
The connection of Definition \ref{def:patched:cartesian} with the cartesian condition appearing in the previous theory of Kolyvagin systems, first introduced in \cite[Definition 1.1.4]{MazurRubin} lies in \cite[Lemma 3.7.1]{MazurRubin}, where it is proven that both definitions are equivalent when $R$ is a discrete valuation ring. When $R$ is a more complicated ring, such as the Iwasawa algebra, Definition \ref{def:patched:cartesian} is a milder assumption (see Proposition \ref{prop:iwasawa:free_cartesian_propagated} below).
\end{remark}

\begin{remark}
In order to check that a Selmer structure $\FF$ is cartesian, it is enough to see that local conditions are cartesians for ultraprimes $\ku\in \Sigma_{\FF}$. Indeed, finite cohomology groups are always cartesian local conditions since 
\[\bH^1_s(K_\ku,T)\subset \Hom(I_\ku,T)\]
is $R$-torsion-free since $R$ is a domain and $T$ is a free module.
\end{remark}

\begin{remark}
Considering the computations in Proposition \ref{prop:iwasawa:free_cartesian_propagated} below when $R$ is the Iwasawa algebra, it might seem reasonable to define cartesian conditions imposing that $\bH^1_{\FF}(K_\ku,T)$ is a free $R$-module. However, that definition presents some technical issues since we cannot guarantee that $\bH^1_\s(K_\ku,T)$ is a free module in this generality .
\end{remark}

The main advantage of cartesian structures is that they allow the study of Selmer groups of $T$ by projecting to the cohomology of $T_1$. In order to do that, we assume the following mild assumption.

\begin{namedass}{(Proj)}
There is a square-free product of ultraprimes $\kn$, divisible by all the primes in $\Sigma_\FF$, such that the following projection map is injective:
\[\bH^1(K^{\kn}/K,T)\hookrightarrow \bH^1(K^{\kn}/K,T_1).\]
\label{ass:patched:quotient}
\end{namedass}

\begin{remark}
When Assumption \ref{ass:patched:basic} hold and $R$ is a principal ring, then Assumption \ref{ass:patched:quotient} also holds for every square-free product of ultraprimes $\kn$. Indeed, Remark \ref{rem:patched:H0vanish} implies that $\bH^0(K^{\kn}/K,T)=0$ for every square-free product of ultraprimes $\kn$. Then the short exact sequence
\[\xymatrix{0\ar[r] & T\ar[r] & T\ar[r] & T_1\ar[r] & 0}\]
induces, by Corollary \ref{cor:patched:global_long_exact}, an injection
\[\bH^1(K^{\kn}/K,T)\otimes R/\m\hookrightarrow \bH^1(K^{\kn}/K,T_1).\]
\label{rem:patched:ass_quotient}
\end{remark}

\begin{remark}
When $R$ is the Iwasawa algebra, it will be shown in Proposition \ref{prop:iwasawa:ass_equiv} that Assumption \ref{ass:patched:quotient} is equivalent to the absence finite submodules in the local cohomology groups.
\end{remark}

\begin{proposition}
Let $\FF$ be a cartesian Selmer structure. If $\kn$ is a square-free product of all the ultraprimes in $\Sigma_{\FF}$, for every ideal $I$ of $R$, there is an injection
\begin{equation}
\bH^1_{\FF}(K,T)/I\hookrightarrow \bH^1(K^{\kn}/K,T)/I.
\label{eq:patched:quot_selmer_inj}
\end{equation}
\label{prop:patched:quot_selmer_inj}
\end{proposition}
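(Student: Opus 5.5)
The plan is to reduce injectivity of \eqref{eq:patched:quot_selmer_inj} to a statement about torsion in the local quotients. Since $\kn$ is divisible by all ultraprimes in $\Sigma_\FF$, Definition \ref{def:patched:selmer} (together with Remark \ref{rem:patched:sigma_independent}) describes $\bH^1_{\FF}(K,T)$ as the kernel of the localisation map. This gives an exact sequence of $R$-modules
\[0\to \bH^1_{\FF}(K,T)\to \bH^1(K^{\kn}/K,T)\xrightarrow{\ \loc\ } Q:=\prod_{\ku\mid \kn}\bH^1_{/\FF}(K_\ku,T).\]
Set $M:=\bH^1(K^{\kn}/K,T)$ and $S:=\bH^1_{\FF}(K,T)$, and write $C:=M/S$, which embeds into $Q$. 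The kernel of $S/IS\to M/IM$ is $(S\cap IM)/IS$, and the snake lemma identifies this with the image of $\mathrm{Tor}_1^R(C,R/I)$ in $S/IS$. So it suffices to show that $S\cap IM=IS$.

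The key input is that $C$ is torsion-free. Indeed, $C$ is a submodule of $Q$, which is a finite product of the modules $\bH^1_{/\FF}(K_\ku,T)$. These are torsion-free exactly by the cartesian hypothesis (Definition \ref{def:patched:cartesian}). For ultraprimes dividing $\kn$ but lying outside $\Sigma_\FF$, the local condition is the finite one, and the corresponding quotient is torsion-free by the remark following Definition \ref{def:patched:cartesian}.

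When $I=(a)$ is principal, torsion-freeness already finishes the argument. If $s=am$ with $s\in S$ and $m\in M$, then $a\cdot \loc(m)=0$ in the torsion-free module $Q$. Hence $\loc(m)=0$, so $m\in S$ and $s\in aS$. For a general ideal $I=(a_1,\dots,a_r)$, an element $s=\sum a_j m_j$ only gives $\sum a_j\loc(m_j)=0$. That relation does not force each $\loc(m_j)$ to vanish, so torsion-freeness alone is not enough. This is the step I expect to be the main obstacle. What is really needed is that $\mathrm{Tor}_1^R(C,R/I)\to S/IS$ is zero, i.e.\ flatness-type behaviour of $C$ rather than mere torsion-freeness.

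Two routes for the general case seem available, and I would try them in this order.

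The first is to argue one generator at a time, or via the regular sequence $\pi_1,\dots,\pi_n$ that defines the $I_k$, which is the case actually used later. Quotienting successively by elements of a regular sequence, I would check at each stage that the propagated local quotients stay torsion-free. This uses the vanishing $\bH^0(K_\ku,-)$-type arguments and the long exact sequences of Proposition \ref{prop:patched:long_profinite} for the local groups, whose bounded cohomology property is given by Remark \ref{rem:patched:BCP_local}.

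The second, if that fails, is to use that $M$ is finitely generated (Proposition \ref{prop:fingen}) and torsion-free (Proposition \ref{prop:patched:selmer_torsion_free}), and to reduce to the case $R$ a DVR by localising at height-one primes. There torsion-free means flat and the principal-ideal argument applies directly. I would then pass back to $R$ via reflexivity of the relevant modules.
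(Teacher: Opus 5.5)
Your reduction to $S\cap IM=IS$ and your argument for principal $I$ are correct. The obstacle you isolate is real, and it is also a gap in the paper's own proof. The paper derives the proposition from Lemma~\ref{lem:quot_tors_free}, whose proof writes an element $\mu(a)\in IB$ as $\lambda b$ with a single $\lambda\in I$. That step is only legitimate for principal $I$. So what is actually proved, by you and by the paper, is the case $I=(a)$, whereas Corollary~\ref{cor:patched:selmer_quotient} needs $I=\m=(p,X)$.

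This case does not follow from torsion-freeness of $C$, even when every module in sight is free or embeds in a free module. Over $\Lambda$, let $B=\Lambda^2$ and $A=\Lambda\cdot(p,X)$. The map $(u,v)\mapsto Xu-pv$ identifies $B/A$ with the ideal $\m$, which is torsion-free. Yet $A/\m A\cong\F_p$ maps to zero in $B/\m B$, because $(p,X)\in\m B$. The obstruction is $\mathrm{Tor}_1^\Lambda(\m,\F_p)\cong\F_p$, exactly as your snake-lemma description predicts. More precisely, when $M$ is free (which holds for $\bT$ under Assumption~\ref{ass:patched:basic} by Proposition~\ref{prop:iwasawa:selmer_free}), the kernel of $S/\m S\to M/\m M$ is exactly $\mathrm{Tor}_1^\Lambda(C,\F_p)$. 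So the case $I=\m$ is equivalent to $C$ being free, and the cartesian condition does not give that.

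Neither of your routes supplies this missing input.

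\textbf{Route one.} It needs torsion-freeness to survive reduction modulo $\pi_1$, either of $C/\pi_1C$ or of the propagated local quotients. The cartesian condition does not guarantee this. In the example $C\cong\m$, and the class of $p$ in $\m/p\m$ (resp.\ of $X$ in $\m/X\m$) is non-zero but killed by $X$ (resp.\ $p$). This is why the paper later imposes such a condition as a separate hypothesis (Assumption~\ref{ass:classic}, which by Proposition~\ref{prop:iwasawa:free_cartesian_propagated} amounts to freeness of the local quotients). Even that only controls the local quotients, not the global image $C$.

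\textbf{Route two.} It fails for a structural reason. Localising at height-one primes can at best show that the kernel of $S/IS\to M/IM$ is pseudo-null, never that it vanishes. For $\m$-primary $I$ such as $\m$ or $(p^k,X^k)$, both modules are finite, so localisation sees nothing at all. Reflexivity of $S$ and $M$ does not pass to these quotients.

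To obtain the general case you must either restrict the statement to principal $I$, or add a hypothesis forcing the connecting map $\mathrm{Tor}_1^R(C,R/I)\to S/IS$ to vanish. One such hypothesis is that $C$ be free; it is finitely generated by Proposition~\ref{prop:fingen}. Over $\Lambda$, with free local quotients, freeness of $C$ is equivalent (Auslander--Buchsbaum) to $Q/C$ having no non-zero finite submodule. Here $Q/C$ embeds in $\bH^1_{\FF^*}(K,T^*)^\vee$ by Proposition~\ref{prop:patched:global_duality}.
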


\begin{proof}
By Definition \ref{def:patched:selmer} and Remark \ref{rem:patched:sigma_independent}, there is an injection
\[\frac{\bH^1(K^{\kn}/K,T)}{\bH^1_{\FF}(K,T)}\hookrightarrow \prod_{\ku\mid \kn} \frac{\bH^1(K_\ku,T)}{\bH^1_{\FF}(K_\ku,T)}.\]
In this map, the cartesian condition in Definition \ref{def:patched:cartesian} implies the codomain is $R$-torsion free. It implies that
\begin{equation}
\left(\frac{\bH^1(K^{\kn}/K,T)}{\bH^1_{\FF}(K,T)}\right)[I]=0.
\label{eq:patched:selmer_quotient}
\end{equation}
Then the proof is completed by Lemma \ref{lem:quot_tors_free} below.
\end{proof}

\begin{lemma}
Let $\mu:\ A\hookrightarrow B$ be an injection of $R$-modules such that the quotient $B/\mu(A)$ is $I$-torsion free. Then, for every ideal $I\subset R$, the map $\mu$ induces an injection
\[\overline{\mu}:\ A/I\hookrightarrow B/I.\]
\label{lem:quot_tors_free}
\end{lemma}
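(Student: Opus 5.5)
The plan is to identify the kernel of $\overline{\mu}$ homologically and then show that it vanishes under the torsion hypothesis on $C:=B/\mu(A)$. Write the short exact sequence
\[\xymatrix{0\ar[r] & A\ar[r]^{\mu} & B\ar[r] & C\ar[r] & 0}\]
and apply $-\otimes_R R/I$. The long exact Tor sequence gives
\[\xymatrix{\mathrm{Tor}_1^R(R/I,B)\ar[r] & \mathrm{Tor}_1^R(R/I,C)\ar[r]^-{\delta} & A/IA\ar[r]^{\overline{\mu}} & B/IB\ar[r] & C/IC\ar[r] & 0.}\]
Hence $\ker\overline{\mu}=\Im(\delta)$. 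Concretely, $\ker\overline{\mu}=(\mu(A)\cap IB)/\mu(IA)$. So the whole statement reduces to showing that every $a\in A$ with $\mu(a)=\sum_j x_jb_j$, where $x_j\in I$ and $b_j\in B$, already lies in $IA$.

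\textbf{Principal case, $I=(x)$.} Here $\mathrm{Tor}_1^R(R/(x),C)=C[x]$ because $R$ is a domain, and $C[x]=C[I]$. The hypothesis that $C$ has no $I$-torsion therefore kills $\delta$. Elementwise the argument is direct: if $\mu(a)=xb$, then the class $\bar b\in C$ satisfies $x\bar b=0$. So $\bar b=0$, i.e.\ $b=\mu(a')$, and injectivity of $\mu$ gives $a=xa'\in IA$.

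\textbf{General $I$: the main obstacle.} For general $I$ one would like to run the same argument generator by generator, using $I=(x_1,\dots,x_r)$ and the Koszul presentation of $\mathrm{Tor}_1$. The difficulty is that $\mathrm{Tor}_1^R(R/I,C)$ is in general strictly larger than $C[I]=\Hom_R(R/I,C)$. I expect this step to fail as literally stated, and would check it first on an example.

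Take $R=\Lambda$, $I=\m=(p,X)$, $B=R^2$, and $A=R\cdot(X,-p)$. Then $C=B/A\cong\m$ via $(u,v)\mapsto pu+Xv$. This $C$ is torsion-free, so $C[\m]=0$. Yet $(X,-p)\in\m B$, so $\overline{\mu}$ is the zero map on $A/\m A\cong k$ and is not injective.

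\textbf{What the proof should use instead.} The honest route is therefore to prove the lemma under the hypothesis that actually makes $\delta$ vanish, namely $\mathrm{Tor}_1^R(R/I,C)=0$. This holds in the following cases:
\begin{itemize}
\item $I$ principal, by the argument above;
\item $I$ generated by a $C$-regular sequence $x_1,\dots,x_r$, by induction on $r$ using the principal case applied to $C/(x_1,\dots,x_{i})C$;
\item $C$ flat.
\end{itemize}
For the application in Proposition \ref{prop:patched:quot_selmer_inj}, I would then verify that condition for the relevant ideals (for instance $I_k=(\pi_1^k,\dots,\pi_n^k)$). The hypothesis $C[I]=0$ alone does not suffice once $I$ is not principal.
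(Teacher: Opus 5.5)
Your proposal is correct, and more accurate than the paper: the statement is false for non-principal $I$, and your counterexample shows this. In it, $B/A\cong\m$ via $(u,v)\mapsto pu+Xv$; the kernel is $\Lambda\cdot(X,-p)$ because $p$ and $X$ are non-associate primes in the UFD $\Lambda$. So $B/\mu(A)$ is torsion-free and has no $I$-torsion under any reading of the hypothesis. Yet the generator of $A/\m A\cong\Lambda/\m$ maps to the class of $(X,-p)$, which is $0$ in $B/\m B$.

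The paper's proof breaks exactly where you expected. From $\mu(a)\in IB$ it concludes that $\mu(a)=\lambda b$ for a single $\lambda\in I$ and $b\in B$. An element of $IB$ is in general only a finite sum $\sum_j\lambda_j b_j$. In your example $(X,-p)$ has no expression $\lambda b$ with $\lambda\in\m$, since $\lambda$ would have to divide both $p$ and $X$ and hence be a unit. Once that step is legitimate, i.e.\ for principal $I$, the remainder of the paper's argument ($b\notin\mu(A)$, so $b+\mu(A)$ is a nonzero torsion class) is your elementwise principal-case argument run by contradiction. So on the part of the statement that is true, the two routes coincide.

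Your $\mathrm{Tor}$ reformulation also shows what the paper's applications actually require. If $B$ is free, your sequence gives $\ker\overline{\mu}\cong\mathrm{Tor}_1^R(R/I,C)$ exactly. For finitely generated modules over the local ring $R$ and $I=\m$, injectivity of $\overline{\mu}$ is then equivalent to $C$ being free. That is strictly stronger than the torsion-freeness supplied by the cartesian condition of Definition \ref{def:patched:cartesian}.

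The paper invokes this lemma with the non-principal ideal $\m$ of $\Lambda$ in Proposition \ref{prop:patched:quot_selmer_inj}, and through it in Corollary \ref{cor:patched:selmer_quotient}. It does so again directly in Proposition \ref{prop:patched:cheb_rank_red}. Those steps therefore need an independent justification or a stronger hypothesis of the kind you list. Uses with principal ideals, such as $I=(X)$ in Proposition \ref{prop:iwasawa:selmer_free}, are covered by your principal case.
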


\begin{proof}
Assume for the sake of contradiction that $\overline{\mu}$ is not injective. Then there exists some $a\in A$ such that 
\[\begin{array}{cc}
    a\notin IA,\ & \mu(a)\in IB.
\end{array}\]
Then there exists some $b\in B$ and $\lambda\in I$ such that $\mu(a)=\lambda b$. Since $a$ is not $I$-divisible in $A$, then $b\notin\mu(A)$. Hence $b+\mu(A)$ induces a non-trivial $I$-torsion class in the quotient $B/\mu(A)$, which leads to a contradiction.
\end{proof}

\begin{corollary}
If Assumption \ref{ass:patched:quotient} holds and $\FF$ is a cartesian Selmer structure, the following projection map is injective.
\[\bH^1_{\FF}(K,T)\otimes R/\m\hookrightarrow \bH^1_{\FF}(K,T_1).\]
\label{cor:patched:selmer_quotient}
\end{corollary}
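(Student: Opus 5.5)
The argument factors the map into two injections. Take $\kn$ as in Assumption \ref{ass:patched:quotient}; it is divisible by every ultraprime in $\Sigma_\FF$. Enlarging $\Sigma_\FF$ by imposing the finite local condition at the extra ultraprimes dividing $\kn$ does not change the Selmer group, by Remark \ref{rem:patched:sigma_independent}. The enlarged structure is still cartesian, since finite conditions are cartesian. So we may treat $\kn$ as the product of the ultraprimes in $\Sigma_\FF$ and apply Proposition \ref{prop:patched:quot_selmer_inj} with $I=\m$. This gives an injection
\[\bH^1_{\FF}(K,T)\otimes R/\m\hookrightarrow \bH^1(K^{\kn}/K,T)\otimes R/\m .\]

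Next we need an injection $\bH^1(K^{\kn}/K,T)\otimes R/\m\hookrightarrow \bH^1(K^{\kn}/K,T_1)$. Two routes are available.

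\emph{First route (via $\m T$).} Use the exact sequence $0\to \m T\to T\to T_1\to 0$. Corollary \ref{cor:patched:global_long_exact} identifies the kernel of $\bH^1(K^\kn/K,T)\to \bH^1(K^\kn/K,T_1)$ with the image of $\bH^1(K^\kn/K,\m T)$. One would then need this image to be exactly $\m\,\bH^1(K^\kn/K,T)$. That holds only if $\bH^1(K^\kn/K,\m T)$ is generated by the images of multiplication by the $\pi_j$, and this is not automatic when $R$ is not principal.

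\emph{Second route (via Assumption \ref{ass:patched:quotient}).} This is the route to take: read the injectivity in Assumption \ref{ass:patched:quotient} as the statement after tensoring with $R/\m$, as in Remark \ref{rem:patched:ass_quotient}. In that reading the map $\bH^1(K^{\kn}/K,T)\to \bH^1(K^{\kn}/K,T_1)$ factors through $\bH^1(K^\kn/K,T)\otimes R/\m$, because $T_1$ is killed by $\m$. The assumption then says precisely that the induced map on this quotient is injective.

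Composing the two injections gives an injection $\bH^1_{\FF}(K,T)\otimes R/\m\hookrightarrow \bH^1(K^\kn/K,T_1)$. It remains to see that its image lies in $\bH^1_\FF(K,T_1)$. By Definition \ref{def:propagation_quotients}, the local conditions on $T_1$ are the images of those on $T$. Hence by functoriality of restriction a class in $\bH^1_\FF(K,T)$ maps to a class in $\bH^1(K^\kn/K,T_1)$ satisfying the propagated local conditions, that is, into $\bH^1_\FF(K,T_1)$. Since the composite is injective, the projection $\bH^1_{\FF}(K,T)\otimes R/\m\to \bH^1_{\FF}(K,T_1)$ is injective, as required.

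The main obstacle is the middle step, namely correctly interpreting Assumption \ref{ass:patched:quotient} as injectivity modulo $\m$. If it were read literally as injectivity on $\bH^1(K^\kn/K,T)$ itself, it would force that module to vanish, since it is a finitely generated torsion-free $R$-module by Propositions \ref{prop:fingen} and \ref{prop:patched:selmer_torsion_free}. The assumption is only meaningful, and the proof only works, with the reduced-modulo-$\m$ reading made explicit as in Remark \ref{rem:patched:ass_quotient}.
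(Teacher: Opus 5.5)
Your proposal is correct and is essentially the paper's argument. The paper places $\bH^1_{\FF}(K,T)\otimes R/\m\to\bH^1_{\FF}(K,T_1)$ in a commutative square over $\bH^1(K^{\kn}/K,T)\otimes R/\m\to\bH^1(K^{\kn}/K,T_1)$, takes the left vertical injection from Proposition \ref{prop:patched:quot_selmer_inj} and the bottom injection from Assumption \ref{ass:patched:quotient}, and concludes; your extra care in enlarging $\Sigma_\FF$ by finite conditions and in reading Assumption \ref{ass:patched:quotient} as injectivity modulo $\m$ is exactly what the paper uses implicitly.
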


\begin{proof}
Let $\kn$ be the square-free product of ultraprimes given by Assumption \ref{ass:patched:quotient}. Consider the commutative diagram
\[\xymatrix{
\bH^1_{\FF}(K,T)\otimes R/\m \ar[r] \ar@{>->}[d]  & \bH^1_{\FF}(K,T_1)\ar[d]\\
\bH^1(K^{\kn}/K,T)\otimes R/\m \ar@{>->}[r]   & \bH^1(K^{\kn}/K,T_1).
}\]
In this diagram, the left vertical map is injective by Proposition \ref{prop:patched:quot_selmer_inj} and the bottom horizontal map is injective by Assumption \ref{ass:patched:quotient}. Hence the top horizontal map needs to be injective.
\end{proof}

The previous corollary implies that the analogous maps induced by the quotients $R_k$ are also injective.
\begin{proposition}
If Assumption \ref{ass:patched:quotient} hold and $\FF$ is a cartesian Selmer structure, the map
\[\bH^1_{\FF}(K,T)\otimes_R R_k\hookrightarrow \bH^1_{\FF}(K,T_k)\]
is injective for all $k\in \N$.
\label{prop:patched:selmer_quotient_higherk}
\end{proposition}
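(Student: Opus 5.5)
The plan is to reduce to the global cohomology $\bH^1(K^{\kn}/K,-)$ and then run an induction along a composition series of $R_k$. At each step, a one-dimensional piece of $R_k$ is handled by Assumption \ref{ass:patched:quotient} together with the vanishing of $\bH^0$ from Remark \ref{rem:patched:H0vanish}. I assume throughout that $T$ is free over $R$, as in the intended applications, so that tensoring short exact sequences of $R$-modules with $T$ keeps them exact.

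\emph{Reduction to global cohomology.} Let $\kn$ be the square-free product given by Assumption \ref{ass:patched:quotient}. Consider the commutative square
\[\xymatrix{
\bH^1_{\FF}(K,T)\otimes R_k \ar[r] \ar[d] & \bH^1_{\FF}(K,T_k)\ar[d]\\
\bH^1(K^{\kn}/K,T)\otimes R_k \ar[r] & \bH^1(K^{\kn}/K,T_k).
}\]
The left vertical map is injective by Proposition \ref{prop:patched:quot_selmer_inj} applied with $I=I_k$. It therefore suffices to show that the bottom map is injective, exactly as in the proof of Corollary \ref{cor:patched:selmer_quotient}. Write $H:=\bH^1(K^{\kn}/K,T)$.

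\emph{Induction on length.} Since $R_k$ is Artinian, choose a chain of ideals
\[I_k=J_0\subset J_1\subset\cdots\subset J_r=\m,\]
with $J_{j+1}/J_j\cong k$, refined by reverse inclusion. I prove injectivity of $H\otimes R/J\to \bH^1(K^{\kn}/K,T/JT)$ for every $J$ in the chain, by descending induction starting from $J=\m$. The base case $J=\m$ is exactly Assumption \ref{ass:patched:quotient}.

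For the inductive step, pass from $J'=J_{j+1}$ to $J=J_j$, and pick $g\in J'$ generating $J'/J\cong k$. Multiplication by $g$ gives the short exact sequence
\[0\to R/\m\to R/J\to R/J'\to 0.\]
Tensoring with the free module $T$ gives
\[0\to T_1\to T/JT\to T/J'T\to 0.\]
Corollary \ref{cor:patched:global_long_exact} and the vanishing $\bH^0(K,T/J'T)=0$ from Remark \ref{rem:patched:H0vanish} then give an exact sequence
\[0\to\bH^1(K^{\kn}/K,T_1)\to \bH^1(K^{\kn}/K,T/JT)\to \bH^1(K^{\kn}/K,T/J'T).\]
On the other hand, right exactness of $H\otimes-$ gives an exact sequence
\[H\otimes R/\m\xrightarrow{g} H\otimes R/J\to H\otimes R/J'\to 0.\]
Both sequences map to each other compatibly, via the natural maps.

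Now take $x$ in the kernel of $H\otimes R/J\to\bH^1(K^{\kn}/K,T/JT)$. Its image in $H\otimes R/J'$ lies in the kernel of the corresponding map for $J'$, so it vanishes by the induction hypothesis. Hence $x=g y$ for some $y\in H\otimes R/\m$. The image of $y$ in $\bH^1(K^{\kn}/K,T_1)$ maps to zero in $\bH^1(K^{\kn}/K,T/JT)$. This map is injective by the displayed sequence, so the image of $y$ is already zero in $\bH^1(K^{\kn}/K,T_1)$. By Assumption \ref{ass:patched:quotient} this forces $y=0$, hence $x=0$. Taking $J=I_k$ finishes the proof.

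\emph{Main obstacle.} The delicate point is the existence of the long exact sequence for $T/JT$ in patched cohomology, which needs the bounded cohomology property. The modules $T/JT$ are finite, since $R/J$ is a finite ring, so Corollary \ref{cor:patched:global_long_exact} (or even Proposition \ref{prop:patched:long_finite} together with Proposition \ref{prop:BCP_global}) applies directly. One also has to check that the connecting maps are compatible with the maps $H\otimes-\to\bH^1(-)$, which follows from naturality. The remaining care is to make sure the multiplication-by-$g$ map $T_1\to T/JT$ is injective, which is where freeness of $T$ is used.
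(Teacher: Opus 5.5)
Your argument is correct, and it takes a different route from the paper. It relies on $T$ being free over $R$, which you flag and which the paper also uses tacitly (e.g.\ in Lemma \ref{lem:proj_finite_kernel}). It also reads Assumption \ref{ass:patched:quotient} as injectivity of $\bH^1(K^{\kn}/K,T)\otimes R/\m\to\bH^1(K^{\kn}/K,T_1)$, which is how the paper uses it.

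The paper stays inside Selmer groups. For $x$ in the kernel, it applies Corollary \ref{cor:patched:selmer_quotient} repeatedly to write $x=bz$ with $z$ not dying in $\bH^1_{\FF}(K,T_1)$. It then uses the socle of the self-injective ring $R_k$ to force $b\in I_k$: by Lemma \ref{lem:proj_finite_kernel}, the kernel of reduction to $T_1$ is the $R_k[\m]$-torsion, and the generator of $R_k[\m]$ is a multiple of any $\bar b\neq 0$.

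You instead climb a composition series $I_k=J_0\subset\cdots\subset J_r=\m$ by a four-lemma chase on the full cohomology $\bH^1(K^{\kn}/K,-)$, and transfer to Selmer groups at the end. This buys the following:
\begin{itemize}
\item it never uses the Gorenstein (principal socle) structure of $R_k$;
\item it works verbatim for any ideal of finite colength;
\item it avoids the paper's step ``$x=ay$ with $a\in\m$''. That step treats an element of $\m\bH^1_{\FF}(K,T)$ as a single multiple, which is harmless when $\m$ is principal. For $\Lambda=\Z_p[[X]]$, however, one only gets sums $\sum_j a_jy_j$, and the iteration produces no single $b$.
\end{itemize}

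In exchange, the paper needs Proposition \ref{prop:patched:quot_selmer_inj} only for $I=\m$ (via Corollary \ref{cor:patched:selmer_quotient}), while your reduction uses it for $I=I_k$. You can remove this difference by running the same induction directly on Selmer groups, with Corollary \ref{cor:patched:selmer_quotient} as the base case. This works because $[g]\circ(T\to T_1)=g\cdot(T\to T/JT)$, so $[g]$ carries the propagated local conditions on $T_1$ into those on $T/JT$. Hence $[g]$ restricts to an injection $\bH^1_{\FF}(K,T_1)\hookrightarrow\bH^1_{\FF}(K,T/JT)$, which is all your chase needs.

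One caveat is shared by both proofs and is inherited from the paper, not introduced by you. Proposition \ref{prop:patched:quot_selmer_inj} for a non-principal ideal rests on Lemma \ref{lem:quot_tors_free}, whose proof writes an element of $IB$ as a single product $\lambda b$ with $\lambda\in I$. For non-principal $I$ the lemma is false as stated. Take $A=\Lambda\cdot(p,X)\subset B=\Lambda^2$: then $B/A\cong\m$ is torsion-free, yet $A/\m A\to B/\m B$ is the zero map. So the $I=\m$ input, which the paper's proof and either version of yours depend on, is only as solid as that proposition.
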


\begin{proof}
Let
\[x\in \ker\biggl(\bH^1_\FF(K,T)\to \bH^1_{\FF}(K,T_{k})\biggr).\]
Then, $x$ is also in the kernel of the projection map to $\bH^1_{\FF}(K,T_{1})$, so Corollary \ref{cor:patched:selmer_quotient} implies the existence of an element $a\in \m$ and $y\in\bH^1_{\FF}(K,T)$ such that $x=ay$.

We repeat this process with the class $y$ until we find an element that is not in the kernel of the projection map to $\bH^1_{\FF}(K,T_1)$, so we construct an element $z\in \bH^1_{\FF}(K,T)$ such that $x=bz$ for some $b\in R$. We aim to show that $b\in I_k$. Note that, in case that the condition on $z$ is never achieved, we will eventually find an element $b\in I_k$, which would prove the proposition.

Assume by contradiction that it is not and let $\overline{b}$ be the projection in $R_k$. By assumption, 
\[\Pi(z)\in \bH^1_{\FF}(K,T_k)[\overline{b}],\]
where $\Pi$ represents the canonical projection map. However, since $\overline{b}\neq 0$ divides any element in $R_k[\m]$, Lemma \ref{lem:proj_finite_kernel} below implies that the projection of $z$ to the Selmer group of $T_1$ vanishes, which leads to a contradiction.
\end{proof}

\begin{lemma}
Let $I=R_k[\m]$. Then the following holds:
\[\ker\Bigl(\bH^1_{\FF}(K,T_k)\to \bH^1_{\FF}(K,T_1)\Bigr)=\bH^1_{\FF}(K,T_k)[I],\]
where the map is induced by the projection map $T_k\twoheadrightarrow T_1$.
\label{lem:proj_finite_kernel}
\end{lemma}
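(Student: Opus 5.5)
We show both inclusions. Throughout, $I=R_k[\m]$; by Remark \ref{rem:R_lim} this is the principal ideal of $R_k$ generated by $\varpi:=\pi_1^{k-1}\cdots\pi_n^{k-1}$. Since $R$ is regular local, $\m$ is generated by the regular system of parameters $\pi_1,\ldots,\pi_n$, so $T_1=T/\m T$. The strategy is to realise $\varpi$ as the connecting map from $T_1$ into $T_k$, compare the long exact sequences of Corollary \ref{cor:patched:global_long_exact}, and use the propagation rules of Definitions \ref{def:propagation_quotients} and \ref{def:propagation_submodules} to pass from global cohomology to Selmer groups.

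\textbf{Key exact sequence.} Multiplication by $\varpi$ on $T_k$ has image $\varpi T_k$. Assume $T$ is free over $R$, as in the setting of \ref{pTtau} and the rest of the paper. Then $T_k$ is free over $R_k$, so $\varpi T_k\cong T_k\otimes_{R_k}\varpi R_k$. Since $\varpi R_k = R_k[\m]\cong k$, this gives $\varpi T_k\cong T_1$. We therefore get a short exact sequence
\[0\to \m T_k\to T_k\to T_1\to 0,\]
together with an injection $\iota: T_1\xrightarrow{\ \varpi\ } T_k$ whose composite with the projection is $T_k\to T_1\to T_k$, that is, multiplication by $\varpi$ on $T_k$.

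\textbf{Inclusion $\supseteq$.} Let $x\in\bH^1_\FF(K,T_k)[I]$, so $\varpi x=0$. I expect the injectivity of $\iota_*:\bH^1(K^\kn/K,T_1)\to\bH^1(K^\kn/K,T_k)$ to follow from the vanishing $\bH^0(K,T_k/\varpi T_k)=0$ in Remark \ref{rem:patched:H0vanish}, together with the long exact sequence of Corollary \ref{cor:patched:global_long_exact}. Granting this, $0=\varpi x=\iota_*(\bar x)$ forces $\bar x=0$, where $\bar x$ is the image of $x$ in $T_1$-cohomology. Local conditions propagate to quotients, so $\bar x$ lies in $\bH^1_\FF(K,T_1)$ and is zero there.

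\textbf{Inclusion $\subseteq$.} Let $x\in\bH^1_\FF(K,T_k)$ map to zero in $\bH^1_\FF(K,T_1)$. Then $x$ comes from $\bH^1(K,\m T_k)$. Every element of $\m T_k$ is killed by $\varpi$, since $\varpi\m=0$ in $R_k$. Hence $\varpi$ kills $\bH^1(K,\m T_k)$, and so it kills the image of that group, giving $\varpi x=0$. Thus $x\in\bH^1_\FF(K,T_k)[I]$.

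\textbf{Main obstacle.} The delicate step is the injectivity of $\iota_*$ in the first inclusion. Its kernel is a quotient of $\bH^0(K,T_k/\varpi T_k)$, so the task is to show this group vanishes. I would deduce that from \ref{pTirred} by a Jordan–Hölder argument: $T_k/\varpi T_k$ is finite with all composition factors isomorphic to $T/\m T$, so a nonzero $G_K$-invariant would force $(T/\m T)^{G_K}\neq0$, which Remark \ref{rem:patched:H0vanish} rules out. The same argument is needed at every finite level, and it should pass to the patched setting by Corollary \ref{cor:patched:local_constant}.
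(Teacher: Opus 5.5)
Your proof is correct and is essentially the paper's argument: both factor multiplication by the socle generator $\varpi$ of $I$ as $T_k\twoheadrightarrow T_1\xrightarrow{\varpi}T_k$, and both use the vanishing of $\bH^0(K,T_k/\varpi T_k)$ from Remark \ref{rem:patched:H0vanish} (the paper applies that remark to $T/IT$, with $I$ viewed as an ideal of $R$) to get injectivity of $\iota_*$, so the kernel of the projection equals the kernel of multiplication by $\varpi$. Two small simplifications: your detour through $\bH^1(K,\m T_k)$ for the inclusion $\subseteq$ is valid but unnecessary, since it follows at once from $\varpi x=\iota_*(\bar x)$; and transferring the $\bH^0$-vanishing to patched cohomology needs no appeal to Corollary \ref{cor:patched:local_constant}, because the global patched $\bH^0$ of a finite module $M$ is the ultrapower of $M^{G_K}$, which equals $M^{G_K}$ by Lemma \ref{lem:ultrafilter_finite_diagonal}.
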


\begin{proof}
Since $R_k$ is self-injective, $I$ is the minimal non-zero ideal of $R_k$, so it is principal. Choose a generator $x\in I$. Since $T_k$ is a free $R_k$-module, there is a short exact sequence
\[\xymatrix{0\ar[r] &T_1\ar[r]^{[x]} \ar[r]& T_k \ar[r] & T_k/T_1\ar[r] & 0.}\]
Since $T_k/T_1=T/IT$, where $I$ is seen as an ideal of $R$ containing $I_k$, Corollary \ref{cor:patched:global_long_exact} and Remark \ref{rem:patched:H0vanish}  induces an injection
\[[x]:\ \bH^1_{\FF}(K,T_1)\hookrightarrow \bH^1_{\FF}(K,T_k).\]
Note that the following composition is the multiplication by $x$:
\[\xymatrix{\bH^1_{\FF}(K,T_k) \ar[r] & \bH^1_{\FF}(K,T_1)\ar@{>->}[r]^{[x]} &  \bH^1_{\FF}(K,T_k).}\]
Since the second map is injective, then
\[\ker\Bigl(\bH^1_{\FF}(K,T_k)\to \bH^1_{\FF}(K,T_1)\Bigr)=\bH^1_{\FF}(K,T_k)[x].\qedhere\]
\end{proof}

The next result can be seen as an analogue of the previous theory in the dual representation $T^*$, concerning submodules instead of quotients. Its prove is the same as the one in \cite[Proposition 3.5 and Corollary 3.8]{BurnsSakamotoSano2}

\begin{proposition}(\cite[Corollary 3.8]{BurnsSakamotoSano2})
Under Assumptions \ref{ass:patched:basic}, for every finitely generated ideal $I$ of $R$, the inclusion $T^*[I]\hookrightarrow T^*$ induces an isomorphism
\[\bH^1_{\FF^*}(K,T^*[I])\cong \bH^1_{\FF^*}(K,T^*)[I].\]
\label{prop:patched:selmer_torsion}
\end{proposition}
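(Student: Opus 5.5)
The plan is to compare the two Selmer groups through the cohomology of the short exact sequence
\[\xymatrix{0\ar[r] & T^*[I]\ar[r] & T^*\ar[r]^{\phi} & \textstyle\bigoplus_{j=1}^r T^*\ar[r] & 0,}\]
where $I=(a_1,\ldots,a_r)$ and $\phi(t)=(a_1t,\ldots,a_rt)$. Exactness on the right is not needed. What matters is that $T^*[I]=\ker\phi$. Since $T^*$ is ind-finite, everything can be checked on finite submodules and then passed to the direct limit, using Proposition \ref{prop:selmer_indfinite}; on finite levels the ultraproduct is exact (Proposition \ref{prop:ultrafilter_exact}).

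\textbf{Step 1: global cohomology.} Let $\Sigma$ contain $\Sigma_\FF$. Taking patched cohomology of $0\to T^*[I]\to T^*\to \phi(T^*)\to 0$ over $K^\Sigma/K$ (Corollary \ref{cor:patched:global_long_exact}, or its ind-finite version), Remark \ref{rem:patched:H0vanish} gives $\bH^0(K,\phi(T^*))\subset \bigoplus \bH^0(K,T^*)=0$. This yields an injection
\[\bH^1(K^\Sigma/K,T^*[I])\hookrightarrow \bH^1(K^\Sigma/K,T^*).\]
Its image lies in the $I$-torsion, because the composite $\bH^1(T^*[I])\to\bH^1(T^*)\xrightarrow{a_j}\bH^1(T^*)$ factors through multiplication by $a_j$ on $T^*[I]$, which is zero.

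For the reverse containment, take $c\in\bH^1(K^\Sigma/K,T^*)[I]$. Then $\phi_*(c)=(a_jc)_j=0$ in $\bigoplus\bH^1(T^*)$. I still need $\phi_*(c)=0$ in $\bH^1(\phi(T^*))$, which requires $\bH^0$ of the cokernel $\bigoplus T^*/\phi(T^*)$ to vanish. I would get this by a finite-level argument, or more cleanly by citing \cite[Proposition 3.5]{BurnsSakamotoSano2}, whose proof only uses the vanishing of $H^0$ of subquotients of $T^*[I]$-type modules, i.e.\ Remark \ref{rem:patched:H0vanish} applied to $T/IT$ and its dual. This is \textbf{the main obstacle}. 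Granting it, $c$ comes from $\bH^1(K^\Sigma/K,T^*[I])$, so
\[\bH^1(K^\Sigma/K,T^*[I])\cong \bH^1(K^\Sigma/K,T^*)[I].\]

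\textbf{Step 2: local conditions.} By Definition \ref{def:propagation_submodules}, the condition $\bH^1_{\FF^*}(K_\ku,T^*[I])$ is the preimage of $\bH^1_{\FF^*}(K_\ku,T^*)$. So a global class for $T^*[I]$ lies in the Selmer group if and only if its image in $\bH^1(K,T^*)$ satisfies the local conditions at every $\ku\in\Sigma$. The isomorphism of Step 1 therefore restricts to an isomorphism of Selmer groups, with no local injectivity needed.

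\textbf{Step 3: compatibility with duals.} The condition $\FF^*$ on $T^*[I]$ obtained by propagation must agree with the dual (in the sense of Definition \ref{def:patched:dual_structure}) of the quotient condition on $T/IT$. This is a local duality check via Proposition \ref{prop:patched:local_duality}. The statement only needs the propagated version, so I would mention this compatibility only as a remark and conclude.
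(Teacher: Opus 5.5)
\textbf{Overall.} Your route is the one the paper intends: it gives no proof of its own and simply refers to \cite[Proposition 3.5 and Corollary 3.8]{BurnsSakamotoSano2}. Steps 2 and 3 are correct. The propagated condition on $T^*[I]$ is a preimage, so $\bH^1_{\FF^*}(K,T^*[I])$ is exactly the preimage of $\bH^1_{\FF^*}(K,T^*)$, and a global isomorphism onto the $I$-torsion restricts to the Selmer groups. The duality compatibility in Step 3 also checks out.

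\textbf{The gap.} The reverse inclusion in Step 1 is granted, not proved, and the justification you sketch (Remark \ref{rem:patched:H0vanish} applied to $T/IT$ and its dual) does not apply as stated. The cokernel $C:=\bigoplus_{j=1}^r T^*/\phi(T^*)$ is not of the form $T/JT$ or $T^*[J]$. Put $\sigma\colon T^r\to T$, $(t_j)\mapsto \sum_j a_jt_j$. Then $\phi=\sigma^*$, so $C\cong(\ker\sigma)^*$. Since $T$ is $R$-flat, $\ker\sigma=N\otimes_R T$, where $N:=\ker(R^r\to I)$ is the module of relations among $a_1,\ldots,a_r$. Equivalently, $C\cong\Hom_R(N,T^*)$.

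\textbf{The fix.} You need the same d\'evissage that proves Remark \ref{rem:patched:H0vanish}, applied to $C$.
\begin{itemize}
\item First compute the $\m$-torsion: $C[\m]=\bigl((N\otimes_R T)/\m\bigr)^*=(N/\m N\otimes_k T/\m T)^*\cong (T^*[\m])^{\oplus s}$, with $s=\dim_k N/\m N$. Hence $C[\m]^{G_K}=0$ by Remark \ref{rem:patched:H0vanish}.
\item Every element of the discrete module $C$ is killed by some power of $\m$, and $G_K$ commutes with $R$. So if $0\neq x\in C^{G_K}$ and $n$ is minimal with $\m^n x=0$, then $\m^{n-1}x$ contains a nonzero element of $C[\m]^{G_K}$, a contradiction. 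Thus $C^{G_K}=0$.
\item For a finite module $A$, $\bH^0(K^\Sigma/K,A)$ is the ultrapower of $A^{G_K}$, which equals $A^{G_K}$ by Lemma \ref{lem:ultrafilter_finite_diagonal}. Writing $C=\varinjlim_n C[\m^n]$ gives $\bH^0(K^\Sigma/K,C)=0$.
\item The long exact sequence for $0\to\phi(T^*)\to\bigoplus_j T^*\to C\to 0$ holds, as a direct limit of the finite-level sequences. So $\bH^1(K^\Sigma/K,\phi(T^*))\to\bigoplus_j\bH^1(K^\Sigma/K,T^*)$ is injective, and your Step 1 goes through.
\end{itemize}

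With this filled in, the proof is complete.
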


\begin{remark}
After dualizing, Proposition \ref{prop:patched:selmer_torsion} can be restated as
\[\bH^1_{\FF^*}(K,T^*[I])^\vee\cong \bH^1_{\FF^*}(K,T^*)^\vee\otimes R/I.\]
\label{rem:patched:selmer_torsion_dual}
\end{remark}

\begin{corollary}
If Assumption \ref{ass:patched:basic} holds, $\bH^1_{\FF^*}(K,T^*)^\vee$ is a finitely generated $R$-module.
\label{cor:fingen_dual}
\end{corollary}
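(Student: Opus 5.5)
The plan is to apply topological Nakayama to the compact module $M:=\bH^1_{\FF^*}(K,T^*)^\vee$, using Remark \ref{rem:patched:selmer_torsion_dual} with $I=\m$ to reduce to a finiteness statement modulo $\m$.

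\textbf{Step 1: reduce modulo $\m$.} Since $R$ is noetherian, $\m$ is finitely generated. Proposition \ref{prop:patched:selmer_torsion} and its dual form Remark \ref{rem:patched:selmer_torsion_dual} then give
\[M/\m M\cong \bH^1_{\FF^*}(K,T^*[\m])^\vee.\]
Here $T^*[\m]=(T/\m T)^*$ is a finite Galois module.

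\textbf{Step 2: finiteness of the reduction.} The group $\bH^1_{\FF^*}(K,T^*[\m])$ sits inside $\bH^1(K^{\Sigma_{\FF}}/K,T^*[\m])$, because the dual local conditions at ultraprimes outside $\Sigma_\FF$ are the finite ones. The latter group is finite by Corollary \ref{cor:global_finite}, since $\Sigma_\FF$ is a finite set of ultraprimes. Hence $M/\m M$ is finite, and so it is a finite-dimensional $k$-vector space.

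\textbf{Step 3: lift generators.} The module $M$ is a compact $R$-module, being the Pontryagin dual of a discrete torsion module. I expect this to be the main obstacle: one must justify that $\bH^1_{\FF^*}(K,T^*)$ is a discrete $\m$-power torsion module, i.e.\ a direct limit of finite groups. This follows from the ind-finite definition of patched cohomology: $T^*=\varinjlim T^*[I_k]$, and by Proposition \ref{prop:selmer_indfinite}
\[\bH^1_{\FF^*}(K,T^*)=\varinjlim_k \bH^1_{\FF^*}(K,T^*[I_k]),\]
where each term is finite by Corollary \ref{cor:global_finite}. Dually,
\[M=\varprojlim_k \bH^1_{\FF^*}(K,T^*[I_k])^\vee\]
is profinite, and $\bigcap_k \m^k M=0$ because every element of the direct system is killed by some $I_k\subset \m$.

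Compact Nakayama for modules over the complete local ring $R$ then lifts a $k$-basis of $M/\m M$ to a set of generators. To check this, let $N\subseteq M$ be the submodule generated by the lifts. It is compact, hence closed, and satisfies $N+\m M=M$. Iterating gives $N+\m^jM=M$ for all $j$, so $N$ is dense in $M$ and therefore $N=M$. This proves that $\bH^1_{\FF^*}(K,T^*)^\vee$ is finitely generated over $R$.
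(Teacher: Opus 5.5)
Your proof is correct and follows the same route as the paper. The paper reduces modulo $\m$ via Remark \ref{rem:patched:selmer_torsion_dual}, deduces that $\bH^1_{\FF^*}(K,T^*)^\vee\otimes R/\m$ is finite from Corollary \ref{cor:global_finite}, and concludes by Nakayama's lemma; the only difference is that you also justify why Nakayama applies to this a priori non-finitely-generated compact module (profiniteness and $\m$-adic separatedness), which the paper leaves implicit.
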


\begin{proof}
By Corollary \ref{cor:global_finite} and Remark \ref{rem:patched:selmer_torsion_dual}, $\bH^1_{\FF^*}(K,T^*)^\vee\otimes R/\m$ is finite. Hence, Nakayama's lemma implies that $\bH^1_{\FF^*}(K,T^*)^\vee$ is a finitely generated $R$-module.
\end{proof}

We can now define the concept of core rank. We give a definition that applies for any Selmer structure $\FF$, not necessarily cartesian.

\begin{definition}
    Let $\FF$ be a Selmer structure. We define the \emph{core rank} of $\FF$ as
    \[\chi(\FF):=\rank_R\ \bH^1_{\FF}(K,T)-\rank_R\ \bH^1_{\FF^*}(K,T^*)^\vee.\]
    \label{def:patched:core_rank}
\end{definition}

\begin{remark}
The core rank is well defined since $\bH^1_{\FF}(K,T)$ and $\bH^1_{\FF^*}(K,T^*)^\vee$ are both finitely generated modules by Proposition \ref{prop:fingen} and Corollary \ref{cor:fingen_dual}.
\end{remark}

\subsection{Kolyvagin ultraprimes}

In the patched cohomology setting, we can define Kolyvagin ultraprimes in a similar way as was done in \cite{MazurRubin}. Recall that Assumption \ref{ass:patched:basic} implies the existence of $\tau\in G_{K_{p^\infty}}$ such that $T/(\tau-1)$ is a free $R$-module of rank one. Once and for all, we fix an automorphism $\tau\in G_K$ satisfying this assumption.

\begin{definition}
Let $T$ be a Galois representation and let $\FF$ be a Selmer structure defined on $T$. An ultraprime $\ku$ is said to be a \emph{Kolyvagin ultraprime} if $\ku\notin \Sigma_\FF$ and $\Frob_\ku$ is conjugate to $\tau$ in $\Gal(K(T)_{p^\infty}/K)$. 
\label{def:kolyvagin_ultraprime}
\end{definition}

\begin{notation}
We denote by $\PP(T)$ the set of Kolyvagin ultraprimes, and we denote by $\NN(\PP)$ and $\NN_i(\PP)$ the set of square-free products of Kolyvagin primes and exactly $i$ Kolyvagin primes, respectively. When it is clear from the context, we will also denote them by $\PP$, $\NN$ or $\NN_i$.
\end{notation}

\begin{notation}
For $\kn\in \NN$, we denote by $\nu(\kn)$ the number of ultraprimes dividing $\kn$.
\end{notation}

\begin{remark}
The existence of Kolyvagin ultraprimes is guaranteed by Proposition \ref{prop:ultraprimes_chebotarev} . Note that $K(T)_{p^\infty}/K$ might be an infinite extension, so we cannot apply the classical version of the Chebotarev density theorem to guarantee the existence of Kolyvagin primes.
\end{remark}

We can describe explicitly the local cohomology of Kolyvagin ultraprimes.
\begin{proposition}
Let $\ku$ be a Kolyvagin ultraprime. Then there is an isomorphism
\[\bH^1_\f(K_\ku,T)\cong T/(\Frob_\ku -1)T\cong T/(\tau-1)T\cong R.\]
\label{prop:patched:finite_descr}
\end{proposition}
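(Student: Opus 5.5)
We will prove the three isomorphisms in order, working first with the finite quotients $T_k$ and then passing to the limit over $k$.

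\textbf{Step 1: reduction to group cohomology.} By Proposition \ref{prop:local_patched_galois_group}, $\bH^1(K_\ku,T)=H^1(G_\ku,T)$ and $\bH^1(I_\ku,T)=H^1(I_\ku,T)$. A Kolyvagin ultraprime is not in $\Sigma_\FF$, so it is not a constant ramified or $p$-adic prime. Hence $T$ is unramified at $\ku$: either $\ku$ is non-constant, or it is a constant prime at which $T$ is unramified. Therefore $I_\ku$ acts trivially on $T$. Let $G_\ku/I_\ku$ be the quotient, which is topologically generated by $\Frob_\ku$ (Remark \ref{rem:local_galois_group}). Inflation--restriction gives
\[0\to H^1(G_\ku/I_\ku,T^{I_\ku})\to H^1(G_\ku,T)\to H^1(I_\ku,T).\]
By Definition \ref{def:patched:finite}, $\bH^1_\f(K_\ku,T)$ is exactly the kernel of the last map, so it equals $H^1(G_\ku/I_\ku,T)$. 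For a procyclic group generated by $\Frob_\ku$, the cohomology with coefficients in a finite (or profinite, after limits) module is computed by evaluating cocycles at the generator:
\[H^1(\langle\Frob_\ku\rangle,T)\cong T/(\Frob_\ku-1)T.\]
To be safe, we will do this at the level of the finite quotients $T_k$. There each group is finite, and Corollary \ref{cor:patched:local_constant} identifies $\bH^1_\f(K_\ku,T_k)$ with $H^1_\f(K_{\ell_i},T_k)\cong T_k/(\Frob_{\ell_i}-1)T_k$ for $\UU$-many $i$. Taking the inverse limit over $k$ (Definition \ref{def:patched_cohomology}) then gives the isomorphism for $T$. The limit is harmless because the transition maps on $T_k/(\Frob-1)T_k$ are surjective, so there is no $\varprojlim^1$ issue.

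\textbf{Step 2: replacing $\Frob_\ku$ by $\tau$.} By Definition \ref{def:kolyvagin_ultraprime}, $\Frob_\ku$ is conjugate to $\tau$ in $\Gal(K(T)_{p^\infty}/K)$. The action of $G_K$ on $T$ factors through this group, since $K(T)_{p^\infty}$ contains the field cut out by $T$. Hence there is $g\in G_K$ such that $\Frob_\ku$ and $g\tau g^{-1}$ act identically on $T$. Multiplication by $g$ is then an $R$-linear isomorphism
\[T/(\tau-1)T\cong T/(g\tau g^{-1}-1)T=T/(\Frob_\ku-1)T.\]
Some care is needed because $\Frob_\ku$ is only a conjugacy class (Definition \ref{def:frobenius}). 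The quotient is independent of the representative up to canonical isomorphism, by the same conjugation argument.

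\textbf{Step 3: the final identification.} The isomorphism $T/(\tau-1)T\cong R$ is exactly hypothesis \ref{pTtau} of Assumption \ref{ass:patched:basic}.

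The main obstacle is Step 1: making the identification of the unramified cohomology with $T/(\Frob_\ku-1)T$ rigorous in the patched setting, where $G_\ku$ is a profinite completion of an ultraproduct. For non-constant $\ku$ we will rely on Remark \ref{rem:local_galois_group}. It also needs to be checked that the Frobenius of the ultraprime acting on $T_k$ agrees with $\Frob_{\ell_i}$ for $\UU$-many $i$. This holds by the definition of the $G_\ku$-action via Proposition \ref{prop:ultrafilter_finite}, applied to the finite group $\Aut(T_k)$. With that compatibility, the statement follows level by level and then in the limit.
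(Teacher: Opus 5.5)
Your proof is correct and follows essentially the same route as the paper: work at the finite levels $T_k$, use that $\ell_i$ is a classical Kolyvagin prime for $T_k$ for $\UU$-many $i$ to identify $\bH^1_\f(K_\ku,T_k)$ with the classical unramified group $T_k/(\Frob_{\ell_i}-1)T_k$ (the paper cites \cite[Lemma 1.2.1]{MazurRubin} here), patch via Proposition \ref{prop:ultrafilter_finite}, and pass to the inverse limit. Your one small difference is that you do the conjugation step and \ref{pTtau} directly on $T$, so only the canonical evaluation-at-Frobenius isomorphism has to be carried through the limit; this is slightly cleaner than the paper's limit-compatibility remark.
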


\begin{proof}
Recall that $T_k=T\otimes_R R_k$ and let $(\ell_i)_{i\in \N}$ be the Kolyvagin ultraprime. For every $k\in \N$, denote by $\alpha_k\in \N$ the minimal natural number such that $p^{\alpha_k} R_k=0$. 

By construction, for $\UU$-many $i$, the Frobenius element $\Frob_{\ell_i}$ is conjugate to $\tau$ in $\Gal(K(T_k)_{p^{\alpha_n}}/K)$. Therefore, $\ell_i$ is a Kolyvagin prime for $\UU$-many $i$. For those $i$, \cite[Lemma 1.2.1]{MazurRubin} implies that 
\[H^1_\f(K_{\ell_i},T_k)\cong \frac{T_k}{(\Frob_{\ell_i} -1)T_k}\cong \frac{T_k}{(\tau-1)T_k}\cong R_k.\]
By Proposition \ref{prop:ultrafilter_finite}, we obtain that
\[\bH^1_\f(K_\ku,T_k)=\UU_i\Bigl(H^1_{\f}(K_{\ell_i},T_k)\Bigr)\cong \frac{T_k}{(\Frob_\ku-1)T_k}\cong \frac{T_k}{(\tau-1)T_k}\cong R_k.\]
Taking limits, we get that 
\[\bH^1_\f(K_\ku,T)\cong \frac{T}{(\Frob_\ku-1)T}\cong \frac{T}{(\tau-1)T}\cong R.\]
Note that the limit compatibility follows from the surjectivity of the transition maps, deduced by from the surjectivity of the analogous maps in Galois cohomology, and the fact that $R$ is a Noetherian ring, all the modules in the previous identifications are free $R$-modules of rank one.
\end{proof}

We can use a similar argument for constructing the patched transverse local conditions.

\begin{definition}
Let $\ku=(\ell_i)_{i\in \N}\in\PP $ be a Kolyvagin ultraprime. For every $i\in \N$, denote by $K(\ell_i)$  the maximal $p$-extension inside the ray class field modulo $\ell_i$.  We define the transverse local condition as 
\[\bH^1_\tr(K_\ku,T)=\varprojlim_{k\in \N} \UU_i\biggl(\Im\left(\inf:\ H^1(K(\ell_i)_{\ell_i}/K,T_k)\to H^1(K_{\ell_i},T_k)\right)\biggr).\]
\end{definition}

We can now prove the basic properties of the transverse cohomology used to develop the theory of Kolyvagin systems.

\begin{proposition}
Let $\ku=(\ell_i)_{i\in \N}\in\PP $ be a Kolyvagin ultraprime. Then the transverse cohomology group is a free, cyclic $R$-module and the local patched cohomology splits as
\[\bH^1(K_\ku,T)=\bH^1_\f(K_\ku,T)\oplus \bH^1_\tr(K_\ku,T).\]
\label{prop:patched_transverse}
\end{proposition}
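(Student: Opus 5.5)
The plan is to argue level by level with the finite quotients $T_k=T\otimes_R R_k$, exactly as in the proof of Proposition \ref{prop:patched:finite_descr}, and then pass to the inverse limit over $k$.

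Fix $k$ and let $\alpha_k$ be minimal with $p^{\alpha_k}R_k=0$. As in the proof of Proposition \ref{prop:patched:finite_descr}, for $\UU$-many $i$ the prime $\ell_i$ is a Kolyvagin prime for $T_k$ in the sense of \cite{MazurRubin}. For those $i$ we may use \cite[Lemma 1.2.4]{MazurRubin}: the $p$-part of the ramification in $K(\ell_i)_{\ell_i}/K_{\ell_i}$ is totally tamely ramified, with degree divisible by $p^{\alpha_k}$. This gives two facts. First, the local cohomology splits as
\[H^1(K_{\ell_i},T_k)=H^1_\f(K_{\ell_i},T_k)\oplus H^1_\tr(K_{\ell_i},T_k).\]
Second, restriction to inertia identifies $H^1_\tr(K_{\ell_i},T_k)$ with $H^1_\s(K_{\ell_i},T_k)$, which is isomorphic to $\Hom(\II_{\ell_i},T_k)^{\Frob}$, and hence to $T_k^{\Frob_{\ell_i}=1}\otimes \mu_{p^{\alpha_k}}^{-1}$. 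The last module is free of rank one over $R_k$, because $\Frob_{\ell_i}$ is conjugate to $\tau$ and $T_k^{\tau=1}$ is dual to $T_k/(\tau-1)T_k\cong R_k$ (self-injectivity of $R_k$ and the freeness of $T_k$). The finite part is $\cong R_k$ by Proposition \ref{prop:patched:finite_descr}.

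Next we apply the ultraproduct. Proposition \ref{prop:ultrafilter_exact} says it is exact and commutes with finite direct sums, as in Remark \ref{rem:ultraproduct_product}. The modules involved have bounded order, so Corollary \ref{cor:ultrafilter_finite_modules} applies. Together these give, for each $k$,
\[\bH^1(K_\ku,T_k)=\bH^1_\f(K_\ku,T_k)\oplus \UU_i\bigl(H^1_\tr(K_{\ell_i},T_k)\bigr),\]
with the second summand free of rank one over $R_k$.

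It remains to pass to the limit over $k$, where $\bH^1(K_\ku,T)=\varprojlim_k \bH^1(K_\ku,T_k)$ by definition. The key point is that the splittings are compatible with the transition maps $T_{k+1}\to T_k$. The finite parts are compatible by functoriality. The transverse parts are compatible because inflation from $K(\ell_i)_{\ell_i}$ commutes with the coefficient projection. Given this, the inverse limit of the direct sums is the direct sum of the inverse limits, and the second limit is $\bH^1_\tr(K_\ku,T)$ by definition.

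For freeness, the transition maps $R_{k+1}$-free-rank-one $\to$ $R_k$-free-rank-one must be surjective. Surjectivity would follow from the identification with $T_k^{\tau=1}$ up to twist: one checks that a generator maps to a generator modulo $\m$ using (T2). A surjective compatible system of free rank-one $R_k$-modules has limit $\cong \varprojlim R_k=R$, so $\bH^1_\tr(K_\ku,T)$ is free and cyclic.

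The main obstacle I expect is this compatibility and surjectivity for the transverse parts. The field $K(\ell_i)$ does not depend on $k$, but the relevant Galois quotient seen by $T_k$ does. The identification with $T_k^{\tau=1}$ involves a choice of generator of tame inertia modulo $p^{\alpha_k}$, and these choices must be made compatibly in $k$. I would handle this by fixing once and for all a generator of $\widehat\Z(1)=I_\ku$ from Remark \ref{rem:local_galois_group}, and working directly with $H^1(G_\ku,-)$ via Proposition \ref{prop:local_patched_galois_group}. There the transverse part is the cohomology of the quotient by the closed subgroup generated by a Frobenius lift, and everything becomes uniform in $k$.
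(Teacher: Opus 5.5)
Your proposal is correct and follows essentially the same route as the paper. Both arguments use the level-$k$ splitting and $H^1_\tr(K_{\ell_i},T_k)\cong R_k$ from \cite[Lemmas 1.2.1 and 1.2.4]{MazurRubin} for $\UU$-many $i$, then patching by bounded order, then passage to the limit using surjectivity of the transition maps on the rank-one transverse pieces. The paper simply asserts that surjectivity, whereas you at least sketch it via $T_k^{\Frob=1}$ and a fixed tame generator.

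One cosmetic slip: $T_k^{\tau=1}$ is not literally the $R_k$-dual of $T_k/(\tau-1)T_k$ (that dual is the invariants of the contragredient). It is still free of rank one, because over the local ring $R_k$ a matrix of $\tau-1$ with cokernel $R_k$ can be brought to the form $\mathrm{diag}(1,\ldots,1,0)$.
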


\begin{proof}
For every $k\in \N$, denote by $S_k$ the set of indices such that $\ell_i\in \PP(T_k)$. Note that $S_k\in \UU$ for all $k\in \N$. For $i\in S_k$, \cite[Lemmas 1.2.1 and 1.2.4]{MazurRubin} imply that 
\[\begin{array}{cc}
    H^1_{\tr}(K_{\ell_i},T_k)\cong R_k,\ & H^1(K_{\ell_i},T_k)\cong H^1_\f(K_{\ell_i},T_k)\oplus H^1_\tr(K_{\ell_i},T_k).
\end{array}\]
By Lemma \ref{lem:ultrafilter_finite_diagonal}, the finite patched version also satisfies that 
\begin{equation}
\bH^1_{\tr}(K_\ku,T_k)=\UU_i\Bigl(H^1_{\tr}(K_{\ell_i},T_k)\Bigr)\cong R_k.
\label{eq:tr:artinian_iso}
\end{equation}
Taking limits, we obtain that 
\[\bH^1_{\tr}(K_\ku,T)=\varprojlim_{k\in \N}\bH^1_{\tr}(K_\ku,T_k)\cong\varprojlim_{n\in \N} R_k=R.\]
It is worth mentioning that, since the isomorphisms in \eqref{eq:tr:artinian_iso} are not canonical, we cannot guarantee that the transition maps in both limits coincide. However, it is possible to guarantee that the transition maps are surjective, which implies that $\bH^1_{\tr}(K_\ku,T)$ is a free module of rank one.

Remark \ref{rem:ultraproduct_product} implies that 
\[\bH^1(K_\ku,T_k)\cong \bH^1_\f(K_\ku,T_k)\oplus \bH^1_\tr(K_\ku,T_k).\]
And, taking limits, 
\[\bH^1(K_\ku,T)\cong \bH^1_\f(K_\ku,T)\oplus \bH^1_\tr(K_\ku,T).\qedhere\]
\end{proof}

\begin{remark}
Note that Proposition \ref{prop:patched_transverse} gives a canonical isomorphism 
\[\bH^1_{\tr}(K_\ku,T)\cong \bH^1_\s(K_\ku,T).\]
\end{remark}

We can also construct a finite-singular map in the patched setting. In order to do that, we need to define a generalisation of the Galois groups $\GG_\ell:=\Gal(K(\ell)/K)$ for ultraprimes.

\begin{definition}
Let $\ku=(\ell_i)_{i\in \N}\in \UU(\MM_K)$ be an ultraprime. Define the group
\[\GG_\ku:=\varprojlim_{k\in \N} \UU_i\Bigl(\GG_{\ell_i}/p^k\Bigr).\]
\label{def:patched:unramified_global}
\end{definition}

\begin{remark}
Note that Definition \ref{def:patched:unramified_global} is independent of the sequence representing the ultraprime. In addition, if $\ku$ is the constant ultraprime $(\ell)$, then $\GG_\ku$ coincides with the group $\GG_\ell$.
\end{remark}

\begin{remark}
If $\ku$ is a Kolyvagin ultraprime, then $\GG_\ku$ is isomorphic to $\Z_p$.
\end{remark}

\begin{proposition}
There is a canonical isomorphism 
\[\bH^1_\s(K_\ku,T)\cong \Hom(I_\ku,T)^{\Frob_\ku=1}.\]
\label{prop:patched:singular}
\end{proposition}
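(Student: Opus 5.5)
The plan is to reduce to the classical statement at finite level and then pass to the limit, following the pattern of Propositions \ref{prop:patched:finite_descr} and \ref{prop:patched_transverse}. The classical input is this: for an unramified $G_{K_\ell}$-module $M$, inflation–restriction gives an exact sequence
\[0\to H^1_\f(K_\ell,M)\to H^1(K_\ell,M)\to H^1(I_\ell,M)^{G_{K_\ell}/I_\ell}\to H^2(G_{K_\ell}/I_\ell,M^{I_\ell})=0,\]
since $\widehat\Z$ has cohomological dimension one. Because $M$ is unramified, $H^1(I_\ell,M)=\Hom(I_\ell,M)$. Taking invariants under $\Frob_\ell$ (acting on $I_\ell$ by conjugation and on $M$) therefore gives a canonical isomorphism
\[H^1_\s(K_\ell,M)\cong \Hom(I_\ell,M)^{\Frob_\ell=1}.\]

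\textbf{Step 1: finite coefficients.} Let $T_k$ be finite. For $\UU$-many $i$, the prime $\ell_i$ is prime to $\#T_k$ and $T_k$ is unramified at $\ell_i$. For these $i$ the classical isomorphism applies, and every homomorphism from $I_{\ell_i}$ to $T_k$ factors through the tame quotient $\Z_{p}(1)/p^{\alpha_k}$. By Proposition \ref{prop:ultrafilter_exact} and Definition \ref{def:patched:singular}, we get
\[\bH^1_\s(K_\ku,T_k)=\UU_i\Bigl(H^1_\s(K_{\ell_i},T_k)\Bigr)\cong \UU_i\Bigl(\Hom(I_{\ell_i},T_k)^{\Frob_{\ell_i}=1}\Bigr).\]

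\textbf{Step 2: identify the ultraproduct.} The goal is to show
\[\UU_i\Bigl(\Hom(I_{\ell_i},T_k)^{\Frob_{\ell_i}=1}\Bigr)\cong \Hom(I_\ku,T_k)^{\Frob_\ku=1}.\]
Since $T_k$ is finite, Proposition \ref{prop:ultraproduct_hom} identifies $\UU_i(\Hom(I_{\ell_i},T_k))$ with $\Hom(\UU(I_{\ell_i}),T_k)$. Continuous homomorphisms to a finite group factor through the profinite completion, so this equals $\Hom(I_\ku,T_k)$. Invariants are a kernel, so they commute with ultraproducts by Proposition \ref{prop:ultrafilter_exact}. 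Finally, the action of $\Frob_{\ell_i}$ on $I_{\ell_i}/p^{\alpha_k}$ and on $T_k$ agrees, for $\UU$-many $i$, with that of $\Frob_\ku$; this holds by the definition of the Frobenius of an ultraprime (Definition \ref{def:frobenius}) applied to the finite extension $K(T_k,\mu_{p^{\alpha_k}})/K$.

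An alternative route to Steps 1 and 2 is to work directly with Proposition \ref{prop:local_patched_galois_group}: apply inflation–restriction to $I_\ku\subset G_\ku$, using that $G_\ku/I_\ku$ is free profinite on one generator and hence of cohomological dimension one. The kernel term is $\bH^1_\f$ by definition, and $H^1(I_\ku,T)=\Hom(I_\ku,T)$ because $I_\ku$ acts trivially on $T$. This works uniformly for finite, profinite and ind-finite $T$, and I would present it as the main argument, with Step 1 as a consistency check.

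\textbf{Step 3: pass to the limit.} For profinite $T$, take inverse limits over $k$. The functor $\Hom(I_\ku,-)^{\Frob_\ku=1}$ commutes with inverse limits because it is a left-exact, limit-preserving construction. The singular quotients form a compatible system by naturality of inflation–restriction.

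\textbf{Main obstacle.} The delicate point is canonicity and exactness in the limit. One must check that $\bH^1_\s(K_\ku,T)=\varprojlim_k \bH^1_\s(K_\ku,T_k)$, which requires Mittag-Leffler for the $\bH^1_\f(K_\ku,T_k)$. These groups are finite, so the condition holds. One must also confirm that the inflation–restriction isomorphisms at each $i$ are natural, so that they survive the ultraproduct, rather than being merely abstract isomorphisms.
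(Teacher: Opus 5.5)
Your proposal is correct and follows the paper's proof: the paper applies inflation--restriction to $I_\ku\subset G_\ku$ through the identifications of Proposition~\ref{prop:local_patched_galois_group}, using that $G_\ku/I_\ku=\langle\Frob_\ku\rangle$ has cohomological dimension one, which is exactly the argument you chose as your main one. Steps 1--3 are an optional consistency check; if you keep Step 2, justify $\UU_i\bigl(\Hom(I_{\ell_i},T_k)\bigr)\cong\Hom(I_\ku,T_k)$ by the fact that all these homomorphisms factor through the bounded quotient $\mu_{p^{\alpha_k}}$, rather than by the surjectivity part of Proposition~\ref{prop:ultraproduct_hom}, which is only reliable when the modules $A_n$ have bounded order.
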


\begin{proof}
This is obtained from the inflation-restriction sequence, in the identifications of Proposition \ref{prop:local_patched_galois_group}, and the cohomological dimension of $\langle \Frob_\ku\rangle$ being one.
\end{proof}

We can restate the isomorphism in Proposition \ref{prop:patched:singular} in the following way:
\begin{corollary}
If $\ku$ is a Kolyvagin ultraprime, there is a canonical isomorphism 
\[\bH^1_\s(K_\ku,T)\otimes_{\Z_p} \GG_\ku\cong T^{\Frob_\ku=1}.\]
\label{cor:patched:singular}
\end{corollary}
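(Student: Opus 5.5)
We will deduce the statement directly from Proposition \ref{prop:patched:singular} by computing $\Hom(I_\ku,T)^{\Frob_\ku=1}$ explicitly. By Remark \ref{rem:local_galois_group}, $I_\ku\cong\widehat\Z(1)$, with $\Frob_\ku$ acting on it through the cyclotomic character. Since $T$ is pro-$p$, every continuous homomorphism $I_\ku\to T$ factors through the pro-$p$ quotient $\Z_p(1)$. Hence
\[\Hom(I_\ku,T)=\Hom(\Z_p(1),T)\cong T\otimes_{\Z_p}\Z_p(-1).\]
Here a Frobenius-fixed homomorphism $f$ is one satisfying $f(\Frob_\ku\cdot\sigma)=\Frob_\ku f(\sigma)$.

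The next step is to identify $\Z_p(1)$ with $\GG_\ku$. This is where the ultraprime formalism enters. For each $k$ and $\UU$-many $i$, local class field theory gives a canonical isomorphism of the $p$-part of $\GG_{\ell_i}/p^k$ with the $p^k$-quotient of tame inertia at $\ell_i$, namely $(\mathcal O_K/\ell_i)^\times\otimes\Z/p^k$. Taking ultraproducts, which is exact by Proposition \ref{prop:ultrafilter_exact}, and then inverse limits in $k$ yields a canonical isomorphism $\GG_\ku\cong I_\ku\otimes\Z_p$. For a Kolyvagin ultraprime, $\Frob_\ku$ is conjugate to $\tau\in G_{K(\mu_{p^\infty})}$, so it acts trivially on $\mu_{p^\infty}$. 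Therefore the twist is invisible to the Frobenius action, and we obtain
\[\Hom(I_\ku,T)^{\Frob_\ku=1}\cong \Hom_{\Z_p}(\GG_\ku,T^{\Frob_\ku=1}).\]
Since $\GG_\ku\cong\Z_p$ is free of rank one, tensoring with $\GG_\ku$ identifies this group with $T^{\Frob_\ku=1}$, and the identification is canonical because evaluation $\Hom(\GG_\ku,M)\otimes\GG_\ku\to M$ is canonical. Combining this with Proposition \ref{prop:patched:singular} gives the corollary.

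The main obstacle is making the identification $\GG_\ku\cong I_\ku^{(p)}$ canonical and compatible with the limit construction. I would first check it at each finite level $T_k$, using $\UU$-many Kolyvagin primes $\ell_i$ for which $K(\ell_i)/K$ is totally tamely ramified at $\ell_i$ in its $p$-part. The Artin map then identifies the inertia group of $\ell_i$ in $K(\ell_i)$ with the whole of $\GG_{\ell_i}$. I would then pass to ultraproducts and verify that the transition maps in $k$ commute with these identifications; they do because everything is induced by the reduction maps $\Z/p^{k+1}\to\Z/p^k$.
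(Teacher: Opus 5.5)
Your proposal is correct and follows essentially the same route as the paper: combine Proposition~\ref{prop:patched:singular} with $\Hom(I_\ku,T)=\Hom(\Z_p(1),T)$, then patch the canonical level-$k$ identifications of $\GG_{\ell_i}/p^k$ with $\mu_{p^k}$ (which you phrase via the Artin map onto tame inertia) to get $\GG_\ku\cong\Z_p(1)$ canonically, and finally use that $\Frob_\ku$ fixes $\mu_{p^\infty}$, so the twist disappears and evaluation gives $\Hom(\GG_\ku,T^{\Frob_\ku=1})\otimes\GG_\ku\cong T^{\Frob_\ku=1}$. The extra condition you flag in your last paragraph (that $K(\ell_i)/K$ be totally ramified at $\ell_i$, so that inertia is all of $\GG_{\ell_i}$) is the same implicit input the paper relies on when it asserts $\GG_{\ell_i}/p^k=\mu_{p^k}$ and $\GG_\ku\cong\Z_p$.
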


\begin{proof}
By Definition \ref{def:local_galois_group}, there is an isomorphism
\[\Hom(I_\ku,T)\to \Hom(\Z_p(1),T).\]

Since $\ku=(\ell_i)_{i\in \N}$ is a Kolyvagin ultraprime, then there is a canonical isomorphism $\GG_{\ell_i}/p^k=\mu_{p^k}$ for $\UU$-many $i$ and all $k\in \N$. Those isomorphisms can be patched to give a canonical identification $\GG_{\ku}= \mu_{p^\infty}$. Moreover, the fact that $\ku$ is a Kolyvagin prime implies that $\Frob_\ku$ acts trivially on the $p$-adic roots of unity, so $\Z_p$ and $\Z_p(1)$ are the same as $\langle \Frob_\ku\rangle$-modules. By Proposition \ref{prop:patched:singular}
\[\bH^1_\s(K_\ku,T)\otimes \GG_\ku\cong \Hom(\Z_p(1),T)^{\Frob_\ku=1}\otimes \GG_\ku=T^{\Frob_\ku=1}.\qedhere\]
\end{proof}

Finite-singular maps also appear in this setting.

\begin{proposition}
If $\ku$ is a Kolyvagin ultraprime, there is a canonical isomorphism, known as \emph{finite-singular map}, 
\[\phi^{\fs}_\ku:\ \bH^1_\f(K_\ku,T)\to \bH^1_\s(K_\ku,T)\otimes_{\Z_p} \GG_\ku.\]
\label{prop:patched:finite-singular}
\end{proposition}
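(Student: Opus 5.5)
The plan is to build $\phi^{\fs}_\ku$ as a composite of canonical identifications already proved, and then to check that it is an isomorphism at each finite level $T_k$ before passing to the limit. By Proposition \ref{prop:patched:finite_descr}, or rather by the proof of its first isomorphism, evaluation at $\Frob_\ku$ gives a canonical isomorphism
\[\bH^1_\f(K_\ku,T)=H^1\bigl(G_\ku/I_\ku,T^{I_\ku}\bigr)\cong T/(\Frob_\ku-1)T.\]
This uses Proposition \ref{prop:local_patched_galois_group} together with the fact that $G_\ku/I_\ku=\langle \Frob_\ku\rangle$ is free profinite on one generator, and that $T$ is unramified at $\ku$. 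On the other side, Corollary \ref{cor:patched:singular} gives a canonical isomorphism
\[\bH^1_\s(K_\ku,T)\otimes_{\Z_p}\GG_\ku\cong T^{\Frob_\ku=1}.\]
So it suffices to produce a canonical isomorphism $T/(\Frob_\ku-1)T\to T^{\Frob_\ku=1}$.

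For this step I will mimic \cite[Definition 1.2.2]{MazurRubin} and use the polynomial $P(x)$ with $P(x)(x-1)=\mathrm{char}(x)$, or more concretely the map $t\mapsto P(\Frob_\ku)t$. Working at finite level $T_k$, the ultraprime $\ku=(\ell_i)$ has $\ell_i$ a Kolyvagin prime for $T_k$ for $\UU$-many $i$. For those $i$, the classical finite-singular map
\[\phi^{\fs}_{\ell_i}:\ H^1_\f(K_{\ell_i},T_k)\to H^1_\s(K_{\ell_i},T_k)\otimes\GG_{\ell_i}\]
is a canonical isomorphism by \cite[Lemma 1.2.3]{MazurRubin}. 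Here $\GG_{\ell_i}$ can be replaced by $\GG_{\ell_i}/p^{\alpha_k}$ because $p^{\alpha_k}T_k=0$. Since these classical maps are functorial, and in fact depend only on the action of $\Frob_{\ell_i}$ on $T_k$ (which agrees with $\Frob_\ku$ for $\UU$-many $i$), the ultraproduct
\[\UU_i\bigl(\phi^{\fs}_{\ell_i}\bigr)\]
is a well-defined isomorphism
\[\bH^1_\f(K_\ku,T_k)\to \bH^1_\s(K_\ku,T_k)\otimes \UU_i\bigl(\GG_{\ell_i}/p^{\alpha_k}\bigr).\]
This uses Corollary \ref{cor:patched:local_constant}, Proposition \ref{prop:ultrafilter_exact}, and the exactness of ultraproducts for isomorphisms. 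Note that $\UU_i(\GG_{\ell_i}/p^{\alpha_k})$ is exactly the level-$k$ term in Definition \ref{def:patched:unramified_global}, and it commutes with the tensor product because everything is finite of bounded order (Corollary \ref{cor:ultrafilter_finite_modules}).

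Finally I pass to the inverse limit over $k$. The maps for $T_{k+1}$ and $T_k$ are compatible with the projections, since the classical finite-singular maps are natural in the coefficient module. The transition maps are surjective, as shown in the proofs of Propositions \ref{prop:patched:finite_descr} and \ref{prop:patched_transverse}. A compatible system of isomorphisms between inverse systems therefore gives an isomorphism of the limits, and $\varprojlim_k\bigl(\bH^1_\s(K_\ku,T_k)\otimes\GG_\ku/p^{\alpha_k}\bigr)=\bH^1_\s(K_\ku,T)\otimes_{\Z_p}\GG_\ku$ because $\GG_\ku\cong\Z_p$ and $\bH^1_\s(K_\ku,T)\cong R$ is finitely generated.

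The main obstacle is canonicity, that is, independence of all choices. The classical map uses a choice of Frobenius lift and the polynomial $P$. I expect the independence to follow because $\tau$-type Kolyvagin conditions make $P(\Frob_\ku)$ well defined on $T_k$ modulo $(\Frob_\ku-1)$. The remaining point to check is that the identification $\GG_\ku=\mu_{p^\infty}$ from Corollary \ref{cor:patched:singular} is the same one used in the classical construction, so that the level-$k$ maps really glue.
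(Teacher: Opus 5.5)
Your proof is correct, but it gets bijectivity by a different route than the paper.

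\textbf{The paper's route.} The paper stays at the profinite level, which is the route of your first paragraph. It writes the characteristic polynomial of $\Frob_\ku$ on $T$ as $(X-1)Q_\ku(X)$ and uses Cayley--Hamilton to get $Q_\ku(\Frob_\ku)\colon T/(\Frob_\ku-1)T\to T^{\Frob_\ku=1}$. It places this map between the identifications of Proposition~\ref{prop:patched:finite_descr} and Corollary~\ref{cor:patched:singular}, and proves it is an isomorphism directly over $R$. Modulo $\m$ it is an isomorphism of one-dimensional $R/\m$-vector spaces, Nakayama gives surjectivity, and freeness of rank one on both sides gives injectivity.

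\textbf{Your route.} You abandon that first paragraph and instead define the map at each artinian level $T_k$ as $\UU_i(\phi^{\fs}_{\ell_i})$. Bijectivity is then inherited from the classical Mazur--Rubin lemma you cite. Its hypotheses hold for $\UU$-many $i$, since $\ell_i$ is a Kolyvagin prime for $T_k$ and $T_k/(\Frob_{\ell_i}-1)T_k\cong R_k$. You then pass to $\varprojlim_k$, using:
\begin{itemize}
\item naturality of the classical maps in the coefficient module;
\item the fact that ultraproducts of finite modules of bounded order commute with $\otimes$;
\item $\varprojlim_k\bH^1_\s(K_\ku,T_k)=\bH^1_\s(K_\ku,T)$, which holds because inverse limits of finite groups are exact.
\end{itemize}

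\textbf{Comparison.} Your construction makes the compatibility $\phi^{\fs}_\ku=\UU_i(\phi^{\fs}_{\ell_i})$ true by definition. The paper only asserts this in Remark~\ref{rem:finite-singular_generator}, and it is what the proof of Theorem~\ref{th:comp_ultra_kol} actually uses. The cost is the limit bookkeeping and reliance on the classical lemma. The paper's argument is shorter and intrinsic to $\ku$, with no representing sequence involved.

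\textbf{Minor points.} The obstacle you flag at the end is moot for your route. Your target is literally $\bH^1_\s(K_\ku,T_k)\otimes\UU_i(\GG_{\ell_i}/p^{\alpha_k})$, so you never use the identification $\GG_\ku=\mu_{p^\infty}$ from Corollary~\ref{cor:patched:singular}. That check would only be needed to show your map agrees with the paper's composite. Likewise, surjectivity of the transition maps is unnecessary: any compatible system of isomorphisms induces an isomorphism of inverse limits.
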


\begin{proof}
Let $P_\ku$ be the characteristic polynomial of the Galois action of $\Frob_\ku$ on $T$. The fact that $T/(\Frob_\ku-1)$ is one dimensional, implies that $P_\ku(X)=(X-1) Q_\ku(X)$, where $Q_\ku$ is a polynomial satisfying that $Q_\ku(1)\neq 0$. Note that, since $P_\ku(\Frob_\ku)$ acts trivially on $T$, then $Q_\ku(\Frob_\ku)$ induces a map
\[Q_\ku(\Frob_\ku):\ T/(\Frob_\ku-1)T\to T^{\Frob_\ku=1}.\]
Consider then the composition,
\[\xymatrix{\bH^1_\f(K_\ku,T)\ar[r] & T/(\Frob_\ku-1)T\ar[r] & T^{\Frob_\ku=1}\ar[r] & \bH^1_\s(K_\ku,T)\otimes\GG_\ku}.\]
The first and third maps are isomorphisms by Proposition \ref{prop:patched:finite_descr} and Corollary \ref{cor:patched:singular}. Hence it only remains to see that $Q_\ku(\Frob_\ku)$ induces an isomorphism. Since $T/\m$ is an $R/\m$-vector space, the following map is an isomorphism:
\[\overline{Q_\ku(\Frob_\ku)}:\ T/(\Frob_\ku-1,\m)T\to (T/\m)^{\Frob_\ku=1}.\]
Nakayama's lemma then implies that $Q_\ku(\Frob_\ku)$ is surjective. Since $T/(\Frob_\ku-1)T$ and $T^{\Frob_\ku=1}$ are free $R$-modules of the same rank, then it is in fact an isomorphism.
\end{proof}

\begin{remark}
Note that, after choosing a generator of $\GG_\ku$, the finite-singular map can be written as a well-determined map
\[\phi^{\fs}_\ku:\ \bH^1_\f(K_\ku,T)\to \bH^1_\s(K_\ku,T).\]
Note that, when $\ku$ is represented by the sequence $(\ell_i)_{i\in \N}$, then choosing a generator of $\GG_\ku$ is the same as choosing a generator of $\GG_{\ell_i}$ for $\UU$-many $i$. In this case, the finite-singular maps $\phi_\ku^{\fs}$ can be obtained by patching $\phi_{\ell_i}^{\fs}$ in a natural way.
\label{rem:finite-singular_generator}
\end{remark}

We can now use these local conditions to modify Selmer structures, analogously to \cite[Example 2.8]{MazurRubin}
\begin{definition}
Let $\FF$ be a Selmer structure and let $\ka$, $\kb$ and $\kc$ be pairwise coprime square-free products of ultraprimes. Assume $\kc\in \NN$. Define the Selmer structure $\FF_\ka^\kb(\kc)$ by the local conditions
\[\bH^1_{\FF_\ka^\kb(\kc)}(K_\ku,T)=\left\{\begin{aligned}
&0\ &\textrm{if }\ku\mid\ka\\
&\bH^1(K_\ku,T)\ &\textrm{if }\ku\mid\kb\\
&\bH^1_{\tr}(K_\ku,T)\ &\textrm{if }\ku\mid\kc\\
&\bH^1_\FF(K_\ku,T)\ &\textrm{otherwise.}
\end{aligned}\right.\]
\label{def:patched:modified}
\end{definition}

Modified Selmer structures are cartesian and their core rank varies in a controlled way. The following result is a generalisation of (\cite[Corollary 3.21]{Sakamoto18}) to the patched cohomology setting.

\begin{proposition}
Let $\FF$ be a cartesian Selmer structure and let $\ka,\kb,\kc\in \NN$ be pairwise coprime. Then $\FF_\ka^\kb(\kc)$ is also cartesian and 
\[\chi(\FF_\ka^\kb(\kc))=\chi(\FF)+\nu(\kb)-\nu(\ka).\]
\label{prop:patched:rank_modified}
\end{proposition}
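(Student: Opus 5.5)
The plan splits into two independent parts: the cartesian property, and the core rank formula.

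\emph{Cartesian property.} By Definition \ref{def:patched:cartesian} it suffices to check each local condition of $\FF_\ka^\kb(\kc)$ separately. Away from $\ka\kb\kc$ the condition is that of $\FF$, which is cartesian by hypothesis. The remaining three cases are as follows.
\begin{itemize}
\item For $\ku\mid\kb$ the quotient is $0$.
\item For $\ku\mid\kc$ the quotient $\bH^1(K_\ku,T)/\bH^1_\tr(K_\ku,T)$ is isomorphic to $\bH^1_\f(K_\ku,T)\cong R$, by Propositions \ref{prop:patched_transverse} and \ref{prop:patched:finite_descr}. This is torsion-free.
\item For $\ku\mid\ka$ with $\ku$ a Kolyvagin ultraprime, the quotient is $\bH^1(K_\ku,T)\cong R\oplus R$, again by Proposition \ref{prop:patched_transverse}. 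This is torsion-free.
\end{itemize}

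\emph{Core rank: reduction to one ultraprime.} For the rank formula I would argue one ultraprime at a time. The modifications at distinct ultraprimes commute, so by induction on $\nu(\ka\kb\kc)$ it suffices to treat the case where $\ka\kb\kc=\ku$ is a single Kolyvagin ultraprime. The tool is the global duality sequence of Proposition \ref{prop:patched:global_duality}, applied to a pair $\FF'\leq\GG'$ that differ only at $\ku$. This gives an exact sequence
\[0\to \bH^1_{\FF'}(K,T)\to \bH^1_{\GG'}(K,T)\to \frac{\bH^1_{\GG'}(K_\ku,T)}{\bH^1_{\FF'}(K_\ku,T)}\to \bH^1_{\FF'^*}(K,T^*)^\vee\to \bH^1_{\GG'^*}(K,T^*)^\vee\to 0 .\]
All terms are finitely generated $R$-modules, by Proposition \ref{prop:fingen}, Corollary \ref{cor:fingen_dual} and Proposition \ref{prop:patched_transverse}. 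So ranks are additive along this sequence, which yields
\[\chi(\GG')-\chi(\FF')=\rank_R \frac{\bH^1_{\GG'}(K_\ku,T)}{\bH^1_{\FF'}(K_\ku,T)} .\]

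\emph{Core rank: the three cases.} Apply this identity three times.
\begin{itemize}
\item Take $\FF'=\FF_\ku$ and $\GG'=\FF$. The local quotient is $\bH^1_\f(K_\ku,T)\cong R$, which has rank $1$. This gives $\chi(\FF_\ku)=\chi(\FF)-1$.
\item Take $\FF'=\FF$ and $\GG'=\FF^\ku$. The local quotient is $\bH^1_\s(K_\ku,T)\cong R$, which has rank $1$. This gives $\chi(\FF^\ku)=\chi(\FF)+1$.
\item Take $\FF'=\FF_\ku$ and $\GG'=\FF(\ku)$. The local quotient is $\bH^1_\tr(K_\ku,T)\cong R$. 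This gives $\chi(\FF(\ku))=\chi(\FF_\ku)+1=\chi(\FF)$.
\end{itemize}
Iterating over the ultraprimes dividing $\ka$, $\kb$ and $\kc$ then gives $\chi(\FF_\ka^\kb(\kc))=\chi(\FF)+\nu(\kb)-\nu(\ka)$.

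\emph{Main obstacle.} The delicate point is the global duality sequence itself. It needs the two Selmer structures $\FF'$ and $\GG'$ to be comparable, and it needs the dual structures to behave correctly at $\ku$. In particular, the dual of the transverse condition should be the transverse condition on $T^*$, which follows by patching the classical fact \cite[Lemma 1.2.4]{MazurRubin} via Lemma \ref{lem:patching_structures_dual}. Rank additivity also requires care. Exactness of the global duality sequence over profinite $T$ comes from the limit argument in Proposition \ref{prop:patched:global_duality}. Once exactness is in hand, additivity of $\rank_R$ is just exactness of $-\otimes_R \mathrm{Frac}(R)$.
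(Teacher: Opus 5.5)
Your proposal is correct and uses the same ingredients as the paper: a case-by-case torsion-freeness check of the modified local quotients, followed by the global duality sequence of Proposition \ref{prop:patched:global_duality} and additivity of rank. The only difference is packaging. The paper handles all ultraprimes at once through the two comparisons $\FF\leq\FF^{\kb\kc}$ and $\FF_\ka^\kb(\kc)\leq\FF^{\kb\kc}$, rather than inducting one ultraprime at a time. Neither route needs the identification of the dual of the transverse condition that you flag as the main obstacle.
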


\begin{proof}
For every Kolyvagin prime $\ku\in \PP$, both $\bH^1(K_\ku,T)$ and $\bH^1_\tr(K_\ku,T)$ are torsion-free, so $\FF_\ka^\kb(\kc)$ is cartesian.

Proposition \ref{prop:patched:global_duality} produces an exact sequence
\begin{center}
\begin{tikzpicture}[descr/.style={fill=white,inner sep=1.5pt}]
        \matrix (m) [
            matrix of math nodes,
            row sep=3.5em,
            column sep=1.5em,
            text height=1.5ex, text depth=0.25ex
        ]
        {  0& \bH^1_{\FF}(K,T) & \bH^1_{\FF^{\kb\kc}}(K,T) & \displaystyle\bigoplus_{\ku\mid \kb\kc}\bH^1_\s(K_\ku,T) \\
            & \bH^1_{\FF^*}(K,T^*)^\vee & \bH^1_{(\FF^*)_{\kb\kc}}(K,T^*)^\vee & 0. \\
        };

        \path[overlay,->, font=\scriptsize,>=latex]
        (m-1-1) edge (m-1-2)
        (m-1-2) edge (m-1-3)
        (m-1-3) edge (m-1-4)
        (m-1-4) edge [out=355,in=175] (m-2-2) 
        (m-2-2) edge (m-2-3)
        (m-2-3) edge (m-2-4)
        ;    
\end{tikzpicture}
\end{center}

Since the rank is additive, then $\chi\bigl(\FF^{\kb\kc}\bigr)=\chi(\FF)+\nu(\kb)+\nu(\kc)$. Proposition \ref{prop:patched:global_duality} produces another exact sequence

\begin{center}
\begin{tikzpicture}[descr/.style={fill=white,inner sep=1.5pt}]
        \matrix (m) [
            matrix of math nodes,
            row sep=3.5em,
            column sep=1em,
            text height=1.5ex, text depth=0.25ex
        ]
        {  0& \bH^1_{\FF_\ka^\kb(\kc)}(K,T) & \bH^1_{\FF^{\kb\kc}}(K,T) & \displaystyle\bigoplus_{\ku\mid \ka\kc}\bH^1_\f(K_\ku,T) \\
            & \bH^1_{(\FF^*)^\ka_\kb(\kc)}(K,T^*)^\vee & \bH^1_{(\FF^*)_{\kb\kc}}(K,T^*)^\vee & 0. \\
        };

        \path[overlay,->, font=\scriptsize,>=latex]
        (m-1-1) edge (m-1-2)
        (m-1-2) edge (m-1-3)
        (m-1-3) edge (m-1-4)
        (m-1-4) edge [out=355,in=175] (m-2-2) 
        (m-2-2) edge (m-2-3)
        (m-2-3) edge (m-2-4)
        ;    
\end{tikzpicture}
\end{center}
Thus,
\[\chi\bigl(\FF^\kb_\ka(\kc)\bigr)=\chi\bigl(\FF^{\kb\kc}\bigr)-\nu(\ka)-\nu(\kc)=\chi\bigl(\FF\bigr)+\nu(\kb)-\nu(\ka).\qedhere\]
\end{proof}

The analogue of \cite[Lemma 4.1.7(ii)]{MazurRubin} is a central technical result when working with Kolyvagin systems.

\begin{lemma}
Let $\FF$ be a Selmer structure and let $\ku\in\PP$ be an ultraprime satisfying that 
\[\loc_\ku:\ \bH^1_{\FF}(K,T)\to \bH^1_\f(K_\ku,T)\]
is surjective. Then $\bH^1_{(\FF^*)(\ku)}(K,T^*)=H^1_{(\FF^*)_\ku}(K,T^*)$.
\label{lem:patched:tr_res}
\end{lemma}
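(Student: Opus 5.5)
The plan is to compare both sides with the relaxed dual Selmer group $\bH^1_{(\FF^*)^\ku}(K,T^*)$ and apply global duality (Proposition \ref{prop:patched:global_duality}) to a suitable pair of Selmer structures. Both $(\FF^*)(\ku)$ and $(\FF^*)_\ku$ lie between $(\FF^*)_\ku$ and $(\FF^*)^\ku$. Since $\bH^1(K_\ku,T^*)=\bH^1_\f\oplus\bH^1_\tr$ (Proposition \ref{prop:patched_transverse} applied to $T^*$, or dually via Proposition \ref{prop:patched:local_duality}), the intersection of the transverse and finite conditions is $0$. Hence a class in $\bH^1_{(\FF^*)(\ku)}(K,T^*)$ lies in $\bH^1_{(\FF^*)_\ku}(K,T^*)$ exactly when its localisation at $\ku$ also lies in $\bH^1_\f(K_\ku,T^*)$, i.e.\ when it lies in $\bH^1_{\FF^*}(K,T^*)$. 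The statement therefore reduces to two facts: every class of $\bH^1_{(\FF^*)(\ku)}(K,T^*)$ has trivial localisation at $\ku$, and every class of $\bH^1_{\FF^*}(K,T^*)$ has trivial localisation at $\ku$. Granting both, $\bH^1_{(\FF^*)(\ku)}(K,T^*)=\bH^1_{(\FF^*)_\ku}(K,T^*)=\bH^1_{\FF^*}(K,T^*)\cap \ker(\loc_\ku)$.

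For the second fact, apply Proposition \ref{prop:patched:global_duality} to $\FF_\ku\le\FF$. The dual structures are $(\FF_\ku)^*=(\FF^*)^\ku$ and $\FF^*$, and the cokernel of $\bH^1_\FF(K,T)\to \bH^1_\f(K_\ku,T)$ is dual to $\bH^1_{(\FF^*)^\ku}(K,T^*)/\bH^1_{\FF^*}(K,T^*)$, via the localisation at $\ku$ into $\bH^1_\s(K_\ku,T^*)$. The hypothesis says this cokernel vanishes, so $\bH^1_{(\FF^*)^\ku}(K,T^*)=\bH^1_{\FF^*}(K,T^*)$. For the localisation into $\bH^1_\f(K_\ku,T^*)$ itself, I would instead use the global reciprocity/Poitou--Tate sum (Proposition \ref{prop:patched:poitou-tate}). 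For $c\in\bH^1_\FF(K,T)$ and $d\in\bH^1_{\FF^*}(K,T^*)$, all local pairings away from $\ku$ vanish, and at $\ku$ the finite conditions annihilate each other, so no information is obtained there directly. This is why I would rather phrase everything through duality of the pair $\FF_\ku\le \FF^\ku$.

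A cleaner route is to apply Proposition \ref{prop:patched:global_duality} to the pair $\FF_\ku\le\FF(\ku)$. Here the local quotient at $\ku$ is $\bH^1_\tr(K_\ku,T)$, and the dual groups are $\bH^1_{(\FF^*)(\ku)}(K,T^*)$ (the transverse condition is self-annihilating under local duality, which I would check by patching \cite[Lemma 1.2.4]{MazurRubin}) and $\bH^1_{(\FF^*)^\ku}(K,T^*)$. Comparing this sequence with the analogous one for $\FF_\ku\le\FF$ shows that the image of $\bH^1_{(\FF^*)(\ku)}$ in $\bH^1_{(\FF^*)^\ku}$ and the image of $\bH^1_{\FF^*}$ are both cut out by the kernels of pairing against $\loc_\ku(\bH^1_{\FF^{\ku}}(K,T))$ in the respective components. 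Surjectivity of $\loc_\ku$ onto the finite part then forces the localisation at $\ku$ of any class in $\bH^1_{(\FF^*)^\ku}(K,T^*)$ to pair trivially with all of $\bH^1_\f(K_\ku,T)$. This places that localisation in $\bH^1_\f(K_\ku,T^*)$, while the reciprocity law pairing against classes with singular localisation forces it also into $\bH^1_\tr$, hence it is $0$.

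The main obstacle is this last step: tracking the reciprocity pairing rigorously in the patched profinite setting. I would do it first for finite $T_k$, where everything is an ultraproduct of classical statements and \cite[Lemma 4.1.7]{MazurRubin} applies factorwise via Lemmas \ref{lem:structure_patching} and \ref{lem:patching_structures_dual}. Surjectivity for $T$ implies surjectivity for each $T_k$ after propagation, since $\bH^1_\f(K_\ku,T)\to\bH^1_\f(K_\ku,T_k)$ is surjective. I would then pass to the limit using Proposition \ref{prop:selmer_indfinite} for $T^*=\varinjlim T_k^*$.
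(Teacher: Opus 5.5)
Your reduction asks for more than the lemma needs, and the extra part is false. The lemma is equivalent to your first ``fact'' alone, since $\bH^1_{(\FF^*)_\ku}(K,T^*)\subset\bH^1_{(\FF^*)(\ku)}(K,T^*)$ always holds. Your second ``fact'' says that every class of $\bH^1_{\FF^*}(K,T^*)$ is locally trivial at $\ku$ (equivalently, after your second paragraph, every class of $\bH^1_{(\FF^*)^\ku}(K,T^*)$). This does not follow from the hypothesis.

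It fails in exactly the situations where the paper uses the lemma. In Corollaries \ref{cor:patched:dual_res_vanish_417} and \ref{cor:patched:dual_tr_vanish}, $\ku$ is chosen so that $\loc_\ku$ is surjective on $\bH^1_\FF(K,T)$ \emph{and} $\loc_\ku(r)\neq 0$ for some $r\in\bH^1_{\FF^*}(K,T^*)$.

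Concretely, global duality for $\FF_\ku\le\FF^\ku$ shows that $\loc_\ku\bigl(\bH^1_{(\FF^*)^\ku}(K,T^*)\bigr)$ and $\loc_\ku\bigl(\bH^1_{\FF^\ku}(K,T)\bigr)$ are exact annihilators. The hypothesis only says the latter contains $\bH^1_\f(K_\ku,T)$. So the former lies in $\bH^1_\f(K_\ku,T^*)$, but it can be all of it.

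Your third-paragraph step, where ``the reciprocity law pairing against classes with singular localisation forces it also into $\bH^1_\tr$'', would need $\loc_\ku\bigl(\bH^1_{\FF^\ku}(K,T)\bigr)$ to contain $\bH^1_\tr(K_\ku,T)$ as well, hence to be all of $\bH^1(K_\ku,T)$. Nothing gives you that. So the ``main obstacle'' you single out is an attempt to prove a false statement, and the finite-level and limit argument of your last paragraph cannot rescue it.

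The repair is already in your proposal, because you only need to control localisations of classes in $\bH^1_{(\FF^*)(\ku)}(K,T^*)$, not of all of $\bH^1_{\FF^*}(K,T^*)$.
\begin{itemize}
\item Your second paragraph correctly gives $\bH^1_{(\FF^*)^\ku}(K,T^*)=\bH^1_{\FF^*}(K,T^*)$.
\item Since $\bH^1_{(\FF^*)(\ku)}(K,T^*)\subset\bH^1_{(\FF^*)^\ku}(K,T^*)$, every class in $\bH^1_{(\FF^*)(\ku)}(K,T^*)$ lies in $\bH^1_{\FF^*}(K,T^*)$.
\item Its localisation at $\ku$ therefore lies in $\bH^1_\f(K_\ku,T^*)\cap\bH^1_\tr(K_\ku,T^*)=0$, and your first-paragraph observation finishes the proof.
\end{itemize}
This is exactly the paper's argument. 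It works directly with profinite and ind-finite coefficients via Proposition \ref{prop:patched:global_duality}, so no reduction to the finite $T_k$ and no reciprocity computation is needed.
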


\begin{proof}
The surjectivity of $\loc_\ku$, together with the exact sequence of Proposition \ref{prop:patched:global_duality} with Selmer structures $\FF_\ku$ and $\FF$ implies that 
\begin{equation}
\bH^1_{(\FF^*)^\ku}(K,T^*)=\bH^1_{\FF^*}(K,T^*).
\label{eq:patched:417}
\end{equation}
By construction, we have that 
\[\bH^1_{(\FF^*)_\ku}(K,T^*)=\bH^1_{\FF^*}(K,T^*)\cap \bH^1_{(\FF^*)(\ku)}(K,T^*).\]
By \eqref{eq:patched:417},
\[\bH^1_{(\FF^*)_\ku}(K,T^*)=\bH^1_{(\FF^*)^\ku}(K,T^*)\cap \bH^1_{(\FF^*)(\ku)}(K,T^*).\]
Since $\bH^1_{(\FF^*)(\ku)}(K,T^*)\subset \bH^1_{(\FF^*)^\ku}(K,T^*)$, we get that
\[\bH^1_{(\FF^*)_\ku}(K,T^*)=\bH^1_{(\FF^*)(\ku)}(K,T^*).\qedhere\]
\end{proof}

\subsection{Chebotarev density theorem}
\label{sec:cheb}

This section is devoted to proving analogues of \cite[Lemma 3.9]{BurnsSakamotoSano2} in the patched cohomology formalism. In the most general setting, we only need to work under Assumption \ref{ass:R_lim} on the coefficient ring $R$.

\begin{proposition}
Let $\kn$ be a square-free product of ultraprimes, let 
\[c_1,\ldots,c_s\in \bH^1(K^\kn/K,T)\setminus \m \bH^1(K^\kn/K,T)\] 
and let 
\[d_1,\ldots,d_t\in \bH^1(K^\kn/K,T^*)\setminus\{0\}.\]
 Under Assumptions \ref{ass:patched:basic} and \ref{ass:patched:quotient}, if $s+t<p$, there exists an ultraprime $\ku\in \PP$ such that $\loc_\ku(c_i)$ generates $\bH^1_\f(K_\ku,T)$ for all $i=1,\ldots, s$ and $\loc_\ku(d_j)\neq 0$ for all $j=1,\ldots, t$.
\label{prop:patched:cheb}
\end{proposition}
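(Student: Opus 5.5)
The plan is to reduce to a statement about the residual representation $T_1=T/\m T$ and then run the classical Chebotarev argument of \cite[Lemma 3.9]{BurnsSakamotoSano2} (equivalently \cite[Proposition 3.6.1]{MazurRubin}), replacing the classical Chebotarev theorem by Proposition \ref{prop:ultraprimes_chebotarev}.

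\textbf{Step 1: reduction to $T_1$.} By Assumption \ref{ass:patched:quotient} (enlarging $\kn$ if necessary, which does not affect the hypotheses since the classes are only inflated), the image $\bar c_i$ of each $c_i$ in $\bH^1(K^\kn/K,T_1)$ is nonzero, because $c_i\notin\m\bH^1(K^\kn/K,T)$ and $\bH^1(K^\kn/K,T)\otimes R/\m$ injects. For the $d_j$, each is a nonzero class in the ind-finite group $\bH^1(K^\kn/K,T^*)$, so it lies in the image of some finite level. Multiplying by a suitable element of $R$, I will replace $d_j$ by a nonzero class $d_j'$ killed by $\m$. By Proposition \ref{prop:patched:selmer_torsion} (applied to the unramified-outside-$\kn$ structure), $d_j'$ comes from $\bH^1(K^\kn/K,T^*[\m])$. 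If $\loc_\ku(d_j')\neq0$ then $\loc_\ku(d_j)\neq0$. Since $\bH^1_\f(K_\ku,T)\cong R$ by Proposition \ref{prop:patched:finite_descr}, Nakayama's lemma shows that $\loc_\ku(c_i)$ generates exactly when its image in $\bH^1_\f(K_\ku,T_1)\cong k$ is nonzero.

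\textbf{Step 2: residual classes.} All groups are now finite. Using Lemma \ref{lem:structure_patching}, the classes $\bar c_i$ and $d_j'$ are represented by classical cohomology classes of $T_1$ and $T_1^*$ for $\UU$-many indices. I will restrict to $L=K(T)_{p^\infty}$. By \ref{pTcoh} (together with the vanishing in Remark \ref{rem:patched:H0vanish}), inflation–restriction makes restriction injective, so each class gives a nonzero $G_L$-equivariant homomorphism $f_i\colon G_L\to T_1$, resp.\ $g_j\colon G_L\to T_1^*$. Irreducibility \ref{pTirred} shows that the image of each is the whole of $T_1$ (resp.\ $T_1^*$). The fields cut out by the $f_i,g_j$ are finite over $L$.

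\textbf{Step 3: the counting argument (main obstacle).} I want $\gamma\in G_L$ such that every $f_i(\gamma)$ has nonzero image in $T_1/(\tau-1)T_1$ and every $g_j(\gamma)$ has nonzero image in $T_1^*/(\tau-1)T_1^*$. Since $(\tau-1)T_1$ is a hyperplane, each condition excludes the preimage of a proper subspace. The standard trick is to let $\gamma$ range over one varying coordinate so that each bad set meets it in at most one value of $k$: a line argument gives at most one bad point per class, leaving at most $s+t<|k|$ bad points when $k=\F_p$. In general, $H$ is a proper subgroup of a group where one uses that a union of $s+t<p$ proper subgroups cannot cover; this is exactly where $s+t<p$ enters. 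Handling the general finite field $k$ and checking that the independent images are jointly surjective (via \ref{pTirred} and the distinctness of the extensions) is the delicate point. I would follow \cite[Lemma 3.9]{BurnsSakamotoSano2} verbatim.

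\textbf{Step 4: producing the ultraprime.} Having $\gamma$, take the conjugacy class of $\tau\gamma$ in $G_K$, which agrees with $\tau$ on $K(T)_{p^\infty}$, and apply Proposition \ref{prop:ultraprimes_chebotarev} to get $\ku$ with $\Frob_\ku=\tau\gamma$. Such a $\ku$ is Kolyvagin, and it can be chosen non-constant and prime to $\kn$ by a non-principality argument. The localisation of a class at $\ku$ in $\bH^1_\f(K_\ku,T_1)\cong T_1/(\Frob_\ku-1)$ is evaluation at $\Frob_\ku$, which equals $f_i(\gamma)$ mod $(\tau-1)$ up to the cocycle contribution at $\tau$. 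Absorbing that contribution is precisely why Step 3 ranges over the coset. This yields the claim.
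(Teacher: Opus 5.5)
\textbf{The gap is in Steps 2 and 4: you treat a patched class as if it were a single Galois cohomology class.} An element $\bar c_i\in\bH^1(K^\kn/K,T_1)=\UU_k\bigl(H^1(K^{n_k}/K,T_1)\bigr)$ is only a sequence $(c_i^{(k)})_k$ of classical classes. When $\kn$ has non-constant ultraprime factors, the extensions of $L$ cut out by the $c_i^{(k)}$ are ramified at primes of $n_k$ that vary with $k$. So there is no continuous homomorphism $f_i\colon G_L\to T_1$ attached to $\bar c_i$. The pointwise $\UU$-limit of the cocycles is generally not continuous, and Lemma \ref{lem:structure_patching} only decomposes things index by index.

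The failure that matters is in Step 4. The localisation $\loc_\ku(\bar c_i)=\bigl(\res_{\ell_k}c_i^{(k)}\bigr)_k$ is computed diagonally, and $\res_{\ell_k}c_i^{(k)}$ depends on the Frobenius of $\ell_k$ in the field cut out by $c_i^{(k)}$, which changes with $k$. The element $\Frob_\ku$ of Definition \ref{def:frobenius} only records the $\UU$-limit of the Frobenius of $\ell_k$ in each \emph{fixed} finite extension of $K$. It therefore does not determine $\loc_\ku(\bar c_i)$. The mechanism is already visible for Kummer classes of primes $q_k\mid n_k$ with $q_k\to\infty$: whether $q_k$ is a $p$-th power modulo $\ell_k$ can be prescribed independently of how $\ell_k$ splits in any fixed number field. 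Hence producing $\ku$ with $\Frob_\ku=\tau\gamma$ via Proposition \ref{prop:ultraprimes_chebotarev} says nothing about $\loc_\ku(c_i)$ or $\loc_\ku(d_j)$. Your argument works only when every prime factor of $\kn$ is constant. The proposition is later applied with $\kn$ containing previously chosen, non-constant Kolyvagin ultraprimes.

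\textbf{The fix, which is the paper's proof, is to run the Chebotarev argument separately at each index rather than once in $G_K$.} Fix representing sequences $\bigl((c_i)_1^{(k)}\bigr)_k$ and $\bigl(d_j^{(k)}\bigr)_k$. For each $k$, apply \cite[Lemma 3.9]{BurnsSakamotoSano2} (your Step 3 is its content) to get $\ell_k\in\PP(T_k)$ at which every nonzero $k$-th representative has nonzero localisation. Then $\ku=(\ell_k)_k$ is a Kolyvagin ultraprime: for each fixed $m$ one has $\ell_k\in\PP(T_m)$ for all $k\geq m$, a cofinite set and hence in $\UU$. The required non-vanishing holds because the representatives are nonzero for $\UU$-many $k$, and Proposition \ref{prop:ultraprimes_chebotarev} is not needed.

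A minor point on Step 1: enlarging $\kn$ to meet Assumption \ref{ass:patched:quotient} is not obviously harmless. Since $\m$ is not principal, a class outside $\m\bH^1(K^\kn/K,T)$ could a priori become $\m$-divisible after inflation. The paper simply applies the assumption to the given $\kn$.
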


\begin{proof}
Write 
\[c_i=(c_i)_k\in \varprojlim_{k\in \N} \bH^1(K^{\kn}/K,T_k).\]
Since $c_i\notin \m\bH^1(K^\kn/K,T)$, Assumption \ref{ass:patched:quotient} implies that $(c_i)_1$ does not vanish. Fix a sequence $\Bigl((c_i)_1^{(k)}\Bigr)_{k\in \N}$ representing $(c_i)_1$ in 
\[\bH^1(K^{\kn}/K,T_1)=\UU_k\Bigl(H^1(K^{n_k}/K,T_1)\Bigr).\]
where $(n_k)_{k\in \N}$ is a sequence representing $\kn$. Similarly, there exists $a\in \N$ such that 
\[d_j\in \bH^1(K^{\kn}/K,T^*_a)\ \forall j=1,\ldots, t.\]
Similarly, fix sequences $\Bigl(d_j^{(k)}\Bigr)$ representing $d_j$ in 
\[\bH^1(K^{\kn}/K,T^*_a)=\UU_k\Bigl(H^1(K^{\kn_k}/K,T_a^*)\Bigr).\]

By \cite[Lemma 3.9]{BurnsSakamotoSano2}, for every $k\in \N$, we can find a Kolyvagin prime such that $\ell_k\in \PP(T_k)$ such that 
\[\begin{aligned}
&\loc_{\ell_k}\Bigl((c_i)_1^{(k)}\Bigr)\neq 0\Leftrightarrow \Bigl((c_i)_1^{(k)}\Bigr)\neq 0\ \forall i=1,\ldots, s,\\
&\loc_{\ell_k}\Bigl(d_j^{(k)}\Bigr)\neq 0\Leftrightarrow d_j^{(k)}\neq 0\ \forall j=1,\ldots, t.
\end{aligned}\]

Let $\ku$ be the ultraprime represented by the sequence $(\ell_k)_{k\in \N}$. By construction, $\ku\in \PP$. Since $(c_i)_1^{(k)}\neq 0$ for $\UU$-many $k$, because it represents a non-zero element in the ultraproduct, then $\loc_{\ell_k}\Bigl((c_i)_1^{(k)}\Bigr)\neq 0$ for $\UU$-many $k$. Hence $\loc_\ku\bigl((c_i)_1\bigr)$ is a non-zero element in $\bH^1_\f(K_\ku,T_1)$, so $\loc_\ku(c_i)$ generates $\bH^1_\f(K_\ku,T)$ for all $i$. Similarly, we can prove that $\loc_\ku(d_j)$ is non-zero for all $j$.
\end{proof}

The remainder of the section explains what can be achieved by choosing the right elements in Proposition \ref{prop:patched:cheb}. First, we show that Assumption \ref{ass:patched:quotient} implies that all cartesian Selmer structures contain elements that are not $\m$-divisible.

\begin{proposition}
Suppose that $T$ satisfies Assumptions \ref{ass:patched:basic} and \ref{ass:patched:quotient}. If $\FF$ is a cartesian Selmer structure defined on $T$ and $\bH^1_{\FF}(K,T)\neq 0$, there exists an element 
\[c\in \bH^1_{\FF}(K,T)\setminus\m \bH^1(K^{\Sigma_{\FF}}/\Sigma,T).\]
\label{prop:cheb:non-div_elt}
\end{proposition}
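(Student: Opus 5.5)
Take $\kn$ to be the square-free product of the ultraprimes in $\Sigma_\FF$, so that $\bH^1(K^{\Sigma_\FF}/K,T)=\bH^1(K^\kn/K,T)$. The goal is to find $c\in\bH^1_\FF(K,T)$ whose image in $\bH^1(K^\kn/K,T)/\m$ is nonzero. The plan is to combine the injection of Proposition \ref{prop:patched:quot_selmer_inj} with the torsion-freeness of Proposition \ref{prop:patched:selmer_torsion_free} and a Krull-intersection argument.

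First, Proposition \ref{prop:patched:quot_selmer_inj} with $I=\m$, which uses only that $\FF$ is cartesian, gives an injection
\[\bH^1_{\FF}(K,T)/\m\bH^1_{\FF}(K,T)\hookrightarrow \bH^1(K^\kn/K,T)/\m\bH^1(K^\kn/K,T).\]
So it suffices to find $c\in\bH^1_\FF(K,T)\setminus \m\bH^1_\FF(K,T)$. Any such $c$ has nonzero image on the left, hence nonzero image on the right, and so $c\notin\m\bH^1(K^{\Sigma_\FF}/K,T)$.

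Second, I show that such a $c$ exists whenever $\bH^1_\FF(K,T)\neq0$. Since $\bH^1_\FF(K,T)$ is an $R$-submodule of the module $\bH^1(K^\kn/K,T)$, which is finitely generated by Proposition \ref{prop:fingen}, and $R$ is noetherian, the Selmer group $M:=\bH^1_\FF(K,T)$ is finitely generated. Suppose for contradiction that $M=\m M$. Then Nakayama's lemma, valid because $R$ is local and $M$ is finitely generated, gives $M=0$, contradicting the hypothesis. Hence $M\neq\m M$, and any $c\in M\setminus \m M$ works.

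The only real obstacle is making sure finite generation of $M$ is legitimate, namely that Proposition \ref{prop:fingen} applies to $\bH^1(K^{\Sigma_\FF}/K,T)$. Its proof used the hypotheses on $R$ and $T$ together with the bounded cohomology property of Proposition \ref{prop:BCP_global}, and those are available here. If one prefers to avoid Nakayama, the Krull intersection theorem applied to $M$ gives the same conclusion: if $M=\m M$ then $M=\bigcap_k\m^kM=0$. Finally, note that Assumption \ref{ass:patched:quotient} is not needed for this particular statement. Its role comes later, via Corollary \ref{cor:patched:selmer_quotient}, to guarantee that such $c$ has nonzero image in $\bH^1_\FF(K,T_1)$, which is what is fed into Proposition \ref{prop:patched:cheb}.
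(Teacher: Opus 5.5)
Your proposal is correct and follows the paper's proof: Nakayama's lemma gives $c\in\bH^1_\FF(K,T)\setminus\m\bH^1_\FF(K,T)$, and Proposition~\ref{prop:patched:quot_selmer_inj} with $I=\m$ then shows $c\notin\m\bH^1(K^{\Sigma_\FF}/K,T)$. You also fill in two points the paper leaves implicit: the finite generation of $\bH^1_\FF(K,T)$, via Proposition~\ref{prop:fingen}, and the observation that Assumption~\ref{ass:patched:quotient} is not used. The torsion-freeness you mention in your plan is never actually used and can be dropped.
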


\begin{proof}
By Nakayama's lemma, there exists a class
\[c\in \bH^1_{\FF}(K,T)\setminus \m\bH^1_{\FF}(K,T).\]
Then Proposition \ref{prop:patched:quot_selmer_inj} implies that
\[c\notin \m\bH^1(K^{\kn}/K,T),\]
where $\kn$ is the square-free product of all the primes in $\Sigma_{\FF_1}\cup\cdots\cup\Sigma_{\FF_s}$. 
\end{proof}

Combining the previous two results, we can obtain the following corollary.

\begin{corollary}
Let $\FF_1,\ldots,\FF_s$ be cartesian structures, and let $d_1,\ldots,d_t\in \bH^1(K,T^*)\setminus\{0\}$, where $s+t<p$. Under Assumptions \ref{ass:patched:basic} and \ref{ass:patched:quotient}, if $\bH^1_{\FF_i}(K,T)\neq 0$ for all $i$, there exists an ultraprime $\ku\in \PP$ such that the maps
\[\loc_\ku:\ \bH^1_{\FF_i}(K,T)\to \bH^1_\f(K_\ku,T)\] 
are surjective for all $i=1,\ldots, s$ and $\loc_\ku(d_j)\neq 0$ for all $j=1,\ldots, t$.
\label{cor:patched:cheb}
\end{corollary}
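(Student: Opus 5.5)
The plan is to reduce Corollary \ref{cor:patched:cheb} to Proposition \ref{prop:patched:cheb}, with Proposition \ref{prop:cheb:non-div_elt} supplying suitable classes. Everything has to take place inside a single global patched cohomology group. So I first fix a square-free product of ultraprimes $\kn$ that is divisible by every ultraprime in $\Sigma_{\FF_1}\cup\cdots\cup\Sigma_{\FF_s}$ and by the ultraprime provided by Assumption \ref{ass:patched:quotient}. I also enlarge $\kn$ so that the finitely many classes $d_1,\ldots,d_t$, which lie in the direct limit $\bH^1(K,T^*)$, all come from $\bH^1(K^\kn/K,T^*)$. Enlarging $\kn$ does not affect the injectivity in Assumption \ref{ass:patched:quotient}: one uses the ultraprime-enlarged version, or one notes that the argument of Proposition \ref{prop:patched:quot_selmer_inj} only needs $\kn$ to contain $\Sigma_\FF$. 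I would check this point but do not expect it to cause trouble.

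Next, for each $i$ the group $\bH^1_{\FF_i}(K,T)$ is nonzero, so Proposition \ref{prop:cheb:non-div_elt} gives a class $c_i\in\bH^1_{\FF_i}(K,T)\setminus \m\bH^1_{\FF_i}(K,T)$. By Proposition \ref{prop:patched:quot_selmer_inj}, applied with $I=\m$ and cartesian $\FF_i$, this class is not in $\m\bH^1(K^\kn/K,T)$. Since $s+t<p$, Proposition \ref{prop:patched:cheb} applied to $c_1,\ldots,c_s$ and $d_1,\ldots,d_t$ produces $\ku\in\PP$ such that each $\loc_\ku(c_i)$ generates $\bH^1_\f(K_\ku,T)$ and each $\loc_\ku(d_j)\neq0$. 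Proposition \ref{prop:patched:cheb} does not exclude $\ku\mid\kn$, but Kolyvagin ultraprimes are by definition outside $\Sigma_\FF$, and for the finitely many remaining constituents of $\kn$ one can replace $\ku$ by a non-constant choice, since the ultraprime built in its proof from varying primes $\ell_k$ can be taken distinct from them. Hence $\loc_\ku$ lands in the finite part for classes unramified at $\ku$.

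Finally, surjectivity follows directly. Since $c_i\in\bH^1_{\FF_i}(K,T)$ and $\ku\notin\Sigma_{\FF_i}$, we have $\loc_\ku(c_i)\in\bH^1_\f(K_\ku,T)$. By Proposition \ref{prop:patched:finite_descr} the latter is free of rank one over $R$, and $\loc_\ku(c_i)$ generates it. So $\loc_\ku\colon\bH^1_{\FF_i}(K,T)\to\bH^1_\f(K_\ku,T)$ is surjective, because $R$-linearity lets $R\cdot c_i$ map onto the whole module.

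The step I expect to need the most care is the first: checking that a single $\kn$ serves all the $\FF_i$ and the $d_j$ simultaneously, while Assumption \ref{ass:patched:quotient} and the cartesian injection still hold for that $\kn$. This is what allows the classes $c_i$ to be treated as non-$\m$-divisible in the same ambient group in which Proposition \ref{prop:patched:cheb} operates.
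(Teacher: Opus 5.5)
Your route is the paper's: it obtains this corollary in one line, by feeding the non-$\m$-divisible classes from Proposition \ref{prop:cheb:non-div_elt} into Proposition \ref{prop:patched:cheb}. Your final surjectivity step via Proposition \ref{prop:patched:finite_descr} is fine, and so is your remark that the primes $\ell_k$ can be chosen away from the finitely many ultraprimes dividing $\kn$.

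The gap is the step you flagged yourself. To apply Proposition \ref{prop:patched:cheb} at the enlarged modulus $\kn$, you need Assumption \ref{ass:patched:quotient} at that same $\kn$. Its proof uses the injectivity of $\bH^1(K^{\kn}/K,T)\otimes R/\m\to \bH^1(K^{\kn}/K,T_1)$ to pass from $c_i\notin\m\bH^1(K^{\kn}/K,T)$ to $(c_i)_1\neq 0$.

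The reason you offer does not cover this. That Proposition \ref{prop:patched:quot_selmer_inj} only needs $\kn$ to contain $\Sigma_\FF$ concerns a different map, $\bH^1_{\FF}(K,T)/\m\hookrightarrow \bH^1(K^{\kn}/K,T)/\m$. That map is indeed insensitive to enlarging $\kn$, but the injectivity in \ref{ass:patched:quotient} is not known to be. Over $\Lambda$ it is equivalent to $\bH^2(K^{\kn}/K,\bT)[\m]=0$, by the exact sequence in the proof of Proposition \ref{prop:iwasawa:ass_equiv}. Adding ultraprimes changes $\bH^2$ (Proposition \ref{prop:patched:poitou-tate}): the kernel of localisation on $\bH^2$ is replaced by a quotient of itself, and new local terms $\bH^0(K_\kq,\bT^*)^\vee$ appear. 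Either change can a priori create finite submodules. So ``enlarging $\kn$ does not affect (Proj)'' is unproved as stated.

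The fix is to avoid needing \ref{ass:patched:quotient} at the big level. The Chebotarev step only uses $(c_i)_1\neq 0$ and $d_j\neq 0$, and the $d_j$ part never uses (Proj).
\begin{itemize}
\item Take $c_i\in \bH^1_{\FF_i}(K,T)\setminus \m\bH^1_{\FF_i}(K,T)$, which exists by Nakayama.
\item Apply Corollary \ref{cor:patched:selmer_quotient} to $\FF_i$, with the modulus for which \ref{ass:patched:quotient} is assumed. This gives that the image of $c_i$ in $\bH^1_{\FF_i}(K,T_1)$ is nonzero.
\item That image lies in $\bH^1(K^{\Sigma_{\FF_i}}/K,T_1)$. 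Inflation into $\bH^1(K^{\kn}/K,T_1)$ is injective (Remark \ref{rem:patched_global_maps}), so $(c_i)_1\neq 0$ at any common level $\kn$, including one large enough that every $d_j$ comes from $\bH^1(K^{\kn}/K,T^*)$.
\item The remainder of the proof of Proposition \ref{prop:patched:cheb} then runs verbatim and produces your $\ku$.
\end{itemize}
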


We can also use Proposition \ref{prop:patched:cheb} to make the dual Selmer group smaller.

\begin{proposition}
Assume that $\FF$ is a cartesian Selmer structure on $T$ such that $\bH^1_{\FF^*}(K,T^*)\neq 0$. Then there exists $d\in \bH^1_{\FF^*}(K,T^*)$ such that, for any ultraprime $\ku$, 
\[\loc_\ku(d)\neq 0\Rightarrow \dim_{R/\m} \bH^1_{(\FF^*)_\ku}(K,T^*)[\m]<\dim_{R/\m} \bH^1_{(\FF^*)}(K,T^*)[\m].\]
\label{prop:cheb:torsion_red}
\end{proposition}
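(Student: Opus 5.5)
Following \cite[Lemma 3.9 and Proposition 3.10]{BurnsSakamotoSano2}, the plan is to choose $d$ among the $\m$-torsion classes and then compare $\m$-torsion before and after imposing the strict condition at $\ku$. Put $V:=\bH^1_{\FF^*}(K,T^*)[\m]$. By Proposition \ref{prop:patched:selmer_torsion} with $I=\m$, we have $V\cong \bH^1_{\FF^*}(K,T^*[\m])$. This space is finite by Corollary \ref{cor:global_finite}, and it is nonzero because $\bH^1_{\FF^*}(K,T^*)\neq 0$ and every element of the ind-finite Selmer group is killed by some power of $\m$. We then pick any nonzero $d\in V$.

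Now fix an ultraprime $\ku$ with $\loc_\ku(d)\neq 0$. We may assume $\ku\notin\Sigma_\FF$ is not constant, i.e.\ $\ku$ is a Kolyvagin ultraprime. In the general case, the same argument works with $\bH^1_{\FF^*}(K_\ku,-)$ in place of the finite part. Since $(\FF^*)_\ku\le \FF^*$, the group $\bH^1_{(\FF^*)_\ku}(K,T^*)$ is the kernel of the map
\[
\loc_\ku:\ \bH^1_{\FF^*}(K,T^*)\to \bH^1_{\FF^*}(K_\ku,T^*).
\]
Taking $\m$-torsion, which is left exact, we get
\[
\bH^1_{(\FF^*)_\ku}(K,T^*)[\m]=\ker\bigl(\loc_\ku|_{V}\bigr).
\]
This is a $k$-subspace of $V$ that does not contain $d$. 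It is therefore a proper subspace, and so its $k$-dimension is strictly smaller than $\dim_k V$. This is exactly the claimed inequality.

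The main point needing care is the identification of the $\m$-torsion of the dual Selmer group with a finite $k$-vector space. We need $V$ to be finite-dimensional over $k$, so that a proper subspace has strictly smaller dimension. This is supplied by Proposition \ref{prop:patched:selmer_torsion} together with finiteness of patched cohomology with finite coefficients (Corollary \ref{cor:global_finite}). We also need the restriction maps to commute with $\m$-torsion, which is immediate from functoriality.

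A secondary check is that $d\in V$ has $\loc_\ku(d)$ landing in the $\m$-torsion of the local group. This also follows from functoriality: since $\m d=0$, we get $\m\,\loc_\ku(d)=0$.
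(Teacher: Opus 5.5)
Your proof is correct and follows the paper's own argument. You choose a nonzero $\m$-torsion class $d$; it exists, as you argue, because every class is killed by a power of $\m$, which is the paper's Nakayama step. Then $\bH^1_{(\FF^*)_\ku}(K,T^*)[\m]$ is a subspace of the finite-dimensional space $\bH^1_{\FF^*}(K,T^*)[\m]$ that misses $d$, so its dimension drops. Your preliminary reduction to Kolyvagin ultraprimes is unnecessary, and the ``i.e.'' there is inaccurate, since the kernel argument uses nothing about $\ku$.
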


\begin{proof}
Nakayama's lemma applied to the Pontryagin dual implies that 
\[\bH^1_{\FF^*}(K,T^*)\neq 0\Leftrightarrow \bH^1_{\FF^*}(K,T^*)[\m]\neq 0.\]
Then the result follows by choosing a non-zero $\m$-torsion element $d$.
\end{proof}

An inductive application of Proposition \ref{prop:cheb:torsion_red} implies the following theorem.

\begin{corollary}
Let $\FF$ be a Selmer structure. Under Assumptions \ref{ass:patched:basic} and \ref{ass:patched:quotient}, there is some $\kn\in \NN$ such that 
\[\bH^1_{\FF^*_\kn}(K,T^*)=0.\]
\label{cor:patched:dual_res_vanish}
\end{corollary}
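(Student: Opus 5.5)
We argue by induction on the invariant
\[d(\FF):=\dim_{R/\m} \bH^1_{\FF^*}(K,T^*)[\m],\]
which is finite. Indeed, by Proposition \ref{prop:patched:selmer_torsion} this group is $\bH^1_{\FF^*}(K,T^*[\m])$, a subgroup of the patched cohomology of the finite module $T^*[\m]$, and this is finite by Corollary \ref{cor:global_finite}. If $d(\FF)=0$, Nakayama's lemma applied to the Pontryagin dual (as in the proof of Proposition \ref{prop:cheb:torsion_red}) gives $\bH^1_{\FF^*}(K,T^*)=0$, so $\kn=1$ works.

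Now suppose $d(\FF)>0$, so that $\bH^1_{\FF^*}(K,T^*)\neq 0$. Proposition \ref{prop:cheb:torsion_red} provides a class $d\in\bH^1_{\FF^*}(K,T^*)$, which we take nonzero and $\m$-torsion. For this $d$, any ultraprime $\ku$ with $\loc_\ku(d)\neq 0$ satisfies
\[\dim_{R/\m}\bH^1_{(\FF^*)_\ku}(K,T^*)[\m]<d(\FF).\]
To produce such a $\ku$ inside $\PP$, we apply Proposition \ref{prop:patched:cheb} with $s=0$ and $t=1$; the condition $1<p$ is automatic. For this step $d$ must be viewed as a nonzero element of $\bH^1(K^{\kn'}/K,T^*)$, where $\kn'$ is the product of the ultraprimes in $\Sigma_\FF$. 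This holds because the Selmer group sits inside that patched cohomology group. The result is a Kolyvagin ultraprime $\ku\notin\Sigma_\FF$ with $\loc_\ku(d)\neq 0$.

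Next we observe that $(\FF^*)_\ku=(\FF_\ku)^*$ up to the obvious identification. The local condition $0$ at $\ku$ is the annihilator of the full local cohomology under Proposition \ref{prop:patched:local_duality}, and the conditions at all other places are unchanged. Hence $d(\FF_\ku)<d(\FF)$. The structure $\FF^\ku$ is still a Selmer structure, and cartesian if $\FF$ was by Proposition \ref{prop:patched:rank_modified}, though cartesianity is not even needed for this step. Applying the induction hypothesis to $\FF^\ku$ gives $\kn_0\in\NN$, coprime to $\ku$ after discarding it if necessary, such that $\bH^1_{(\FF^\ku)^*_{\kn_0}}(K,T^*)=0$. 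Setting $\kn=\ku\kn_0$, we have $\FF^*_{\kn}=(\FF_\ku)^*_{\kn_0}$, which gives the claim.

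The main subtlety is the application of Proposition \ref{prop:patched:cheb}. We must check that $d$ is a legitimate input, namely a nonzero class in the global patched cohomology of some $K^{\kn}/K$ with $\kn$ a square-free product of ultraprimes, and that Assumption \ref{ass:patched:quotient} is available there. The other point to watch is that the Kolyvagin ultraprime obtained is distinct from the ultraprimes already used; this follows since $\loc_\ku(d)\neq0$ while $d$ lies in the relaxed-at-$\ku$ Selmer group. With $s=0$, the Assumption \ref{ass:patched:quotient} hypothesis is only used vacuously, so the main obstacle reduces to confirming that the dual Selmer group embeds in the appropriate patched $\bH^1$.
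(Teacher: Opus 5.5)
Your argument is correct and is exactly the paper's proof, which consists of the single sentence that the corollary follows from an inductive application of Proposition~\ref{prop:cheb:torsion_red}; you have simply written out that induction on $\dim_{R/\m}\bH^1_{\FF^*}(K,T^*)[\m]$, with Proposition~\ref{prop:patched:cheb} (taking $s=0$, $t=1$) supplying a Kolyvagin ultraprime $\ku$ with $\loc_\ku(d)\neq 0$. There is one notational slip: the identity you need is $(\FF^*)_\ku=(\FF^\ku)^*$, not $(\FF_\ku)^*$, so the invariant that decreases is $d(\FF^\ku)$ and the final identification reads $\FF^*_\kn=(\FF^\ku)^*_{\kn_0}$; your own justification, that the zero condition is the annihilator of the full local cohomology, already gives exactly this.
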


When $\bH^1_{\FF}(K,T)$ is non-zero, we can construct a vertex $\kn\in \NN$ as above but also satisfying an additional property.
\begin{corollary}
Assume $p\geq 3$ and let $\FF$ be a Selmer structure. Under Assumptions \ref{ass:patched:basic} and \ref{ass:patched:quotient}, if $\bH^1_{\FF}(K,T)\neq 0$, there is some $\kn\in \NN$ such that $\bH^1_{(\FF^*)_\kn}(K,T^*)=0$ and that, for every $\ku\mid\kn$, there is an equality
\[\bH^1_{(\FF^*)(\ku)}(K,T^*)=\bH^1_{(\FF^*)_\ku}(K,T^*).\]
\label{cor:patched:dual_res_vanish_417}
\end{corollary}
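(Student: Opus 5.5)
We build $\kn=\ku_1\cdots\ku_r$ one ultraprime at a time, choosing each new $\ku$ so that two things happen at once. First, $\loc_\ku$ must be surjective from the current Selmer group $\bH^1_{\FF_{\kn'}}(K,T)$ onto $\bH^1_\f(K_\ku,T)$; then Lemma \ref{lem:patched:tr_res} gives the required equality. Second, $\loc_\ku$ must not kill a chosen $\m$-torsion dual class $d$, so that the dual Selmer group shrinks.

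Since the core rank is positive, we keep track of the structures $\FF_{\kn'}$ along the way, where $\kn'$ is the product of the ultraprimes already chosen. By Proposition \ref{prop:patched:rank_modified}, $\FF_{\kn'}$ is cartesian and
\[\chi(\FF_{\kn'})=\chi(\FF)-\nu(\kn').\]

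\textbf{Inductive step.} Suppose $\kn'$ has been chosen and $\bH^1_{(\FF^*)_{\kn'}}(K,T^*)\neq 0$.
\begin{enumerate}
\item Apply Proposition \ref{prop:cheb:torsion_red} to the structure $\FF_{\kn'}$. This gives a class $d\in\bH^1_{(\FF^*)_{\kn'}}(K,T^*)[\m]$ with $d\neq 0$.
\item Check that $\bH^1_{\FF_{\kn'}}(K,T)\neq 0$. This is the step I expect to be the main obstacle. The cleanest route is to arrange the ordering so that $\chi(\FF_{\kn'})\ge 1$ whenever the dual group is still nonzero; then the Selmer group has positive rank. Failing that, I would use the hypothesis $\bH^1_\FF(K,T)\neq0$ together with torsion-freeness (Proposition \ref{prop:patched:selmer_torsion_free}) to find a nonzero class that survives in the relaxed structure.
\item Apply Corollary \ref{cor:patched:cheb} with $s=t=1$; this is allowed because $p\ge 3>s+t$. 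We get $\ku\in\PP$, coprime to $\kn'$, such that $\loc_\ku$ is surjective on $\bH^1_{\FF_{\kn'}}(K,T)$ and $\loc_\ku(d)\neq0$.
\item Set the new product to be $\kn'\ku$.
\end{enumerate}

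\textbf{Consequences of this choice.}
\begin{itemize}
\item Because $\loc_\ku(d)\neq0$, Proposition \ref{prop:cheb:torsion_red} shows that $\dim_{R/\m}\bH^1_{(\FF^*)_{\kn'\ku}}(K,T^*)[\m]$ strictly drops.
\item Because $\loc_\ku$ is surjective, Lemma \ref{lem:patched:tr_res}, applied to the structure $\FF_{\kn'}$ (with the dual conventions at $\ku$), gives
\[\bH^1_{((\FF^*)_{\kn'})(\ku)}(K,T^*)=\bH^1_{(\FF^*)_{\kn'\ku}}(K,T^*).\]
\end{itemize}

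\textbf{Termination.} The dimension of $\bH^1_{(\FF^*)_{\kn}}(K,T^*)[\m]$ is finite, by Corollary \ref{cor:global_finite} and Proposition \ref{prop:patched:selmer_torsion}. It drops at every step, so after finitely many steps we have $\bH^1_{(\FF^*)_\kn}(K,T^*)=0$. By Nakayama applied to the Pontryagin dual, the vanishing of the $\m$-torsion is equivalent to the vanishing of the whole group.

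\textbf{Upgrading to the equality for $\FF$ itself.} The construction gives the equality relative to the intermediate structures $\FF_{\kn'}$, but the statement asks for it relative to $\FF$. Since $\bH^1_{\FF_{\kn'}}(K,T)\subset\bH^1_\FF(K,T)$, surjectivity of $\loc_\ku$ on the smaller group implies surjectivity on $\bH^1_\FF(K,T)$. Then Lemma \ref{lem:patched:tr_res}, applied directly to $\FF$, gives $\bH^1_{(\FF^*)(\ku)}=\bH^1_{(\FF^*)_\ku}$ for every $\ku\mid\kn$, as required.
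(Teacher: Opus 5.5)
Your step 2 is a genuine gap, and it is self-inflicted. You require $\loc_\ku$ to be surjective on the strict group $\bH^1_{\FF_{\kn'}}(K,T)$, but this group can vanish long before the dual group does. Take $\Lambda$-coefficients with $\chi(\FF)=1$ and $\bH^1_{\FF^*}(K,\bT^*)^\vee$ torsion. Then Proposition \ref{prop:iwasawa:selmer_free} gives $\bH^1_\FF(K,\bT)\cong\Lambda$. Your first choice makes $\loc_{\ku_1}\colon\bH^1_\FF(K,\bT)\to\bH^1_\f(K_{\ku_1},\bT)\cong\Lambda$ an isomorphism, so $\bH^1_{\FF_{\ku_1}}(K,\bT)=0$. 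Each step lowers $\dim_{\F_p}\bH^1_{(\FF^*)_{\kn'}}(K,\bT^*)[\m]$ by at most one, because the target $\bH^1_\f(K_\ku,\bT^*)\cong\bH^1_\s(K_\ku,\bT)^\vee\cong\Lambda^\vee$ has one-dimensional $\m$-torsion. So step 2 already fails at the second iteration whenever $\dim_{\F_p}\bH^1_{\FF^*}(K,\bT^*)[\m]\geq 2$.

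Neither fallback rescues this. The number of steps is governed by that dimension, not by $\chi(\FF)$, so no ordering keeps $\chi(\FF_{\kn'})\geq 1$. Any version of the second fallback that gives surjectivity only on a relaxed Selmer group gives it on a larger module, which does not descend to $\bH^1_\FF(K,T)$, so your final paragraph would then break.

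There is also a dualisation slip behind this. Since $(\FF_{\kn'})^*=(\FF^*)^{\kn'}$, applying Proposition \ref{prop:cheb:torsion_red} or Lemma \ref{lem:patched:tr_res} to $\FF_{\kn'}$ produces statements about $(\FF^*)^{\kn'}$, not $(\FF^*)_{\kn'}$. The structure dual to $(\FF^*)_{\kn'}$ is $\FF^{\kn'}$, and its core rank $\chi(\FF)+\nu(\kn')$ grows rather than shrinks.

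The repair is already latent in your last paragraph: do not pass to intermediate structures on the $T$ side at all. At each stage take any nonzero $d\in\bH^1_{(\FF^*)_{\kn'}}(K,T^*)[\m]$. Apply Corollary \ref{cor:patched:cheb} to the single cartesian structure $\FF$ (nonzero by hypothesis) and the class $d$, with $s=t=1<p$. Then $\loc_\ku(d)\neq 0$ removes $d$ from $\bH^1_{(\FF^*)_{\kn'\ku}}(K,T^*)$, so the $\m$-torsion dimension drops; this is all Proposition \ref{prop:cheb:torsion_red} says. Surjectivity of $\loc_\ku$ on $\bH^1_\FF(K,T)$ then gives $\bH^1_{(\FF^*)(\ku)}(K,T^*)=\bH^1_{(\FF^*)_\ku}(K,T^*)$ directly by Lemma \ref{lem:patched:tr_res}. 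The intermediate equality relative to $(\FF^*)_{\kn'}$ is never needed, and this is exactly the paper's argument.

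One feature of your version is worth keeping: you refresh $d$ inside the current strict group at every step. The paper's written proof instead fixes a basis $r_1,\dots,r_s$ of $\bH^1_{\FF^*}(K,T^*)[\m]$ in advance and only asks $\loc_{\ku_i}(r_i)\neq 0$. That alone does not force the common kernel of the $\loc_{\ku_i}$ to vanish, whereas your inductive choice makes the termination argument airtight.
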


\begin{proof}
Choose a basis $\{r_1,\ldots,r_s\}$ of $\bH^1_{\FF^*_\kn}(K,T^*)[\m]$ as a $k$-vector space. By Corollary \ref{cor:patched:cheb}, we can find ultraprimes $\ku_i\in \PP$ such that $\loc_{\ku_i}(r_i)\neq 0$ and the map
\[\loc_{\ku_i}:\ \bH^1_{\FF}(K,T)\to \bH^1_\f(K_{\ku_i},T)\]
is surjective. For $\kn:=\ku_1\ldots\ku_s$, we get $\bH^1_{\FF^*_\kn}(K,T^*)=0$ and, by Lemma \ref{lem:patched:tr_res}, we have that 
\[\bH^1_{(\FF^*)(\ku_i)}(K,T^*)=\bH^1_{(\FF^*)_{\ku_i}}(K,T^*).\qedhere\]
\end{proof}

When the core rank is positive, we can achieve an analogue of Corollary \ref{cor:patched:dual_res_vanish} with the transverse Selmer group.
\begin{corollary}
Assume that $p\geq 3$ and let $\FF$ be a cartesian Selmer structure of core rank $\chi(\FF)=1$. Under Assumptions \ref{ass:patched:basic} and \ref{ass:patched:quotient}, there exists some $\kn\in \NN$ such that $\bH^1_{(\FF^*)(\kn)}(K,T^*)=0$.
\label{cor:patched:dual_tr_vanish}
\end{corollary}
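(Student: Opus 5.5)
We argue by induction on $\dim_{k}\bH^1_{(\FF^*)(\kn)}(K,T^*)[\m]$, which is finite by Corollary \ref{cor:global_finite} and Proposition \ref{prop:patched:selmer_torsion}. By Nakayama's lemma applied to the Pontryagin dual, this dimension vanishes exactly when the dual transverse Selmer group vanishes. We therefore aim to find a Kolyvagin ultraprime $\ku\nmid\kn$ that strictly lowers the dimension when we pass from $\kn$ to $\kn\ku$. The induction keeps $\FF(\kn)$ cartesian and of core rank $\chi(\FF(\kn))=\chi(\FF)=1$, by Proposition \ref{prop:patched:rank_modified} (taking $\ka=\kb=1$ and $\kc=\kn$).

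Fix $\kn$ with $\bH^1_{(\FF^*)(\kn)}(K,T^*)\neq 0$ and write $\GG=\FF(\kn)$. Since $\chi(\GG)=1$, the rank of $\bH^1_{\GG}(K,T)$ is at least $1$, so $\bH^1_{\GG}(K,T)\neq 0$. Choose a nonzero $d\in\bH^1_{\GG^*}(K,T^*)[\m]$, which exists by Proposition \ref{prop:cheb:torsion_red}. Since $s+t=2<p$, Corollary \ref{cor:patched:cheb} applied with the single structure $\GG$ and the single class $d$ gives $\ku\in\PP$, which we may take coprime to $\kn$, such that
\[\loc_\ku:\ \bH^1_{\GG}(K,T)\to\bH^1_\f(K_\ku,T)\]
is surjective and $\loc_\ku(d)\neq0$. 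By Lemma \ref{lem:patched:tr_res} we then have
\[\bH^1_{(\GG^*)(\ku)}(K,T^*)=\bH^1_{(\GG^*)_\ku}(K,T^*),\]
and the left-hand side is $\bH^1_{(\FF^*)(\kn\ku)}(K,T^*)$. Moreover $\bH^1_{(\GG^*)_\ku}(K,T^*)\subset\bH^1_{\GG^*}(K,T^*)$, and taking $\m$-torsion gives an inclusion of $\m$-torsion parts that misses $d$. Hence the dimension drops strictly, and the induction closes after finitely many steps.

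The main obstacle is justifying the surjectivity of $\loc_\ku$ onto $\bH^1_\f(K_\ku,T)\cong R$ for the modified structure. Corollary \ref{cor:patched:cheb} requires $\GG$ to be cartesian and $\bH^1_\GG(K,T)\neq0$. It also implicitly requires that a non-$\m$-divisible class of $\bH^1_\GG(K,T)$ remains non-divisible in $\bH^1(K^{\kn'}/K,T)$ for a $\kn'$ to which Assumption \ref{ass:patched:quotient} applies, and this $\kn'$ must include the transverse ultraprimes dividing $\kn$. I would handle this via Proposition \ref{prop:cheb:non-div_elt}, using the fact that transverse conditions at Kolyvagin ultraprimes are cartesian by Proposition \ref{prop:patched_transverse}. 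If Assumption \ref{ass:patched:quotient} is only available for a fixed modulus, I would enlarge that modulus to absorb $\kn$, using Remark \ref{rem:patched_global_maps} and the injectivity in Proposition \ref{prop:patched:quot_selmer_inj}.
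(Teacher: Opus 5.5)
Your proof is the paper's argument: iterate Corollary~\ref{cor:patched:cheb} on $\FF(\kn)$, whose Selmer group is nonzero because $\chi(\FF(\kn))=1$, and use Lemma~\ref{lem:patched:tr_res} to replace the dual transverse group by the dual strict one; your induction on $\dim_k$, with $d$ re-chosen in the current dual group at each step, is in fact the right way to run it, since the paper fixes a basis $r_1,\ldots,r_s$ up front and only arranges $\loc_{\ku_j}(r_j)\neq 0$, which does not by itself force the final group to vanish. The modulus issue you flag is also passed over silently in the paper, but your proposed patch does not settle it: Remark~\ref{rem:patched_global_maps} and Proposition~\ref{prop:patched:quot_selmer_inj} do not give injectivity of $\bH^1(K^{\kn'}/K,T)\otimes R/\m\to\bH^1(K^{\kn'}/K,T_1)$ at an enlarged modulus $\kn'$, so you should instead either read Assumption~\ref{ass:patched:quotient} as available for the moduli of the modified structures (automatic for principal $R$ by Remark~\ref{rem:patched:ass_quotient}), or, for $\Lambda$ under Assumption~\ref{ass:iwasawa:local_no_fin_sub}, run the Poitou--Tate argument of Lemma~\ref{lem:H2} with a weak core vertex inside $\kn'$ and then apply Proposition~\ref{prop:iwasawa:ass_equiv}.
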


\begin{proof}
Similarly, let $\{r_1,\ldots,r_s\}$ be a basis of $\bH^1_{\FF^*_\kn}(K,T^*)[\m]$ as a $k$-vector space. Suppose inductively that we have constructed ultraprimes $\ku_1,\ldots,\ku_i$ such that $\loc_{\ku_j}(r_j)\neq 0$ for all $j\leq i$ and 
\[\bH^1_{(\FF^*)(\ku_1\ldots\ku_i)}(K,T^*)=\bH^1_{(\FF^*)_{\ku_1\cdots\ku_i}}(K,T^*).\]
Note that Proposition \ref{prop:patched:rank_modified} implies that $\chi(\FF(\ku_1\ldots\ku_i))=\chi(\FF)$ is positive, so, in particular, $\bH^1_{\FF(\ku_1\ldots\ku_i)}(K,T)\neq 0$. By Corollary \ref{cor:patched:cheb}, we can find an ultraprime $\ku_{i+1}\in \PP$ such that $\loc_{\ku_{i+1}}(r_{i+1})\neq 0$ and the map
\[\loc_{\ku_{i+1}}:\ \bH^1_{\FF(\ku_1\ldots\ku_i)}(K,T)\to \bH^1_\f(K_{\ku_{i+1}},T)\]
is surjective. By Lemma \ref{lem:patched:tr_res}, we have that
\[\bH^1_{(\FF^*)(\ku_1\ldots\ku_{i+1})}(K,T^*)=\bH^1_{(\FF^*)_{\ku_{i+1}}(\ku_1\ldots\ku_i)}(K,T^*)=\bH^1_{(\FF^*)_{\ku_1\cdots\ku_{i+1}}}(K,T^*).\]
For $\kn:=\ku_1\ldots\ku_s$, we get $\bH^1_{(\FF^*)(\kn)}(K,T^*)=0$.
\end{proof}

In addition to obtaining a trivial Selmer group by modifying the local condition at certain primes, we will also use a stronger result about reducing the rank of the Selmer group modifying the local condition at only one prime.
\begin{proposition}
Let $\FF$ be a cartesian Selmer structure such that 
\[\rank_R\ \bH^1_{\FF^*}(K,T^*)^\vee>0.\]
Then there exists $d\in \bH^1_{\FF^*}(K,T^*)$ such that, for every ultraprime $\ku$, then 
\[\loc_\ku(d)\neq 0\Rightarrow \rank_R\ \bH^1_{(\FF^*)_\ku}(K,T^*)^\vee=\rank_R\ \bH^1_{\FF^*}(K,T^*)^\vee-1.\]
\label{prop:patched:cheb_rank_red}
\end{proposition}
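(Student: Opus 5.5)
The plan is to mimic Proposition \ref{prop:cheb:torsion_red}, replacing the $\m$-torsion by a height-one prime at which the dual Selmer group has positive rank. Write $M:=\bH^1_{\FF^*}(K,T^*)^\vee$. By Corollary \ref{cor:fingen_dual} it is a finitely generated $R$-module, and by hypothesis $r:=\rank_R M>0$.

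First, I will choose a nonzero element $\pi\in\m$ such that $M/\pi M$ has $R/(\pi)$-rank $r$ and $R/(\pi)$ is again regular. For instance, take $\pi\in\m\setminus\m^2$ avoiding the finitely many height-one primes in the support of $M_{\tors}$. Using Remark \ref{rem:patched:selmer_torsion_dual}, we have $\bH^1_{\FF^*}(K,T^*[\pi])^\vee\cong M/\pi M$. I then pick an element $d$ of $\bH^1_{\FF^*}(K,T^*)[\pi]=\bH^1_{\FF^*}(K,T^*[\pi])$ whose dual functional on $M/\pi M$ is nonzero on the free part. Equivalently, $d$ corresponds to a surjection $M\to R/(\pi)$ (or at least a map with nonzero image in the generic fibre) that kills $M_{\tors}$ and factors through a free summand of $M/M_{\tors}$ modulo $\pi$.

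Next, for any ultraprime $\ku$ with $\loc_\ku(d)\ne0$, I apply Proposition \ref{prop:patched:global_duality} to the pair $\FF_\ku\le\FF$ on the dual side. This gives an exact sequence
\[\bH^1_{\FF^*}(K,T^*)\xrightarrow{\loc_\ku}\bH^1_{\FF^*}(K_\ku,T^*)\]
and identifies $\bH^1_{(\FF^*)_\ku}(K,T^*)^\vee$ as the quotient of $M$ by the image of the dual of the local condition $\bH^1_{\FF^*}(K_\ku,T^*)^\vee$. For a Kolyvagin (or unramified) ultraprime this local term is cyclic, being a quotient of $R$ by Proposition \ref{prop:patched:finite_descr} via duality. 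So the rank drops by at most one. It drops by exactly one precisely when the image of the map $R\to M$ dual to localisation is not torsion, i.e. when the composite $\bH^1_{\FF^*}(K,T^*)\to\bH^1_{\f}(K_\ku,T^*)$ is nonzero on the divisible part corresponding to the free part of $M$.

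The main obstacle is deducing that the image of $R$ in $M$ is non-torsion from $\loc_\ku(d)\ne0$, since $d$ is only a $\pi$-torsion class. I will argue as follows. Suppose the image of the generator were torsion in $M$, say killed by some $a\ne0$. Then the localisation functional $\lambda:M\to R$ (the dual map) would factor through $M_{\tors}$. By the choice of $d$, which is orthogonal to $M_{\tors}$ and reduces to the free part mod $\pi$, the pairing $\lambda(d)$ would then vanish, contradicting $\loc_\ku(d)\ne0$.

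To make this rigorous, I will replace $d$ by a class chosen after fixing a decomposition up to pseudo-isomorphism $M\sim R^r\oplus M_{\tors}$ (or use a free quotient $M\twoheadrightarrow M/M_{\tors}\hookrightarrow R^r$). I will then take $d$ to be the image of a coordinate functional reduced mod $\pi$. If the case of nonconstant ultraprimes with non-Kolyvagin Frobenius causes the local term to be non-cyclic, I will check separately that the rank drop is still at most one, since $\bH^1_\f(K_\ku,T^*)$ has corank equal to $\rank\, T^{*\Frob_\ku=1}$. If that fails in general, I will restrict the statement's use to $\ku\in\PP$.
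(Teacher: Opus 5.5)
Your argument is correct and is essentially the paper's. In both, the point is to take $d$ in the Pontryagin dual of $M=\bH^1_{\FF^*}(K,T^*)^\vee$ annihilating $M_{\tors}$. Then $\loc_\ku(d)=d\circ(\loc_\ku)^\vee\neq0$ forces the cyclic image of $\bH^1_\f(K_\ku,T^*)^\vee\cong R$ in $M$ to be non-torsion, so the rank drops by exactly one. As you anticipate, this works only for $\ku\in\PP$, and the paper's proof also restricts to Kolyvagin ultraprimes without saying so.

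The paper takes $d\in M^\vee[\m]$ vanishing on the image of $M_{\tors}$ in $M/\m M$. Your auxiliary height-one $\pi$ and the pseudo-isomorphism decomposition are therefore unnecessary: any nonzero element of $(M/M_{\tors})^\vee\subset M^\vee$ works, and one exists since $r>0$. Also, such a $d$ is a continuous character of $M$, not an $R$-linear map $M\to R/(\pi)$.
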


\begin{proof}
By Lemma \ref{lem:quot_tors_free}, there is an injection
\[\mu:\ \bH^1_{\FF^*}(K,T^*)^\vee_{\tors}\otimes R/\m\hookrightarrow  \bH^1_{\FF^*}(K,T^*)^\vee\otimes R/\m.\]

Choose a $k$-basis $\{f_1,\ldots,f_s\}$ of $\mu\Bigl(\bH^1_{\FF^*}(K,T^*)^\vee_{\tors}/\m\Bigr)$ and a set $\{g_1,\ldots,g_t\}$ extending it to a basis of $\bH^1_{\FF^*}(K,T^*)^\vee/\m$.

The localisation $(\bH^1_{\FF^*}(K,T^*)^\vee_{/\tors})_{(0)}$ is a finitely-generated $R_{(0)}$ vector space of dimension $r$ coinciding with the rank. Then any set of generators of $\bH^1_{\FF^*}(K,T^*)^\vee_{/\tors}$ contains at least $r$ elements, so Nakayama's lemma implies that $t\geq r>0$.

Let $d\in \bH^1_{\FF^*}(K,T^*)[\m]$ be the element of the dual basis associated with $g_1$, i.e., the unique element satisfying that  $g_1(d)\equiv 1\mod \m$ and $g_i(d)=0$ for $i\geq 2$ and $f_i(d)=0$ for all $i$.

If $\ku\in \PP$ is a Kolyvagin ultraprime satisfying that $\loc_\ku(d)\neq 0$. Consider then the following diagram:
\[\xymatrix{
    \bH^1_\f(K_\ku,T^*)^\vee \ar[r]^{(\loc_\ku)^\vee} \ar[d] & \bH^1_{\FF^*}(K,T^*)^\vee \ar[d] \\
    \bH^1_\f(K_\ku,T^*)^\vee\otimes R/\m \ar[r]^{(\loc_\ku)^\vee} & \bH^1_{\FF^*}(K,T^*)^\vee\otimes R/\m.
}\]
The image of the bottom horizontal map contains an element $h$ satisfying that $h(d)\neq 0$. Hence the ${g_1}$-coordinate of ${d}$ in the basis $\{f_1,\ldots,f_s,g_1,\ldots,g_t\}$ is non-zero. In particular, it implies that any lift of $h$ in $\bH^1_{\FF^*}(K,T^*)^\vee$ is not a torsion element. 

Nevertheless, the image of the top horizontal arrow contains a lift of $h$, which will be a non-torsion element. In particular, the image has rank one. Since the rank is an additive function,  we can conclude that 
\[\rank_R\ \bH^1_{(\FF^*)_\ku}(K,T^*)=\rank_R\ \bH^1_{\FF^*}(K,T^*)-1.\qedhere\]

\end{proof}

\begin{proposition}
Suppose that $p\geq 3$ and that $T$ satisfies Assumptions \ref{ass:patched:basic} and \ref{ass:patched:quotient}. Let $\FF$ be a cartesian Selmer structure such that $\bH^1_{\FF}(K,T)\neq 0$. Assume that 
\[\rank_R\ \bH^1_{\FF^*}(K,T^*)^\vee>0.\]
Then there is an ultraprime $\ku\in \PP$ such that 
\[\rank_R\ \bH^1_{(\FF^*)(\ku)}(K,T^*)^\vee=\rank_R\ \bH^1_{\FF^*}(K,T^*)^\vee-1.\]
\label{cor:patched:cheb_rank_red}
\end{proposition}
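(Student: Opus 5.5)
We combine Proposition \ref{prop:patched:cheb_rank_red} with Corollary \ref{cor:patched:cheb} and Lemma \ref{lem:patched:tr_res}, in the same way that Corollary \ref{cor:patched:dual_tr_vanish} upgrades vanishing of the relaxed dual Selmer group to vanishing of the transverse one.

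\textbf{Step 1: choose the dual class.} Since $\rank_R\,\bH^1_{\FF^*}(K,T^*)^\vee>0$, Proposition \ref{prop:patched:cheb_rank_red} gives a class $d\in\bH^1_{\FF^*}(K,T^*)$, in fact $d\in\bH^1_{\FF^*}(K,T^*)[\m]$, with the following property: for every ultraprime $\ku$ with $\loc_\ku(d)\neq 0$,
\[\rank_R\,\bH^1_{(\FF^*)_\ku}(K,T^*)^\vee=\rank_R\,\bH^1_{\FF^*}(K,T^*)^\vee-1.\]
In particular $d\neq 0$.

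\textbf{Step 2: choose the ultraprime.} Since $\bH^1_\FF(K,T)\neq 0$ and $\FF$ is cartesian, we apply Corollary \ref{cor:patched:cheb} with $s=1$ (the structure $\FF$) and $t=1$ (the class $d$). Here $s+t=2<p$ because $p\geq 3$. This produces a Kolyvagin ultraprime $\ku\in\PP$ such that:
\begin{itemize}
\item $\loc_\ku:\bH^1_\FF(K,T)\to\bH^1_\f(K_\ku,T)$ is surjective;
\item $\loc_\ku(d)\neq 0$.
\end{itemize}

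\textbf{Step 3: transverse equals strict.} By Lemma \ref{lem:patched:tr_res}, surjectivity of $\loc_\ku$ gives
\[\bH^1_{(\FF^*)(\ku)}(K,T^*)=\bH^1_{(\FF^*)_\ku}(K,T^*).\]
Taking Pontryagin duals and ranks, and using the rank drop from Step 1, we obtain
\[\rank_R\,\bH^1_{(\FF^*)(\ku)}(K,T^*)^\vee=\rank_R\,\bH^1_{(\FF^*)_\ku}(K,T^*)^\vee=\rank_R\,\bH^1_{\FF^*}(K,T^*)^\vee-1,\]
which is the statement.

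\textbf{Points to check.} The main thing to verify is that the hypotheses of Corollary \ref{cor:patched:cheb} are met. That corollary asks for $d_j\in\bH^1(K,T^*)\setminus\{0\}$, whereas Proposition \ref{prop:patched:cheb} is phrased with classes in $\bH^1(K^\kn/K,T^*)$. We therefore need $d$ to lie in $\bH^1(K^{\kn}/K,T^*)$ for some square-free $\kn$ divisible by $\Sigma_\FF$. This holds because $d$ is a Selmer class, hence unramified outside $\Sigma_{\FF^*}=\Sigma_\FF$ by Proposition \ref{prop:patched:unramified}.

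We also need Assumption \ref{ass:patched:quotient} to hold for this $\kn$. It suffices to take $\kn$ to be the vertex supplied by that assumption; enlarging it is harmless because $d$ is unramified there.

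Finally, the duality identification in Lemma \ref{lem:patched:tr_res} involves no rank subtleties, since it is an equality of submodules and the ranks therefore agree.
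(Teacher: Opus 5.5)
Your proposal is correct and is essentially the paper's proof. The paper applies Proposition \ref{prop:patched:cheb} to the non-$\m$-divisible class from Proposition \ref{prop:cheb:non-div_elt} and the class $d$ from Proposition \ref{prop:patched:cheb_rank_red}, which is exactly what Corollary \ref{cor:patched:cheb} packages; it leaves implicit the appeal to Lemma \ref{lem:patched:tr_res} that you make explicit in Step 3.
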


\begin{proof}
If is a direct application of Proposition \ref{prop:patched:cheb} choosing the element in $\bH^1_{\FF}(K,T)$ given by Proposition \ref{prop:cheb:non-div_elt} and the element in $\bH^1_{\FF^*}(K,T^*)$ given by Proposition \ref{prop:patched:cheb_rank_red}.
\end{proof}

\section{Iwasawa Selmer groups}
\label{sec:iwasawa}

From now on, we reduced the generality on the coefficient ring $R$ and work with Selmer groups with coefficients in the Iwasawa algebra.

\begin{namedass}{(Iw)}
Let $\Lambda=\Z_p[[X]]$ be the classical Iwasawa algebra and let $\bT$ be a $\Lambda[[G_K]]$-module that is free and finitely generated as a $\Lambda$-module. In addition, we assume the Galois action is only ramified at finitely many primes.
\end{namedass}

\begin{notation}
Following Remark \ref{rem:R_lim}, we denote
\[\Lambda_k=\Lambda/(p^k,X^k).\]
Note that the Iwasawa algebra satisfies Assumption \ref{ass:R_lim}, so the results in the previous section also hold in this case.
\label{not:iw_limit}
\end{notation}

Since the maximal ideal is not principal, we cannot use Remark \ref{rem:patched:ass_quotient} to prove Assumption \ref{ass:patched:quotient}, which was required for the results in \textsection\ref{sec:cheb}. However, we can show that Assumption \ref{ass:patched:quotient} is equivalent to the following which, although it depends on a global cohomology group, it can be checked from the local cohomology using Lemma \ref{lem:H2} below.

\begin{namedass}{(H2)}
There is a square-free product of ultraprimes $\kn$ divisible by all the primes in $\Sigma_\FF$ such that the patched cohomology group $\bH^2(K^{\kn}/K,\bT)$ contains no finite $\Lambda$-submodules.
\label{ass:H2}
\end{namedass}

\begin{remark}
Although Assumption \ref{ass:H2} might seem hard to verify, it will be reinterpreted, in \textsection\ref{sec:H2} below, in terms of the local zero cohomology groups. 
\end{remark}

\begin{proposition}
Under Assumption \ref{ass:patched:basic}, Assumptions \ref{ass:patched:quotient} and \ref{ass:H2} are equivalent.
\label{prop:iwasawa:ass_equiv}
\end{proposition}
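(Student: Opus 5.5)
To prove Proposition \ref{prop:iwasawa:ass_equiv}, I would fix a square-free product $\kn$ of ultraprimes divisible by all of $\Sigma_\FF$. I would then show, for this fixed $\kn$, that the injectivity in Assumption \ref{ass:patched:quotient} is equivalent to $\bH^2(K^\kn/K,\bT)$ having no nonzero finite $\Lambda$-submodule. Since both assumptions quantify over the same kind of $\kn$, this gives the equivalence.

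The first step is to factor the projection $\bT\to\bT_1=\bT/(p,X)$ through $\bT/p\bT$. This gives two short exact sequences $0\to\bT\xrightarrow{p}\bT\to\bT/p\to0$ and $0\to\bT/p\xrightarrow{X}\bT/p\to\bT_1\to 0$. The second is exact because $\bT$ is $\Lambda$-free, so $\bT/p$ is free over $\F_p[[X]]$. By Proposition \ref{prop:BCP_global}, Corollary \ref{cor:patched:global_long_exact} applies to both. Remark \ref{rem:patched:H0vanish} kills the $\bH^0$ terms, so the long exact sequences read
\[0\to \bH^1(K^\kn/K,\bT)/p\to \bH^1(K^\kn/K,\bT/p)\to \bH^2(K^\kn/K,\bT)[p]\to 0,\]
\[0\to \bH^1(K^\kn/K,\bT/p)/X\to \bH^1(K^\kn/K,\bT_1)\to \bH^2(K^\kn/K,\bT/p)[X]\to0.\]
Write $H^i=\bH^i(K^\kn/K,\bT)$. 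Since $\bH^1(K^\kn/K,\bT_1)\supset \bH^1(K^\kn/K,\bT/p)/X$, the map $H^1/\m H^1\to \bH^1(K^\kn/K,\bT_1)$ is injective if and only if $H^1/(p,X)\to \bH^1(K^\kn/K,\bT/p)/X$ is injective.

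The second step is to identify the kernel of that map. Applying $-\otimes\Lambda/X$ to the first sequence gives a Tor exact sequence
\[\mathrm{Tor}_1^{\Lambda}(\bH^1(K^\kn/K,\bT/p),\Lambda/X)\to H^2[p][X]\to H^1/(p,X)\to \bH^1(K^\kn/K,\bT/p)/X\to H^2[p]/X\to 0.\]
The Tor term is $\bH^1(K^\kn/K,\bT/p)[X]$, which vanishes by the $H^0$-vanishing argument of Proposition \ref{prop:patched:selmer_torsion_free} applied to $\bT/p$. Hence the kernel is $H^2[p,X]=H^2[\m]$. So Assumption \ref{ass:patched:quotient} holds for $\kn$ if and only if $H^2[\m]=0$.

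The final step is purely algebraic. By Proposition \ref{prop:fingen} and the exact sequence, $H^2$ is a finitely generated $\Lambda$-module. For such modules, $H^2[\m]=0$ is equivalent to $H^2$ having no nonzero finite submodule. Indeed, a nonzero finite submodule has nonzero $\m$-torsion, and conversely $H^2[\m]$ is itself a finite submodule.

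The main obstacle is justifying that $H^2$ is finitely generated and that the long exact sequences are valid for $\bH^2$ with profinite coefficients. For finite generation, I would use the surjection $H^2\to$ finite group $\bH^2(K^\kn/K,\bT_1)$ together with Nakayama's lemma and compactness, the latter because $H^2$ is an inverse limit of finite groups by the bounded cohomology property. Care is also needed that $\bH^2$ is an inverse limit so that Mittag-Leffler issues do not break exactness; here I rely on Proposition \ref{prop:patched:long_profinite}.
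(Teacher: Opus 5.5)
Your proof is correct and is essentially the paper's argument: the paper likewise factors $\bT\to\bT/\m\bT$ through a one-step quotient and uses the snake lemma to identify the kernel of $\bH^1(K^\kn/K,\bT)\otimes\Lambda/\m\to\bH^1(K^\kn/K,\bT/\m\bT)$ with $\bH^2(K^\kn/K,\bT)[\m]$, except that it quotients by $X$ first and then by $p$ (your Tor sequence is the same computation with the roles of $p$ and $X$ swapped). Your worry about finite generation of $\bH^2$ is unnecessary, because any nonzero $x\in\bH^2(K^\kn/K,\bT)[\m]$ already spans a finite submodule $\Lambda x\cong\Lambda/\m$, and conversely any nonzero finite $\Lambda$-module has finite length over the local ring $\Lambda$ and therefore nonzero $\m$-torsion.
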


\begin{proof}
Fix a square-free product of ultraprimes $\kn$ divisible by all the primes in $\Sigma_{\FF}$ and consider the exact sequence
\[\xymatrix{0\ar[r] & \bT\ar[r] & \bT\ar[r] & \bT/X\bT\ar[r] & 0.}\]
By Remark \ref{rem:patched:H0vanish} and Proposition \ref{prop:patched:global_duality}, it induces another exact sequence
\begin{equation}
\bH^1(K^{\kn}/K,\bT)\otimes \Lambda/(X)\hookrightarrow \bH^1(K^{\kn}/K,\bT/X\bT)\twoheadrightarrow \bH^2(K^{\kn}/K,\bT)[X].
\label{eq:ass_eq_ses1}
\end{equation}
In addition, consider the exact sequence
\[\xymatrix{0\ar[r] & \bT/X\bT\ar[r]^p & \bT/X\bT\ar[r] & \bT/\m\bT\ar[r] & 0.}\]
Again, Proposition \ref{prop:patched:global_duality} and Remark \ref{rem:patched:H0vanish} show that 
\[\bH^1\Bigl(K^{\kn}/K,\bT/X\bT\Bigr)[p]=0\]
and that the following map is injective:
\begin{equation}
\bH^1\Bigl(K^{\kn}/K,\bT/X\bT\Bigr)\otimes \Lambda/(p)\hookrightarrow \bH^1\Bigl(K^{\kn}/K,\bT/\m\bT\Bigr).
\label{eq:ass_eq_inj}
\end{equation}
The snake lemma applied to the short exact sequence in \eqref{eq:ass_eq_ses1} and the multiplication by $p$ induces another exact sequence
\begin{equation}
\bH^2(K^{\kn}/K,\bT)[\m]\hookrightarrow    \bH^1(K^{\kn}/K,\bT)\otimes \Lambda/\m\twoheadrightarrow \bH^1(K^{\kn}/K,\bT/(X))\otimes \Lambda/(p).
\label{eq:iwasawa:coh_inj_max_partial}
\end{equation}
Combining \eqref{eq:ass_eq_inj} and \eqref{eq:iwasawa:coh_inj_max_partial}, we obtain an exact sequence
\[\xymatrix{0\ar[r] & \bH^2(K^{\kn}/K,\bT)[\m]\ar[r] & \bH^1(K^{\kn}/K,\bT)\otimes \Lambda/\m \ar[r] & \bH^1(K^{\kn}/K,\bT/\m\bT),}\]
that leads to the equivalence between Assumptions \ref{ass:patched:quotient} and \ref{ass:H2} for the particular choice of $\kn$.
\end{proof}

Under Assumption \ref{ass:patched:basic}, all the profinite Selmer groups are free modules.
\begin{proposition}
Let $\FF$ be a Selmer structure defined on a Galois representation $\bT$ satisfying Assumptions \ref{ass:patched:basic}. Then the Selmer group $\bH^1_{\FF}(K,T)$ is a free, finitely generated $\Lambda$-module.
\label{prop:iwasawa:selmer_free}
\end{proposition}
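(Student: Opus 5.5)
Since $\Lambda$ is a two-dimensional regular local ring, a finitely generated $\Lambda$-module is free exactly when it is reflexive. Equivalently, by the Auslander--Buchsbaum formula, it is free exactly when it has depth $2$. So the plan is to exhibit $\bH^1_{\FF}(K,\bT)$ as a module of depth $2$. We know it is finitely generated from Proposition \ref{prop:fingen} (it is a submodule of $\bH^1(K^{\Sigma_\FF}/K,\bT)$ and $\Lambda$ is noetherian). We know it is torsion-free from Proposition \ref{prop:patched:selmer_torsion_free}. It therefore remains to find a regular sequence of length two, and the natural choice is $(X,p)$.

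First, $X$ is a nonzerodivisor on $M:=\bH^1_{\FF}(K,\bT)$ because $M$ is torsion-free. Second, we must show that $p$ is a nonzerodivisor on $M/XM$. To do this, I will embed $M/XM$ into a module that visibly has no $p$-torsion.

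The first candidate is $\bH^1_{\FF}(K,\bT/X\bT)$, the Selmer group of the propagated structure (Definition \ref{def:propagation_quotients}). The short exact sequence
\[0\to \bT\xrightarrow{X}\bT\to \bT/X\bT\to 0\]
gives, via Corollary \ref{cor:patched:global_long_exact}, a map $M/XM\to \bH^1(K^{\Sigma_\FF}/K,\bT)/X\hookrightarrow \bH^1(K^{\Sigma_\FF}/K,\bT/X\bT)$. The second map is injective by the long exact sequence. The subtle point is the injectivity of $M/XM\to \bH^1(K^{\Sigma_\FF}/K,\bT)/X$. This is exactly what Proposition \ref{prop:patched:quot_selmer_inj} gives for cartesian structures, but the statement does not assume $\FF$ is cartesian.

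To avoid needing the cartesian condition, I would argue directly. Suppose $m\in M$ with $pm=Xy$ for some $y\in M$. I want to conclude that $m\in XM$.

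First I work in the ambient group $H:=\bH^1(K^{\Sigma_\FF}/K,\bT)$. Remark \ref{rem:patched:H0vanish} gives $\bH^0(K,\bT/(X,p))=0$. The argument in the proof of Proposition \ref{prop:iwasawa:ass_equiv} then shows $\bH^1(K^{\Sigma_\FF}/K,\bT/X\bT)[p]=0$, so $H/XH$ is $p$-torsion-free. Hence $m=Xz$ for some $z\in H$.

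Next I need $z\in M$, i.e. that $z$ satisfies the local conditions. We know $Xz=m\in M$. Since $M$ is defined by the kernel of maps to $\prod_\ku \bH^1(K_\ku,\bT)/\bH^1_\FF(K_\ku,\bT)$, the image of $z$ in each local quotient is $X$-torsion. If these local quotients were torsion-free (the cartesian case), we would be done immediately.

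In general I would instead exploit $Xy=pm=pXz$. Since $H$ is torsion-free by Proposition \ref{prop:patched:selmer_torsion_free}, this gives $y=pz$. Now $y\in M$, so the local image of $z$ is killed by both $X$ and $p$. I would then hope to show that the local quotients have no $\m$-torsion, at least at the ultraprimes in $\Sigma_\FF$ where closedness applies.

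This last step is the main obstacle. If it fails for general (non-cartesian) $\FF$, I would fall back on interpreting the proposition for cartesian structures, which are the ones used throughout. In that setting Proposition \ref{prop:patched:quot_selmer_inj} gives the required injection outright.

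Granting that step, $(X,p)$ is an $M$-regular sequence, so $M$ has depth $2$. Auslander--Buchsbaum then gives projective dimension $0$, and $M$ is free over the local ring $\Lambda$.
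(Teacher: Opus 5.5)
With $\FF$ cartesian, your proof is correct and essentially the paper's. Both arguments rest on the chain $M/XM\hookrightarrow \bH^1(K^{\Sigma_\FF}/K,\bT)/X\hookrightarrow \bH^1(K^{\Sigma_\FF}/K,\bT/X\bT)$. The first map is Proposition \ref{prop:patched:quot_selmer_inj}, and the target has no $p$-torsion by Remark \ref{rem:patched:H0vanish}. The only difference is the commutative algebra. The paper argues: ``$M$ is torsion-free, so $M\subset F$ with $F$ free and $F/M$ finite; if $F/M\neq 0$, the finite module $(F/M)[X]$ would sit inside $M/XM$.'' You instead show $(X,p)$ is $M$-regular and apply Auslander--Buchsbaum. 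The two are interchangeable.

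The step you could not complete for non-cartesian $\FF$ cannot be completed, because the statement is false there. The paper's proof also goes through Proposition \ref{prop:patched:quot_selmer_inj}, so it silently assumes $\FF$ cartesian too. Here is a counterexample. Let $\FF'$ be cartesian with $M':=\bH^1_{\FF'}(K,\bT)\neq 0$. By Propositions \ref{prop:cheb:non-div_elt} and \ref{prop:patched:cheb}, choose a Kolyvagin ultraprime $\ku\notin\Sigma_{\FF'}$ and $c\in M'$ with $\loc_\ku(c)$ generating $\bH^1_\f(K_\ku,\bT)\cong\Lambda$. Let $\FF$ agree with $\FF'$ except that $\bH^1_\FF(K_\ku,\bT):=\m\,\bH^1_\f(K_\ku,\bT)$. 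Then $\bH^1_\FF(K,\bT)$ is the kernel of the surjection $M'\to\bH^1_\f(K_\ku,\bT)/\m\,\bH^1_\f(K_\ku,\bT)\cong\Lambda/\m$. This kernel is not free, since $\Lambda/\m$ has projective dimension $2$. Concretely, when $M'=\Lambda c$ the Selmer group is $\m c\cong\m$, and $p$ kills the nonzero class of $Xc$ in $\m c/X\m c$. The local quotient $\bH^1(K_\ku,\bT)/\m\,\bH^1_\f(K_\ku,\bT)\cong\Lambda/\m\oplus\Lambda$ carries exactly the $\m$-torsion your reduction isolates. So the proposition needs the cartesian hypothesis, which holds in all of its later applications, and with it your argument is complete.
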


\begin{proof}
By Proposition \ref{prop:patched:selmer_torsion_free}, $\bH^1_{\FF}(K,\bT)$ is a finitely generated, torsion free Iwasawa module, so the structure theorem (see \cite[Theorem 5.3.8]{NSW}) implies that it is pseudo-isomorphic to a free module. In addition, Proposition \ref{prop:patched:selmer_torsion_free} implies that the Selmer group contains no finite submodules, so there is a free module $F$ and a finite module $C$ fitting in the exact sequence
\[\xymatrix{0\ar[r] & \bH^1_{\FF}(K,\bT)\ar[r] & F\ar[r] &C \ar[r] &0.}\]
Assume for the sake of contradiction that $C$ is a non-trivial module. The snake lemma induces another exact sequence
\[\xymatrix{0\ar[r] & C[X]\ar[r] & \bH^1_{\FF}(K,\bT)\otimes \Lambda/(X)\ar[r] & F/XF \ar[r] & C/XC\ar[r] & 0.}\]
When $C$ is non-trivial, then $C[X]\neq 0$, so $\bH^1_{\FF}(K,\bT)\otimes \Lambda/(X)$ contains a finite submodule.

Consider the commutative diagram
\[\xymatrix{
\bH^1_{\FF}(K,T)\otimes \Lambda/(X) \ar[r] \ar@{>->}[d]  & \bH^1_{\FF}\Bigl(K,\bT/(X)\Bigr)\ar[d]\\
\bH^1(K^{\kn}/K,T)\otimes \Lambda/(X) \ar@{>->}[r]   & \bH^1\Bigl(K^{\kn}/K,\bT/(X)\Bigr),
}\]
in which the left vertical map is injective by Proposition \ref{prop:patched:quot_selmer_inj} and the bottom horizontal map is injective by the argument in Proposition \ref{prop:iwasawa:ass_equiv}. It implies that 
\[\bH^1\Bigl(K^\kn/K,\bT/X\bT\Bigr)[p]\neq 0,\]
 contradicting Proposition \ref{prop:patched:selmer_torsion_free} for the Galois representation $\bT/(X)$.
\end{proof}

We can use the Euler characteristic to compute the core rank of Iwasawa Selmer modules.

\begin{proposition}
Let $\FF$ be a Selmer structure defined on a Galois representation $T$ satisfying Assumption \ref{ass:patched:basic}. Denote 
\[d_-(T):=\sum_{\substack{v\in \Sigma_{\infty}\\v\ \textrm{real}}} \rank_{\Lambda} \bT^{\sigma_v=-1} +\sum_{\substack{v\in \Sigma_{\infty}\\v\ \textrm{complex}}} \rank_{\Lambda} \bT.\]
where $\Sigma_\infty$ denotes the archimedean places of $K$ and $\sigma_v$ denotes any complex conjugation. Then
\begin{equation}
\chi(\FF)=d_-(T)+\sum_{\ku\in \Sigma_{\FF}} \biggl(\rank_\Lambda\bH^1_{/\FF}(K_\ku,T)-\rank_\Lambda \bH^0(K_\ku,T^*)^\vee\biggr).
\label{eq:core_rank}
\end{equation}
\label{prop:core_rank}
\end{proposition}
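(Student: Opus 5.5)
The plan is to reduce the rank computation to the patched Euler characteristic formula (Corollary \ref{cor:euler_char}) and the patched Poitou--Tate sequence (Proposition \ref{prop:patched:poitou-tate}). Let $\kn$ be the square-free product of the ultraprimes in $\Sigma_\FF$. I would first compare $\FF$ with the relaxed structure $\FF^{\kn}$, whose local condition at each $\ku\mid\kn$ is the full $\bH^1(K_\ku,\bT)$, so that $\bH^1_{\FF^{\kn}}(K,\bT)=\bH^1(K^{\kn}/K,\bT)$. The dual structure $(\FF^*)_{\kn}$ is strict at every $\ku\mid\kn$.

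Applying Proposition \ref{prop:patched:global_duality} to $\FF\leq\FF^{\kn}$ and using additivity of $\rank_\Lambda$ gives
\[\chi(\FF)=\chi(\FF^{\kn})-\sum_{\ku\in\Sigma_\FF}\rank_\Lambda \bH^1_{/\FF}(K_\ku,\bT).\]
Hence it suffices to compute $\chi(\FF^{\kn})$.

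For $\chi(\FF^{\kn})$, I would compare the Poitou--Tate sequence with the definition of the Selmer group.
\begin{itemize}
\item The cokernel of $\bH^1(K^{\kn}/K,\bT)\to\prod_{\ku}\bH^1(K_\ku,\bT)$ maps into $\bH^1(K^{\kn}/K,\bT^*)^\vee$.
\item That map has kernel $\bH^1_{(\FF^*)_{\kn}}(K,\bT^*)^\vee$ up to the image term.
\end{itemize}
Taking ranks and using $\bH^0(K^{\kn}/K,\bT)=0$ from Remark \ref{rem:patched:H0vanish} yields
\[\chi(\FF^{\kn})=\rank_\Lambda\bH^1(K^{\kn}/K,\bT)-\rank_\Lambda\bH^2(K^{\kn}/K,\bT)-\sum_{\ku\mid\kn}\Bigl(\rank_\Lambda\bH^1(K_\ku,\bT)-\rank_\Lambda \bH^2(K_\ku,\bT)\Bigr)+\dots\]
It is cleaner to organise this as follows: the rank of $\bH^1(K^{\kn}/K,\bT)$ minus the rank of $\bH^2(K^{\kn}/K,\bT)$ equals $d_-(\bT)$, the global Euler characteristic. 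Separately, by patched local duality (Proposition \ref{prop:patched:local_duality}) the local terms contribute $\rank_\Lambda\bH^1(K_\ku,\bT)-\rank_\Lambda\bH^0(K_\ku,\bT^*)^\vee$. Here $\bH^2(K_\ku,\bT)\cong\bH^0(K_\ku,\bT^*)^\vee$, and the local Euler characteristic has no $\rank\bH^0(K_\ku,\bT)$ term because that rank vanishes. Combining these with the first step, the $\rank\bH^1(K_\ku)$ terms turn into $\rank\bH^1_{/\FF}(K_\ku)$, which gives \eqref{eq:core_rank}.

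The key input is the global identity $\rank_\Lambda\bH^1(K^{\kn}/K,\bT)-\rank_\Lambda\bH^2(K^{\kn}/K,\bT)=d_-(\bT)$. I would prove it by reducing modulo the height one prime $(X)$ and then working over $\Z_p$ at finite level $\bT/(p^k,X)$. Using the long exact sequences of Corollary \ref{cor:patched:global_long_exact} for multiplication by $X$, and the vanishing of $\bH^0$, the ranks over $\Lambda$ equal the corresponding $\Z_p$-ranks for $\bT/X\bT$, corrected only by terms that cancel in the alternating sum. I would then apply Corollary \ref{cor:euler_char} to $\bT/(X,p^k)$ and let $k\to\infty$. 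The orders grow like $p^{k\cdot(\text{rank})}$, and the right side of \eqref{eq:euler_char} gives exactly $p^{-k\,d_-}$ times bounded factors.

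I expect this global identity to be the main obstacle. The local counterparts, namely the patched local Euler characteristic at non-constant ultraprimes (where $G_\ku$ is the tame group of Remark \ref{rem:local_galois_group}, giving Euler characteristic $0$) and at constant ultraprimes (classical Tate), should be routine by the same reduction argument.
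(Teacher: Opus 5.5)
Your Step 1 is correct: Proposition \ref{prop:patched:global_duality} for $\FF\le\FF^{\kn}$ gives $\chi(\FF)=\chi(\FF^{\kn})-\sum_{\ku\in\Sigma_\FF}\rank_\Lambda\bH^1_{/\FF}(K_\ku,\bT)$. Your plan for the global identity $\rank_\Lambda\bH^1(K^{\kn}/K,\bT)-\rank_\Lambda\bH^2(K^{\kn}/K,\bT)=d_-(\bT)$ is also fine. It uses reduction mod $X$ via $\rank_\Lambda M=\rank_{\Z_p}M/XM-\rank_{\Z_p}M[X]$, the vanishing $\bH^1(K^{\kn}/K,\bT)[X]=0$, cohomological dimension $2$, and then Corollary \ref{cor:euler_char} at finite level, which makes the paper's ``standard limit argument'' explicit.

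The gap is Step 2. Since $(\FF^*)_{\kn}$ is strict at every $\ku\mid\kn$, the module $\bH^1_{(\FF^*)_{\kn}}(K,\bT^*)^\vee$ is the cokernel of $\prod_{\ku}\bH^1(K_\ku,\bT)\to\bH^1(K^{\kn}/K,\bT^*)^\vee$. Proposition \ref{prop:patched:poitou-tate} identifies this cokernel with $\ker\bigl(\bH^2(K^{\kn}/K,\bT)\to\bigoplus_{\ku}\bH^2(K_\ku,\bT)\bigr)$. That map is surjective because $\bH^0(K^{\kn}/K,\bT^*)=0$ (Remark \ref{rem:patched:H0vanish}). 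Hence
\[\chi(\FF^{\kn})=d_-(\bT)+\sum_{\ku\mid\kn}\rank_\Lambda\bH^0(K_\ku,\bT^*)^\vee,\]
with no $\rank_\Lambda\bH^1(K_\ku,\bT)$ term and no local Euler characteristic needed.

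Your claimed local contribution $\rank\bH^1(K_\ku,\bT)-\rank\bH^0(K_\ku,\bT^*)^\vee$ is wrong. At a $p$-adic $\ku$, Tate's formula gives $\rank\bH^1(K_\ku,\bT)=[K_\ku:\Q_p]\rank_\Lambda\bT+\rank\bH^0(K_\ku,\bT)+\rank\bH^2(K_\ku,\bT)$, and the term $[K_\ku:\Q_p]\rank_\Lambda\bT$ has no place in the answer. Your closing step is also an arithmetic slip: subtracting $\rank\bH^1_{/\FF}(K_\ku,\bT)$ from $\rank\bH^1(K_\ku,\bT)$ leaves $\rank\bH^1_{\FF}(K_\ku,\bT)$, not $\rank\bH^1_{/\FF}(K_\ku,\bT)$.

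Carried out correctly, your two steps give
\[\chi(\FF)=d_-(\bT)-\sum_{\ku\in\Sigma_\FF}\Bigl(\rank_\Lambda\bH^1_{/\FF}(K_\ku,\bT)-\rank_\Lambda\bH^0(K_\ku,\bT^*)^\vee\Bigr).\]
This is exactly what the alternating rank count of the paper's own exact sequence produces. That sequence is global duality spliced with Poitou--Tate, ending in $\bH^2(K^{\Sigma_\FF}/K,\bT)\to\bigoplus_{\ku}\bH^2(K_\ku,\bT)\to 0$. Your route is the same sequence split into two pieces, so the method is the paper's.

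The displayed formula in the statement therefore has a sign typo, and the minus sign is the one the paper's applications need. For the Rankin--Selberg convolution, $d_-=2$ and $\bH^1_{/\FBK}(\Q_p,\bT)\cong\bH^1(\Q_p,\bT^-)$ has rank one. The printed sign would give $\chi(\FBK)=3$, whereas the rank-one Kolyvagin system $\kappa_{\LLZ}$ requires $\chi(\FBK)=1$.

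As written, your proposal reaches the printed formula only through faulty bookkeeping. The fix is to replace Step 2 by the computation above and record the corrected sign.
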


\begin{proof}
Combining Propositions \ref{prop:patched:poitou-tate} and \ref{prop:patched:global_duality}, there is an exact sequence
\begin{center}
\begin{tikzpicture}[descr/.style={fill=white,inner sep=1.5pt}]
        \matrix (m) [
            matrix of math nodes,
            row sep=4em,
            column sep=1em,
            text height=1.5ex, text depth=0.25ex
        ]
        {  0& \bH^1_{\FF}(K,T) & \bH^1(K^{\Sigma_\FF}/K,T) & \displaystyle\bigoplus_{\ku\in \Sigma_\FF} \bH^1_{/\FF}(K_\ku,T) \\
            & \bH^1_{\FF^*}(K,T^*)^\vee & \bH^2(K^{\Sigma_\FF}/K,T)& \displaystyle\bigoplus_{\ku\in \Sigma_\FF} \bH^2(K_\ku,T) &0.\\
        };

        \path[overlay,->, font=\scriptsize,>=latex]
        (m-1-1) edge (m-1-2)
        (m-1-2) edge (m-1-3)
        (m-1-3) edge (m-1-4)
        (m-1-4) edge [out=355,in=175] (m-2-2) 
        (m-2-2) edge (m-2-3)
        (m-2-3) edge (m-2-4)
        (m-2-4) edge (m-2-5)
        ;    
\end{tikzpicture}
\end{center}
By Remark \ref{rem:patched:H0vanish}, $H^0(K^{\Sigma_\FF}/K,T)=0$. Then Corollary \ref{cor:euler_char} and the standard limit argument imply that
\[d_-(T)=\rank_\Lambda\ \bH^1(K^{\Sigma_\FF}/K,T)-\rank_\Lambda\ \bH^2(K^{\Sigma_\FF}/K,T).\]
By Proposition \ref{prop:patched:local_duality}, there is an identification $\bH^2(K_\ku,T)\cong\bH^0(K_\ku,T^*)^\vee$ for every ultraprime $\ku$. Then the formula follows from the additivity of the rank.
\end{proof}

\subsection{Core vertices}

There are some special vertices in $\NN$ that control the Kolyvagin and Stark systems defined below. In particular, Stark systems are controlled by weak core vertices and Kolyvagin systems by core vertices.

\begin{definition}
A \emph{weak core vertex} is a square-free product $\kc\in \NN$ of Kolyvagin ultraprimes such that $\bH^1_{(\FF^*)_\kc}(K,\bT^*)=0$.
\label{def:core_vertex_weak}
\end{definition}

\begin{proposition}
If $\kc\in \NN$ is a weak core vertex, then $\bH^1_{\FF^\kc}(K,\bT)$ is a free module of rank $\chi(\FF)+\nu(\kc)$.
\label{prop:weak_core_pseudo_free}
\end{proposition}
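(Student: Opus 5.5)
Freeness is immediate from Proposition \ref{prop:iwasawa:selmer_free}: $\FF^\kc$ is a Selmer structure on $\bT$, Assumption \ref{ass:patched:basic} is in force, so $\bH^1_{\FF^\kc}(K,\bT)$ is a free, finitely generated $\Lambda$-module. The whole content of the statement is therefore the computation of its rank, and the plan is to reduce this to the core rank formula.

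By Definition \ref{def:patched:core_rank},
\[\chi(\FF^\kc)=\rank_\Lambda \bH^1_{\FF^\kc}(K,\bT)-\rank_\Lambda \bH^1_{(\FF^\kc)^*}(K,\bT^*)^\vee.\]
The dual Selmer structure of $\FF^\kc$ is $(\FF^*)_\kc$. This follows from Proposition \ref{prop:patched:local_duality}, because the annihilator of the full local cohomology $\bH^1(K_\ku,\bT)$ is $0$, while away from $\kc$ the local conditions are dual by Definition \ref{def:patched:dual_structure}. The weak core vertex hypothesis says precisely that $\bH^1_{(\FF^*)_\kc}(K,\bT^*)=0$, so the second term vanishes. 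Hence
\[\rank_\Lambda \bH^1_{\FF^\kc}(K,\bT)=\chi(\FF^\kc).\]

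It remains to show $\chi(\FF^\kc)=\chi(\FF)+\nu(\kc)$. This is Proposition \ref{prop:patched:rank_modified} with $\ka=1$, $\kb=\kc$ and no transverse primes (the first step of its proof already gives $\chi(\FF^{\kb})=\chi(\FF)+\nu(\kb)$). That step uses the global duality sequence of Proposition \ref{prop:patched:global_duality} for $\FF\le\FF^\kc$, together with the fact that each $\bH^1_\s(K_\ku,\bT)\cong \bH^1_\tr(K_\ku,\bT)$ is free of rank one for $\ku\in\PP$ (Proposition \ref{prop:patched_transverse}). It then takes alternating sums of ranks. One point needs checking here: Proposition \ref{prop:patched:rank_modified} assumes $\FF$ is cartesian, but the rank computation itself only uses additivity of rank in the global duality sequence, so it goes through without that hypothesis.

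The only potentially delicate point is confirming that additivity of $\rank_\Lambda$ applies along the global duality sequence for profinite $\bT$, i.e.\ that all its terms are finitely generated $\Lambda$-modules. This is ensured by Proposition \ref{prop:fingen} and Corollary \ref{cor:fingen_dual}, with the local terms free of rank one, and it is the step I would verify most carefully. Combining the three steps gives that $\bH^1_{\FF^\kc}(K,\bT)$ is free of rank $\chi(\FF)+\nu(\kc)$.
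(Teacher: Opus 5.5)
Your proposal is correct and follows the paper's proof. You identify $(\FF^\kc)^*=(\FF^*)_\kc$, use the weak core vertex hypothesis to kill the dual term so that the rank equals $\chi(\FF^\kc)=\chi(\FF)+\nu(\kc)$ by Proposition \ref{prop:patched:rank_modified}, and get freeness from Proposition \ref{prop:iwasawa:selmer_free}. Your remark that the cartesian hypothesis is not needed for the rank count is right. The freeness step, however, does use it: the proof of Proposition \ref{prop:iwasawa:selmer_free} goes through Proposition \ref{prop:patched:quot_selmer_inj}, so it needs $\FF^\kc$ to be cartesian, which holds here because $\FF$ is.
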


\begin{proof}
Note that $(\FF^\kc)^*=(\FF^*)_\kc$. By Proposition \ref{prop:patched:rank_modified}, $\chi(\FF^c)=\chi(\FF)+\nu(c)$, so the rank of $\bH^1_{\FF^{\kc}}(K,\bT)$ is $\chi(\FF)+\nu(c)$. By Proposition \ref{prop:iwasawa:selmer_free}, 
\[\bH^1_{\FF^{\kc}}(K,\bT)\cong \Lambda^{\chi(\FF)+\nu(\kc)}.\qedhere\]
\end{proof}

We can use Corollary \ref{cor:patched:dual_res_vanish} to prove the existence of a weak core vertex.
\begin{proposition}
Assume that $\bT$ satisfies Assumption \ref{ass:patched:basic}. Let $\kn\in \NN$ be a square-free product of Kolyvagin ultraprimes. Then there exists a weak core vertex $\kc\in \NN$ such that $\kn\mid \kc$.
\label{prop:weak_core_vertex_existence}
\end{proposition}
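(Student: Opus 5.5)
The plan is to reduce to Corollary \ref{cor:patched:dual_res_vanish}, applied to the modified Selmer structure $\FF^\kn$ rather than to $\FF$. The identity $(\FF^\kn)^*=(\FF^*)_\kn$ already appears in the proof of Proposition \ref{prop:weak_core_pseudo_free}, since under local duality (Proposition \ref{prop:patched:local_duality}) the relaxed condition at each $\ku\mid\kn$ is dual to the strict one. Moreover $\FF^\kn$ is again a cartesian Selmer structure by Proposition \ref{prop:patched:rank_modified}, because $\bH^1(K_\ku,\bT)$ is torsion free for Kolyvagin ultraprimes.

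The argument then runs as follows. By Corollary \ref{cor:patched:dual_res_vanish} applied to $\FF^\kn$, there is some $\km\in\NN$ with
\[\bH^1_{((\FF^\kn)^*)_\km}(K,\bT^*)=\bH^1_{(\FF^*)_{\kn\km}}(K,\bT^*)=0.\]
We will arrange $\km$ to be coprime to $\kn$ and set $\kc:=\kn\km$. Then $\kc\in\NN$ is a square-free product of Kolyvagin ultraprimes divisible by $\kn$ and satisfying $\bH^1_{(\FF^*)_\kc}(K,\bT^*)=0$, which is exactly Definition \ref{def:core_vertex_weak}.

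To secure coprimality, I would return to the inductive construction behind Corollary \ref{cor:patched:dual_res_vanish}. There one repeatedly picks a nonzero $d\in\bH^1_{(\FF^*)_{\kn\km'}}(K,\bT^*)[\m]$ and uses Proposition \ref{prop:patched:cheb} to find $\ku\in\PP$ with $\loc_\ku(d)\neq0$. Such a $\ku$ cannot divide $\kn\km'$, because at those places the local condition is already $0$, so $\loc_\ku(d)=0$ there. Each chosen ultraprime is therefore new, and adjoining it strictly drops $\dim\bH^1_{(\FF^*)_{\kn\km'}}(K,\bT^*)[\m]$ by Proposition \ref{prop:cheb:torsion_red}. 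This dimension is finite by Corollary \ref{cor:global_finite} together with Proposition \ref{prop:patched:selmer_torsion}, so the process terminates. Once the $\m$-torsion vanishes, Nakayama's lemma applied to the Pontryagin dual, which is finitely generated by Corollary \ref{cor:fingen_dual}, gives vanishing of the whole dual Selmer group.

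The main obstacle is the hypotheses of Corollary \ref{cor:patched:dual_res_vanish}. It needs Assumption \ref{ass:patched:quotient}, which the statement does not list explicitly; I would treat it as part of the standing assumptions, or equivalently assume \ref{ass:H2} via Proposition \ref{prop:iwasawa:ass_equiv}. It also needs Assumption \ref{ass:patched:quotient} to hold for the enlarged set $\Sigma_{\FF}\cup\{\ku\mid\kn\}$, which I expect to follow by enlarging the square-free product in \ref{ass:patched:quotient}. Beyond these checks, only the single element $d$ (with $t=1$, $s=0$, so $s+t<p$) is fed into Proposition \ref{prop:patched:cheb}, so the Chebotarev step itself presents no difficulty.
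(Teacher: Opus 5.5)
Your argument is correct and is essentially the paper's proof, which is a one-line application of Corollary~\ref{cor:patched:dual_res_vanish} to a modified structure. The paper writes $\FF_\kn$ rather than your $\FF^\kn$; both work, since replacing the relaxed condition by the strict one at the primes of $\kn$ only shrinks the dual Selmer group, and you can skip the coprimality bookkeeping by taking $\kc$ to be the product of the distinct ultraprimes dividing either $\kn$ or the product $\kr$ returned by the corollary.

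On the obstacle you flag, do not add Assumption~\ref{ass:patched:quotient} or \ref{ass:H2} as a hypothesis. It is not needed: with $s=0$, the proof of Proposition~\ref{prop:patched:cheb} uses Assumption~\ref{ass:patched:quotient} only to guarantee $(c_i)_1\neq 0$ for the classes $c_i$, and there are none. Importing it would also make Lemma~\ref{lem:H2} circular, since that lemma invokes this very proposition to \emph{deduce} \ref{ass:H2}.
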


\begin{proof}
It follows from a direct application of Corollary \ref{cor:patched:dual_res_vanish} to the Selmer structure $\FF_\kn$.
\end{proof}

\begin{remark}
Note that the following converse result also holds: when $\kc\in \NN$ is a weak core vertex, every $\kn\in \NN$ divisible by $\kc$ is also a weak core vertex.
\label{rem:weak_core_div}
\end{remark}

As we will show below in \textsection\ref{sec:iwasawa:stark}, weak core vertices are used to control Stark systems. In order to have an analogue for Kolyvagin systems, we need a stronger version of core vertices.

\begin{definition}
A \emph{core vertex} is a square-free product $\kc\in \NN$ of Kolyvagin ultraprimes such that $\bH^1_{(\FF^*)(\kc)}(K,\bT^*)=0$.
\label{def:core_vertex}
\end{definition}

\begin{proposition}
Suppose that $\bT$ satisfies Assumption \ref{ass:patched:basic}. If $\kc\in \NN$ is a core vertex,  $\bH^1_{\FF(\kc)}(K,\bT)$ is a free module of rank $\chi(\FF)$.
\label{prop:core_pseudo_free}
\end{proposition}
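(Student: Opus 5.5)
The argument mirrors Proposition \ref{prop:weak_core_pseudo_free}, with the transverse modification $\FF(\kc)$ in place of the relaxed one $\FF^\kc$. I would establish three facts: that $\FF(\kc)$ is cartesian, that its core rank equals $\chi(\FF)$, and that its dual Selmer group has rank zero by the core-vertex hypothesis. Freeness then comes from Proposition \ref{prop:iwasawa:selmer_free}.

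\textbf{Step 1 (core rank).} Proposition \ref{prop:patched:rank_modified} with $\ka=\kb=1$ and the third slot equal to $\kc$ shows that $\FF(\kc)$ is cartesian and that
\[\chi(\FF(\kc))=\chi(\FF)+\nu(1)-\nu(1)=\chi(\FF).\]

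\textbf{Step 2 (dual structure).} I need $(\FF(\kc))^*=(\FF^*)(\kc)$. This means that at each Kolyvagin ultraprime $\ku\mid\kc$, the annihilator of $\bH^1_\tr(K_\ku,\bT)$ under the pairing of Proposition \ref{prop:patched:local_duality} is $\bH^1_\tr(K_\ku,\bT^*)$.

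For finite quotients $\bT_k$, this is the classical fact \cite[Lemma 1.3.2]{MazurRubin} applied to the primes $\ell_i$ representing $\ku$. It passes to the ultraproduct by the same argument as Lemma \ref{lem:patching_structures_dual}, using Proposition \ref{prop:ultraproduct_hom} and exactness of $\UU$. It then passes to $\bT$ and $\bT^*$ by taking the inverse and direct limits in the definition of $\bH^1_\tr$.

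This identification is the only real content, and the main point to check is the compatibility of the limits with the transverse definition. In this finite-level-then-limit form it reduces to the classical statement.

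\textbf{Step 3 (conclusion).} Since $\kc$ is a core vertex, $\bH^1_{(\FF(\kc))^*}(K,\bT^*)=\bH^1_{(\FF^*)(\kc)}(K,\bT^*)=0$, so its Pontryagin dual has $\Lambda$-rank $0$. Definition \ref{def:patched:core_rank} then gives
\[\rank_\Lambda \bH^1_{\FF(\kc)}(K,\bT)=\chi(\FF(\kc))=\chi(\FF).\]
By Proposition \ref{prop:iwasawa:selmer_free}, applied to the Selmer structure $\FF(\kc)$, the group $\bH^1_{\FF(\kc)}(K,\bT)$ is a free finitely generated $\Lambda$-module. Hence
\[\bH^1_{\FF(\kc)}(K,\bT)\cong\Lambda^{\chi(\FF)}.\]
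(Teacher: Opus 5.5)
Your proposal is correct and follows the paper's argument: Proposition \ref{prop:patched:rank_modified} gives $\chi(\FF(\kc))=\chi(\FF)$, the core-vertex hypothesis kills the dual Selmer group so the rank equals $\chi(\FF)$, and Proposition \ref{prop:iwasawa:selmer_free} gives freeness. The only difference is that you spell out the identification $(\FF(\kc))^*=(\FF^*)(\kc)$ via self-duality of the transverse conditions. The paper uses this tacitly, both here and in the proof of Proposition \ref{prop:patched:rank_modified}.
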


\begin{proof}
By Proposition \ref{prop:patched:rank_modified}, then $\chi(\FF(\kc))=\chi(\FF)$, so the rank of $\bH^1_{\FF(\kc)}(K,\bT)$ is $\chi(\FF)$. By Proposition \ref{prop:iwasawa:selmer_free}, we have that 
\[\bH^1_{\FF^\kc}(K,\bT)\cong \Lambda^{\chi(\FF)}.\qedhere\]
\end{proof}

We can also show the existence of core vertices, using Corollary \ref{cor:patched:dual_tr_vanish}.

\begin{proposition}
Consider a square-free product of Kolyvagin ultraprimes $\kn\in \NN$. If $\bT$ satisfies Assumptions \ref{ass:patched:basic} and \ref{ass:H2}, then there is a core vertex $\kc\in \NN$ such that $\kn\mid\kc$.
\label{prop:core_vertex_existence}
\end{proposition}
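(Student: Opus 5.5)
We plan to follow the proof of Corollary \ref{cor:patched:dual_tr_vanish}, applied to the modified Selmer structure $\FF(\kn)$ rather than to $\FF$. First, by Proposition \ref{prop:iwasawa:ass_equiv}, Assumption \ref{ass:H2} together with Assumption \ref{ass:patched:basic} gives Assumption \ref{ass:patched:quotient}. This makes the Chebotarev-type results of \textsection\ref{sec:cheb} available, namely Proposition \ref{prop:patched:cheb}, Corollary \ref{cor:patched:cheb} and Proposition \ref{prop:cheb:non-div_elt}. Second, Proposition \ref{prop:patched:rank_modified} shows that $\FF(\kn)$ is again cartesian with core rank $\chi(\FF(\kn))=\chi(\FF)>0$; we are working with structures of positive core rank throughout.

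Next we argue that every cartesian structure $\GG$ of positive core rank has $\bH^1_{\GG}(K,\bT)\neq 0$. By definition of core rank, $\rank_\Lambda \bH^1_{\GG}(K,\bT)\geq \chi(\GG)>0$. We then run the induction on $\dim_k \bH^1_{(\FF^*)(\kn\ka)}(K,\bT^*)[\m]$, starting from $\ka=1$. This dimension is finite: the $\m$-torsion is dual to $\bH^1_{(\FF^*)(\kn\ka)}(K,\bT^*)^\vee\otimes\Lambda/\m$, which is finite by Remark \ref{rem:patched:selmer_torsion_dual} and Corollary \ref{cor:global_finite}. If the $\m$-torsion is nonzero, we pick a nonzero $r$ in it. Since $\bH^1_{\FF(\kn\ka)}(K,\bT)\neq0$, Corollary \ref{cor:patched:cheb} with $s=t=1$ (using $p\geq 5>2$) gives a Kolyvagin ultraprime $\ku\nmid\kn\ka$ such that $\loc_\ku(r)\neq 0$ and $\loc_\ku:\bH^1_{\FF(\kn\ka)}(K,\bT)\to\bH^1_\f(K_\ku,\bT)$ is surjective. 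To ensure $\ku$ avoids the finitely many ultraprimes already used, we add to the finite list of classes one class whose localisation is nonzero only off those ultraprimes. Alternatively, we note that Proposition \ref{prop:patched:cheb} produces ultraprimes from sequences of primes that can be taken to avoid any finitely many given sequences.

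Lemma \ref{lem:patched:tr_res}, applied to the structure $\FF(\kn\ka)$, then gives
\[\bH^1_{(\FF^*)(\kn\ka\ku)}(K,\bT^*)=\bH^1_{(\FF^*)(\kn\ka)_\ku}(K,\bT^*)\subseteq \bH^1_{(\FF^*)(\kn\ka)}(K,\bT^*).\]
The class $r$ lies in the larger group but not in the smaller one, so the $\m$-torsion dimension strictly drops; this uses Proposition \ref{prop:patched:selmer_torsion} to identify $\m$-torsion with the Selmer group of $\bT^*[\m]$. After finitely many steps the $\m$-torsion vanishes. By Nakayama's lemma for the Pontryagin dual, as in Proposition \ref{prop:cheb:torsion_red}, the whole dual Selmer group then vanishes. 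Hence $\kc:=\kn\ka$ is a core vertex divisible by $\kn$.

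The main obstacle is the surjectivity of $\loc_\ku$ on the transverse-modified Selmer group, which requires a class not divisible by $\m$ in the ambient cohomology. This is exactly where Assumption \ref{ass:H2} (via Assumption \ref{ass:patched:quotient} and Proposition \ref{prop:cheb:non-div_elt}) enters. The secondary technical point is keeping the new ultraprime coprime to $\kn\ka$.
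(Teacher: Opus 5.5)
Your proposal is correct and is essentially the paper's proof. The paper deduces the statement directly from Corollary \ref{cor:patched:dual_tr_vanish}, applied to $\FF(\kn)$, with Proposition \ref{prop:iwasawa:ass_equiv} supplying Assumption \ref{ass:patched:quotient}; you have unfolded that corollary's argument, using only positivity of the core rank rather than $\chi(\FF)=1$, and you run the induction more carefully than the written proof of that corollary by choosing at each step a nonzero class in the \emph{current} $\m$-torsion (so its dimension strictly drops) instead of fixing a basis of the original $\m$-torsion at the outset.
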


\begin{proof}
It is an immediate consequence of Corollary \ref{cor:patched:dual_tr_vanish}, which is applicable to this case due to Proposition \ref{prop:iwasawa:ass_equiv}.
\end{proof}

\begin{remark}
Proposition \ref{prop:core_vertex_existence} is the only place where Assumption \ref{ass:H2} is needed. However, the existence of core vertices is a central step in the control of Kolyvagin systems, as will be shown in Theorem \ref{th:kol_core_projection} below.
\end{remark}

\begin{remark}
There is no analogue of Remark \ref{rem:weak_core_div} for core vertices, since there might exist $\kn\in \NN$ and $\ku\in \PP$ such that $\bH^1_{(\FF^*)(\kn)}(K,\bT^*)$ vanishes but $\bH^1_{(\FF^*)(\kn\ku)}(K,\bT^*)$ does not.
\label{rem:core_div}
\end{remark}

\subsection{Assumption on the second cohomology}
\label{sec:H2}

In this section, we reinterpret Assumption \ref{ass:H2} on the lack finite $\Lambda$-submodules of $\bH^2(K^\kn/K,\bT)$ in terms of the local cohomology of bad primes.

\begin{namedass}{(Tam)}
For every $\ku\in \Sigma_{\FF}$, $\bH^0(K_\ku,\bT^*)^\vee$ contains no finite $\Lambda$-submodule.
\label{ass:iwasawa:local_no_fin_sub}
\end{namedass}

\begin{remark}
By Proposition \ref{prop:patched:local_duality}, Assumption \ref{ass:iwasawa:local_no_fin_sub} is equivalent to the fact that $\bH^2(K_\ku,\bT)$ does not containing any finite submodules for any $\ku\in \Sigma_{\FF}$.
\label{rem:iwasawa:local_duality_finsub}
\end{remark}

\begin{lemma}
Under Assumption \ref{ass:patched:basic}, Assumption \ref{ass:iwasawa:local_no_fin_sub} implies Assumption \ref{ass:H2}.
\label{lem:H2}
\end{lemma}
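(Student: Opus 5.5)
We need to show that, under \ref{pTirred}--\ref{pTcoh} and \ref{ass:iwasawa:local_no_fin_sub}, there is some $\kn$ divisible by all primes of $\Sigma_\FF$ with $\bH^2(K^{\kn}/K,\bT)$ free of finite $\Lambda$-submodules. The natural choice is $\kn$ equal to the product of the ultraprimes in $\Sigma_\FF$; this set contains the archimedean, $p$-adic and ramified places, so Proposition \ref{prop:patched:poitou-tate} applies.

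The plan is to read off $\bH^2(K^{\kn}/K,\bT)$ from the tail of the Poitou--Tate sequence:
\[\bH^1(K^{\kn}/K,\bT^*)^\vee\to\bH^2(K^{\kn}/K,\bT)\to\prod_{\ku\in\Sigma_\FF}\bH^2(K_\ku,\bT)\to\bH^0(K^{\kn}/K,\bT^*)^\vee\to 0.\]
By Remark \ref{rem:patched:H0vanish} the last term vanishes. Write $\Sha^2$ for the kernel of the localisation map in the middle. Then
\[0\to\Sha^2\to\bH^2(K^{\kn}/K,\bT)\to\prod_{\ku}\bH^2(K_\ku,\bT)\to 0.\]
Let $M\subset\bH^2(K^{\kn}/K,\bT)$ be a finite submodule. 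Its image in $\prod_\ku\bH^2(K_\ku,\bT)$ is finite, so it vanishes by Remark \ref{rem:iwasawa:local_duality_finsub}. Hence $M\subset\Sha^2$, and it suffices to show that $\Sha^2$ has no nonzero finite submodule. (Actually $\Sha^2$ need not vanish, so we cannot just kill it.)

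To control $\Sha^2$, I would use that it is dual to $\Sha^1(\bT^*)$, the kernel of $\bH^1(K^{\kn}/K,\bT^*)\to\prod\bH^1(K_\ku,\bT^*)$, via the exactness of the dual portion of the sequence. A finite submodule of $\Sha^2$ means $\Sha^2[\m]\neq0$ in a way that persists, so a cleaner route is the following. Show that $\bH^2(K^{\kn}/K,\bT)[\m]$ injects into $\bH^2$ with coefficients mod $\m$, compatibly with the short exact sequence. Alternatively, one can argue directly, using the equivalent formulation of Proposition \ref{prop:iwasawa:ass_equiv}: it suffices to show that
\[\bH^1(K^{\kn}/K,\bT)\otimes\Lambda/\m\to\bH^1(K^{\kn}/K,\bT/\m)\]
is injective.

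The concrete approach I would try first is to pick a prime $\pi\in\m\setminus\m^2$ with $\bT/\pi$ again free over $\Lambda/\pi\cong\Z_p$ (say $\pi=X$). We then need $\bH^2(K^{\kn}/K,\bT)[X]=0$ up to the contribution of finite modules, and we study $\bH^2(K^{\kn}/K,\bT)[X]$ via
\[\bH^1(\bT/X)\twoheadrightarrow\bH^2(\bT)[X].\]
A finite submodule $M$ of a finitely generated $\Lambda$-module $N$ with $N[X]$ small is detected by $N[X]\neq0$ for a suitable choice of height-one prime $X-a$ avoiding the support of $N_{\tors}$ other than $\m$. I would choose such an $X-a$. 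Then $N[X-a]$ equals $M[X-a]$ up to finite index, and comparing the localisations of the exact sequence for $\bT/(X-a)$, a $\Z_p$-representation, shows that the $\Sha^2$ part is $\Z_p$-torsion-free. This uses Remark \ref{rem:patched:H0vanish} to get $\bH^1(K^\kn/K,\bT/(X-a))[p]=0$, together with a local analogue of the same statement.

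The main obstacle is this last step: translating the absence of finite submodules locally, plus the vanishing of $\bH^0$ globally, into the absence of finite submodules of $\Sha^2$. The whole argument hinges on exactness of patched Poitou--Tate in the profinite limit and on the choice of an auxiliary height-one prime.
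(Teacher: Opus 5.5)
Your reduction is correct as far as it goes. With $\kn$ the product of $\Sigma_\FF$, the tail of Poitou--Tate together with Remark \ref{rem:patched:H0vanish} gives $0\to\Sha^2\to\bH^2(K^{\kn}/K,\bT)\to\prod_{\ku}\bH^2(K_\ku,\bT)\to 0$, and by Remark \ref{rem:iwasawa:local_duality_finsub} every finite submodule lies in $\Sha^2$. The step after that is a genuine gap, and the height-one-prime sketch does not close it.

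For $f=X-a$ chosen as you describe, $\bH^2(K^{\kn}/K,\bT)[f]$ is a finite quotient of $\bH^1(K^{\kn}/K,\bT/f\bT)$. Knowing that the latter is $\Z_p$-torsion-free says nothing about its finite quotients, so nothing forces $\Sha^2[f]=0$. For this fixed $\kn$, $\Sha^2\cong\Sha^1(\bT^*)^\vee$ is the dual of the strict (fine) Selmer group. Excluding finite submodules there is essentially Theorem \ref{th:iwasawa:nofinsub} for the structure relaxed at all of $\Sigma_\FF$, and the paper proves that theorem using this very lemma. It does not follow from local information plus $\bH^0$-vanishing.

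The missing idea is that (H2) is \emph{existential} in $\kn$. You may enlarge the set of ultraprimes, and that is exactly how $\Sha^2$ is killed, contrary to your remark that it cannot be.

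\begin{itemize}
\item Let $\kn$ be the product of the primes of $\Sigma_\FF$ and the prime divisors of a weak core vertex $\kc$. Such a $\kc$ exists by Proposition \ref{prop:weak_core_vertex_existence}. That proof only uses the part of Proposition \ref{prop:patched:cheb} concerning nonzero dual classes $d_j$, which does not need (Proj), so there is no circularity.
\item A class in $\Sha^1(\bT^*)$ for the enlarged set is unramified outside it, trivial at $\Sigma_\FF$, and trivial at each $\ku\mid\kc$. Hence it lies in $\bH^1_{(\FF^*)_\kc}(K,\bT^*)=0$.
\item Dually, $\bigoplus_{\ku}\bH^1(K_\ku,\bT)\to\bH^1(K^{\kn}/K,\bT^*)^\vee$ is surjective. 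Poitou--Tate then gives an injection $\bH^2(K^{\kn}/K,\bT)\hookrightarrow\bigoplus_{\ku}\bH^0(K_\ku,\bT^*)^\vee$.
\item The summands at $\Sigma_\FF$ have no finite submodules by (Tam).
\item At a Kolyvagin ultraprime, $\Frob_\ku$ fixes $\mu_{p^\infty}$, so $\bH^0(K_\ku,\bT^*)^\vee\cong\bT/(\Frob_\ku-1)\bT\cong\Lambda$.
\end{itemize}

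Hence $\bH^2(K^{\kn}/K,\bT)$ has no finite submodules, which is (H2).
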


\begin{proof}
Let $\kn$ be a weak core vertex for $\FF$, which exists by Proposition \ref{prop:weak_core_vertex_existence}, and let $\Sigma$ be the set consisting of the primes in $\Sigma_\FF$ and the prime divisors of $\kn$. Since the kernel of the map
\begin{equation}
\bH^1(K^\Sigma/K,\bT^*)\to \bigoplus_{\ku\in \Sigma}\bH^1(K_\ku,\bT)
\label{eq:H2_inj}
\end{equation}
is contained in $\bH^1_{(\FF^*)_\kn}(K,\bT)$, this kernel is trivial since $\kn$ is a weak core vertex.

By Proposition \ref{prop:patched:poitou-tate}, there is an exact sequence
\begin{center}
\begin{tikzpicture}[descr/.style={fill=white,inner sep=1.5pt}]
        \matrix (m) [
            matrix of math nodes,
            row sep=3.5em,
            column sep=1em,
            text height=1.5ex, text depth=0.25ex
        ]
        {  \displaystyle\bigoplus_{\ku\in \Sigma}\bH^1(K_\ku,\bT) & \bH^1(K^{\Sigma}/K,\bT^*)^\vee &
            \bH^2(K^\Sigma/K,\bT) & \displaystyle\bigoplus_{\ku\in \Sigma}\bH^0(K_\ku,\bT^*)^\vee.  \\
        };

        \path[overlay,->, font=\scriptsize,>=latex]
        (m-1-1) edge (m-1-2)
        (m-1-2) edge (m-1-3) 
        (m-1-3) edge (m-1-4)
        ;    
\end{tikzpicture}
\end{center}
Since $\kn$ is a weak core vertex, the first map is surjective since it is the dual of the map in \eqref{eq:H2_inj}. Hence the proof of the lemma is reduced to showing that $\bH^0(K_\ku,\bT^*)^\vee$ contains no finite non-trivial submodules for any $\ku\in \Sigma$. When $\ku\in \Sigma_\FF$, this is an assumption, and when $\ku\mid \kn$, then $\ku$ is a Kolyvagin prime, so 
\[\bH^0(K_\ku,\bT^*)^\vee\cong \Lambda\]
also contains no finite non-zero submodules. Then Assumption \ref{ass:H2} holds by considering the square free product of primes in $\Sigma$.
\end{proof}

\begin{remark}
When $\ell$ is a constant ultraprime, one condition implying that $\bH^0(K_\ell,\bT^*)^\vee$ contains no-finite submodules is the $p$-divisibility of the dual group $\bH^0(K_\ell,\bT^*)$.

Suppose that $\bT$ comes from a $p$-adic representation $T$ by Shapiro's lemma, meaning that 
\[\bT=T\otimes_{\Z_p}\Z_p[[\Gal(K_\infty/K)]],\]
where $K_\infty/K$ is a $\Z_p$-extension at which no-finite prime splits completely. Then Shapiro's lemma gives an isomorphism
\[\bH^0(K_\ell,\bT^*)=\bigoplus_{v\mid \ell}\bH^0\Bigl((K_\infty)_v,T^*\Bigr),\]
where the sum is taken over all the primes of $K_\infty$ above $\ell$. 

The divisibility of $\bH^0(K_\ell,\bT^*)$ can be guaranteed by assuming the $p$-primality of the Tamagawa numbers associated with a Galois representation. The $p$-part of the Tamagawa number at $\ell$ is defined as the order of the torsion subgroup of $\bH^0\Bigl(K_\ell^{\ur},T\Bigr)$. Therefore, if the Tamagawa number is prime to $p$, then $\bH^0\Bigl(K_\ell^{\ur},T\Bigr)$ is a free $\Z_p$-module, so $\bH^0\Bigl(K_\ell^{\ur},T^*\Bigr)$ is divisible.

Since $\ell$ does not split completely at $K_\infty/K$, then $(K_\infty)_v/K_\ell$ is the unramified $\Z_p$-extension of $K_\ell$. Hence the profinite degree $K_\ell^{\ur}/(K_\infty)_v$ is prime to $p$, so its action on 
\[\Aut\biggl(\bH^0\Bigl(K_\ell^{\ur},T^*\Bigr)^\vee\biggr)\cong \GL_d(\Z_p),\]
where $d\in \Z_{\geq 0}$ is at most the rank of $\bT$, has finite image. Hence the eigenvalues of all the matrices in the image are roots of unity, implying that $\bH^0\Bigl((K_\infty)_v,T^*\Bigr)$ is also a divisible group.

\label{rem:iwasawa:local_nofinsub_divisible}
\end{remark}

\section{Algebraic preliminaries}
\label{sec:algebra}

In this section, we outline the algebraic preliminaries required for studying the structure of Iwasawa modules using Kolyvagin systems. The contents in this section include an introduction to the theory of Fitting ideals and the construction of the rank reduction map of exterior powers.

\subsection{Fitting ideals}

In this section, we define the Fitting ideals of modules and outline some results about their description of the structure of Iwasawa modules. For a more complete treatment, we refer the reader to \cite[\textsection 2.2]{BurnsBullach} and \cite[\textsection 2]{RoncheLongoVigni}.

\begin{definition}
Let $R$ be a ring and let $M$ be a finitely presented $R$-module. Choose a resolution
\[\xymatrix{R^n\ar[r]^{A} & R^m\ar[r] & M \ar[r] & 0,}\]
where the map $R^n\to R^m$ is represented by the matrix $A$. For every $i\geq 0$, we define the $i^{\textrm{th}}$ Fitting ideal $\Fitt^i_R(M)$ as the ideal of $R$ generated by the minors of size $(m-i)$ of $A$. By convention, $\Fitt^i_R(M)$ is equal to $R$ when $i\geq m$.
\label{def:fitting}
\end{definition}

\begin{remark}
The $0^{\textrm{th}}$ Fitting ideal coincides with the image of the following map, induced by the matrix $A$:
\[\Fitt_R^0(M)=\textrm{Im}\left(\bigwedge^{m} R^n\to \bigwedge^{m} R^m\cong R\right).\]
\label{rem:exterior_power_fitting}
\end{remark}

It can be shown that Fitting ideals are well defined.
\begin{proposition}(\cite[Corollary 20.4]{Eisenbud})
The Fitting ideals $\Fitt^i_R(M)$ are independent of the chosen resolution.
\end{proposition}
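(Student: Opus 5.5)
The statement is the independence of the Fitting ideals $\Fitt^i_R(M)$ from the chosen finite presentation. The plan is to compare any two presentations through elementary operations that leave the ideals of minors unchanged. Fix two presentations
\[R^n\xrightarrow{A}R^m\to M\to 0,\qquad R^{n'}\xrightarrow{A'}R^{m'}\to M\to 0.\]
Write $I_k(A)$ for the ideal generated by the $k\times k$ minors of $A$, so that $\Fitt^i_R(M)=I_{m-i}(A)$. The goal is to show $I_{m-i}(A)=I_{m'-i}(A')$ for all $i$.

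\textbf{Step 1: invariance for a fixed set of generators.} First I would fix the generators $R^m\twoheadrightarrow M$ and vary only the relations. The ideal $I_k(A)$ depends only on the submodule $N=\Im(A)\subseteq R^m$. Indeed, by multilinearity of determinants, each $k\times k$ minor of any finite family of elements of $N$ lies in the ideal of $k\times k$ minors of any generating family of $N$. Running this in both directions gives equality. So for a fixed surjection $R^m\to M$, the ideal $I_k$ is determined by the kernel. Appending redundant relations, or multiplying $A$ on either side by invertible matrices, does not change it.

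\textbf{Step 2: compare two generating sets.} Given two surjections $R^m\to M$ and $R^{m'}\to M$, I would pass to the combined surjection $R^{m+m'}\to M$. It then suffices to treat the case of adding one generator $e$ whose image in $M$ is already $\sum a_j \bar e_j$. The new relation module is generated by the old relations together with $e-\sum a_j e_j$. After an invertible change of basis, $e\mapsto e-\sum a_je_j$, the presentation matrix becomes the block matrix $\begin{pmatrix}A&0\\0&1\end{pmatrix}$.

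The remaining check is that the $(k+1)$-minors of this block matrix generate exactly $I_k(A)$. A minor using the last row and last column equals the corresponding $k$-minor of $A$. A minor avoiding the last row but using the last column vanishes, and likewise one using the last row but not the last column. A minor avoiding both is a $(k+1)$-minor of $A$, which lies in $I_k(A)$ by Laplace expansion. Hence the ideal of $(m+1-i)$-minors of the new matrix is $I_{m-i}(A)$. The convention $\Fitt^i=R$ for $i\ge m$ is compatible with this, since the block matrix has a unit entry.

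\textbf{Main obstacle.} The main technical point is Step 1. There one must justify that minors of arbitrary $R$-linear combinations of columns lie in the ideal generated by minors of the original columns, including when a relation set is infinite. For the latter, I would reduce to finitely many relations, using that only finitely many columns enter any given minor. Once this is done, induction on the number of added generators closes the argument.
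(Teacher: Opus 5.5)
Your argument is correct. The paper gives no proof of its own and only cites Eisenbud, and what you wrote is the standard argument for this fact: ideals of minors depend only on the relation module and are unchanged by changes of basis, and two generating sets are compared through the combined one, where each added generator amounts to bordering the matrix by a $1$. The one point you assert without proof is invariance under left multiplication by an invertible matrix, which your change of basis in Step 2 genuinely needs, since Step 1 only covers operations preserving $\Im(A)$; it follows from the same multilinearity argument applied to rows, or from Cauchy--Binet.
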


The following results can be useful.
\begin{proposition}
Let $R\to S$ be a ring-homomorphism, let $M$ be a finitely presented $R$-module and let $i\in \Z_{\geq 0}$. Then $\Fitt^i_S\Bigl(M\otimes_R S\Bigr)$ is the ideal generated by the image of $\Fitt^i_R(M)$ in $S$.
\label{prop:fitting_tensor}
\end{proposition}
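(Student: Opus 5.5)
The plan is to tensor a finite presentation of $M$ with $S$ and read off the Fitting ideals directly from Definition \ref{def:fitting}. Fix a presentation
\[\xymatrix{R^n\ar[r]^{A} & R^m\ar[r] & M \ar[r] & 0}\]
of the finitely presented module $M$. The functor $-\otimes_R S$ is right exact, so applying it yields an exact sequence
\[\xymatrix{S^n\ar[r]^{\phi(A)} & S^m\ar[r] & M\otimes_R S \ar[r] & 0.}\]
Here $\phi:R\to S$ is the given homomorphism and $\phi(A)$ is the matrix obtained by applying $\phi$ entrywise. In particular $M\otimes_R S$ is finitely presented over $S$, so its Fitting ideals are defined. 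Since the Fitting ideals do not depend on the presentation (\cite[Corollary 20.4]{Eisenbud}), we may compute $\Fitt^i_S(M\otimes_R S)$ using exactly this presentation.

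Next we compare minors. For $i<m$, every $(m-i)\times(m-i)$ minor of $\phi(A)$ is a polynomial with integer coefficients in the entries of $\phi(A)$. Because $\phi$ is a ring homomorphism, this minor equals $\phi$ applied to the corresponding minor of $A$. The minors of $\phi(A)$ are therefore exactly the images of the minors of $A$.

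It follows that the ideal of $S$ generated by the minors of $\phi(A)$ is the ideal generated by $\phi$ of a generating set of $\Fitt^i_R(M)$. This is the same as the ideal generated by $\phi(\Fitt^i_R(M))$: any element of $\Fitt^i_R(M)$ is an $R$-linear combination of minors, and $\phi$ sends it to an $S$-linear combination of their images.

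For $i\geq m$, both ideals equal the unit ideal by the convention in Definition \ref{def:fitting}, since $\phi(1)=1$. The only point needing care is that the convention is applied with the same $m$ on both sides, and this holds because both presentations have $m$ generators. I do not expect a real obstacle here; the one thing to justify is that the tensored sequence is still a presentation, which is precisely right exactness.
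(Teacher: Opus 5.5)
Your proposal is correct and follows the paper's proof: tensor a presentation of $M$ with $S$, use right exactness to obtain a presentation of $M\otimes_R S$, and observe that the minors of $\phi(A)$ are the images of the minors of $A$. The paper only says the result "follows from the definition", so your explicit handling of the $i\geq m$ convention and of independence of the presentation supplies details it leaves implicit.
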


\begin{proof}
Consider a resolution of $M$ given by 
\[\xymatrix{R^n\ar[r]^{A} & R^m\ar[r] & M \ar[r] & 0.}\]
Since the tensor product is a right exact functor, we can obtain a resolution of $M\otimes S$:
\[\xymatrix{S^n\ar[r]^{A} & S^m\ar[r] & M\otimes_R S \ar[r] & 0.}\]
Then the proposition follows from the definition of Fitting ideals.
\end{proof}

\begin{proposition}
Assume that $R$ is a domain and $M$ is a torsion $R$-module of projective dimension at most one. Then $\Fitt^0_R(M)$ is a principal ideal.
    \label{prop:fitting_principal}
\end{proposition}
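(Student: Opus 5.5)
Since $M$ has projective dimension at most one, and $R$ is a domain (so in particular we work with finitely generated modules), I will choose a presentation of the form
\[\xymatrix{0\ar[r] & P\ar[r]^{A} & F\ar[r] & M\ar[r] & 0}\]
with $F$ free of finite rank $m$ and $P$ projective. Over a local ring such as the ones in Assumption \ref{ass:R_lim}, $P$ is automatically free. In general I would first reduce to $P$ free by adding a suitable free module $Q$ with $P\oplus Q$ free to both $P$ and $F$ (Eilenberg swindle on finite level). This does not change $M$ and hence does not change $\Fitt^0_R(M)$.

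\textbf{Step 1: the presentation matrix is square.} Tensor the sequence with the fraction field $\mathrm{Frac}(R)$. Since $M$ is torsion, $M\otimes\mathrm{Frac}(R)=0$. Hence $A\otimes \mathrm{Frac}(R)$ is an isomorphism, and therefore $\rank P=\rank F=m$. So we have a presentation $R^m\xrightarrow{A}R^m\to M\to 0$ with $A$ an $m\times m$ matrix.

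\textbf{Step 2: read off the Fitting ideal.} By Definition \ref{def:fitting}, or equivalently Remark \ref{rem:exterior_power_fitting}, $\Fitt^0_R(M)$ is generated by the $m\times m$ minors of $A$. There is exactly one such minor, so
\[\Fitt^0_R(M)=(\det A),\]
which is principal. Moreover $\det A\neq 0$ because $A$ is injective, so the ideal is invertible. This extra fact is likely useful later in the paper.

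\textbf{Main obstacle.} The only delicate point is ensuring the resolution has free terms of finite rank. For the rings in Assumption \ref{ass:R_lim} this is immediate: they are noetherian local, and finitely generated projectives over a local ring are free, since the module is finitely generated by Proposition \ref{prop:fingen}/Corollary \ref{cor:fingen_dual} in our applications. So I would state the argument for $R$ local, or use the stabilization trick in general.
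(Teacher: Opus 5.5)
Your argument in the local case is the paper's proof. The paper starts from a length-one resolution $0\to R^n\xrightarrow{A}R^m\to M\to 0$ and gets $n=m$ from additivity of rank over the domain $R$, which is your tensoring with $\mathrm{Frac}(R)$. It then reads off $\Fitt^0_R(M)=(\det A)$. The paper writes the resolution with free terms from the start. That is legitimate for $\Lambda$, and for any local $R$ as in Assumption \ref{ass:R_lim}: the kernel of $R^m\to M$ is finitely generated because $M$ is finitely presented, and it is projective because $M$ has projective dimension at most one, hence it is free.

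You should drop the claim that a stabilization trick handles general domains. Adding a free module $Q$ with $P\oplus Q$ free requires $P$ to be stably free, and this can fail. The Eilenberg swindle, for its part, only works with infinitely generated free modules, where there is no determinant.

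In fact the statement is false for arbitrary domains. Let $R$ be a Dedekind domain and $I$ a non-principal ideal. Then $M=R/I$ is torsion of projective dimension one, since $I$ is projective, but $\Fitt^0_R(R/I)=I$ is not principal. Correspondingly, $I$ is not stably free: $\bigwedge^{k+1}(I\oplus R^k)\cong I$, so $I\oplus R^k\cong R^{k+1}$ would force $I\cong R$.

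So the proposition should be read with the extra hypothesis that $M$ admits a finite free resolution of length one. This is automatic over local rings, and it is exactly what the paper's proof tacitly uses.
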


\begin{proof}
Since the projective dimension of $M$ is at most one, it admits a resolution
\[\xymatrix{0\ar[r] & R^n\ar[r]^{A} & R^m\ar[r] & M \ar[r] & 0.}\]
Since $R$ is a domain, the rank is an additive function, so $m=n$. Therefore, $\Fitt^0(M)$ is generated by $\det(A)$.
\end{proof}
 
When $R$ is the Iwasawa algebra $\Z_p[[X]]$, the sequence of Fitting ideals can recover the pseudo-isomorphism class of the module. Moreover, they see a finer structure, since they can be also used to study the pseudo-null submodules.

In order to determine the pseudo-isomorphism structure of a module, we do not need the complete knowledge of the Fitting ideals, but only up to finite index.
\begin{notation}
When $M$ is an Iwasawa module and $I$ and $J$ are two submodules, we say that $I$ and $J$ are pseudo-equal if, for every height one prime ideal $\BB$, then $I_\BB=J_\BB$ in $M_\BB$. We then denote $I\approx J$. Note that this definition also applies to two ideals $I$ and $J$ of the Iwasawa algebra. If, in addition, $I\subset J$, we will write $I\subset_{f.i.} J$.
\label{not:pseudo_equal}
\end{notation}

\begin{proposition}(\cite[Theorem 2.8]{RoncheLongoVigni})
Let $\Lambda=\Z_p[[X]]$ be the Iwasawa algebra, and let $M$ be a finitely generated $\Lambda$-module. Then the Fitting ideals determine $\Lambda$ up to pseudo-isomorphism.
\label{prop:fitting_iwasawa}
\end{proposition}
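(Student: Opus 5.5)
Taking the statement to mean that if $M$ and $N$ are finitely generated $\Lambda$-modules with $\Fitt^i_\Lambda(M)\approx\Fitt^i_\Lambda(N)$ for every $i\geq 0$ then $M$ and $N$ are pseudo-isomorphic, I plan to localise at height one primes and apply the structure theory of modules over a discrete valuation ring. By the structure theorem \cite[Theorem 5.3.8]{NSW}, $M$ is pseudo-isomorphic to
\[E=\Lambda^r\oplus\bigoplus_{j}\Lambda/(f_j^{a_j}),\]
with $f_j$ irreducible (either $p$ or a distinguished polynomial). The pseudo-isomorphism class of $M$ is determined by $r$ together with, for each height one prime $\BB=(f)$, the multiset of exponents $a_j$ with $f_j=f$. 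Since $\Lambda_\BB$ is a discrete valuation ring, this data is exactly the isomorphism class of $M_\BB$ for all $\BB$, together with the localisation at $(0)$, which records the rank. So it suffices to show that the Fitting ideals up to pseudo-equality (Notation \ref{not:pseudo_equal}) determine every $M_\BB$.

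The first step is to record that Fitting ideals commute with localisation. This is Proposition \ref{prop:fitting_tensor} applied to the flat map $\Lambda\to\Lambda_\BB$, giving $\Fitt^i_{\Lambda_\BB}(M_\BB)=\Fitt^i_\Lambda(M)_\BB$. Hence the sequence $(\Fitt^i_\Lambda(M))_{i\geq 0}$ up to $\approx$ determines the sequence $(\Fitt^i_{\Lambda_\BB}(M_\BB))_i$ for every height one $\BB$. The rank needs a little care, since $(0)$ is not height one: I would read it off as the smallest $i$ with $\Fitt^i_\Lambda(M)\neq 0$. Pseudo-equality detects this, because a nonzero ideal is nonzero after localising at any height one prime it does not lie in, and every nonzero ideal avoids all but finitely many height one primes.

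The second step is the computation over a discrete valuation ring $O$ with uniformiser $\pi$. For $N\cong O^r\oplus\bigoplus_{k=1}^s O/\pi^{b_k}$ with $b_1\geq\dots\geq b_s$, a diagonal presentation gives
\[\Fitt^i(N)=\begin{cases}0 & i<r,\\ \bigl(\pi^{b_{i-r+1}+\cdots+b_s}\bigr) & r\leq i\leq r+s,\end{cases}\]
with the empty sum read as zero. The successive quotients of these ideals recover each $b_k$. Combining the two steps shows that $M_\BB\cong N_\BB$ for all $\BB$ and that $M$ and $N$ have the same rank, so both are pseudo-isomorphic to the same elementary module $E$.

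The main obstacle is the bookkeeping needed to pass from the localised data back to the elementary module. Only the finitely many primes $\BB$ containing $\Fitt^r_\Lambda(M)$ carry torsion, and the module is then the direct sum over these primes of its $\BB$-primary parts. I would make this explicit by applying the first step to $E$ itself: the pseudo-isomorphism $M\to E$ induces $M_\BB\cong E_\BB$ at every height one prime, so $\Fitt^i(M)\approx\Fitt^i(E)$.
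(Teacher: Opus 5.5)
Your argument is correct. Note that the paper gives no proof of this statement; it only quotes \cite[Theorem 2.8]{RoncheLongoVigni}. The statement also contains the typo ``$\Lambda$'' for ``$M$'', which you correct silently.

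Your route makes the result self-contained using only tools already in the paper:
\begin{enumerate}
\item Base change of Fitting ideals (Proposition \ref{prop:fitting_tensor}) turns the pseudo-equality $\Fitt^i_\Lambda(M)\approx\Fitt^i_\Lambda(N)$ of Notation \ref{not:pseudo_equal} into the equality $\Fitt^i_{\Lambda_\BB}(M_\BB)=\Fitt^i_{\Lambda_\BB}(N_\BB)$ at every height one $\BB$.
\item The diagonal presentation over the discrete valuation ring $\Lambda_\BB$ recovers the rank and the elementary divisors of $M_\BB$.
\item The elementary module $E$ given by the structure theorem is determined by the isomorphism classes of the $M_\BB$, because the finite kernel and cokernel of $M\to E$ vanish at height one primes.
\end{enumerate}
Your last paragraph also gives the converse for free: Fitting ideals up to $\approx$ are invariants of the class of $E$.

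Two small remarks.
\begin{itemize}
\item The rank step can be shortened. A nonzero ideal of a domain stays nonzero in every localisation. In any case, $r=\rank_{\Lambda_\BB}M_\BB$ is already read off from your DVR computation at any single height one $\BB$.
\item State the conclusion as ``$M$ and $N$ are pseudo-isomorphic to the same elementary module'', as you do at the end, rather than ``$M$ and $N$ are pseudo-isomorphic''. Outside the torsion case pseudo-isomorphism is not symmetric. For example, the inclusion $\m\hookrightarrow\Lambda$ is a pseudo-isomorphism, and $\Fitt^i_\Lambda(\m)\approx\Fitt^i_\Lambda(\Lambda)$ for all $i$. Yet there is no pseudo-isomorphism $\Lambda\to\m$, since for every nonzero $a\in\m$ the cokernel $\m/a\Lambda$ is infinite.
\end{itemize}
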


The pseudo-equality class of the $0^{\textrm{th}}$-Fitting ideal is additive in short exact sequences.
\begin{corollary}(\cite[Theorem 2.8]{RoncheLongoVigni})
Let
\[\xymatrix{0\ar[r] & A\ar[r] & B\ar[r] & C\ar[r] & 0}\]
be a short exact sequence of Iwasawa modules. Then there is a pseudo-equality
\[\Fitt^0_\Lambda(B)\approx\Fitt^0_\Lambda(A)\Fitt^0_\Lambda(C).\]
\label{cor:fitting_add_iwasawa}
\end{corollary}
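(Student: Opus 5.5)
Reading the statement as \[\Fitt^0_\Lambda(B)\approx\Fitt^0_\Lambda(A)\,\Fitt^0_\Lambda(C),\] I would prove it by localizing at height one primes. By Notation \ref{not:pseudo_equal}, pseudo-equality of ideals is tested after localizing at each height one prime $\BB$ of $\Lambda$. Fitting ideals commute with base change (Proposition \ref{prop:fitting_tensor} applied to $\Lambda\to\Lambda_\BB$, which is flat, so $\Fitt^0_\Lambda(M)_\BB=\Fitt^0_{\Lambda_\BB}(M_\BB)$). Localization is exact, so $0\to A_\BB\to B_\BB\to C_\BB\to 0$ stays exact. It therefore suffices to show \[\Fitt^0_{\Lambda_\BB}(B_\BB)=\Fitt^0_{\Lambda_\BB}(A_\BB)\,\Fitt^0_{\Lambda_\BB}(C_\BB)\] for every height one $\BB$.

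Next I use that $\Lambda$ is a regular (hence normal) domain of dimension two. So $\Lambda_\BB$ is a discrete valuation ring, in particular a PID. I split into cases according to torsion.

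\emph{First case: one of $A_\BB$, $C_\BB$ has positive rank.} Then $B_\BB$ has positive rank too, by additivity of rank. A finitely generated module of positive rank over a domain has zero $0^{\textrm{th}}$ Fitting ideal, since any presentation $R^n\to R^m$ has rank $<m$ and so all $m\times m$ minors vanish. Both sides are then $0$.

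\emph{Second case: all three modules are torsion.} Over the DVR $\Lambda_\BB$ each is a finite direct sum of cyclic modules $\Lambda_\BB/(\varpi^{a_j})$. Hence $\Fitt^0$ equals $(\varpi^{\textrm{length}})$; I would use Proposition \ref{prop:fitting_principal}, or directly the diagonal presentation. Length is additive in short exact sequences, and this gives the multiplicativity.

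The main obstacle is only bookkeeping: checking that the definition of pseudo-equality in Notation \ref{not:pseudo_equal} really amounts to equality of localizations at all height one primes, including the prime $(p)$. It also requires compatibility of localization with Fitting ideals, which holds because localization is flat and Fitting ideals are computed from a finite presentation. Beyond that the argument is the standard multiplicativity of the characteristic ideal, here phrased via $\Fitt^0$ up to pseudo-equality.
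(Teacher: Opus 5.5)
Your argument is correct. The paper gives no proof of its own here: it cites \cite[Theorem 2.8]{RoncheLongoVigni}, the same result it invokes for Proposition \ref{prop:fitting_iwasawa}. So there the statement is obtained from the pseudo-isomorphism theory of finitely generated $\Lambda$-modules, in which Fitting ideals classify modules up to pseudo-isomorphism and $\Fitt^0$ behaves like the characteristic ideal.

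Your route is different and more direct. You localize at each height one prime $\BB$ and reduce to additivity of length over the discrete valuation ring $\Lambda_\BB$. This is the mechanism behind multiplicativity of characteristic ideals, but you never need the structure theorem for $\Lambda$-modules. You only use three facts:
\begin{itemize}
\item Fitting ideals commute with base change (Proposition \ref{prop:fitting_tensor});
\item $(IJ)_\BB=I_\BB J_\BB$;
\item $\Lambda_\BB$ is a DVR.
\end{itemize}
What this buys is self-containedness and generality. The same proof works verbatim over any noetherian normal domain, for instance any ring $R$ as in Assumption \ref{ass:R_lim}. The citation, by contrast, ties the statement to $\Lambda=\Z_p[[X]]$ and to the finer classification.

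Two small remarks:
\begin{itemize}
\item What you flag as the main obstacle is not one. Notation \ref{not:pseudo_equal} \emph{defines} pseudo-equality as equality after localizing at every height one prime, $(p)$ included.
\item You implicitly need $A$, $B$, $C$ finitely generated for $\Fitt^0$ to be defined. This is the standing convention, and since $\Lambda$ is noetherian it gives finite presentation.
\end{itemize}
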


Fitting ideals can determine more than the structure of the module up to pseudo-isomorphism. In fact, they can determine whether a torsion Iwasawa module contains finite submodules.
\begin{proposition}
Let $M$ be a torsion Iwasawa module. Then $\Fitt^0_\Lambda(M)$ is a principal ideal if and only if $M$ contains no finite modules.
\label{prop:fitting_ppal_finsub}
\end{proposition}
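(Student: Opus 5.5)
We need to prove: for a torsion finitely generated $\Lambda$-module $M$, $\Fitt^0_\Lambda(M)$ is principal iff $M$ has no nonzero finite submodule. The key fact is that over the two-dimensional regular local ring $\Lambda$, having no nonzero finite submodule is equivalent to having projective dimension at most one. By Auslander--Buchsbaum, $\mathrm{pd}(M)+\mathrm{depth}(M)=2$. The maximal ideal $\m$ is an associated prime of $M$ exactly when $M$ has a nonzero finite submodule, since the finite submodules of $M$ are the $\m$-power torsion. So $M$ has no finite submodule iff $\mathrm{depth}(M)\geq 1$, iff $\mathrm{pd}(M)\leq 1$.

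\emph{The forward direction} (no finite submodules implies principal) is now immediate. In that case $\mathrm{pd}(M)\leq 1$, and Proposition~\ref{prop:fitting_principal} gives that $\Fitt^0_\Lambda(M)$ is principal, generated by $\det(A)$.

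\emph{For the converse}, let $M_{\mathrm{fin}}$ be the maximal finite submodule and set $M'=M/M_{\mathrm{fin}}$. Then $M'$ has no finite submodules, so $\Fitt^0(M')=(f)$ with $f$ a generator of the characteristic ideal. Choose a length-one resolution $0\to\Lambda^m\xrightarrow{A}\Lambda^m\to M'\to0$ with $\det A=f$.
\begin{itemize}
\item Lift this to a presentation of $M$ by adjoining generators and relations for $M_{\mathrm{fin}}$. This yields $\Fitt^0(M)\subseteq \Fitt^0(M')=(f)$.
\item Pseudo-equality (Corollary~\ref{cor:fitting_add_iwasawa}) gives $\Fitt^0(M)\approx (f)\cdot\Fitt^0(M_{\mathrm{fin}})\approx(f)$, because $\Fitt^0(M_{\mathrm{fin}})$ is $\m$-primary (its radical is $\m$, since its support is $\{\m\}$).
\end{itemize}
Now suppose $\Fitt^0(M)=(g)$ is principal. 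From $(g)\subseteq(f)$ and $(g)\approx(f)$ we get $g=fu$ with $(u)$ of finite index in $\Lambda$. A principal ideal of finite index in the two-dimensional domain $\Lambda$ must be the unit ideal, so $u$ is a unit. Therefore
\[\Fitt^0(M)=\Fitt^0(M')=(f).\]
It remains to show that this equality forces $M_{\mathrm{fin}}=0$; this is the main obstacle.

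To handle it, I would use the standard identity (e.g.\ Cornacchia--Greither, or via Proposition~\ref{prop:fitting_tensor} applied to the resolution):
\[\Fitt^0(M)=\Fitt^0(M')\cdot\Fitt^0\bigl(\Ext^1_\Lambda(M_{\mathrm{fin}},\Lambda)^{\ast}\bigr),\]
or, more directly, the known formula $\Fitt^0(M)=(f)\cdot\Fitt^0(E)$ for a suitable finite module $E$ with the same length as $M_{\mathrm{fin}}$. An alternative route, which avoids relying on that identity, is to compare presentations. Write $M=\Lambda^n/N$ and $M'=\Lambda^n/N'$ with $N\subseteq N'$ and $N'/N\cong M_{\mathrm{fin}}$. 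Here $N'$ is free of rank $n$ (as $\mathrm{pd}\,M'\leq 1$), and $N$ is a submodule of $N'$ of finite colength.
\begin{itemize}
\item The ideal $\Fitt^0(M)$ is generated by the determinants of $n$-tuples of elements of $N$. Each such determinant equals $f$ times the determinant of the corresponding $n$-tuple, expressed in a basis of $N'$.
\item Hence $\Fitt^0(M)=(f)\cdot\Fitt^0(N'/N)$. Since $\Fitt^0(N'/N)$ is $\m$-primary, this ideal equals $(f)$ only if $\Fitt^0(N'/N)=\Lambda$.
\item That in turn forces $N'/N=0$, i.e.\ $M_{\mathrm{fin}}=0$.
\end{itemize}
I expect the determinant-factorisation step to be the main technical point, and would check it carefully using $N'\cong\Lambda^n$.
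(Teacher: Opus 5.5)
Your proof is correct. The direction ``no finite submodule $\Rightarrow$ principal'' is the paper's argument: you replace its citation of NSW for ``no finite submodule $\Leftrightarrow \mathrm{pd}_\Lambda M\le 1$'' by Auslander--Buchsbaum, then apply Proposition~\ref{prop:fitting_principal}.

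For the converse the two routes differ.
\begin{itemize}
\item The paper shows directly that a principal $\Fitt^0_\Lambda(M)$ forces $\mathrm{pd}_\Lambda M\le 1$, citing the Stacks Project, and then reuses the equivalence with the absence of finite submodules. The content of that implication is a Cramer's-rule argument valid over any local ring. If $\Fitt^0(M)$ is generated by a nonzerodivisor, some maximal minor $\Delta=\det B_0$ of a presentation matrix already generates it. Every relation column $b$ then satisfies $\operatorname{adj}(B_0)\,b\in\Delta\Lambda^n$, so $b$ lies in the column span of $B_0$, and $M$ has a square presentation.
\item In the converse you never need $\mathrm{pd}_\Lambda M$. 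You use the square presentation of $M'=M/M_{\mathrm{fin}}$ to prove the exact factorisation $\Fitt^0_\Lambda(M)=\Fitt^0_\Lambda(M')\cdot\Fitt^0_\Lambda(M_{\mathrm{fin}})$. This is multiplicativity of $\Fitt^0$ on an extension whose quotient has a square presentation. You then conclude because a principal $\m$-primary ideal of the two-dimensional local ring $\Lambda$ cannot exist.
\end{itemize}
The paper's route is shorter and dimension-free. Yours is elementary and self-contained, genuinely uses $\dim\Lambda=2$, and yields more: the precise defect $\Fitt^0_\Lambda(M_{\mathrm{fin}})$ measuring how far $\Fitt^0_\Lambda(M)$ is from being principal.

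Two remarks on the write-up.
\begin{itemize}
\item The identity you first quote, involving $\Ext^1_\Lambda(M_{\mathrm{fin}},\Lambda)$, is false as stated. Since $\operatorname{depth}\Lambda=2$, we have $\Ext^1_\Lambda(F,\Lambda)=0$ for every finite $F$, so the identity would give $\Fitt^0(M)=\Fitt^0(M')$. That fails for $M=\Lambda/(p)\oplus\Lambda/\m$, where $\Fitt^0(M)=(p)\m\neq(p)$. Drop it; your determinant computation proves the correct version.
\item The pseudo-equality step establishing $\Fitt^0(M)=(f)$ is unnecessary. Once $\Fitt^0(M)=f\cdot J$ with $f\neq 0$ and $J=\Fitt^0(M_{\mathrm{fin}})$, principality of $\Fitt^0(M)$ already forces $J$ to be principal: if $fJ=(fj_0)$ with $j_0\in J$, then $J=(j_0)$. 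Now $J$ is either $\Lambda$ or $\m$-primary, and Krull's principal ideal theorem rules out the latter for a principal ideal. So $J=\Lambda$, and hence $M_{\mathrm{fin}}=0$.
\end{itemize}
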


\begin{proof}
By \cite[Proposition 5.5.3.(iv)]{NSW}, $M$ contains no finite submodules if and only if its projective dimension is at most one. By Proposition \ref{prop:fitting_principal} and \cite[Lemma 15.8.10]{stacks-project}, that equivalent to $\Fitt^0_\Lambda(M)$ being a principal ideal.
\end{proof}

\subsection{Rank reduction of exterior powers}
\label{sec:algebra:rank_reduction}

The goal of this section is to introduce a rank reduction map between exterior powers of Iwasawa modules. It will be based on the construction of \cite[\textsection 2]{BurnsBullach}.
\begin{definition}
The \emph{exterior bidual} of rank $r$ of a module $M$ is defined as
\[\bigcap^{r}_\Lambda M=\Hom\left(\bigwedge^r M^*,R\right),\]
where $\displaystyle\bigwedge^r M^*$ denotes the exterior power of $M$.
\end{definition}

\begin{proposition}(\cite[Lemma 2.17.]{BurnsBullach})
Let $s$ be a natural number and consider the exact sequence of $\Lambda$-modules
\[\xymatrix{0\ar[r] & M\ar[r]^\mu & N\ar[r]^\varepsilon & \Lambda^s \ar[r]&C\ar[r] & 0.}\]
For every $r\geq s$, there is a canonical map
\[\phi_{N,M}:\ \bigcap^{r} N \to \bigcap^{r-s} M.\]
\label{prop:bidual_map}
\end{proposition}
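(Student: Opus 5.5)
The map $\phi_{N,M}$ will be built as the composite of $r-s$ contractions by elements of $N^*$ with the restriction along $\mu$. The canonicity of the construction is the real content, and it should come from the fact that $\varepsilon$ lands in a free module of rank exactly $s$.

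\textbf{Step 1: contraction by $\varepsilon$.} Write $\varepsilon=(\varepsilon_1,\ldots,\varepsilon_s)$, where $\varepsilon_j\in N^*=\Hom_\Lambda(N,\Lambda)$ are the coordinates with respect to the standard basis of $\Lambda^s$. Put
\[\varepsilon_\wedge:=\varepsilon_1\wedge\cdots\wedge\varepsilon_s\in \bigwedge^s N^*.\]
For $\Phi\in\bigcap^r N=\Hom(\bigwedge^r N^*,\Lambda)$, define $\Phi'\in\Hom(\bigwedge^{r-s}N^*,\Lambda)$ by
\[\Phi'(f):=\Phi(\varepsilon_\wedge\wedge f).\]
This is a well-defined homomorphism, and it is alternating in $f$ because the wedge product is. Changing the basis of $\Lambda^s$ by some $g\in\GL_s(\Lambda)$ multiplies $\varepsilon_\wedge$ by the unit $\det g$. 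So the map is canonical once a trivialisation $\bigwedge^s\Lambda^s\cong\Lambda$ is fixed, which is the same normalisation as in \cite[\S 2]{BurnsBullach}.

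\textbf{Step 2: descent to $M$.} The aim is to show that $\Phi'$ factors through the restriction $\mu^*:N^*\to M^*$, i.e. that it is the pullback of a unique $\Psi\in\Hom(\bigwedge^{r-s}M^*,\Lambda)$. Two issues arise: $\mu^*$ need not be surjective, and its kernel has to be killed.

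For the kernel, take $f=g\wedge f'$ with $g\in\ker\mu^*$. Then $g$ vanishes on $M=\ker\varepsilon$, so it factors through $\Im\varepsilon\subset\Lambda^s$. Tensoring with the fraction field $Q$ of $\Lambda$, the restriction $g_Q$ lies in the span of $\varepsilon_{1,Q},\ldots,\varepsilon_{s,Q}$ whenever $\varepsilon$ has rank $s$. Hence $\varepsilon_\wedge\wedge g=0$ after $\otimes Q$. Since biduals are torsion-free, being submodules of $\Hom(-,\Lambda)$, it follows that $\Phi'(g\wedge f')=0$. If $C$ is not torsion, the rank of $\Im\varepsilon$ is smaller than $s$, so $\varepsilon_\wedge$ is torsion in $\bigwedge^s N^*$, which embeds in its localisation when $N$ is reflexive; then $\Phi'=0$ and we set $\phi_{N,M}=0$.

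For the image, argue that $\bigcap^{r-s}$ only depends on $M^*$ after localising at height-one primes. Over the regular local ring $\Lambda$, every reflexive module agrees with its bidual, and a homomorphism into $\Lambda$ defined on $\Im(\mu^*)$ extends to $M^*$ because $\coker\mu^*\subset\Ext^1(\Im\varepsilon,\Lambda)$ is pseudo-null. Concretely, define $\Psi$ on the image of $\bigwedge^{r-s}\Im(\mu^*)$ in $\bigwedge^{r-s}M^*$. This is legitimate since its values lie in $\Lambda$ at each height-one localisation, where $\coker\mu^*$ vanishes. Then extend using
\[\Hom\Bigl(X,\Lambda\Bigr)=\bigcap_{\mathfrak p\ \mathrm{ht}\,1}\Hom\Bigl(X_{\mathfrak p},\Lambda_{\mathfrak p}\Bigr),\]
valid for finitely generated $X$ because $\Lambda$ is normal.

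\textbf{Main obstacle.} The hardest step is the extension in Step 2: one must justify that $\coker(\mu^*)$ is pseudo-null, or otherwise handled, so that the resulting $\Psi$ is an honest $\Lambda$-valued map on all of $\bigwedge^{r-s}M^*$. I would first localise at each height-one prime $\mathfrak p$. There $\Lambda_{\mathfrak p}$ is a DVR and $\Lambda^s_{\mathfrak p}$ is free, so the construction reduces to the classical Rubin-type rank reduction, where everything splits. I would then glue, using that biduals are reflexive and hence determined by their height-one localisations. This is the route taken in \cite[Lemma 2.17]{BurnsBullach}.
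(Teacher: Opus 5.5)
Your construction is correct. The paper's proof consists only of quoting \cite[Lemma 2.17]{BurnsBullach}, justified by $\Lambda$ being Gorenstein; that lemma also covers self-injective rings such as the quotients $\Lambda_k$ used in \S\ref{sec:classical}. You instead give a self-contained argument that uses the fact that $\Lambda$ is a noetherian normal domain whose height-one localisations are discrete valuation rings. You contract by $\varepsilon_1\wedge\cdots\wedge\varepsilon_s$ and then descend over the fraction field $Q$. There $\mu^*_Q$ is onto, and when $C$ is torsion its kernel is spanned by the $\varepsilon_j$. Finally you prove integrality prime by prime via $\Lambda=\bigcap_{\mathfrak p}\Lambda_{\mathfrak p}$. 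This makes explicit several facts the paper's later height-one arguments rely on, for instance in the proof of Proposition \ref{prop:bidual_map_fitting}:
\begin{itemize}
\item the map is $0$ when $C$ is not torsion;
\item at each height-one prime it is the classical Rubin-type map;
\item it depends on the trivialisation of $\Lambda^s$ only through a determinant.
\end{itemize}
The price is generality: your argument does not apply to non-normal Gorenstein rings such as the $\Lambda_k$.

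Three points should be tightened, though none is a real gap.

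First, if $\mu^*$ is not surjective, $\ker\bigl(\bigwedge^{r-s}\mu^*\bigr)$ can be strictly larger than $\ker(\mu^*)\wedge\bigwedge^{r-s-1}N^*$. A model case is $\Lambda^2\twoheadrightarrow(p,X)\subset\Lambda$. So testing $\Phi'$ on elements $g\wedge f'$ does not by itself make $\Psi$ well defined on the image of $\bigwedge^{r-s}N^*$. Instead, define $\Psi_Q$ on $\bigwedge^{r-s}M^*_Q$ and use that $\Phi$ kills torsion because $\Lambda$ is a domain. That is the fact you actually need, rather than torsion-freeness of biduals.

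Second, the step you flag as the main obstacle is immediate. For a height-one prime $\mathfrak p$, the module $\Im(\varepsilon)_{\mathfrak p}\subset\Lambda_{\mathfrak p}^s$ is free over the DVR $\Lambda_{\mathfrak p}$. Hence $0\to M_{\mathfrak p}\to N_{\mathfrak p}\to\Im(\varepsilon)_{\mathfrak p}\to 0$ splits and $\mu^*_{\mathfrak p}$ is onto. Equivalently, $\coker\mu^*\subset\Ext^1_\Lambda(\Im\varepsilon,\Lambda)\cong\Ext^2_\Lambda(C,\Lambda)$, which is finite.

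Third, the reflexivity aside is unnecessary. $N^*$ is reflexive over the two-dimensional regular local ring $\Lambda$, hence free. So in the non-torsion case $\varepsilon_1\wedge\cdots\wedge\varepsilon_s$ is simply $0$.
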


\begin{proof}
It follows from \cite[Lemma 2.17.]{BurnsBullach}, since the Iwasawa algebra is a Gorenstein ring by \cite[Lemma 2.3]{BurnsBullach}.
\end{proof}

Although this construction is more natural using biduals, it can be expressed as a rank reduction of exterior powers when $M$ and $N$ are free modules.
\begin{corollary}
In the notation of Proposition \ref{prop:bidual_map}, assume in addition that $M$ and $N$ are free modules. Then there is a canonical rank reduction map
\[\phi_{N,M}:\ \bigwedge^{r} N \to \bigwedge^{r-s} M\]
\label{cor:exterior_rankred}
\end{corollary}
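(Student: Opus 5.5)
The plan is to reduce the corollary to Proposition \ref{prop:bidual_map} by identifying exterior biduals with exterior powers for free modules. When $F$ is free of finite rank over $\Lambda$, say $F\cong\Lambda^n$, the dual $F^*=\Hom_\Lambda(F,\Lambda)$ is free of rank $n$. Hence $\bigwedge^r F^*$ is free of rank $\binom{n}{r}$. I will use the canonical map
\[\xi_F:\ \bigwedge^r F\to \bigcap^r F=\Hom\Bigl(\bigwedge^r F^*,\Lambda\Bigr),\qquad f_1\wedge\cdots\wedge f_r\mapsto \Bigl(\varphi_1\wedge\cdots\wedge\varphi_r\mapsto \det\bigl(\varphi_i(f_j)\bigr)\Bigr).\]
This map is well defined and functorial, and involves no choices.

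The first step is to check that $\xi_F$ is an isomorphism for $F$ free. Choose a basis $e_1,\ldots,e_n$ with dual basis $e_1^*,\ldots,e_n^*$. The wedges $e_I=e_{i_1}\wedge\cdots\wedge e_{i_r}$, for $I$ ranging over $r$-subsets, form a basis of $\bigwedge^r F$, and the $e_I^*$ form a basis of $\bigwedge^r F^*$. One computes $\xi_F(e_I)(e_J^*)=\delta_{IJ}$, so $\xi_F$ sends a basis to the dual basis and is an isomorphism. Since $\xi_F$ itself is defined without reference to a basis, the identification is canonical. In the degenerate cases $r>n$ or $r-s<0$, both sides vanish, with the convention that $\bigwedge^{k}=0$ for negative $k$.

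The second step is to apply this to both $N$ and $M$. Proposition \ref{prop:bidual_map} gives $\phi_{N,M}:\bigcap^r N\to\bigcap^{r-s}M$. Under the hypothesis that $M$ and $N$ are free, I define the rank reduction map on exterior powers as the composite
\[\xi_M^{-1}\circ\phi_{N,M}\circ\xi_N:\ \bigwedge^r N\to\bigwedge^{r-s}M,\]
and canonicity follows because each of the three factors is canonical.

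The only point needing care, and the main obstacle, is confirming that the biduals in \cite[Lemma 2.17]{BurnsBullach} are the same objects $\Hom(\bigwedge^r M^*,\Lambda)$ as in the definition here, so that $\xi$ matches their natural map $\bigwedge^r\to\bigcap^r$. If that matches, nothing further is required. Optionally, one can also record the explicit formula for the composite: for $\varepsilon=(\varepsilon_1,\ldots,\varepsilon_s)$ with components $\varepsilon_i\in N^*$, it is contraction by $\varepsilon_1\wedge\cdots\wedge\varepsilon_s$, with image landing in $\bigwedge^{r-s}M$. This formula is not needed for the statement itself.
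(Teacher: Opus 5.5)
Your proposal is correct and is exactly the paper's argument: the paper's proof consists of invoking the canonical isomorphism $\bigcap^r_\Lambda F\cong\bigwedge^r_\Lambda F$ for free Iwasawa modules and transporting the map of Proposition \ref{prop:bidual_map} along it. Your composite $\xi_M^{-1}\circ\phi_{N,M}\circ\xi_N$ does precisely this, and you additionally spell out why $\xi_F$ is an isomorphism. That verification needs $F$ to have finite rank, which holds for all the modules used in the paper.
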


\begin{proof}
It follows from the canonical isomorphism 
\[\bigcap^r_\Lambda F\cong \bigwedge^{r}_\Lambda F\]
for any free Iwasawa module.
\end{proof}

\begin{remark}
The map in Corollary \ref{cor:exterior_rankred} can be constructed directly. See \cite[\textsection 2.1]{BurnsSano} for details.
\end{remark}

The rank reduction map can be used to compute Fitting ideals.

\begin{definition}
Let $M$ be an Iwasawa module module and let $a\in M$. Consider the canonical map into the bidual module
\[\Phi:\ M\to M^{++}: a\in M\mapsto \Bigl[\varphi\in \Hom(M,R)\mapsto \varphi(a)\Bigr].\]
The \emph{index} of $a$ is defined as
\[\ind(a,M):=\Im(\Phi(a)).\]
\label{def:index}
\end{definition}

\begin{proposition}
Assume, in the notation of Proposition \ref{prop:bidual_map}, that $N$ is also a torsion-free module of rank $r$ and let $\psi$ be a generator in $\bigcap^{r} N$. Then 
\[\ind(\phi_{N,M})\approx\Fitt^0_\Lambda(C).\]
\label{prop:bidual_map_fitting}
\end{proposition}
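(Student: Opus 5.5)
The plan is to reduce the statement to a computation over discrete valuation rings. By Notation~\ref{not:pseudo_equal}, a pseudo-equality $I\approx J$ of ideals of $\Lambda$ only needs to be checked after localising at every height-one prime $\BB$, and each $\Lambda_\BB$ is a discrete valuation ring. So I will fix such a $\BB$, localise the exact sequence of Proposition~\ref{prop:bidual_map}, and prove
\[\ind\bigl(\phi_{N,M}(\psi),M\bigr)_\BB=\Fitt^0_\Lambda(C)_\BB.\]

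\emph{Compatibility with localisation.} The first step is to check that every object in the statement commutes with the flat base change $\Lambda\to\Lambda_\BB$.
\begin{itemize}
\item For finitely generated modules over a noetherian ring, $\Hom$ commutes with flat base change. Hence the exterior biduals satisfy $\bigl(\bigcap^r N\bigr)_\BB=\bigcap^r_{\Lambda_\BB}N_\BB$, and likewise for $M$.
\item The construction of \cite[Lemma 2.17]{BurnsBullach} is canonical and built from $\Hom$'s and exterior powers, so $\phi_{N,M}$ localises to $\phi_{N_\BB,M_\BB}$.
\item The index is defined through $\Hom(M,\Lambda)$, so it also localises.
\item Fitting ideals localise by Proposition~\ref{prop:fitting_tensor}.
\end{itemize}
Since $\Lambda$ is a UFD, $\bigcap^r N$ is a reflexive module of rank one, hence free, so the generator $\psi$ makes sense; it stays a generator after localisation. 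I expect this compatibility to be the main (if routine) obstacle, because one must verify that the bidual rank reduction map really is functorial for flat base change rather than merely well defined. I would argue it by using the explicit description of the map in \cite[\S 2.1]{BurnsSano}, which is given by contraction with $\varepsilon_1\wedge\cdots\wedge\varepsilon_s$, where the $\varepsilon_i$ are the coordinates of $\varepsilon$.

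\emph{Computation over the DVR.} Over $\Lambda_\BB$, the module $N_\BB$ is torsion-free and finitely generated, hence free of rank $r$. The image $J=\varepsilon(N_\BB)\subset\Lambda_\BB^s$ is free, since $\Lambda_\BB$ is a PID, so the sequence $0\to M_\BB\to N_\BB\to J\to 0$ splits. It follows that $M_\BB$ is free of rank $r-\rank J$. Since $r\geq s$ and the statement concerns $\bigcap^{r-s}M$, I expect $\rank J=s$ and hence $C_\BB$ torsion; this is the generic case, and otherwise both sides vanish, which I will check separately.

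Now choose a basis $m_1,\dots,m_{r-s}$ of $M_\BB$ and elements $n_1,\dots,n_s\in N_\BB$ lifting a basis of $J$. Then, up to a unit,
\[\psi=m_1\wedge\cdots\wedge m_{r-s}\wedge n_1\wedge\cdots\wedge n_s.\]
Contracting with $\varepsilon_1\wedge\cdots\wedge\varepsilon_s$ kills every term in which some $\varepsilon_i$ is evaluated on an $m_j$, because $\varepsilon(M_\BB)=0$. This gives
\[\phi_{N,M}(\psi)=\pm\det\bigl(\varepsilon_i(n_j)\bigr)_{i,j}\,m_1\wedge\cdots\wedge m_{r-s}.\]

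Since $m_1\wedge\cdots\wedge m_{r-s}$ is a basis of a free module of rank one, its index is the unit ideal. Therefore
\[\ind\bigl(\phi_{N,M}(\psi)\bigr)_\BB=\bigl(\det(\varepsilon_i(n_j))\bigr).\]
On the other hand, $\Lambda_\BB^s\xrightarrow{(\varepsilon_i(n_j))}\Lambda_\BB^s\to C_\BB\to0$ is a presentation of $C_\BB$, so by Definition~\ref{def:fitting} the right-hand side equals $\Fitt^0_{\Lambda_\BB}(C_\BB)=\Fitt^0_\Lambda(C)_\BB$. As this holds at every height-one prime $\BB$, the pseudo-equality follows.
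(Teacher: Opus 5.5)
Your proposal is correct and follows essentially the same route as the paper: localise at each height-one prime $\BB$ and compute over the discrete valuation ring $\Lambda_\BB$. The paper phrases the DVR step on the dual side, via the image of $\det(F^+)_\BB\to\bigwedge^s N^+_\BB$. You instead use a basis of $N_\BB$ adapted to the splitting $N_\BB\cong M_\BB\oplus J$ and read off $\det(\varepsilon_i(n_j))$ directly, which is cleaner and also makes the degenerate case $\rank J<s$ explicit.
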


\begin{remark}
The structure theorem implies that $N$ is pseudo-isomorphic to $\Lambda^r$. Dualizing, since $\Ext^1(C,\Lambda)=0$ for all finite Iwasawa modules $C$ (see \cite[Corollary 5.5.9]{NSW}), we get that $N^+\cong \Lambda^r$, so 
\[\bigcap^r N\cong \bigwedge^r \Lambda^r=\Lambda.\]
Hence there exists a generator $\psi$ of $\bigcap^{r} N$.
\label{rem:dual_fin_index}
\end{remark}

\begin{proof}[Proof of Proposition \ref{prop:bidual_map_fitting}]
Let $\BB$ be a height one prime ideal of $\Lambda$. Note that $\ind\Bigl(\Phi_{N,M} (\psi)\Bigr)_\BB$ coincides with the image of the composition
\[\xymatrix{\bigwedge^{r-s} N_\BB^+\otimes \det(F^+)_\BB\ar[r] & \bigwedge^{r-s} M_\BB^+ \otimes \det(F^+)_\BB \ar[r] & \bigwedge^{r} N_\BB^+\ar[r]^{\ \ \ \ \psi} & \Lambda_\BB.}\]

After localising with a height $1$ prime ideal $\BB$ of $\Lambda$ and fixing an isomorphism $\det(F)_\BB\cong \Lambda_\BB$, Proposition \ref{prop:fitting_tensor} and Remark \ref{rem:exterior_power_fitting} imply that 
\[\Fitt_\Lambda^0(C)_\BB=\Im\left(\bigwedge^s N_\BB\to \det(F)_\BB\right).\]
Note that $N_\BB$ is a free $\Lambda_\BB$-module of rank $r$ and consider the image of the dual map
\[I:=\Im\left(\det(F^+)_\BB\to \bigwedge^s N^+_\BB\right).\]
Since $\Lambda_\BB$ is a discrete valuation ring, every generator $\alpha$ of $I$ has index $\Fitt^0_R(C)_\BB$, so there is an element $\beta\in \bigwedge^s N^+_\BB$ and a generator $\gamma$ of $\Fitt^0_R(C)_\BB$ such that $\alpha =\gamma\beta$ and $\beta$ projects to a non-zero element 
\[\overline \beta\in \bigwedge^s (N^+_\BB\otimes \Lambda_\BB/\BB).\]
That implies that there exists a linearly independent set in the $\Lambda_\BB/\BB\Lambda_\BB$ vector space $N^+_\BB/\BB N^+_\BB$, $S=\{e_1,\ldots,e_s\}$, such that 
\[\overline{\beta}=e_1\wedge\cdots \wedge e_s.\]
Consider a set $\{e_{s+1},\ldots,e_r\}$ extending $S$ to a basis of $N^+_\BB/\BB N^+_\BB$. Then,
\[\alpha\wedge e_{s+1}\wedge\cdots \wedge e_r\in \bigwedge^r N^+_\BB\]
generates the submodule $\left(\Fitt^0_R(C)_\BB\right)\cdot\bigwedge^r N^+_\BB$. In addition, every element in 
\[I\otimes\bigwedge^{r-s} N^+_\BB \subset \bigwedge^s N^+_\BB\]
is $\Fitt^0_R(C)_\BB$-divisible in the last exterior power. Hence,
\[I\otimes\bigwedge^{r-s} N^+_\BB =\Fitt^0_R(C) \bigwedge^s N^+_\BB.\]
Using the comment at the beginning of this proof, we get that
\[\ind\Bigl(\Phi_{N,M} (\psi)\Bigr)_\BB=\Fitt^0_R(C)_\BB.\qedhere\]
\end{proof}

There is one special case in which we can prove that the index is contained in the Fitting ideal. That is because we can show that this index is a principal ideal, so it contains all ideals to which it is pseudo-equal (see Notation \ref{not:pseudo_equal}).
\begin{proposition}
Consider the exact sequence of finitely generated Iwasawa modules
\[\xymatrix{0\ar[r] & M\ar[r]^\mu & N\ar[r]^\varepsilon & F\ar[r] & C\ar[r] &0,}\]
where $M$, $N$ and $F$ are torsion-free Iwasawa modules of ranks 
\[\begin{array}{cc}
\rank_\Lambda N=r,\ &\rank_\Lambda F=s,
\end{array}\]
for some natural numbers $r\geq s$. Let $\psi$ be a generator of $\bigcap^{r} N$. Then the index $\ind\Bigl(\Phi_{N,M} (\psi)\Bigr)$ is the unique principal ideal containing $\Fitt^0_\Lambda(C)$ with finite index.
\label{prop:bidual_map_iwasawa_fitting_inclusion}
\end{proposition}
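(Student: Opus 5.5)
Two things need to be shown: that $\ind\bigl(\Phi_{N,M}(\psi)\bigr)$ is principal, and that it contains $\Fitt^0_\Lambda(C)$. Uniqueness then comes for free. In $\Lambda$, two principal ideals that are pseudo-equal coincide: a generator is determined up to a unit by its valuations at the height one primes, because $\Lambda$ is a UFD. By Proposition \ref{prop:bidual_map_fitting} we already know $\ind(\Phi_{N,M}(\psi))\approx\Fitt^0_\Lambda(C)$. So it is enough to prove principality, and then to show $\Fitt^0_\Lambda(C)\subseteq \ind(\Phi_{N,M}(\psi))$.

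\emph{Principality.} The plan is to use that $\Lambda$ is a two-dimensional regular local ring. The key input is that reflexive $\Lambda$-modules are free. Since $M$ is torsion-free, $\bigcap^{r-s}M=\Hom_\Lambda(\bigwedge^{r-s}M^+,\Lambda)$ is a dual module, hence reflexive, hence free; the same holds for $M^+$.
\begin{itemize}
\item If $r-s=0$, then $\Phi_{N,M}(\psi)\in\Lambda$, and its index is the principal ideal it generates.
\item In general, $\Phi_{N,M}(\psi)$ lives in the free module $\bigcap^{r-s}M\cong\Lambda^{t}$, and its index is computed against the dual $(\bigcap^{r-s}M)^+\cong\Lambda^t$. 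The index is therefore the ideal generated by its coordinates $(a_1,\dots,a_t)$.
\end{itemize}
So the real task is to show that this coordinate ideal is principal, i.e.\ that the gcd of the $a_i$ generates it. Equivalently, the ideal $(a_1,\dots,a_t)/\gcd$ must be the unit ideal, i.e.\ it has no height two component.

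\emph{The main obstacle} is ruling out an $\m$-primary (finite index) part of the index ideal. I would localise at the maximal ideal and use the explicit construction of $\Phi_{N,M}$ from \cite[Lemma 2.17]{BurnsBullach}. There, $\Phi_{N,M}(\psi)$ is obtained by contracting $\psi$ with $\bigwedge^s\varepsilon^+(f_1\wedge\dots\wedge f_s)$, where $f_i$ is a basis of $F^+$. The index ideal of $\Phi(\psi)$ then equals the image of
\[\bigwedge^{r-s}N^+\otimes\det F^+\to\bigwedge^r N^+\xrightarrow{\psi}\Lambda.\]
Since $N^+$ is reflexive and hence free of rank $r$, and $\psi$ generates $\bigcap^rN\cong\Lambda$, $\psi$ is an isomorphism $\bigwedge^rN^+\cong\Lambda$. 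The image is thus the ideal of $s\times s$ minors of the matrix of $\varepsilon^+:F^+\to N^+$ with respect to bases. This is $\Fitt^{0}$-type data of $\coker(\varepsilon^+)$, which gives a natural comparison with $C$.

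To see that the cokernel of $\varepsilon^+$ has principal $0$-th Fitting ideal up to the required comparison, I would dualise $0\to M\to N\to F\to C\to0$. Since $C$ is torsion and $\Ext^2_\Lambda$ of a finite module is the only obstruction, I expect a complex
\[0\to F^+\to N^+\to M^+\to\Ext^1(C,\Lambda)\to\cdots\]
Here $\Ext^1(C,\Lambda)$ is pseudo-isomorphic to $C$ and has no finite submodules. Then $\coker(\varepsilon^+)$ embeds into the torsion-free module $M^+$. The minor ideal of $\varepsilon^+$ should then be principal times the unit ideal, because a free presentation whose cokernel is torsion-free in the complementary rank has height $\ge2$ maximal minors. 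The gcd part carries the divisorial information.

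The argument is arranged as follows:
\begin{enumerate}
\item Write the index as (gcd)$\cdot J$, where $J$ is the ideal of maximal minors of $\varepsilon^+$ after removing the gcd.
\item Show that $J$ is either the unit ideal or of height $2$.
\item Use torsion-freeness of $F$, $N$, $M$ to show that $J$ is in fact the unit ideal.
\end{enumerate}
Step 3 is the step I am least sure of. If it fails, I would instead directly show $\Fitt^0(C)\subseteq$ the principal ideal generated by the gcd, which follows since every $s\times s$ minor of $\varepsilon$ is divisible by that gcd. This at least yields the inclusion and the pseudo-equality.
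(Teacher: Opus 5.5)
\textbf{There is a genuine gap at the principality step.} Your reduction is sound: $\Lambda$ is a normal domain, so a principal ideal contains every ideal pseudo-equal to it. Everything therefore hinges on showing that $\ind(\Phi_{N,M}(\psi))$ is principal, and that is where the gap lies.

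\emph{The missing idea.} You never compute the rank $t$ of the free module $\bigcap^{r-s}M$. Additivity of rank in $0\to M\to N\to F\to C\to 0$ gives $\rank_\Lambda M=r-s+\rank_\Lambda C$.
\begin{itemize}
\item If $C$ is not torsion, then $\Fitt^0_\Lambda(C)=0$. Proposition \ref{prop:bidual_map_fitting} gives $\ind(\Phi_{N,M}(\psi))\approx 0$, and since $\Lambda$ is a domain the index must be $0$.
\item If $C$ is torsion, then $\rank_\Lambda M=r-s$. Hence $M^+$ is free of rank $r-s$ and $\bigcap^{r-s}M=\Hom_\Lambda(\bigwedge^{r-s}M^+,\Lambda)\cong\Lambda$, so $t=1$. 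The index of an element $a\cdot e$ of a free rank-one module is simply $(a)$.
\end{itemize}
This is the paper's argument, and once you see it your \emph{main obstacle} disappears.

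\emph{Why your Steps 1--3 fail.} They rest on a false identification.
\begin{itemize}
\item The index of $\Phi_{N,M}(\psi)$ is its image on $(\bigcap^{r-s}M)^+=\bigwedge^{r-s}M^+$.
\item The ideal of $s\times s$ minors of $\varepsilon^+$ is instead its image on $\bigwedge^{r-s}N^+$, which only sees the image of the restriction $N^+\to M^+$.
\item That restriction need not be surjective. So the two ideals agree only after localising at height-one primes, which is all that Proposition \ref{prop:bidual_map_fitting} uses.
\end{itemize}
A concrete example: take $N=\Lambda^2$, $F=\Lambda$ and $\varepsilon(x,y)=px+Xy$. Then $M=\Lambda\cdot(X,-p)$ and $C\cong\Lambda/\m$. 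The element $\Phi_{N,M}(e_1\wedge e_2)=\pm(X,-p)$ generates $M$, so its index is $\Lambda$. However, the minor ideal of $\varepsilon^+$ is $(p,X)=\m$, which is $\m$-primary and not principal. So your Step 3 is false in general.

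Your fallback does not rescue the argument either. It shows only that the principal ideal generated by a gcd contains $\Fitt^0_\Lambda(C)$. It does not show that the index itself is principal or equal to that ideal.
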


\begin{proof}
Since $M$ is torsion-free, it is pseudo-isomorphic to a free Iwasawa module. Assume that its rank is larger than $r-s$. The additivity of the rank implies that $C$ has positive rank, so
\[\Fitt^0_{\Lambda}(C)=0.\]
Since $\Lambda$ contains no finite ideals, Proposition \ref{prop:bidual_map_fitting} implies that 
\[\ind(\Phi_{N,M})(\psi)=0.\]
Now assume that the rank of $M$ is $r-s$. Then $M$ is pseudo-isomorphic to $\Lambda^{r-s}$. As in Remark \ref{rem:dual_fin_index}, we can conclude that
\[\bigcap^{r-s} M\cong \bigcap^{r-s} \Lambda^{r-s} =\Lambda.\]
Then the index of every element in $\bigcap^{r-s} M$ is a principal ideal and, in particular, the index of $\phi_{N,M}(\psi)$. Since the principal ideals of the Iwasawa algebra contain all the ideals they are pseudo-equal to, we can conclude by Proposition \ref{prop:bidual_map_fitting} that 
\[\Fitt^0_\Lambda(C)\subset\ind\Bigl(\Phi_{N,M} (\psi)\Bigr).\qedhere\]
\end{proof}

The rank reduction maps satisfy the following commutative property. The following proposition follows from \cite[Proposition 2.4]{Sakamoto18}.
\begin{proposition}(\cite[Proposition 2.4]{Sakamoto18})
Consider the following exact sequences of Iwasawa modules,
\[\begin{aligned}
&\xymatrix{0\ar[r] & M_1 \ar[r]^{\mu_{12}} & M_2\ar[r]^{\pi_{12}} & \Lambda^s\ar[r] &0,}\\
&\xymatrix{0\ar[r] & M_2 \ar[r]^{\mu_{23}} & M_3\ar[r]^{\pi_{23}} & \Lambda^{s'}\ar[r] &0,}\\
&\xymatrix{0\ar[r] & M_1 \ar[r]^{\mu_{13}} & M_3\ar[r]^{\pi_{13}\ \ \ } & \Lambda^{s+s'}\ar[r] &0,}\\
\end{aligned}\]
satisfying that 
\begin{equation}
\begin{array}{ccc}
    \mu_{13}=\mu_{23}\circ\mu_{12},\ & \pi_{23}=(\pi_{13})_2,\ & \pi_{12}=(\pi_{13})_1\circ\mu_{23},
    \label{eq:compatibility}
\end{array}
\end{equation}
where $(\pi_{13})_i$ for $i=1,2$ denotes the composition of $\pi_{13}$ projection to the first or second component, respectively, in $\Lambda^{s+s'}=\Lambda^s\oplus \Lambda^{s'}$. Then
\[\phi_{M_3,M_1}=\phi_{M_3,M_2}\circ\phi_{M_2,M_1}.\]
\label{prop:bidual_map_comp}
\end{proposition}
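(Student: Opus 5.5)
The proof reduces the statement to a single compatibility check on the level of maps of modules. By \cite[Proposition 2.4]{Sakamoto18}, or directly from the construction in \cite[\textsection 2.1]{BurnsSano}, the rank reduction map $\phi_{N,M}$ attached to $0\to M\to N\to\Lambda^s\to 0$ has a concrete description. Write $\pi=(\pi^{(1)},\ldots,\pi^{(s)})$ for the coordinates of $\varepsilon$. Then each $\pi^{(j)}\in N^*$ acts on $\bigwedge^r N$ by contraction, and
\[\phi_{N,M}(x)=\bigl(\pi^{(1)}\wedge\cdots\wedge\pi^{(s)}\bigr)\lrcorner\, x.\]
This element lies in $\bigwedge^{r-s}N$. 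Since every $\pi^{(j)}$ kills it after a further contraction, it descends to, and is identified with, an element of $\bigwedge^{r-s}M$. This identification holds because $\Lambda^s$ is free, so the sequence splits and $\bigwedge^{r-s}M$ is exactly the subspace of $\bigwedge^{r-s}N$ annihilated by contraction with every $\pi^{(j)}$. I would take this contraction formula as the working definition, checked against Corollary \ref{cor:exterior_rankred} via the identification $\bigcap^r F\cong\bigwedge^r F$ for free $F$. For general (non-free) modules one passes to biduals, but the same formula holds there. All three sequences in the statement end in free modules, so they split.

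The key steps, in order, are as follows.

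\begin{enumerate}
\item Using the compatibilities \eqref{eq:compatibility}, identify the coordinate functionals of $\pi_{13}$ on $M_3$. The last $s'$ of them are exactly the coordinates of $\pi_{23}$. The first $s$ of them, restricted along $\mu_{23}$ to $M_2$, are the coordinates of $\pi_{12}$.
\item Use the associativity of iterated contraction,
\[(\alpha\wedge\beta)\lrcorner x=\alpha\lrcorner(\beta\lrcorner x)\]
for $\alpha\in\bigwedge^s M_3^*$ and $\beta\in\bigwedge^{s'}M_3^*$, up to a fixed sign convention. Apply it with $\beta$ the wedge of the $\pi_{23}$-coordinates and $\alpha$ the wedge of the first $s$ coordinates of $\pi_{13}$. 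The inner contraction $\beta\lrcorner x$ is then $\phi_{M_3,M_2}(x)$, which lies in $\bigwedge^{r-s'}M_2$.
\item Contract $\phi_{M_3,M_2}(x)$ with $\alpha$. On elements coming from $\bigwedge M_2$, contracting with a functional on $M_3$ agrees with contracting with its restriction to $M_2$. By step 1 that restriction is the coordinate data of $\pi_{12}$, so this outer contraction is $\phi_{M_2,M_1}$, and the composite equals $\phi_{M_3,M_1}(x)$.
\end{enumerate}

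Two bookkeeping points remain. The wedge of the coordinates of $\pi_{13}$ is, up to the ordering convention, $\alpha\wedge\beta$, with a sign $(-1)^{ss'}$ that is absorbed into the convention of Proposition \ref{prop:bidual_map}. The images also land in the correct submodules, because each map in the chain lands in the kernel of the functionals used at the next stage.

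The main obstacle is not the algebra, which is formal. It is making sure that the canonical map of Proposition \ref{prop:bidual_map}, defined via biduals as in \cite[Lemma 2.17]{BurnsBullach}, really agrees with the contraction formula, including the sign and ordering conventions. If the modules are not free, this is exactly the content of \cite[Proposition 2.4]{Sakamoto18}. I would first check the agreement after localizing at height one primes, where everything is free over a discrete valuation ring and the contraction description is immediate. Equality then globalizes because $\bigcap^{r-s-s'}M_1$ is reflexive, hence determined by its localizations at height one primes.
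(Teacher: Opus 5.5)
The paper gives no argument of its own here: it imports the statement from \cite[Proposition 2.4]{Sakamoto18}, where it is proved for exterior biduals over general Gorenstein rings. You instead give a self-contained proof, and it is correct.

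\textbf{Why your argument works.} Because each $\pi$ is onto a free module, the three sequences split. The biduals then decompose as $\bigcap^r(M\oplus\Lambda^s)\cong\bigoplus_{a+b=r}\bigcap^a M\otimes\bigwedge^b\Lambda^s$, and the rank reduction is contraction with the wedge of the coordinate functionals. The compatibilities \eqref{eq:compatibility}, together with naturality of contraction along $\mu_{23}$, give the factorization. Note that the composite in the statement has to be read as $\phi_{M_2,M_1}\circ\phi_{M_3,M_2}$, which is what you prove.

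\textbf{Comparison with the citation.} Your route is elementary and makes the role of each compatibility visible; the citation buys generality. That generality is actually used: in Lemma \ref{lem:com} the maps to $\prod_{\ku}\bH^1_{\s}(K_{\ku},\bT)$ need not be surjective, so your splitting step is unavailable there. Your argument still extends to that case, along the lines of your last paragraph. All biduals are torsion-free, so it suffices to compare the two maps after tensoring with the fraction field of $\Lambda$. There every sequence splits and your contraction computation applies verbatim. If one of the $\pi$'s fails to be generically surjective, then so does $\pi_{13}$ and one of $\pi_{12},\pi_{23}$, and both sides vanish.

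\textbf{Correction to your sign bookkeeping.} The wedge of the coordinates of $\pi_{13}$, in the given order, is literally $\alpha\wedge\beta$. With the convention you use in step 2 (the last-listed functional contracts first, so $(\alpha\wedge\beta)\lrcorner x=\alpha\lrcorner(\beta\lrcorner x)$), no sign appears at all. A sign comes only from the order of contraction. Under the opposite convention, where the first-listed functional contracts first, one gets $\phi_{M_3,M_1}=(-1)^{ss'}\phi_{M_2,M_1}\circ\phi_{M_3,M_2}$. This is the convention produced by the usual determinant formula $\sum_{\sigma}\mathrm{sgn}(\sigma)\det(\phi_i(m_{\sigma(j)}))\,m_{\sigma(s+1)}\wedge\cdots\wedge m_{\sigma(r)}$. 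The sign appears because the block $(\pi_{13})_2$ that cuts out $M_2$ is listed second. The two conventions differ by $(-1)^{s(s-1)/2}$ on each $\phi_{N,M}$, so the identity holds exactly only for yours. You should fix that convention explicitly rather than say a $(-1)^{ss'}$ is absorbed.
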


It is possible to compute the kernel of the rank reduction maps.
\begin{proposition}(\cite[Lemma 2.17]{BurnsBullach})
Let $M$ and $N$ be modules over a ring $R$ which is either self-injective, a discrete valuation ring or the Iwasawa algebra. In addition, let
\[\phi_{N,M}:\ \bigcap^{r} N\to \bigcap^{r-1} M\]
be the rank reduction map constructed in Proposition \ref{prop:bidual_map} for the exact sequence
\[\xymatrix{0\ar[r] & M\ar[r]^\mu & N\ar[r]^\varepsilon &  R,}\]
Consider also the map 
\[\iota:\ \bigcap^r M\hookrightarrow \bigcap^r N.\]
Then $\ker(\phi_{N,M})=\Im(\iota)$.
\label{prop:bidual_map_iwasawa_kernel}
\end{proposition}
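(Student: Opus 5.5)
We need to show that $\ker(\phi_{N,M})=\Im(\iota)$, where $\phi_{N,M}$ comes from $0\to M\to N\xrightarrow{\varepsilon} R$ with $s=1$. The plan is to work directly with the explicit description of the rank reduction map in the rank-one case. By \cite[\textsection 2.1]{BurnsSano} and \cite[Lemma 2.17]{BurnsBullach}, for $\Psi\in\bigcap^r N=\Hom(\bigwedge^r N^*,R)$ we have $\phi_{N,M}(\Psi)$ equal to the functional $\varepsilon\wedge\Psi$, namely
\[(\varphi_1\wedge\cdots\wedge\varphi_{r-1})\mapsto \Psi(\varepsilon\wedge\tilde\varphi_1\wedge\cdots\wedge\tilde\varphi_{r-1}),\]
where $\tilde\varphi_i\in N^*$ are lifts of $\varphi_i\in M^*$. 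Well-definedness requires that $\mu^*:N^*\to M^*$ be surjective, or at least that the value be independent of lifts. Independence is immediate whenever the lifts differ by multiples of $\varepsilon$. Surjectivity is exactly where the hypothesis on $R$ enters:
\begin{itemize}
\item for $R$ self-injective, $\Hom(-,R)$ is exact;
\item for $R$ a discrete valuation ring or $\Lambda$, one uses that $\Ext^1(\Im\varepsilon,R)$ vanishes after the Gorenstein-type reduction of \cite[Lemma 2.3]{BurnsBullach}, or else one defines the map through the bidual as in Proposition~\ref{prop:bidual_map}.
\end{itemize}

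The inclusion $\Im(\iota)\subseteq\ker(\phi_{N,M})$ comes first. Take $\Psi=\iota(\Theta)$ with $\Theta\in\bigcap^r M$, so that $\Psi(\eta)=\Theta(\bigwedge^r\mu^*(\eta))$. Since $\mu^*(\varepsilon)=\varepsilon\circ\mu=0$, every term $\Psi(\varepsilon\wedge\cdots)$ vanishes. Hence $\phi_{N,M}\circ\iota=0$.

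For the reverse inclusion, suppose $\phi_{N,M}(\Psi)=0$, so that $\Psi$ vanishes on $\varepsilon\wedge\bigwedge^{r-1}N^*$. I want to show that $\Psi$ factors through $\bigwedge^r\mu^*:\bigwedge^r N^*\to\bigwedge^r M^*$.
\begin{itemize}
\item When $\mu^*$ is surjective with kernel $R\varepsilon$ (the self-injective case, using exactness of the dualised sequence $R\to N^*\to M^*\to 0$), the kernel of $\bigwedge^r\mu^*$ is exactly $\varepsilon\wedge\bigwedge^{r-1}N^*$. So $\Psi$ descends to a functional $\Theta$ on $\bigwedge^r M^*$ with $\iota(\Theta)=\Psi$.
\item When $R$ is a DVR or $\Lambda$, the image of $\mu^*$ may be a proper submodule $Q\subseteq M^*$ with $M^*/Q\hookrightarrow\Ext^1(\Im\varepsilon,R)$, and the descended functional is only defined on $\bigwedge^r Q$. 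I would extend it to $\bigwedge^r M^*$ by localising at height one primes, where everything is free and the previous argument applies. The local extensions then glue because a reflexive module over a normal domain is the intersection of its height-one localisations, and $\bigcap^r M$ is reflexive. In the DVR case, $\Im\varepsilon$ is free, so $\Ext^1$ vanishes and no extension is needed.
\end{itemize}

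The main obstacle is this last extension step for $\Lambda$. Specifically, I need to ensure that $\bigwedge^r Q\to\bigwedge^r M^*$ has cokernel supported in codimension $\ge2$, or else handle a height-one contribution. I would first try to show that $\Ext^1(\Im\varepsilon,\Lambda)$ is pseudo-null up to a part killed in the functional. If that fails, I would simply cite \cite[Lemma 2.17]{BurnsBullach}, where this is proved for Gorenstein rings, including $\Lambda$.
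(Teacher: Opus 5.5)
The paper gives no argument of its own here: it simply imports \cite[Lemma 2.17]{BurnsBullach}. Your fallback is therefore literally the paper's proof, and your explicit contraction argument is a different, more hands-on route. That route proves $\Im(\iota)\subseteq\ker(\phi_{N,M})$ correctly in all three cases. It is also complete for self-injective $R$: there $\Hom(-,R)$ is exact, so $\ker(\mu^*)=\Hom(\Im\varepsilon,R)$ really is $R\varepsilon$, and the rest of your argument goes through.

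For a DVR or $\Lambda$, your argument has one wrong step and one unfinished step; both can be repaired.

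\emph{(i) The kernel of $\mu^*$ is not $R\varepsilon$.} In general $\ker(\mu^*)=\Hom(\Im\varepsilon,R)$ is strictly larger. For $\varepsilon\neq 0$, write $\Im\varepsilon=aJ$, where $a$ is the gcd of $\Im\varepsilon$ (using that $R$ is a UFD) and $J$ is either $R$ or $\m$-primary. Depth considerations give $\Hom(J,R)=R$, so $\ker(\mu^*)=R\varepsilon'$ with $\varepsilon=a\varepsilon'$. Consequently $\phi_{N,M}(\Psi)=0$ only yields $a\,\Psi(\varepsilon'\wedge\eta)=0$ for $\eta\in\bigwedge^{r-1}N^*$. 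You must cancel $a$, using that $R$ is a domain, before $\Psi$ descends to $\bigwedge^r N^*/(\varepsilon'\wedge\bigwedge^{r-1}N^*)=\bigwedge^r Q$, where $Q=\Im(\mu^*)$. This cancellation is where the hypothesis on $R$ genuinely enters in these two cases. Independence of lifts still holds, because $\varepsilon\wedge\varepsilon'=0$.

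\emph{(ii) The extension step for $\Lambda$.} Your claim that $\Ext^1_\Lambda(\Im\varepsilon,\Lambda)$ vanishes is false. For $N=\Lambda^2$ and $\varepsilon(a,b)=pa+Xb$ one has $\Im\varepsilon=\m$ and $\Ext^1_\Lambda(\m,\Lambda)\cong\Ext^2_\Lambda(\F_p,\Lambda)\cong\F_p$, so $Q=\m\subsetneq M^*\cong\Lambda$. Nevertheless, your ``main obstacle'' is not a real one. For every height-one prime $\mathfrak P$, $(\Im\varepsilon)_{\mathfrak P}$ is free over the DVR $\Lambda_{\mathfrak P}$, so $\Ext^1_\Lambda(\Im\varepsilon,\Lambda)_{\mathfrak P}=0$. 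Hence $M^*/Q$ is finite, and so are the kernel and cokernel of $\bigwedge^t Q\to\bigwedge^t M^*$. Since $\Hom_\Lambda(C,\Lambda)=\Ext^1_\Lambda(C,\Lambda)=0$ for finite $C$, $\Lambda$-valued functionals on $\bigwedge^t Q$ descend and extend uniquely to $\bigwedge^t M^*$. This does two things:
\begin{itemize}
\item it makes your contraction formula actually define $\phi_{N,M}$ on all of $\bigwedge^{r-1}M^*$;
\item it finishes the reverse inclusion.
\end{itemize}
Alternatively, your height-one gluing via reflexivity of $\bigcap^r M$ already suffices, with no codimension-two estimate, once the corrected DVR case from (i) is applied at each $\mathfrak P$.
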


\begin{remark}
All the results in this section can be applied to the map constructed in Corollary \ref{cor:exterior_rankred}, provided that $M$ and $N$ are free Iwasawa modules.
\end{remark}

\section{Stark Systems}
\label{sec:iwasawa:stark}

The goal of this section is to give a construction of the ultra Stark systems, analogous to the classical case in \cite{Sakamoto18} and \cite{BurnsSano}. In order to do that, we assume that we are given a cartesian Selmer structure defined on a Galois representation satisfying Assumption \ref{ass:patched:basic}.

Let $\kn,\kr\in \NN$ be square-free products of Kolyvagin ultraprimes such that $\kn\mid \kr$. There is an exact sequence
\[\xymatrix{0\ar[r] & \bH^1_{\FF^\kn}(K,\bT)\ar[r] & \bH^1_{\FF^\kr}(K,\bT)\ar[r] & \prod_{\ku\mid \frac{\kr}{\kn}} \bH^1_{\s}(K_\ku,\bT).}\]

We want to use Corollary \ref{cor:exterior_rankred} to construct a map
\[\phi_{\kr,\kn}:\ \bigwedge^{\chi(\FF)+\nu(\kr)} \bH^1_{\FF^\kr}(K,\bT)\to \bigwedge^{\chi(\FF)+\nu(\kn)} \bH^1_{\FF^\kn}(K,\bT).\]
In order to do that, we fix, for every Kolyvagin ultraprime $\ku\in \PP$, an isomorphism 
\[\psi_\ku:\ \bH^1_{\s}(K_\ku,\bT)\cong \Lambda.\]
In addition, we need to fix an order in the set $\PP$. Such ordering fixes an isomorphism
\[\prod_{\ku\mid \frac{\kr}{\kn}} \bH^1_{\s}(K_\ku,\bT)\cong \Lambda^{\nu(\kr)-\nu(\kn)}.\]
Then the map $\phi_{\kr,\kn}$ is the one given by Corollary \ref{cor:exterior_rankred}.

In order to guarantee the commutative property of these maps, we need to modify the sign convention. Following \cite[\textsection 3.1]{BurnsSano}, we make the following definition.
\begin{definition}
For every pair of elements $\kn,\kr\in \NN$ such that $\kn\mid \kr$, we denote by $\ku_1,\ldots,\ku_{\nu(\kn)}$ and $\ku'_{1},\ldots, \ku'_{\nu(\kr/\kn)}$ the prime divisors of $\kn$ and $\kr/\kn$, ordered by the predetermined ordering. Similarly, let $\ku''_{1},\ldots, \ku''_{\nu(\kr)}$ the ordered set of prime divisors of $\kr$. Define $\sign(\kr,\kn)$ as the sign of the permutation
\[(\ku'_{1},\ldots, \ku'_{\nu(\kr/\kn)},\ku_1,\ldots,\ku_{\nu(\kn)})\mapsto (\ku''_{1},\ldots, \ku''_{\nu(\kr)}).\]
\label{def:sign}
\end{definition}

\begin{definition}
We define the map 
\[\Phi_{\kr,\kn}=\sign(\kr,\kn)\cdot \phi_{\kr,\kn}:\ \bigwedge^{\chi(\FF)+\nu(\kr)} \bH^1_{\FF^\kr}(K,\bT)\to \bigwedge^{\chi(\FF)+\nu(\kn)} \bH^1_{\FF^\kn}(K,\bT),\]
where $\sign(\kr,\kn)$ is defined in Definition \ref{def:sign} and the map $\phi_{\kr, \kn}$ is the map given in Corollary \ref{cor:exterior_rankred}. Note that this map is well defined only because both Selmer groups are free (see Proposition \ref{prop:iwasawa:selmer_free}).
\label{def:stark:transition_map}
\end{definition}

\begin{lemma}
Let $\kn,\kr,\ks\in \NN$ be square-free products of Kolyvagin ultraprimes such that $\kn\mid \kr\mid \ks$. Then
\[\Phi_{\ks,\kn}=\Phi_{\kr,\kn}\circ \Phi_{\ks,\kr}.\]
\label{lem:com} 
\end{lemma}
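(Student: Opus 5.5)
The plan is to reduce the statement to the transitivity of rank reduction maps, Proposition \ref{prop:bidual_map_comp}, and then check that the sign factors from Definition \ref{def:sign} are exactly what is needed.

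First, set up the three exact sequences. Write $M_1=\bH^1_{\FF^\kn}(K,\bT)$, $M_2=\bH^1_{\FF^\kr}(K,\bT)$ and $M_3=\bH^1_{\FF^\ks}(K,\bT)$; these are free by Proposition \ref{prop:iwasawa:selmer_free}. They fit into the sequences
\[0\to M_1\to M_2\to \textstyle\prod_{\ku\mid \kr/\kn}\bH^1_\s(K_\ku,\bT),\]
\[0\to M_2\to M_3\to \textstyle\prod_{\ku\mid \ks/\kr}\bH^1_\s(K_\ku,\bT),\]
\[0\to M_1\to M_3\to \textstyle\prod_{\ku\mid \ks/\kn}\bH^1_\s(K_\ku,\bT),\]
where each product is identified with a free module via the fixed $\psi_\ku$ and the fixed ordering of $\PP$. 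These maps need not be surjective, so Proposition \ref{prop:bidual_map_comp} does not apply verbatim. I would handle this in one of two ways: either use the version with cokernels, i.e. Proposition \ref{prop:bidual_map} together with the fact (from \cite[Proposition 2.4]{Sakamoto18} or \cite[Lemma 2.17]{BurnsBullach}) that the composition law holds without surjectivity, or work directly with the explicit formula for the rank reduction map from \cite[\textsection 2.1]{BurnsSano}.

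Second, verify the compatibilities \eqref{eq:compatibility}. The inclusion $M_1\to M_3$ is the composite of the two inclusions, because all three Selmer groups sit inside $\bH^1(K,\bT)$. The singular projections are coordinatewise restriction maps, so the projections agree once the coordinates are matched. The only subtlety is ordering. In $\Lambda^{\nu(\ks/\kn)}$ the primes of $\ks/\kn$ appear in the global order, whereas the proposition wants the primes of $\kr/\kn$ grouped first and those of $\ks/\kr$ second. I would therefore absorb a permutation $\sigma$ reordering the coordinates of $\Lambda^{\nu(\ks/\kn)}$. This multiplies $\phi_{\ks,\kn}$ by $\sign(\sigma)$, since rank reduction is contracting with the wedge of the coordinate functionals. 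With that convention, and keeping track of the order in which the functionals are contracted, Proposition \ref{prop:bidual_map_comp} gives
\[\phi_{\ks,\kn}=\pm\,\phi_{\kr,\kn}\circ\phi_{\ks,\kr},\]
with an explicit sign.

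Third, and this is the main obstacle, show that this sign equals $\sign(\ks,\kn)\,\sign(\ks,\kr)\,\sign(\kr,\kn)$. That identity yields $\Phi_{\ks,\kn}=\Phi_{\kr,\kn}\circ\Phi_{\ks,\kr}$. I would compute it by comparing permutations of the ordered list of prime divisors of $\ks$. The permutation $(\ks/\kn,\kn)\mapsto\ks$ factors as a reordering $(\ks/\kr,\kr/\kn,\kn)\mapsto(\ks/\kn,\kn)$, which accounts for $\sign(\sigma)$, combined with $(\kr/\kn,\kn)\mapsto\kr$ applied to the block of $\kr$ and then $(\ks/\kr,\kr)\mapsto\ks$. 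Signs are multiplicative, and the Burns--Sano convention puts new primes in front precisely so that these block reorderings contribute no extra Koszul signs. This is exactly the bookkeeping in \cite[\textsection 3.1]{BurnsSano}, and I would follow that computation, checking at each step that contracting with $\psi_{\ku'_1}\wedge\cdots$ in the chosen order matches the permutation convention.
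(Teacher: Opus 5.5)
Your proposal is correct and is essentially the paper's argument: apply the composition law of Proposition \ref{prop:bidual_map_comp} to the sequence for $\kn\mid\ks$ with block-ordered coordinates, then compare with the global ordering, where the discrepancy is $\sign(\ks,\kr)\sign(\ks,\kn)\sign(\kr,\kn)$. Your two-route factorisation of the reordering of $(\ks/\kr,\kr/\kn,\kn)$ actually verifies this sign identity, which the paper only asserts. Your caveat that the localisation maps need not be surjective is also a fair point the paper passes over.

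Keep the block order consistent with the order of contraction. Since $\phi_{\ks,\kr}$ is applied first, under the Burns--Sano convention the grouping that makes the signs match is $(\ks/\kr,\kr/\kn)$, as in your third step. It is not $(\kr/\kn,\ks/\kr)$ as in your second step, and the two differ by $(-1)^{\nu(\kr/\kn)\nu(\ks/\kr)}$.
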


\begin{proof}
By Proposition \ref{prop:bidual_map_comp}, the map 
\[\sign(\ks,\kr)\cdot\sign(\kr,\kn)\cdot\bigl(\Phi_{\ks,\kr}\circ \Phi_{\kr,\kn}\bigr)\]
is the map induced from Corollary \ref{cor:exterior_rankred} by the exact sequence
\[\xymatrix{\bH^1_{\FF^\kn}(K,\bT)\ar@{>->}[r] & \bH^1_{\FF^\ks}(K,\bT)\ar[r] & \displaystyle{\prod_{\ku\mid \frac{\kr}{\kn}} \bH^1_{\s}(K_\ku,\bT)\times \prod_{\ku\mid \frac{\ks}{\kr}} \bH^1_{\s}(K_\ku,\bT)}}\]
and the isomorphisms
\[\begin{array}{cc}
\displaystyle{\prod_{\ku\mid \frac{\kr}{\kn}} \bH^1_{\s}(K_\ku,\bT)\cong \Lambda^{\nu(\kr/\kn)}},&\displaystyle{\prod_{\ku\mid \frac{\ks}{\kr}} \bH^1_{\s}(K_\ku,\bT) \cong \Lambda^{\nu(\ks/\kr)},}
\end{array}\]
induced by the order choice.

Similarly, the map $\sign(\ks,\kn)\Phi_{\ks,\kn}$ is induced by the exact sequence
\[\xymatrix{0\ar[r] & \bH^1_{\FF^\kn}(K,\bT)\ar[r] & \bH^1_{\FF^\ks}(K,\bT)\ar[r] & \displaystyle{\prod_{\ku\mid \frac{\ks}{\kn}} \bH^1_{\s}(K_\ku,\bT)}}\]
and the isomorphism
\[\prod_{\ku\mid \frac{\ks}{\kn}} \bH^1_{\s}(K_\ku,T)\cong \Lambda^{\nu(\ks/\kn)}.\]

Consider the following diagram in which the vertical isomorphisms are induced by the ones above mentioned and the horizontal ones are the natural maps:
\[\xymatrix{
\displaystyle{\det\left(\prod_{\ku\mid \frac{\kr}{\kn}} \bH^1_{\s}(K_\ku,\bT)\times \prod_{\ku\mid \frac{\ks}{\kr}} \bH^1_{\s}(K_\ku,\bT)\right)} \ar[r] \ar[d] & \displaystyle{\det\left(\prod_{\ku\mid \frac{\ks}{\kn}} \bH^1_{\s}(K_\ku,\bT)\right)}\ar[d] \\
\det\left(R^{\nu(\kr/\kn)}\times R^{\nu(\ks/\kr)}\right)\ar[r] & \det\left(R^{\nu(\ks/\kn)}\right).}
\]
Then this diagram is commutative up to the sign 
\[\sign(\ks,\kr)\sign(\ks,\kn)\sign(\kr,\kn).\]

By the identifications made above, we get that 
\[\Phi_{\ks,\kn}=\Phi_{\ks,\kr}\circ \Phi_{\kr,\kn}.\qedhere\]
\end{proof}

Therefore, the set of maps $\Phi_{\kr,\kn}$ forms an inverse system, so it makes sense to consider the elements in the inverse limit.

\begin{definition}
The module of Stark systems of $\FF$ is defined as the inverse limit
\[\bSS(\bT,\FF):=\varprojlim_{\kn\in \NN} \left(\bigwedge^{\chi(\FF)+\nu(n)} \bH^1_{\FF^\kn}(K,\bT)\right).\]
\label{def:stark_systems}
\end{definition}

\begin{remark}
Unlike the construction in \cite{BurnsSakamotoSano2}, we opted to construct Stark systems as collections in the exterior powers of Selmer groups instead of exterior biduals. The reason for this choice is that Iwasawa Selmer groups are free modules, so the rank reduction maps are well defined on exterior powers. It is then unnecessary to consider the exterior biduals, although it will be useful in \textsection\ref{sec:classical} to compare the ultra Kolyvagin systems with the classical theory.
\end{remark}

The link between Stark systems and the Fitting ideals of Selmer groups involves the construction of a sequence of theta ideals.

\begin{definition}
Let $\varepsilon\in \bSS(\bT,\FF)$ be a Stark system. We define its theta ideals as
\[\bTheta_i(\varepsilon):=\sum_{\kn\in \NN_i} \ind\left(\varepsilon_\kn,\bigwedge^{\chi(\FF)+\nu(\kn)} \bH^1_{\FF^\kn}(K,\bT)\right).\]
\label{def:stark_theta}
\end{definition}

\subsection{The module of Stark systems}

The main goal of this section will be to prove the following theorem, which says that Stark systems are controlled by weak core vertices.

\begin{theorem}
Suppose that $\bT$ satisfies Assumption \ref{ass:patched:basic} and let $\kc\in \NN$ be a weak core vertex. Then the projection map
\[\bSS(\bT,\FF)\to \bigwedge^{\chi(\FF)+\nu(\kc)} \bH^1_{\FF^\kc}(K,\bT)\]
is an isomorphism.
\label{th:stark_core_projection}
\end{theorem}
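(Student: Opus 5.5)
The strategy follows \cite{Sakamoto18} and \cite{BurnsSano}: the key input is that, at weak core vertices, the transition maps $\Phi_{\kr,\kn}$ are isomorphisms. Once this is known, weak core vertices are cofinal in $\NN$ by Proposition \ref{prop:weak_core_vertex_existence} and Remark \ref{rem:weak_core_div}, so the inverse limit can be computed over the cofinal subsystem of weak core vertices divisible by $\kc$. All transition maps in this subsystem are isomorphisms, so the projection to the $\kc$-component is an isomorphism.

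\textbf{Step 1: reduction to a single ultraprime.} Let $\kn\mid\kr$ be weak core vertices. By Lemma \ref{lem:com}, $\Phi_{\kr,\kn}$ factors as a composition of maps $\Phi_{\kn\ku,\kn}$, where $\ku$ runs over the prime divisors of $\kr/\kn$. Every intermediate product is again a weak core vertex by Remark \ref{rem:weak_core_div}. It therefore suffices to treat the case $\kr=\kn\ku$.

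\textbf{Step 2: surjectivity of the localisation.} Apply global duality (Proposition \ref{prop:patched:global_duality}) to the pair $\FF^\kn\le\FF^{\kn\ku}$. The dual Selmer group $\bH^1_{(\FF^*)_{\kn}}(K,\bT^*)$ vanishes because $\kn$ is a weak core vertex. This gives a short exact sequence
\[0\to \bH^1_{\FF^\kn}(K,\bT)\to \bH^1_{\FF^{\kn\ku}}(K,\bT)\to \bH^1_\s(K_\ku,\bT)\cong\Lambda\to 0.\]
By Proposition \ref{prop:weak_core_pseudo_free}, the two Selmer groups are free of ranks $\chi(\FF)+\nu(\kn)$ and $\chi(\FF)+\nu(\kn)+1$. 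Since the quotient is free, the sequence splits.

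\textbf{Step 3: the rank reduction is an isomorphism.} Choose a basis $e_1,\dots,e_m$ of $\bH^1_{\FF^\kn}(K,\bT)$ and a lift $f$ of a generator of the quotient. Then $\bigwedge^{m+1}\bH^1_{\FF^{\kn\ku}}(K,\bT)$ is free of rank one on $f\wedge e_1\wedge\cdots\wedge e_m$. The rank reduction map of Corollary \ref{cor:exterior_rankred} is contraction against the functional $\psi_\ku\circ\loc_\ku$, up to sign. It sends this generator to $\pm e_1\wedge\cdots\wedge e_m$, which generates the rank-one free module $\bigwedge^m\bH^1_{\FF^\kn}(K,\bT)$.

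Alternatively, Step 3 can be argued from Proposition \ref{prop:bidual_map_iwasawa_kernel}: the kernel of the map is $\bigwedge^{m+1}\bH^1_{\FF^\kn}(K,\bT)$, which vanishes because that module has rank $m$. Surjectivity then follows from Proposition \ref{prop:bidual_map_fitting}, since the index is $\Fitt^0(0)=\Lambda$, together with the fact that the target is free of rank one.

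\textbf{Step 4: assembling the limit.} Because every transition map between weak core vertices divisible by $\kc$ is an isomorphism, the inverse limit over this cofinal set maps isomorphically to the $\kc$-component. Injectivity uses cofinality: any $\kn\in\NN$ divides a weak core vertex divisible by $\kc$ (apply Proposition \ref{prop:weak_core_vertex_existence} to $\kn\kc$). Hence a system whose $\kc$-component vanishes has all components zero. Surjectivity comes from defining $\varepsilon_\kn:=\Phi_{\kr,\kn}\circ\Phi_{\kr,\kc}^{-1}(x)$ for any weak core vertex $\kr$ divisible by $\kn\kc$. This is independent of the choice of $\kr$, by comparing two choices with their product and applying Lemma \ref{lem:com}.

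\textbf{Main obstacle.} The crucial point is Step 2, specifically the surjectivity of $\loc_\ku$ onto $\bH^1_\s(K_\ku,\bT)$. Everything else is formal. This surjectivity is where the weak core hypothesis is used, via the patched global duality sequence in the profinite setting.
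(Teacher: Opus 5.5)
Your proposal is correct and is essentially the paper's proof. It reduces (via Lemma \ref{lem:com} and cofinality of weak core vertices) to $\Phi_{\kc\ku,\kc}$, obtains the short exact sequence onto $\bH^1_\s(K_\ku,\bT)\cong\Lambda$ from global duality and $\bH^1_{(\FF^*)_\kc}(K,\bT^*)=0$, and concludes via Proposition \ref{prop:bidual_map_fitting} (or your explicit basis contraction) that a surjection between free rank-one modules is an isomorphism.

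You also spell out the limit bookkeeping that the paper leaves implicit. The only nit is that you should use $\mathrm{lcm}(\kn,\kc)$ rather than $\kn\kc$ so that you stay in $\NN$; this is already a weak core vertex by Remark \ref{rem:weak_core_div}.
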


\begin{proof}
By Proposition \ref{prop:weak_core_vertex_existence}, we only need to prove that if $\kc\in \NN$ is a weak core vertex and $\ku\in \PP$ does not divide $\kc$, the map
\[\Phi_{\kc\ku,\kc}:\ \bigwedge^{r+\nu(\kc\ku)} \bH^1_{\FF^{\kc\ku}} (K,\bT)\to \bigwedge^{r+\nu(\kc)} \bH^1_{\FF^\kc} (K,\bT)\]
is an isomorphism. This map is induced by the exact sequence
\[\xymatrix{0\ar[r] & \bH^1_{\FF^\kc}(K,\bT)\ar[r]&\bH^1_{\FF^{\kc\ku}}(K,\bT)\ar[r] & \bH^1_\s(K_\ku, \bT) \ar[r] &0.}\]
Then Proposition \ref{prop:bidual_map_fitting} implies that $\Phi_{\kc\ku,\kc}$ is surjective so, being both domain and codomain free modules of rank one, it is an isomorphism.
\end{proof}

The above control theorem allows the computation of the module of Stark systems.

\begin{corollary}
If $\bT$ satisfies Assumption \ref{ass:patched:basic}, the module of Stark systems $\bSS(\bT,\FF)$ is a free $\Lambda$-module of rank one.
\label{cor:stark_free}
\end{corollary}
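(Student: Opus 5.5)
The plan is to combine Theorem \ref{th:stark_core_projection} with the existence of weak core vertices and the freeness of Selmer groups at weak core vertices. First I would invoke Proposition \ref{prop:weak_core_vertex_existence} with $\kn=1$ to produce a weak core vertex $\kc\in\NN$. This is the only input that needs Assumption \ref{ass:patched:basic}, through Corollary \ref{cor:patched:dual_res_vanish}.

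By Theorem \ref{th:stark_core_projection}, the projection
\[\bSS(\bT,\FF)\to \bigwedge^{\chi(\FF)+\nu(\kc)} \bH^1_{\FF^\kc}(K,\bT)\]
is an isomorphism. It therefore suffices to identify the target. Proposition \ref{prop:weak_core_pseudo_free} gives
\[\bH^1_{\FF^\kc}(K,\bT)\cong\Lambda^{\chi(\FF)+\nu(\kc)}.\]
Its top exterior power is then $\bigwedge^{m}\Lambda^{m}\cong\Lambda$ with $m=\chi(\FF)+\nu(\kc)$, which is free of rank one. This is exactly the claimed statement.

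The only point needing care is that the inverse limit defining $\bSS(\bT,\FF)$ is really controlled by the single vertex $\kc$, that is, that the projection is injective as well as surjective. Theorem \ref{th:stark_core_projection} already asserts this. The underlying reasons are the following.

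First, the weak core vertices are cofinal in $\NN$. By Remark \ref{rem:weak_core_div} any multiple of $\kc$ is again a weak core vertex, and any $\kn$ divides some $\kn\kc$ after removing common factors.

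Second, the transition maps between weak core vertices are isomorphisms of free rank-one modules. This follows from the surjectivity given by Proposition \ref{prop:bidual_map_fitting}, since the relevant cokernel $C$ vanishes when the dual Selmer group at $\kc$ is trivial. Note also that a surjective endomorphism of $\Lambda$ is bijective.

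Consequently the limit over the cofinal subsystem equals any one of its terms. I therefore expect no genuine obstacle: the corollary is a direct consequence of those three results.
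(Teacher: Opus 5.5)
Your proposal is correct and follows the paper's proof: pick a weak core vertex $\kc$, use Theorem~\ref{th:stark_core_projection} to identify $\bSS(\bT,\FF)$ with $\bigwedge^{\chi(\FF)+\nu(\kc)}\bH^1_{\FF^\kc}(K,\bT)$, and conclude from the freeness of $\bH^1_{\FF^\kc}(K,\bT)$ of rank $\chi(\FF)+\nu(\kc)$. Proposition~\ref{prop:weak_core_pseudo_free} is the right citation for that freeness (the paper cites the core-vertex version, Proposition~\ref{prop:core_pseudo_free}); one small correction is that this freeness also uses Assumption~\ref{ass:patched:basic}, through Proposition~\ref{prop:iwasawa:selmer_free}, so the existence of $\kc$ is not the only step that needs it.
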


\begin{proof}
Let $\kc\in \NN$ be a weak core vertex. By Proposition \ref{prop:core_pseudo_free}, $\bH^1_{\FF^\kc}(K,\bT)$ is free of rank ${\chi(\FF)+\nu(\kc)}$. Thus, Theorem \ref{th:stark_core_projection} implies that 
\[\bSS(\bT,\FF)\cong \bigwedge^{\chi(\FF)+\nu(\kc)} \bH^1_{\FF^\kc}(K,\bT)\cong \bigwedge^{\chi(\FF)+\nu(\kc)} \Lambda^{\chi(\FF)+\nu(\kc)} =\Lambda.\qedhere\]
\end{proof}

Therefore, it makes sense to talk about generators of the module of Stark systems.

\begin{definition}
The generators of $\bSS(\bT,\FF)$ are called \emph{primitive} Stark systems.
\end{definition}

\begin{theorem}
Suppose that Assumption \ref{ass:patched:basic} holds and let $\varepsilon=(\varepsilon_\kn)_{\kn\in \NN}\in \bSS(\bT,\FF)$ be a primitive Stark system. For every $\kn\in \NN$, we have that $\ind(\varepsilon_n)$ is a principal ideal such that

\[\Fitt^0_\Lambda\Bigl(\bH^1_{\FF_\kn^*}(K,\bT^*)^\vee\Bigr)\subset_{f.i.}\ind(\varepsilon_\kn).\]
\label{th:stark:fitt_sel_res}
\end{theorem}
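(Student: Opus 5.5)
We plan to reduce the statement to Proposition \ref{prop:bidual_map_iwasawa_fitting_inclusion}, applied to an exact sequence relating $\bH^1_{\FF^\kn}(K,\bT)$ to $\bH^1_{\FF^\kc}(K,\bT)$ for a suitable weak core vertex $\kc$.

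\textbf{Step 1: choose $\kc$ and write the exact sequence.} Fix $\kn\in\NN$. By Proposition \ref{prop:weak_core_vertex_existence}, there is a weak core vertex $\kc\in\NN$ with $\kn\mid\kc$. Apply global duality (Proposition \ref{prop:patched:global_duality}) to the pair of Selmer structures $\FF^\kn\le\FF^\kc$. Since $(\FF^\kc)^*=(\FF^*)_\kc$ and $\bH^1_{(\FF^*)_\kc}(K,\bT^*)=0$, this gives an exact sequence
\[0\to \bH^1_{\FF^\kn}(K,\bT)\to \bH^1_{\FF^\kc}(K,\bT)\to \bigoplus_{\ku\mid \kc/\kn}\bH^1_\s(K_\ku,\bT)\to \bH^1_{(\FF^*)_\kn}(K,\bT^*)^\vee\to 0.\]
Using the fixed isomorphisms $\psi_\ku$, the third term is $\Lambda^{s}$ with $s=\nu(\kc)-\nu(\kn)$, and the cokernel $C$ is exactly $\bH^1_{\FF_\kn^*}(K,\bT^*)^\vee$. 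By Proposition \ref{prop:iwasawa:selmer_free}, the first two terms are free. By Proposition \ref{prop:weak_core_pseudo_free}, $N:=\bH^1_{\FF^\kc}(K,\bT)$ has rank $r=\chi(\FF)+\nu(\kc)$, and $r\ge s$ because $\chi(\FF)\ge 0$.

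\textbf{Step 2: identify $\varepsilon_\kn$ as the image of a generator.} Since $\varepsilon$ is primitive, Theorem \ref{th:stark_core_projection} and Corollary \ref{cor:stark_free} show that $\varepsilon_\kc$ generates $\bigwedge^{r}N\cong\Lambda$. As $N$ is free, this exterior power coincides with $\bigcap^r N$. By the definition of the inverse limit, $\varepsilon_\kn=\Phi_{\kc,\kn}(\varepsilon_\kc)$, and $\Phi_{\kc,\kn}$ is, up to sign, the rank reduction map $\phi_{N,M}$ of Corollary \ref{cor:exterior_rankred} for the sequence above, with $M=\bH^1_{\FF^\kn}(K,\bT)$. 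The sign does not affect the index.

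\textbf{Step 3: apply Proposition \ref{prop:bidual_map_iwasawa_fitting_inclusion}.} That proposition now says that $\ind(\varepsilon_\kn)$ is the unique principal ideal containing $\Fitt^0_\Lambda(C)$ with finite index. This is exactly the claim.

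The one point needing care is a compatibility check. The index in Definition \ref{def:index}, computed in $\bigwedge^{\chi(\FF)+\nu(\kn)}M$, must agree with the index in $\bigcap^{r-s}M$ used in Proposition \ref{prop:bidual_map_iwasawa_fitting_inclusion}. This holds via the canonical identification $\bigcap\cong\bigwedge$ for free modules, which the paper already uses.

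There is also a degenerate case where $\bH^1_{\FF^\kn}$ has rank larger than $r-s$. Then $C$ has positive rank, so $\Fitt^0_\Lambda(C)=0$. In that case $\ind(\varepsilon_\kn)=0$ is still principal, and the inclusion is trivial. Proposition \ref{prop:bidual_map_iwasawa_fitting_inclusion} already handles this in its proof, so no extra work should be needed.
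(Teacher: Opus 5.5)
Your proposal is correct and follows the paper's proof: choose a weak core vertex $\kc$ divisible by $\kn$, use Theorem \ref{th:stark_core_projection} to see that $\varepsilon_\kc$ generates $\bigwedge^{\chi(\FF)+\nu(\kc)}\bH^1_{\FF^\kc}(K,\bT)$, write $\varepsilon_\kn=\Phi_{\kc,\kn}(\varepsilon_\kc)$, and apply Proposition \ref{prop:bidual_map_iwasawa_fitting_inclusion} to the global duality sequence. You are slightly more precise than the written proof, which cites Remark \ref{rem:weak_core_div} for a ``core vertex''; what is actually needed is Proposition \ref{prop:weak_core_vertex_existence} together with a weak core vertex, exactly as you use.
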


\begin{proof}
Fix $\kn\in \NN$. By Remark \ref{rem:weak_core_div}, there exists a core vertex $\kc$ such that $\kn\mid \kc$. Consider then the map in Definition \ref{def:stark:transition_map}:
\[\Phi_{\kc,\kn}:\ \bigwedge^{\chi(\FF)+\nu(\kc)} \bH^1_{\FF^\kc}(K,\bT)\to \bigwedge^{\chi(\FF)+\nu(\kn)} \bH^1_{\FF^\kn}(K,\bT).\]

Since $\varepsilon$ generates $\bSS(\bT,\FF)$, Theorem \ref{th:stark_core_projection} implies that $\varepsilon_{\kc} $ generates 
\[\bigwedge^{\chi(\FF)+\nu(\kc)}\bH^1_{\FF^\kc}(K,\bT).\]
Since $\varepsilon_\kn=\Phi_{\kc,\kn}(\varepsilon_\kc)$, then Proposition  \ref{prop:bidual_map_iwasawa_fitting_inclusion} implies that 
\[ \Fitt^0_\Lambda\Bigl(\bH^1_{\FF_\kn^*}(K,\bT^*)^\vee\Bigr)\subset_{f.i.} \ind(\varepsilon_\kn).\qedhere\]
\end{proof}

\subsection{Higher Fitting ideals of the Selmer group}

We can also use Stark systems to compute higher Fitting ideals of the Selmer group. Indeed, we will see the higher Fitting ideals coincide up to finite index with the theta ideals defined in Definition \ref{def:stark_theta}

\begin{theorem}
If Assumption \ref{ass:patched:basic} holds, every primitive Stark system $\varepsilon\in \bSS(\bT,\FF)$ satisfies that
\[\Fitt^i_\Lambda\Bigl(\bH^1_{\FF^*}(K,\bT^*)^\vee\Bigr)\subset_{f.i.}\bTheta_i(\varepsilon).\]
\label{th:iwasawa:stark_theta_fitting}
\end{theorem}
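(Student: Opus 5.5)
We need to show $\Fitt^i(M)\subset_{f.i.} \bTheta_i(\varepsilon)$ for $M:=\bH^1_{\FF^*}(K,\bT^*)^\vee$. Since $\bTheta_i(\varepsilon)$ is a sum of indices and pseudo-containment can be checked after localising at each height one prime $\BB$, the plan is to reduce to the case $i$ fixed, $\BB$ fixed. There we want some $\kn\in\NN_i$ with $\Fitt^i_\Lambda(M)_\BB\subset \ind(\varepsilon_\kn)_\BB$. Then we use Theorem \ref{th:stark:fitt_sel_res}, which gives $\Fitt^0_\Lambda\bigl(\bH^1_{\FF^*_\kn}(K,\bT^*)^\vee\bigr)\subset_{f.i.}\ind(\varepsilon_\kn)$. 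So it suffices to show:
\[\Fitt^i_\Lambda(M)_\BB\subset \Fitt^0_\Lambda\bigl(\bH^1_{\FF^*_\kn}(K,\bT^*)^\vee\bigr)_\BB \quad\text{for a suitable }\kn\in\NN_i.\]
Note that finite-index containments are preserved when we sum over $\kn$.

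By global duality (Proposition \ref{prop:patched:global_duality}, applied to $\FF_\kn\le\FF$), we have an exact sequence
\[\bigoplus_{\ku\mid\kn}\bH^1_\f(K_\ku,\bT)\to M\to M_\kn:=\bH^1_{\FF^*_\kn}(K,\bT^*)^\vee\to0,\]
with each $\bH^1_\f(K_\ku,\bT)\cong\Lambda$. Hence $M_\kn$ is the quotient of $M$ by the submodule generated by the images $x_1,\dots,x_i$ of the generators. The standard Fitting ideal inequality for a quotient by $i$ elements gives $\Fitt^i(M)\subset\Fitt^0(M_\kn)$, and this holds for every $\kn\in\NN_i$, even without localising. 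This follows directly from Definition \ref{def:fitting}: adjoining $i$ columns to a presentation matrix, every maximal minor of the enlarged matrix expands into $(m-i)$-minors of the original one. So the inclusion is in fact immediate once $\NN_i\ne\emptyset$, which Proposition \ref{prop:ultraprimes_chebotarev} guarantees.

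Combining the steps, for each $\kn\in\NN_i$,
\[\Fitt^i(M)\subset\Fitt^0(M_\kn)\subset_{f.i.}\ind(\varepsilon_\kn)\subset\bTheta_i(\varepsilon),\]
which proves the theorem.

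The main point to check is the meaning of $\subset_{f.i.}$. It means pseudo-equality together with containment, so we must make sure that $\Fitt^i(M)$ and $\bTheta_i(\varepsilon)$ become equal after localising at every $\BB$, not merely that one is contained in the other. If that stronger reading is intended, the key extra step is a Chebotarev choice, made at each $\BB$. Using Proposition \ref{prop:patched:cheb_rank_red} and Corollary \ref{cor:patched:cheb} iteratively, I would choose Kolyvagin ultraprimes $\ku_1,\dots,\ku_i$ whose local generators map onto a minimal generating set of $M_\BB$ (in the DVR $\Lambda_\BB$, $M_\BB\cong\bigoplus\Lambda_\BB/\BB^{a_j}$). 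These kill the $i$ largest cyclic factors, so that $\Fitt^0(M_\kn)_\BB=\Fitt^i(M)_\BB$. This realisation of the top cyclic factors by localisations at ultraprimes, simultaneously surjective enough, is where I expect the difficulty. I would attack it by choosing $d$-elements in $\bH^1_{\FF^*}(K,\bT^*)[\m]$ dual to the relevant generators, as in the proof of Proposition \ref{prop:patched:cheb_rank_red}.
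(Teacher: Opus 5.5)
\textbf{There is a genuine gap: the algebraic inequality your whole chain rests on is reversed.} If $N=M/\langle x_1,\dots,x_i\rangle$ and $A$ presents $M$, then $[A\mid X]$ presents $N$. The Laplace expansion you describe writes each maximal minor of $[A\mid X]$ as a combination of $(m-j)$-minors of $A$ with $j\le i$. That proves $\Fitt^0_\Lambda(N)\subseteq\Fitt^i_\Lambda(M)$, not $\Fitt^i_\Lambda(M)\subseteq\Fitt^0_\Lambda(N)$. The inclusion you state is false as algebra: for $M=(\Lambda/p)^2$, $i=1$, $x_1=0$ one gets $\Fitt^1_\Lambda(M)=(p)\not\subseteq(p^2)=\Fitt^0_\Lambda(N)$. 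Nothing forces the images of the local generators at an arbitrary $\kn$ to hit the largest cyclic factors of $M$. So a single $\kn\in\NN_i$ gives nothing, and the containment half of $\subset_{f.i.}$, which you treat as immediate, is exactly what is unproved. It also cannot be recovered from pseudo-equalities obtained prime by prime, since pseudo-equality does not give containment (e.g.\ $\Lambda\approx\m$). A minor point: the sequence $\bigoplus_{\ku\mid\kn}\Lambda\to M\to M_\kn\to 0$ comes from global duality for $\FF\le\FF^\kn$, with the singular quotients $\bH^1_\s(K_\ku,\bT)$, not from $\FF_\kn\le\FF$.

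\textbf{The missing idea is the exact identity} $\Fitt^i_\Lambda(M)=\sum_{\kn\in\NN_i}\Fitt^0_\Lambda(M_\kn)$, which the paper obtains from a weak core vertex.
\begin{itemize}
\item Choose $\kc$ with $\bH^1_{(\FF^*)_\kc}(K,\bT^*)=0$ and $\nu(\kc)\ge i$, using Proposition~\ref{prop:weak_core_vertex_existence} and Remark~\ref{rem:weak_core_div}.
\item Proposition~\ref{prop:patched:global_duality} for $\FF\le\FF^\kc$ gives a finite presentation $\bH^1_{\FF^\kc}(K,\bT)\to\bigoplus_{\ku\mid\kc}\bH^1_\s(K_\ku,\bT)\cong\Lambda^{\nu(\kc)}\to M\to 0$.
\item For $\kn\mid\kc$, the same proposition for $\FF^\kn\le\FF^\kc$ shows that $M_\kn$ is presented by the same matrix with the rows indexed by $\ku\mid\kn$ deleted.
\item The $(\nu(\kc)-i)$-minors of a matrix are precisely the maximal minors of its submatrices with $i$ rows removed. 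Hence $\Fitt^i_\Lambda(M)=\sum_{\kn\mid\kc,\,\nu(\kn)=i}\Fitt^0_\Lambda(M_\kn)$. The paper runs this as an induction on $i$ built on the same presentation.
\item Theorem~\ref{th:stark:fitt_sel_res} then yields the containment $\Fitt^i_\Lambda(M)\subseteq\bTheta_i(\varepsilon)$.
\item For pseudo-equality, localise at a height-one prime $\BB$ and use the correct inclusion $\Fitt^0_\Lambda(M_\kn)\subseteq\Fitt^i_\Lambda(M)$ for all $\kn\in\NN_i$. This gives $\bTheta_i(\varepsilon)_\BB=\sum_{\kn}\Fitt^0_\Lambda(M_\kn)_\BB=\Fitt^i_\Lambda(M)_\BB$.
\end{itemize}
So no Chebotarev choice adapted to $\BB$ is needed. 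That is fortunate, because the results of \textsection\ref{sec:cheb} only control localisations modulo $\m$, not the cyclic factors of $M_\BB$. Also, $M$ need not be torsion, so $M_\BB$ need not have the shape you wrote.
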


\begin{proof}
Choose a weak core vertex $\kc\in \NN$, which induces an exact sequence
\[\xymatrix{\bH^1_{\FF}(K,\bT)\ar@{>->}[r] & \bH^1_{\FF^{\kc}}(K,\bT)\ar[r] &\displaystyle{\bigoplus_{\ku\mid \kc}\bH^1_\s(K_\ku,\bT)}\ar@{->>}[r] & \bH^1_{\FF^*}(K,\bT^*)^\vee.}\]

For every $\kq\mid \kc$, there is another exact sequence 
\[\xymatrix{\bH^1_{\FF^\kq}(K,\bT)\ar@{>->}[r] & \bH^1_{\FF^{\kc}}(K,\bT)\ar[r] &\displaystyle{\bigoplus_{\ku\mid \frac{\kc}{\kq}}\bH^1_\s(K_\ku,\bT)}\ar@{->>}[r] & \bH^1_{(\FF^*)_\kq}(K,\bT^*)^\vee.}\]

By the definition of the Fitting ideals, we can deduce that 
\[\Fitt^i_\Lambda\Bigl(\bH^1_{\FF^*}(K,\bT^*)^\vee\Bigr)=\sum_{\kq\mid \kc} \Fitt^{i-1}_\Lambda\Bigl(\bH^1_{(\FF^*)_\kq}(K,\bT^*)^\vee\Bigr).\] 

Proposition \ref{prop:weak_core_vertex_existence} implies that 
\[\Fitt^i_\Lambda\Bigl(\bH^1_{\FF^*}(K,\bT^*)^\vee\Bigr)=\sum_{\kq\in \PP} \Fitt^{i-1}_\Lambda\Bigl(\bH^1_{(\FF^*)_\kq}(K,\bT^*)^\vee\Bigr).\] 

We can now proceed to prove Theorem \ref{th:iwasawa:stark_theta_fitting} by induction. For $i=0$, it is Theorem \ref{th:stark:fitt_sel_res} for $n=1$. Assuming the result holds true for $i-1$, we can apply it to all Selmer structures $\FF^\kq$ to obtain that 
\[\Fitt^i_\Lambda\Bigl(\bH^1_{\FF^*}(K,\bT^*)^\vee\Bigr)=\sum_{\kq\in \PP} \sum_{\kn\in \NN_{i-1},\ \kq\nmid \kn} \Fitt^0_\Lambda\Bigl(\bH^1_{(\FF^*)_{\kq\kn}}(K,\bT^*)^\vee\Bigr).\]
Rearranging the sum,
\[\Fitt^i_\Lambda\Bigl(\bH^1_{\FF^*}(K,\bT^*)^\vee\Bigr)=\sum_{\kn\in \NN_{i}} \Fitt^0_\Lambda\Bigl(\bH^1_{(\FF^*)_{\kn}}(K,\bT^*)^\vee\Bigr).\]
By Theorem \ref{th:stark:fitt_sel_res}, we obtain that 
\[\Fitt^i_\Lambda\Bigl(\bH^1_{\FF^*}(K,\bT^*)^\vee\Bigr)\lessapprox \sum_{\kn\in \NN_{i}}\ind(\varepsilon_\kn)=\bTheta_i(\varepsilon).\qedhere\]
\end{proof}

\begin{remark}
It is expected that the ideals $\bTheta_i(\varepsilon)$ can be recovered as a limit of the analogues of classical Stark systems, as those appearing in \cite{BurnsSakamotoSano2}. If that comparison can be done, Theorem \ref{th:iwasawa:stark_theta_fitting} can be seen as a weaker version of \cite[Theorem 4.12]{BurnsSakamotoSano2}.
\end{remark}

\section{Kolyvagin systems}
\label{sec:iwasawa:kol}

In this section, we develop the theory of ultra Kolyvagin systems in order to give a computation of the Fitting ideals of Selmer groups. Although that has already been done using Stark systems, the computations of the theta ideals associated with Kolyvagin systems are more explicit from the classes in the Euler system.
\subsection{Definition of Kolyvagin systems}
Unlike Stark systems, Kolyvagin systems can be only defined when the core rank is positive. The reason is that Kolyvagin system relations will be defined on an exterior power of rank $\chi(\FF)-1$, which is only constructed when the core rank is positive. In addition, we need to assume that the prime $p$ is at least $5$. This assumption was already made in \cite{MazurRubin} and it is required so we can construct suitable primes using Proposition \ref{prop:patched:cheb}.
\begin{namedass}{(Pos)}
$\FF$ is a cartesian Selmer structure of core rank $\chi(\FF)\geq 1$ and the prime $p$ is at least $5$.
\label{ass:core_rank_positve}
\end{namedass}

For every $\kn\in \NN$ and $\ku\in \PP$, there are exact sequences
\[\xymatrix{0\ar[r] & \bH^1_{{\FF(\kn)_{\ku}}}(K,\bT)\ar[r] & \bH^1_{\FF(\kn)}(K,\bT)\ar[r]& \bH^1_{\f}(K_\ku,\bT),}\] 
\[\xymatrix{0\ar[r] & \bH^1_{{\FF(\kn)_{\ku}}}(K,\bT)\ar[r] & \bH^1_{\FF(\kn\ku)}(K,\bT)\ar[r]& \bH^1_{\tr}(K_\ku,\bT).}\] 
Recall that, in the previous section, we had defined an isomorphism
\[\psi_\ku:\ \bH^1_{\s}(K_\ku,\bT)\to R.\]
Together with the finite-singular map defined in Proposition \ref{prop:patched:finite-singular}, it induces another isomorphism
\[\phi_\ku=\psi_\ku\circ\phi_\ku^{\fs}:\ \bH^1_\f(K_\ku,\bT)\cong R.\]
Under these identifications, the above exact sequences and Corollary \ref{cor:exterior_rankred} induce maps
\[\alpha_{\kn,\ku}: \bigwedge^{\chi(\FF)} \bH^1_{\FF(\kn)}(K,\bT) \to \bigwedge^{\chi(\FF)-1} \bH^1_{\FF(\kn)_\ku}(K,\bT),\] 
\[\beta_{\kn,\ku}: \bigwedge^{\chi(\FF)} \bH^1_{\FF(\kn\ku)}(K,\bT) \to \bigwedge^{\chi(\FF)-1} \bH^1_{\FF(\kn)_\ku}(K,\bT).\] 
\begin{definition}

A \emph{Kolyvagin system} is a collection
\[(\kappa_\kn)_{\kn\in \NN} \in \prod_{\kn\in \NN} \left(\bigwedge^{\chi(\FF)} \bH^1_{\FF(\kn)}(K,\bT)\right)\]
satisfying, for all $\kn\in \NN$ and $\ku\in \PP$ not dividing $\kn$, that 
\[\alpha_{\kn,\ku}(\kappa_\kn)=\beta_{\kn,\ku}(\kappa_{\kn\ku}).\]
\label{def:kol_highrank}
\end{definition}

\begin{remark}
When $\chi(\FF)=1$, Definition \ref{def:kol_highrank} coincides with the construction in \cite{MazurRubin}, modified as in \cite{Sweeting} to consider local conditions at non-constant ultraprimes. Indeed, one could see that
\[\begin{array}{cc}
    \alpha_{\kn,\ku}(\kappa_\kn)=\phi_\ku(\loc_\ku(\kappa_\kn)),& \beta_{\kn,\ku}(\kappa_{\kn\ku})=\psi_{\ku}(\loc_\ku(\kappa_{\kn\ku})).
\end{array}\]
Only because $\phi_\ku$ and $\psi_{\ku}$ are defined satisfying $\phi_{\ku}=\psi_\ku\circ\phi_\ku^{\fs}$, we can see that Kolyvagin system relations in both settings are equivalent.
\label{rem:kol_relation}
\end{remark}

\begin{remark}
By Proposition \ref{prop:iwasawa:selmer_free}, all the Selmer groups $\bH^1_{\FF(\kn)}(K,\bT)$ are free modules, so there is a canonical identification 
\[\bigwedge^{\chi(\FF)}\bH^1_{\FF(\kn)}(K,\bT)=\bigcap^{\chi(\FF)}\bH^1_{\FF(\kn)}(K,\bT).\]
\end{remark}

Similarly to Definition \ref{def:stark_theta}, we can associate a sequence of theta ideals with every Kolyvagin system.
\begin{definition}
Let $\kappa\in \bKS(\bT,\FF)$ be a Kolyvagin systems. We define its theta ideals as
\[\bTheta_i(\kappa):=\sum_{n\in \NN_i} \ind\left(\kappa_n,\bigwedge^{\chi(\FF)} \bH^1_{\FF(\kn)}(K,\bT)\right).\]
\label{def:kol_theta}
\end{definition}

\subsection{Kolyvagin systems from Stark systems}

In this section, we define a regulator map from the module of Stark systems to the Kolyvagin systems and prove that it is, indeed, an isomorphism.

For every $\kn\in \NN$, there is an exact sequence
\[\xymatrix{0\ar[r] & \bH^1_{\FF(\kn)}(K,\bT)\ar[r] &\bH^1_{\FF^\kn}(K,\bT) \ar[r] & \displaystyle \prod_{\ku\mid \kn} \bH^1_{\f}(K_\ku, \bT).}\] 
With the ordering of the ultraprimes previously established, there is a well defined isomorphism
\[\bigoplus_{\ku\mid \kn}\phi_\ku:\ \prod_{\ku\mid \kn} \bH^1_{\f}(K_\ku, \bT)\cong \Lambda^{\nu(\kn)}.\]
With these identifications, Corollary \ref{cor:exterior_rankred} constructs a map
\[(-1)^{\nu(\kn)}\Reg_\kn:\ \bigwedge^{\chi(\FF)+\nu(\kn)} \bH^1_{\FF^\kn}(K,T)\to \bigwedge^{\chi(\FF)} \bH^1_{\FF(\kn)}(K,T).\]
The sign modification is introduced to guarantee that the regulator maps produce Kolyvagin systems.
\begin{theorem}
Let $\FF$ be a cartesian Selmer structure of core rank $\chi(\FF)\geq 1$. Combined for all $n\in \NN$, the maps $\Reg_n$ induce a regulator map
\[\Reg:\ \bSS(\bT,\FF)\to \bKS(\bT,\FF).\]
\label{th:regulator_bijective}
\end{theorem}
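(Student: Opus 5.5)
We have to show that for a Stark system $\varepsilon\in\bSS(\bT,\FF)$ the collection $\kappa_\kn:=\Reg_\kn(\varepsilon_\kn)$ satisfies the Kolyvagin relation $\alpha_{\kn,\ku}(\kappa_\kn)=\beta_{\kn,\ku}(\kappa_{\kn\ku})$ for all $\kn\in\NN$ and $\ku\in\PP$ with $\ku\nmid\kn$. The plan is to express both sides as one rank reduction map, of total drop $\nu(\kn)+1$, applied to $\varepsilon_{\kn\ku}$, and then to compare signs. Every map involved comes from Corollary \ref{cor:exterior_rankred}. Each Selmer group in sight is free by Proposition \ref{prop:iwasawa:selmer_free}, so all the rank reductions are defined on exterior powers, and Proposition \ref{prop:bidual_map_comp} lets us compose them.

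For the left side, we use the compatibility $\varepsilon_\kn=\Phi_{\kn\ku,\kn}(\varepsilon_{\kn\ku})$, which gives $\alpha_{\kn,\ku}\circ\Reg_\kn\circ\Phi_{\kn\ku,\kn}$ applied to $\varepsilon_{\kn\ku}$. These three maps come from the chain of inclusions
\[\bH^1_{\FF(\kn)_\ku}(K,\bT)\subset\bH^1_{\FF(\kn)}(K,\bT)\subset\bH^1_{\FF^\kn}(K,\bT)\subset\bH^1_{\FF^{\kn\ku}}(K,\bT).\]
Their quotients embed, respectively, into $\bH^1_\f(K_\ku,\bT)$ via $\phi_\ku$, into $\prod_{\kq\mid\kn}\bH^1_\f(K_\kq,\bT)$ via $\phi_\kq$, and into $\bH^1_\s(K_\ku,\bT)$ via $\psi_\ku$.

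Iterating Proposition \ref{prop:bidual_map_comp}, the composite is $\pm$ the rank reduction attached to
\[\bH^1_{\FF(\kn)_\ku}(K,\bT)\hookrightarrow\bH^1_{\FF^{\kn\ku}}(K,\bT)\to\prod_{\kq\mid\kn}\bH^1_\f(K_\kq,\bT)\times\bH^1(K_\ku,\bT).\]
The local component at $\ku$ is identified with $\Lambda^2$ through the splitting $\bH^1(K_\ku,\bT)=\bH^1_\f\oplus\bH^1_\tr$ of Proposition \ref{prop:patched_transverse}. We take the transverse component through $\psi_\ku$, using $\bH^1_\tr\cong\bH^1_\s$, and the finite component through $\phi_\ku$.

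For the right side, $\beta_{\kn,\ku}\circ\Reg_{\kn\ku}$ arises from
\[\bH^1_{\FF(\kn)_\ku}\subset\bH^1_{\FF(\kn\ku)}\subset\bH^1_{\FF^{\kn\ku}}.\]
The first quotient maps to $\bH^1_\tr(K_\ku,\bT)$ through $\psi_\ku$. The second maps to $\prod_{\kq\mid\kn\ku}\bH^1_\f(K_\kq,\bT)$, with the $\ku$-component taken through $\phi_\ku$. Composing again by Proposition \ref{prop:bidual_map_comp}, the result is $\pm$ the rank reduction for the same sequence and the same identification with $\Lambda^{\nu(\kn)+2}$, up to the ordering of the coordinates. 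To apply the proposition we must verify the compatibility conditions \eqref{eq:compatibility}. Here the key input is that the projection to $\bH^1_\f(K_\ku,\bT)$ and the projection to $\bH^1_\tr(K_\ku,\bT)$ are the two coordinates of a direct sum decomposition. On $\bH^1_{\FF^\kn}$ the transverse component vanishes, and on $\bH^1_{\FF(\kn\ku)}$ the finite component vanishes. So each partial projection matches the restriction of the total one, as \eqref{eq:compatibility} requires.

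The main obstacle is the sign bookkeeping. The two sides order the coordinates differently: on the left $\ku$ comes first for $\Phi$, then the $\kn$-block, then $\ku$ again; on the right the order is $\ku$ (transverse), then the $\kn\ku$-block in the fixed ordering. The left side collects the factors $\sign(\kn\ku,\kn)$ and $(-1)^{\nu(\kn)}$. The right side collects $(-1)^{\nu(\kn)+1}$ together with the position of $\ku$ inside the ordered divisors of $\kn\ku$. I would check that each of these discrepancies is a permutation of determinant factors, exactly as in the proof of Lemma \ref{lem:com}. Then I would verify that the sign of moving $\ku$ into its ordered position is $\sign(\kn\ku,\kn)$, and that the $(-1)^{\nu(\kn)}$ normalisation in $\Reg$ absorbs the transposition of the two $\ku$-coordinates. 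Together with the remaining factor $-1$, this makes the total sign $+1$. Finally, $\Reg$ is $\Lambda$-linear because each $\Reg_\kn$ is, so it defines a map $\bSS(\bT,\FF)\to\bKS(\bT,\FF)$.
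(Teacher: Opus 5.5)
Your proposal is correct and follows essentially the paper's own argument. Both composites $\alpha_{\kn,\ku}\circ\Reg_\kn\circ\Phi_{\kn\ku,\kn}$ and $\beta_{\kn,\ku}\circ\Reg_{\kn\ku}$ are identified with the rank reduction attached to $\bH^1_{\FF_\ku(\kn)}(K,\bT)\hookrightarrow\bH^1_{\FF^{\kn\ku}}(K,\bT)$, with the $\ku$-component split as $\bH^1_\f\oplus\bH^1_\tr$, and they differ only by a reordering sign. When you carry out that sign check, order iterated contractions by the same convention that makes Lemma \ref{lem:com} hold with $\sign(\cdot,\cdot)$ as in Definition \ref{def:sign}, namely the functional listed first is contracted first; with the opposite convention the two sides differ by $(-1)^{\nu(\kn)}$.
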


\begin{proof}
We need to show that the image of the regulator of a Stark system satisfies the Kolyvagin system relations, i.e., that for every $\kn\in \NN$ and $\ku\in \PP$ not dividing $\kn$, we have that
\[\Bigl(\alpha_{\kn,\ku}\circ \Reg_\kn\circ \Phi_{\kn\ku,\kn} \Bigr)(\varepsilon_{\kn\ku})=\Bigl(\beta_{\kn,\ku}\circ \Reg_{\kn\ku}\Bigr)(\varepsilon_{\kn\ku}),\]
since the Stark system relation implies that $\Phi_{\kn\ku,\kn}(\varepsilon_{\kn\ku})=\varepsilon_{\kn}$.

The maps $\Bigl(\alpha_{\kn,\ku}\circ \Reg_\kn\circ\Phi_{\kn\ku,\kn}\Bigr)$ and $\Bigl(\beta_{\kn,\ku}\circ \Reg_{\kn\ku}\Bigr)$ can be computed using Corollary \ref{cor:exterior_rankred} and the exact sequences
\[\xymatrix{
    \bH^1_{\FF_\ku(\kn)}(K,\bT)\ar@{>->}[r] & \bH^1_{\FF^{\kn\ku}}(K,\bT) \ar[r] & \displaystyle{\bH^1_{\tr}(K_\ku,\bT)\oplus\bigoplus_{\kq\mid \kn}\bH^1_{\f}(K_\kq,\bT)}}\oplus \bH^1_\f(K_\ku,\bT),\]
     \[\xymatrix{\bH^1_{\FF_\ku(\kn)}(K,\bT)\ar@{>->}[r] & \bH^1_{\FF^{\kn\ku}}(K,\bT) \ar[r] & \displaystyle{\bigoplus_{\kq\mid \kn\ku}\bH^1_{\f}(K_\kq,\bT)\oplus \bH^1_\tr(K_\ku,\bT)}.
    }\]
Note that the ordering matters to establish an isomorphism to $\Lambda^{\nu(\kn\ku)+1}$, used in Corollary \ref{cor:exterior_rankred} to construct the rank reduction maps\footnote{A different order of the summands might result in a sign difference when constructing the rank reduction maps.}. It is important to clarify that, when we write ${\bigoplus_{\kq\mid\ka}}$, we are using the canonical order in the ultraprimes dividing $\ka$.

With these ordering choices, the maps constructed by Corollary \ref{cor:exterior_rankred} differ from $\Bigl(\alpha_{\kn,\ku}\circ \Reg_\kn\circ\Phi_{\kn\ku,\kn}\Bigr)$ and $\Bigl(\beta_{\kn,\ku}\circ \Reg_{\kn\ku}\Bigr)$ in the signs of $(-1)^{\nu(\kn\ku)}\sign(\kn\ku,\kn)$ and $(-1)^{\nu(\kn)}$, respectively. This sign difference is due to a reordering of the summands, so we can conclude that 
\[\Bigl(\alpha_{\kn,\ku}\circ \Reg_\kn\circ\Phi_{\kn\ku,\kn}\Bigr)=\Bigl(\beta_{\kn,\ku}\circ \Reg_{\kn\ku}\Bigr).\qedhere\]
\end{proof}
It is especially important to note that the regulator maps are isomorphisms on core vertices.

\begin{proposition}
Let $\kn\in \NN$ be a core vertex. Then the regulator map
\[\Reg_\kn:\ \bigwedge^{\chi(\FF)+\nu(\kn)} \bH^1_{\FF^\kn}(K,\bT)\to \bigwedge^{\chi(\FF)} \bH^1_{\FF(\kn)}(K,\bT)\]
is an isomorphism.
    \label{prop:reg_core_iso}
\end{proposition}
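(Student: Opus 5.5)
The plan is to express $\Reg_\kn$ as the rank reduction map of Corollary \ref{cor:exterior_rankred} associated with the exact sequence
\[\xymatrix{0\ar[r] & \bH^1_{\FF(\kn)}(K,\bT)\ar[r] &\bH^1_{\FF^\kn}(K,\bT) \ar[r] & \displaystyle \prod_{\ku\mid \kn} \bH^1_{\f}(K_\ku, \bT)\cong\Lambda^{\nu(\kn)},}\]
and then read off its image from Proposition \ref{prop:bidual_map_fitting}. The key input is identifying the cokernel $C$ of the last map. I would apply the global duality sequence of Proposition \ref{prop:patched:global_duality} to the pair $\FF(\kn)\leq\FF^\kn$. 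The dual structures are $(\FF^\kn)^*=(\FF^*)_\kn$ and $\FF(\kn)^*=\FF^*(\kn)$, the transverse condition being self-dual up to the finite/transverse splitting of Proposition \ref{prop:patched_transverse}. This identifies $C$ with a submodule of $\bH^1_{(\FF^*)(\kn)}(K,\bT^*)^\vee$. Since $\kn$ is a core vertex, that group vanishes, so $C=0$ and the sequence above is right exact.

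With $C=0$, both $\bH^1_{\FF(\kn)}(K,\bT)$ and $\bH^1_{\FF^\kn}(K,\bT)$ are free, by Proposition \ref{prop:iwasawa:selmer_free}. Their ranks are $\chi(\FF)$ and $\chi(\FF)+\nu(\kn)$, by Proposition \ref{prop:core_pseudo_free} and by additivity of rank in the now short exact sequence. Because $\Lambda^{\nu(\kn)}$ is free, the sequence splits: $\bH^1_{\FF^\kn}(K,\bT)\cong \bH^1_{\FF(\kn)}(K,\bT)\oplus\Lambda^{\nu(\kn)}$. Choosing a basis adapted to this splitting, I would compute the rank reduction map directly, following the explicit description in \cite[\textsection 2.1]{BurnsSano}. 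It sends $x_1\wedge\cdots\wedge x_{\chi(\FF)}\wedge e_1\wedge\cdots\wedge e_{\nu(\kn)}$, where the $e_j$ lift the standard basis of $\Lambda^{\nu(\kn)}$, to $\pm x_1\wedge\cdots\wedge x_{\chi(\FF)}$. Hence a generator of the rank-one free module $\bigwedge^{\chi(\FF)+\nu(\kn)}\bH^1_{\FF^\kn}(K,\bT)$ maps to a generator of $\bigwedge^{\chi(\FF)}\bH^1_{\FF(\kn)}(K,\bT)$.

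A surjective map between free $\Lambda$-modules of rank one is an isomorphism, which gives the claim. As a sanity check, Proposition \ref{prop:bidual_map_fitting} yields the same conclusion: the index of the image of a generator is pseudo-equal to $\Fitt^0(C)=\Lambda$, and since both modules are free of rank one, this index is principal and therefore equal to $\Lambda$.

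I expect the main obstacle to be the identification of the cokernel with the dual transverse Selmer group, specifically checking that $\FF(\kn)^*=\FF^*(\kn)$ in the patched setting. This requires showing that $\bH^1_\tr(K_\ku,\bT)$ and $\bH^1_\tr(K_\ku,\bT^*)$ are exact annihilators under the pairing of Proposition \ref{prop:patched:local_duality}. I would verify this at finite level for $\UU$-many Kolyvagin primes $\ell_i$, using \cite[Lemma 1.2.4]{MazurRubin}, and then pass to the ultraproduct and the limit, as in Lemma \ref{lem:patching_structures_dual}.
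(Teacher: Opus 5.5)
Your proof is correct and follows the paper's argument. In both, the two exterior powers are free of rank one, and the cokernel of $\bH^1_{\FF^\kn}(K,\bT)\to\prod_{\ku\mid\kn}\bH^1_\f(K_\ku,\bT)$ injects, by global duality, into $\bH^1_{(\FF^*)(\kn)}(K,\bT^*)^\vee=0$; this forces a generator to map to a generator. The only differences are that you make the vanishing of that cokernel explicit (the paper leaves it implicit) and then compute the rank reduction on an adapted basis, whereas the paper invokes Proposition \ref{prop:bidual_map_fitting} directly — a route that would still work if the cokernel were only pseudo-null.
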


\begin{proof}
By Proposition \ref{prop:core_pseudo_free},
\[\bigwedge^{\chi(\FF)+\nu(\kn)} \bH^1_{\FF^\kn}(K,\bT)\cong \bigwedge^{\chi(\FF)} \bH^1_{\FF(\kn)}(K,\bT)\cong \Lambda\]
and, by Proposition \ref{prop:bidual_map_fitting}, the regulator of a generator of $\bigwedge^{\chi(\FF)+\nu(\kn)} \bH^1_{\FF^\kn}(K,\bT)$ is an indivisible element in $\bigwedge^{\chi(\FF)} \bH^1_{\FF(\kn)}(K,\bT)$, so $\Reg_\kn$ is an isomorphism.
\end{proof}

\subsection{The module of Kolyvagin systems}

We will see in this section that the module of Kolyvagin systems is controlled by core vertices, similarly to the control made by weak core vertices to Stark systems in Theorem \ref{th:stark_core_projection}.
\begin{theorem}
Suppose that Assumptions \ref{ass:core_rank_positve}, \ref{ass:patched:basic} and \ref{ass:H2} hold and let $\kn\in \NN$ be a core vertex. Then the projection map
\[\bKS(\bT,\FF)\to \bigwedge^{\chi(\FF)} \bH^1_{\FF(\kn)}(K,\bT)\]
is an isomorphism.
\label{th:kol_core_projection}
\end{theorem}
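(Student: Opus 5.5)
We will prove that the projection $\kappa\mapsto\kappa_\kn$ is both injective and surjective. Surjectivity comes from the Stark system side. Injectivity will be a propagation argument along the graph of core vertices, in the style of Mazur--Rubin and Burns--Sakamoto--Sano.

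\emph{Surjectivity.} Let $\varepsilon$ be a primitive Stark system, which exists by Corollary \ref{cor:stark_free}, and take $\kappa=\Reg(\varepsilon)\in\bKS(\bT,\FF)$ from Theorem \ref{th:regulator_bijective}. The Selmer structure $(\FF^*)_\kn$ is contained in $(\FF^*)(\kn)$, so any core vertex $\kn$ is also a weak core vertex. Hence Theorem \ref{th:stark_core_projection} shows that $\varepsilon_\kn$ generates $\bigwedge^{\chi(\FF)+\nu(\kn)}\bH^1_{\FF^\kn}(K,\bT)$. Proposition \ref{prop:reg_core_iso} then says $\kappa_\kn=\Reg_\kn(\varepsilon_\kn)$ generates $\bigwedge^{\chi(\FF)}\bH^1_{\FF(\kn)}(K,\bT)\cong\Lambda$. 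Multiplying $\kappa$ by elements of $\Lambda$ gives surjectivity.

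\emph{Injectivity.} Suppose $\kappa_\kn=0$ at the core vertex $\kn$. We must show $\kappa_\km=0$ for every $\km\in\NN$.

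Step one: propagate between adjacent core vertices. Let $\kn$ and $\kn\ku$ both be core vertices. Then $\bH^1_{(\FF^*)(\kn)}(K,\bT^*)=0$, and the global duality sequence (Proposition \ref{prop:patched:global_duality}) for $\FF(\kn)_\ku\subset\FF(\kn)$ shows that $\loc_\ku:\bH^1_{\FF(\kn)}(K,\bT)\to\bH^1_\f(K_\ku,\bT)$ is surjective. Both Selmer groups involved are free (Proposition \ref{prop:iwasawa:selmer_free}), so by Proposition \ref{prop:bidual_map_fitting} the map $\alpha_{\kn,\ku}$ is an isomorphism between free rank-one modules. The analogous surjectivity onto $\bH^1_\tr(K_\ku,\bT)$ for $\FF(\kn\ku)$ makes $\beta_{\kn,\ku}$ an isomorphism as well. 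The Kolyvagin relation therefore gives $\kappa_\kn=0\iff\kappa_{\kn\ku}=0$.

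Step two: show that any two core vertices are joined by a path of core vertices, where each step adds or removes one ultraprime. For this we mimic the proof of Corollary \ref{cor:patched:dual_tr_vanish}, using Corollary \ref{cor:patched:cheb} with $s+t<p$ (this is where $p\ge5$ enters). Given core vertices $\kn,\kn'$, we choose $\ku$ such that $\kn\ku$ and $\kn'\ku$ are again core vertices, i.e.\ localisation at $\ku$ is surjective from both $\bH^1_{\FF(\kn)}$ and $\bH^1_{\FF(\kn')}$. Then, by Lemma \ref{lem:patched:tr_res}, the dual transverse Selmer groups stay zero. We remove primes one at a time, at each step inserting auxiliary ultraprimes so that we stay among core vertices.

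Step three: show that $\kappa_\km=0$ for an arbitrary $\km\in\NN$. By Proposition \ref{prop:core_vertex_existence} there is a core vertex $\kc$ with $\km\mid\kc$, and step two gives $\kappa_\kc=0$. We then argue by induction on $\nu(\kc/\km)$, removing one ultraprime $\ku$ at a time. Using Corollary \ref{cor:patched:cheb}, we choose auxiliary ultraprimes $\kq$ such that $\alpha_{\km,\kq}$ is injective on $\bigwedge^{\chi(\FF)}\bH^1_{\FF(\km)}(K,\bT)$. By Proposition \ref{prop:bidual_map_iwasawa_kernel}, the kernel of $\alpha_{\km,\kq}$ is the image of $\bigwedge^{\chi(\FF)}\bH^1_{\FF(\km)_\kq}$. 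This vanishes once $\bH^1_{\FF(\km)_\kq}(K,\bT)$ has rank below $\chi(\FF)$, which can be achieved by Proposition \ref{cor:patched:cheb_rank_red} and its dual version. Then $\kappa_\km=0$ follows from $\beta_{\km,\kq}(\kappa_{\km\kq})=0$.

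The main obstacle is this last step: controlling $\kappa$ at non-core vertices, where the exterior powers need not be rank one, so that one needs a $\kq$ making $\alpha_{\km,\kq}$ injective. I would first try to lower the rank of $\bH^1_{\FF(\km)_\kq}$ via Proposition \ref{cor:patched:cheb_rank_red}, using that the Selmer groups are torsion free.
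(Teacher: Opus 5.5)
Your surjectivity argument is the paper's (Lemma~\ref{lem:kol_core_projection_sur}). Step three of the injectivity, however, cannot work as stated.

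For an arbitrary vertex $\kr\in\NN$ (the one you call $\mathfrak{m}$), put $\lambda(\kr)=\rank_\Lambda\bH^1_{(\FF^*)(\kr)}(K,\bT^*)^\vee$. Since $\chi(\FF(\kr))=\chi(\FF)$, the free module $\bH^1_{\FF(\kr)}(K,\bT)$ has rank $\chi(\FF)+\lambda(\kr)$. Since $\bH^1_\f(K_\kq,\bT)\cong\Lambda$, the kernel $\bH^1_{\FF(\kr)_\kq}(K,\bT)$ of $\loc_\kq$ has rank at least $\chi(\FF)+\lambda(\kr)-1$. Hence whenever $\lambda(\kr)\geq1$, the module $\bigwedge^{\chi(\FF)}\bH^1_{\FF(\kr)_\kq}(K,\bT)$ is nonzero, and by Proposition~\ref{prop:bidual_map_iwasawa_kernel} we get $\ker(\alpha_{\kr,\kq})\neq0$ for \emph{every} $\kq\in\PP$. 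No auxiliary prime makes $\alpha_{\kr,\kq}$ injective. Neither Proposition~\ref{cor:patched:cheb_rank_red} nor any other choice of $\kq$ can push the rank of $\bH^1_{\FF(\kr)_\kq}(K,\bT)$ below $\chi(\FF)+\lambda(\kr)-1\geq\chi(\FF)$.

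Your induction on $\nu(\kc/\kr)$ is also not well founded. The auxiliary $\kq$ does not divide $\kc$, so $\kappa_{\kr\kq}=0$ is not supplied by the induction hypothesis. Nothing forces $\kr\kq$ to divide a core vertex closer to it (compare Remark~\ref{rem:core_div}).

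The missing idea is to induct on $\lambda(\kr)$, argue by contradiction, and choose the auxiliary prime depending on the class, as in Lemma~\ref{lem:kol_projection_injective}. If $\kappa_\kr\neq0$, Proposition~\ref{cor:patched:cheb_rank_red} together with Proposition~\ref{prop:bidual_map_iwasawa_kernel} provides $\kq$ with $\kappa_\kr\notin\ker(\alpha_{\kr,\kq})$ and $\lambda(\kr\kq)<\lambda(\kr)$. The induction hypothesis then gives $\kappa_{\kr\kq}=0$, contradicting the Kolyvagin relation.

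The base case is all of $\NN^{(0)}$, not only the core vertices, and here your Steps one and two are too thin. The surjectivity of $\loc_\ku$ on $\bH^1_{\FF(\kn)}(K,\bT)$ does not follow from $\kn$ and $\kn\ku$ being core vertices. By Proposition~\ref{prop:patched:global_duality}, its cokernel is $\bH^1_{(\FF^*)^\ku(\kn)}(K,\bT^*)^\vee$, and nothing forces this relaxed group to vanish. If $\loc_\ku$ is zero, then $\alpha_{\kn,\ku}=\beta_{\kn,\ku}=0$ and the relation gives no information.

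The paper handles these points as follows:
\begin{itemize}
\item It treats this degenerate case with a further auxiliary prime (Lemma~\ref{lemma:iwasawa:kol_gamma_n_nu}), working throughout with the cyclic submodules $\Upsilon(\kn)$ for $\kn\in\NN^{(0)}$.
\item It proves in Lemma~\ref{lemma:iwasawa:kol_gamma_tau} the swap step $\kn\mapsto\kn\ku/\kq$ that you only allude to; this is where $p\geq5$ is used.
\item Lemma~\ref{lem:core_graph_connected} then connects any two vertices of $\NN^{(0)}$.
\end{itemize}
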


The proof of Theorem \ref{th:kol_core_projection} is more complicated than its analogous in Theorem \ref{th:stark_core_projection}. It will be presented below as a sequence of lemmas. We start by showing the surjectivity.
\begin{lemma}
Under assumptions \ref{ass:core_rank_positve}, \ref{ass:patched:basic} and \ref{ass:H2}, the projection map
\[\bKS(\bT,\FF)\to \bigwedge^{\chi(\FF)} \bH^1_{\FF(\kc)}(K,\bT)\]
is surjective for every core vertex $\kc\in \NN$.
\label{lem:kol_core_projection_sur}
\end{lemma}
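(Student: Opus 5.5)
The plan is to factor the projection through the regulator. By Theorem \ref{th:regulator_bijective} there is a map $\Reg:\bSS(\bT,\FF)\to\bKS(\bT,\FF)$ whose $\kc$-component is $\Reg_\kc$ applied to the $\kc$-component of the Stark system. I will show that the composite
\[\bSS(\bT,\FF)\xrightarrow{\Reg}\bKS(\bT,\FF)\to \bigwedge^{\chi(\FF)} \bH^1_{\FF(\kc)}(K,\bT)\]
is surjective. Surjectivity of the second map then follows at once. By construction this composite equals $\Reg_\kc\circ \mathrm{pr}_\kc$, where $\mathrm{pr}_\kc:\bSS(\bT,\FF)\to \bigwedge^{\chi(\FF)+\nu(\kc)}\bH^1_{\FF^\kc}(K,\bT)$ is the projection to the $\kc$-component. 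So it suffices to show that both factors are surjective.

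First I will check that a core vertex is in particular a weak core vertex. There is an inclusion of local conditions $(\FF^*)_\kc\le(\FF^*)(\kc)$, since at $\ku\mid\kc$ we have $0\subset \bH^1_{\tr}$ and elsewhere the conditions agree. Hence $\bH^1_{(\FF^*)_\kc}(K,\bT^*)\subset \bH^1_{(\FF^*)(\kc)}(K,\bT^*)=0$. Theorem \ref{th:stark_core_projection} then says that $\mathrm{pr}_\kc$ is an isomorphism, in particular surjective.

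Second, Proposition \ref{prop:reg_core_iso} says that $\Reg_\kc$ is an isomorphism at a core vertex. This uses that both sides are free of rank one, by Propositions \ref{prop:weak_core_pseudo_free} and \ref{prop:core_pseudo_free}, and that the image of a generator is indivisible, by Proposition \ref{prop:bidual_map_fitting}. The sign $(-1)^{\nu(\kc)}$ built into $\Reg_\kc$ is a unit and does not affect surjectivity. Combining the two steps, any $x\in\bigwedge^{\chi(\FF)}\bH^1_{\FF(\kc)}(K,\bT)$ is the $\kc$-component of $\Reg(\varepsilon)$, where $\varepsilon=\mathrm{pr}_\kc^{-1}(\Reg_\kc^{-1}(x))$. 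Since $\Reg(\varepsilon)\in\bKS(\bT,\FF)$, this gives the lemma.

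The main subtle point is Proposition \ref{prop:reg_core_iso}. One has to confirm that in the exact sequence defining $\Reg_\kc$, namely
\[0\to\bH^1_{\FF(\kc)}\to\bH^1_{\FF^\kc}\to\bigoplus_{\ku\mid\kc}\bH^1_\f(K_\ku,\bT),\]
the cokernel $C$ satisfies $\Fitt^0(C)\approx\Lambda$. By global duality (Proposition \ref{prop:patched:global_duality}), $C$ injects into $\bH^1_{(\FF^*)(\kc)}(K,\bT^*)^\vee=0$, so the last map is surjective and Proposition \ref{prop:bidual_map_fitting} applies with trivial cokernel. This is where Assumptions \ref{ass:core_rank_positve} and \ref{ass:H2} enter, since they guarantee the existence of core vertices (Proposition \ref{prop:core_vertex_existence}) and the rank computations used above.
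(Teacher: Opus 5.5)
Your proposal is correct and is the paper's own argument: the paper proves the lemma with exactly the commutative square comparing $\bSS(\bT,\FF)\to\bKS(\bT,\FF)\to\bigwedge^{\chi(\FF)}\bH^1_{\FF(\kc)}(K,\bT)$ with $\Reg_\kc\circ\mathrm{pr}_\kc$, citing Theorem \ref{th:stark_core_projection} for the projection and Proposition \ref{prop:reg_core_iso} for $\Reg_\kc$. Your check that a core vertex is a weak core vertex, via $(\FF^*)_\kc\le(\FF^*)(\kc)$, makes explicit a step the paper leaves implicit; the only quibble is that the existence of core vertices (Proposition \ref{prop:core_vertex_existence}) is not actually used here, since the statement concerns a given $\kc$.
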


\begin{proof}
Consider the following commutative diagram 
\[\xymatrix{\bSS(\bT,\FF)\ar[r]^{\Reg} \ar[d]& \bKS(\bT,\FF)\ar[d]\\
\displaystyle\bigwedge^{r+\nu(c)} \bH^1_{\FF^\kc}(K,\bT)\ar[r]^{\Reg_{\kc}} &  \displaystyle\bigwedge^{r} \bH^1_{\FF(\kc)}(K,\bT).}\]
By Theorem \ref{th:stark_core_projection} and Proposition \ref{prop:reg_core_iso}, the left vertical map and the bottom horizontal map are isomorphisms. Hence, the right vertical map is surjective.
\end{proof}

We next prove the injectivity of the map in Theorem \ref{th:kol_core_projection}. The argument takes a Kolyvagin system in the kernel of this map and proves inductively that all the classes $\kappa_n$ have to vanish. The following set of vertices, containing all core vertices, have control over the Kolyvagin systems.

\begin{definition}
Let $\NN^{(0)}\subset \NN$ be the subset of vertices defined as
\[\NN^{(0)}:=\left\{\kn\in \NN:\ \rank_\Lambda\ \bH^1_{(\FF^*)(\kn)}(K,\bT^*)^\vee=0\right\}.\]
\end{definition}

\begin{notation}
For every $\kn\in \NN$, we define the module $\Upsilon(\kn)$ as the unique cyclic submodule of $\bigwedge^{\chi(\FF)} \bH^1_{\FF(\kn)}(K,\bT)$ such that 
\[\Upsilon(\kn)\approx\biggl(\Fitt^0_{\Lambda}\Bigl(\bH^1_{(\FF^*)(\kn)}(K,\bT^*)^\vee\Bigr)\biggr)\cdot \left(\bigwedge^{\chi(\FF)} \bH^1_{\FF(\kn)}(K,\bT)\right).\]
Note that $\Upsilon(\kn)=0$ whenever $\kn\notin\NN^{(0)}$.
\end{notation}

\begin{notation}
In addition, we denote $\Omega_{\kn,\ku}$ the unique cyclic submodule of $\bigwedge^{\chi(\FF)-1}\bH^1_{\FF_\ku(\kn)}(K,\bT)$ as the unique cyclic submodule satisfying that 
\[\Omega_{\kn,\ku}\approx \biggl(\Fitt^0_\Lambda\Bigl(\bH^1_{\FF^\ku(\kn)}(K,\bT^*)^\vee\Bigr)\biggr)\cdot\left(\bigwedge^{\chi(\FF)-1}\bH^1_{\FF_\ku(\kn)}(K,\bT)\right).\]
\end{notation}

\begin{remark}
Both $\Upsilon(\kn)$ and $\Omega_{\kn,\ku}$ are well-defined since, whenever the Fitting ideal in the product is non-zero, the exterior power is a free Iwasawa module of rank one.
\end{remark}

\begin{lemma}
Suppose that Assumptions \ref{ass:core_rank_positve}, \ref{ass:patched:basic} and \ref{ass:H2} hold and let $\kn\in \NN$ and $\ku\in \PP$ be elements such that $\kn,\kn\ku\in \NN^{(0)}$. Then there is an isomorphism 
\[\gamma_{\kn,\kn\ku}:\Upsilon(\kn)\to \Upsilon(\kn\ku)\]
such that every $\kappa\in \bKS(\bT,\FF)$ satisfying that $\kappa_\kn\in \Upsilon(\kn)$, then $\gamma_{\kn,\kn\ku}(\kappa_\kn)=\kappa_{\kn\ku}$.
\label{lemma:iwasawa:kol_gamma_n_nu}
\end{lemma}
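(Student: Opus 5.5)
The plan is to compare both $\Upsilon(\kn)$ and $\Upsilon(\kn\ku)$ with the common module $\Omega_{\kn,\ku}\subset\bigwedge^{\chi(\FF)-1}\bH^1_{\FF_\ku(\kn)}(K,\bT)$. I will show that $\alpha_{\kn,\ku}$ restricts to an isomorphism $\Upsilon(\kn)\cong\Omega_{\kn,\ku}$ and $\beta_{\kn,\ku}$ restricts to an isomorphism $\Upsilon(\kn\ku)\cong\Omega_{\kn,\ku}$. I then set
\[\gamma_{\kn,\kn\ku}:=\bigl(\beta_{\kn,\ku}|_{\Upsilon(\kn\ku)}\bigr)^{-1}\circ\alpha_{\kn,\ku}|_{\Upsilon(\kn)}.\]
If $\beta_{\kn,\ku}$ is injective on all of $\bigwedge^{\chi(\FF)}\bH^1_{\FF(\kn\ku)}(K,\bT)$, the Kolyvagin relation $\beta_{\kn,\ku}(\kappa_{\kn\ku})=\alpha_{\kn,\ku}(\kappa_\kn)\in\Omega_{\kn,\ku}$ forces $\kappa_{\kn\ku}$ to be the unique preimage in $\Upsilon(\kn\ku)$. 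That gives $\kappa_{\kn\ku}=\gamma_{\kn,\kn\ku}(\kappa_\kn)$.

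First, I control ranks. The two global duality sequences of Proposition \ref{prop:patched:global_duality}, for $\FF(\kn)_\ku\le\FF(\kn)$ and for $\FF(\kn)_\ku\le\FF(\kn\ku)$, read
\[0\to\bH^1_{\FF_\ku(\kn)}\to\bH^1_{\FF(\kn)}\to\bH^1_\f(K_\ku,\bT)\to\bH^1_{(\FF^*)^\ku(\kn)}(K,\bT^*)^\vee\to\bH^1_{(\FF^*)(\kn)}(K,\bT^*)^\vee\to0,\]
and the analogue with $\bH^1_{\FF(\kn\ku)}$, $\bH^1_\tr(K_\ku,\bT)$ and $\bH^1_{(\FF^*)(\kn\ku)}(K,\bT^*)^\vee$.

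Since $\kn,\kn\ku\in\NN^{(0)}$, the modules $\bH^1_{\FF(\kn)}$ and $\bH^1_{\FF(\kn\ku)}$ have rank $\chi(\FF)$ and are free by Proposition \ref{prop:iwasawa:selmer_free}. The module $X:=\bH^1_{(\FF^*)^\ku(\kn)}(K,\bT^*)^\vee$ has rank $0$ or $1$.

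In the main case $\rank X=0$, the module $\bH^1_{\FF_\ku(\kn)}$ is free of rank $\chi(\FF)-1$ and both localisation maps have image of rank one. By Proposition \ref{prop:bidual_map_iwasawa_kernel}, $\ker\alpha_{\kn,\ku}$ and $\ker\beta_{\kn,\ku}$ are images of $\bigwedge^{\chi(\FF)}$ of a free module of rank $\chi(\FF)-1$, hence vanish. So both maps are injective.

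Next, I apply Proposition \ref{prop:bidual_map_iwasawa_fitting_inclusion} to a generator of $\bigwedge^{\chi(\FF)}\bH^1_{\FF(\kn)}$. Its image under $\alpha_{\kn,\ku}$ has principal index pseudo-equal to $\Fitt^0$ of $C:=\coker(\bH^1_{\FF(\kn)}\to\bH^1_\f(K_\ku,\bT))$. From the exact sequence $0\to C\to X\to\bH^1_{(\FF^*)(\kn)}(K,\bT^*)^\vee\to0$ and Corollary \ref{cor:fitting_add_iwasawa}, I get
\[\Fitt^0(C)\cdot\Fitt^0\bigl(\bH^1_{(\FF^*)(\kn)}(K,\bT^*)^\vee\bigr)\approx\Fitt^0(X).\]
Hence $\alpha_{\kn,\ku}(\Upsilon(\kn))\approx\Omega_{\kn,\ku}$. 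Both sides are cyclic submodules of the rank-one free module $\bigwedge^{\chi(\FF)-1}\bH^1_{\FF_\ku(\kn)}$ with principal ideals, and principal ideals of $\Lambda$ are determined by their pseudo-equality class, so this is an equality. The identical argument with $\bH^1_\tr$ and $\bH^1_{(\FF^*)(\kn\ku)}$ gives $\beta_{\kn,\ku}(\Upsilon(\kn\ku))=\Omega_{\kn,\ku}$, which finishes the main case.

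The main obstacle is the degenerate case $\rank X=1$. There both localisation maps vanish, since the image in $\Lambda$ is torsion and hence zero. Then $\alpha_{\kn,\ku}=\beta_{\kn,\ku}=0$ and the relation at $\ku$ carries no information, while $\Upsilon(\kn)$ and $\Upsilon(\kn\ku)$ may still be nonzero. My plan here is to route through an auxiliary Kolyvagin ultraprime $\kq$. Using Corollary \ref{cor:patched:cheb} (with $p\ge5$ from Assumption \ref{ass:core_rank_positve}, so several conditions can be imposed at once), I would choose $\kq$ such that $\loc_\kq$ is surjective on $\bH^1_{\FF(\kn)}$, on $\bH^1_{\FF(\kn\ku)}$ and on the relevant intermediate Selmer groups, and such that $\kn\kq,\kn\ku\kq\in\NN^{(0)}$. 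By Lemma \ref{lem:patched:tr_res} the relevant dual groups then agree, and I am in the main case along the three edges $\kn\to\kn\kq$, $\kn\kq\to\kn\kq\ku$ and $\kn\ku\to\kn\ku\kq$.

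I would then define $\gamma_{\kn,\kn\ku}$ as the composite of the three main-case isomorphisms, $\gamma_{\kn\ku,\kn\ku\kq}^{-1}\circ\gamma_{\kn\kq,\kn\kq\ku}\circ\gamma_{\kn,\kn\kq}$. It remains to check that this is independent of $\kq$, which I would verify by comparing two such choices through a fourth ultraprime.
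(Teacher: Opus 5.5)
Your overall plan is the paper's. In the case $\rank_\Lambda\bH^1_{(\FF^*)^\ku(\kn)}(K,\bT^*)^\vee=0$ you identify $\Upsilon(\kn)$ and $\Upsilon(\kn\ku)$ with $\Omega_{\kn,\ku}$ through $\alpha_{\kn,\ku}$ and $\beta_{\kn,\ku}$. In the rank-one case you detour through $\kn\to\kn\kq\to\kn\kq\ku\leftarrow\kn\ku$. The main case is fine: your injectivity of both $\alpha_{\kn,\ku}$ and $\beta_{\kn,\ku}$ on the full exterior powers is also what lets you traverse the last edge backwards.

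\textbf{The gap is in the choice of $\kq$.} Every condition you impose is on the $\bT$-side:
\begin{itemize}
\item surjectivity of $\loc_\kq$ on $\bH^1_{\FF(\kn)}(K,\bT)=\bH^1_{\FF_\ku(\kn)}(K,\bT)=\bH^1_{\FF(\kn\ku)}(K,\bT)$ (these coincide because both localisations at $\ku$ vanish);
\item $\kn\kq,\kn\ku\kq\in\NN^{(0)}$, which already follows from that surjectivity by Lemma \ref{lem:patched:tr_res}.
\end{itemize}
This puts the two outer edges in the main case, but not the middle edge $\kn\kq\to\kn\kq\ku$. That edge requires $\rank_\Lambda\bH^1_{(\FF^*)^\ku(\kn\kq)}(K,\bT^*)^\vee=0$, equivalently $\loc_\ku\neq 0$ on $\bH^1_{\FF(\kn\kq)}(K,\bT)$.

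Lemma \ref{lem:patched:tr_res}, applied to $\FF_\ku(\kn)$, only gives $\bH^1_{(\FF^*)^\ku(\kn\kq)}(K,\bT^*)=\bH^1_{(\FF^*)^\ku_\kq(\kn)}(K,\bT^*)$. This is the kernel of localisation at $\kq$ on $\bH^1_{(\FF^*)^\ku(\kn)}(K,\bT^*)$, whose dual has rank one. Nothing in your conditions forces this kernel to have smaller rank. If it does not, the middle edge is degenerate again and the construction regresses.

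\textbf{The fix is a dual-side condition at $\kq$.} $\FF_\ku(\kn)$ is cartesian and its dual Selmer group has rank one. So Proposition \ref{prop:patched:cheb_rank_red} provides $d\in\bH^1_{(\FF^*)^\ku(\kn)}(K,\bT^*)$ such that $\loc_\kq(d)\neq0$ forces $\rank_\Lambda\bH^1_{(\FF^*)^\ku_\kq(\kn)}(K,\bT^*)^\vee=0$. Use Corollary \ref{cor:patched:cheb} to choose $\kq$ with two properties at once: $\loc_\kq$ surjective on $\bH^1_{\FF(\kn)}(K,\bT)$, and $\loc_\kq(d)\neq 0$. This is how the paper chooses $\kq$ (``by the argument in Corollary \ref{cor:patched:cheb_rank_red}''), and with it all three edges are in the main case.

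Separately, the independence of $\kq$ that you plan to verify is unnecessary. The lemma only asserts that some $\gamma_{\kn,\kn\ku}$ with the stated property exists, and the paper explicitly does not use independence.
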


\begin{proof}

Assume first that the rank of $\bH^1_{(\FF^*)^\ku(\kn)}(K,\bT^*)^\vee$ is zero or, equivalently, that
\[\Fitt^0_\Lambda\Bigl(\bH^1_{(\FF^*)^\ku(\kn)}(K,\bT^*)^\vee\Bigr)\not\approx 0.\]
Denote
\[C_{\kn,\ku}:=\coker\biggl(\loc_\kq:\ \bH^1_{\FF(\kn)}(K,\bT)\to \bH^1_\f(K_\kq,\bT)\biggr).\]
By Proposition \ref{prop:patched:global_duality} and Corollary \ref{cor:fitting_add_iwasawa}
\begin{equation}
\Fitt^0_\Lambda \Bigl(\bH^1_{(\FF^*)^\ku(\kn)}(K,\bT^*)^\vee\Bigr)\cong \Fitt^0_\Lambda (C_{\kn,\ku})\Fitt^0_\Lambda \Bigl(\bH^1_{(\FF^*)(\kn)}(K,\bT^*)^\vee\Bigr).
\label{eq:C_fitt_add}
\end{equation}

Since $\kn\in \NN^{(0)}$, $\bigwedge^{\chi(\FF)} \bH^1_{\FF(\kn)}(K,\bT)$ is free of rank one, so  Proposition \ref{prop:bidual_map_fitting} implies that the map $\alpha_{\kn,\ku}$ induces an isomorphism
\[\alpha_{\kn,\ku}:\bigwedge^{\chi(\FF)}\bH^1_{\FF(\kn)}(K,\bT) \to \HH_{\kn,\ku} \cdot\left(\bigwedge^{\chi(\FF)-1}\bH^1_{\FF_\ku(\kn)}(K,\bT)\right),\]
where $\HH_{\kn,\ku}$ denotes the unique principal ideal of $\Lambda$ pseudo-equal to $\Fitt^0_\Lambda (C_{\kn,\ku})$. By \eqref{eq:C_fitt_add}, it restricts to an isomorphism
\[\alpha_{\kn,\ku}:\ \Upsilon(\kn)\to \Omega_{\kn,\ku}.\]
A similar argument leads to another isomorphism
\[\beta_{\kn,\ku}:\ \Upsilon(\kn\ku)\to \Omega_{\kn,\ku}.\]
We can thus define the isomorphism
\[\gamma_{\kn,\kn\ku}:=\beta_{\kn,\ku}^{-1}\circ\alpha_{\kn,\ku}:\ \Upsilon(\kn)\to \Upsilon(\kn\ku).\]

Assume now that the rank of $\bH^1_{(\FF^*)^\ku(\kn)}(K,\bT^*)^\vee$ is one. By the argument in Corollary \ref{cor:patched:cheb_rank_red}, we can find an ultraprime $\kq\in \PP$ such that the rank of $\bH^1_{(\FF^*)^\ku_\kq(\kn)}(K,\bT^*)^\vee$ is zero and $C_{\kn,\ku}=0$. Then the exact sequence
\[\xymatrix{0\ar[r] & \coker(\loc_\kq)\ar[r] & \bH^1_{(\FF^*)^\kq(\kn)}(K,\bT^*)^\vee \ar[r] & \bH^1_{(\FF^*)(\kn)}(K,\bT^*)^\vee\ar[r] & 0,}\]
 given by Proposition \ref{prop:patched:global_duality}, implies that $\bH^1_{(\FF^*)^\kq(\kn)}(K,\bT^*)$ has rank $0$, since $\kn\in \NN$, so we can define the map $\gamma_{\kn,\kn\kq}$ as above.

On the other hand, since the rank of $\bH^1_{(\FF^*)^\ku(\kn)}(K,\bT^*)^\vee$ is one, the fact that $\kn,\kn\ku\in \NN^{(0)}$ and Proposition \ref{prop:patched:global_duality} imply that the following maps are zero:
\[\begin{aligned}
&\loc_\ku:\ \bH^1_{\FF(\kn)}(K,\bT)\to \bH^1_\f(K_\ku,\bT),\\ &\loc_\ku:\ \bH^1_{\FF(\kn\ku)}(K,\bT)\to \bH^1_\tr(K_\ku,\bT).
\end{aligned}\]
Hence $\bH^1_{\FF(\kn)}(K,\bT)=\bH^1_{\FF_\ku(\kn)}(K,\bT)=\bH^1_{\FF(\kn\ku)}(K,\bT)$. Then the map
\[\loc_\kq:\ \bH^1_{\FF(\kn\ku)}(K,\bT)\to \bH^1_\f(K_\kq,\bT)\]
 is also surjective. Similarly, since $\kn\ku\in \NN^{(0)}$, $\bH^1_{(\FF^*)^\kq(\kn\ku)}(K,\bT^*)^\vee$ has rank $0$. Hence the map $\gamma_{\kn\ku,\kn\ku\kq}$ can be defined.

Finally, the map
\[\loc_\kq:\ \bH^1_{\FF_\ku(\kn)}(K,\bT)\to \bH^1_\f(K_\kq,\bT)\]
is surjective too. Thus Lemma \ref{lem:patched:tr_res} implies that
\[\bH^1_{(\FF^*)^\ku(\kn\kq)}(K,\bT^*)=\bH^1_{(\FF^*)^\ku_\kq(\kn)}(K,\bT^*).\]
By the assumption on the ultraprime $\kq\in \PP$, we have that the Selmer group $\bH^1_{(\FF^*)^\ku(\kn\kq)}(K,\bT^*)^\vee$ has rank zero, implying that $\kn\kq$ and $\kn\ku\kq$ are both in $\NN^{(0)}$ and that the map $\gamma_{\kn\kq,\kn\ku\kq}$ can be defined. We then define the map
\[\gamma_{\kn,\kn\ku}=\gamma_{\kn\ku,\kn\ku\kq}^{-1}\circ \gamma_{\kn\kq,\kn\ku\kq}\circ \gamma_{\kn,\kn\kq}:\ \Upsilon(\kn)\to \Upsilon(\kn\ku).\]
By construction, every Kolyvagin system $\kappa \in \bKS(\bT,\FF)$ in which $\kappa_{\kn}\in \Upsilon(\kn)$ satisfies that $\gamma_{\kn,\ku}(\kappa_{\kn})=\kappa_{\kn\ku}$.
\end{proof}

\begin{remark}
One can show that the map $\gamma_{\kn,\ku}$ is independent of the choice of the auxiliary prime $\kq$. However, we do not make use of this fact and only need the existence of such map for our purposes.
\end{remark}

\begin{lemma}
Suppose that $\bT$ satisfies Assumptions \ref{ass:core_rank_positve}, \ref{ass:patched:basic} and \ref{ass:H2}. Let $\kq_1,\kq_2\in \PP$ and let $\kn_1,\kn_2\in \NN^{(0)}$ be vertices such that $\kq_1\mid \kn_1$ and $\kq_2\mid \kn_2$. Then there is an ultraprime $\ku\in \PP$ such that both $\kn_i\ku/\kq_i\in \NN^{(0)}$ for $i=1,2$ and there are isomorphisms
\[\gamma_{\kn_i,\kn_i\ku/\kq_i}:\ \Upsilon(\kn_i)\to \Upsilon(\kn_i\ku/\kq_i)\]
such that, for all $\kappa\in \bKS(\bT,\FF)$, if $\kappa_{\kn_i}\in \Upsilon(\kn_i)$, the equality $\gamma_{\kn_i,\kn_i\ku/\kq_i}(\kappa_{\kn})=\kappa_{\kn_i\ku/\kq_i}$ holds.
\label{lemma:iwasawa:kol_gamma_tau}
\end{lemma}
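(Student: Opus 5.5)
The plan is to mimic Lemma \ref{lemma:iwasawa:kol_gamma_n_nu}, but pass through the "bottom" vertices $\kn_i/\kq_i$. Set $\km_i:=\kn_i/\kq_i$. Note that $\km_i$ need not lie in $\NN^{(0)}$, so we cannot simply use $\gamma_{\km_i,\km_i\ku}\circ\gamma_{\km_i,\kn_i}^{-1}$. Instead we choose a single ultraprime $\ku$ that plays for $\km_i\ku$ the same role that $\kq_i$ plays for $\kn_i=\km_i\kq_i$. The desired $\gamma_{\kn_i,\kn_i\ku/\kq_i}$ will be a composite of maps of the form $\alpha^{\pm1},\beta^{\pm1}$ restricted to $\Upsilon$ and $\Omega$ modules, in the same way as in the proof of Lemma \ref{lemma:iwasawa:kol_gamma_n_nu}.

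\textbf{Choice of $\ku$.} We apply Proposition \ref{prop:patched:cheb} (available since $p\geq5$, and Assumption \ref{ass:patched:quotient} holds by Proposition \ref{prop:iwasawa:ass_equiv}) with $s+t\leq 4<p$. The inputs are, for $i=1,2$:
\begin{itemize}
\item a class $c_i$ in $\bH^1_{\FF_{\kq_i}(\km_i)}(K,\bT)$ which is not $\m$-divisible, provided by Proposition \ref{prop:cheb:non-div_elt} when this group is nonzero (its core rank is $\chi(\FF)-1$ plus the rank of the dual, so we handle the case $\chi(\FF)=1$ separately as below);
\item an element $d_i$ of $\bH^1_{(\FF^*)^{\kq_i}(\km_i)}(K,\bT^*)$ chosen as in Proposition \ref{prop:patched:cheb_rank_red} when that dual Selmer group has positive rank, or as in Proposition \ref{prop:cheb:torsion_red} otherwise.
\end{itemize}
The key point is to choose $\ku$ so that the rank of $\bH^1_{(\FF^*)(\km_i\ku)}(K,\bT^*)^\vee$ equals that of $\bH^1_{(\FF^*)(\km_i\kq_i)}(K,\bT^*)^\vee$, namely $0$. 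We do this by computing both ranks from the common intermediate structure $(\FF^*)^{\kq_i}(\km_i)$ and its relaxation at $\ku$, using global duality (Proposition \ref{prop:patched:global_duality}) and additivity of rank. The two relevant exact sequences differ only in which of $\loc_{\kq_i}$ or $\loc_\ku$ appears. Lemma \ref{lem:patched:tr_res} identifies the transverse condition at $\ku$ with the strict one once $\loc_\ku$ is surjective on $\bH^1_{\FF(\km_i)}$. This gives $\km_i\ku\in\NN^{(0)}$.

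\textbf{Construction of $\gamma$.} Let $\kn_i'=\km_i\ku$. If $\bH^1_{(\FF^*)^{\kq_i}(\km_i)}(K,\bT^*)^\vee$ has rank $0$, we argue as in the first case of Lemma \ref{lemma:iwasawa:kol_gamma_n_nu}. By \eqref{eq:C_fitt_add} and Proposition \ref{prop:bidual_map_fitting}, $\beta_{\km_i,\kq_i}$ restricts to an isomorphism $\Upsilon(\kn_i)\to\Omega_{\km_i,\kq_i}$. Likewise $\alpha_{\km_i,\ku}$ and $\beta_{\km_i,\ku}$ give isomorphisms onto $\Omega_{\km_i,\ku}$ with sources $\Upsilon(\km_i)$ and $\Upsilon(\kn_i')$ respectively; these are defined when $\km_i\in \NN^{(0)}$, and otherwise both sides vanish. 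To compare $\Omega_{\km_i,\kq_i}$ with $\Omega_{\km_i,\ku}$, we pass through the vertex $\km_i\kq_i\ku$. We need it to lie in $\NN^{(0)}$, which is a further constraint on $\ku$ of the same type. Then we apply Lemma \ref{lemma:iwasawa:kol_gamma_n_nu} twice, for $\kn_i\to\kn_i\ku$ and for $\kn_i'\to\kn_i'\kq_i=\kn_i\ku$, and set
\[
\gamma_{\kn_i,\kn_i'}:=\gamma_{\kn_i',\kn_i\ku}^{-1}\circ\gamma_{\kn_i,\kn_i\ku}.
\]
With this definition, compatibility with Kolyvagin systems is inherited directly from Lemma \ref{lemma:iwasawa:kol_gamma_n_nu}. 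So the real task reduces to choosing $\ku$, simultaneously for $i=1,2$, with $\kn_i\ku\in\NN^{(0)}$ and $\km_i\ku\in\NN^{(0)}$.

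\textbf{Main obstacle.} The hard step is this choice of $\ku$. If $\loc_\ku$ is nonzero on the relevant rank-one direction of $\bH^1_{(\FF^*)(\kn_i)}$, we have $\kn_i\ku\in\NN^{(0)}$ as soon as $\kn_i\in\NN^{(0)}$: adding a transverse prime keeps the dual rank at $0$, provided $\loc_\ku$ is surjective on $\bH^1_{\FF(\kn_i)}(K,\bT)$, again by Lemma \ref{lem:patched:tr_res}. So we require $\loc_\ku$ to be surjective on both $\bH^1_{\FF(\kn_i)}$, $i=1,2$, and on $\bH^1_{\FF(\km_i)}$ where needed, and to be nonzero on the $d_i$. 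That is at most four conditions, so $p\geq5$ suffices. These groups are nonzero because $\chi(\FF)\geq1$ and Proposition \ref{prop:patched:rank_modified} applies. If some $\bH^1_{\FF_{\kq_i}(\km_i)}$ vanishes, the corresponding condition is simply dropped.
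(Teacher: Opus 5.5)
Your main case is the paper's argument. A non-$\m$-divisible class in $\bH^1_{\FF_{\kq_i}(\kn_i/\kq_i)}(K,\bT)$ lies in both $\bH^1_{\FF(\kn_i)}(K,\bT)$ and $\bH^1_{\FF(\kn_i/\kq_i)}(K,\bT)$, so it makes $\loc_\ku$ surjective on both. Lemma \ref{lem:patched:tr_res} then puts $\kn_i\ku$ and (using $d_i$) $\kn_i\ku/\kq_i$ into $\NN^{(0)}$, and $\gamma_{\kn_i\ku/\kq_i,\kn_i\ku}^{-1}\circ\gamma_{\kn_i,\kn_i\ku}$ is exactly the paper's map. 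Taking $d_i$ in $\bH^1_{(\FF^*)^{\kq_i}(\kn_i/\kq_i)}(K,\bT^*)$ rather than in $\bH^1_{(\FF^*)(\kn_i/\kq_i)}(K,\bT^*)$ is a harmless variant, since $\bH^1_{(\FF^*)_\ku(\kn_i/\kq_i)}\subset\bH^1_{(\FF^*)^{\kq_i}_\ku(\kn_i/\kq_i)}$. Note that your count of four conditions is correct only because one class per index does the work of two surjectivities.

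The gap is the degenerate case $\bH^1_{\FF_{\kq_i}(\kn_i/\kq_i)}(K,\bT)=0$, where you say the condition is ``simply dropped''. Then nothing makes $\loc_\ku$ surjective on $\bH^1_{\FF(\kn_i)}(K,\bT)$ or on $\bH^1_{\FF(\kn_i/\kq_i)}(K,\bT)$. So Lemma \ref{lem:patched:tr_res} is unavailable, neither $\kn_i\ku$ nor $\kn_i\ku/\kq_i$ is shown to lie in $\NN^{(0)}$, and your composite is not defined. Imposing the two surjectivities separately while keeping $d_i$ costs more than four conditions, which Proposition \ref{prop:patched:cheb} does not allow for $p=5$.

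The missing observation is that in this case Proposition \ref{prop:patched:rank_modified} forces $\chi(\FF)=1$ and $\rank_\Lambda \bH^1_{(\FF^*)^{\kq_i}(\kn_i/\kq_i)}(K,\bT^*)^\vee=0$. Hence $\kn_i/\kq_i\in\NN^{(0)}$, which is precisely the situation you dismissed at the outset. The paper exploits this: it only asks that $\loc_\ku$ be surjective on $\bH^1_{\FF(\kn_i/\kq_i)}(K,\bT)$ (the dual rank condition is then automatic). It sets $\gamma_{\kn_i,\kn_i\ku/\kq_i}=\gamma_{\kn_i/\kq_i,\kn_i\ku/\kq_i}\circ\gamma_{\kn_i/\kq_i,\kn_i}^{-1}$, which avoids $\kn_i\ku$ altogether. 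Alternatively, you could keep your uniform composite and, for such an index, replace $c_i$ and $d_i$ by two non-$\m$-divisible classes, one in each of $\bH^1_{\FF(\kn_i)}(K,\bT)$ and $\bH^1_{\FF(\kn_i/\kq_i)}(K,\bT)$. Then $d_i$ is unnecessary there and the total stays at four. Either way, this case has to be treated, not dropped.
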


\begin{proof}
We can apply Propositions \ref{prop:patched:cheb} and \ref{prop:patched:cheb_rank_red} to find $\ku\in \PP$ such that for $i=1,2$, 
\begin{itemize}
\item If $\rank_\Lambda\bH^1_{\FF_{\kq_i}(\kn_i/\kq_i)}(K,\bT)\geq 1$, the map
 \[\loc_{\ku}:\ \bH^1_{\FF_{\kq_i}(\kn_i/\kq_i)}(K,\bT)\to \bH^1_\f(K_\ku,\bT)\]
is surjective.
\item If $\rank_\Lambda\bH^1_{\FF_{\kq_i}(\kn_i/\kq_i)}(K,\bT) =0$, the map
\begin{equation}
    \loc_{\ku}:\ \bH^1_{\FF(\kn_i/\kq_i)}(K,\bT)\to \bH^1_\f(K_\ku,\bT)
    \label{eq:loc}
\end{equation}
is surjective.
\item $\rank_\Lambda\ \bH^1_{(\FF^*)_\ku(\kn_i/\kq_i)}(K,\bT^*)^\vee=0$.
\end{itemize}
Note that the last condition can be achieved since $\kn_i\in \NN^{(0)}$ implies that $\rank_\Lambda\ \bH^1_{(\FF^*)(\kn_i/\kq_i)}(K,\bT^*)^\vee \leq 1$

Independently of the rank of $\bH^1_{\FF^(\kn_i/\kq_i)_{\kq_i}}(K,\bT)$, the map in \eqref{eq:loc} is also surjective, so Lemma \ref{lem:patched:tr_res} implies that
\[\rank_\Lambda \bH^1_{(\FF^*)(\kn_i\ku/\kq_i)}(K,\bT^*)^\vee=\rank_\Lambda\bH^1_{(\FF^*)_\ku(\kn_i/\kq_i)}(K,\bT^*)^\vee=0,\]
so $\kn_i\ku/\kq_i\in \NN^{(0)}$.

When $\rank_\Lambda\ \bH^1_{\FF(\kn_i/\kq_i)_{\kq_i}}(K,\bT)\geq 1$, the map 
   \[\loc_{\ku}:\ \bH^1_{\FF(\kn_i)}(K,\bT)\to \bH^1_\f(K_\ku,\bT)\]
is surjective, so Lemma \ref{lem:patched:tr_res} implies that
\[\rank_\Lambda\bH^1_{(\FF^*)(\kn_i\ku)}(K,\bT^*)=\rank_\Lambda\bH^1_{(\FF^*)_\ku(\kn_i)}(K,\bT^*)=0,\]
where the last equality follows from the fact $\kn_i\in \NN^{(0)}$.

In this case, we can construct the map $\gamma_{\kn_i,\kn_i\ku/\kq_i}$ as the composition of maps constructed in Lemma \ref{lemma:iwasawa:kol_gamma_tau}, i.e., 
\[\gamma_{\kn_i,\kn_i\ku/\kq_i}= \gamma_{\kn_i/\kq_i\ku,\kn_i\ku}^{-1}\circ \gamma_{\kn_i,\kn_i\ku}.\]

When $\rank_\Lambda\ \bH^1_{\FF(\kn_i/\kq_i)_{\kq_i}}(K,\bT)=0$, then we necessarily have, by Proposition \ref{prop:patched:rank_modified}, that $\chi(\FF)=1$ and, by Proposition \ref{prop:patched:global_duality}, we also get that 
\[\rank_\Lambda\ \bH^1_{(\FF^*)(\kn_i/\kq_i)^{\kq_i}}(K,T^*)^\vee=\rank_\Lambda\ \bH^1_{(\FF^*)(\kn_i/\kq_i)}(K,T^*)^\vee=0.\]
We can now use the maps in Lemma \ref{lemma:iwasawa:kol_gamma_n_nu}, to construct
\[\gamma_{\kn_i,\kn_i\ku/\kq_i}= \gamma_{\kn_i/\kq_i,\kn_i\ku/\kq_i}\circ \gamma_{\kn_i/\kq_i,\kn_i}^{-1}.\qedhere\]
\end{proof}

\begin{remark}
Lemma \ref{lemma:iwasawa:kol_gamma_tau} is the only place in this proof where the condition $p\geq 5$ is required. For the other parts of this proof, it is enough to assume that $p\geq 3$.
\end{remark}

We can now use the previous two lemmas to construct the map to construct such maps for any two core vertices.

\begin{lemma}
Under Assumptions \ref{ass:core_rank_positve}, \ref{ass:patched:basic} and \ref{ass:H2}, there is, for every $\kn_1,\kn_2\in \NN^{(0)}$, an isomorphism
\[\gamma_{\kn_1,\kn_2}:\Upsilon(\kn_1)\to \Upsilon(\kn_2)\]
such that, for every $\kappa\in \bKS(\bT,\FF)$ satisfying that $\kappa_{\kn_1}\in \Upsilon(\kn_1)$, then $\gamma_{\kn_1,\kn_2}(\kappa_{\kn_1})=\kappa_{\kn_2}$.
\label{lem:core_graph_connected}
\end{lemma}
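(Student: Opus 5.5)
We prove Lemma \ref{lem:core_graph_connected} by induction on $\nu(\kn_1)+\nu(\kn_2)$. The maps of Lemmas \ref{lemma:iwasawa:kol_gamma_n_nu} and \ref{lemma:iwasawa:kol_gamma_tau} act as edges of a graph on $\NN^{(0)}$, and we show that this graph is connected. Each edge is an isomorphism between the cyclic modules $\Upsilon(\cdot)$, and each edge carries $\kappa_{\kn}$ to $\kappa_{\kn'}$ whenever $\kappa_\kn\in\Upsilon(\kn)$. Once we have a path from $\kn_1$ to $\kn_2$ inside $\NN^{(0)}$, we define $\gamma_{\kn_1,\kn_2}$ as the composition of the edge isomorphisms (or their inverses) along the path.

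Along the path, the membership hypothesis propagates automatically. If $\kappa_{\kn}\in\Upsilon(\kn)$ and $\gamma_{\kn,\kn'}$ is an edge, then $\kappa_{\kn'}=\gamma_{\kn,\kn'}(\kappa_\kn)\in\Upsilon(\kn')$. The same holds for inverse edges, because $\gamma_{\kn,\kn'}$ maps $\Upsilon(\kn)$ isomorphically onto $\Upsilon(\kn')$. The compatibility $\gamma_{\kn_1,\kn_2}(\kappa_{\kn_1})=\kappa_{\kn_2}$ therefore follows step by step.

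The connectivity argument runs as follows.

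First, if $\kn_1$ and $\kn_2$ both have no prime divisors, then $\kn_1=\kn_2=1$ and we take the identity.

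Second, suppose $\kn_1$ has a prime divisor $\kq_1$, and let $\kq_2$ be a prime divisor of $\kn_2$ if one exists; otherwise, treat $\kn_2$ symmetrically. Apply Lemma \ref{lemma:iwasawa:kol_gamma_tau} to the pairs $(\kn_1,\kq_1)$ and $(\kn_2,\kq_2)$. This produces a single ultraprime $\ku$ with $\kn_i\ku/\kq_i\in\NN^{(0)}$ for $i=1,2$, together with edges from $\kn_i$ to $\kn_i\ku/\kq_i$. We can also choose $\ku$ so that it does not divide $\kn_1\kn_2$, since we only need to avoid finitely many ultraprimes in the Chebotarev-type construction of Proposition \ref{prop:patched:cheb}.

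Third, iterate this swap. Replacing the prime divisors of $\kn_1$ and $\kn_2$ one at a time by common new ultraprimes, we move both vertices inside $\NN^{(0)}$ to $\kn_1'=\ka\km$ and $\kn_2'=\kb\km$, where $\km$ is a common part and $\ka,\kb$ are what remains when $\nu(\kn_1)\neq\nu(\kn_2)$. In the equal-length case we reach a common vertex $\kn_1'=\kn_2'$. To handle differing lengths, we use Lemma \ref{lemma:iwasawa:kol_gamma_n_nu} in the form of a step $\kn\to\kn\ku$: we choose $\ku$ via Proposition \ref{prop:patched:cheb} so that $\loc_\ku$ is surjective on the relevant Selmer group and the dual rank stays $0$. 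Lemma \ref{lem:patched:tr_res} then guarantees $\kn\ku\in\NN^{(0)}$, and this step lets us lengthen the shorter vertex first.

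The main obstacle is this bookkeeping. At every intermediate stage we must be able to choose the new ultraprime so that two conditions hold simultaneously for both chains: membership in $\NN^{(0)}$, and coprimality to everything already used. This works because each step only requires finitely many surjectivity and nonvanishing conditions. In Lemma \ref{lemma:iwasawa:kol_gamma_tau} these are at most four conditions, which is within the range $s+t<p$ since $p\geq5$. It is for this reason that we go through Lemma \ref{lemma:iwasawa:kol_gamma_tau} rather than handling $\kn_1$ and $\kn_2$ independently.
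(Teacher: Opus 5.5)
Your route is the paper's. You lengthen the shorter vertex through steps $\kn\to\kn\ku$ of Lemma \ref{lemma:iwasawa:kol_gamma_n_nu}, and your mechanism for this ($\loc_\ku$ surjective on $\bH^1_{\FF(\kn)}(K,\bT)$, then Lemma \ref{lem:patched:tr_res}) is the right one. You then use Lemma \ref{lemma:iwasawa:kol_gamma_tau} with a shared fresh ultraprime to bring both vertices to a common one, and compose. (It is not really an induction on $\nu(\kn_1)+\nu(\kn_2)$: swaps preserve that quantity and lengthening increases it. It is a finite direct construction.)

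One step is not justified as written. You claim inverse edges transport $\kappa$ correctly ``because $\gamma_{\kn,\kn'}$ maps $\Upsilon(\kn)$ isomorphically onto $\Upsilon(\kn')$''. You need inverse edges to walk from the common vertex back to $\kn_2$. You also need them for the reduction to $\nu(\kn_1)\le\nu(\kn_2)$, since the statement is not symmetric. Given $\kappa_{\kn'}\in\Upsilon(\kn')$, bijectivity only gives some $x\in\Upsilon(\kn)$ with $\gamma_{\kn,\kn'}(x)=\kappa_{\kn'}$. Concluding $x=\kappa_\kn$ requires already knowing $\kappa_\kn\in\Upsilon(\kn)$, which is exactly what is unknown on the $\kn_2$ side. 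Lemmas \ref{lemma:iwasawa:kol_gamma_n_nu} and \ref{lemma:iwasawa:kol_gamma_tau} are stated one-directionally, so this cannot be extracted from them used as black boxes.

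The fix is to open up the basic edge.
\begin{itemize}
\item In the rank-zero case of Lemma \ref{lemma:iwasawa:kol_gamma_n_nu}, $\alpha_{\kn,\ku}$ and $\beta_{\kn,\ku}$ are injective on the free rank-one modules $\bigwedge^{\chi(\FF)}\bH^1_{\FF(\kn)}(K,\bT)$ and $\bigwedge^{\chi(\FF)}\bH^1_{\FF(\kn\ku)}(K,\bT)$.
\item The Fitting-ideal computation there (\eqref{eq:C_fitt_add} and its analogue for $\kn\ku$) gives $\alpha_{\kn,\ku}^{-1}(\Omega_{\kn,\ku})=\Upsilon(\kn)$ and $\beta_{\kn,\ku}^{-1}(\Omega_{\kn,\ku})=\Upsilon(\kn\ku)$.
\item Hence the Kolyvagin relation $\alpha_{\kn,\ku}(\kappa_\kn)=\beta_{\kn,\ku}(\kappa_{\kn\ku})$ yields $\kappa_\kn\in\Upsilon(\kn)\iff\kappa_{\kn\ku}\in\Upsilon(\kn\ku)$, and in that case $\gamma_{\kn,\kn\ku}(\kappa_\kn)=\kappa_{\kn\ku}$.
\end{itemize}
This two-sided property is stable under composition and inversion. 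So it holds for every edge you use, including the composite edges built inside both lemmas, which already involve inverses. With it, your path argument is complete. The paper's proof composes with these inverses without comment, so the same observation is implicitly needed there too.
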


\begin{proof}
We can assume without loss of generality that the quantity $s=\nu(\kn_2)-\nu(\kn_1)\geq 0$. A recursive application of Corollary \ref{cor:patched:dual_res_vanish_417} shows the existence of primes $\ku_1,\ldots,\ku_s$ such that for all $i$, the product $\kn\ku_1\cdots \ku_i\in \NN^{(0)}$. Let $\kr:=\kn\ku_1\cdots \ku_s$. By Lemma \ref{lemma:iwasawa:kol_gamma_n_nu}, we can construct by composition a map
\[\gamma_{\kn_1,\kr}:\ \Upsilon(\kn_1)\to \Upsilon(\kr),\]
such that every Kolyvagin system $\kappa\in \bKS(\bT,\FF)$ such that $\kappa_{\kn_1}\in \Upsilon(\kn_1)$ satisfies that $\gamma_{\kn_1,\kr}(\kappa_{\kn_1})=\kappa_{\kr}$.

For every prime $\kq_1\mid \kr$ not dividing $\kn_2$ and every prime $\kq_2\mid \kn_2$ and not dividing $\kr$, we can apply Lemma \ref{lemma:iwasawa:kol_gamma_tau} recursively to find the map
\[\gamma_{\kr,\kn_2}:\Upsilon(\kr)\to \Upsilon(\kn_2),\]
such that every Kolyvagin system $\kappa\in \bKS(\bT,\FF)$ such that $\kappa_{\kr}\in \Upsilon(\kr)$ satisfies that $\gamma_{\kr,\kn_1}(\kappa_{\kr})=\kappa_{\kn_2}$.
The isomorphism $\gamma_{\kn_1,\kn_2}$ is constructed as the composition of these maps:
\[\gamma_{\kn_1,\kn_2}=\gamma_{\kr,\kn_2}\circ \gamma_{\kn_1,\kr}.\]
\end{proof}

Since, for every core vertex $\kc\in \NN$, we have that 
\[\Upsilon(\kc)=\bigwedge^{\chi(\FF)} \bH^1_{\FF}(K,\bT),\]
the following corollary holds.

\begin{corollary}
Let $\kappa\in \bKS(\bT,\FF)$ be a Kolyvagin system. If $\kappa_{\kc}=0$ for some core vertex $\kc\in \NN$, then $\kappa_{\kn}=0$ for any other vertices $\kn\in \NN^{(0)}$. In particular, $\kappa_{\kn}=0$ for all other core vertices $\kn$.
\label{cor:kol_projection_injective_core_all}
\end{corollary}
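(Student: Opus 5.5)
The plan is to deduce the corollary directly from Lemma \ref{lem:core_graph_connected}, applied with $\kn_1=\kc$ and $\kn_2=\kn$. First I check that $\kc\in\NN^{(0)}$. Since $\kc$ is a core vertex, $\bH^1_{(\FF^*)(\kc)}(K,\bT^*)=0$, so its Pontryagin dual has rank zero. Moreover its Fitting ideal is $\Lambda$. Hence
\[\Upsilon(\kc)=\bigwedge^{\chi(\FF)}\bH^1_{\FF(\kc)}(K,\bT),\]
and this is free of rank one by Proposition \ref{prop:core_pseudo_free}. Here I use that $\Upsilon(\kc)$ is the unique cyclic submodule pseudo-equal to the whole exterior power. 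Since the whole module is cyclic (free of rank one) and a free module has no nontrivial finite-index proper cyclic submodules of this kind, the two agree. In particular the hypothesis $\kappa_\kc\in\Upsilon(\kc)$ of Lemma \ref{lem:core_graph_connected} holds automatically for every Kolyvagin system.

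Now let $\kappa$ satisfy $\kappa_\kc=0$ and take any $\kn\in\NN^{(0)}$. Lemma \ref{lem:core_graph_connected} gives an isomorphism $\gamma_{\kc,\kn}:\Upsilon(\kc)\to\Upsilon(\kn)$ with $\gamma_{\kc,\kn}(\kappa_\kc)=\kappa_\kn$. Since $\gamma_{\kc,\kn}$ is linear, $\kappa_\kn=\gamma_{\kc,\kn}(0)=0$.

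For the final assertion, observe that any other core vertex $\kn$ has $\bH^1_{(\FF^*)(\kn)}(K,\bT^*)=0$, so $\kn\in\NN^{(0)}$ and the previous paragraph applies.

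The only point needing care is the identification $\Upsilon(\kc)=\bigwedge^{\chi(\FF)}\bH^1_{\FF(\kc)}(K,\bT)$. Even this is not essential: $0$ lies in any submodule, so the lemma's hypothesis is met for $\kappa$ as soon as $\kc\in\NN^{(0)}$, which is immediate from the vanishing of the dual Selmer group.
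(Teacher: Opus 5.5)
Your proposal is correct and follows the paper's argument: the corollary is an immediate application of Lemma~\ref{lem:core_graph_connected} with $\kn_1=\kc$, using that $\kc\in\NN^{(0)}$ and that $\Upsilon(\kc)$ is the full exterior power. Your closing observation is also right. Since $\kappa_\kc=0$ lies in $\Upsilon(\kc)$ trivially, the identification of $\Upsilon(\kc)$ with the whole exterior power is not needed for the corollary itself; the paper invokes it, and it is genuinely needed only for the remark that follows.
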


\begin{remark}
If we apply Lemma \ref{lem:core_graph_connected} with $\kn_1$ being a core vertex, we obtain that 
\[\kappa_{\kn_2}\in \Upsilon(\kn_2)\ \forall \kn_2\in \NN^{(0)},\ \forall \kappa\in \bKS(\bT,\FF).\]
\end{remark}

\begin{lemma}
Let $\kappa\in \bKS(\bT,\FF)$ be a Kolyvagin system such that $\kappa_{\kc}=0$ for some core vertex $\kc\in \NN$. Then $\kappa=0$.
\label{lem:kol_projection_injective}
\end{lemma}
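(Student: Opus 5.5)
The plan is to prove that $\kappa_\kn=0$ for every $\kn\in\NN$ by induction on
\[r(\kn):=\rank_\Lambda\ \bH^1_{(\FF^*)(\kn)}(K,\bT^*)^\vee.\]
The base case $r(\kn)=0$ says exactly that $\kn\in\NN^{(0)}$. There Corollary \ref{cor:kol_projection_injective_core_all} already gives $\kappa_\kn=0$ from the hypothesis $\kappa_\kc=0$. So fix $\kn$ with $r:=r(\kn)>0$, and assume $\kappa_\km=0$ for every $\km\in\NN$ with $r(\km)<r$.

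\textbf{Ranks.} By Proposition \ref{prop:patched:rank_modified}, $\chi(\FF(\kn))=\chi(\FF)$, so the free module $\bH^1_{\FF(\kn)}(K,\bT)$ (Proposition \ref{prop:iwasawa:selmer_free}) has rank $\chi(\FF)+r$. The idea is to cut this module down, one Kolyvagin ultraprime at a time, by $r+1$ localisation conditions. We will keep $\kappa_\kn$ inside the $\chi(\FF)$-th exterior power of the cut-down submodule. At the end that submodule is free of rank $\chi(\FF)-1$, whose $\chi(\FF)$-th exterior power is zero.

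\textbf{Choice of ultraprimes.} Apply Proposition \ref{prop:patched:cheb_rank_red} to the cartesian structure $\FF(\kn)$. This gives $d\in\bH^1_{(\FF^*)(\kn)}(K,\bT^*)$ such that every $\ku$ with $\loc_\ku(d)\neq0$ satisfies
\[\rank_\Lambda \bH^1_{(\FF^*)(\kn)_\ku}(K,\bT^*)^\vee=r-1.\]
Put $M_0:=\bH^1_{\FF(\kn)}(K,\bT)$, and inductively $M_i:=\bH^1_{\FF(\kn)_{\ku_1\cdots\ku_i}}(K,\bT)$. At step $i$, with $M_{i-1}\neq0$, use Proposition \ref{prop:cheb:non-div_elt} to pick an element $c\in M_{i-1}$ that is not $\m$-divisible. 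Then apply Proposition \ref{prop:patched:cheb} with $s=t=1<p$ to get $\ku_i\in\PP$, coprime to $\kn\ku_1\cdots\ku_{i-1}$, such that:
\begin{itemize}
\item $\loc_{\ku_i}(d)\neq0$;
\item $\loc_{\ku_i}\colon M_{i-1}\to\bH^1_\f(K_{\ku_i},\bT)$ is surjective.
\end{itemize}
Surjectivity onto a rank one free module makes $M_i$ free of rank $\chi(\FF)+r-i$. Surjectivity also implies that $\loc_{\ku_i}$ is surjective on all of $\bH^1_{\FF(\kn)}(K,\bT)$. So Lemma \ref{lem:patched:tr_res} gives
\[r(\kn\ku_i)=\rank_\Lambda\bH^1_{(\FF^*)(\kn)_{\ku_i}}(K,\bT^*)^\vee=r-1,\]
and the induction hypothesis gives $\kappa_{\kn\ku_i}=0$.

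\textbf{Descending $\kappa_\kn$.} The Kolyvagin relation now reads $\alpha_{\kn,\ku_i}(\kappa_\kn)=\beta_{\kn,\ku_i}(\kappa_{\kn\ku_i})=0$. Suppose inductively that $\kappa_\kn$ lies in the image of $\bigwedge^{\chi(\FF)}M_{i-1}$. I would check that the restriction of $\alpha_{\kn,\ku_i}$ to $\bigwedge^{\chi(\FF)}M_{i-1}$ is the rank reduction map attached to the exact sequence
\[0\to M_i\to M_{i-1}\xrightarrow{\phi_{\ku_i}\circ\loc_{\ku_i}}\Lambda\to 0.\]
This is naturality of the construction in Corollary \ref{cor:exterior_rankred} with respect to the inclusion $M_{i-1}\subset\bH^1_{\FF(\kn)}(K,\bT)$, which should follow from Proposition \ref{prop:bidual_map_comp}. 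The map $\bigwedge^{\chi(\FF)-1}M_i\to\bigwedge^{\chi(\FF)-1}\bH^1_{\FF(\kn)_{\ku_i}}(K,\bT)$ is injective, because the cokernel is torsion-free by the cartesian property. Granting this, Proposition \ref{prop:bidual_map_iwasawa_kernel} puts $\kappa_\kn$ in the image of $\bigwedge^{\chi(\FF)}M_i$. Injectivity of $\bigwedge^{\chi(\FF)}M_i\to\bigwedge^{\chi(\FF)}M_0$ holds for the same reason, since all modules involved are free with torsion-free quotients.

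\textbf{Conclusion.} After $r+1$ steps, $\kappa_\kn$ lies in the image of $\bigwedge^{\chi(\FF)}M_{r+1}$, where $M_{r+1}$ is free of rank $\chi(\FF)-1$. That exterior power vanishes, so $\kappa_\kn=0$.

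\textbf{Main obstacle.} I expect the hardest step to be the compatibility of $\alpha_{\kn,\ku_i}$ with restriction to $M_{i-1}$, including the signs and the identifications $\phi_{\ku}$. Together with the injectivity of the induced maps on exterior powers, this is what lets the kernel statement be applied successively.
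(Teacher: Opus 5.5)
Your proof is correct, and it takes a different route from the paper's in the inductive step.

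\textbf{The paper's route.} It runs the same induction on $\rank_\Lambda \bH^1_{(\FF^*)(\kn)}(K,\bT^*)^\vee$, with the same base case from Lemma \ref{lem:core_graph_connected}. It then picks a \emph{single} Kolyvagin ultraprime $\tau$ that does two things at once: it lowers the rank, and it satisfies $\kappa_\kn\notin\ker(\alpha_{\kn,\tau})=\bigwedge^{\chi(\FF)}\bH^1_{\FF(\kn)_\tau}(K,\bT)$. The Kolyvagin relation together with $\kappa_{\kn\tau}=0$ then gives a contradiction. This is shorter, but it needs a Chebotarev choice that detects the wedge element $\kappa_\kn$ itself. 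Proposition \ref{prop:patched:cheb} and Corollary \ref{cor:patched:cheb_rank_red} only make localisations of non-$\m$-divisible classes generate, and localisations of nonzero dual classes nonzero. They do not directly supply such a $\tau$ when $\chi(\FF)\geq 2$ or when $\kappa_\kn$ is $\m$-divisible, and the paper leaves this choice implicit.

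\textbf{Your route.} You never need to see $\kappa_\kn$ through a localisation. Each of your $r+1$ ultraprimes only has to lower the rank and be surjective on the current $M_{i-1}$. That is exactly what Propositions \ref{prop:cheb:non-div_elt}, \ref{prop:patched:cheb} (with $s=t=1$) and \ref{prop:patched:cheb_rank_red}, together with Lemma \ref{lem:patched:tr_res}, provide. The iterated kernel description (Proposition \ref{prop:bidual_map_iwasawa_kernel}) then traps $\kappa_\kn$ in $\bigwedge^{\chi(\FF)}$ of a free module of rank $\chi(\FF)-1$, which is zero.

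\textbf{The step you flagged.} The price of your route is some linear algebra, and it is easier than you expect.
\begin{itemize}
\item Surjectivity of $\loc_{\ku_i}$ on $M_{i-1}$ gives a splitting $M_{i-1}=M_i\oplus\Lambda e_i$.
\item For free modules, the rank reduction map is contraction with the functional $\phi_{\ku_i}\circ\loc_{\ku_i}$. So compatibility with restriction to $M_{i-1}\subset\bH^1_{\FF(\kn)}(K,\bT)$ is immediate from the formula; it is not really an instance of Proposition \ref{prop:bidual_map_comp}.
\item Signs do not matter, since you only use vanishing.
\item Injectivity of $\bigwedge^k$ applied to an inclusion of free $\Lambda$-modules is automatic after tensoring with the fraction field. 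The cartesian property is not needed for that step.
\end{itemize}
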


\begin{proof}
We will show that $\kappa_{\kn}=0$ for all $\kn\in \NN$. We will proceed by induction on the quantity
\[\lambda(\kn)=\rank_\Lambda\ \bH^1_{\FF^*}(K,\bT^*)^\vee.\]
When $\lambda(\kn)=0$, then we can deduce by Lemma \ref{lem:core_graph_connected} that $\kappa_\kn=0$.

Assume now that $\kappa_{\kr}=0$ for all $\kr\in \NN$ such that $\lambda(\kr)\leq k$ for some natural number $k$ and that $\kn\in \NN$ is a vertex satisfying that $\lambda(\kn)=k$ and that $\kappa_{\kn}\neq 0$.

By Corollary \ref{cor:patched:cheb_rank_red} and Proposition \ref{prop:bidual_map_iwasawa_kernel}, we can construct a Kolyvagin ultraprime such that 
\[\kappa_\kn\notin\ker\left(\bigwedge^{\chi(\FF)} \bH^1_{\FF(\kn)}(K,\bT)\to \bigwedge^{\chi(\FF)-1} \bH^1_{\FF_\tau(\kn)}(K,\bT)\right)\]
and $\lambda(\kn\tau)<\lambda(\kn)$. By the induction hypothesis, $\kappa_{\kn\tau}=0$, which contradicts the Kolyvagin relation between $\kappa_\kn$ and $\kappa_{\kn\tau}$. The contradiction proves that $\kappa_\kn=0$ for all $\kappa_n\in \NN$.
\end{proof}

We can now conclude the proof of Theorem \ref{th:kol_core_projection}.

\begin{proof}[Proof of Theorem \ref{th:kol_core_projection}]
The projection 
\[\bKS(\bT,\FF)\to \bigwedge^{\chi(\FF)} \bH^1_{\FF(\kn)}(K,\bT)\]
is surjective by Lemma \ref{lem:kol_core_projection_sur} and injective by Lemma \ref{lem:kol_projection_injective}.
\end{proof}

Hence we know that the module of Kolyvagin systems is a free, cyclic $\Lambda$-module that is controlled by the module of Stark systems.

\begin{corollary}
Let $\FF$ be a Selmer structure of core rank $\chi(\FF)\geq 1$. If $\bT$ satisfies Assumptions \ref{ass:patched:basic} and \ref{ass:H2}, the set of Kolyvagin systems is a free, cyclic $\Lambda$-module.
\label{cor:iwasawa:kol_mod}
\end{corollary}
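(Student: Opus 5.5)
The plan is to deduce the corollary directly from Theorem \ref{th:kol_core_projection}, after producing a core vertex and computing the rank of the Selmer group there.

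\textbf{Step 1: a core vertex exists.} By Proposition \ref{prop:core_vertex_existence}, applied with $\kn=1$, there is a core vertex $\kc\in\NN$. Note that Proposition \ref{prop:core_vertex_existence} uses Assumption \ref{ass:H2}, through Proposition \ref{prop:iwasawa:ass_equiv}, to get Assumption \ref{ass:patched:quotient}. It also uses Corollary \ref{cor:patched:dual_tr_vanish}, which needs positive core rank.

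\textbf{Step 2: apply the control theorem.} With the hypotheses of Assumption \ref{ass:core_rank_positve} in force, Theorem \ref{th:kol_core_projection} shows that the projection
\[\bKS(\bT,\FF)\to \bigwedge^{\chi(\FF)} \bH^1_{\FF(\kc)}(K,\bT)\]
is an isomorphism of $\Lambda$-modules. I expect the only real care to be here. The corollary is stated only with $\chi(\FF)\geq 1$, whereas Assumption \ref{ass:core_rank_positve} also asks for $p\geq 5$ and that $\FF$ be cartesian. I would read these as the standing hypotheses of this section: $p\geq 5$ is assumed throughout the paper, and cartesianness is assumed for the Selmer structures under consideration.

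\textbf{Step 3: identify the target.} By Proposition \ref{prop:core_pseudo_free}, $\bH^1_{\FF(\kc)}(K,\bT)$ is free of rank $\chi(\FF)$, since $\kc$ is a core vertex. Indeed, $\chi(\FF(\kc))=\chi(\FF)$ by Proposition \ref{prop:patched:rank_modified}, and $\bH^1_{(\FF^*)(\kc)}(K,\bT^*)=0$, so the rank equals the core rank; freeness then comes from Proposition \ref{prop:iwasawa:selmer_free}. Hence
\[\bigwedge^{\chi(\FF)}\bH^1_{\FF(\kc)}(K,\bT)\cong\bigwedge^{\chi(\FF)}\Lambda^{\chi(\FF)}\cong\Lambda .\]
Therefore $\bKS(\bT,\FF)\cong\Lambda$ is free and cyclic.

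One could also argue via Theorem \ref{th:regulator_bijective} and Corollary \ref{cor:stark_free}: $\Reg$ is an isomorphism because it is compatible with the projections at $\kc$ and $\Reg_\kc$ is an isomorphism by Proposition \ref{prop:reg_core_iso}. However, the route above is already complete.
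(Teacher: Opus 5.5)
Your proposal is correct and follows the paper's argument. The paper gives no separate proof and presents the corollary as an immediate consequence of Theorem \ref{th:kol_core_projection}, combined with the existence of a core vertex (Proposition \ref{prop:core_vertex_existence}) and the freeness of $\bH^1_{\FF(\kc)}(K,\bT)$ of rank $\chi(\FF)$ (Proposition \ref{prop:core_pseudo_free}), which give $\bKS(\bT,\FF)\cong\bigwedge^{\chi(\FF)}\Lambda^{\chi(\FF)}\cong\Lambda$. You are also right that the cartesian hypothesis and $p\geq 5$ from Assumption \ref{ass:core_rank_positve} are tacitly required, because the theorem you invoke needs them.
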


\begin{corollary}
Under Assumptions \ref{ass:core_rank_positve}, \ref{ass:patched:basic} and \ref{ass:H2}, the regulator map 
\[\Reg:\ \bSS(\bT,\FF)\to \bKS(\bT,\FF)\]
is an isomorphism.
\label{cor:iwasawa:reg_iso}
\end{corollary}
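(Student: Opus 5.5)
The statement is Corollary~\ref{cor:iwasawa:reg_iso}, that $\Reg$ is an isomorphism. I would get it by comparing $\Reg$ with the projections to a single core vertex, where both modules are already understood.

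First I fix a core vertex. By Proposition~\ref{prop:core_vertex_existence}, applied with $\kn=1$ (which is legitimate under Assumption~\ref{ass:H2}), there is a core vertex $\kc\in\NN$. Its basic properties are:
\begin{itemize}
\item Being a core vertex, $\kc$ is also a weak core vertex. This holds because $\bH^1_{(\FF^*)_\kc}(K,\bT^*)\subset \bH^1_{(\FF^*)(\kc)}(K,\bT^*)=0$, the strict condition at each $\ku\mid\kc$ being contained in the transverse one.
\item Theorem~\ref{th:stark_core_projection} therefore applies at $\kc$: the projection $\bSS(\bT,\FF)\to\bigwedge^{\chi(\FF)+\nu(\kc)}\bH^1_{\FF^\kc}(K,\bT)$ is an isomorphism.
\item Theorem~\ref{th:kol_core_projection} gives that the projection $\bKS(\bT,\FF)\to\bigwedge^{\chi(\FF)}\bH^1_{\FF(\kc)}(K,\bT)$ is an isomorphism.
\item Proposition~\ref{prop:reg_core_iso} gives that the bottom map $\Reg_\kc$ is an isomorphism.
\end{itemize}

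Next I use the commutative square already appearing in the proof of Lemma~\ref{lem:kol_core_projection_sur}. Its top row is $\Reg:\bSS(\bT,\FF)\to\bKS(\bT,\FF)$, its vertical arrows are the two projections at $\kc$, and its bottom row is $\Reg_\kc$. The square commutes by the very definition of $\Reg$ in Theorem~\ref{th:regulator_bijective}: the $\kc$-component of $\Reg(\varepsilon)$ is $\Reg_\kc(\varepsilon_\kc)$.

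In this square three of the four arrows are isomorphisms. Hence
\[\Reg=\pi^{\KS}_\kc{}^{-1}\circ\Reg_\kc\circ\pi^{\SS}_\kc,\]
where $\pi^{\SS}_\kc$ and $\pi^{\KS}_\kc$ denote the two vertical projections. It follows that $\Reg$ is an isomorphism.

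The only point needing care is the first step, namely that a core vertex exists and is also a weak core vertex. Everything after that is formal diagram chasing. All the substantive difficulty is contained in Theorem~\ref{th:kol_core_projection}, which we are allowed to use.
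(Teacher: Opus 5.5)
Your proposal is correct and matches the argument the paper intends: the corollary follows from the commutative square in the proof of Lemma \ref{lem:kol_core_projection_sur}, where the left vertical arrow (Theorem \ref{th:stark_core_projection}), the bottom arrow $\Reg_\kc$ (Proposition \ref{prop:reg_core_iso}) and the right vertical arrow (Theorem \ref{th:kol_core_projection}) are all isomorphisms. You also check explicitly that a core vertex is a weak core vertex, since $\bH^1_{(\FF^*)_\kc}(K,\bT^*)\subset\bH^1_{(\FF^*)(\kc)}(K,\bT^*)$. This justifies applying Theorem \ref{th:stark_core_projection} at $\kc$, a step the paper uses without comment.
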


It now makes sense to consider generators of the module of Kolyvagin systems.

\begin{definition}
We say that a Kolyvagin system $\kappa\in \bKS(\bT,\FF)$ is \emph{primitive} if it generates $\bKS(\bT,\FF)$ as a $\Lambda$-module.
\end{definition}

Analogously to Theorem \ref{th:stark:fitt_sel_res}, primitive Kolyvagin systems control the Fitting ideal with transverse local conditions.
\begin{theorem}
    Suppose Assumptions \ref{ass:patched:basic} and \ref{ass:H2} hold and that $\FF$ is a cartesian Selmer structure of core rank $\chi(\FF)\geq 1$. If $p\geq 5$ and $\kappa\in \bKS(\bT,\FF)$ is a primitive Kolyvagin system, then
    \[\Fitt^0_\Lambda\Bigl(\bH^1_{(\FF^*)(\kn)}(K,\bT^*)^\vee\Bigr)\subset_{f.i.}\ind(\kappa_\kn) \ \forall\, \kn\in \NN.\]
    \label{th:kol_fitting0}
\end{theorem}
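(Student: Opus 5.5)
Since $\kappa$ generates $\bKS(\bT,\FF)$, I will transport the statement to Stark systems and then to the exact sequence defining $\Reg_\kn$, where Proposition \ref{prop:bidual_map_iwasawa_fitting_inclusion} gives exactly the required inclusion. By Corollary \ref{cor:iwasawa:reg_iso}, $\Reg$ is an isomorphism, so $\kappa=\Reg(\varepsilon)$ for a primitive Stark system $\varepsilon$. In particular $\kappa_\kn=\Reg_\kn(\varepsilon_\kn)$ for every $\kn\in\NN$, up to the harmless sign $(-1)^{\nu(\kn)}$.

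Fix $\kn$ and choose a weak core vertex $\kc$ with $\kn\mid\kc$ (Proposition \ref{prop:weak_core_vertex_existence}). By Theorem \ref{th:stark_core_projection}, $\varepsilon_\kc$ generates $\bigwedge^{\chi(\FF)+\nu(\kc)}\bH^1_{\FF^\kc}(K,\bT)$, and $\varepsilon_\kn=\Phi_{\kc,\kn}(\varepsilon_\kc)$. By Proposition \ref{prop:bidual_map_comp}, the composite $\Reg_\kn\circ\Phi_{\kc,\kn}$ equals, up to sign, the rank reduction map attached to the single exact sequence
\[0\to \bH^1_{\FF(\kn)}(K,\bT)\to \bH^1_{\FF^{\kc}}(K,\bT)\to \bigoplus_{\ku\mid \kc/\kn}\bH^1_\s(K_\ku,\bT)\oplus\bigoplus_{\ku\mid\kn}\bH^1_\f(K_\ku,\bT)\to C\to 0,\]
with the target identified with $\Lambda^{\nu(\kc)}$ via the fixed maps $\psi_\ku$ and $\phi_\ku$. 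To check that the compatibility conditions \eqref{eq:compatibility} hold, I will use that $\bH^1_{\FF^\kn}(K,\bT)$ is the preimage of the $\bH^1_\f$-components at $\ku\mid\kc/\kn$. Thus $\kappa_\kn$ is the image of a generator $\psi$ of the rank-one free module $\bigwedge^{\chi(\FF)+\nu(\kc)}\bH^1_{\FF^\kc}(K,\bT)$ under this rank reduction map, which is well defined because all three modules are free by Proposition \ref{prop:iwasawa:selmer_free}.

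Next I identify $C$ by global duality, Proposition \ref{prop:patched:global_duality}, applied to $\FF(\kn)\le\FF^{\kc}$. The dual of $\FF^\kc$ is $(\FF^*)_\kc$, whose Selmer group vanishes because $\kc$ is a weak core vertex. Hence $C\cong\bH^1_{(\FF^*)(\kn)}(K,\bT^*)^\vee$; here I use that the transverse and finite conditions at $\ku\mid\kn$ are mutual annihilators under Proposition \ref{prop:patched:local_duality}, so that $(\FF(\kn))^*=(\FF^*)(\kn)$. Proposition \ref{prop:bidual_map_iwasawa_fitting_inclusion} then shows that $\ind(\kappa_\kn)$ is the principal ideal containing $\Fitt^0_\Lambda(C)$ with finite index, which is the claim.

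The main obstacle is the bookkeeping in the second step. One must verify that the composite of $\Phi_{\kc,\kn}$ (a reduction by singular quotients) with $\Reg_\kn$ (a reduction by finite quotients, a different shape of sequence) is really the single reduction attached to the combined sequence. This is the same kind of ordering and sign argument as in Theorem \ref{th:regulator_bijective}, and signs only affect $\kappa_\kn$ by a unit, so they do not change the index. The other delicate point is the duality identification $(\FF(\kn))^*=(\FF^*)(\kn)$ at Kolyvagin ultraprimes; I would deduce it from the splitting in Proposition \ref{prop:patched_transverse} together with local duality.
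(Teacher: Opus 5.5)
Your proposal is correct and follows the paper's own proof. Both arguments write $\kappa=\Reg(\varepsilon)$ with $\varepsilon$ a primitive Stark system (Corollary \ref{cor:iwasawa:reg_iso}), choose a weak core vertex $\kc$ with $\kn\mid\kc$, regard $\kappa_\kn=(\Reg_\kn\circ\Phi_{\kc,\kn})(\varepsilon_\kc)$ as the rank reduction of a generator along the global duality sequence with cokernel $\bH^1_{(\FF^*)(\kn)}(K,\bT^*)^\vee$, and conclude by Proposition \ref{prop:bidual_map_iwasawa_fitting_inclusion}.

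The steps you spell out, namely the composition via Proposition \ref{prop:bidual_map_comp} and the identity $(\FF(\kn))^*=(\FF^*)(\kn)$, are left implicit in the paper. For the identity, your stated reason needs correcting. What is needed is that $\bH^1_{\tr}(K_\ku,\bT)$ and $\bH^1_{\tr}(K_\ku,\bT^*)$ are exact annihilators of each other, obtained by patching the classical Mazur--Rubin statement. It is not that the transverse and finite conditions annihilate each other, and it does not follow from the splitting in Proposition \ref{prop:patched_transverse} plus the self-orthogonality of the finite conditions alone.
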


\begin{proof}
By Corollary \ref{cor:iwasawa:reg_iso}, there is a primitive Stark system $\varepsilon$ such that $\Reg(\varepsilon)=\kappa$. For every $\kn\in \NN$, Proposition \ref{prop:weak_core_vertex_existence} shows the existence of a weak core vertex $\kc\in \NN$ such that $\kn\mid \kc$. Then Proposition \ref{prop:patched:global_duality} gives an exact sequence

\begin{center}
\begin{tikzpicture}[descr/.style={fill=white,inner sep=1.5pt}]
        \matrix (m) [
            matrix of math nodes,
            row sep=3.5em,
            column sep=1.3em,
            text height=1.5ex, text depth=0.25ex
        ]
        {  0&\bH^1_{\FF(\kn)}(K,\bT) & \bH^1_{\FF^\kc}(K,\bT) \\
        &\displaystyle{\bigoplus_{\ku\mid \kn} \bH^1_{\f}(K_\ku,\bT)\oplus \bigoplus_{\ku\mid \frac{\kc}{\kn}} \bH^1_{\s}(K_\ku,\bT)}
            & \bH^1_{(\FF^*)(\kn)}(K,\bT^*)^\vee&0.  \\
        };

        \path[overlay,->, font=\scriptsize,>=latex]
        (m-1-1) edge (m-1-2)
        (m-1-2) edge (m-1-3)
         
        (m-1-3) edge [out=355,in=175] (m-2-2) 
        (m-2-2) edge (m-2-3)
        (m-2-3) edge (m-2-4)
       
        ;    
\end{tikzpicture}
\end{center}
Then $\kappa_\kn=(\Reg_\kn\circ \phi_{\kc,\kn})(\varepsilon_{\kc})$. Since $\varepsilon$ is a primitive Stark system, Theorem \ref{th:stark_core_projection} shows that $\varepsilon_\kc$ generates $\bigwedge^{\chi(\FF)
+\nu(\kc)}\bH^1_{\FF^c}(K,\bT)$. Hence Propositions \ref{prop:bidual_map_fitting} and \ref{prop:bidual_map_iwasawa_fitting_inclusion} imply that 
\[\Fitt^0_\Lambda\Bigl(\bH^1_{\FF^*(\kn)}(K,\bT^*)^\vee\Bigr)\subset_{f.i.}\ind(\kappa_n).\qedhere\]
\end{proof}

\subsection{Fitting ideals of the Selmer group}

In this section, we give a description of the Fitting ideals of the Selmer group, by relating them with the theta ideals of a primitive Kolyvagin system.

\begin{theorem}
Let $\kappa\in \bKS(\bT,\FF)$ be a primitive Kolyvagin system and suppose that Assumptions \ref{ass:core_rank_positve}, \ref{ass:patched:basic} and \ref{ass:H2} hold. Then 
\[\Fitt^i_\Lambda\Bigl(\bH^1_{\FF^*}(K,\bT^*)^\vee\Bigr)\subset_{f.i.}\bTheta(\kappa).\]
\label{th:iwasawa:kolyvagin_theta_fitting}
\end{theorem}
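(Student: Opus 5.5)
The plan is to transfer Theorem \ref{th:iwasawa:stark_theta_fitting} from Stark systems to Kolyvagin systems through the regulator isomorphism of Corollary \ref{cor:iwasawa:reg_iso}, while also reproving the needed inclusions directly with the transverse Fitting ideals. Concretely, write $\kappa=\Reg(\varepsilon)$ with $\varepsilon$ a primitive Stark system. The statement should be read as $\Fitt^i_\Lambda\bigl(\bH^1_{\FF^*}(K,\bT^*)^\vee\bigr)\subset_{f.i.}\bTheta_i(\kappa)$ for every $i\geq 0$. By Theorem \ref{th:kol_fitting0} we already know
\[\Fitt^0_\Lambda\Bigl(\bH^1_{(\FF^*)(\kn)}(K,\bT^*)^\vee\Bigr)\subset_{f.i.}\ind(\kappa_\kn)\quad\forall\,\kn\in\NN.\]
It therefore suffices to show
\[\Fitt^i_\Lambda\Bigl(\bH^1_{\FF^*}(K,\bT^*)^\vee\Bigr)\approx\sum_{\kn\in\NN_i}\Fitt^0_\Lambda\Bigl(\bH^1_{(\FF^*)(\kn)}(K,\bT^*)^\vee\Bigr),\]
or at least that the left side is contained in the right side with finite index. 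Summing the displayed inclusions over $\kn\in\NN_i$ then gives the claim.

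To get this formula, I would mimic the proof of Theorem \ref{th:iwasawa:stark_theta_fitting}, but with transverse conditions instead of relaxed ones. Fix $\kn\in\NN_i$ and choose a weak core vertex $\kc$ with $\kn\mid\kc$ (Proposition \ref{prop:weak_core_vertex_existence}). Global duality (Proposition \ref{prop:patched:global_duality}) gives the presentation appearing in the proof of Theorem \ref{th:kol_fitting0}:
\[\bH^1_{\FF^\kc}(K,\bT)\to\bigoplus_{\ku\mid\kn}\bH^1_\f(K_\ku,\bT)\oplus\bigoplus_{\ku\mid\kc/\kn}\bH^1_\s(K_\ku,\bT)\to\bH^1_{(\FF^*)(\kn)}(K,\bT^*)^\vee\to0.\]
We compare it with the presentation of $\bH^1_{\FF^*}(K,\bT^*)^\vee$, namely $\bH^1_{\FF^\kc}(K,\bT)\to\bigoplus_{\ku\mid\kc}\bH^1_\s(K_\ku,\bT)$. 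Both presentations share the same free module $\bH^1_{\FF^\kc}(K,\bT)\cong\Lambda^{\chi(\FF)+\nu(\kc)}$ (Proposition \ref{prop:weak_core_pseudo_free}) and the same target rank $\nu(\kc)$. Their matrices differ only in the $\nu(\kn)$ rows indexed by $\ku\mid\kn$, where the singular localisation is replaced by the finite one. An $(\nu(\kc)-i)$-minor of the relaxed matrix omitting the rows indexed by $\kn$ is, up to sign, a $\nu(\kc)\times\nu(\kc)$-minor of the transverse matrix for that $\kn$, provided those $i$ rows can be filled by columns contributing a unit. This is what I would aim to establish.

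This step is where I expect the real obstacle. The finite-localisation rows need not split off cleanly, so I would first try to choose $\kc$ so that the loc maps at $\ku\mid\kn$ are surjective, using Corollary \ref{cor:patched:cheb} and Lemma \ref{lem:patched:tr_res}, as in Corollary \ref{cor:patched:dual_tr_vanish}. Failing that, I would argue only up to pseudo-equality. Localise at each height-one prime $\BB$, where $\Lambda_\BB$ is a DVR and Fitting ideals are governed by elementary divisors. Then use the Chebotarev-type Propositions \ref{prop:patched:cheb} and \ref{cor:patched:cheb_rank_red} to pick, inductively for each generator of the Selmer module, Kolyvagin ultraprimes $\ku_1,\dots,\ku_i$ whose transverse modification kills exactly the $i$ largest elementary divisors over $\Lambda_\BB$. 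Then $\Fitt^0$ of $\bH^1_{(\FF^*)(\ku_1\cdots\ku_i)}(K,\bT^*)^\vee$ agrees at $\BB$ with $\Fitt^i$ of $\bH^1_{\FF^*}(K,\bT^*)^\vee$.

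Since only finitely many height-one primes occur in the support, one finite sum over such $\kn$ realises the pseudo-equality. Combining this with Theorem \ref{th:kol_fitting0} gives $\Fitt^i\subset_{f.i.}\bTheta_i(\kappa)$.
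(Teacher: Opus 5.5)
Your reduction matches the paper's. By Theorem \ref{th:kol_fitting0} it suffices to prove
\[\Fitt^i_\Lambda\Bigl(\bH^1_{\FF^*}(K,\bT^*)^\vee\Bigr)=\sum_{\kn\in\NN_i}\Fitt^0_\Lambda\Bigl(\bH^1_{(\FF^*)(\kn)}(K,\bT^*)^\vee\Bigr),\]
or at least the inclusion $\subseteq$, and then sum the finite-index inclusions. The gap is in how you propose to prove this identity.

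Fix a weak core vertex $\kc$. The $(\nu(\kc)-i)$-minors of the relaxed matrix that omit the rows of an $i$-subset $\kn\mid\kc$ generate $\Fitt^0_\Lambda\bigl(\bH^1_{(\FF^*)_\kn}(K,\bT^*)^\vee\bigr)$, which is the \emph{strict} group. $\Fitt^i$ is the sum of these over \emph{all} $i$-subsets of the prime divisors of $\kc$. So the argument must start from a fixed $\kc$, not from a fixed $\kn$. To turn these minors into full minors of the transverse matrix for the same $\kn$, as you propose, you need extra columns that vanish in the rows $\ku\mid\kc/\kn$ and form an invertible $i\times i$ block in the finite-localisation rows. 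A sufficient condition is an ordering $\kq_1,\dots,\kq_i$ of the primes of $\kn$ such that $\loc_{\kq_j}\colon\bH^1_{\FF(\kq_1\cdots\kq_{j-1})}(K,\bT)\to\bH^1_\f(K_{\kq_j},\bT)$ is surjective. That is a property of $\kn$ itself, so choosing $\kc$ after $\kn$ cannot arrange it. Corollary \ref{cor:patched:cheb} and Lemma \ref{lem:patched:tr_res} (as used in Corollary \ref{cor:patched:dual_tr_vanish}) give such surjectivity only one prime at a time, or along initial segments of one chosen sequence. They do not give it for every $i$-subset of a single $\kc$, which is what a one-shot comparison of minors needs once $i\geq 2$.

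The missing idea is the paper's induction on $i$, which only ever needs one-prime surjectivity. Choose $\kc$ by Corollary \ref{cor:patched:dual_res_vanish_417}, so that $\bH^1_{(\FF^*)(\kq)}(K,\bT^*)=\bH^1_{(\FF^*)_\kq}(K,\bT^*)$ for each $\kq\mid\kc$. Deleting a single row of the presentation gives, for $i\geq 1$,
\[\Fitt^i_\Lambda\Bigl(\bH^1_{\FF^*}(K,\bT^*)^\vee\Bigr)=\sum_{\kq\mid\kc}\Fitt^{i-1}_\Lambda\Bigl(\bH^1_{(\FF^*)_\kq}(K,\bT^*)^\vee\Bigr)=\sum_{\kq\mid\kc}\Fitt^{i-1}_\Lambda\Bigl(\bH^1_{(\FF(\kq))^*}(K,\bT^*)^\vee\Bigr).\]
One then applies the $(i-1)$ case to the new cartesian structure $\FF(\kq)$, which has the same core rank, using a fresh weak core vertex adapted to $\FF(\kq)$. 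This yields an exact equality, and hence the containment.

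Your fallback does not repair the gap. Proposition \ref{prop:patched:cheb} only sees reductions modulo $\m$, and Proposition \ref{cor:patched:cheb_rank_red} only controls $\Lambda$-ranks. Nothing available lets you target elementary divisors at a height-one prime such as $(p)$. Even if it worked, it would give only $\Fitt^i\approx\bTheta_i(\kappa)$. Since $\bTheta_i(\kappa)$ need not be principal, you would lose the inclusion. That inclusion is part of $\subset_{f.i.}$, and it is exactly what Theorem \ref{th:iwasawa:ultrakol:theta_fitting_equal} later uses.
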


\begin{proof}
By Corollary \ref{cor:patched:dual_res_vanish_417}, we can choose a weak core vertex $\kc\in \NN$ such that, for every $\ku\mid \kc$, the following equality holds
\begin{equation}
\bH^1_{(\FF^*)(\ku)}(K,\bT^*)=\bH^1_{(\FF^*)_\ku}(K,\bT^*).
\label{eq:kol_fit_tr_res}
\end{equation}

The core vertex $\kc$ induces an exact sequence
\[\xymatrix{\bH^1_{\FF}(K,\bT) \ar@{>->}[r]& \bH^1_{\FF^{\kc}}(K,\bT)\ar[r] & \displaystyle{\bigoplus_{\ku\mid \kc}\bH^1_\s(K_\ku,\bT)} \ar@{->>}[r] & \bH^1_{\FF^*}(K,\bT^*)^\vee.}\]

For every $\kq\mid \kc$, there is another exact sequence 
\[\xymatrix{\bH^1_{\FF^\kq}(K,\bT)\ar@{>->}[r] & \bH^1_{\FF^{\kc}}(K,\bT)\ar[r] & \displaystyle{\bigoplus_{\ku\mid \frac{\kn}{\kq}}\bH^1_s(K_\ku,\bT)} \ar@{->>}[r] & \bH^1_{(\FF^*)_\kq}(k,\bT^*).}\]

By the definition of the Fitting ideals, we can deduce that 
\[\Fitt^i_\Lambda\Bigl(\bH^1_{\FF^*}(K,\bT^*)^\vee\Bigr)=\sum_{\kq\mid \kc} \Fitt^{i-1}_\Lambda\Bigl(\bH^1_{(\FF^*)_\kq}(K,\bT^*)^\vee\Bigr).\]
By \eqref{eq:kol_fit_tr_res}, we can deduce that
\[\Fitt^i_\Lambda\Bigl(\bH^1_{\FF^*}(K,\bT^*)^\vee\Bigr)=\sum_{\kq\mid \kc} \Fitt^{i-1}_\Lambda\Bigl(\bH^1_{(\FF^*)(\kq)}(K,\bT^*)^\vee\Bigr).\] 

By Proposition \ref{prop:core_vertex_existence}, we can extend the equality to all the ultraprimes in $\PP$:
\[\Fitt^i_\Lambda\Bigl(\bH^1_{\FF^*}(K,\bT^*)^\vee\Bigr)=\sum_{\kq\in \PP} \Fitt^{i-1}_\Lambda\Bigl(\bH^1_{(\FF^*)(\kq)}(K,\bT^*)^\vee\Bigr).\] 

We can now proceed to prove Theorem \ref{th:iwasawa:kolyvagin_theta_fitting} by induction. For $i=0$, it is Theorem \ref{th:kol_fitting0} for $n=1$. Assuming the result holds true for $i-1$, for all Selmer structures $\FF(\kq)$, we have that 
\[\Fitt^i_\Lambda\Bigl(\bH^1_{\FF^*}(K,\bT^*)^\vee\Bigr)=\sum_{\kq\in \PP} \sum_{\kn\in \NN_{i-1}, \kq\nmid \kn} \Fitt^0_\Lambda\Bigl(\bH^1_{(\FF^*)({\kq\kn})}(K,\bT^*)^\vee\Bigr).\]
Rearranging the sum,
\[\Fitt^i_\Lambda\Bigl(\bH^1_{\FF^*}(K,\bT^*)^\vee\Bigr)=\sum_{\kn\in \NN_{i}} \Fitt^0_\Lambda\Bigl(\bH^1_{(\FF^*)({\kn})}(K,\bT^*)^\vee\Bigr).\]
By Theorem \ref{th:kol_fitting0}, we obtain that
\[\Fitt^i_\Lambda\Bigl(\bH^1_{\FF^*}(K,\bT^*)^\vee\Bigr)\subset_{f.i.} \sum_{\kn\in \NN_{i}}\ind(\kappa_\kn)=\bTheta(\kappa).\qedhere\]
\end{proof}

\section{Relation with the classical theory}
\label{sec:classical}

This section compares the ultra Kolyvagin systems constructed in \textsection{\ref{sec:iwasawa:kol}} with the previous construction in \cite{BurnsSakamotoSano2}.

\subsection{Ultra Kolyvagin systems with finite coefficients}

It is possible to use patched cohomology and ultraprimes to build ultra Kolyvagin systems for a Selmer group with finite coefficient ring. However, no technical advantage appears a result of this construction, but it will be necessary to compare Definition \ref{def:kol_highrank} with \cite[Definition 5.1]{BurnsSakamotoSano2}

When working with finite coefficients, we are forced to work with exterior biduals instead of exterior powers. The reason is that Selmer groups are no longer necessarily free, so we need to use a rank reduction map analogue to Proposition \ref{prop:bidual_map}, instead of Corollary \ref{cor:exterior_rankred} (see \cite[Lemma 2.1]{Sakamoto18} for a construction of this map for modules over a self-injective ring).

\begin{namedass}{(SI)}
    Let $R$ be a self-injective, local, artinian ring with finite residue field and let $T$ be an $R[[G_K]]$-module which is free and finitely generated as an $R$-module, ramifies only at finitely many primes, and satisfies Assumption \ref{ass:patched:basic}. 
    \label{ass:selfinj}
\end{namedass}

The reader is referred to \cite[Definition 5.1]{BurnsSakamotoSano2} for the classical construction of Kolyvagin systems for Selmer structures defined over $T$. The module of such Kolyvagin systems will be denoted by $\KS(T,\FF)$. An identical construction will lead to the module of ultra Kolyvagin systems $\bKS(T,\FF)$, but considering patched Selmer groups and collection of classes indexed square-free product by Kolyvagin ultraprimes. 

The proofs in \cite{BurnsSakamotoSano2} also work for studying the module of ultra Kolyvagin systems. In particular, the proof of \cite[Theorem 5.2]{BurnsSakamotoSano2} implies that $\bKS(T,\FF)$ is a free $R$-module of rank one whenever $\FF$ is a cartesian Selmer structure of positive core rank.

The next theorem shows that both constructions lead to the same module.

\begin{theorem}
Let $R$ and $T$ be as in Assumption \ref{ass:selfinj} and let $\FF$ be a cartesian Selmer structure defined on $T$ in the sense of \cite[Definition 3.9]{Sakamoto18}. Let $\NN_{\cl}$ denote the set of square-free products of Kolyvagin primes, while $\NN_u$ denotes the set of square-free products of Kolyvagin ultraprimes. Then there is an isomorphism
\[\bKS(T,\FF)\to \KS(T,\FF):\ \{\kappa_\kn\}_{\kn\in \NN_u}\mapsto \{\kappa_\kn\}_{\kn\in \NN_\cl},\]
defined by restricting via the inclusion $\NN_{\cl}\hookrightarrow \NN_u$ as constant ultraprimes.

The inverse map is is defined by the ultra Kolyvagin system constructed as follows: every $\kn\in \NN_\ku$ represented by a sequence $(n_i)_{i\in \N}$, the class $\kappa_{\kn}$ is constructed by patching the classes $\kappa_{n_i}$ via the identification
\[\bigcap^{\chi(\FF)} \bH^1_{\FF(\kn)}(K,T)=\UU_i\left(\bigcap^{\chi(\FF)} \bH^1_{\FF(n_i)}(K,T)\right).\]
\label{th:comp_ultra_kol}
\end{theorem}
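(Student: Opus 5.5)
The plan is to check that both maps are well defined, that they are mutually inverse, and then to use the rank-one freeness of both modules only as a consistency check.

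\emph{Restriction map.} Let $\ell$ be a Kolyvagin prime. As a constant ultraprime it lies in $\PP(T)$: its Frobenius is conjugate to $\tau$ in $\Gal(K(T)_{p^\infty}/K)$, since $T$ is finite and only a finite layer matters. For a constant $\kn=n\in\NN_\cl$, every factor of the ultraproduct is the same. Lemma \ref{lem:ultrafilter_finite_diagonal} and Corollary \ref{cor:patched:local_constant} then identify $\bH^1_{\FF(n)}(K,T)$ with $H^1_{\FF(n)}(K,T)$, compatibly with the local conditions. Here the transverse conditions are constant, and we fix generators of $\GG_\ell$ in the same way on both sides, as in Remark \ref{rem:finite-singular_generator}. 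Under these identifications the maps $\alpha_{n,\ell}$ and $\beta_{n,\ell}$ built from \cite[Lemma 2.1]{Sakamoto18} agree, so the restriction of an ultra Kolyvagin system is a classical one.

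\emph{Patching map.} Fix a classical system $\kappa$ and $\kn\in\NN_u$ with representative $(n_i)$. By Proposition \ref{prop:patched:product_ultraprimes_ultraproduct_naturals} and the definition of Kolyvagin ultraprimes, $n_i\in\NN_\cl$ for $\UU$-many $i$. Lemma \ref{lem:structure_patching} gives
\[\bH^1_{\FF(\kn)}(K,T)=\UU_i\bigl(H^1_{\FF(n_i)}(K,T)\bigr).\]
These groups have bounded order: by Proposition \ref{prop:BCP_global}, $\nu(n_i)$ is eventually constant and $T$ is finite. So Corollary \ref{cor:ultrafilter_finite_modules} lets us pick an $S\in\UU$ on which all the relevant Selmer groups, and all of their duals, are isomorphic to fixed finite $R$-modules.

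The key technical step is that the bidual commutes with the ultraproduct:
\[\bigcap^{r}\UU_i(M_i)=\UU_i\Bigl(\bigcap^{r}M_i\Bigr).\]
To prove it, apply Proposition \ref{prop:ultraproduct_hom} twice. For $\Hom(M_i,R)$ the target is constant and finite, so we get an isomorphism. The same argument gives the isomorphism for $\Hom(\bigwedge^r M_i^*,R)$. Finally, exterior powers commute with ultraproducts of modules of bounded order by Corollary \ref{cor:ultrafilter_finite_modules}. Setting $\kappa_\kn:=(\kappa_{n_i})_i$ is therefore well defined. It is independent of the representative, since two representatives agree $\UU$-often.

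The relation $\alpha_{\kn,\ku}(\kappa_\kn)=\beta_{\kn,\ku}(\kappa_{\kn\ku})$ is then checked factorwise. The patched rank-reduction maps are the ultraproducts of the classical ones, which holds because the defining exact sequences are ultraproducts (Proposition \ref{prop:ultrafilter_exact}). The same applies to $\phi_\ku$ and $\psi_\ku$, provided the isomorphisms $\psi_{\ell_i}$ are chosen coherently as the $\ell_i$ vary. So the relation holds for $\UU$-many $i$, and hence in the ultraproduct.

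\emph{Inverse maps.} Restricting a patched system to a constant $\kn=n$ gives back the diagonal element $\kappa_n$, so restriction after patching is the identity. The converse, that patching after restriction is the identity, is where I expect the real work to lie. Let $\kappa\in\bKS(T,\FF)$ and $\kappa'$ be the patch of its restriction. Then $\kappa-\kappa'$ is an ultra Kolyvagin system vanishing on all of $\NN_\cl$. We show it is zero using the ultra analogue of \cite[Theorem 5.2]{BurnsSakamotoSano2}, namely that projection to a core vertex is injective.

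It therefore suffices to find a \emph{constant} core vertex. The classical Chebotarev argument (\cite[Lemma 3.9]{BurnsSakamotoSano2}) produces $c\in\NN_\cl$ with $H^1_{(\FF^*)(c)}(K,T^*)=0$. We also need $\bH^1_{(\FF^*)(c)}(K,T^*)=0$ in the patched sense. This should follow because for constant ultraprimes the patched dual Selmer group is the ultrapower of the classical one (Lemma \ref{lem:patching_structures_dual}), hence equal to it. Alternatively one can avoid core vertices: since both modules are free of rank one over $R$, it is enough to show that restriction sends a generator to a generator, that is, that the composite $\KS\to\bKS\to\KS$ is the identity and $\bKS\to\KS$ is surjective. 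A surjection between free rank-one modules over the finite ring $R$ is an isomorphism.
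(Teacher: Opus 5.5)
Your proposal is correct and follows the paper's proof. The shared steps are restriction to constant ultraprimes, patching $\kappa_\kn:=(\kappa_{n_i})_i$ through the identification of the patched bidual with the ultraproduct of classical biduals, and checking $\alpha_{\kn,\ku}(\kappa_\kn)=\beta_{\kn,\ku}(\kappa_{\kn\ku})$ factorwise with $\phi_\ku,\psi_\ku$ patched from $\phi_{\ell_i},\psi_{\ell_i}$. The paper then concludes exactly as in your ``alternative'': $\KS(T,\FF)\to\bKS(T,\FF)\to\KS(T,\FF)$ is the identity, and both modules are free of rank one over the finite ring $R$; the constant-core-vertex route you sketch first also works, but rests on the same adapted control theorem from \cite{BurnsSakamotoSano2} that gives the rank-one freeness of $\bKS(T,\FF)$.
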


\begin{remark}
Note that, since $R$ is finite, there is a canonical inclusion\footnote{The inclusion is not an equality since sequences in which the number of prime divisors is not bounded might not represent a (finite) product of Kolyvagin ultraprimes.} $\NN_u\subset \UU(\NN_\cl)$. Under this identification, $\NN_u$ contains all finite square-free products of constant Kolyvagin ultraprimes, so we can identify $\NN_\cl\hookrightarrow \NN_u$.
\label{rem:kol_primes}
\end{remark}

\begin{proof}[Proof of Theorem \ref{th:comp_ultra_kol}]
It is straightforward that if we have an ultra Kolyvagin system $\kappa=\{\kappa_\kn\}_{\kn\in \NN_u}$ in $\bKS(T_k,\FF)$, the subset of $\kappa_\kn$ when $\kn\in \NN_\cl$ is a constant ultraprime forms a classical Kolyvagin system in $\KS(T_k,\FF)$.

We can now outline the construction of the inverse map. Assume that $\kappa=\{\kappa_n\}_{n\in \NN}\in \KS(T_k,\FF)$ is a classical Kolyvagin system and let $\kn\in \NN_\ku$ be a square-free product of Kolyvagin ultraprimes. By Remark \ref{rem:kol_primes}, we can view $\kn$ as an element in $\UU(\NN_\cl)$, so it can be represented by a sequence $(n_i)_{i\in \N}$, where $n_i\in \NN_\cl$. Since the Selmer groups are finite, we can use Corollary \ref{cor:ultrafilter_finite_modules} to conclude that
\[\bigcap^{\chi(\FF)} \bH^1_{\FF(\kn)}(K,T)=\UU_i\left(\bigcap^{\chi(\FF)} \bH^1_{\FF(n_i)}(K,T)\right).\]
Under this identification, the sequence $(\kappa_{n_i})_{i\in \N}$ represents an element in the exterior bidual $\bigcap^{\chi(\FF)} \bH^1_{\FF(\kn)}(K,T_k)$, that will be defined as $\kappa_{\kn}$. Note that the construction of $\kappa_{\kn}$ is independent of the choice of the representative sequence.

For every ultraprime $\ku=(\ell_i)_{i\in \N}$ dividing $\kn$, the sequences $(\phi_{\ell_i})_{i\in \N}$ and  $(\psi_{\ell_i})_{i\in \N}$ represent, by Proposition \ref{prop:ultraproduct_hom}, elements 
\[\begin{array}{cc}
\phi_\ku\in \bH^1_{\ku}(K_\ku,T)^+,\ &\psi_\ku\in\bH^1_{\tr}(K_\ku,T)^+,
\end{array}\]
respectively, satisfying that $\phi_\ku=\phi_\ku^\fs\circ\psi_\ku$. We will use those to compute the Kolyvagin system relations. 

Since the rank reduction maps commute with the ultraproduct, we have that 
\[\begin{array}{cc}
    \alpha_{\kn,\ku}(\kappa_\kn)=(\alpha_{n_i,\ell_i}(\kappa_{n_i}))_{i\in \N},\ & \beta_{\kn,\ku}(\kappa_{\kn\ku})=(\beta_{n_i,\ell_i}(\kappa_{n_i\ell_i}))_{i\in \N},
\end{array}\]
where, again, we are using the following identification:
\[\bigcap^{\chi(\FF)-1} \bH^1_{\FF_\ku(\kn)}(K,T)=\UU_i\left(\bigcap^{\chi(\FF)-1} \bH^1_{\FF_{\ell_i}(n_i)}(K,T)\right).\]
Since $\kappa_{n_i}$ and $\kappa_{n_i\ell_i}$ satisfy the Kolyvagin system relation $\alpha_{n_i,\ell_i}(\kappa_{n_i})=\beta_{n_i,\ell_i}(\kappa_{n_i\ell_i})$ for all $i\in \N$, then 
\[\alpha_{\kn,\ku}(\kappa_\ku)=(\beta_{\kn,\ku}(\kappa_{\kn\ku})).\]
Then this construction gives a well defined map
\[\KS(T,\FF)\to \bKS(T,\FF).\]
It is a direct computation to check that the composition
\[\KS(T,\FF)\to\bKS(T,\FF)\to \KS(T,\FF)\]
is the identity map. Indeed, when $\kn$ is equivalent to a constant sequence $(n)$, then, by construction, of $\kappa_{\kn}$ is the patching of a sequence $(\kappa_{n_i})_{i\in \N}$, where $n_i=n$ for $\UU$-many $i$. Then $\kappa_{\kn}=\kappa_n$.

Since both $\KS(T,\FF)$ and $\bKS(T,\FF)$ are free $R_k$-modules of rank one, the above maps are, in fact, isomorphisms.
\end{proof}

\subsection{Assumptions}
\label{sec:cl:ass}

Recall that we established in Notation \ref{not:iw_limit} that 
\[\begin{array}{cc}
\Lambda_k:=\Lambda/(p^k,X^k),\ & \bT_k:=\bT\otimes_\Lambda \Lambda_k.
\end{array}\]

The goal of this section is to establish conditions on a Selmer structure defined on $\bT$ that ensure that the propagated Selmer structures on $\bT_k$ are cartesian in the sense of \cite[Definition 3.9]{Sakamoto18}, so the module of ultra Kolyvagin systems can be studied using Theorem \ref{th:comp_ultra_kol}.

\begin{namedass}{(Prop)}
Assume that $p\geq 5$ and let $\bT$ be a Galois representation with coefficients in the Iwasawa algebra satisfying Assumptions \ref{ass:patched:basic} and \ref{ass:iwasawa:local_no_fin_sub} and let $\FF$ be a cartesian Selmer structure defined on $\bT$ such that its propagation to $\bT/(X)$ is also cartesian.
\label{ass:classic}
\end{namedass}

Assumption \ref{ass:classic} can be reinterpreted in terms of the local conditions.
\begin{proposition}
Assume that $\bT$ satisfies Assumptions \ref{ass:patched:basic} and \ref{ass:iwasawa:local_no_fin_sub}. Then Assumption \ref{ass:classic} is equivalent to $\bH^1_{/\FF}(K_\ku,\bT)$ being a free Iwasawa module for every $\ku\in \Sigma_{\FF}$.
\label{prop:iwasawa:free_cartesian_propagated}
\end{proposition}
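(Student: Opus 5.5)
Since Assumptions \ref{ass:patched:basic} and \ref{ass:iwasawa:local_no_fin_sub} are common to both sides, the claim reduces to this: for each $\ku\in\Sigma_\FF$, the torsion-free quotient $Q_\ku:=\bH^1_{/\FF}(K_\ku,\bT)$ is free over $\Lambda$ if and only if the propagated quotient $\bH^1_{/\FF}(K_\ku,\bT/X\bT)$ is $\Z_p$-torsion free. Here "torsion free" is the cartesian condition of Definition \ref{def:patched:cartesian}, applied over $\Lambda$ and over $\Lambda/(X)\cong\Z_p$ respectively. The plan is to compare $Q_\ku/XQ_\ku$ with the propagated quotient.

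\textbf{Local control.} Consider the sequence $0\to\bT\xrightarrow{X}\bT\to\bT/X\bT\to0$. Propositions \ref{prop:patched:long_profinite} and Remark \ref{rem:patched:BCP_local} give the exact sequence
\[\bH^1(K_\ku,\bT)/X\hookrightarrow\bH^1(K_\ku,\bT/X\bT)\twoheadrightarrow\bH^2(K_\ku,\bT)[X].\]
The propagated local condition is the image of $\bH^1_\FF(K_\ku,\bT)$ under Definition \ref{def:propagation_quotients}. Dividing both $\bH^1$-terms by this image yields
\[0\to Q_\ku/XQ_\ku\to\bH^1_{/\FF}(K_\ku,\bT/X\bT)\to\bH^2(K_\ku,\bT)[X]\to0.\]
Injectivity of the first map follows from Lemma \ref{lem:quot_tors_free} with $I=(X)$, since $Q_\ku$ is torsion free by the cartesian hypothesis.

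\textbf{The $\bH^2$ term.} By Remark \ref{rem:iwasawa:local_duality_finsub}, Assumption \ref{ass:iwasawa:local_no_fin_sub} says $\bH^2(K_\ku,\bT)$ has no finite $\Lambda$-submodule. From this I will deduce that $\bH^2(K_\ku,\bT)[X]$ is $\Z_p$-torsion free. Suppose $y\in\bH^2[X]$ with $py=0$. Then $y$ is killed by $(p,X)=\m$, so $\Lambda y$ is finite, and therefore $y=0$. Consequently the middle term of the sequence is $\Z_p$-torsion free if and only if $Q_\ku/XQ_\ku$ is. This holds because an extension of a torsion-free $\Z_p$-module by a torsion-free one is torsion free, and conversely a submodule of a torsion-free module is torsion free.

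\textbf{Algebra.} It remains to show that a finitely generated, $\Lambda$-torsion-free module $Q$ is free if and only if $Q/XQ$ is $\Z_p$-torsion free. For the forward direction this is clear. For the converse, $Q$ torsion free gives $Q[X]=0$, so $X$ is a regular element on $Q$. If $Q/XQ$ is a torsion-free, finitely generated $\Z_p$-module, it is free over the DVR $\Z_p$. Then $X$ and $p$ form a regular sequence on $Q$, so $\operatorname{depth}Q=2=\dim\Lambda$. Auslander--Buchsbaum then gives projective dimension $0$, so $Q$ is free. (Alternatively, lift a basis of $Q/XQ$ to $F=\Lambda^n\to Q$, use Nakayama for surjectivity, and check that the kernel $N$ satisfies $N/XN=0$ via the snake lemma using $Q[X]=0$.) Applying this to each $Q_\ku$ proves the equivalence.

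\textbf{Main obstacle.} The delicate step is the local control sequence. I need the surjection onto $\bH^2(K_\ku,\bT)[X]$ to survive after quotienting by the local conditions. I also need to identify the propagated condition precisely enough that $Q_\ku/XQ_\ku$ is exactly the kernel. Both should follow by chasing the snake-lemma diagram that compares the inclusion $\bH^1_\FF\subset\bH^1$ with its reduction mod $X$.
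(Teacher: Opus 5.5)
Your proof is correct. Its skeleton is the paper's: both rest on the local sequence $0\to Q_\ku/XQ_\ku\to\bH^1_{/\FF}(K_\ku,\bT/X\bT)\to\bH^2(K_\ku,\bT)[X]$, and both use (Tam) only for the implication from freeness to the propagated cartesian condition.

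You differ from the paper only in the algebraic step ``propagated cartesian $\Rightarrow$ free''. The paper uses the structure theory of $\Lambda$-modules. A finitely generated torsion-free $Q_\ku$ sits in $0\to Q_\ku\to F\to C\to 0$ with $F$ free and $C$ finite. If $C\neq 0$, the snake lemma exhibits $C[X]$ as a nonzero finite submodule of $Q_\ku/XQ_\ku$, contradicting the cartesian property modulo $X$. You instead prove directly that a finitely generated torsion-free $Q$ is free if and only if $Q/XQ$ is $\Z_p$-torsion-free. You do this via depth and Auslander--Buchsbaum, or by lifting a $\Z_p$-basis and applying Nakayama twice. Your route is self-contained and avoids pseudo-isomorphisms and reflexive hulls. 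The paper's is quicker given the Iwasawa-theoretic tools it already has in play.

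The step you flag as the main obstacle is automatic. The propagated condition is by definition the image of $\bH^1_\FF(K_\ku,\bT)$ under reduction modulo $X$. That image lies inside the image of the injection $\bH^1(K_\ku,\bT)/X\hookrightarrow\bH^1(K_\ku,\bT/X\bT)$. Quotienting both by it gives the exact sequence, including surjectivity onto $\bH^2(K_\ku,\bT)[X]$, with no hypothesis on $Q_\ku$ at all. Lemma \ref{lem:quot_tors_free} concerns the map $\bH^1_\FF(K_\ku,\bT)/X\to\bH^1(K_\ku,\bT)/X$, so it is not what gives injectivity of $Q_\ku/XQ_\ku\to\bH^1_{/\FF}(K_\ku,\bT/X\bT)$.
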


\begin{proof}
Suppose first that Assumption \ref{ass:classic} holds. In particular, $\FF$ is cartesian, so $\bH^1_{/\FF}(K_\ku,\bT)$ is pseudo-isomorphic to a free module $F$ for every $\ku\in \Sigma_{\FF}$. Since it does not contain any finite submodules, there is a finite module $C$ fitting in the exact sequence 
\[\xymatrix{0\ar[r] & \bH^1_{/\FF}(K_\ku,\bT)\ar[r] & F\ar[r] & C\ar[r] & 0.}\]
For the sake of contradiction, assume that $C$ is not the trivial module. The snake lemma then induces another exact sequence
\[\xymatrix{0\ar[r] &C[X]\ar[r] &  \bH^1_{/\FF}(K_\ku,\bT)\otimes \Lambda/(X)\ar[r] & F/XF\ar[r] & C/XC\ar[r] & 0,}\]
which implies that $\bH^1_{/\FF}(K_\ku,\bT)\otimes \Lambda/(X)$ contains a finite submodule.

However, Proposition \ref{prop:patched:long_profinite} and diagram chasing induce another exact sequence
\begin{equation}
\xymatrix{0\ar[r] &\bH^1_{/\FF}(K_\ku,\bT)\otimes \Lambda/(X)\ar[r] & \bH^1_{/\FF}(K_\ku,\bT/X\bT)\ar[r] & \bH^2(K_\ku,\bT)[X].}
\label{eq:prop_ses}
\end{equation}
When the propagated Selmer structure is cartesian, $\bH^1_{/\FF}(K_\ku,\bT/X\bT)$ contains no finite Selmer modules, so neither does $\bH^1_{/\FF}(K_\ku,\bT)\otimes \Lambda/(X)$. Therefore, $C=0$ and $\bH^1_{/\FF}(K_\ku,\bT)$ is a free Iwasawa module.

Conversely, it is clear that the freeness $\bH^1_{/\FF}(K_\ku,\bT)$ for every $\ku\in \Sigma_{\FF}$ is stronger than the cartesian condition on $\FF$, so we are only left to show that the propagation of $\FF$ to $\bT/X\bT$ is also cartesian. Remark \ref{rem:iwasawa:local_duality_finsub} implies that, under Assumption \ref{ass:iwasawa:local_no_fin_sub}, $\bH^2(K_\ku,\bT)[X]$ contains no finite submodules. If $\bH^1_{/\FF}(K_\ku,\bT)$ is a free module, then the snake lemma applied to the short exact sequence in \eqref{eq:prop_ses} implies that $\bH^1_{/\FF}(K_\ku,\bT/X\bT)$ contains no finite submodules either, so the propagated Selmer structure is cartesian on $\bT/X\bT$.
\end{proof}

Assumption \ref{ass:classic} is enough to conclude that the propagated Selmer structures are cartesian on $\bT_k$.
\begin{proposition}
Let $\bT$ and $\FF$ be as in Assumption \ref{ass:classic}. Then, for all $k\in \N$, the propagation of $\FF$ to $\bT_k$ is cartesian, in the sense of \cite[Definition 3.9]{Sakamoto18}.
\label{prop:iwasawa:finite_cartesian}
\end{proposition}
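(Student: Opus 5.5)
The plan is to reduce Sakamoto's cartesian condition to a freeness statement for the local quotients over $\Lambda_k$, and to deduce that freeness from Proposition \ref{prop:iwasawa:free_cartesian_propagated}.

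Recall that, in the sense of \cite[Definition 3.9]{Sakamoto18}, the propagated structure on $\bT_k$ is cartesian if the following holds for every $\ku\in \Sigma_\FF$ and every ideal $I$ of $\Lambda_k$. Take a generator $x$ of the annihilator of $I$; it is principal because $\Lambda_k$ is Gorenstein, by the argument of Remark \ref{rem:ann_prin}. The map $[x]:\bT_k/I\bT_k\hookrightarrow \bT_k$ should then induce an injection
\[\bH^1_{/\FF}(K_\ku,\bT_k/I)\hookrightarrow \bH^1_{/\FF}(K_\ku,\bT_k).\]
Equivalently, the local condition on $\bT_k/I$ is the pullback of the one on $\bT_k$. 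For ultraprimes outside $\Sigma_\FF$ the local condition is the finite one, and the standard argument of \cite{MazurRubin} applies after patching. So only $\ku\in\Sigma_\FF$ needs attention.

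The first step is to apply Proposition \ref{prop:iwasawa:free_cartesian_propagated}. Under Assumption \ref{ass:classic}, it says that $Q:=\bH^1_{/\FF}(K_\ku,\bT)$ is a free $\Lambda$-module, say $Q\cong\Lambda^d$.

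The second step is to show that, for every quotient $\bT\twoheadrightarrow \bT/J$ with $I_k\subset J$, the natural map
\[Q\otimes_\Lambda \Lambda/J\to \bH^1_{/\FF}(K_\ku,\bT/J)\]
is an isomorphism. Here the propagated local condition is taken as the image of $\bH^1_\FF(K_\ku,\bT)$, as in Definition \ref{def:propagation_quotients}. I would first do this for $J=(X^k)$ and then for $J=(p^k,X^k)$, peeling off one element at a time. Two ingredients go into each step. The first is the long exact sequence of Proposition \ref{prop:patched:long_profinite}, which gives $\bH^1(K_\ku,\bT)/X^k\hookrightarrow\bH^1(K_\ku,\bT/X^k)$ with cokernel inside $\bH^2(K_\ku,\bT)[X^k]$. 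The second is the snake lemma, exactly as in the proof of Proposition \ref{prop:iwasawa:free_cartesian_propagated}, which gives an exact sequence
\[0\to Q/X^kQ\to \bH^1_{/\FF}(K_\ku,\bT/X^k)\to \bH^2(K_\ku,\bT)[X^k].\]

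\textbf{Main obstacle.} The hard point is showing that the last map in this sequence vanishes, and, in the second step, that no extra $p$-torsion appears. The first thing I would try uses both hypotheses together: the quotient $\bH^1_{/\FF}(K_\ku,\bT/X)$ is torsion-free over $\Lambda/(X)\cong\Z_p$, because the propagation to $\bT/X$ is cartesian, and $\bH^2(K_\ku,\bT)$ has no finite submodules, by Assumption \ref{ass:iwasawa:local_no_fin_sub} together with Remark \ref{rem:iwasawa:local_duality_finsub}. From these, the image of $\bH^1_{/\FF}(K_\ku,\bT/X^k)$ in $\bH^2(K_\ku,\bT)[X^k]$ should be forced to be zero, by comparing $\Z_p$-ranks and using the freeness of $Q$. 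If that direct approach fails, I would instead replace $\bH^1_{/\FF}(K_\ku,\bT_k)$ by $Q\otimes\Lambda_k$ and verify that the propagated condition is literally the image of $\bH^1_\FF(K_\ku,\bT)$, which suffices for the cartesian property below.

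Granting the second step, the conclusion is purely algebraic. We get $\bH^1_{/\FF}(K_\ku,\bT_k/I)\cong (\Lambda_k/I)^d$ and $\bH^1_{/\FF}(K_\ku,\bT_k)\cong\Lambda_k^d$, compatibly with $[x]$. For the free module $\Lambda_k$ over the self-injective ring $\Lambda_k$, multiplication by $x$ induces an isomorphism $\Lambda_k/I\cong \Lambda_k[\mathrm{ann}(x)]\subset\Lambda_k$, so $[x]:(\Lambda_k/I)^d\to\Lambda_k^d$ is injective. This is exactly the required injectivity, so the propagation of $\FF$ to $\bT_k$ is cartesian for every $k$.
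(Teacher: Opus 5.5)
Your first step is fine, but the second step is false under Assumption \ref{ass:classic}, and everything after it depends on it. The sequence you write is in fact also exact on the right:
\[0\to Q/X^kQ\to \bH^1_{/\FF}(K_\ku,\bT/X^k\bT)\to \bH^2(K_\ku,\bT)[X^k]\to 0.\]
The reason is that the connecting map $\bH^1(K_\ku,\bT/X^k\bT)\to\bH^2(K_\ku,\bT)[X^k]$ is surjective. It also kills the propagated condition, since that condition lies in the image of $\bH^1(K_\ku,\bT)$. So ``the last map vanishes'' is equivalent to $\bH^2(K_\ku,\bT)[X^k]=0$, and no comparison of ranks can produce this. Assumption \ref{ass:iwasawa:local_no_fin_sub} forbids finite submodules of $\bH^2(K_\ku,\bT)$; it does not forbid $X$-torsion.

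Concretely, nothing in the local hypotheses prevents $\bH^1_\FF(K_\ku,\bT)=\bH^1(K_\ku,\bT)$ (so $Q=0$, which is free) together with $\bH^2(K_\ku,\bT)\cong\Lambda/(X)$, which has no finite submodules. The same long exact sequences then give $\bH^1_{/\FF}(K_\ku,\bT_k)\cong\Lambda_k/(X)$. This is neither $Q\otimes_\Lambda\Lambda_k$ nor free over $\Lambda_k$, so your final algebraic step has nothing to act on.

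The fallback does not help either. The cartesian condition concerns all of $\bH^1_{/\FF}(K_\ku,\bT_k/I)$, including precisely the classes not coming from $\bH^1(K_\ku,\bT)$, and replacing the quotient by $Q\otimes\Lambda_k$ throws those away. In the example the proposition is still true: on the $\bH^2$-pieces the socle map is $p^{k-1}\colon\Z/p\Z\to\Z/p^k\Z$. So it is the route that fails, not the statement.

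A smaller point: the annihilator of an arbitrary ideal of $\Lambda_k$ need not be principal; in $\Lambda_2$ one has $\mathrm{ann}(\m^2)=\m$. What the paper checks is injectivity for $I=\m$, i.e.\ for the map induced by the socle generator $p^{k-1}X^{k-1}\colon\bT/\m\bT\to\bT_k$.

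What is missing is an argument that controls the $\bH^2$-contribution directly rather than assuming it away. The paper factors the socle map as $[X^{k-1}]\circ[p^{k-1}]$ through $\bT/(p^k,X)\bT$.

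\begin{itemize}
\item Injectivity of $[p^{k-1}]$ on the quotients $\bH^1_{/\FF}$ follows from \cite[Lemma 3.7.1]{MazurRubin} over the discrete valuation ring $\Lambda/(X)$. This is where the hypothesis that the propagation to $\bT/X\bT$ is cartesian actually enters.
\item For $[X^{k-1}]$, the paper does a diagram chase with the sequences for $X$ and $X^k$ on $\bT/p^k\bT$. Using $\bH^2(K_\ku,\bT/p^k\bT)[X]\subset\bH^2(K_\ku,\bT/p^k\bT)[X^k]$, it lifts $x$ to some $\alpha$ with $X^{k-1}(\alpha+X\beta)\in\bH^1_\FF(K_\ku,\bT/p^k\bT)$. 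It then concludes because $\bH^1_{/\FF}(K_\ku,\bT/p^k\bT)$ has no $X$-torsion.
\item That last fact comes from the exact sequence $0\to Q/p^kQ\to\bH^1_{/\FF}(K_\ku,\bT/p^k\bT)\to\bH^2(K_\ku,\bT)[p^k]\to0$. Here $Q/p^kQ$ is free over $\Lambda/(p^k)$, and an $X$-torsion element of $\bH^2(K_\ku,\bT)[p^k]$ would generate a finite submodule.
\end{itemize}

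The key point is that Assumption \ref{ass:iwasawa:local_no_fin_sub} kills $X$-torsion inside the $p^k$-torsion of $\bH^2(K_\ku,\bT)$, but not the $X^k$-torsion of $\bH^2(K_\ku,\bT)$ itself. That is why one must quotient by $p^k$ first, and why one cannot expect $\bH^1_{/\FF}(K_\ku,\bT_k)$ to be free.
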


\begin{proof}
We need to show, for every $\ku\in \Sigma_{\FF}$, that the composition

\begin{center}
\begin{tikzpicture}[descr/.style={fill=white,inner sep=1.5pt}]
        \matrix (m) [
            matrix of math nodes,
            row sep=3.5em,
            column sep=3.5em,
            text height=1.5ex, text depth=0.25ex
        ]
        {  \bH^1_{/\FF}(K_\ku,\bT/\m\bT) &\bH^1_{/\FF}(K_\ku,\bT/(p^k,X)\bT) &\bH^1_{/\FF}(K_\ku,\bT_k)\\
        };
        \path[overlay,->, font=\scriptsize,>=latex]
        (m-1-1) edge node[midway,above]{$[p^{k-1}]$}(m-1-2)
        (m-1-2) edge node[midway,above]{$[X^{k-1}]$} (m-1-3)

        ;    
\end{tikzpicture}
\end{center}
is injective. The first map $[p^{k-1}]$ is known to be injective by the argument in \cite[Lemma 3.7.1]{MazurRubin}, since the propagated Selmer structure on $\bT/X\bT$ is cartesian and $\Lambda/(X)$ is a discrete valuation ring. For the injectivity of $[X^{k-1}]$, consider the following commutative diagram with exact rows:
\begin{center}
\begin{tikzpicture}[descr/.style={fill=white,inner sep=1.5pt}]
        \matrix (m) [
            matrix of math nodes,
            row sep=3.5em,
            column sep=1.2em,
            text height=1.5ex, text depth=0.25ex
        ]
        { \bH^1\left(K_\ku,\frac{\bT}{p^k\bT}\right) & \bH^1\left(K_\ku,\frac{\bT}{p^k\bT}\right) & \bH^1\left(K_\ku,\frac{\bT}{(p^k,X)\bT}\right) & \bH^2\left(K_\ku,\frac{\bT}{p^k\bT}\right)[X]  \\ 
         \bH^1\left(K_\ku,\frac{\bT}{p^k\bT}\right) &\bH^1\left(K_\ku,\frac{\bT}{p^k\bT}\right) & \bH^1\left(K_\ku,\frac{\bT}{(p^k,X^k)\bT}\right) & \bH^2\left(K_\ku,\frac{\bT}{p^k\bT}\right)[X^k].  \\
        };
        \path[overlay,->, font=\scriptsize,>=latex]
        (m-1-2) edge node[midway,above]{$\Pi_1$}(m-1-3)
        (m-1-3) edge node[midway,above]{$\delta_1$} (m-1-4)
        (m-2-2) edge node[midway,above]{$\Pi_k$} (m-2-3) 
        (m-2-3) edge node[midway,above]{$\delta_k$} (m-2-4)
        (m-1-2) edge node[midway,right]{$X^{k-1}$} (m-2-2)
        (m-1-3) edge node[midway,right]{$[X^{k-1}]$} (m-2-3)
        (m-1-4) edge node[midway,right]{$\subset$} (m-2-4)
        (m-1-1) edge node[midway,right]{} (m-2-1)
        (m-1-1) edge node[midway,above]{$X$} (m-1-2)
        (m-2-1) edge node[midway,above]{$X^k$} (m-2-2)
        ;    
\end{tikzpicture}
\end{center}
Let 
\[x\in [X^{k-1}]^{-1}\biggl(\bH^1_{\FF}(K_\ku,\bT/(p^k,X^k)\bT)\biggr).\] 
By Definition \ref{def:propagation_quotients}, there exists $\gamma\in \bH^1_{\FF}(K_\ku,\bT/p^k\bT)$ such that $\Pi_k(\gamma)=[X^{k-1}](x)$. The exactness of the last row implies that $[X^{k-1}](x)\in \ker(\delta_k)$, so the injectivity of the rightmost vertical map implies that $x\in \ker(\delta_1)=\Im(\Pi_1)$.

Let $\alpha\in \bH^1(K_\ku,\bT/p^k\bT)$ be such that $\Pi_1(\alpha)=x$. The exactness of the bottom row implies that 
\[X^{k} \bH^1(K_\ku,\bT/p^k\bT)=\ker(\Pi_k),\] 
so we can find an element $\beta\in \bH^1(K_\ku,\bT/p^k\bT)$ such that 
\[\gamma=X^{k-1}\alpha+X^k\beta=X^{k-1} (\alpha+X \beta)\in \bH^1_{\FF}(K_\ku,\bT/p^k\bT).\]

We claim that $\bH^1_{/\FF}(K_\ku,\bT/p^k\bT)$ contains no $X$-torsion. Indeed, by diagram chasing, there is an exact sequence
\[\xymatrix{0\ar[r] & \bH^1_{/\FF}(K_\ku,\bT)\otimes \Lambda/(p^k) \ar[r]& \bH^1_{/\FF}(K_\ku,\bT/p^k\bT) \ar[r] & \bH^2(K_\ku,\bT)[p^k]\ar[r] &0.}\]
By Proposition \ref{prop:iwasawa:free_cartesian_propagated}, $\bH^1_{/\FF}(K_\ku,\bT)$ is a free module. Therefore, $\bH^1_{/\FF}(K_\ku,\bT)\otimes \Lambda/(p^k)$ contains no $X$-torsion. In addition $\bH^2(K_\ku,\bT)[p^k]$ cannot contain $X$-torsion either since that would contradict Assumption \ref{ass:iwasawa:local_no_fin_sub} (see also Remark \ref{rem:iwasawa:local_duality_finsub}). Hence $\bH^1_{/\FF}(K_\ku,\bT/p^k\bT)$ cannot contain $X$ torsion either.

Since $\bH^1_{/\FF}(K_\ku,\bT/p^k\bT)$ contains no $X$-torsion, then 
\[\alpha+X\beta \in \bH^1_{\FF}(K_\ku,\bT/p^k\bT).\]
By the exactness of the top row, $X \bH^1(K_\ku,\bT/p^k\bT)=\ker(\Pi_1)$, so we have that 
\[x=\Pi_1(\alpha)=\Pi_1(\alpha+X\beta)\in \Pi_1\Bigl(\bH^1_{\FF}(K_\ku,\bT/p^k\bT)\Bigr)=\bH^1_{\FF}(K_\ku,\bT/(p^k,X)\bT),\]
which concludes the proof of the injectivity of $[X^{k-1}]$.
\end{proof}

Next we show that the core rank of $\FF$ remains invariant after propagating to $\bT_k$. Following \cite{MazurRubin}, that is equivalent to the following proposition.

\begin{proposition}
Let $\bT$ and $\FF$ be as in Assumption \ref{ass:classic}. Then
\[\dim_{\F_p} \bH^1_{\FF}(K,\bT/\m\bT)-\dim_{\F_p} \bH^1_{\FF^*}(K,\bT^*[\m])=\chi(\FF).\]
\label{prop:core_inv}
\end{proposition}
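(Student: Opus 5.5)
The plan is to compare both sides with ranks over $\Lambda$, using the Euler characteristic formula (Corollary \ref{cor:euler_char}) at the residual level and Proposition \ref{prop:core_rank} at the Iwasawa level. I would go through the $\Lambda$-level core rank formula rather than directly through Selmer groups, because the residual core rank has an analogous local expression. Concretely, since $\bT/\m\bT$ is finite, Corollary \ref{cor:patched:global_long_exact} and Proposition \ref{prop:patched:poitou-tate} still apply. Together with $\bH^0(K,\bT/\m\bT)=\bH^0(K,\bT^*[\m])=0$ (Remark \ref{rem:patched:H0vanish}), global duality (Proposition \ref{prop:patched:global_duality}) and the Euler characteristic formula then give the standard Wiles-type identity
\[\dim \bH^1_{\FF}(K,\bT/\m\bT)-\dim \bH^1_{\FF^*}(K,\bT^*[\m])=d_-(\bT/\m)+\sum_{\ku\in\Sigma_\FF}\Bigl(\dim \bH^1_{\FF}(K_\ku,\bT/\m\bT)-\dim \bH^0(K_\ku,\bT/\m\bT)\Bigr).\]
Here $p$-adic places contribute the usual $[K_v:\Q_p]\cdot\rank$ term via the local Euler characteristic, which I would absorb by rewriting in terms of $\dim \bH^1_{/\FF}(K_\ku,\bT/\m)-\dim\bH^0(K_\ku,(\bT/\m)^*)$, exactly matching the shape of \eqref{eq:core_rank}. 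Also $d_-(\bT/\m)=d_-(\bT)$, since $p$ is odd and complex conjugation eigenspaces of a free $\Lambda$-module reduce well.

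It then suffices to compare each local term with the corresponding rank. For $\ku\in\Sigma_\FF$, Proposition \ref{prop:iwasawa:free_cartesian_propagated} says $\bH^1_{/\FF}(K_\ku,\bT)$ is free. The reduction sequences \eqref{eq:prop_ses} (for $X$) and the analogous ones for $p$ on $\bT/X\bT$ give
\[0\to \bH^1_{/\FF}(K_\ku,\bT)\otimes\Lambda/\m\to \bH^1_{/\FF}(K_\ku,\bT/\m\bT)\to \bH^2(K_\ku,\bT)\otimes\text{(torsion pieces)}.\]
One must check that the last contribution equals the correction $\dim\bH^0(K_\ku,\bT^*[\m])-\rank_\Lambda\bH^0(K_\ku,\bT^*)^\vee$. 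By local duality $\bH^0(K_\ku,\bT^*[\m])^\vee=\bH^2(K_\ku,\bT)\otimes\Lambda/\m$, which by Proposition \ref{prop:patched:local_duality} and Remark \ref{rem:patched:selmer_torsion_dual}-type reasoning locally has $k$-dimension equal to the minimal number of generators of $\bH^0(K_\ku,\bT^*)^\vee$. Meanwhile the cokernel term is $\bH^2(K_\ku,\bT)[X]$ combined with the $p$-torsion of $\bH^2(K_\ku,\bT)/X$.

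The key algebraic fact I would prove is the following. For a finitely generated $\Lambda$-module $M$ of projective dimension $\le1$ (guaranteed for $M=\bH^2(K_\ku,\bT)$ by Assumption \ref{ass:iwasawa:local_no_fin_sub} via \cite[Proposition 5.5.3]{NSW}), the Tor-type Euler characteristic satisfies $\dim_k M\otimes\Lambda/\m-\dim_k \mathrm{Tor}_1(M,\Lambda/\m)=\rank_\Lambda M$. This follows by taking a resolution $0\to\Lambda^a\to\Lambda^b\to M\to0$ and tensoring with the Koszul complex for $(p,X)$, since $\mathrm{Tor}_2=0$. The pieces $\bH^2[X]$ and $(\bH^2/X)[p]$ assemble into $\mathrm{Tor}_1(M,\Lambda/\m)$ via the two-step reduction. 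Summing over $\ku$ then turns the residual formula into \eqref{eq:core_rank}, giving $\chi(\FF)$.

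The main obstacle is the bookkeeping of this last step. One has to show that the cokernels in the two successive reductions (first mod $X$, then mod $p$) are exactly the torsion pieces making up $\mathrm{Tor}_1(\bH^2(K_\ku,\bT),\Lambda/\m)$, with no extra loss. I would handle it using that $\bH^1_{/\FF}(K_\ku,\bT/X\bT)$ is torsion-free over the DVR $\Lambda/(X)$ (from Assumption \ref{ass:classic}), so the $p$-reduction step is exact as in \cite[Lemma 3.7.1]{MazurRubin}. I would also double-check the archimedean and $p$-adic contributions against Corollary \ref{cor:euler_char}.
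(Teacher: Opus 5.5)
Your proof is correct, but it takes a different route from the paper's.

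\textbf{The paper's route.} The paper argues globally, in two steps. First it quotes the argument of \cite[Corollary 5.2.6]{MazurRubin} over the discrete valuation ring $\Lambda/(X)\cong\Z_p$; this is where cartesianness on $\bT/X\bT$ enters. That step replaces the residual difference by $\rank_{\Z_p}\bH^1_{\FF}(K,\bT/X\bT)-\rank_{\Z_p}\bH^1_{\FF^*}(K,\bT^*[X])^\vee$. Second, it compares this with $\chi(\FF)$ by a snake-lemma chase over a vertex $\kn$ with $\bH^1_{\FF^*_\kn}(K,\bT^*)=0$. It identifies the relevant kernel with $\bH^1_{\FF^*}(K,\bT^*)^\vee[X]$ and the cokernel with $\bH^2(K^{\kn}/K,\bT)[X]$ via Poitou--Tate, and concludes with the structure theorem.

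\textbf{Your route.} You write both sides through the Poitou--Tate/Euler characteristic formula. The archimedean contributions agree ($d_-(\bT/\m\bT)=d_-(\bT)$ since $p$ is odd), so everything reduces to the place-by-place identity
\[\dim_k\bH^1_{/\FF}(K_\ku,\bT/\m\bT)-\dim_k\bH^2(K_\ku,\bT/\m\bT)=\rank_\Lambda\bH^1_{/\FF}(K_\ku,\bT)-\rank_\Lambda\bH^2(K_\ku,\bT).\]
Your two reductions do establish this. Write $q$ for the rank of the free module $\bH^1_{/\FF}(K_\ku,\bT)$ and $M=\bH^2(K_\ku,\bT)$. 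The $p$-torsion-freeness of $\bH^1_{/\FF}(K_\ku,\bT/X\bT)$ together with $M[\m]=0$ gives
\[\dim_k\bH^1_{/\FF}(K_\ku,\bT/\m\bT)=q+\dim_k M[X]/pM[X]+\dim_k(M/XM)[p]=q+\dim_k\mathrm{Tor}_1^\Lambda(M,k).\]
Then $\mathrm{pd}_\Lambda M\le 1$ finishes.

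\textbf{What each buys.} Your route needs no auxiliary vertex $\kn$, hence no Chebotarev-type input, and no appeal to Mazur--Rubin. It also makes clear that only local hypotheses matter. In fact, $\rank_{\Z_p}M/XM-\rank_{\Z_p}M[X]=\rank_\Lambda M$ holds for every finitely generated $\Lambda$-module. So the local identity follows from the two cartesian conditions alone, without using the absence of finite submodules in $M$. The paper's route instead stays with the global Selmer groups. Along the way it relates the failure of $\bH^1_{\FF}(K,\bT)\otimes\Lambda/(X)\hookrightarrow\bH^1_{\FF}(K,\bT/X\bT)$ to be surjective to $\bH^1_{\FF^*}(K,\bT^*)^\vee[X]$.

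\textbf{Two bookkeeping remarks.} First, you can avoid local Euler characteristics at $p$ entirely. Derive the residual formula exactly as in the proof of Proposition \ref{prop:core_rank}, using Poitou--Tate plus Corollary \ref{cor:euler_char}. This gives the local terms $\dim\bH^2(K_\ku,\cdot)-\dim\bH^1_{/\FF}(K_\ku,\cdot)$ directly. Second, that derivation gives these terms with the sign opposite to the one printed in \eqref{eq:core_rank}. Since you compare differences term by term, this does not affect your argument, but do not try to match the printed formula literally.
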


\begin{proof}
    By the argument in \cite[Corollary 5.2.6]{MazurRubin}, since $\FF$ is cartesian in $\bT/X\bT$ and $\Lambda/(X)$ is a discrete valuation ring, there is an equality
    \[\begin{aligned}
    &\dim_{\F_p} \bH^1_{\FF}(K,\bT/\m\bT)-\dim_{\F_p} \bH^1_{\FF^*}(K,\bT^*[\m])=\\&\rank_{\Z_p} \bH^1_{\FF}(K,\bT/X\bT)-\rank_{\Z_p} \bH^1_{\FF^*}(K,\bT^*[X])^\vee.
    \end{aligned}\]

    Applying Corollary \ref{cor:patched:global_long_exact} to the short exact sequence
    \[\xymatrix{0\ar[r] & \bT \ar[r]^{X} & \bT \ar[r] & \bT/X\bT \ar[r] & 0,}\]
    we obtain an injection
    \[\bH^1_{\FF}(K,\bT)\otimes \Lambda/(X)\hookrightarrow\bH^1_{\FF}(K,\bT/X\bT).\]
    Then we can define $C$ as the cokernel of this map. Since $\bH^1_{\FF}(K,\bT)$ is pseudo-isomorphic to a free module, we have that
    \[\rank_\Lambda\bH^1_{\FF}(K,\bT)=\rank_{\Z_p}\biggl(\bH^1_{\FF}(K,\bT)\otimes \Lambda/(X)\biggr),\]
    so the additivity of the rank implies that
    \begin{equation}
    \rank_{\Z_p} C=\rank_{\Z_p}\bH^1_{\FF}(K,\bT/X\bT)-\rank_\Lambda \bH^1_{\FF}(K,\bT)
    \label{eq:rk_c}
    \end{equation}
    This proof will be completed with the computation of the rank of $C$. By Corollary \ref{cor:patched:dual_res_vanish}, we can find a square-free product of ultraprimes $\kn$, divisible by all the primes in $\Sigma_{\FF}$, such that $\bH^1_{\FF^*_\kn}(K,\bT^*)=0$. Define $I$ as the image
    \[I:=\Im\left(\bH^1(K^\kn/K,\bT)\to \bigoplus_{\ku\mid \kn} \bH^1_{/\FF}(K_\ku,\bT)\right).\]
    Then there is a short exact sequence
    \[\xymatrix{0\ar[r] & \bH^1_{\FF}(K,\bT)\ar[r] & \bH^1(K^{\kn}/K,\bT) \ar[r] & I\ar[r] &0.}\]
    Since $\FF$ is cartesian, $I$ is a torsion-free module, so the snake lemma induces another exact sequence
    \[\xymatrix{0\ar[r] & \bH^1_{\FF}(K,\bT)\otimes \Lambda/(X) \ar[r] &\bH^1(K^{\kn}/K,\bT)\otimes \Lambda/(X) \ar[r] & I/XI \ar[r] &0.}\]
    Similarly, define $J$ as the image
    \[J:=\Im\left(\bH^1(K^\kn/K,\bT/X\bT)\to \bigoplus_{\ku\mid \kn} \bH^1_{/\FF}(K_\ku,\bT/X\bT)\right).\]
    Then there is a commutative diagram with exact rows:
    \begin{center}
\begin{tikzpicture}[descr/.style={fill=white,inner sep=1.5pt}]
        \matrix (m) [
            matrix of math nodes,
            row sep=3.5em,
            column sep=1.5em,
            text height=1.5ex, text depth=0.25ex
        ]
        { 0 &  \bH^1_{\FF}(K,\bT)\otimes \Lambda/(X) & \bH^1(K^{\kn}/K,\bT)\otimes\Lambda/(X) & I/XI &0 \\
        0 & \bH^1_\FF(K,\bT/X\bT) & \bH^1(K^\kn/K,\bT/X\bT) &  J & 0. \\
        };
        \path[overlay,->, font=\scriptsize,>=latex]
        (m-1-1) edge (m-1-2)
        (m-1-2) edge (m-1-3)
        (m-1-3) edge (m-1-4)
        (m-1-4) edge (m-1-5)
        (m-2-1) edge (m-2-2) 
        (m-2-2) edge (m-2-3)
        (m-2-3) edge (m-2-4)
        (m-2-4) edge (m-2-5)
        (m-1-2) edge (m-2-2)
        (m-1-3) edge (m-2-3)
        (m-1-4) edge (m-2-4)
        
        ;    
\end{tikzpicture}
\end{center}
Note that Corollary \ref{cor:patched:global_long_exact}, applied to the short exact sequence
\begin{equation}
\xymatrix{0\ar[r] & \bT\ar[r]^{X} & \bT\ar[r] &\bT/X\bT\ar[r] & 0,}
\label{eq:ses_quot_x}
\end{equation}
 proves that the middle vertical map is injective with cokernel being isomorphic to $\bH^2(K^{\kn}/K,\bT)[X]$. If we define $A$ and $B$ as the kernel and cokernel fitting in the exact sequence
\[\xymatrix{0\ar[r] & A\ar[r] & I/XI\ar[r] & J\ar[r] & B\ar[r] &0.}\]
The snake lemma induces another exact sequence:
\[\xymatrix{0\ar[r] &A \ar[r] & C\ar[r] & \bH^2(K^{\kn}/K,\bT)[X] \ar[r] & B \ar[r] & 0.}\]
The additivity of the rank implies that
\[\rank_{\Z_p} C=\rank_{\Z_p}\bH^2(K^{\kn}/K,\bT)[X]+\rank_{\Z_p} A-\rank_{\Z_p} B.\]
Our choice of $\kn$ and the combination of Propositions \ref{prop:patched:global_duality} and \ref{prop:patched:selmer_torsion} produce a commutative diagram with exact rows and columns
\    \begin{center}
\begin{tikzpicture}[descr/.style={fill=white,inner sep=1.5pt}]
        \matrix (m) [
            matrix of math nodes,
            row sep=3em,
            column sep=0.9em,
            text height=1.5ex, text depth=0.25ex
        ]
        {  & A & 0 &0 &\\
            &  I/XI & \displaystyle{\bigoplus_{\ku\mid \kn} \bH^1_{/\FF}(K_\ku,\bT)\otimes \Lambda/(X)} & \bH^1_{\FF^*}(K,\bT^*)^\vee\otimes \Lambda/(X) &0 \\
        0 & J & \displaystyle{\bigoplus_{\ku\mid \kn} \bH^1_{/\FF}(K_\ku,\bT/X\bT)} & \bH^1_{\FF^*}(K,\bT^*[X])^\vee  & 0 \\
         & B & \displaystyle{\bigoplus_{\ku\mid \kn} \bH^2(K_\ku,\bT)[X]} & 0. &\\
        };
        \path[overlay,->, font=\scriptsize,>=latex]
        (m-2-2) edge (m-2-3)
        (m-2-3) edge (m-2-4)
        (m-2-4) edge (m-2-5)
        (m-3-1) edge (m-3-2)
        (m-3-2) edge (m-3-3)
        (m-3-3) edge (m-3-4)
        (m-3-4) edge (m-3-5)

        (m-1-2) edge (m-2-2)
        (m-1-3) edge (m-2-3)
        (m-1-4) edge (m-2-4)
        (m-2-2) edge (m-3-2)
        (m-2-3) edge (m-3-3)
        (m-2-4) edge (m-3-4)
         (m-3-2) edge (m-4-2)
        (m-3-3) edge (m-4-3)
        (m-3-4) edge (m-4-4)
        
        ;    
\end{tikzpicture}
\end{center}
Note that the exactness of the middle column follows from Corollary \ref{cor:patched:global_long_exact} applied to the short exact sequence in \eqref{eq:ses_quot_x} and the exactness of the rightmost column follows from Proposition \ref{prop:patched:selmer_torsion}. Combining the snake lemma with Proposition \ref{prop:patched:poitou-tate}, we can construct isomorphisms
\[B\cong \bigoplus_{\ku\mid \kn} \bH^2(K_\ku,\bT)[X] \cong \bH^2(K^{\kn}/K,\bT)[X],\]
where the last isomorphism holds because $\bH^1_{\FF^*_\kn}(K,\bT^*)=0$. Hence the ranks of $A$ and $C$ coincide. A double application of the snake lemma shows that 
\[A=\ker\left(I/XI\to \bigoplus_{\ku\mid \kn} \bH^1_{/\FF}(K_\ku,\bT)\otimes \Lambda/(X)\right)=\bH^1_{\FF^*}(K,\bT^*)^\vee[X].\]
By the structure theorem of finitely generated Iwasawa modules, we have that 
\[\rank_{\Z_p}\bH^1_{\FF^*}(K,\bT^*)^\vee[X]=\rank_{\Z_p}\biggl(\bH^1_{\FF^*}(K,\bT^*)^\vee\otimes \Lambda/(X)\biggr)-\rank_\Lambda\bH^1_{\FF^*}(K,\bT^*)^\vee.\]
By Proposition \ref{prop:patched:selmer_torsion} and \eqref{eq:rk_c}, we can then conclude that 
\[\begin{aligned}
    &\chi(\FF):=\rank_\Lambda \bH^1_{\FF}(K,\bT)-\rank_\Lambda\bH^1_{\FF^*}(K,\bT^*)^\vee\\
&\rank_{\Z_p}\bH^1_{\FF}(K,\bT/X\bT)-\rank_{\Z_p}\bH^1_{\FF^*}(K,\bT^*[X])^\vee=\\
&\dim_{\F_p} \bH^1_{\FF}(K,\bT/\m\bT)-\dim_{\F_p} \bH^1_{\FF^*}(K,\bT^*[\m]).
\end{aligned}\qedhere\]
\end{proof}

\subsection{Comparison theorem}

The goal of this section is to generalise the idea in Theorem \ref{th:comp_ultra_kol} to express the ultra Kolyvagin systems from Definition \ref{def:kol_highrank} as a limit of classical Kolyvagin systems.

Since, by Proposition \ref{prop:selmer_profinite}, an Iwasawa Selmer group can be recovered as the limit
\[\bH^1_{\FF}(K,\bT)=\varprojlim_{k\in \N} \bH^1_{\FF}(K,\bT_k),\]
where, in the finite Selmer groups, $\FF$ represents the propagated structure. The first part of the argument will be to generalise this argument to the exterior biduals of Selmer groups.

First, we need to say what are the transition maps used to define the inverse limit. The functoriality of Selmer groups and exterior biduals produces, for every $k\in \N$, an homomorphism
\begin{equation}
\bigcap_{\Lambda_{k+1}}^{\chi(\FF)} \bH^1_{\FF}(K,\bT_{k+1})\to \bigcap_{\Lambda_{k+1}}^{\chi(\FF)} \bH^1_{\FF}(K,\bT_k).
\label{eq:comp_trans_pre}
\end{equation}

The annihilator $\Lambda_{k+1}[(p^k,X^k)]$ is principal, since it is generated by $pX$. Then there is an isomorphism $\Lambda_k\cong \Lambda_{k+1}[(p^k,X^k)]$. Then, for every $\Lambda_k$-module $M_k$,
\[\Hom(M_k,\Lambda_{k+1})=\Hom\Bigl(M_k,\Lambda_{k+1}[(p^k,X^k)]\Bigr)=\Hom(M_k,\Lambda_k).\]
We can then identify
\[\bigcap_{\Lambda_{k+1}}^{\chi(\FF)} \bH^1_{\FF}(K,\bT_k)=\bigcap_{\Lambda_{k}}^{\chi(\FF)} \bH^1_{\FF}(K,\bT_k).\]
Then the map in \eqref{eq:comp_trans_pre} can be expressed as
\[\bigcap_{\Lambda_{k+1}}^{\chi(\FF)} \bH^1_{\FF}(K,\bT_{k+1})\to \bigcap_{\Lambda_{k}}^{\chi(\FF)} \bH^1_{\FF}(K,\bT_k).\]

These maps can be used to construct the inverse limit. In order to compare it with the exterior bidual of the Iwasawa Selmer group, we need to construct the projection maps from the Iwasawa Selmer group.

When Assumption \ref{ass:patched:basic} holds, $\bH^1_{\FF}(K,\bT)$ is a free Iwasawa module by Proposition \ref{prop:iwasawa:selmer_free}. It is then clear that 
\[\left(\bigcap_\Lambda^{\chi(\FF)}\bH^1_{\FF}(K,\bT)\right)\otimes_\Lambda \Lambda_k=\bigcap_{\Lambda_k}^{\chi(\FF)} \Bigl(\bH^1_\FF(K,\bT)\otimes_\Lambda \Lambda_k\Bigr).\]
Using the functoriality of Selmer groups and exterior biduals, we can construct a map
\begin{equation}
\bigcap_\Lambda^{\chi(\FF)}\bH^1_{\FF}(K,\bT)\to \bigcap_{\Lambda_k}^{\chi(\FF)} \bH^1_\FF(K,\bT_k).
\label{eq:bidual_projection}
\end{equation}
These maps are compatible when varying $k$, so they induce a map to the inverse limit
\begin{equation}
\bigcap_\Lambda^{\chi(\FF)}\bH^1_{\FF}(K,\bT)\to \varprojlim_{k\in \N}\bigcap_{\Lambda_k}^{\chi(\FF)} \bH^1_\FF(K,\bT_k).
\label{eq:proj_from_infty}
\end{equation}
We can show that this map is, in fact, an isomorphism.

\begin{lemma}
 Let $\FF$ be a cartesian Selmer structure on $T$ such that $\bH^1_\FF(K,T)$ is a free $R$-module. Then the following map is an isomorphism:
 \[\bigcap_R^{\chi(\FF)}\bH^1_{\FF}(K,T)\cong \varprojlim_{k\in \N}\left(\bigcap_{R_k}^{\chi(\FF)} \bH^1_\FF(K,T_k)\right).\]
\label{lem:proj_bidual_iso_limit}
\end{lemma}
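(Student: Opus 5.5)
The plan is to factor the map \eqref{eq:proj_from_infty} as a composition of two maps and show each is an isomorphism:
\[\bigcap_R^{\chi(\FF)}\bH^1_{\FF}(K,T)\to \varprojlim_{k}\bigcap_{R_k}^{\chi(\FF)}\Bigl(\bH^1_\FF(K,T)\otimes_R R_k\Bigr)\to \varprojlim_{k}\bigcap_{R_k}^{\chi(\FF)} \bH^1_\FF(K,T_k).\]
Write $H:=\bH^1_\FF(K,T)\cong R^m$.

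\emph{First map.} Since $H$ is free, $\bigcap^{\chi(\FF)}_R H=\bigwedge^{\chi(\FF)}_R H$ is free of finite rank, and its base change to $R_k$ is $\bigcap_{R_k}^{\chi(\FF)}(H\otimes R_k)$, as already noted before \eqref{eq:bidual_projection}. The transition maps are then the reduction maps of a free module. Completeness of $R$ gives $\bigwedge^{\chi(\FF)}H\cong\varprojlim_k \bigwedge^{\chi(\FF)}H\otimes_R R_k$, so the first map is an isomorphism.

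\emph{Second map: injectivity at each level.} By Proposition \ref{prop:patched:selmer_quotient_higherk}, the natural map $\iota_k:\ H\otimes R_k\hookrightarrow \bH^1_\FF(K,T_k)$ is injective. This requires Assumption \ref{ass:patched:quotient}, which is available in the Iwasawa setting via Proposition \ref{prop:iwasawa:ass_equiv}. Since $R_k$ is self-injective, $\Hom_{R_k}(-,R_k)$ is exact. Hence restriction of functionals $\bH^1_\FF(K,T_k)^*\to (H\otimes R_k)^*$ is surjective, so $\bigwedge^{r}$ of it is surjective. Dualizing again shows that the induced map on exterior biduals
\[\bigcap_{R_k}^{r}(H\otimes R_k)\to \bigcap_{R_k}^{r}\bH^1_\FF(K,T_k),\qquad r=\chi(\FF),\]
is injective for each $k$. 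Left exactness of $\varprojlim$ then gives injectivity in the limit.

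\emph{Second map: surjectivity in the limit.} This is the main obstacle, since $\iota_k$ need not be surjective at any finite level. Set $C_k:=\coker(\iota_k)$. By Proposition \ref{prop:selmer_profinite}, $\varprojlim_k \bH^1_\FF(K,T_k)=H=\varprojlim_k H\otimes R_k$. Because all groups are finite, Mittag-Leffler holds, so $\varprojlim_k C_k=0$. In other words, for each $k$ there is $k'\ge k$ such that the image of $\bH^1_\FF(K,T_{k'})$ in $\bH^1_\FF(K,T_k)$ lies in $\iota_k(H\otimes R_k)$.

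Now take a compatible family $(\Phi_k)$ with $\Phi_k\in\bigcap^r\bH^1_\FF(K,T_k)=\Hom(\bigwedge^r \bH^1_\FF(K,T_k)^*,R_k)$. Its value at level $k$ equals the transition image of $\Phi_{k'}$. I would show that the transition map on biduals factors through restriction to functionals that are pulled back along the projection $\bH^1_\FF(K,T_{k'})\to \bH^1_\FF(K,T_k)$, whose image lies in $H\otimes R_k$. This should show that $\Phi_k$ comes from $\bigcap^r(H\otimes R_k)$, i.e.\ that it kills $\bigwedge^r$ of the functionals vanishing on $H\otimes R_k$. The delicate point is the identification $\Hom(-,R_{k+1})=\Hom(-,R_k)$ via the principal annihilator $R_{k+1}[I_k]$ (Remark \ref{rem:ann_prin}), which is used to define the transition maps. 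I would check directly, with the explicit generator (e.g.\ $pX$ for $\Lambda$), that a functional on $\bH^1_\FF(K,T_k)$ vanishing on $\iota_k(H\otimes R_k)$ pulls back to one vanishing on the image of $\bH^1_\FF(K,T_{k'})$. If this compatibility works, then each $\Phi_k$ lies in the image of $\bigcap^r(H\otimes R_k)$, and by injectivity the preimages are unique and therefore compatible. This gives surjectivity in the limit and completes the proof.
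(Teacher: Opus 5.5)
Your proposal is correct and follows essentially the same route as the paper. It uses the same factorization through $\varprojlim_k\bigcap_{R_k}^{\chi(\FF)}\bigl(\bH^1_\FF(K,T)\otimes_R R_k\bigr)$, gets injectivity from Proposition \ref{prop:patched:selmer_quotient_higherk} together with self-injectivity of $R_k$, and gets surjectivity from the fact that, by finiteness, the images of $\bH^1_\FF(K,T_{k'})$ in $\bH^1_\FF(K,T_k)$ stabilise to $\iota_k\bigl(\bH^1_\FF(K,T)\otimes_R R_k\bigr)$. The step you hedge on goes through as you suggest: every functional vanishing on $\iota_k\bigl(\bH^1_\FF(K,T)\otimes_R R_k\bigr)$ pulls back to zero on $\bH^1_\FF(K,T_{k'})$, so $\Phi_k$ kills every wedge having at least one such factor, which is exactly the kernel of $\bigwedge^r$ of the restriction map (not merely $\bigwedge^r$ of those functionals, as you phrased it); this factorization through the image is all that the paper's step invoking ``the exterior bidual preserves surjections'' actually needs.
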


\begin{proof}
Since $\bH^1_{\FF}(K,T)$ is a free, finitely generated $R$-module, there is an isomorphism
\[\bigcap_R^{\chi(\FF)}\bH^1_{\FF}(K,T)\cong\varprojlim_{k\in \N}\left(\bigcap_{R_k}^{\chi(\FF)} \bH^1_\FF(K,T)\otimes_R R_k\right).\]
By Proposition \ref{prop:patched:selmer_quotient_higherk} and the left-exactness of the exterior bidual and the inverse limit, the following map is injective:
\[\varprojlim_{k\in \N}\left(\bigcap_{R_k}^{\chi(\FF)} \bH^1_\FF(K,T)\otimes_R R_k\right)\hookrightarrow\varprojlim_{k\in \N}\left(\bigcap_{R_k}^{\chi(\FF)} \bH^1_\FF(K,T_k)\right).\]

For the surjectivity, let
\[(c_k)_{k\in \N}\in \varprojlim_{k\in \N}\left(\bigcap_{R_k}^{\chi(\FF)} \bH^1_\FF(K,T_k)\right).\]
Then, for every $k\in \N$ and every $j\geq k$,
\[c_k\in \Im\left( \bigcap_{R_j}^{\chi(\FF)} \bH^1_\FF(K,T_j)\to \bigcap_{R_k}^{\chi(\FF)} \bH^1_\FF(K,T_k)\right).\]
Since the exterior bidual preserves surjections,
\[c_k\in \bigcap_{R_k}^{\chi(\FF)}\biggl(\Im\Bigl(\bH^1_\FF(K,T_j)\to\bH^1_\FF(K,T_k)\Bigr)\biggr)\subset \bigcap_{R_k}^{\chi(\FF)}\bH^1_\FF(K,T_k).\]
Since $\bH^1_{\FF}(K,T)=\varprojlim_{k\in \N}\bH^1_{\FF}(K,T_k)$ by Proposition \ref{prop:selmer_profinite}, then
\[c_k\in \bigcap_{R_k}^{\chi(\FF)} \biggl(\Im\Bigl(\bH^1_\FF(K,T)\to\bH^1_\FF(K,T_k)\Bigr)\biggr),\]
which, using the compactness of $\bH^1_\FF(K,T)$, completes the proof of the surjectivity.
\end{proof}

Let $\NN$ be the set of square-free products of Kolyvagin ultraprimes for $\bT$. Since $\bH^1_{\FF(\kn)}(K,\bT)$ is a free, finitely generated Iwasawa module by Proposition \ref{prop:iwasawa:selmer_free}, there is a canonical identification
\[\bigwedge^{\chi(\FF)} \bH^1_{\FF(\kn)}(K,\bT)\cong \bigcap^{\chi(\FF)} \bH^1_{\FF(\kn)}(K,\bT).\]

Note that every $\kn\in \NN$ is a square-free product of Kolyvagin primes for every $\bT_k$. The combination of Theorem \ref{th:comp_ultra_kol} and Lemma \ref{lem:proj_bidual_iso_limit} allows the construction of a map 
\begin{equation}
\bKS(\bT,\FF)\to \overline{\KS}(\bT,\FF)=\varprojlim_{k\in \N} \KS(\bT_k,\FF),
\label{eq:kol_limit}
\end{equation}
where the inverse limit is constructed \cite[Definition 5.24]{BurnsSakamotoSano2}. The above map is, indeed, an isomorphism.

\begin{theorem}
Let $\bT$ and $\FF$ be as in Assumption \ref{ass:classic}. Then there is a canonical identification
\[\bKS(\bT,\FF)\cong \overline{\KS}(\bT,\FF).\]
\label{th:ultrakol_limit}
\end{theorem}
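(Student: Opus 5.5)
The plan is to show that the map \eqref{eq:kol_limit} is injective and that it is surjective. Both steps reduce, via the control theorems, to a single core vertex.

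\textbf{Well-definedness.} By Proposition \ref{prop:iwasawa:finite_cartesian}, the propagation of $\FF$ to each $\bT_k$ is cartesian. By Proposition \ref{prop:core_inv}, its core rank equals $\chi(\FF)$. So the classical modules $\KS(\bT_k,\FF)$ of \cite{BurnsSakamotoSano2} are defined in the same rank $\chi(\FF)$ and are free of rank one over $\Lambda_k$. Every Kolyvagin ultraprime for $\bT$ is one for every $\bT_k$, so $\NN$ injects into the set $\NN_u$ of ultraprime products for $\bT_k$.

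Given $\kappa\in\bKS(\bT,\FF)$, I project each $\kappa_\kn$ through \eqref{eq:bidual_projection} into $\bigcap^{\chi(\FF)}_{\Lambda_k}\bH^1_{\FF(\kn)}(K,\bT_k)$. The rank reduction maps $\alpha_{\kn,\ku}$ and $\beta_{\kn,\ku}$ commute with these projections, because the local identifications $\phi_\ku$ and $\psi_\ku$ are compatible with reduction modulo $(p^k,X^k)$. Hence the projected classes satisfy the ultra Kolyvagin relations over $\Lambda_k$, at least on the ultraprimes coming from $\NN$. Restricting to constant ultraprimes and applying Theorem \ref{th:comp_ultra_kol} then gives an element of $\KS(\bT_k,\FF)$. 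These elements are compatible in $k$, which yields the map.

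\textbf{Injectivity.} Choose a core vertex $\kc\in\NN$ for $\bT$, which exists by Proposition \ref{prop:core_vertex_existence} since (Tam) implies (H2) by Lemma \ref{lem:H2}. By Theorem \ref{th:kol_core_projection}, $\kappa$ is determined by $\kappa_\kc$. By Lemma \ref{lem:proj_bidual_iso_limit} applied to $\FF(\kc)$, $\kappa_\kc$ is determined by its images in $\bigcap^{\chi(\FF)}\bH^1_{\FF(\kc)}(K,\bT_k)$ for all $k$. These images are the $\kc$-components of the ultra systems over $\bT_k$ corresponding, via Theorem \ref{th:comp_ultra_kol}, to the classical systems in the image. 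So if the image vanishes, $\kappa_\kc=0$ and therefore $\kappa=0$.

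\textbf{Surjectivity.} Start with $(\kappa^{(k)})_k\in\overline{\KS}(\bT,\FF)$. Via Theorem \ref{th:comp_ultra_kol}, lift each $\kappa^{(k)}$ to an ultra system over $\bT_k$, indexed by $\NN_u\supset\NN$. The inverse of that theorem is canonical, so the lifts remain compatible in $k$. For each $\kn\in\NN$, Lemma \ref{lem:proj_bidual_iso_limit} applied to $\FF(\kn)$ assembles the components into a class in $\bigcap^{\chi(\FF)}\bH^1_{\FF(\kn)}(K,\bT)=\bigwedge^{\chi(\FF)}\bH^1_{\FF(\kn)}(K,\bT)$; the identification with the exterior power uses Proposition \ref{prop:iwasawa:selmer_free}. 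The Kolyvagin relations hold modulo every $(p^k,X^k)$. The targets $\bigwedge^{\chi(\FF)-1}\bH^1_{\FF_\ku(\kn)}(K,\bT)$ are free and $\Lambda$-adically separated, so the relations hold over $\Lambda$. This produces a preimage.

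\textbf{Main obstacle.} The delicate step is the compatibility between the rank reduction maps $\alpha,\beta$ built over $\Lambda$ by Corollary \ref{cor:exterior_rankred} and the bidual rank reduction over the self-injective rings $\Lambda_k$ used in \cite{BurnsSakamotoSano2}. The difficulty is that $\bH^1_{\FF(\kn)}(K,\bT)\otimes\Lambda_k\to\bH^1_{\FF(\kn)}(K,\bT_k)$ is only injective (Proposition \ref{prop:patched:selmer_quotient_higherk}), not surjective. I would handle this by writing both maps as contraction with the same linear forms $\phi_\ku\circ\loc_\ku$ and $\psi_\ku\circ\loc_\ku$, which are compatible with reduction. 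Naturality of the construction in \cite[Lemma 2.17]{BurnsBullach} under base change, together with the identification $\Lambda_{k+1}[(p^k,X^k)]\cong\Lambda_k$, then settles the compatibility, including the signs, which are fixed identically in both settings.
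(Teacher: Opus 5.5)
There is a genuine gap in your construction of the map. The reduced classes $\kappa_\kn \bmod (p^k,X^k)$ are indexed only by $\kn\in\NN$, the products of Kolyvagin ultraprimes for $\bT$. This set contains no nontrivial constant ultraprime. For a constant prime $\ell\notin\Sigma_\FF$, $\Frob_\ell$ acts on $\mu_{p^\infty}$ through the norm of $\ell$, which is not $1$. So it is never conjugate to $\tau\in G_{K(\mu_{p^\infty})}$ in $\Gal(K(\bT)_{p^\infty}/K)$.

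Hence ``restricting to constant ultraprimes'' leaves only $\kappa_1$, and Theorem \ref{th:comp_ultra_kol} does not apply. Its input is a system indexed by all of $\NN_u$ for $\bT_k$, which is much larger than $\NN$. (The paper takes \eqref{eq:kol_limit} as constructed before the theorem, but once you build the map yourself this step has to work.)

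What is missing is a rigidity statement over $\bT_k$: an element of $\bKS(\bT_k,\FF)$ is determined by, and can be prescribed at, a single suitable vertex of $\NN$. The tool is the control theorem over $\bT_k$:
\begin{itemize}
\item a core vertex $\kc\in\NN$ for $\bT$ (Proposition \ref{prop:core_vertex_existence}) remains a core vertex for $\bT_k$ by Proposition \ref{prop:patched:selmer_torsion};
\item the ultraprime version of \cite[Theorem 5.20]{BurnsSakamotoSano2} then identifies $\bKS(\bT_k,\FF)$ with $\bigcap^{\chi(\FF)}_{\Lambda_k}\bH^1_{\FF(\kc)}(K,\bT_k)$.
\end{itemize}
You would then define the image of $\kappa$ as the system whose $\kc$-component is the reduction of $\kappa_\kc$. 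This is precisely the ingredient the paper invokes, and your proposal never uses it.

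The same omission affects your surjectivity step. You only check on $\NN$ that your assembled $\kappa$ reduces to the $\kappa^{(k)}$. Concluding that its image is $(\kappa^{(k)})_k$ requires the uniqueness half of that control theorem over each $\bT_k$.

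With the map defined through $\kc$, your injectivity argument is correct and is a more explicit version of the paper's. Surjectivity then needs no vertex-by-vertex assembly or separatedness argument. Lift $(\kappa^{(k)}_\kc)_k$ to $\bigwedge^{\chi(\FF)}\bH^1_{\FF(\kc)}(K,\bT)$ by Lemma \ref{lem:proj_bidual_iso_limit}, extend by Theorem \ref{th:kol_core_projection}, and conclude by uniqueness over each $\bT_k$.

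The paper argues differently, via Nakayama. Since $\bKS(\bT,\FF)$ and $\overline{\KS}(\bT,\FF)$ are both free of rank one, it suffices that $\bKS(\bT,\FF)\to\bKS(\bT_1,\FF)$ is onto. At $\kc$ this reduces to $\bH^1_{\FF(\kc)}(K,\bT)\otimes\Lambda/\m\cong\bH^1_{\FF(\kc)}(K,\bT_1)$, a dimension count using Propositions \ref{prop:core_inv} and \ref{prop:iwasawa:ass_equiv}. That route needs only the injective half of Lemma \ref{lem:proj_bidual_iso_limit}, but relies on the freeness of $\overline{\KS}(\bT,\FF)$ from \cite{BurnsSakamotoSano2}. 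Yours uses the full lemma and does not need that freeness.
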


\begin{proof}
By construction and Lemma \ref{lem:proj_bidual_iso_limit}, the map in \eqref{eq:kol_limit} is injective.

Since both modules are free Iwasawa modules of rank one, in order to show the surjectivity, it is enough to show that the map
\[\bKS(\bT,\FF)\to \bKS(\bT_1,\FF)\] 
is surjective. By Proposition \ref{prop:core_vertex_existence}, there exists a core vertex $\kc\in \NN$ and, by Proposition \ref{prop:patched:selmer_torsion}, 
\[\bH^1_{\FF^*(\kc)}(K,\bT_1^*)=0.\]
By Theorem \ref{th:kol_core_projection} and \cite[Theorem 5.20]{BurnsSakamotoSano2}, adapted to consider ultraprimes, the following diagram is commutative with the vertical maps being isomorphisms:
\[\xymatrix{
    \bKS(\bT,\FF)\ar[r] \ar[d]^{\cong} & \bKS(\bT_1,\FF)\ar[d]^{\cong}\\
    \bH^1_{\FF^*(\kc)}(K,\bT)\ar[r] & \bH^1_{\FF^*(\kc)}(K,\bT_1).
}\]
Since the Selmer groups $\bH^1_{\FF^*(\kc)}(K,\bT)$ and $\bH^1_{\FF^*(\kc)}(K,\bT_1)$ are free $\Lambda$ and $\Lambda/\m$ modules of rank $\chi(\FF)$, respectively, by Proposition \ref{prop:core_inv}, then Proposition \ref{prop:iwasawa:ass_equiv} implies that the bottom map is surjective. Hence the top horizontal map 
\[\bKS(\bT,\FF)\twoheadrightarrow \bKS(\bT_1,\FF)\]
is surjective as well.
\end{proof}

Let $\kappa^{(k)}\in \KS(\bT_k,\FF)$. In \cite[Definition 5.1]{BurnsSakamotoSano2}, the following ideals were constructed:
\[\Theta_i\Bigl(\kappa^{(k)}\Bigr)=\sum_{n\in (\NN_\cl)_i} \ind(\kappa^{(k)}_n),\]
where $(\NN_\cl)_{i}$ is the set of square-free products of exactly $i$ Kolyvagin primes. 

By Theorem \ref{th:comp_ultra_kol}, we can view $\kappa^{(k)}$ as an element in $\bKS(\bT_k,\FF)$ and consider the ideals
\[\bTheta_i\Bigl(\kappa^{(k)}\Bigr)=\sum_{\kn\in (\NN_u)_i} \ind(\kappa^{(k)}_\kn)\]
where, similarly, $(\NN_u)_i$ denotes the square-free product of exactly $i$ Kolyvagin ultraprimes. It will be shown in Corollary \ref{cor:theta_comparison} below that both ideals $\Theta_i\Bigl(\kappa^{(k)}\Bigr)$ and $\bTheta_i\Bigl(\kappa^{(k)}\Bigr)$ coincide.

For every $\overline{\kappa}=\Bigl(\kappa^{(k)}\Bigr)\in \overline{\KS}(\bT,\FF)$, the argument in \cite[Definition 5.24]{BurnsSakamotoSano2} can be used to construct ideals
\[\Theta_i\Bigl(\overline{\kappa}\Bigr)=\varprojlim_{k\in \N} \Theta_i\Bigl(\kappa^{(k)}\Bigr) \subset \Lambda.\]

We will show that, under the identification in Theorem \ref{th:ultrakol_limit}, the constructions of theta ideals from Kolyvagin systems in $\bKS(\bT,\FF)$ and $\overline{\KS}(\bT,\FF)$ are equivalent.
\begin{theorem}
Let $\FF$ be a Selmer structure satisfying Assumption \ref{ass:classic}, let $\kappa\in \bKS(\bT,\FF)$ and let $\overline{\kappa}=\Bigl(\kappa^{(k)}\Bigr)_{k\in \N}\in \overline{\KS}(\bT,\FF)$ be its identification under the isomorphism in Theorem \ref{th:ultrakol_limit}. Then 
\[\bTheta_i(\kappa)=\Theta_i\Bigl(\overline{\kappa}\Bigr).\]
\label{th:ultrakol_theta_limit}
\end{theorem}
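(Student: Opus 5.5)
We compare the ideals level by level: first the ultra and classical theta ideals for each finite $\Lambda_k$, then pass to the limit using the projections $\Lambda\to\Lambda_k$. Concretely, we will show that for every $k$ the image of $\bTheta_i(\kappa)$ in $\Lambda_k$ equals $\bTheta_i(\kappa^{(k)})=\Theta_i(\kappa^{(k)})$. Since $\Lambda$ is compact and all ideals of the noetherian ring $\Lambda$ are closed, we then have
\[\bTheta_i(\kappa)=\varprojlim_k \Bigl(\bTheta_i(\kappa)\Lambda_k\Bigr)=\varprojlim_k\Theta_i\bigl(\kappa^{(k)}\bigr)=\Theta_i(\overline{\kappa}).\]

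\emph{Step 1 (finite level: ultra equals classical).} Fix $k$ and write $\kappa^{(k)}$ both as a classical system and, via Theorem \ref{th:comp_ultra_kol}, as an ultra system. Constant ultraprimes embed $\NN_\cl$ into $\NN_u$, so $\Theta_i(\kappa^{(k)})\subseteq\bTheta_i(\kappa^{(k)})$. For the reverse inclusion, let $\kn=(n_j)$. By Corollary \ref{cor:ultrafilter_finite_modules} the bidual at $\kn$ is identified with the bidual at $n_j$ for $\UU$-many $j$, and $\kappa^{(k)}_\kn$ corresponds to $\kappa^{(k)}_{n_j}$ under this identification. The index only depends on the module together with the element. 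Moreover, since the module is finite, Proposition \ref{prop:ultraproduct_hom} shows that the homomorphisms to $\Lambda_k$ patch. Hence $\ind(\kappa^{(k)}_\kn)=\ind(\kappa^{(k)}_{n_j})$ for $\UU$-many $j$, and in particular it lies in $\Theta_i(\kappa^{(k)})$. This proves what the paper calls Corollary \ref{cor:theta_comparison}.

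\emph{Step 2 (reduction of indices).} For $\kn\in\NN_u$ we compare $\ind(\kappa_\kn)\Lambda_k$ with $\ind(\kappa^{(k)}_\kn)$, where $\kappa^{(k)}_\kn$ is the image of $\kappa_\kn$ under \eqref{eq:bidual_projection}. Since $M=\bH^1_{\FF(\kn)}(K,\bT)$ is free (Proposition \ref{prop:iwasawa:selmer_free}), we have
\[\ind(\kappa_\kn)\Lambda_k=\ind\Bigl(\kappa_\kn\otimes 1,\ \bigwedge^{\chi}(M\otimes\Lambda_k)\Bigr).\]
This holds because every functional on $M\otimes\Lambda_k$ lifts to one on $M$. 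By Proposition \ref{prop:patched:selmer_quotient_higherk}, $M\otimes\Lambda_k$ injects into $\bH^1_{\FF(\kn)}(K,\bT_k)$. Since $\Lambda_k$ is self-injective, functionals extend along this injection, and restriction is surjective on biduals. The index computed inside the larger group therefore agrees with the index computed inside $M\otimes\Lambda_k$.

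Here we also need $\FF(\kn)$ to propagate to a cartesian structure on $\bT_k$, which follows from Proposition \ref{prop:iwasawa:finite_cartesian}; transverse conditions are harmless. Combining this with Step 1, the image of $\bTheta_i(\kappa)$ in $\Lambda_k$ is $\sum_\kn \ind(\kappa^{(k)}_\kn)=\Theta_i(\kappa^{(k)})$.

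\emph{Main obstacle.} The delicate point is the limit step. We need $\Theta_i(\overline\kappa)$ as defined in \cite[Definition 5.24]{BurnsSakamotoSano2} to be exactly the inverse limit of these images, with transition maps surjective. Surjectivity follows because each $\Theta_i(\kappa^{(k)})$ is the image of the single ideal $\bTheta_i(\kappa)$. We must also check that a sum over infinitely many $\kn$ commutes with reduction mod $(p^k,X^k)$. This holds because $\bTheta_i(\kappa)$ is finitely generated, so only finitely many $\kn$ are needed, and its image is then the corresponding finite sum.
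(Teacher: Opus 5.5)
\textbf{There is a genuine gap.} Your Steps 1 and 2 are fine: they are the paper's Corollary \ref{cor:theta_comparison} and Lemma \ref{lem:bidual_proj_index}. The problem is the sentence closing Step 2, ``the image of $\bTheta_i(\kappa)$ in $\Lambda_k$ is $\sum_\kn \ind(\kappa^{(k)}_\kn)=\Theta_i(\kappa^{(k)})$''. It silently identifies two different index sets.

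\begin{itemize}
\item The ideal $\bTheta_i(\kappa)$ is a sum over $\kn\in\NN_i(\bT)$. These are products of ultraprimes whose Frobenius is conjugate to $\tau$ in all of $\Gal(K(\bT)_{p^\infty}/K)$, and they are the only vertices at which $\kappa$ has classes.
\item By Step 1, $\Theta_i(\kappa^{(k)})=\bTheta_i(\kappa^{(k)})$ is a sum over $\NN_i(\bT_k)$. There the Frobenius condition is imposed only in the smaller quotient $\Gal(K(\bT_k)_{p^\infty}/K)$.
\end{itemize}

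In general $\NN_i(\bT)\subsetneq\NN_i(\bT_k)$. For $\kn\in\NN_i(\bT_k)\setminus\NN_i(\bT)$, $\kappa$ has no class at $\kn$, so Step 2 says nothing about $\ind(\kappa^{(k)}_\kn)$.

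What you have actually proved is the inclusion $\bTheta_i(\kappa)\Lambda_k\subseteq\Theta_i(\kappa^{(k)})$, which is the paper's Corollary \ref{cor:dvr:theta_proj_finite}. This gives $\bTheta_i(\kappa)\subseteq\Theta_i(\overline{\kappa})$ and no more. The surjectivity claim in your last paragraph rests on the unproven level-wise equality, so it is circular. The real obstacle is this mismatch of prime sets, not whether sums commute with reduction.

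\textbf{The missing idea.} For the reverse inclusion you need to manufacture vertices in $\NN_i(\bT)$ from witnesses at the finite levels, by patching along the level index itself.

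\begin{enumerate}
\item Take $a=(a_k)\in\varprojlim_k\Theta_i(\kappa^{(k)})$. For each $k$, choose classical products $n_{k,1},\dots,n_{k,s_k}$ of $i$ Kolyvagin primes for $\bT_k$ with $a_k\in\sum_b\ind(\kappa^{(k)}_{n_{k,b}})$.
\item Put $\kn_b=(n_{k,b})_{k\in\N}$. A Kolyvagin prime for $\bT_k$ is also one for $\bT_j$ whenever $j\le k$, and $\UU$ contains all cofinite sets. Hence each $\kn_b$ lies in $\NN_i(\bT)$, even though no single $n_{k,b}$ need be Kolyvagin for $\bT$.
\item Fix a level $k$. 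Combine Lemma \ref{lem:index_kappa_umany} at that level, Lemma \ref{lem:bidual_proj_index}, and the compatibility of the $a_j$ and the $\kappa^{(j)}$ for $j\ge k$. This shows $a_k$ lies in the image of $\bTheta_i(\kappa)$ in $\Lambda_k$. Keep the number of summands at level $k$ bounded, which is possible because $\Lambda_k$ is finite, so that only finitely many $\UU$-large sets are intersected.
\item Closedness of $\bTheta_i(\kappa)$ then gives $a\in\bTheta_i(\kappa)$.
\end{enumerate}

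This is how the paper argues. It proves the equality of the limits directly, which is all the theorem asserts; the stronger level-wise equality you aimed for is not needed.
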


The rest of this section is dedicated to the proof of Theorem \ref{th:ultrakol_theta_limit}. We start showing the equality between the notion of theta ideals for classical and ultra Kolyvagin systems with finite coefficients.

\begin{lemma}
Assume that $\bT_k$ and $\FF$ satisfy Assumption \ref{ass:selfinj} and let $\kappa^{(k)}\in \bKS(\bT_k,\FF)$ and let $\kn=(n_i)_{i\in \N}\in \NN(T_k)$. For $\UU$-many $i$, the following equality holds: 
\[\ind\left(\kappa^{(k)}_\kn, \bigcap^{\chi(\FF)}_{R_k}H^1_{\FF(\kn)}(K,T_k)\right)=\ind\left(\kappa^{(k)}_{n_i}, \bigcap^{\chi(\FF)}_{R_k}\bH^1_{\FF(n_i)}(K,T_k)\right).\]
\label{lem:index_kappa_umany}
\end{lemma}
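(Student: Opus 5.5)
The plan is to reduce everything to the behaviour of ultraproducts of finite modules over the finite ring $R_k=\Lambda_k$. The target identity says that the index of the patched class $\kappa^{(k)}_\kn$ in the patched exterior bidual equals the index of the component classes $\kappa^{(k)}_{n_i}$ in the classical exterior biduals, for $\UU$-many $i$. (The statement writes $H^1$ and $\bH^1$ the other way round from what is meant; the intended reading is patched at $\kn$ and classical at $n_i$.)

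\textbf{Step 1: fix the patching identification.} Since all Selmer groups involved are finite $R_k$-modules of bounded order, Corollary \ref{cor:ultrafilter_finite_modules} and Lemma \ref{lem:structure_patching} give
\[\bH^1_{\FF(\kn)}(K,\bT_k)=\UU_i\bigl(H^1_{\FF(n_i)}(K,\bT_k)\bigr).\]
Moreover, there are $\UU$-many $i$ for which a single isomorphism class $M$ occurs. I will fix a set $S\in\UU$ and isomorphisms $f_i\colon H^1_{\FF(n_i)}(K,\bT_k)\cong M$ for $i\in S$. Under these, the patched group is identified with $\UU(M)\cong M$ via Lemma \ref{lem:ultrafilter_finite_diagonal}.

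\textbf{Step 2: transport the bidual and dual.} The exterior bidual $\bigcap^{\chi(\FF)}_{R_k}$ and the dual $\Hom_{R_k}(-,R_k)$ are functorial. By Proposition \ref{prop:ultraproduct_hom}, applied with the constant finite target $R_k$, the dual of the ultraproduct is the ultraproduct of the duals. The same holds for the bidual, since $\bigwedge^{\chi(\FF)}$ of a finite module is again finite of bounded order. So the identification in Theorem \ref{th:comp_ultra_kol} is simply the one induced by the $f_i$. The class $\kappa^{(k)}_\kn$ corresponds to the sequence $\bigl(\bigcap f_i(\kappa^{(k)}_{n_i})\bigr)_i$, which is an element of $\UU\bigl(\bigcap^{\chi(\FF)}M\bigr)=\bigcap^{\chi(\FF)}M$.

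\textbf{Step 3: use that this bidual is a finite set.} Since $\bigcap^{\chi(\FF)}M$ is finite, Proposition \ref{prop:ultrafilter_finite} yields a single element $x\in\bigcap^{\chi(\FF)}M$ with
\[\bigcap f_i\bigl(\kappa^{(k)}_{n_i}\bigr)=x\quad\text{for $\UU$-many $i$,}\]
and this $x$ is the image of $\kappa^{(k)}_\kn$.

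\textbf{Step 4: compare indices.} The index of Definition \ref{def:index} is invariant under module isomorphisms, because it is the image of evaluation against all functionals. Hence
\[\ind\bigl(\kappa^{(k)}_\kn\bigr)=\ind\Bigl(x,\bigcap^{\chi(\FF)}M\Bigr)=\ind\Bigl(\kappa^{(k)}_{n_i},\bigcap^{\chi(\FF)}H^1_{\FF(n_i)}(K,\bT_k)\Bigr)\]
for those $i$.

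\textbf{Main obstacle.} The hard part is Step 2: checking that the identification used in Theorem \ref{th:comp_ultra_kol} really coincides with the one induced by the $f_i$, i.e.\ that forming the bidual commutes with the ultraproduct. For this I would use that $\Hom(\UU(A_i),R_k)=\UU(\Hom(A_i,R_k))$ when the orders of the $A_i$ are bounded. I would apply it twice: once to $H^1_{\FF(n_i)}$, and once to $\bigwedge^{\chi(\FF)}$ of its dual, which is also bounded. The remaining point is that the ultraproduct commutes with exterior powers of bounded finite modules, which again follows from the reduction to a constant module $M$ in Step 1.
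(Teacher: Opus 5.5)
Your proposal is correct and follows the paper's own argument. Both identify $\bigcap^{\chi(\FF)}\bH^1_{\FF(\kn)}(K,\bT_k)$ with the ultraproduct of the classical biduals, represent $\kappa^{(k)}_\kn$ by the sequence $(\kappa^{(k)}_{n_i})_i$ (which, in your argument as in the paper's, relies on Theorem \ref{th:comp_ultra_kol}), and then apply Corollary \ref{cor:ultrafilter_finite_modules} together with the invariance of the index under isomorphisms. Your Steps 2--3 spell out two points the paper leaves implicit: that the bidual commutes with the ultraproduct, and (via Proposition \ref{prop:ultrafilter_finite}) that the isomorphism for $\UU$-many $i$ can be chosen to send $\kappa^{(k)}_\kn$ to $\kappa^{(k)}_{n_i}$.
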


\begin{proof}
Recall from the proof of Theorem \ref{th:comp_ultra_kol} that 
\[\bigcap^{\chi(\FF)}\bH^1_{\FF(\kn)}(K,T_k)=\UU\left(\bigcap^{\chi(\FF)}H^1_{\FF(n_i)}(K,T_k)\right).\]
Then $\kappa_{\kn}$ can be represented by the sequence $\Bigl(\kappa_{n_i}\Bigr)$ in this ultraproduct. By Corollary \ref{cor:ultrafilter_finite_modules}, for $\UU$-many $i$, there is an isomorphism
\[\bigcap^{\chi(\FF)}\bH^1_{\FF(\kn)}(K,T_k)\cong \bigcap^{\chi(\FF)}H^1_{\FF(n_i)}(K,T_k)\]
identifying $\kappa_{\kn}$ with $\kappa_{n_i}$, which completes the proof of the lemma.
\end{proof}

\begin{corollary}
Assume that $\bT_k$ and $\FF$ satisfy Assumption \ref{ass:selfinj} and let $\kappa^{(k)}\in \KS(\bT_k,\FF)=\bKS(\bT_k,\FF)$. Then 
\[\Theta_i\Bigl(\kappa^{(k)}\Bigr)=\bTheta_i\Bigl(\kappa^{(k)}\Bigr)\]
\label{cor:theta_comparison}
\end{corollary}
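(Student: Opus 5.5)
We prove the two inclusions $\Theta_i(\kappa^{(k)})\subset\bTheta_i(\kappa^{(k)})$ and $\bTheta_i(\kappa^{(k)})\subset\Theta_i(\kappa^{(k)})$ separately. Both ideals are sums of indices indexed by $(\NN_\cl)_i$ and $(\NN_u)_i$ respectively. By Theorem \ref{th:comp_ultra_kol}, the classical system $\kappa^{(k)}$ and its ultra counterpart agree on constant products, i.e.\ the restriction of the ultra system along $\NN_\cl\hookrightarrow\NN_u$ recovers the classical classes.

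\emph{First inclusion.} Every $n\in(\NN_\cl)_i$ is a product of $i$ distinct Kolyvagin primes for $\bT_k$. Each such prime, viewed as a constant ultraprime, is a Kolyvagin ultraprime: its Frobenius is conjugate to $\tau$ in the relevant finite quotient $\Gal(K(\bT_k)_{p^{\alpha_k}}/K)$, which is all that matters for the finite module $\bT_k$. Hence $n\in(\NN_u)_i$ by Remark \ref{rem:kol_primes}. For constant ultraprimes the patched Selmer group $\bH^1_{\FF(n)}(K,\bT_k)$ is the ultrapower of the classical finite Selmer group, so it coincides with it by Lemma \ref{lem:ultrafilter_finite_diagonal}. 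Under this identification $\kappa^{(k)}_n$ is the same element, so the two indices agree. Summing over $n$ gives $\Theta_i(\kappa^{(k)})\subset\bTheta_i(\kappa^{(k)})$.

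\emph{Second inclusion.} Let $\kn\in(\NN_u)_i$ be represented by $(n_j)_{j\in\N}$. By Proposition \ref{prop:patched:product_ultraprimes_ultraproduct_naturals} and Remark \ref{rem:square-free_welldef}, $n_j$ is a square-free product of exactly $i$ primes for $\UU$-many $j$. Since $\kn$ is a product of Kolyvagin ultraprimes for the finite module $\bT_k$, each prime factor of $n_j$ is a Kolyvagin prime for $\bT_k$ for $\UU$-many $j$, again using that only a finite quotient of $G_K$ matters. Intersecting these finitely many sets in $\UU$, we get $n_j\in(\NN_\cl)_i$ for $\UU$-many $j$. Lemma \ref{lem:index_kappa_umany} then yields a $j$ in this set such that $\ind(\kappa^{(k)}_\kn)=\ind(\kappa^{(k)}_{n_j})\subset\Theta_i(\kappa^{(k)})$. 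Summing over $\kn$ gives $\bTheta_i(\kappa^{(k)})\subset\Theta_i(\kappa^{(k)})$.

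The main point requiring care is that the index is intrinsic to the abstract module. Lemma \ref{lem:index_kappa_umany} supplies a module isomorphism carrying $\kappa_\kn$ to $\kappa_{n_j}$, and the index in Definition \ref{def:index} is invariant under isomorphisms of the ambient bidual. We also use that the patched Kolyvagin primes of $\bT_k$ are detected by a single finite extension, which is immediate since $\bT_k$ is finite.
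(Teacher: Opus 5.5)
Your proof is correct and follows the paper's argument: $\Theta_i\subset\bTheta_i$ holds because constant Kolyvagin primes are Kolyvagin ultraprimes, and the reverse inclusion comes from applying Lemma \ref{lem:index_kappa_umany} to a representative $n_j$ that is a square-free product of $i$ classical Kolyvagin primes. You simply make explicit the ultrafilter bookkeeping, and the finite-level reading of the Kolyvagin condition for $\bT_k$, which the paper leaves implicit.
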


\begin{proof}
It is clear that 
\[\Theta_i(\kappa^{(k)})\subset \bTheta_i(\kappa^{(k)})\]
since the constant Kolyvagin ultraprimes are a subset of the Kolyvagin ultraprimes. Conversely, we need to show that 
\[\ind\biggl(\kappa_\kn, \bH^1_{\FF(n)}(K,T)\biggr)\subset \Theta_i(\kappa^{(k)}).\]
However, it follows from Lemma \ref{lem:index_kappa_umany}, since there exists a square free product of $i$ classical Kolyvagin primes $n$ such that 
\[\ind(\kappa_\kn)=\ind(\kappa_n).\qedhere\]
\end{proof}

The second part of the proof consists on the study of the preservation of indices of the Kolyvagin classes under the isomorphism in \eqref{eq:proj_from_infty}.

\begin{lemma}
Let $\FF$ be a Selmer structure such that the Selmer group $\bH^1_{\FF}(K,T)$ is a free $R$-module. Let $\alpha\in \bigcap^{\chi(\FF)}_R \bH^1_{\FF}(K,T)$ and let $\alpha^{(k)}\in \bigcap^{\chi(\FF)}_{R_k} \bH^1_{\FF}(K,T_k)$ be its image under the projection map in \eqref{eq:bidual_projection}. Then 
\[\ind\left(\alpha,\bigcap^{\chi(\FF)}_\Lambda \bH^1_{\FF}(K,\bT)\right)+(p^k,X^k)=\ind\left(\alpha^{(k)}, \bigcap^{\chi(\FF)}_{\Lambda_k} \bH^1_{\FF}(K,\bT_k)\right),\]
where we are identifying the ideals of $\Lambda_k$ with the ideals of $\Lambda$ containing $(p^k,X^k)$.
\label{lem:bidual_proj_index}
\end{lemma}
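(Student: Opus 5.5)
The plan is to reduce both indices to coordinate ideals in a fixed basis, using that $M:=\bH^1_{\FF}(K,\bT)$ is free, and then to compare the $\Lambda$-module with its reduction modulo $(p^k,X^k)$ via the injection of Proposition \ref{prop:patched:selmer_quotient_higherk}. Write $r=\chi(\FF)$ and choose a basis $e_1,\ldots,e_m$ of $M$. Then $\bigcap^r_\Lambda M=\bigwedge^r M$ is free with basis the $e_I=e_{i_1}\wedge\cdots\wedge e_{i_r}$. If $\alpha=\sum_I a_I e_I$, the linear forms on $\bigwedge^r M$ are exactly the maps $e_I\mapsto$ arbitrary values, so $\ind(\alpha)=(a_I)_I$, the ideal generated by the coordinates.

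Next, reduce modulo $(p^k,X^k)$. Set $M_k:=M\otimes_\Lambda\Lambda_k$, which is free over $\Lambda_k$ on the images $\bar e_i$. By the identification preceding \eqref{eq:bidual_projection}, the image of $\alpha$ in $\bigcap^r_{\Lambda_k}M_k=\bigwedge^r M_k$ is $\bar\alpha=\sum \bar a_I\bar e_I$. Its index in $\bigwedge^r M_k$ is $(\bar a_I)=\ind(\alpha)+(p^k,X^k)$. This settles the statement if $\bH^1_\FF(K,\bT_k)$ is replaced by $M_k$.

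The main step, and the one I expect to be the main obstacle, is to show that the index does not change when passing along the inclusion $\iota:M_k\hookrightarrow N_k:=\bH^1_{\FF}(K,\bT_k)$ given by Proposition \ref{prop:patched:selmer_quotient_higherk}. I would argue as follows.

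The inclusion $\ind(\alpha^{(k)},\bigcap^r N_k)\subset \ind(\bar\alpha,\bigcap^r M_k)$ holds because every element of $(\bigwedge^r N_k)^*$ restricts along $\bigwedge^r\iota$ to a form on $\bigwedge^r M_k$. I would check that $\bigcap^r$ is computed through $\Hom(\bigwedge^r N_k^*,\Lambda_k)$ and that the pairing of $\alpha^{(k)}$ with $\phi_1\wedge\cdots\wedge\phi_r$ is the pairing of $\bar\alpha$ with $\iota^*\phi_1\wedge\cdots\wedge\iota^*\phi_r$.

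For the reverse inclusion, the key input is that $\Lambda_k$ is self-injective (it is Gorenstein artinian). Hence $\iota^*:N_k^*\to M_k^*$ is surjective, so every dual basis vector $\bar e_i^*$ lifts to some $\phi_i\in N_k^*$. Pairing $\alpha^{(k)}$ with $\phi_{i_1}\wedge\cdots\wedge\phi_{i_r}$ then returns $\bar a_I$. Therefore every generator $\bar a_I$ lies in the index computed in $N_k$, and the two indices coincide.

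Finally, I would translate back through the correspondence between ideals of $\Lambda_k$ and ideals of $\Lambda$ containing $(p^k,X^k)$ to obtain the stated equality. The point to double-check along the way is the compatibility of the identification $\Hom(-,\Lambda_{k+1})=\Hom(-,\Lambda_k)$ with these duals. I expect this to be routine, since it is given by multiplication by the socle generator.
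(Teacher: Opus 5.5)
Your proposal is correct and follows essentially the same route as the paper. You compute the index of the reduction of $\alpha$ in the free $\Lambda_k$-module $\bigcap^{\chi(\FF)}_{\Lambda_k}\bigl(\bH^1_{\FF}(K,\bT)\otimes_\Lambda\Lambda_k\bigr)$ via coordinates, and then transport it along the injection coming from Proposition \ref{prop:patched:selmer_quotient_higherk}. Your appeal to the self-injectivity of $\Lambda_k$, so that linear forms extend along that injection, is the justification the paper's final step actually needs; the paper states that step tersely and attributes it only to the freeness of the domain, which on its own would not suffice.
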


\begin{proof}
Since $\bH^1_{\FF}(K,T)$ is a free module, we can consider the class 
\[\alpha+(p^k,X^k)\in \left(\bigcap^{\chi(\FF)}_\Lambda \bH^1_{\FF}(K,\bT)\right)\otimes_\Lambda \Lambda_k=\bigcap^{\chi(\FF)}_{\Lambda_k} \Bigl(\bH^1_{\FF}(K,\bT)\otimes_\Lambda \Lambda_k\Bigr).\]
By construction, it is clear that
\[\ind(\alpha)+(p^k,X^k)=\ind\Bigl(\alpha+(p^k,X^k)\Bigr).\]
Then $\alpha^{(k)}$ is the image of $\alpha+(p^k,X^k)$ under the following map, which is injective by Proposition \ref{prop:patched:selmer_quotient_higherk} and the left-exactness of the exterior bidual:
\[\bigcap^{\chi(\FF)}_{\Lambda_k} \Bigl(\bH^1_{\FF}(K,\bT)\otimes_\Lambda \Lambda_k\Bigr)\hookrightarrow  \bigcap^{\chi(\FF)}_{\Lambda_k} \bH^1_{\FF}(K,\bT_k).\]
Since the domain is a free $\Lambda_k$-module, we have that
\[\ind\Bigl(\alpha^{(k)}\Bigr)=\ind\Bigl(\alpha+(p^k,X^k)\Bigr)=\ind(\alpha)+(p^k,X^k).\qedhere\]
\end{proof}

This result proves an inclusion between theta ideals. Since $\NN_i(\bT)\subset \NN_i(\bT_k)$, the following corollary holds.
\begin{corollary}
Let $\FF$ be a Selmer structure defined on $\bT$ satisfying Assumption \ref{ass:classic} and let $\kappa\in \bKS(T,\FF)$ be an ultra Kolyvagin system and let $\kappa^{(k)}\in \bKS(T_k,\FF)$ be its projection under the identification in Theorem \ref{th:ultrakol_limit}. Then 
\[\bTheta_i(\kappa)\subset \bTheta_i(\kappa^{(k)}).\]
\label{cor:dvr:theta_proj_finite}
\end{corollary}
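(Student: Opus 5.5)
The inclusion $\bTheta_i(\kappa)\subset \bTheta_i(\kappa^{(k)})$ should follow term by term from the definitions of the theta ideals. Write
\[\bTheta_i(\kappa)=\sum_{\kn\in \NN_i(\bT)} \ind\Bigl(\kappa_\kn,\bigcap^{\chi(\FF)}_\Lambda \bH^1_{\FF(\kn)}(K,\bT)\Bigr),\qquad
\bTheta_i(\kappa^{(k)})=\sum_{\kn\in \NN_i(\bT_k)} \ind\Bigl(\kappa^{(k)}_\kn,\bigcap^{\chi(\FF)}_{\Lambda_k} \bH^1_{\FF(\kn)}(K,\bT_k)\Bigr).\]
Here the first index is identified with the exterior power via Proposition \ref{prop:iwasawa:selmer_free}, and the second is viewed as an ideal of $\Lambda$ containing $(p^k,X^k)$. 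It then suffices to show that each summand of the first sum lies in some summand of the second.

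Fix $\kn\in\NN_i(\bT)$. Every Kolyvagin ultraprime for $\bT$ is one for $\bT_k$, since its Frobenius is conjugate to $\tau$ in $\Gal(K(\bT)_{p^\infty}/K)$ and hence in the corresponding quotient for $\bT_k$. So $\kn\in\NN_i(\bT_k)$, and the number of ultraprime factors is unchanged.

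Next I identify $\kappa^{(k)}_\kn$ with the image of $\kappa_\kn$ under the projection \eqref{eq:bidual_projection} for the Selmer structure $\FF(\kn)$. This is how the map \eqref{eq:kol_limit} and the identification of Theorem \ref{th:ultrakol_limit} were built: classwise, through Lemma \ref{lem:proj_bidual_iso_limit} applied to each $\FF(\kn)$. The hypotheses needed for $\FF(\kn)$ are cartesianness (Proposition \ref{prop:patched:rank_modified}) and freeness of the Selmer group (Proposition \ref{prop:iwasawa:selmer_free}). The identification also uses that $\FF(\kn)$ propagated to $\bT_k$ is the transverse modification of the propagated $\FF$. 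This holds because the transverse condition for $\bT$ is by definition the inverse limit of the transverse conditions for $\bT_k$.

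With this in place, Lemma \ref{lem:bidual_proj_index} applied to $\FF(\kn)$ and $\alpha=\kappa_\kn$ gives
\[\ind(\kappa_\kn)+(p^k,X^k)=\ind(\kappa^{(k)}_\kn).\]
Hence $\ind(\kappa_\kn)\subset\ind(\kappa^{(k)}_\kn)\subset\bTheta_i(\kappa^{(k)})$. Summing over $\NN_i(\bT)$ gives the claim.

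The main obstacle is the compatibility check in the middle step: that the $\kn$-component of the image of $\kappa$ in $\bKS(\bT_k,\FF)$ really is the bidual projection of $\kappa_\kn$, and in particular that the identifications $\phi_\ku$ and $\psi_\ku$ used for $\bT$ reduce to those used for $\bT_k$. I would handle this by fixing the $\bT_k$ trivialisations as the reductions modulo $(p^k,X^k)$ of the $\bT$ trivialisations, using Propositions \ref{prop:patched:finite_descr} and \ref{prop:patched_transverse}. The rank reduction maps then commute with reduction, and the Kolyvagin relations are preserved.
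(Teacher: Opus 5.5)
Your proposal is correct and follows the paper's own route: the paper deduces the corollary directly from Lemma \ref{lem:bidual_proj_index} together with $\NN_i(\bT)\subset\NN_i(\bT_k)$, which is exactly your term-by-term argument. Your additional checks on the propagation of the transverse condition and on matching the trivialisations $\phi_\ku$, $\psi_\ku$ spell out what the paper leaves implicit in its construction of the map \eqref{eq:kol_limit}.
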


We can now conclude the proof of Theorem \ref{th:ultrakol_theta_limit}.

\begin{proof}[Proof of Theorem \ref{th:ultrakol_theta_limit}]
By Corollary \ref{cor:dvr:theta_proj_finite} and taking the limit, we can deduce that 
\[\bTheta_i(\kappa)\subset \bTheta_i(\overline\kappa).\]

Conversely, let 
\[a=(a_k)\in \varprojlim_{k\in \N} \bTheta_i(\kappa^{(k)})\subset \varprojlim_{k\in \N}\Lambda_k=\Lambda.\]
By Corollary \ref{cor:theta_comparison}, and the construction of theta ideals, there are square-free products $n_{k,1},\ldots, n_{k,s_k}\in \NN_i(\bT_k)$ of exactly $i$ Kolyvagin primes for $\bT_k$ such that 
\[a_k\in \sum_{i=1}^{s_k}\ind\Bigl(\kappa^{(k)}_{n_{k,a}}\Bigr).\]

In an attempt to simplify the notation, we denote $n_{k,i}:=n_{k,s_k}$ for every $i\geq s_k$. 

For every $a\in \N$, let $\kn_a$ be the square-free product of ultraprimes represented by the sequence $(n_{k,a})_{k\in \N}$. By Lemmas \ref{lem:index_kappa_umany} and \ref{lem:bidual_proj_index}, for every fixed $a$ and $\UU$-many $k$, we have that 
\[\ind(\kappa_{\kn_a})+(p^k,X^k)=\ind(\kappa^{(k)}_{n_{k,a}}),\]
where we are again using the equivalence of ideals of $\Lambda_k$ and ideals of $\Lambda$ containing $(p^k,X^k)$. By the compatibility relation of the different Kolyvagin systems in the tower, we have that, for every $j\geq k$,
\[a_k+(p^k,X^k)\subset \sum_{a=1}^{\infty}\ind(\kappa^{(j)}_{n_{j,a}})+(p^k,X^k).\]
 That implies that 
\[a_k+(p^k,X^k)\subset \sum_{a=1}^{\infty}\ind(\kappa_{\kn_a})+(p^k,X^k)\subset \bTheta_i(\kappa)+(p^k,X^k).\]
Since $\Lambda$ is compact in the profinite topology, every (finitely generated) ideal is also compact, so closed. Since $(p^k,X^k)$ is a basis of neighbourhoods of the identity and 
\[\Bigl(a+(p^k,X^k)\Bigr)\cap \bTheta_i(\kappa)\neq \emptyset\ \forall i\in \N.\]
Then the closure of $\bTheta_i(\kappa)$ implies that it contains $a$.
\end{proof}

We conclude this section with a proof for the question in \cite[Remark 5.3]{BurnsSakamotoSano2} for the Galois representation $\bT_k$.

\begin{corollary}
Suppose that $\bT$ and $\FF$ satisfy Assumption \ref{ass:classic} and consider a primitive Kolyvagin system $\kappa^{(k)}\in \KS(\bT_k,\FF)$. Then 
\[\Theta_i(\kappa^{(k)})=\Fitt^i_{\Lambda_k}\Bigl(\bH^1_{\FF^*}(K,(\bT_k)^*)^\vee\Bigr).\]
\label{cor:theta_fitting_selfinj}
\end{corollary}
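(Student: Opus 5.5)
The plan is to work entirely in the self-injective setting of Assumption \ref{ass:selfinj}, over $R=\Lambda_k$ and $T=\bT_k$, with ultra Kolyvagin systems, and then transfer the result to classical systems via Corollary \ref{cor:theta_comparison}. By Proposition \ref{prop:iwasawa:finite_cartesian} the propagated structure on $\bT_k$ is cartesian, and by Proposition \ref{prop:core_inv} its core rank equals $\chi(\FF)\geq 1$. So the argument of \cite[Theorem 5.2]{BurnsSakamotoSano2} (adapted to ultraprimes, as remarked before Theorem \ref{th:comp_ultra_kol}) applies: $\bKS(\bT_k,\FF)\cong\KS(\bT_k,\FF)$ is free of rank one over $\Lambda_k$. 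Moreover \cite[Theorem 5.2]{BurnsSakamotoSano2} already gives the inclusion $\Theta_i(\kappa^{(k)})\subset \Fitt^i_{\Lambda_k}$. It therefore remains to prove $\Fitt^i\subset\Theta_i$. Since primitivity is preserved up to a unit, we may take $\kappa^{(k)}$ to be the image of a primitive $\kappa\in\bKS(\bT,\FF)$ under Theorem \ref{th:ultrakol_limit}. This uses the surjectivity $\bKS(\bT,\FF)\twoheadrightarrow\bKS(\bT_1,\FF)$ established there, together with Nakayama's lemma.

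For the reverse inclusion we copy the inductive scheme of Theorem \ref{th:iwasawa:kolyvagin_theta_fitting}, but now with exact equalities, because $\Lambda_k$ is finite and the ultraproduct lets us pick ultraprimes realizing everything exactly. First, Fitting ideals commute with base change (Proposition \ref{prop:fitting_tensor}). Remark \ref{rem:patched:selmer_torsion_dual} gives $\bH^1_{\FF^*}(K,\bT_k^*)^\vee=\bH^1_{\FF^*}(K,\bT^*)^\vee\otimes\Lambda_k$. Hence
\[\Fitt^i_{\Lambda_k}\Bigl(\bH^1_{\FF^*}(K,\bT_k^*)^\vee\Bigr)=\Fitt^i_\Lambda\Bigl(\bH^1_{\FF^*}(K,\bT^*)^\vee\Bigr)+(p^k,X^k).\]
Second, we reduce to $i=0$. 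Using a weak core vertex $\kc$ chosen via Corollary \ref{cor:patched:dual_res_vanish_417} (applied over $\bT_k$), the presentation from global duality yields
\[\Fitt^i=\sum_{\kn\in\NN_i}\Fitt^0\Bigl(\bH^1_{(\FF^*)(\kn)}(K,\bT_k^*)^\vee\Bigr),\]
exactly as in the proof of Theorem \ref{th:iwasawa:kolyvagin_theta_fitting}, but now with genuine equalities over $\Lambda_k$. It then suffices to show $\Fitt^0_{\Lambda_k}\bigl(\bH^1_{(\FF^*)(\kn)}(K,\bT_k^*)^\vee\bigr)\subset\ind(\kappa^{(k)}_\kn)$ for every $\kn$.

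For this last inclusion we pass through Stark systems over $\Lambda_k$, in the bidual formalism of \cite{BurnsSakamotoSano2,Sakamoto18}. Choose a weak core vertex $\kc$ with $\kn\mid\kc$. Over the self-injective ring, $\bH^1_{\FF^\kc}(K,\bT_k)$ is free of rank $\chi(\FF)+\nu(\kc)$. The Stark element $\varepsilon_\kc$ attached to the primitive system is a generator, and $\kappa^{(k)}_\kn=\Reg_\kn\Phi_{\kc,\kn}(\varepsilon_\kc)$. The exact sequence
\[0\to\bH^1_{\FF(\kn)}\to\bH^1_{\FF^\kc}\to\Lambda_k^{\nu(\kc)}\to\bH^1_{(\FF^*)(\kn)}(K,\bT_k^*)^\vee\to0\]
has free middle term. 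The self-injective analogue of Proposition \ref{prop:bidual_map_fitting} (\cite[Lemma 2.17]{BurnsBullach}, \cite[Proposition 4.3]{BurnsSakamotoSano2}) then says that the index of the rank reduction of a basis generator equals $\Fitt^0$ of the cokernel exactly. That gives the needed inclusion, in fact an equality.

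The main obstacle is this exact index computation over $\Lambda_k$, which is not a DVR, so the localisation argument of Proposition \ref{prop:bidual_map_fitting} is unavailable. If the cited lemma does not apply directly, the fallback is to derive the $i=0$ statement over $\Lambda_k$ from the Iwasawa-level one. By Lemma \ref{lem:bidual_proj_index}, $\ind(\kappa^{(k)}_\kn)=\ind(\kappa_\kn)+(p^k,X^k)$. One would then need the stronger exact statement $\Fitt^0=\ind(\kappa_\kn)$ at $\Lambda$-level, obtainable when $\Fitt^0$ is principal, since dual Selmer groups at core-type vertices have no finite submodules (Proposition \ref{prop:fitting_ppal_finsub}). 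Combining this with Theorem \ref{th:ultrakol_theta_limit} and the base change identity above then yields the corollary.
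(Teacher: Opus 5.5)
Your main route is genuinely different from the paper's, and it is sound in outline.

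\textbf{The paper's route.} The paper never reproves anything at level $k$ except by citing \cite[Theorem 5.2]{BurnsSakamotoSano2} for $\Theta_i\subset\Fitt^i$. For the other inclusion it takes the Iwasawa-level statement of Theorem \ref{th:iwasawa:kolyvagin_theta_fitting}. That statement is a genuine containment $\Fitt^i_\Lambda(\bH^1_{\FF^*}(K,\bT^*)^\vee)\subset\bTheta_i(\kappa)$, because $\subset_{f.i.}$ includes actual inclusion (Notation \ref{not:pseudo_equal}). The paper reduces it modulo $(p^k,X^k)$ via Proposition \ref{prop:fitting_tensor} and Remark \ref{rem:patched:selmer_torsion_dual}, then pushes $\bTheta_i(\kappa)+(p^k,X^k)$ into $\Theta_i(\kappa^{(k)})$ using Corollaries \ref{cor:dvr:theta_proj_finite} and \ref{cor:theta_comparison}. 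The whole argument is two lines.

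\textbf{Your route.} You instead redo the argument over $\Lambda_k$. Because $\Lambda_k$ is Gorenstein, $N^*\to M^*$ is surjective, and Laplace expansion gives the exact vertex-wise identity $\ind(\kappa^{(k)}_\kn)=\Fitt^0_{\Lambda_k}(\bH^1_{\FF^*(\kn)}(K,\bT_k^*)^\vee)$. The Fitting-ideal induction then yields $\Fitt^i\subset\sum_{\kn\in\NN_i}\Fitt^0$. This buys a sharper statement than the corollary and avoids all pseudo-isomorphism arguments, at the cost of length. Note also that the Fitting manipulations in Theorem \ref{th:iwasawa:kolyvagin_theta_fitting} were already exact equalities over $\Lambda$. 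The only inexact step there is the index comparison, so "now with genuine equalities" does not distinguish your setting.

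\textbf{One step to repair.} Corollary \ref{cor:patched:dual_res_vanish_417} cannot be "applied over $\bT_k$". Everything in \S\ref{sec:cheb} assumes (Reg), and $\Lambda_k$ is not a domain. More substantively, over a non-principal self-injective ring it is not automatic that the level-$k$ Selmer group contains an element with zero annihilator. Without such an element, no ultraprime $\ku$ makes $\loc_\ku$ surjective onto $\bH^1_\f(K_\ku,\bT_k)\cong\Lambda_k$, and then Lemma \ref{lem:patched:tr_res} gives you nothing.

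The fix is to produce every vertex in the induction at the $\bT$-level, including those for the modified structures $\GG=\FF(\kq_1\cdots\kq_j)$. There (Proj)/(H2) gives surjectivity $\loc_\ku:\bH^1_{\GG}(K,\bT)\twoheadrightarrow\bH^1_\f(K_\ku,\bT)$. This surjectivity descends to $\bH^1_{\GG}(K,\bT_k)$, so Lemma \ref{lem:patched:tr_res} gives $\GG^*(\ku)=\GG^*_\ku$ at level $k$. Weak-core vanishing also descends, by Proposition \ref{prop:patched:selmer_torsion}. So your level-$k$ argument works only because the relevant classes come from the free Iwasawa Selmer groups, and you should make that dependence explicit.

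\textbf{The fallback.} Your fallback is unnecessary. You do not need $\Fitt^0=\ind(\kappa_\kn)$ or principality at $\Lambda$-level, since Theorem \ref{th:kol_fitting0} already gives an actual inclusion. It follows that $\Fitt^0_{\Lambda_k}=\Fitt^0_\Lambda+(p^k,X^k)\subset\ind(\kappa_\kn)+(p^k,X^k)=\ind(\kappa^{(k)}_\kn)$ by Lemma \ref{lem:bidual_proj_index}. Carried out with Theorem \ref{th:iwasawa:kolyvagin_theta_fitting} rather than vertex by vertex, this is exactly the paper's proof.
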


\begin{proof}
By Theorems \ref{th:comp_ultra_kol} and \ref{th:ultrakol_limit}, there is a primitive Kolyvagin system $\kappa\in \bKS(\bT,\FF)$ that projects to $\kappa^{(k)}$ under the map
\[\bKS(\bT,\FF)\cong\overline{\KS}(\bT,\FF)=\biggl(\varprojlim_{j\in \N}\KS(\bT_j,\FF)\biggr)\to\KS(\bT_k,\FF).\]
By Proposition \ref{prop:patched:selmer_torsion}, Theorem \ref{th:iwasawa:kolyvagin_theta_fitting} and Corollary \ref{cor:dvr:theta_proj_finite}, we have that
\[\Fitt^i_{\Lambda_k}\Bigl(\bH^1_{\FF^*}(K,(\bT_k)^*)^\vee\Bigr)=\Fitt^i_{\Lambda}\Bigl(\bH^1_{\FF^*}(K,\bT^*)^\vee\Bigr)\otimes \Lambda_k\subset \bTheta_i(\kappa)\otimes \Lambda_k\subset \Theta_i\Bigl(\kappa^{(k)}\Bigr).\]
The other inclusion is proven in \cite[Theorem 5.2]{BurnsSakamotoSano2}.
\end{proof}

\begin{remark}
The argument in Corollary \ref{cor:theta_fitting_selfinj} is generalisable to other self-injective quotients of $\Lambda$.
\end{remark}

\subsection{Iwasawa Selmer groups}

Theorem \ref{th:iwasawa:kolyvagin_theta_fitting} proved that, when $\FF$ is a cartesian Selmer structure, the Fitting ideal $\Fitt^i_\Lambda\Bigl(\bH^1_{\FF^*}(K,\bT^*)^\vee\Bigr)$ is contained, with finite index, in the ideal $\Theta_i(\kappa)$ associated with a primitive Kolyvagin system $\kappa\in \KS(\bT,\FF)$. The aim of this section is to show that this inclusion is indeed an equality when $\FF$ satisfies Assumption \ref{ass:classic}.

\begin{theorem}
Let $\bT$ be an Iwasawa Galois representation and let $\FF$ be a Selmer structure defined on $\bT$ such that Assumption \ref{ass:classic} holds and $\chi(\FF)\geq 1$. If $\kappa\in \bKS(\bT,\FF)$ is a primitive ultra Kolyvagin system, then
\[\bTheta_i(\kappa)=\Fitt^i_\Lambda\Bigl(\bH^1_{\FF^*}(K,\bT^*)^\vee\Bigr)\ \forall i\in \Z_{\geq 0}.\]
\label{th:iwasawa:ultrakol:theta_fitting_equal}
\end{theorem}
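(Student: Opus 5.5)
The plan is to prove the two inclusions separately, passing to the finite layers $\bT_k$ and using the classical theory of \cite{BurnsSakamotoSano2} there.

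\emph{Inclusion $\bTheta_i(\kappa)\subset \Fitt^i_\Lambda(\bH^1_{\FF^*}(K,\bT^*)^\vee)$.} By Theorem \ref{th:ultrakol_limit}, $\kappa$ corresponds to a compatible family $\overline\kappa=(\kappa^{(k)})_k$ with $\kappa^{(k)}\in \KS(\bT_k,\FF)$. This uses that the propagated structures are cartesian (Proposition \ref{prop:iwasawa:finite_cartesian}) with the same core rank (Proposition \ref{prop:core_inv}). Theorem \ref{th:ultrakol_theta_limit} gives $\bTheta_i(\kappa)=\varprojlim_k \Theta_i(\kappa^{(k)})$.

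For each $k$, \cite[Theorem 5.2]{BurnsSakamotoSano2} gives $\Theta_i(\kappa^{(k)})\subset \Fitt^i_{\Lambda_k}(\bH^1_{\FF^*}(K,\bT_k^*)^\vee)$; this holds for every Kolyvagin system, not only primitive ones. By Remark \ref{rem:patched:selmer_torsion_dual}, the right-hand side is $\Fitt^i_\Lambda(\bH^1_{\FF^*}(K,\bT^*)^\vee)\otimes\Lambda_k$, using Proposition \ref{prop:fitting_tensor}. Hence every $a\in\bTheta_i(\kappa)$ lies in $\Fitt^i_\Lambda(\cdot)+(p^k,X^k)$ for all $k$. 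Since ideals of $\Lambda$ are closed, $a\in\Fitt^i_\Lambda(\bH^1_{\FF^*}(K,\bT^*)^\vee)$.

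\emph{Reverse inclusion.} Theorem \ref{th:iwasawa:kolyvagin_theta_fitting} only gives this up to finite index, so it has to be refined. The route I would take is through the finite layers. By Corollary \ref{cor:theta_fitting_selfinj}, for the primitive $\kappa^{(k)}$ (the image of a primitive $\kappa$ is primitive by surjectivity in Theorem \ref{th:ultrakol_limit}) we have the equality $\Theta_i(\kappa^{(k)})=\Fitt^i_{\Lambda_k}(\bH^1_{\FF^*}(K,\bT_k^*)^\vee)=\Fitt^i_\Lambda(\bH^1_{\FF^*}(K,\bT^*)^\vee)+(p^k,X^k)$. Passing to the inverse limit over $k$ and applying Theorem \ref{th:ultrakol_theta_limit} once more gives
\[\bTheta_i(\kappa)=\varprojlim_k \Theta_i(\kappa^{(k)})=\varprojlim_k\Bigl(\Fitt^i_\Lambda+(p^k,X^k)\Bigr)/(p^k,X^k)=\Fitt^i_\Lambda\Bigl(\bH^1_{\FF^*}(K,\bT^*)^\vee\Bigr).\]
The last equality holds because a finitely generated ideal of the compact ring $\Lambda$ is closed, so it equals the intersection of its neighbourhoods $I+(p^k,X^k)$.

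\emph{Main obstacle.} The delicate point is the input to Corollary \ref{cor:theta_fitting_selfinj}. Its proof itself relied on Theorem \ref{th:iwasawa:kolyvagin_theta_fitting}, where the containment was stated only up to finite index. I would re-examine that step: the inclusion $\Fitt^i_\Lambda\otimes\Lambda_k\subset\bTheta_i(\kappa)\otimes\Lambda_k$ needs a genuine containment, not just a pseudo-containment.

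To obtain it, I would go back to Theorem \ref{th:kol_fitting0}. The index $\ind(\kappa_\kn)$ is principal, and by Proposition \ref{prop:bidual_map_iwasawa_fitting_inclusion} it is the unique principal ideal containing $\Fitt^0_\Lambda(\bH^1_{(\FF^*)(\kn)}(K,\bT^*)^\vee)$ with finite index. Hence $\Fitt^0\subset\ind(\kappa_\kn)$ is an actual inclusion. The recursive decomposition $\Fitt^i=\sum_{\kn\in\NN_i}\Fitt^0(\bH^1_{(\FF^*)(\kn)}(K,\bT^*)^\vee)$ is also an exact identity. Together these give $\Fitt^i_\Lambda\subset\bTheta_i(\kappa)$ on the nose, which makes the limit argument above rigorous.

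If that genuine containment turns out to fail, the fallback is to use Assumption \ref{ass:iwasawa:local_no_fin_sub} and Proposition \ref{prop:fitting_ppal_finsub} at core vertices. The aim would be to show that the Fitting ideals in question are themselves principal, so that finite-index containment upgrades to equality.
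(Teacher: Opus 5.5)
Your proof is correct and follows the paper's own argument. The inclusion $\bTheta_i(\kappa)\subset\Fitt^i_\Lambda\bigl(\bH^1_{\FF^*}(K,\bT^*)^\vee\bigr)$ comes from the finite layers via \cite[Theorem 5.2]{BurnsSakamotoSano2}, Proposition \ref{prop:fitting_tensor} and closedness of ideals, and the reverse inclusion is exactly the containment in Theorem \ref{th:iwasawa:kolyvagin_theta_fitting}. In Notation \ref{not:pseudo_equal} the symbol $\subset_{f.i.}$ already denotes an actual inclusion together with pseudo-equality, so that theorem gives $\Fitt^i_\Lambda\subset\bTheta_i(\kappa)$ directly, and your detour through Corollary \ref{cor:theta_fitting_selfinj} and the inverse limit is unnecessary.
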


\begin{proof}
Fix $i\in \Z_{\geq 0}$. Theorem \ref{th:iwasawa:kolyvagin_theta_fitting} implies that
\[\Fitt^i_\Lambda\Bigl(\bH^1_{\FF^*}(K,\bT^*)^\vee\Bigr)\subset \bTheta_i(\kappa).\]
Conversely, for every $k\in \N$, let $\kappa^{(k)}\in \bKS(\bT_k,\FF)$ the image of $\kappa$ under the projection map
\[\bKS(\bT,\FF)\cong\overline{\KS}(\bT,\FF)=\biggl(\varprojlim_{j\in \N}\KS(\bT_j,\FF)\biggr)\to\KS(\bT_k,\FF).\]
By Corollary \ref{cor:theta_comparison} and \cite[Theorem 5.2]{BurnsSakamotoSano2},
\[\bTheta_i\Bigl(\kappa^{(k)}\Bigr)=\Theta_i\Bigl(\kappa^{(k)}\Bigr)\subset \Fitt^i_{\Lambda_k} \Bigl(\bH^1_{\FF^*}(K,(\bT_k)^*)^\vee\Bigr).\]
By Propositions \ref{prop:patched:selmer_torsion} and \ref{prop:fitting_tensor},
\[\Fitt^i_{\Lambda_k} \Bigl(\bH^1_{\FF^*}(K,(\bT_k)^*)^\vee\Bigr)=\Fitt^i_\Lambda\Bigl(\bH^1_{\FF^*}(K,\bT^*)^\vee\Bigr)+(p^k,X^k),\]
where we are identifying ideals of $\Lambda_k$ with ideals of $\Lambda$ that contain $(p^k,X^k)$.
Corollary \ref{cor:dvr:theta_proj_finite} implies that 
\[\bTheta_i(\kappa)+(p^k,X^k)\subset \bTheta_i(\kappa^{(k)}).\]
Taking all of the above into account, we can conclude that 
\[\bTheta_i(\kappa)+(p^k,X^k)\subset \Fitt^i_\Lambda\Bigl(\bH^1_{\FF^*}(K,\bT^*)^\vee\Bigr)+(p^k,X^k).\]
Since that holds for all $k\in \N$, we obtain that 
\[\bTheta_i(\kappa)\subset \Fitt^i_\Lambda\Bigl(\bH^1_{\FF^*}(K,\bT^*)^\vee\Bigr).\qedhere\]
\end{proof}

The following is a generalisation of \cite[Proposition 2.9]{Kur2014}, which assumed the vanishing of the Iwasawa $\mu$-invariant associated with the Selmer group.

\begin{theorem}
Let $\FF$ be a Selmer structure satisfying Assumption \ref{ass:classic}. Then $\bH^1_{\FF^*}(K,\bT^*)^\vee$ contains no finite Iwasawa submodules.
\label{th:iwasawa:nofinsub}
\end{theorem}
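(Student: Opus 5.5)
The plan is to combine the exact equality of Theorem \ref{th:iwasawa:ultrakol:theta_fitting_equal} at $i=0$ with Proposition \ref{prop:fitting_ppal_finsub}. Put $M:=\bH^1_{\FF^*}(K,\bT^*)^\vee$; it is finitely generated by Corollary \ref{cor:fingen_dual}. The argument splits according to whether $M$ is torsion.

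\emph{Torsion case.} Here $\chi(\FF)=\rank_\Lambda \bH^1_{\FF}(K,\bT)\geq 1$. I expect the trivial vertex $1\in\NN$ to be a core vertex. Then $\bTheta_0(\kappa)=\ind(\kappa_1)$, and $\kappa_1$ lies in $\bigwedge^{\chi(\FF)}\bH^1_{\FF}(K,\bT)$, which is free of rank one. Indeed, $\bH^1_{\FF}(K,\bT)$ is free by Proposition \ref{prop:iwasawa:selmer_free} and has rank $\chi(\FF)$ because $M$ is torsion. Hence $\ind(\kappa_1)$ is principal: any element of a rank one free module has principal index. Theorem \ref{th:iwasawa:ultrakol:theta_fitting_equal} then gives that $\Fitt^0_\Lambda(M)$ is principal, and Proposition \ref{prop:fitting_ppal_finsub} shows that $M$ has no finite submodule.

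\emph{Non-torsion case.} Now $\Fitt^0_\Lambda(M)=0$, so this route gives no information and a reduction is needed. Let $r=\rank_\Lambda M$. I would apply Proposition \ref{cor:patched:cheb_rank_red} repeatedly to obtain $\kn\in\NN_r$ with $\bH^1_{(\FF^*)(\kn)}(K,\bT^*)^\vee$ torsion. The modified structure $\FF(\kn)$ still satisfies Assumption \ref{ass:classic}: its local conditions at $\Sigma_\FF$ are unchanged, and the transverse conditions are free by Proposition \ref{prop:patched_transverse}, so Proposition \ref{prop:iwasawa:free_cartesian_propagated} applies. By Proposition \ref{prop:patched:rank_modified} it also has the same core rank. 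The torsion case then shows that $M_\kn:=\bH^1_{(\FF^*)(\kn)}(K,\bT^*)^\vee$ has no finite submodules.

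To return to $M$, the step I expect to be the main obstacle, I would pass through the intermediate structure $\FF^\kn$ with $(\FF^\kn)^*=(\FF^*)_\kn$. Global duality (Proposition \ref{prop:patched:global_duality}) provides surjections $M_\kn\leftarrow\bH^1_{(\FF^*)^\kn(\kn)}(K,\bT^*)^\vee$ and $M\leftarrow \bH^1_{(\FF^*)^\kn}(K,\bT^*)^\vee$. Surjections do not preserve the absence of finite submodules, so instead I would use the dual sequence: $M_\kn^\vee$ and $M^\vee$ both contain $\bH^1_{(\FF^*)_\kn}(K,\bT^*)$, with cokernels embedding into the torsion-free modules $\bigoplus_{\ku\mid\kn}\bH^1_{\tr}$ and $\bigoplus_{\ku\mid\kn}\bH^1_\f$ respectively.

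Before relying on that, I would try the cleaner characterisation $\mathrm{pd}_\Lambda M\le 1$, equivalently $\Ext^2_\Lambda(M,\Lambda)=0$. Suppose $\kn$ is chosen as in Corollary \ref{cor:patched:dual_res_vanish_417}, so that the localisations at the $\ku\mid\kn$ are surjective onto the free rank one modules. Then there should be an exact sequence $0\to \Lambda^{r}\to M\to M'\to 0$ with $M'$ a torsion module of the same type and no finite submodules. If that holds, $\Ext^2(M,\Lambda)$ sits between $\Ext^2(M',\Lambda)=0$ and $\Ext^2(\Lambda^r,\Lambda)=0$, so it vanishes.

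As a fallback, I would apply the equality of Theorem \ref{th:iwasawa:ultrakol:theta_fitting_equal} at $i=r$, using that $\Fitt^r_\Lambda(M)$ is essentially $\Fitt^0$ of the torsion part. This needs care with the pseudo-null corrections.
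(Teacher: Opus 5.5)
\textbf{The gap: core rank at most zero.} When $\chi(\FF)\ge 1$, your torsion case is exactly the paper's argument: $\bTheta_0(\kappa)=\ind(\kappa_1)$ is principal because $\bigwedge^{\chi(\FF)}\bH^1_{\FF}(K,\bT)\cong\Lambda$, and Proposition~\ref{prop:fitting_ppal_finsub} finishes. Your aside that $1$ is a core vertex is unjustified, since torsionness of $M$ does not give $\bH^1_{\FF^*}(K,\bT^*)=0$, but you never use it.

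The genuine gap is the claim that $\chi(\FF)=\rank_\Lambda\bH^1_{\FF}(K,\bT)\geq 1$ in the torsion case. Assumption~\ref{ass:classic} puts no condition on the core rank, so you only know $\chi(\FF)\ge 0$. When $\chi(\FF)=0$ there are no Kolyvagin systems, and Theorem~\ref{th:iwasawa:ultrakol:theta_fitting_equal} (which needs $\chi(\FF)\ge 1$) is unavailable. This subcase needs its own, easy, argument:
\begin{itemize}
\item $\bH^1_{\FF}(K,\bT)$ is free of rank $0$, hence zero.
\item For a weak core vertex $\kc$ (Proposition~\ref{prop:weak_core_vertex_existence}), global duality then gives $0\to\bH^1_{\FF^\kc}(K,\bT)\to\bigoplus_{\ku\mid\kc}\bH^1_\s(K_\ku,\bT)\to M\to 0$.
\item Both left-hand terms are free, so $\mathrm{pd}_\Lambda M\le 1$.
\end{itemize}

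The same issue breaks your non-torsion reduction, because transverse modification preserves the core rank (Proposition~\ref{prop:patched:rank_modified}).
\begin{itemize}
\item If $\chi(\FF)<0$, the rank of $\bH^1_{(\FF^*)(\kn)}(K,\bT^*)^\vee$ is at least $-\chi(\FF)>0$ for every $\kn$, so it can never be made torsion.
\item If $\chi(\FF)=0$, you land in the unhandled subcase above.
\item Proposition~\ref{cor:patched:cheb_rank_red} also requires $\bH^1_{\FF(\kn')}(K,\bT)\neq 0$ at each step, which can fail when $\chi(\FF)<0$.
\end{itemize}

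\textbf{How the paper avoids this.} It restricts at the new primes instead of imposing transverse conditions. Proposition~\ref{prop:patched:cheb_rank_red} on its own uses only a class in the dual Selmer group. It yields $\kn\in\NN_r$ with $M':=\bH^1_{(\FF^*)_\kn}(K,\bT^*)^\vee$ torsion. The torsion case is then applied to $\FF^\kn$, which still satisfies Assumption~\ref{ass:classic} since the new local conditions are the full local cohomology. Its core rank $\chi(\FF)+r=\rank_\Lambda\bH^1_{\FF^\kn}(K,\bT)$ is automatically $\ge 0$, whatever $\chi(\FF)$ is.

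The step you single out as the main obstacle then becomes a rank count, with no surjectivity of localisations needed. In the global duality sequence $\bigoplus_{\ku\mid\kn}\bH^1_\s(K_\ku,\bT)\to M\to M'\to 0$, the source is $\Lambda^r$ and the image has rank $r$. So the first map is injective and $0\to\Lambda^r\to M\to M'\to 0$ is exact. A finite submodule of $M$ meets $\Lambda^r$ trivially, so it embeds in $M'$, which has none.

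Your $\Ext^2$ sketch uses this same sequence. However, Corollary~\ref{cor:patched:dual_res_vanish_417} is the wrong input there: it produces a weak core vertex, typically with more than $r$ primes, so the map from $\Lambda^{\nu(\kn)}$ is not injective.

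For $\chi(\FF)\ge 1$ your transverse route can be completed. Choose each new prime, as in the proof of Proposition~\ref{cor:patched:cheb_rank_red}, so that $\loc_\ku$ is surjective on $\bH^1_{\FF(\kn')}(K,\bT)$. Then iterating Lemma~\ref{lem:patched:tr_res} identifies $\bH^1_{(\FF^*)(\kn)}(K,\bT^*)$ with $\bH^1_{(\FF^*)_\kn}(K,\bT^*)$. The restricted route is shorter and covers every core rank, and the $\Fitt^r$ fallback is unnecessary.
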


\begin{proof}
We start assuming that $\bH^1_{\FF^*}(K,\bT^*)^\vee$ is torsion, what implies that $\chi(\FF)\geq 0$. 

If $\chi(\FF)\geq1$, pick a primitive Kolyvagin system $\kappa\in \bKS(\bT,\FF)$. By Theorem \ref{th:iwasawa:ultrakol:theta_fitting_equal},
\[\Fitt^0_\Lambda\Bigl(\bH^1_{\FF^*}(K,T^*)\Bigr)^\vee=\bTheta_0(\kappa)=\ind(\delta_1).\]
By Proposition \ref{prop:iwasawa:selmer_free}, $\bH^1_{\FF}(K,\bT)$ is a free module of rank $\chi(\FF)$. Then
\[\bigwedge^{\chi(\FF)}\bH^1_{\FF}(K,\bT)\cong \Lambda.\]
Therefore,
\[\ind\left(\delta_1,\bigwedge^{\chi(\FF)}\bH^1_{\FF}(K,\bT)\right)\]
is a principal ideal, which, by Proposition \ref{prop:fitting_ppal_finsub}, implies that $\bH^1_{\FF^*}(K,\bT^*)^\vee$ contains no finite submodules.

When $\chi(\FF)=0$, we can use Proposition \ref{prop:weak_core_vertex_existence} to ensure the existence of a weak core vertex $\kc\in \NN$. Since $\bH^1_{\FF^*}(K,\bT^*)^\vee$ is torsion, the assumption $\chi(\FF)=0$ and Proposition \ref{prop:iwasawa:selmer_free} implies that $\bH^1_{\FF}(K,\bT)=0$.
Then Proposition \ref{prop:patched:global_duality} induces an exact sequence 
\[\xymatrix{0\ar[r] & \bH^1_{\FF^\kc}(K,\bT)\ar[r] & \displaystyle\bigoplus_{\ku\mid \kc}\bH^1_{\s}(K_\ku,T)\ar[r] & \bH^1_{\FF^*}(K,\bT^*)^\vee\ar[r] &0.}\]
Hence $\bH^1_{\FF^*}(K,\bT^*)^\vee$ has projective dimension one, so it contains no finite submodules due to \cite[Proposition 5.3.19]{NSW}.

Now assume that $\bH^1_{\FF^*}(K,\bT^*)^\vee$ has positive rank $r$. By Proposition \ref{prop:patched:cheb_rank_red}, we can find some $\kn\in \NN$, divisible by exactly $r$ Kolyvagin ultraprimes, satisfying that the Selmer group $\bH^1_{(\FF^*)_\kn}(K,\bT^*)^\vee$ is torsion. By Proposition \ref{prop:patched:global_duality}, there is an exact sequence
\[\xymatrix{\displaystyle \bigoplus_{\ku\mid \kn} \bH^1_{\s}(K,\bT)\ar^{\varphi}[r] & \bH^1_{\FF^*}(K,\bT^*)^\vee\ar[r] & \bH^1_{(\FF^*)_\kn}(K,\bT^*)^\vee \ar[r] &0.}\]
Since $\kn\in\NN$, the first term of the exact sequence is isomorphic to $\Lambda^r$. Then the first part of this proof implies that $\bH^1_{(\FF^*)_\kn}(K,\bT^*)^\vee$. The image of the map $\varphi$ has rank $r$, so it contains no torsion elements, which implies that $\bH^1_{\FF^*}(K,\bT^*)^\vee$ contains no finite sumbodules.
\end{proof}

\section{Applications}
\label{sec:app}

\subsection{Elliptic curves}
\label{sec:app:ec}

Let $E/\Q$ be an elliptic curve defined over the rationals. Assume that $\bT$ can be obtained as the tensor product $T_p E\otimes \Z_p[[\Gal(\Q_\infty/\Q)]]$, where $\Q_\infty/\Q$ is the cyclotomic $\Z_p$-extension.

For this representation, Kato constructed in \cite{Kato} an Euler system that, in combination with \cite[Theorem 5.3.3]{MazurRubin}, produces a Kolyvagin system $\overline \kappa_{\Kato}\in \overline{\KS}(\bT,\FF^p)$, where $\FF^p$ denotes the Selmer structure relaxed at $p$, in which the local conditions are the finite cohomology groups for finite primes not above $p$ and the full local cohomology groups for $p$-adic and archimedean primes (see \cite[Definition 5.3.2]{MazurRubin}). Note that Proposition \ref{prop:core_rank} implies that $\chi(\FF^p)=1$.

By the isomorphism in Theorem \ref{th:ultrakol_limit}, we can see it as an ultra Kolyvagin system $\kappa_{\Kato}\in \bKS(\bT,\FF^p)$. Then Theorem \ref{th:iwasawa:kolyvagin_theta_fitting} can be used to compute the structure of the fine Selmer group.
\begin{theorem}
Let $E/\Q$ be an elliptic curve defined over the rationals, let $\bT=T_pE\otimes \Z_p[[\Gal(\Q_\infty/\Q)]]$ and let $\kappa_{\Kato}\in\bKS(\bT,\FF^p)$ be the ultra Kolyvagin system obtained from Kato's Euler system. Assume that:
\begin{itemize}
\item \namedlabel{Eord}{(E1)} E has good ordinary reduction at $p\geq 5$.
\item \namedlabel{Esur}{(E2)} $G_K$ acts surjectively on $T_pE$. 
\item \namedlabel{Etam}{(E3)} The Tamagawa numbers of $E$ are prime to $p$.
\item \namedlabel{Eanom}{(E4)} $E(\Q_{\infty,\p})[p^\infty]$ is a $p$-divisible group, where $\p$ is the unique prime of $\Q_\infty$ above $p$.
\item \namedlabel{EIMC}{(E5)} The Iwasawa main conjecture holds for $E/\Q$.
\end{itemize} 
Then 
\[\bTheta_i(\kappa_{\Kato})=\Fitt^i_\Lambda\Bigl(\bH^1_{\FF^*}(K,\bT^*)^\vee\Bigr)\ \forall i\in \Z_{\geq 0}.\]
\label{th:kato}
\end{theorem}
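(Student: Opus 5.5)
The plan is to deduce the statement from Theorem \ref{th:iwasawa:ultrakol:theta_fitting_equal} applied to $\FF=\FF^p$ on $\bT=T_pE\otimes\Z_p[[\Gal(\Q_\infty/\Q)]]$. Two inputs are needed. First, the hypotheses of Assumption \ref{ass:classic} must hold. Second, $\kappa_{\Kato}$ must be a \emph{primitive} ultra Kolyvagin system, since Theorem \ref{th:iwasawa:ultrakol:theta_fitting_equal} only applies to generators of $\bKS(\bT,\FF^p)$. The core rank is already $\chi(\FF^p)=1$ by Proposition \ref{prop:core_rank}, so Assumption \ref{ass:core_rank_positve} holds once $p\geq 5$, which is \ref{Eord}.

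For Assumption \ref{ass:patched:basic}, I would use \ref{Esur}. Surjectivity onto $\GL_2(\Z_p)$ gives irreducibility of $E[p]$, which is \ref{pTirred}. It also lets one pick $\tau\in G_{\Q(\mu_{p^\infty})}$ acting unipotently on $T_pE$; since $\tau$ fixes $\Q_\infty\subset\Q(\mu_{p^\infty})$, it acts trivially on the $\Lambda$-factor, so $\bT/(\tau-1)\bT\cong\Lambda$, which is \ref{pTtau}. The vanishing \ref{pTcoh} reduces, as in \cite{MazurRubin}, to $H^1(\GL_2(\Z_p),E[p])=0$ for $p\geq5$.

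For Assumption \ref{ass:iwasawa:local_no_fin_sub}, I would treat the primes of $\Sigma_{\FF^p}$ separately. For primes $\ell\neq p$ I would apply Remark \ref{rem:iwasawa:local_nofinsub_divisible}: no finite prime splits completely in $\Q_\infty$, and \ref{Etam} gives the required divisibility. At $p$, the local group is $\bH^0(\Q_p,\bT^*)=E(\Q_{\infty,\p})[p^\infty]$, which is divisible by \ref{Eanom}, so its dual is $\Z_p$-free and has no finite submodule. The archimedean place is harmless for $p$ odd.

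For the cartesian conditions I would use Proposition \ref{prop:iwasawa:free_cartesian_propagated}: it suffices that $\bH^1_{/\FF^p}(K_\ku,\bT)$ is $\Lambda$-free for $\ku\in\Sigma_{\FF^p}$. At $p$ the condition is relaxed, so the quotient is $0$. At $\ell\neq p$ the quotient is $\bH^1_\s$, which embeds in $\bT(-1)^{\Frob}$-type invariants; this is free by the same Tamagawa argument, or I would check it directly via local duality with the unramified part.

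The main obstacle is primitivity. My approach is to compare the index of $\kappa_{\Kato,1}$ with the Fitting ideal. By Theorem \ref{th:iwasawa:ultrakol:theta_fitting_equal}, applied to a generator $\kappa$, we have $\ind(\kappa_1)=\Fitt^0_\Lambda(\bH^1_{(\FF^p)^*}(\Q,\bT^*)^\vee)$. Since $\kappa_{\Kato}=\lambda\kappa$ for some $\lambda\in\Lambda$, it follows that $\ind(\kappa_{\Kato,1})=\lambda\,\Fitt^0$. Kato's zeta element generates an index equal to a characteristic ideal, and \ref{EIMC}, in Kato's formulation $\mathrm{char}(\bH^1(\Q,\bT)/\Lambda z_{\Kato})=\mathrm{char}(\text{fine Selmer}^\vee)$, says this index equals $\Fitt^0$, which is principal by Theorem \ref{th:iwasawa:nofinsub}. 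That forces $\lambda$ to be a unit, provided $\Fitt^0\neq0$; the latter follows from Kato's theorem that the fine Selmer dual is torsion. One also needs that the identification of Theorem \ref{th:ultrakol_limit}, together with Theorem \ref{th:ultrakol_theta_limit}, transports the level-one index correctly. This holds because $\kappa_{\Kato,1}$ is the image of Kato's class in $\bH^1_{\FF^p}(\Q,\bT)=H^1(\Q,\bT)$, so its index is computed in $\Lambda$ directly. Once primitivity is established, Theorem \ref{th:iwasawa:ultrakol:theta_fitting_equal} gives the equality for all $i$.
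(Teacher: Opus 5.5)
Your proposal is correct and follows the paper's route. You check Assumption~\ref{ass:classic} (big image from (E2), (Tam) via Remark~\ref{rem:iwasawa:local_nofinsub_divisible} and (E4), cartesianness via Proposition~\ref{prop:iwasawa:free_cartesian_propagated}) together with the primitivity of $\kappa_{\Kato}$, and then apply Theorem~\ref{th:iwasawa:ultrakol:theta_fitting_equal}. Your argument through $\bTheta_0$ and Theorem~\ref{th:iwasawa:nofinsub} just spells out the paper's one-line claim that primitivity is equivalent to (E5).

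The one step to tidy is the cartesian check at $\ell\neq p$. Your appeal to a Tamagawa-type argument for freeness there is not the right justification. The quotient $\bH^1_{/\FF^p}(\Q_\ell,\bT)$ is in fact zero, because \cite[Lemma 5.3.1]{MazurRubin} gives $\bH^1_{\FF^p}(\Q_\ell,\bT)=\bH^1(\Q_\ell,\bT)$. The reason is that Frobenius at $\ell$ acts on the $\Lambda$-factor through a non-trivial element of $\Gal(\Q_\infty/\Q)$, which kills the singular quotient. This is what the paper uses, and it needs no Tamagawa-type input.
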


\begin{proof}
The surjectivity assumption implies that $\bT$ satisfies Assumption \ref{ass:patched:basic}. In addition, the second and third assumption implies Assumption \ref{ass:iwasawa:local_no_fin_sub} (see Remark \ref{rem:iwasawa:local_nofinsub_divisible}). 

Note that $\Sigma_{\FF^p}$ contains only $p$, $\infty$, and the primes dividing the conductor of $E$. When $\ell\in \Sigma_{\FF^*}$, \cite[Lemma 5.3.1]{MazurRubin} implies that
\[\bH^1_{\FF^p}(\Q_\ell,\bT)=\bH^1(\Q_\ell,\bT),\]
so $\FF^p$ is cartesian. The propagation of $\FF$ to $\bT/(X)$ is also cartesian by Proposition \ref{prop:iwasawa:free_cartesian_propagated}.

Then the result follows from Theorem \ref{th:iwasawa:ultrakol:theta_fitting_equal}, since the primitivity of Kato's Kolyvagin system is equivalent to the Iwasawa main conjecture.
\end{proof}

\begin{remark}
The case in Theorem \ref{th:kato} was not covered in \cite[Theorem 10.14]{Ohshita} since the Tate module of an elliptic curve is its own Cartier dual.
\end{remark}

\subsection{Rankin-Selberg convolutions}
\label{sec:app:rs}

Let $f$ and $g$ modular forms of weights $k$ and $\ell$, respectively, and levels $N_f$ and $N_g$, respectively. Denote by $\varepsilon_{f}$ and $\varepsilon_g$ the Nebentypus characters of $f$ and $g$, respectively. Consider the Galois representation associated with its Rankin-Selberg convolution,
\[T:=T_f\otimes T_g,\]
where $T_f$ and $T_g$ are the Galois representations associated with $f$ and $g$. As usual, denote
\[\bT:=T\otimes \Z_p[[\Gal(\Q_\infty/\Q)]].\]

In \cite{LeiLoefflerZerbes}, Kato's construction was generalised by Lei, Loeffler and Zerbes to build an Euler system for the representation $T$, that will be denoted by $\bz_{\LLZ}$.

\begin{remark}
In this case, we cannot obtain a Kolyvagin system in $\bKS(\bT, \FF^p)$ using \cite[Theorem 5.3.3]{MazurRubin} and Theorem \ref{th:ultrakol_limit} since $\chi(\FF^p)=2$. Note that the fact $\chi(\FF^p)=2$ can be seen from Proposition \ref{prop:core_rank}, since $d_-(T)=2$ because both $T_f$ and $T_g$ are odd representations and all the other terms in the formula vanish. It is important to remark that, since
\[H^0(\Q_\ell,\bT^*)=H^0((\Q_\ell)_\infty,T^*),\]
is a torsion Iwasawa module for every finite prime $\ell$, where $(\Q_\ell)_\infty$ is the cyclotomic $\Z_p$-extension of $\Q_\ell$, only $p$ and $\infty$ contribute to the formula in \eqref{eq:core_rank}.

\end{remark}

However, it is possible, under the following assumption, to use $\bz_{\LLZ}$ to obtain a Kolyvagin system for a different Selmer structure. In order to do that, we need to make the following assumption on $f$ and $g$.
\begin{namedass}{(RS)}
Assume that
\begin{itemize}
\item \namedlabel{RSord}{(RS1)} Both $f$ and $g$ are ordinary at $p\geq 5$. In addition, assume the weights of $f$ and $g$ are at least $2$.
\item \namedlabel{RSBI}{(RS2)} The action of $G_\Q$ on $T_f\otimes T_g$ satisfies Assumption \ref{ass:patched:basic}.
\item \namedlabel{RSTam}{(RS3)} The cohomology groups $H^0((K_\infty)_\ell,\bT^*)$ are $p$-divisible when $\ell$ is either the prime $p$ or a prime divisor of $N_fN_g$.
\end{itemize}
    \label{ass:RS}
\end{namedass}

\begin{remark}
By \cite[Proposition 4.3.1]{Loeffler_BI}, Assumption \ref{RSBI} will never hold if $\epsilon_f\epsilon_g=1$. However, it is shown in \cite{KingsLoefflerZerbes} that Assumption \ref{RSBI} holds for all but finitely many primes in the case where $k,\ell\geq 2$, at least one of them is odd, $(N_f,N_g)=1$ and neither $f$ nor $g$ is of CM-type. The reader is referred to \cite{Loeffler_BI} and \cite{Studnia} for more involved situations in which Assumption \ref{RSBI} is guaranteed to hold.
\end{remark}

Since $f$ (resp. $g$) is ordinary, $T_f$ (resp. $T_g$) contains a subrepresentation $T_f^+$ (resp. $T_g^+$) of weight $k-1$ (resp. $\ell-1$). Then the quotients $T_f/T_f^+$ and $T_g/T_g^+$ have Hodge-Tate weight $0$.

Under this assumption, we can consider the Panchiskin subrepresentation
\[T^+:=T_f^+\otimes T_g+T_f\otimes T_g^+.\]
Note that $T^+$ has Hodge-Tate weights $\{k-1,\ell-1,k+\ell-2\}$ while $T/T^+$ has Hodge-Tate $0$. We can consider the Iwasawa analogue
\[\bT^+:=T^+\otimes \Z_p[[\Gal(\Q_\infty/\Q)]]\subset \bT.\]
Moreover, denote the quotients
\[\begin{array}{cc}
    \bT^{-}:=\bT/\bT^+,\ \ & T^-:=T/T^+,
\end{array}\]
which are free Iwasawa and p-adic modules, respectively, of rank one.

We can define the following Selmer structure of Greenberg-type, which was first defined in \cite{Greenberg_reps}.
\begin{definition}
The Greenberg Selmer structure $\FGR$ is defined by the local conditions\footnote{There is no need to define a local condition at the archimedean prime, since we are assuming that $p$ is odd.}
\[\bH^1_{\FGR}(\Q_\ku,\bT):=\left\{
    \begin{aligned}
    &\bH^1_{\FGR}(\Q_\ku,\bT)=\bH^1_\f(\Q_\ku,\bT),\ \textrm{if $\ku\neq p,$}\\
    &\bH^1_{\FGR}(\Q_p,\bT)=\ker\biggl(\bH^1(\Q_p,\bT)\to \bH^1(\II_p,\bT^-)\biggr).
    \end{aligned}\right.\]
\end{definition}

In order to compute the local condition at $p$, we make the following further assumptions.
\begin{namedass}{(RS)}
    Assume that
\begin{itemize}
\item \namedlabel{RSanom}{(RS4)} $H^0(\Q_p,\bT^-\otimes \Lambda/\m)=0$,
\item \namedlabel{RSquot}{(RS5)} $T^+$ does not admit any non-zero quotient in which $G_{\Q_p}$ acts by the cyclotomic character.
\end{itemize}
\end{namedass}

\begin{remark}
Under Assumption \ref{RSord}, the Weil bound implies that Assumption \ref{RSquot} will hold provided that at least one of the weights is at least $3$.
\end{remark}

\begin{proposition}
    Under Assumption \ref{RSanom},
    \[\bH^1_\FGR(\Q_p,\bT)=\Im\biggl(\bH^1(\Q_p,\bT^+)\to \bH^1(\Q_p,\bT)\biggr).\]
    \label{prop:greenberg_image}
\end{proposition}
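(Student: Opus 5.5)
We need to show that under (RS4), the Greenberg local condition at $p$ equals the image of $\bH^1(\Q_p,\bT^+)\to\bH^1(\Q_p,\bT)$. The plan is to compare the two descriptions through the long exact sequence attached to
\[0\to \bT^+\to \bT\to \bT^-\to 0,\]
and to show that the kernel of $\bH^1(\Q_p,\bT^-)\to \bH^1(\II_p,\bT^-)$ vanishes.

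First, the constant ultraprime $p$ has local patched cohomology equal to ordinary Galois cohomology (Proposition \ref{prop:local_patched_galois_group}, Lemma \ref{lem:ultrafilter_finite_diagonal}). Proposition \ref{prop:patched:long_profinite} then gives the exact sequence
\[\bH^1(\Q_p,\bT^+)\to\bH^1(\Q_p,\bT)\xrightarrow{\pi}\bH^1(\Q_p,\bT^-).\]
So the image in question is $\ker\pi$. By definition, $\bH^1_{\FGR}(\Q_p,\bT)$ is the kernel of the composite
\[\bH^1(\Q_p,\bT)\xrightarrow{\pi}\bH^1(\Q_p,\bT^-)\xrightarrow{\res}\bH^1(\II_p,\bT^-).\]
This immediately gives $\ker\pi\subseteq \bH^1_{\FGR}(\Q_p,\bT)$. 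For the reverse inclusion it suffices to prove that $\res$ is injective on $\bH^1(\Q_p,\bT^-)$.

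By inflation–restriction, $\ker(\res)=H^1(G_{\Q_p}/\II_p,(\bT^-)^{\II_p})$. The representation $T^-=T_f/T_f^+\otimes T_g/T_g^+$ is unramified, since both factors have Hodge–Tate weight $0$ and are unramified quotients of ordinary representations. Hence $\bT^-$ is unramified as well: the cyclotomic $\Z_p$-extension is ramified at $p$, but only totally ramified, so the inertia action on $\Z_p[[\Gamma]]$ is through the tautological character. I should be careful here; the relevant fact is that $(\bT^-)^{\II_p}=0$, or at least that the unramified cohomology vanishes.

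I will argue as follows. Since $\Gamma=\Gal(\Q_\infty/\Q)$ is totally ramified at $p$, inertia surjects onto $\Gamma$, and $\II_p$ acts on $\bT^-\cong\Lambda\otimes T^-$ by the universal character times an unramified one. An element fixed by $\II_p$ would be killed by $\gamma-1$ for a topological generator $\gamma$ of $\Gamma$, i.e.\ by $X$. But $\Lambda$ is $X$-torsion free, so $(\bT^-)^{\II_p}=0$, and hence $\ker(\res)=0$.

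If that direct argument is insufficient (for instance if the twist conventions differ), I would instead use (RS4) as follows. The kernel of $\res$ is $(\bT^-)^{\II_p}/(\Frob-1)$, a finitely generated $\Lambda$-module $M$. By Nakayama's lemma, $M=0$ as soon as $M/\m M=0$, and $M/\m M$ is a quotient of $H^1$ of $\widehat{\Z}$ acting on the reduction. For the pro-cyclic group $\widehat{\Z}$, $H^1$ of a finite module has the same order as $H^0$, and $H^0$ embeds in $H^0(\Q_p,\bT^-\otimes\Lambda/\m)=0$ by (RS4). This duality between $H^0$ and $H^1$ for $\widehat{\Z}$, together with the limit argument, is the step I expect to be the main obstacle.
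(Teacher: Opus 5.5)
Your proof is correct, and its skeleton matches the paper's: identify the image with $\ker\bigl(\bH^1(\Q_p,\bT)\to\bH^1(\Q_p,\bT^-)\bigr)$, then use inflation--restriction to reduce to the vanishing of $\bH^1\bigl(\Q_p^{\ur}/\Q_p,\bH^0(\II_p,\bT^-)\bigr)$. The difference is in the last step.

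\textbf{How the paper concludes.} The paper splits into two cases. If $\bT^-$ is ramified, it notes that $\bH^0(\II_p,\bT^-)=0$, which is your argument. If $\bT^-$ is unramified, it uses \ref{RSanom} to show that $\bT^-/(\gamma-1)\bT^-$ is torsion and also torsion-free, hence zero.

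\textbf{How you conclude.} You observe that the second case never occurs. Since $\Q_\infty/\Q$ is totally ramified at $p$, inertia surjects onto $\Gamma$. It therefore acts on the rank-one free module $\bT^-$ through a character whose value at a lift of a topological generator is $[\gamma]^{\pm1}$. Since $[\gamma]^{\pm1}-1\neq 0$ in the domain $\Lambda$, the invariants vanish.

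\textbf{What your route buys.} It is shorter, and it shows that \ref{RSanom} is not actually needed for this proposition. The paper still uses \ref{RSanom} later, for the torsion-freeness of $\bH^1(\Q_p,\bT^-)$ in the proof of Theorem \ref{th:llz}. Your argument also does not need $T^-$ to be unramified. If inertia acted on $T^-$ through a nontrivial character $\psi$, then $\psi(\sigma)[\bar\sigma]^{\pm1}-1$ would still be nonzero for any $\sigma\in\II_p$ with nontrivial image $\bar\sigma\in\Gamma$.

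\textbf{Your fallback.} The Nakayama argument covers the paper's unramified case more cleanly. There $M/\m M=(\bT^-/\m)/(\Frob-1)$ has the same order as the kernel of $\Frob-1$ on the finite module $\bT^-/\m$, and that kernel is zero by \ref{RSanom}. The step you flag as the main obstacle is only the fact that an endomorphism of a finite module has kernel and cokernel of equal order.

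\textbf{One correction.} Delete the sentence asserting that $\bT^-$ is unramified. It contradicts what you prove next, and the ramification of $\bT^-$ through the tautological character is exactly what makes your argument work.
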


\begin{proof}
Since 
\[\Im\biggl(\bH^1(\Q_p,\bT^+)\to \bH^1(\Q_p,\bT)\biggr)=\ker\biggl(\bH^1(\Q_p,\bT)\to \bH^1(\Q_p,\bT^-)\biggr),\]
we are only left to show that the map
\[\bH^1(\Q_p,\bT^-)\to \bH^1(\II_p,\bT^-)\]
is injective. By the inflation-restriction sequence, that is equivalent to show that
\[\bH^1\Bigl(\Q_p^{\ur}/\Q_p,\bH^0(\II_p,\bT^-)\Bigr)=0,\]
where $\Q_p^\ur$ denotes the maximal unramified extension of $\Q_p$. If $\bT^-$ is ramified, then it necessarily holds that $\bH^0(\II_p,\bT^-)=0$. Otherwise, we have that $\bH^0(\II_p,\bT^-)=\bT^-$ and, by Assumption \ref{RSanom},
\[\bH^0(\Q_p^{\ur}/\Q_p,\bT^-)=0.\]
In the second case, note that there is an exact sequence
\[\xymatrix{0\ar[r] & \bH^0(\Q_p^\ur/\Q_p,\bT^-)\ar[r] & \bT^-\ar[r]^{\gamma-1} & \bT^-\ar[r] &\bH^1(\Q_p^\ur/\Q_p,\bT^-)\ar[r] & 0,}\]
where $\gamma$ denotes the Frobenius element in $\Gal(\Q_p^{\ur}/\Q_p)$. By the additivity of the rank, $\bH^1(\Q_p,\bT^-)$ is a torsion $\Lambda$-module. However, Assumption \ref{RSanom} is stronger than $\bH^0(\Q_p^\ur,\Q_p,\bT^-/f\bT^-)=0$, where $f$ is any element in $\Lambda$. Then we can conclude that
\[\bH^1(\Q_p^\ur/\Q_p,\bT^-)_\tors=0,\]
so this cohomology group necessarily vanishes.
\end{proof}

With the identification in Proposition \ref{prop:greenberg_image}, it is shown within the proof of \cite[Corollary B.1.5]{LeiLoefflerZerbes2} that the Greenberg local condition coincides with the Bloch-Kato condition.

\begin{proposition}(\cite[Corollary B.1.5]{LeiLoefflerZerbes2}) 
    If we assume \ref{RSord}, \ref{RSanom} and \ref{RSquot}, then 
    \[\bH^1_\FBK(\Q_p,\bT)=\bH^1_{\FGR}(\Q_p,\bT).\]
    \label{prop:BK_GR}
\end{proposition}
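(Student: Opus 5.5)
The plan is to reduce to finite layers of the cyclotomic tower and to rational coefficients, prove the equality there by a dimension count, and then descend to integral coefficients and take the inverse limit. Recall that, via Shapiro's lemma, $\bH^1(\Q_p,\bT)=\varprojlim_n H^1(\Q_{p,n},T)$, where $\Q_{p,n}$ is the $n$-th layer of the local cyclotomic $\Z_p$-extension. The Bloch--Kato condition $\bH^1_\FBK(\Q_p,\bT)$ is the inverse limit of the conditions $H^1_\f(\Q_{p,n},T)$. The latter are the preimages in $H^1(\Q_{p,n},T)$ of $H^1_\f(\Q_{p,n},V)$, with $V=T\otimes\Q_p$. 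By Proposition \ref{prop:greenberg_image}, $\bH^1_\FGR(\Q_p,\bT)$ is the kernel of $\bH^1(\Q_p,\bT)\to\bH^1(\Q_p,\bT^-)$. This kernel is also an inverse limit of the layerwise kernels of $H^1(\Q_{p,n},T)\to H^1(\Q_{p,n},T^-)$. It therefore suffices to prove, for each $n$, that
\[H^1_\f(\Q_{p,n},T)=\ker\Bigl(H^1(\Q_{p,n},T)\to H^1(\Q_{p,n},T^-)\Bigr),\]
and to check that these identifications are compatible with corestriction. Compatibility is automatic, since both sides are defined functorially.

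\emph{Rational step.} Fix $F=\Q_{p,n}$. I first show that $H^1_\f(F,V^+)=H^1(F,V^+)$. By \ref{RSord}, all Hodge--Tate weights of $V^+$ lie in $\{k-1,\ell-1,k+\ell-2\}$ and are positive, so $\mathrm{Fil}^0 D_{\mathrm{dR}}(V^+)=0$. The Bloch--Kato dimension formula then gives
\[\dim H^1_\f(F,V^+)=\dim H^0(F,V^+)+[F:\Q_p]\dim V^+.\]
On the other hand, the local Euler characteristic formula gives
\[\dim H^1(F,V^+)=\dim H^0(F,V^+)+[F:\Q_p]\dim V^++\dim H^2(F,V^+),\]
and $H^2(F,V^+)\cong H^0(F,(V^+)^*(1))^\vee$ vanishes by \ref{RSquot}. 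Here I must check that \ref{RSquot} excludes a cyclotomic quotient over every $\Q_{p,n}$ and not only over $\Q_p$. This holds because the cyclotomic $\Z_p$-extension changes a character only by a $\Z_p^\times$-valued character of finite order in $\Gal(\Q_{p,\infty}/\Q_p)$, and this can be absorbed when stating the assumption Iwasawa-theoretically. Next, $V^-$ has Hodge--Tate weight $0$, so $H^1_\f(F,V^-)=H^1_\ur(F,V^-)$. Combining these facts with the exactness of $H^1_\f$ on $0\to V^+\to V\to V^-\to 0$ (using $H^2(F,V^+)=0$), I get that $H^1_\f(F,V)$ is the preimage of $H^1_\ur(F,V^-)$. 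The last point is that $H^1_\ur(F,V^-)=0$. It is the $\gamma$-coinvariants of $(V^-)^{I}$, which vanish because \ref{RSanom} forces the Frobenius eigenvalue on $V^-$ (if $V^-$ is unramified) to be different from $1$ on every layer. Hence $H^1_\f(F,V)=\Im\bigl(H^1(F,V^+)\to H^1(F,V)\bigr)$.

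\emph{Integral step, which I expect to be the main obstacle.} I must show that the preimage in $H^1(F,T)$ of $\Im(H^1(F,V^+))$ equals $\ker\bigl(H^1(F,T)\to H^1(F,T^-)\bigr)$. The kernel is always contained in this preimage. For the reverse inclusion, a class in the preimage maps to a torsion class in $H^1(F,T^-)$, so it suffices that $H^1(F,T^-)_\tors\cong H^0(F,T^-\otimes\Q_p/\Z_p)/\mathrm{div}$ vanishes. This follows from \ref{RSanom}: $H^0(\Q_p,\bT^-\otimes\Lambda/\m)=0$ means $H^0(\Q_{p,n},T^-/p)=0$ for all $n$, hence $H^0(F,T^-\otimes\Q_p/\Z_p)=0$. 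This also confirms that the layerwise kernels have no spurious torsion, so the inverse limit argument goes through. The identification of the inverse limit of the kernels with the kernel for $\bT$ uses left exactness of $\varprojlim$ on compact modules.
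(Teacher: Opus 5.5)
Your proof is correct and takes a different, more self-contained route than the paper. The paper gives no argument of its own: it combines Proposition \ref{prop:greenberg_image} with an appeal to the proof of \cite[Corollary B.1.5]{LeiLoefflerZerbes2}. You instead prove the local comparison directly at each layer $F=\Q_{p,n}$, in three steps. The Bloch--Kato dimension formula forces $H^1_\f(F,V^+)=H^1(F,V^+)$, and \ref{RSanom} kills $H^1_\f(F,V^-)$. You then saturate integrally using $H^1(F,T^-)_{\tors}=0$. Finally you pass to the limit, taking $\bH^1_\FBK(\Q_p,\bT)=\varprojlim_n H^1_\f(\Q_{p,n},T)$ as the definition, which the paper leaves implicit. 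This buys transparency: it shows exactly where \ref{RSord}, \ref{RSanom} and \ref{RSquot} enter.

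You also do not need exactness of $H^1_\f$ on $0\to V^+\to V\to V^-\to 0$, which is not a general fact. Once $H^1_\f(F,V^+)=H^1(F,V^+)$ and $H^1_\f(F,V^-)=0$, functoriality alone gives
\[\Im\bigl(H^1(F,V^+)\to H^1(F,V)\bigr)\subseteq H^1_\f(F,V)\subseteq\ker\bigl(H^1(F,V)\to H^1(F,V^-)\bigr)=\Im\bigl(H^1(F,V^+)\to H^1(F,V)\bigr).\]

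One step is not justified as written: the passage from \ref{RSquot}, stated over $\Q_p$, to $H^2(F,V^+)=0$ at every layer. A $G_F$-equivariant surjection $V^+\to\Q_p(1)$ need not come from a one-dimensional $G_{\Q_p}$-quotient on which $G_{\Q_p}$ acts by the cyclotomic character times a finite-order character. What you actually have is the $\Q_p[\Gal(F/\Q_p)]$-module $W=\Hom_{G_F}(V^+,\Q_p(1))$. A priori this could be a nontrivial irreducible rational representation, whose characters are $\mu_{p^n}$-valued rather than $\Z_p^\times$-valued. Absorbing this into a restated hypothesis changes the assumption rather than proving anything.

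The claim is nevertheless true under the stated hypotheses. By Maschke's theorem $W$ is semisimple. Every nontrivial irreducible $\Q_p$-representation of the cyclic $p$-group $\Gal(F/\Q_p)$ has dimension $p^{j-1}(p-1)\geq p-1\geq 4$, while $\dim W\leq\dim V^+=3$. So $W$ is a trivial $\Gal(F/\Q_p)$-module, and $W=\Hom_{G_{\Q_p}}(V^+,\Q_p(1))=0$ by \ref{RSquot}; this is where $p\geq 5$ is used.

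Alternatively, read \ref{RSquot} as also excluding finite quotients $T^+\twoheadrightarrow\mu_{p^m}$, i.e.\ as $H^0(\Q_p,(T^+)^*)=0$. Then the pro-$p$ argument from the proof of Theorem \ref{th:llz} gives $H^0(\Q_{p,n},(T^+)^*)=0$ for all $n$, hence $H^2(\Q_{p,n},T^+)=0$. With this repair, the rest of your argument, including the integral step and the limit, goes through.
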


By \cite[Proposition 6.5.4]{LeiLoefflerZerbes}, the Euler system $\bz_{\LLZ}$ of Lei, Loeffler and Zerbes satisfies the following local conditions: for every number abelian extension $F/\Q$ and every prime $w$ of $F$,
\[\loc_w((\bz_{\LLZ})_F)\in H^1_\f(F_w,T).\]

By \cite[Corollary B.1.5]{LeiLoefflerZerbes2} and \cite[Proposition 12.2.3]{KingsLoefflerZerbes}, the Kolyvagin derivative produces a Kolyvagin system
\[\kappa_{\LLZ}\in \overline{\KS}(\bT,\FBK)=\bKS(\bT,\FBK),\]
where $\FBK$ represents the Bloch-Kato Selmer structure\footnote{This descent process is finer than the one described in \cite{MazurRubin}, since the Kolyvagin system obtained is associated with the Bloch-Kato Selmer structure instead of the canonical one.} and we are using the identification in Theorem \ref{th:ultrakol_limit}.

The following corollary is obtained as an application of Theorem \ref{th:iwasawa:kolyvagin_theta_fitting}.
\begin{theorem}
Under Assumptions \ref{RSord}-\ref{RSquot}, there exists $\delta\in\Lambda$, measuring the divisibility $\kappa_{\LLZ}$, defined independently of $i$ such that
\[\bTheta_i(\kappa_{\LLZ})=\delta\cdot\Fitt^i\Bigl(\bH^1_{(\FBK)^*}(\Q_\infty,\bT^*)^\vee\Bigr)\ \forall i\in \Z_{\geq 0}.\]
Moreover, $\delta$ can be taken equal to $1$ when the Iwasawa main conjecture holds.
    \label{th:llz}
\end{theorem}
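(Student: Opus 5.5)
The plan is to reduce Theorem \ref{th:llz} to the general machinery of \textsection\ref{sec:iwasawa:kol} and \textsection\ref{sec:classical}. First I check the hypotheses for $\bT=T\otimes\Z_p[[\Gal(\Q_\infty/\Q)]]$ and $\FF=\FBK$. Assumption \ref{ass:patched:basic} is \ref{RSBI}. Assumption \ref{ass:iwasawa:local_no_fin_sub} follows from \ref{RSTam} via Remark \ref{rem:iwasawa:local_nofinsub_divisible}, since divisibility of $\bH^0(K_\ell,\bT^*)$ rules out finite submodules of its dual.

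Next I would compute the core rank with Proposition \ref{prop:core_rank}. By Proposition \ref{prop:BK_GR} and Proposition \ref{prop:greenberg_image}, $\bH^1_{/\FBK}(\Q_p,\bT)$ injects into $\bH^1(\Q_p,\bT^-)$, which has $\Lambda$-rank one, while $d_-(T)=2$. This gives $\chi(\FBK)=1$.

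For Assumption \ref{ass:classic} I use Proposition \ref{prop:iwasawa:free_cartesian_propagated}, so I must show that each $\bH^1_{/\FBK}(K_\ku,\bT)$ is free. At primes $\ell\neq p$ the quotient is $\bH^1_\s$, which is torsion-free of rank zero, hence zero. At $p$ the quotient is identified with the image of $\bH^1(\Q_p,\bT)$ in $\bH^1(\Q_p,\bT^-)$. Here \ref{RSanom} gives $\bH^0(\Q_p,\bT^-/\m)=0$, and local Euler characteristic and duality arguments then make $\bH^1(\Q_p,\bT^-)$ free of rank one. The cokernel of $\bH^1(\Q_p,\bT)\to\bH^1(\Q_p,\bT^-)$ sits inside $\bH^2(\Q_p,\bT^+)$, which is dual to $\bH^0(\Q_p,(\bT^+)^*)$. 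That group should vanish or be divisible by \ref{RSquot}, so the image is free.

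I expect this freeness at $p$ to be the main obstacle, since it needs careful use of \ref{RSanom} and \ref{RSquot} together with local duality. If full freeness fails, I would fall back on Theorem \ref{th:iwasawa:kolyvagin_theta_fitting}, which gives only the finite-index version.

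With the hypotheses in place, Theorem \ref{th:ultrakol_limit} realizes $\kappa_{\LLZ}\in\bKS(\bT,\FBK)$. Corollary \ref{cor:iwasawa:kol_mod} says this module is free of rank one with some primitive generator $\kappa$, so $\kappa_{\LLZ}=\delta\kappa$ for some $\delta\in\Lambda$. Since the index is linear in scalars on free exterior powers, $\ind(\delta\kappa_\kn)=\delta\,\ind(\kappa_\kn)$, and hence $\bTheta_i(\kappa_{\LLZ})=\delta\,\bTheta_i(\kappa)$. Applying Theorem \ref{th:iwasawa:ultrakol:theta_fitting_equal} to $\kappa$ yields the displayed equality.

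For the last assertion, I compare $i=0$. By Theorem \ref{th:kol_core_projection}, $\bTheta_0(\kappa_{\LLZ})=\ind(\kappa_{\LLZ,1})$ is the characteristic ideal of $\bigwedge^1\bH^1_{\FBK}/\Lambda\kappa_{\LLZ,1}$, and the Iwasawa main conjecture identifies it with $\Fitt^0$ of the dual Selmer group. Theorem \ref{th:iwasawa:nofinsub} makes this principal and generically nonzero, so $\delta$ is a unit and can be taken equal to $1$.
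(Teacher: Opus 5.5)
Your proposal is correct and follows essentially the same route as the paper. You check (BI) and (Tam) from (RS2)--(RS3) and show that the local quotient at $p$ is free by comparing it with the free module $\bH^1(\Q_p,\bT^-)$, controlling the cokernel via $\bH^2(\Q_p,\bT^+)\cong\bH^0(\Q_p,(\bT^+)^*)^\vee$ and (RS5); you then deduce (Prop) from Proposition \ref{prop:iwasawa:free_cartesian_propagated}, write $\kappa_{\LLZ}=\delta\kappa$ with $\kappa$ primitive, and apply Theorem \ref{th:iwasawa:ultrakol:theta_fitting_equal}. The differences are cosmetic: the paper gets $\bH^0(\Q_p,(\bT^+)^*)=0$ outright, using that $\Gal((\Q_\infty)_\p/\Q_p)$ is pro-$p$, so the map to $\bH^1(\Q_p,\bT^-)$ is an isomorphism; and it simply asserts that the main conjecture is equivalent to primitivity, whereas you derive $\delta\in\Lambda^\times$ from the case $i=0$ together with Theorem \ref{th:iwasawa:nofinsub}.
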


\begin{remark}
The Iwasawa main conjecture for the Rankin-Selberg convolution has been proven in some cases in \cite{Wan}.
\end{remark}

\begin{proof}[Proof of Theorem \ref{th:llz}]
Note that Assumption \ref{ass:patched:basic} is guaranteed by \ref{RSBI} and Assumption \ref{ass:iwasawa:local_no_fin_sub}, by \ref{RSTam}. \cite[Lemma 5.3.1 (ii)]{MazurRubin} implies, when $\ell\in \Sigma_{\FF^*}$, that 
\[\bH^1_{\FF^p}(\Q_\ell,\bT)=\bH^1(\Q_\ell,\bT),\]
so these local conditions are cartesian. By Proposition \ref{prop:BK_GR}, 
\begin{equation}
\bH^1_{/\FBK}(\Q_p,\bT)\hookrightarrow \bH^1(\Q_p,\bT^-).
\label{eq:BK_quot_inj}
\end{equation}
As shown in the proof of Proposition \ref{prop:greenberg_image}, Assumption \ref{RSanom} implies that the latter is torsion-free, which implies that the Bloch-Kato condition is cartesian.

Moreover, we can see that the map in \eqref{eq:BK_quot_inj} is an isomorphism. Note that its cokernel is isomorphic to $\bH^2(\Q_p,\bT^+)[X]$. By local duality, there is an isomorphism
\[\bH^2(\Q_p,\bT^+)\cong \bH^0\Bigl(\Q_p,(\bT^+)^*\Bigr)=\bH^0\Bigl((\Q_\infty)_\p,(T^+)^*\Bigr)=0,\]
where $\p$ is the unique prime of $\Q_\infty$ above $p$ and the last equality follows from assumption \ref{RSquot} and the fact that $\Gal((\Q_\infty)_\p/\Q_p)$ is a pro-$p$ group. Hence,
\[\bH^1_{/\FBK}(\Q_p,\bT)\cong\bH^1(\Q_p,\bT^-).\]
The same argument used to show the torsion-freeness of $\bH^1(\Q_p,\bT^-)$ leads to
\[\bH^1(\Q_p,\bT^-/X\bT^-)[p]=0.\]
By \ref{RSanom}, there is an injection
\[\bH^1(\Q_p,\bT^-)\otimes \Lambda/(X)\hookrightarrow\bH^1(\Q_p,\bT^-/X\bT^-),\]
so we can conclude that the first is a free $\Z_p$-module. This, together with the absence torsion, is enough to conclude that $\bH^1(\Q_p,\bT^-)$ is a free Iwasawa module (see the argument in Proposition \ref{prop:iwasawa:selmer_free}). Then Proposition \ref{prop:iwasawa:free_cartesian_propagated} shows that the propagation of $\FBK$ to $\bT/X\bT$ is cartesian.

The Kolyvagin system $\kappa_{\LLZ}$ might not be primitive, but there exist a primitive one $\kappa\in \KS(\bT,\FBK)$ and $\delta\in \Lambda$ such that $\kappa_{\LLZ}=\delta\cdot\kappa$. Then Theorem \ref{th:iwasawa:ultrakol:theta_fitting_equal} implies that
\[\bTheta_i(\kappa_{\LLZ})=\delta\Theta_i(\kappa)=\Fitt^i\Bigl(\bH^1_{(\FBK)^*}(\Q,\bT^*)^\vee\Bigr).\]
The Iwasawa main conjecture is equivalent to the primitivity of $\kappa_{\LLZ}$, meaning that $\delta\in \Lambda^\times$.
\end{proof}

\begingroup

\sloppy
\hbadness=10000
\printbibliography[heading=bibintoc]
\endgroup

\end{document}